%
%
%

\documentclass[10pt]{amsart}
\usepackage[latin1]{inputenc}
\usepackage{xspace,amssymb,amsfonts,euscript, enumitem}
\usepackage{amsthm,amsmath,amscd,stmaryrd,latexsym}

\addtolength{\hoffset}{-1.6cm}
\addtolength{\textwidth}{3cm}


\usepackage{color}

\usepackage{graphicx}
\usepackage[all,dvips]{xy}
\xyoption{line}
\xyoption{arrow}
\xyoption{color}
\SelectTips{cm}{}

\usepackage{tikz}
\usetikzlibrary{decorations.markings}
\usetikzlibrary{decorations.pathreplacing}
\usetikzlibrary{arrows,shapes,positioning}
\tikzstyle directed=[postaction={decorate,decoration={markings,
    mark=at position #1 with {\arrow{>}}}}]
\tikzstyle rdirected=[postaction={decorate,decoration={markings,
    mark=at position #1 with {\arrow{<}}}}]



\IfFileExists{comments.sty}{\usepackage{comments}}


\usepackage{url}
\usepackage[bookmarks=true,%
    colorlinks=true,%
    linkcolor=blue,%
    citecolor=blue,%
    filecolor=blue,%
    menucolor=blue,%
    urlcolor=blue,%
    breaklinks=true]{hyperref}



\newcommand{\refequal}[1]{\xy {\ar@{=}^{#1}
(-1,0)*{};(1,0)*{}};
\endxy}

\newcommand{\refcong}[1]{\xy {\ar@{\cong}^{#1}
(-1,0)*{};(1,0)*{}};
\endxy}


\newcommand\nc{\newcommand}
\nc\calU{\mathcal{U}}
\nc\cU{\calU}
\nc\col{\colon\thinspace}
\nc\calA{\mathcal{A}}
 \nc\diag{\mathrm{d}}
\nc\modU {\mathcal{U}}
\nc\bfU{\mathbf{U}}
\nc\dU{\dot{\mathbf U}}
\nc\dUZ{{_\modZ\dot{\mathbf U}}}
\nc\UZ{{_\modZ \mathbf U} }

\nc\fsl{\mathfrak{sl}}

\newcommand{\U}{\dot{{\bf U}}}
\newcommand{\UA}{{_{\cal{A}}\dot{{\bf U}}}}

\newcommand{\Ucat}{\cal{U}}

\newcommand{\UcatD}{\dot{\cal{U}}}

\newcommand{\onell}[1]{{\mathbf 1}_{#1}}
\newcommand{\onelp}{{\mathbf 1}_{\lambda'}}
\def\l{\lambda}
\newcommand{\onel}{{\mathbf 1}_{\lambda}}
\newcommand{\onem}{{\mathbf 1}_{\mu}}
\newcommand{\onenu}{{\mathbf 1}_{\nu}}
\newcommand\sE{{\cal{E}}}
\newcommand\sF{{\cal{F}}}


\newcommand{\und}{\underline}

\newcommand{\xsum}[2]{
  \vcenter{\xy
  (0,.4)*{\sum};
  (0,3.8)*{\scs #2};
  (0,-3.2)*{\scs #1};
  \endxy}
}

\newcommand{\iccbub}[2]{
\xybox{%
 (-6,0)*{};
  (6,0)*{};
  (-4,0)*{}="t1";
  (4,0)*{}="t2";
  "t2";"t1" **\crv{(4,6) & (-4,6)}; ?(.7)*\dir{}+(-2,0)*{\scs #2}
  ?(.05)*\dir{>} ?(1)*\dir{>};
  "t2";"t1" **\crv{(4,-6) & (-4,-6)};
   ?(.3)*\dir{}+(0,0)*{\bullet}+(0,-3)*{\scs {#1}};
}}
\newcommand{\icbub}[2]{
\xybox{%
 (-6,0)*{};
  (6,0)*{};
  (-4,0)*{}="t1";
  (4,0)*{}="t2";
  "t2";"t1" **\crv{(4,6) & (-4,6)};?(.7)*\dir{}+(-2,0)*{\scs #2};
   ?(0)*\dir{<} ?(.95)*\dir{<};
  "t2";"t1" **\crv{(4,-6) & (-4,-6)};
   ?(.3)*\dir{}+(0,0)*{\bullet}+(0,-3)*{\scs {#1}};
}}


%
\hfuzz=12pc

\newcommand{\BOX}{\hbox {$\sqcap$ \kern -1em $\sqcup$}}

\newcommand{\Hom}{{\rm Hom}}

\renewcommand{\to}{\rightarrow}
\newcommand{\maps}{\colon}
\newcommand{\op}{{\rm op}}
\newcommand{\co}{{\rm co}}

\newcommand{\im}{{\rm im\ }}

\newcommand{\la}{\langle}
\newcommand{\ra}{\rangle}

\newcommand{\scs}{\scriptstyle}

\newcommand{\Com}{\textsf{Com}}
\newcommand{\Kom}{\textsf{Kom}}

\theoremstyle{definition}
\newtheorem{thm}{Theorem}[section]
\newtheorem{cor}[thm]{Corollary}
\newtheorem{conj}[thm]{Conjecture}

\newtheorem{rem}[thm]{Remark}
\newtheorem{prop}[thm]{Proposition}
\newtheorem{definition}[thm]{Definition}

        \newcommand{\be}{\begin{equation}}
        \newcommand{\ee}{\end{equation}}
        \newcommand{\ba}{\begin{eqnarray}}
        \newcommand{\ea}{\end{eqnarray}}
        \newcommand{\ban}{\begin{eqnarray*}}
        \newcommand{\ean}{\end{eqnarray*}}
        \newcommand{\barr}{\begin{array}}
        \newcommand{\earr}{\end{array}}


\numberwithin{equation}{section}


%
%
%
%
%

%


\def\emph#1{{\sl #1\/}}
\def\ie{{\sl i.e. \/}}
\def\eg{{\sl e.g. \/}}

%
%


\let\tilde=\widetilde


\let\phi=\varphi
\let\epsilon=\varepsilon


\usepackage{bbm}
\def\C{{\mathbbm C}}

\def\Z{{\mathbbm Z}}
\def\Q{{\mathbbm Q}}


\def\cal#1{\mathcal{#1}}%
\def\1{\mathbbm{1}}%
\def\nn{\notag}



\def\Id{\mathrm{Id}}

\def\mf{\mathfrak}

\newcommand{\hackcenter}[1]{
 \xy (0,0)*{#1}; \endxy}

%
%

%
%


\newcommand{\sdotu}[1]{\xybox{%
  (-2,0)*{};
  (2,0)*{};
  (0,0)*{}; (0,-8)*{} **[blue]\dir{-}?(.5)*{\scs\bullet} ?(0)*[blue]\dir{<};
  (0,-10)*{\scs  #1};
}}
\newcommand{\sdotur}[1]{\xybox{%
  (-2,0)*{};
  (2,0)*{};
  (0,0)*{}; (0,-8)*{} **[black]\dir{-}?(.5)*{\scs\bullet} ?(0)*[black]\dir{<};
  (0,-10)*{\scs  #1};
}}
\newcommand{\sdotug}[1]{\xybox{%
  (-2,0)*{};
  (2,0)*{};
  (0,0)*{}; (0,-8)*{} **[green]\dir{-}?(.5)*{\scs\bullet} ?(0)*[green]\dir{<};
  (0,-10)*{\scs  #1};
}}
\newcommand{\sdotup}[1]{\xybox{%
  (-2,0)*{};
  (2,0)*{};
  (0,0)*{}; (0,-8)*{} **[magenta]\dir{-}?(.5)*{\scs\bullet} ?(0)*[magenta]\dir{<};
  (0,-10)*{\scs  #1};
}}
\newcommand{\sdotd}[1]{\xybox{%
  (-2,0)*{};
  (2,0)*{};
  (0,0)*{}; (0,-8)*{} **[blue]\dir{-}?(.5)*{\scs\bullet} ?(1)*[blue]\dir{>};
  (0,-10)*{\scs  #1};
}}
\newcommand{\sdotdr}[1]{\xybox{%
  (-2,0)*{};
  (2,0)*{};
  (0,0)*{}; (0,-8)*{} **[black]\dir{-}?(.5)*{\scs\bullet} ?(1)*[black]\dir{>};
  (0,-10)*{\scs  #1};
}}
\newcommand{\sdotdg}[1]{\xybox{%
  (-2,0)*{};
  (2,0)*{};
  (0,0)*{}; (0,-8)*{} **[green]\dir{-}?(.5)*{\scs\bullet} ?(1)*[green]\dir{>};
  (0,-10)*{\scs  #1};
}}
\newcommand{\sdot}[1]{\xybox{%
  (-2,0)*{};
  (2,0)*{};
  (0,0)*{}; (0,-8)*{} **[blue]\dir{-}?(.5)*{\scs\bullet} ?(1)*[blue]\dir{};
  (0,-10)*{\scs  #1};
}}
\newcommand{\sdotr}[1]{\xybox{%
  (-2,0)*{};
  (2,0)*{};
  (0,0)*{}; (0,-8)*{} **[black]\dir{-}?(.5)*{\scs\bullet} ?(1)*[black]\dir{};
  (0,-10)*{\scs  #1};
}}

\newcommand{\sdotp}[1]{\xybox{%
  (-2,0)*{};
  (2,0)*{};
  (0,0)*{}; (0,-8)*{} **[magenta]\dir{-}?(.5)*{\scs\bullet} ?(1)*[magenta]\dir{};
  (0,-10)*{\scs  #1};
}}
\newcommand{\slineu}[1]{\xybox{%
  (-2,0)*{};
  (2,0)*{};
  (0,0)*{}; (0,-8)*{} **[blue]\dir{-}; ?(0)*[blue]\dir{<};
  (0,-10)*{\scs  #1};
}}
\newcommand{\slineur}[1]{\xybox{%
  (-2,0)*{};
  (2,0)*{};
  (0,0)*{}; (0,-8)*{} **[black]\dir{-}; ?(0)*[black]\dir{<};
  (0,-10)*{\scs  #1};
}}
\newcommand{\slineup}[1]{\xybox{%
  (-2,0)*{};
  (2,0)*{};
  (0,0)*{}; (0,-8)*{} **[magenta]\dir{-}; ?(0)*[magenta]\dir{<};
  (0,-10)*{\scs  #1};
}}
\newcommand{\slineug}[1]{\xybox{%
  (-2,0)*{};
  (2,0)*{};
  (0,0)*{}; (0,-8)*{} **[green]\dir{-}; ?(0)*[green]\dir{<};
  (0,-10)*{\scs  #1};
}}
\newcommand{\slinen}[1]{\xybox{%
  (-2,0)*{};
  (2,0)*{};
  (0,0)*{}; (0,-8)*{} **[blue]\dir{-};
  (0,-10)*{\scs  #1};
}}
\newcommand{\slinenr}[1]{\xybox{%
  (-2,0)*{};
  (2,0)*{};
  (0,0)*{}; (0,-8)*{} **[black]\dir{-};
  (0,-10)*{\scs  #1};
}}
\newcommand{\slinenp}[1]{\xybox{%
  (-2,0)*{};
  (2,0)*{};
  (0,0)*{}; (0,-8)*{} **[magenta]\dir{-};
  (0,-10)*{\scs  #1};
}}
\newcommand{\slineng}[1]{\xybox{%
  (-2,0)*{};
  (2,0)*{};
  (0,0)*{}; (0,-8)*{} **[green]\dir{-};
  (0,-10)*{\scs  #1};
}}
\newcommand{\slined}[1]{\xybox{%
  (-2,0)*{};
  (2,0)*{};
  (0,0)*{}; (0,-8)*{} **[blue]\dir{-}; ?(1)*[blue]\dir{>};
  (0,-10)*{\scs  #1};
}}
\newcommand{\slinedp}[1]{\xybox{%
  (-2,0)*{};
  (2,0)*{};
  (0,0)*{}; (0,-8)*{} **[magenta]\dir{-}; ?(1)*[magenta]\dir{>};
  (0,-10)*{\scs  #1};
}}
\newcommand{\slinedr}[1]{\xybox{%
  (-2,0)*{};
  (2,0)*{};
  (0,0)*{}; (0,-8)*{} **[black]\dir{-}; ?(1)*[black]\dir{>};
  (0,-10)*{\scs  #1};
}}
\newcommand{\slinedg}[1]{\xybox{%
  (-2,0)*{};
  (2,0)*{};
  (0,0)*{}; (0,-8)*{} **[green]\dir{-}; ?(1)*[green]\dir{>};
  (0,-10)*{\scs  #1};
}}

\newcommand{\ucrosspb}[2]{
 \xybox{
    (-3,-4)*{};(3,4)*{} **[magenta]\crv{(-3,-1) & (3,1)}?(1)*[magenta]\dir{>} ;
    (3,-4)*{};(-3,4)*{} **[blue]\crv{(3,-1) & (-3,1)}?(1)*[blue]\dir{>};
    (-3,-6)*{\scs #1};
     (3.1,-6)*{\scs #2};
     (-8,0)*{};(8,0)*{};
     }}
\newcommand{\ucrossbp}[2]{
 \xybox{
    (-3,-4)*{};(3,4)*{} **[blue]\crv{(-3,-1) & (3,1)}?(1)*[blue]\dir{>} ;
    (3,-4)*{};(-3,4)*{} **[magenta]\crv{(3,-1) & (-3,1)}?(1)*[magenta]\dir{>};
    (-3,-6)*{\scs #1};
     (3.1,-6)*{\scs #2};
     (-8,0)*{};(8,0)*{};
     }}
\newcommand{\ucrossrb}[2]{
 \xybox{
    (-3,-4)*{};(3,4)*{} **[black]\crv{(-3,-1) & (3,1)}?(1)*[black]\dir{>} ;
    (3,-4)*{};(-3,4)*{} **[blue]\crv{(3,-1) & (-3,1)}?(1)*[blue]\dir{>};
    (-3,-6)*{\scs #1};
     (3.1,-6)*{\scs #2};
     (-8,0)*{};(8,0)*{};
     }}
 \newcommand{\ucrossrp}[2]{
 \xybox{
    (-3,-4)*{};(3,4)*{} **[black]\crv{(-3,-1) & (3,1)}?(1)*[black]\dir{>} ;
    (3,-4)*{};(-3,4)*{} **[magenta]\crv{(3,-1) & (-3,1)}?(1)*[magenta]\dir{>};
    (-3,-6)*{\scs #1};
     (3.1,-6)*{\scs #2};
     (-8,0)*{};(8,0)*{};
     }}
  \newcommand{\ucrosspr}[2]{
 \xybox{
    (-3,-4)*{};(3,4)*{} **[magenta]\crv{(-3,-1) & (3,1)}?(1)*[magenta]\dir{>} ;
    (3,-4)*{};(-3,4)*{} **[black]\crv{(3,-1) & (-3,1)}?(1)*[black]\dir{>};
    (-3,-6)*{\scs #1};
     (3.1,-6)*{\scs #2};
     (-8,0)*{};(8,0)*{};
     }}
\newcommand{\ucrossbr}[2]{
 \xybox{
    (-3,-4)*{};(3,4)*{} **[blue]\crv{(-3,-1) & (3,1)}?(1)*[blue]\dir{>} ;
    (3,-4)*{};(-3,4)*{} **[black]\crv{(3,-1) & (-3,1)}?(1)*[black]\dir{>};
    (-3,-6)*{\scs #1};
     (3.1,-6)*{\scs #2};
     (-8,0)*{};(8,0)*{};
     }}
\newcommand{\ucrossrr}[2]{
 \xybox{
    (-3,-4)*{};(3,4)*{} **[black]\crv{(-3,-1) & (3,1)}?(1)*[black]\dir{>} ;
    (3,-4)*{};(-3,4)*{} **[black]\crv{(3,-1) & (-3,1)}?(1)*[black]\dir{>};
    (-3,-6)*{\scs #1};
     (3.1,-6)*{\scs #2};
     (-8,0)*{};(8,0)*{};
     }}
\newcommand{\ucrossgr}[2]{
 \xybox{
    (-3,-4)*{};(3,4)*{} **[green]\crv{(-3,-1) & (3,1)}?(1)*[green]\dir{>} ;
    (3,-4)*{};(-3,4)*{} **[black]\crv{(3,-1) & (-3,1)}?(1)*[black]\dir{>};
    (-3,-6)*{\scs #1};
     (3.1,-6)*{\scs #2};
     (-8,0)*{};(8,0)*{};
     }}
\newcommand{\ucrossrg}[2]{
 \xybox{
    (-3,-4)*{};(3,4)*{} **[black]\crv{(-3,-1) & (3,1)}?(1)*[black]\dir{>} ;
    (3,-4)*{};(-3,4)*{} **[green]\crv{(3,-1) & (-3,1)}?(1)*[green]\dir{>};
    (-3,-6)*{\scs #1};
     (3.1,-6)*{\scs #2};
     (-8,0)*{};(8,0)*{};
     }}
\newcommand{\ucrossgb}[2]{
 \xybox{
    (-3,-4)*{};(3,4)*{} **[green]\crv{(-3,-1) & (3,1)}?(1)*[green]\dir{>} ;
    (3,-4)*{};(-3,4)*{} **[blue]\crv{(3,-1) & (-3,1)}?(1)*[blue]\dir{>};
    (-3,-6)*{\scs #1};
     (3.1,-6)*{\scs #2};
     (-8,0)*{};(8,0)*{};
     }}
\newcommand{\ucrossbg}[2]{
 \xybox{
    (-3,-4)*{};(3,4)*{} **[blue]\crv{(-3,-1) & (3,1)}?(1)*[blue]\dir{>} ;
    (3,-4)*{};(-3,4)*{} **[green]\crv{(3,-1) & (-3,1)}?(1)*[green]\dir{>};
    (-3,-6)*{\scs #1};
     (3.1,-6)*{\scs #2};
     (-8,0)*{};(8,0)*{};
     }}
\newcommand{\ucrossgg}[2]{
 \xybox{
    (-3,-4)*{};(3,4)*{} **[green]\crv{(-3,-1) & (3,1)}?(1)*[green]\dir{>} ;
    (3,-4)*{};(-3,4)*{} **[green]\crv{(3,-1) & (-3,1)}?(1)*[green]\dir{>};
    (-3,-6)*{\scs #1};
     (3.1,-6)*{\scs #2};
     (-8,0)*{};(8,0)*{};
     }}
\newcommand{\sucrossrr}[2]{
 \xybox{
    (-2,-4)*{};(2,4)*{} **[black]\crv{(-2,-1) & (2,1)}?(1)*[black]\dir{>} ;
    (2,-4)*{};(-2,4)*{} **[black]\crv{(2,-1) & (-2,1)}?(1)*[black]\dir{>};
    (-2,-6)*{\scs #1};
     (2.1,-6)*{\scs #2};
     (-4,0)*{};(4,0)*{};
     }}

\newcommand{\sucrossbp}[2]{
 \xybox{
    (-2,-4)*{};(2,4)*{} **[blue]\crv{(-2,-1) & (2,1)}?(1)*[blue]\dir{>} ;
    (2,-4)*{};(-2,4)*{} **[magenta]\crv{(2,-1) & (-2,1)}?(1)*[magenta]\dir{>};
    (-2,-6)*{\scs #1};
     (2.1,-6)*{\scs #2};
     (-4,0)*{};(4,0)*{};
     }}

\newcommand{\lcross}[2]{
 \xybox{
    (-3,-4)*{};(3,4)*{} **[blue]\crv{(-3,-1) & (3,1)}?(0)*[blue]\dir{<} ;
    (3,-4)*{};(-3,4)*{} **[blue]\crv{(3,-1) & (-3,1)}?(1)*[blue]\dir{>};
    (-3,-6)*{\scs #1};
     (3.1,-6)*{\scs #2};
     (-8,0)*{};(8,0)*{};
     }}
\newcommand{\lcrossrr}[2]{
 \xybox{
    (-3,-4)*{};(3,4)*{} **[black]\crv{(-3,-1) & (3,1)}?(0)*[black]\dir{<} ;
    (3,-4)*{};(-3,4)*{} **[black]\crv{(3,-1) & (-3,1)}?(1)*[black]\dir{>};
    (-3,-6)*{\scs #1};
     (3.1,-6)*{\scs #2};
     (-8,0)*{};(8,0)*{};
     }}
\newcommand{\lcrossrb}[2]{
 \xybox{
    (-3,-4)*{};(3,4)*{} **[black]\crv{(-3,-1) & (3,1)}?(0)*[black]\dir{<} ;
    (3,-4)*{};(-3,4)*{} **[blue]\crv{(3,-1) & (-3,1)}?(1)*[blue]\dir{>};
    (-3,-6)*{\scs #1};
     (3.1,-6)*{\scs #2};
     (-8,0)*{};(8,0)*{};
     }}
\newcommand{\lcrossbr}[2]{
 \xybox{
    (-3,-4)*{};(3,4)*{} **[blue]\crv{(-3,-1) & (3,1)}?(0)*[blue]\dir{<} ;
    (3,-4)*{};(-3,4)*{} **[black]\crv{(3,-1) & (-3,1)}?(1)*[black]\dir{>};
    (-3,-6)*{\scs #1};
     (3.1,-6)*{\scs #2};
     (-8,0)*{};(8,0)*{};
     }}
\newcommand{\lcrossrp}[2]{
 \xybox{
    (-3,-4)*{};(3,4)*{} **[black]\crv{(-3,-1) & (3,1)}?(0)*[black]\dir{<} ;
    (3,-4)*{};(-3,4)*{} **[magenta]\crv{(3,-1) & (-3,1)}?(1)*[magenta]\dir{>};
    (-3,-6)*{\scs #1};
     (3.1,-6)*{\scs #2};
     (-8,0)*{};(8,0)*{};
     }}

\newcommand{\lcrossbg}[2]{
 \xybox{
    (-3,-4)*{};(3,4)*{} **[blue]\crv{(-3,-1) & (3,1)}?(0)*[blue]\dir{<} ;
    (3,-4)*{};(-3,4)*{} **[green]\crv{(3,-1) & (-3,1)}?(1)*[green]\dir{>};
    (-3,-6)*{\scs #1};
     (3.1,-6)*{\scs #2};
     (-8,0)*{};(8,0)*{};
     }}
\newcommand{\lcrossgr}[2]{
 \xybox{
    (-3,-4)*{};(3,4)*{} **[green]\crv{(-3,-1) & (3,1)}?(0)*[green]\dir{<} ;
    (3,-4)*{};(-3,4)*{} **[black]\crv{(3,-1) & (-3,1)}?(1)*[black]\dir{>};
    (-3,-6)*{\scs #1};
     (3.1,-6)*{\scs #2};
     (-8,0)*{};(8,0)*{};
     }}
\newcommand{\lcrossgb}[2]{
 \xybox{
    (-3,-4)*{};(3,4)*{} **[green]\crv{(-3,-1) & (3,1)}?(0)*[green]\dir{<} ;
    (3,-4)*{};(-3,4)*{} **[blue]\crv{(3,-1) & (-3,1)}?(1)*[blue]\dir{>};
    (-3,-6)*{\scs #1};
     (3.1,-6)*{\scs #2};
     (-8,0)*{};(8,0)*{};
     }}
\newcommand{\lcrossbp}[2]{
 \xybox{
    (-3,-4)*{};(3,4)*{} **[blue]\crv{(-3,-1) & (3,1)}?(0)*[blue]\dir{<} ;
    (3,-4)*{};(-3,4)*{} **[magenta]\crv{(3,-1) & (-3,1)}?(1)*[magenta]\dir{>};
    (-3,-6)*{\scs #1};
     (3.1,-6)*{\scs #2};
     (-8,0)*{};(8,0)*{};
     }}
\newcommand{\lcrosspb}[2]{
 \xybox{
    (-3,-4)*{};(3,4)*{} **[magenta]\crv{(-3,-1) & (3,1)}?(0)*[magenta]\dir{<} ;
    (3,-4)*{};(-3,4)*{} **[blue]\crv{(3,-1) & (-3,1)}?(1)*[blue]\dir{>};
    (-3,-6)*{\scs #1};
     (3.1,-6)*{\scs #2};
     (-8,0)*{};(8,0)*{};
     }}
\newcommand{\lcrossrg}[2]{
 \xybox{
    (-3,-4)*{};(3,4)*{} **[black]\crv{(-3,-1) & (3,1)}?(0)*[black]\dir{<} ;
    (3,-4)*{};(-3,4)*{} **[green]\crv{(3,-1) & (-3,1)}?(1)*[green]\dir{>};
    (-3,-6)*{\scs #1};
     (3.1,-6)*{\scs #2};
     (-8,0)*{};(8,0)*{};
     }}
\newcommand{\lcrossgg}[2]{
 \xybox{
    (-3,-4)*{};(3,4)*{} **[green]\crv{(-3,-1) & (3,1)}?(0)*[green]\dir{<} ;
    (3,-4)*{};(-3,4)*{} **[green]\crv{(3,-1) & (-3,1)}?(1)*[green]\dir{>};
    (-3,-6)*{\scs #1};
     (3.1,-6)*{\scs #2};
     (-8,0)*{};(8,0)*{};
     }}
\newcommand{\rcross}[2]{
 \xybox{
    (-3,-4)*{};(3,4)*{} **[blue]\crv{(-3,-1) & (3,1)}?(1)*[blue]\dir{>} ;
    (3,-4)*{};(-3,4)*{} **[blue]\crv{(3,-1) & (-3,1)}?(0)*[blue]\dir{<};
    (-3,-6)*{\scs #1};
     (3.1,-6)*{\scs #2};
     (-8,0)*{};(8,0)*{};
     }}
\newcommand{\rcrossrr}[2]{
 \xybox{
    (-3,-4)*{};(3,4)*{} **[black]\crv{(-3,-1) & (3,1)}?(1)*[black]\dir{>} ;
    (3,-4)*{};(-3,4)*{} **[black]\crv{(3,-1) & (-3,1)}?(0)*[black]\dir{<};
    (-3,-6)*{\scs #1};
     (3.1,-6)*{\scs #2};
     (-8,0)*{};(8,0)*{};
     }}
\newcommand{\rcrossbr}[2]{
 \xybox{
    (-3,-4)*{};(3,4)*{} **[blue]\crv{(-3,-1) & (3,1)}?(1)*[blue]\dir{>} ;
    (3,-4)*{};(-3,4)*{} **[black]\crv{(3,-1) & (-3,1)}?(0)*[black]\dir{<};
    (-3,-6)*{\scs #1};
     (3.1,-6)*{\scs #2};
     (-8,0)*{};(8,0)*{};
     }}
\newcommand{\rcrossrb}[2]{
 \xybox{
    (-3,-4)*{};(3,4)*{} **[black]\crv{(-3,-1) & (3,1)}?(1)*[black]\dir{>} ;
    (3,-4)*{};(-3,4)*{} **[blue]\crv{(3,-1) & (-3,1)}?(0)*[blue]\dir{<};
    (-3,-6)*{\scs #1};
     (3.1,-6)*{\scs #2};
     (-8,0)*{};(8,0)*{};
     }}
\newcommand{\rcrossgb}[2]{
 \xybox{
    (-3,-4)*{};(3,4)*{} **[green]\crv{(-3,-1) & (3,1)}?(1)*[green]\dir{>} ;
    (3,-4)*{};(-3,4)*{} **[blue]\crv{(3,-1) & (-3,1)}?(0)*[blue]\dir{<};
    (-3,-6)*{\scs #1};
     (3.1,-6)*{\scs #2};
     (-8,0)*{};(8,0)*{};
     }}
\newcommand{\rcrossbg}[2]{
 \xybox{
    (-3,-4)*{};(3,4)*{} **[blue]\crv{(-3,-1) & (3,1)}?(1)*[blue]\dir{>} ;
    (3,-4)*{};(-3,4)*{} **[green]\crv{(3,-1) & (-3,1)}?(0)*[green]\dir{<};
    (-3,-6)*{\scs #1};
     (3.1,-6)*{\scs #2};
     (-8,0)*{};(8,0)*{};
     }}
\newcommand{\rcrosspb}[2]{
 \xybox{
    (-3,-4)*{};(3,4)*{} **[magenta]\crv{(-3,-1) & (3,1)}?(1)*[magenta]\dir{>} ;
    (3,-4)*{};(-3,4)*{} **[blue]\crv{(3,-1) & (-3,1)}?(0)*[blue]\dir{<};
    (-3,-6)*{\scs #1};
     (3.1,-6)*{\scs #2};
     (-8,0)*{};(8,0)*{};
     }}
\newcommand{\rcrossbp}[2]{
 \xybox{
    (-3,-4)*{};(3,4)*{} **[blue]\crv{(-3,-1) & (3,1)}?(1)*[blue]\dir{>} ;
    (3,-4)*{};(-3,4)*{} **[magenta]\crv{(3,-1) & (-3,1)}?(0)*[magenta]\dir{<};
    (-3,-6)*{\scs #1};
     (3.1,-6)*{\scs #2};
     (-8,0)*{};(8,0)*{};
     }}
\newcommand{\rcrossrg}[2]{
 \xybox{
    (-3,-4)*{};(3,4)*{} **[black]\crv{(-3,-1) & (3,1)}?(1)*[black]\dir{>} ;
    (3,-4)*{};(-3,4)*{} **[green]\crv{(3,-1) & (-3,1)}?(0)*[green]\dir{<};
    (-3,-6)*{\scs #1};
     (3.1,-6)*{\scs #2};
     (-8,0)*{};(8,0)*{};
     }}
\newcommand{\rcrossgr}[2]{
 \xybox{
    (-3,-4)*{};(3,4)*{} **[green]\crv{(-3,-1) & (3,1)}?(1)*[green]\dir{>} ;
    (3,-4)*{};(-3,4)*{} **[black]\crv{(3,-1) & (-3,1)}?(0)*[black]\dir{<};
    (-3,-6)*{\scs #1};
     (3.1,-6)*{\scs #2};
     (-8,0)*{};(8,0)*{};
     }}

\newcommand{\rcrosspr}[2]{
 \xybox{
    (-3,-4)*{};(3,4)*{} **[magenta]\crv{(-3,-1) & (3,1)}?(1)*[magenta]\dir{>} ;
    (3,-4)*{};(-3,4)*{} **[black]\crv{(3,-1) & (-3,1)}?(0)*[black]\dir{<};
    (-3,-6)*{\scs #1};
     (3.1,-6)*{\scs #2};
     (-8,0)*{};(8,0)*{};
     }}
\newcommand{\rcrossgg}[2]{
 \xybox{
    (-3,-4)*{};(3,4)*{} **[green]\crv{(-3,-1) & (3,1)}?(1)*[green]\dir{>} ;
    (3,-4)*{};(-3,4)*{} **[green]\crv{(3,-1) & (-3,1)}?(0)*[green]\dir{<};
    (-3,-6)*{\scs #1};
     (3.1,-6)*{\scs #2};
     (-8,0)*{};(8,0)*{};
     }}

\newcommand{\ncrossrr}[2]{
 \xybox{
    (-3,-4)*{};(3,4)*{} **[black]\crv{(-3,-1) & (3,1)}?(1)*[black]\dir{} ;
    (3,-4)*{};(-3,4)*{} **[black]\crv{(3,-1) & (-3,1)}?(1)*[black]\dir{};
    (-3,-6)*{\scs #1};
     (3.1,-6)*{\scs #2};
     (-8,0)*{};(8,0)*{};
     }}
 \newcommand{\ncrosspb}[2]{
 \xybox{
    (-3,-4)*{};(3,4)*{} **[magenta]\crv{(-3,-1) & (3,1)}?(1)*[magenta]\dir{} ;
    (3,-4)*{};(-3,4)*{} **[blue]\crv{(3,-1) & (-3,1)}?(1)*[blue]\dir{};
    (-3,-6)*{\scs #1};
     (3.1,-6)*{\scs #2};
     (-8,0)*{};(8,0)*{};
     }}
 \newcommand{\ncrossbp}[2]{
 \xybox{
    (-3,-4)*{};(3,4)*{} **[blue]\crv{(-3,-1) & (3,1)}?(1)*[blue]\dir{} ;
    (3,-4)*{};(-3,4)*{} **[magenta]\crv{(3,-1) & (-3,1)}?(1)*[magenta]\dir{};
    (-3,-6)*{\scs #1};
     (3.1,-6)*{\scs #2};
     (-8,0)*{};(8,0)*{};
     }}
  \newcommand{\ncrosspr}[2]{
 \xybox{
    (-3,-4)*{};(3,4)*{} **[magenta]\crv{(-3,-1) & (3,1)}?(1)*[magenta]\dir{} ;
    (3,-4)*{};(-3,4)*{} **[black]\crv{(3,-1) & (-3,1)}?(1)*[black]\dir{};
    (-3,-6)*{\scs #1};
     (3.1,-6)*{\scs #2};
     (-8,0)*{};(8,0)*{};
     }}
   \newcommand{\ncrossrp}[2]{
 \xybox{
    (-3,-4)*{};(3,4)*{} **[black]\crv{(-3,-1) & (3,1)}?(1)*[black]\dir{} ;
    (3,-4)*{};(-3,4)*{} **[magenta]\crv{(3,-1) & (-3,1)}?(1)*[magenta]\dir{};
    (-3,-6)*{\scs #1};
     (3.1,-6)*{\scs #2};
     (-8,0)*{};(8,0)*{};
     }}
\newcommand{\ncrossbr}[2]{
 \xybox{
    (-3,-4)*{};(3,4)*{} **[blue]\crv{(-3,-1) & (3,1)}?(1)*[blue]\dir{} ;
    (3,-4)*{};(-3,4)*{} **[black]\crv{(3,-1) & (-3,1)}?(1)*[black]\dir{};
    (-3,-6)*{\scs #1};
     (3.1,-6)*{\scs #2};
     (-8,0)*{};(8,0)*{};
     }}
\newcommand{\ncrossrb}[2]{
 \xybox{
    (-3,-4)*{};(3,4)*{} **[black]\crv{(-3,-1) & (3,1)}?(1)*[black]\dir{} ;
    (3,-4)*{};(-3,4)*{} **[blue]\crv{(3,-1) & (-3,1)}?(1)*[blue]\dir{};
    (-3,-6)*{\scs #1};
     (3.1,-6)*{\scs #2};
     (-8,0)*{};(8,0)*{};
     }}
\newcommand{\ncrossgb}[2]{
 \xybox{
    (-3,-4)*{};(3,4)*{} **[green]\crv{(-3,-1) & (3,1)}?(1)*[green]\dir{} ;
    (3,-4)*{};(-3,4)*{} **[blue]\crv{(3,-1) & (-3,1)}?(1)*[blue]\dir{};
    (-3,-6)*{\scs #1};
     (3.1,-6)*{\scs #2};
     (-8,0)*{};(8,0)*{};
     }}
\newcommand{\ncrossbg}[2]{
 \xybox{
    (-3,-4)*{};(3,4)*{} **[blue]\crv{(-3,-1) & (3,1)}?(1)*[blue]\dir{} ;
    (3,-4)*{};(-3,4)*{} **[green]\crv{(3,-1) & (-3,1)}?(1)*[green]\dir{};
    (-3,-6)*{\scs #1};
     (3.1,-6)*{\scs #2};
     (-8,0)*{};(8,0)*{};
     }}
\newcommand{\ncrossrg}[2]{
 \xybox{
    (-3,-4)*{};(3,4)*{} **[black]\crv{(-3,-1) & (3,1)}?(1)*[black]\dir{} ;
    (3,-4)*{};(-3,4)*{} **[green]\crv{(3,-1) & (-3,1)}?(1)*[green]\dir{};
    (-3,-6)*{\scs #1};
     (3.1,-6)*{\scs #2};
     (-8,0)*{};(8,0)*{};
     }}
\newcommand{\ncrossgr}[2]{
 \xybox{
    (-3,-4)*{};(3,4)*{} **[green]\crv{(-3,-1) & (3,1)}?(1)*[green]\dir{} ;
    (3,-4)*{};(-3,4)*{} **[black]\crv{(3,-1) & (-3,1)}?(1)*[black]\dir{};
    (-3,-6)*{\scs #1};
     (3.1,-6)*{\scs #2};
     (-8,0)*{};(8,0)*{};
     }}

\newcommand{\dcrossbp}[2]{
 \xybox{
    (-3,-4)*{};(3,4)*{} **[blue]\crv{(-3,-1) & (3,1)}?(0)*[blue]\dir{<} ;
    (3,-4)*{};(-3,4)*{} **[magenta]\crv{(3,-1) & (-3,1)}?(0)*[magenta]\dir{<};
    (-3,-6)*{\scs #1};
     (3.1,-6)*{\scs #2};
     (-8,0)*{};(8,0)*{};
     }}
\newcommand{\dcrosspr}[2]{
 \xybox{
    (-3,-4)*{};(3,4)*{} **[magenta]\crv{(-3,-1) & (3,1)}?(0)*[magenta]\dir{<} ;
    (3,-4)*{};(-3,4)*{} **[black]\crv{(3,-1) & (-3,1)}?(0)*[black]\dir{<};
    (-3,-6)*{\scs #1};
     (3.1,-6)*{\scs #2};
     (-8,0)*{};(8,0)*{};
     }}
\newcommand{\dcrossrp}[2]{
 \xybox{
    (-3,-4)*{};(3,4)*{} **[black]\crv{(-3,-1) & (3,1)}?(0)*[black]\dir{<} ;
    (3,-4)*{};(-3,4)*{} **[magenta]\crv{(3,-1) & (-3,1)}?(0)*[magenta]\dir{<};
    (-3,-6)*{\scs #1};
     (3.1,-6)*{\scs #2};
     (-8,0)*{};(8,0)*{};
     }}
\newcommand{\dcrossrr}[2]{
 \xybox{
    (-3,-4)*{};(3,4)*{} **[black]\crv{(-3,-1) & (3,1)}?(0)*[black]\dir{<} ;
    (3,-4)*{};(-3,4)*{} **[black]\crv{(3,-1) & (-3,1)}?(0)*[black]\dir{<};
    (-3,-6)*{\scs #1};
     (3.1,-6)*{\scs #2};
     (-8,0)*{};(8,0)*{};
     }}
\newcommand{\dcrossbr}[2]{
 \xybox{
    (-3,-4)*{};(3,4)*{} **[blue]\crv{(-3,-1) & (3,1)}?(0)*[blue]\dir{<} ;
    (3,-4)*{};(-3,4)*{} **[black]\crv{(3,-1) & (-3,1)}?(0)*[black]\dir{<};
    (-3,-6)*{\scs #1};
     (3.1,-6)*{\scs #2};
     (-8,0)*{};(8,0)*{};
     }}
\newcommand{\dcrossrb}[2]{
 \xybox{
    (-3,-4)*{};(3,4)*{} **[black]\crv{(-3,-1) & (3,1)}?(0)*[black]\dir{<} ;
    (3,-4)*{};(-3,4)*{} **[blue]\crv{(3,-1) & (-3,1)}?(0)*[blue]\dir{<};
    (-3,-6)*{\scs #1};
     (3.1,-6)*{\scs #2};
     (-8,0)*{};(8,0)*{};
     }}
\newcommand{\dcrossgb}[2]{
 \xybox{
    (-3,-4)*{};(3,4)*{} **[green]\crv{(-3,-1) & (3,1)}?(0)*[green]\dir{<} ;
    (3,-4)*{};(-3,4)*{} **[blue]\crv{(3,-1) & (-3,1)}?(0)*[blue]\dir{<};
    (-3,-6)*{\scs #1};
     (3.1,-6)*{\scs #2};
     (-8,0)*{};(8,0)*{};
     }}
\newcommand{\dcrossbg}[2]{
 \xybox{
    (-3,-4)*{};(3,4)*{} **[blue]\crv{(-3,-1) & (3,1)}?(0)*[blue]\dir{<} ;
    (3,-4)*{};(-3,4)*{} **[green]\crv{(3,-1) & (-3,1)}?(0)*[green]\dir{<};
    (-3,-6)*{\scs #1};
     (3.1,-6)*{\scs #2};
     (-8,0)*{};(8,0)*{};
     }}
\newcommand{\dcrossrg}[2]{
 \xybox{
    (-3,-4)*{};(3,4)*{} **[black]\crv{(-3,-1) & (3,1)}?(0)*[black]\dir{<} ;
    (3,-4)*{};(-3,4)*{} **[green]\crv{(3,-1) & (-3,1)}?(0)*[green]\dir{<};
    (-3,-6)*{\scs #1};
     (3.1,-6)*{\scs #2};
     (-8,0)*{};(8,0)*{};
     }}
\newcommand{\dcrossgr}[2]{
 \xybox{
    (-3,-4)*{};(3,4)*{} **[green]\crv{(-3,-1) & (3,1)}?(0)*[green]\dir{<} ;
    (3,-4)*{};(-3,4)*{} **[black]\crv{(3,-1) & (-3,1)}?(0)*[black]\dir{<};
    (-3,-6)*{\scs #1};
     (3.1,-6)*{\scs #2};
     (-8,0)*{};(8,0)*{};
     }}
\newcommand{\dcrossgg}[2]{
 \xybox{
    (-3,-4)*{};(3,4)*{} **[green]\crv{(-3,-1) & (3,1)}?(0)*[green]\dir{<} ;
    (3,-4)*{};(-3,4)*{} **[green]\crv{(3,-1) & (-3,1)}?(0)*[green]\dir{<};
    (-3,-6)*{\scs #1};
     (3.1,-6)*{\scs #2};
     (-8,0)*{};(8,0)*{};
     }}


\newcommand{\srcup}[1]{\xybox{%
  (-5,0)*{};
  (5,0)*{};
  (-3,0)*{}="t1";
  (3,0)*{}="t2";
  "t1";"t2" **[blue]\crv{(-3,-5) & (3,-5)}; ?(1)*[blue]\dir{>}
   ?(.5)*\dir{}+(0,-2)*{\scriptstyle{#1}};
}}
\newcommand{\srcupr}[1]{\xybox{%
  (-5,0)*{};
  (5,0)*{};
  (-3,0)*{}="t1";
  (3,0)*{}="t2";
  "t1";"t2" **[black]\crv{(-3,-5) & (3,-5)}; ?(1)*[black]\dir{>}
   ?(.5)*\dir{}+(0,-2)*{\scriptstyle{#1}};
}}
\newcommand{\srcupp}[1]{\xybox{%
  (-5,0)*{};
  (5,0)*{};
  (-3,0)*{}="t1";
  (3,0)*{}="t2";
  "t1";"t2" **[magenta]\crv{(-3,-5) & (3,-5)}; ?(1)*[magenta]\dir{>}
   ?(.5)*\dir{}+(0,-2)*{\scriptstyle{#1}};
}}
\newcommand{\ssrcupr}[1]{\xybox{%
  (-5,0)*{};
  (5,-6)*{};
  (-3,0)*{}="t1";
  (3,0)*{}="t2";
  "t1";"t2" **[black]\crv{(-3,-3.5) & (3,-3.5)}; ?(1)*[black]\dir{>}
   ?(.5)*\dir{}+(0,-2)*{\scriptstyle{#1}};
}}
\newcommand{\srcupg}[1]{\xybox{%
  (-5,0)*{};
  (5,0)*{};
  (-3,0)*{}="t1";
  (3,0)*{}="t2";
  "t1";"t2" **[green]\crv{(-3,-5) & (3,-5)}; ?(1)*[green]\dir{>}
   ?(.5)*\dir{}+(0,-2)*{\scriptstyle{#1}};
}}
\newcommand{\slcup}[1]{\xybox{%
  (-5,0)*{};
  (5,0)*{};
  (-3,0)*{}="t1";
  (3,0)*{}="t2";
  "t1";"t2" **[blue]\crv{(-3,-5) & (3,-5)}; ?(0)*[blue]\dir{<}
   ?(.5)*\dir{}+(0,-2)*{\scriptstyle{#1}};
}}
\newcommand{\slcupr}[1]{\xybox{%
  (-5,0)*{};
  (5,0)*{};
  (-3,0)*{}="t1";
  (3,0)*{}="t2";
  "t1";"t2" **[black]\crv{(-3,-5) & (3,-5)}; ?(0)*[black]\dir{<}
   ?(.5)*\dir{}+(0,-2)*{\scriptstyle{#1}};
}}
\newcommand{\slcupg}[1]{\xybox{%
  (-5,0)*{};
  (5,0)*{};
  (-3,0)*{}="t1";
  (3,0)*{}="t2";
  "t1";"t2" **[green]\crv{(-3,-5) & (3,-5)}; ?(0)*[green]\dir{<}
   ?(.5)*\dir{}+(0,-2)*{\scriptstyle{#1}};
}}
\newcommand{\srcap}[1]{\xybox{%
  (-5,6)*{};
  (5,0)*{};
  (-3,0)*{}="t1";
  (3,0)*{}="t2";
  "t1";"t2" **[blue]\crv{(-3,5) & (3,5)}; ?(1)*[blue]\dir{>}
  ?(.5)*\dir{}+(0,2)*{\scriptstyle{#1}};
}}
\newcommand{\srcapr}[1]{\xybox{%
  (-5,6)*{};
  (5,0)*{};
  (-3,0)*{}="t1";
  (3,0)*{}="t2";
  "t1";"t2" **[black]\crv{(-3,5) & (3,5)}; ?(1)*[black]\dir{>}
  ?(.5)*\dir{}+(0,2)*{\scriptstyle{#1}};
}}
\newcommand{\srcapp}[1]{\xybox{%
  (-5,6)*{};
  (5,0)*{};
  (-3,0)*{}="t1";
  (3,0)*{}="t2";
  "t1";"t2" **[magenta]\crv{(-3,5) & (3,5)}; ?(1)*[magenta]\dir{>}
  ?(.5)*\dir{}+(0,2)*{\scriptstyle{#1}};
}}
\newcommand{\ssrcapr}[1]{\xybox{%
  (-5,6)*{};
  (5,0)*{};
  (-3,0)*{}="t1";
  (3,0)*{}="t2";
  "t1";"t2" **[black]\crv{(-3,3.5) & (3,3.5)}; ?(1)*[black]\dir{>}
  ?(.5)*\dir{}+(0,2)*{\scriptstyle{#1}};
}}
\newcommand{\srcapg}[1]{\xybox{%
  (-5,6)*{};
  (5,0)*{};
  (-3,0)*{}="t1";
  (3,0)*{}="t2";
  "t1";"t2" **[green]\crv{(-3,5) & (3,5)}; ?(1)*[green]\dir{>}
  ?(.5)*\dir{}+(0,2)*{\scriptstyle{#1}};
}}
\newcommand{\slcap}[1]{\xybox{%
  (-5,6)*{};
  (5,0)*{};
  (-3,0)*{}="t1";
  (3,0)*{}="t2";
  "t1";"t2" **[blue]\crv{(-3,5) & (3,5)}; ?(0)*[blue]\dir{<}
  ?(.5)*\dir{}+(0,2)*{\scriptstyle{#1}};
}}
\newcommand{\slcapr}[1]{\xybox{%
  (-5,6)*{};
  (5,0)*{};
  (-3,0)*{}="t1";
  (3,0)*{}="t2";
  "t1";"t2" **[black]\crv{(-3,5) & (3,5)}; ?(0)*[black]\dir{<}
  ?(.5)*\dir{}+(0,2)*{\scriptstyle{#1}};
}}
\newcommand{\slcapg}[1]{\xybox{%
  (-5,6)*{};
  (5,0)*{};
  (-3,0)*{}="t1";
  (3,0)*{}="t2";
  "t1";"t2" **[green]\crv{(-3,5) & (3,5)}; ?(0)*[green]\dir{<}
  ?(.5)*\dir{}+(0,2)*{\scriptstyle{#1}};
}}

\newcommand{\lcupcuprb}[2]{\xybox{%
(-5,0)*{};
  (5,0)*{};
  (-3,0)*{}="t1";
  (3,0)*{}="t2";
  "t1";"t2" **[blue]\crv{(-3,-5) & (3,-5)}; ?(0)*[blue]\dir{<}
   ?(.5)*\dir{}+(0,-2)*{\scriptstyle{#1}};
  (-8,0)*{};
  (8,0)*{};
  (-6,0)*{}="t1";
  (6,0)*{}="t2";
  "t1";"t2" **[black]\crv{(-6,-8) & (6,-8)}; ?(0)*[black]\dir{<}
  ?(.5)*\dir{}+(0,-2)*{\scriptstyle{#1}};
}}
\newcommand{\lcupcupbr}[2]{\xybox{%
(-5,0)*{};
  (5,0)*{};
  (-3,0)*{}="t1";
  (3,0)*{}="t2";
  "t1";"t2" **[black]\crv{(-3,-5) & (3,-5)}; ?(0)*[black]\dir{<}
   ?(.5)*\dir{}+(0,-2)*{\scriptstyle{#1}};
  (-8,0)*{};
  (8,0)*{};
  (-6,0)*{}="t1";
  (6,0)*{}="t2";
  "t1";"t2" **[blue]\crv{(-6,-8) & (6,-8)}; ?(0)*[blue]\dir{<}
  ?(.5)*\dir{}+(0,-2)*{\scriptstyle{#1}};
}}
\newcommand{\rcupcuprb}[2]{\xybox{%
(-5,0)*{};
  (5,0)*{};
  (-3,0)*{}="t1";
  (3,0)*{}="t2";
  "t1";"t2" **[blue]\crv{(-3,-5) & (3,-5)}; ?(1)*[blue]\dir{>}
   ?(.5)*\dir{}+(0,-2)*{\scriptstyle{#1}};
  (-8,0)*{};
  (8,0)*{};
  (-6,0)*{}="t1";
  (6,0)*{}="t2";
  "t1";"t2" **[black]\crv{(-6,-8) & (6,-8)}; ?(1)*[black]\dir{>}
  ?(.5)*\dir{}+(0,-2)*{\scriptstyle{#1}};
}}
\newcommand{\rcupcupbr}[2]{\xybox{%
(-5,0)*{};
  (5,0)*{};
  (-3,0)*{}="t1";
  (3,0)*{}="t2";
  "t1";"t2" **[black]\crv{(-3,-5) & (3,-5)}; ?(1)*[black]\dir{>}
   ?(.5)*\dir{}+(0,-2)*{\scriptstyle{#1}};
  (-8,0)*{};
  (8,0)*{};
  (-6,0)*{}="t1";
  (6,0)*{}="t2";
  "t1";"t2" **[blue]\crv{(-6,-8) & (6,-8)}; ?(1)*[blue]\dir{>}
  ?(.5)*\dir{}+(0,-2)*{\scriptstyle{#1}};
}}
\newcommand{\rcapcapbr}[2]{\xybox{%
(-5,0)*{};
  (5,0)*{};
  (-3,0)*{}="t1";
  (3,0)*{}="t2";
  "t1";"t2" **[black]\crv{(-3,5) & (3,5)}; ?(1)*[black]\dir{>}
   ?(.5)*\dir{}+(0,2)*{\scriptstyle{#1}};
  (-8,0)*{};
  (8,0)*{};
  (-6,0)*{}="t1";
  (6,0)*{}="t2";
  "t1";"t2" **[blue]\crv{(-6,8) & (6,8)}; ?(1)*[blue]\dir{>}
  ?(.5)*\dir{}+(0,2)*{\scriptstyle{#1}};
}}
\newcommand{\rcapcaprb}[2]{\xybox{%
(-5,0)*{};
  (5,0)*{};
  (-3,0)*{}="t1";
  (3,0)*{}="t2";
  "t1";"t2" **[blue]\crv{(-3,5) & (3,5)}; ?(1)*[blue]\dir{>}
   ?(.5)*\dir{}+(0,2)*{\scriptstyle{#1}};
  (-8,0)*{};
  (8,0)*{};
  (-6,0)*{}="t1";
  (6,0)*{}="t2";
  "t1";"t2" **[black]\crv{(-6,8) & (6,8)}; ?(1)*[black]\dir{>}
  ?(.5)*\dir{}+(0,2)*{\scriptstyle{#1}};
}}
\newcommand{\lcapcapbr}[2]{\xybox{%
(-5,0)*{};
  (5,0)*{};
  (-3,0)*{}="t1";
  (3,0)*{}="t2";
  "t1";"t2" **[black]\crv{(-3,5) & (3,5)}; ?(0)*[black]\dir{<}
   ?(.5)*\dir{}+(0,2)*{\scriptstyle{#1}};
  (-8,0)*{};
  (8,0)*{};
  (-6,0)*{}="t1";
  (6,0)*{}="t2";
  "t1";"t2" **[blue]\crv{(-6,8) & (6,8)}; ?(0)*[blue]\dir{<}
  ?(.5)*\dir{}+(0,2)*{\scriptstyle{#1}};
}}
\newcommand{\lcapcaprb}[2]{\xybox{%
(-5,0)*{};
  (5,0)*{};
  (-3,0)*{}="t1";
  (3,0)*{}="t2";
  "t1";"t2" **[blue]\crv{(-3,5) & (3,5)}; ?(0)*[blue]\dir{<}
   ?(.5)*\dir{}+(0,2)*{\scriptstyle{#1}};
  (-8,0)*{};
  (8,0)*{};
  (-6,0)*{}="t1";
  (6,0)*{}="t2";
  "t1";"t2" **[black]\crv{(-6,8) & (6,8)}; ?(0)*[black]\dir{<}
  ?(.5)*\dir{}+(0,2)*{\scriptstyle{#1}};
}}


\newcommand{\lowrru}[1]{\xybox{%
  (-8,0)*{};
  (8,0)*{};
  (-6,-18)*{};(6,-9)*{} **[blue]\crv{(-6,-13) & (6,-15)} ?(1)*[blue]\dir{>};
  (6,-9)*{};(6,0)*{}  **[blue]\dir{-} ?(.3)*\dir{ }+(2,0)*{\scs {\bf j}};
}}

\newcommand{\lowllu}[1]{\xybox{%
  (-8,0)*{};
  (8,0)*{};
  (6,-18)*{};(-6,-9)*{} **[blue]\crv{(6,-13) & (-6,-15)} ?(1)*[blue]\dir{>};
  (-6,-9)*{};(-6,0)*{}  **[blue]\dir{-} ?(.3)*\dir{ }+(-2,0)*{\scs {\bf j}};
}}

\newcommand{\bber}[1]{\xybox{%
  (-2,0)*{};
  (2,0)*{};
  (0,0);(0,-18) **[black]\dir{-}; ?(.5)*[black]\dir{<}+(2.3,0)*{\scriptstyle{#1}};
}}

\newcommand{\ccbub}[2]{
\xybox{%
 (-6,0)*{};
  (6,0)*{};
  (-4,0)*{}="t1";
  (4,0)*{}="t2";
  "t2";"t1" **[blue]\crv{(4,6) & (-4,6)}; ?(.7)*\dir{}+(-2,1)*{\scs #2}
  ?(.05)*[blue]\dir{>} ?(1)*[blue]\dir{>};
  "t2";"t1" **[blue]\crv{(4,-6) & (-4,-6)};
   ?(.3)*\dir{}+(0,0)*{\bullet}+(0,-3)*{\scs {#1}};
}}
\newcommand{\ccbubr}[2]{
\xybox{%
 (-6,0)*{};
  (6,0)*{};
  (-4,0)*{}="t1";
  (4,0)*{}="t2";
  "t2";"t1" **[black]\crv{(4,6) & (-4,6)}; ?(.7)*\dir{}+(-2,0)*{\scs #2}
  ?(.05)*[black]\dir{>} ?(1)*[black]\dir{>};
  "t2";"t1" **[black]\crv{(4,-6) & (-4,-6)};
   ?(.3)*\dir{}+(0,0)*{\bullet}+(0,-3)*{\scs {#1}};
}}
\newcommand{\ccbubg}[2]{
\xybox{%
 (-6,0)*{};
  (6,0)*{};
  (-4,0)*{}="t1";
  (4,0)*{}="t2";
  "t2";"t1" **[green]\crv{(4,6) & (-4,6)}; ?(.7)*\dir{}+(-2,0)*{\scs #2}
  ?(.05)*[green]\dir{>} ?(1)*[green]\dir{>};
  "t2";"t1" **[green]\crv{(4,-6) & (-4,-6)};
   ?(.3)*\dir{}+(0,0)*{\bullet}+(0,-3)*{\scs {#1}};
}}
\newcommand{\cbub}[2]{
\xybox{%
 (-6,0)*{};
  (6,0)*{};
  (-4,0)*{}="t1";
  (4,0)*{}="t2";
  "t2";"t1" **[blue]\crv{(4,6) & (-4,6)};?(.7)*\dir{}+(-2,0)*{\scs #2};
   ?(0)*[blue]\dir{<} ?(.95)*[blue]\dir{<};
  "t2";"t1" **[blue]\crv{(4,-6) & (-4,-6)};
   ?(.3)*\dir{}+(0,0)*{\bullet}+(0,-3)*{\scs {#1}};
}}
\newcommand{\cbubr}[2]{
\xybox{%
 (-6,0)*{};
  (6,0)*{};
  (-4,0)*{}="t1";
  (4,0)*{}="t2";
  "t2";"t1" **[black]\crv{(4,6) & (-4,6)};?(.7)*\dir{}+(-2,0)*{\scs #2};
   ?(0)*[black]\dir{<} ?(.95)*[black]\dir{<};
  "t2";"t1" **[black]\crv{(4,-6) & (-4,-6)};
   ?(.3)*\dir{}+(0,0)*{\bullet}+(0,-3)*{\scs {#1}};
}}
\newcommand{\cbubg}[2]{
\xybox{%
 (-6,0)*{};
  (6,0)*{};
  (-4,0)*{}="t1";
  (4,0)*{}="t2";
  "t2";"t1" **[green]\crv{(4,6) & (-4,6)};?(.7)*\dir{}+(-2,0)*{\scs #2};
   ?(0)*[green]\dir{<} ?(.95)*[green]\dir{<};
  "t2";"t1" **[green]\crv{(4,-6) & (-4,-6)};
   ?(.3)*\dir{}+(0,0)*{\bullet}+(0,-3)*{\scs {#1}};
}}

\newcommand{\medccbub}[1]{
\xybox{%
 (-5,0)*{};
  (5,0)*{};
  (-2.2,0)*{}="t1";
  (2.2,0)*{}="t2";
  "t2";"t1" **[blue]\crv{(2.2,3.2) & (-2.2,3.2)};   ?(1)*[blue]\dir{>}; ?(.3)*\dir{}+(0,0)*{\bullet}+(1,3)*{\scriptscriptstyle
  {#1}};
  "t2";"t1" **[blue]\crv{(2.2,-3.2) & (-2.2,-3.2)};
}}
\newcommand{\medcbub}[1]{
\xybox{%
 (-5,0)*{};
  (5,0)*{};
  (-2.2,0)*{}="t1";
  (2.2,0)*{}="t2";
  "t2";"t1" **[blue]\crv{(2.2,3.2) & (-2.2,3.2)};   ?(1)*[blue]\dir{<}; ?(.3)*\dir{}+(0,0)*{\bullet}+(1,3)*{\scriptscriptstyle
  {#1}};
  "t2";"t1" **[blue]\crv{(2.2,-3.2) & (-2.2,-3.2)};
}}
\newcommand{\smccbubr}[1]{
\xybox{%
 (-5,0)*{};
  (5,0)*{};
  (-2,0)*{}="t1";
  (2,0)*{}="t2";
  "t2";"t1" **[black]\crv{(2,3) & (-2,3)}; ?(.05)*[black]\dir{>} ?(1)*[black]\dir{>}; ?(.3)*\dir{}+(0,0)*{\scs \bullet}+(1,2.5)*{\scriptscriptstyle
  {#1}};
  "t2";"t1" **[black]\crv{(2,-3) & (-2,-3)};
}}
\newcommand{\smccbubg}[1]{
\xybox{%
 (-5,0)*{};
  (5,0)*{};
  (-2,0)*{}="t1";
  (2,0)*{}="t2";
  "t2";"t1" **[green]\crv{(2,3) & (-2,3)}; ?(.05)*[green]\dir{>} ?(1)*[green]\dir{>}; ?(.3)*\dir{}+(0,0)*{\scs \bullet}+(1,2.5)*{\scriptscriptstyle
  {#1}};
  "t2";"t1" **[green]\crv{(2,-3) & (-2,-3)};
}}

\newcommand{\smcbubr}[1]{
\xybox{%
 (-5,0)*{};
  (5,0)*{};
  (-2,0)*{}="t1";
  (2,0)*{}="t2";
  "t2";"t1" **[black]\crv{(2,3) & (-2,3)}; ?(0)*[black]\dir{<} ?(.95)*[black]\dir{<} ?(.3)*\dir{}+(0,0)*{\scs \bullet}+(1,2.5)*{\scriptscriptstyle {#1}};
  "t2";"t1" **[black]\crv{(2,-3) & (-2,-3)};
}}
\newcommand{\smcbubg}[1]{
\xybox{%
 (-5,0)*{};
  (5,0)*{};
  (-2,0)*{}="t1";
  (2,0)*{}="t2";
  "t2";"t1" **[green]\crv{(2,3) & (-2,3)}; ?(0)*[green]\dir{<} ?(.95)*[green]\dir{<} ?(.3)*\dir{}+(0,0)*{\scs \bullet}+(1,2.5)*{\scriptscriptstyle {#1}};
  "t2";"t1" **[green]\crv{(2,-3) & (-2,-3)};
}}


\newcommand{\rcup}[1]{\xybox{%
  (-6,0)*{};
  (6,0)*{};
  (-4,0)*{}="t1";
  (4,0)*{}="t2";
  "t1";"t2" **[blue]\crv{(-4,-6) & (4,-6)}; ?(1)*[blue]\dir{>}
   ?(.5)*\dir{}+(0,-2)*{\scriptstyle{#1}};
}}
\newcommand{\rcupr}[1]{\xybox{%
  (-6,0)*{};
  (6,0)*{};
  (-4,0)*{}="t1";
  (4,0)*{}="t2";
  "t1";"t2" **[black]\crv{(-4,-6) & (4,-6)}; ?(1)*[black]\dir{>}
   ?(.5)*\dir{}+(0,-2)*{\scriptstyle{#1}};
}}
\newcommand{\rcupg}[1]{\xybox{%
  (-6,0)*{};
  (6,0)*{};
  (-4,0)*{}="t1";
  (4,0)*{}="t2";
  "t1";"t2" **[green]\crv{(-4,-6) & (4,-6)}; ?(1)*[green]\dir{>}
   ?(.5)*\dir{}+(0,-2)*{\scriptstyle{#1}};
}}
\newcommand{\lcup}[1]{\xybox{%
  (-6,0)*{};
  (6,0)*{};
  (-4,0)*{}="t1";
  (4,0)*{}="t2";
  "t2";"t1" **[blue]\crv{(4,-6) & (-4,-6)}; ?(1)*[blue]\dir{>}
  ?(.5)*\dir{}+(0,-2)*{\scriptstyle{#1}};
}}
\newcommand{\lcupr}[1]{\xybox{%
  (-6,0)*{};
  (6,0)*{};
  (-4,0)*{}="t1";
  (4,0)*{}="t2";
  "t2";"t1" **[black]\crv{(4,-6) & (-4,-6)}; ?(1)*[black]\dir{>}
  ?(.5)*\dir{}+(0,-2)*{\scriptstyle{#1}};
}}
\newcommand{\lcupg}[1]{\xybox{%
  (-6,0)*{};
  (6,0)*{};
  (-4,0)*{}="t1";
  (4,0)*{}="t2";
  "t2";"t1" **[green]\crv{(4,-6) & (-4,-6)}; ?(1)*[green]\dir{>}
  ?(.5)*\dir{}+(0,-2)*{\scriptstyle{#1}};
}}
\newcommand{\rcap}[1]{\xybox{%
  (-6,0)*{};
  (6,0)*{};
  (-4,0)*{}="t1";
  (4,0)*{}="t2";
  "t1";"t2" **[blue]\crv{(-4,6) & (4,6)}; ?(1)*[blue]\dir{>}
  ?(.5)*\dir{}+(0,2)*{\scriptstyle{#1}};
}}
\newcommand{\rcapr}[1]{\xybox{%
  (-6,0)*{};
  (6,0)*{};
  (-4,0)*{}="t1";
  (4,0)*{}="t2";
  "t1";"t2" **[black]\crv{(-4,6) & (4,6)}; ?(1)*[black]\dir{>}
  ?(.5)*\dir{}+(0,2)*{\scriptstyle{#1}};
}}
\newcommand{\rcapg}[1]{\xybox{%
  (-6,0)*{};
  (6,0)*{};
  (-4,0)*{}="t1";
  (4,0)*{}="t2";
  "t1";"t2" **[green]\crv{(-4,6) & (4,6)}; ?(1)*[green]\dir{>}
  ?(.5)*\dir{}+(0,2)*{\scriptstyle{#1}};
}}
\newcommand{\lcap}[1]{\xybox{%
  (-6,0)*{};
  (6,0)*{};
  (-4,0)*{}="t1";
  (4,0)*{}="t2";
  "t2";"t1" **[blue]\crv{(4,6) & (-4,6)}; ?(1)*[blue]\dir{>} ?(.5)*\dir{}+(0,2)*{\scriptstyle{#1}};
}}
\newcommand{\lcapr}[1]{\xybox{%
  (-6,0)*{};
  (6,0)*{};
  (-4,0)*{}="t1";
  (4,0)*{}="t2";
  "t2";"t1" **[black]\crv{(4,6) & (-4,6)}; ?(1)*[black]\dir{>} ?(.5)*\dir{}+(0,2)*{\scriptstyle{#1}};
}}
\newcommand{\lcapg}[1]{\xybox{%
  (-6,0)*{};
  (6,0)*{};
  (-4,0)*{}="t1";
  (4,0)*{}="t2";
  "t2";"t1" **[green]\crv{(4,6) & (-4,6)}; ?(1)*[green]\dir{>} ?(.5)*\dir{}+(0,2)*{\scriptstyle{#1}};
}}


\newcommand{\rbigcrosspb}[2]{\xybox{
  (-9,-8)*{};(9,8)*{} **[magenta]\crv{(-9,-2) & (9,2)}?(1)*[magenta]\dir{>} ;
    (9,-8)*{};(-9,8)*{} **[blue]\crv{(9,-2) & (-9,2)}?(0)*[blue]\dir{<};
    (-9,-12)*{\scs #1};
     (9.2,-12)*{\scs #2};
     (-11,0)*{};(11,0)*{};
 }}
\newcommand{\rbigcrossrr}[2]{\xybox{
  (-9,-8)*{};(9,8)*{} **[black]\crv{(-9,-2) & (9,2)}?(1)*[black]\dir{>} ;
    (9,-8)*{};(-9,8)*{} **[black]\crv{(9,-2) & (-9,2)}?(0)*[black]\dir{<};
    (-9,-12)*{\scs #1};
     (9.2,-12)*{\scs #2};
     (-11,0)*{};(11,0)*{};
 }}
\newcommand{\lbigcrossrr}[2]{\xybox{
  (-9,-8)*{};(9,8)*{} **[black]\crv{(-9,-2) & (9,2)}?(0)*[black]\dir{<} ;
    (9,-8)*{};(-9,8)*{} **[black]\crv{(9,-2) & (-9,2)}?(1)*[black]\dir{>};
    (-9,-12)*{\scs #1};
     (9.2,-12)*{\scs #2};
     (-11,0)*{};(11,0)*{};
 }}

\newcommand{\lbigcrossbp}[2]{\xybox{
  (-9,-8)*{};(9,8)*{} **[blue]\crv{(-9,-2) & (9,2)}?(0)*[blue]\dir{<} ;
    (9,-8)*{};(-9,8)*{} **[magenta]\crv{(9,-2) & (-9,2)}?(1)*[magenta]\dir{>};
    (-9,-12)*{\scs #1};
     (9.2,-12)*{\scs #2};
     (-11,0)*{};(11,0)*{};
 }}
\newcommand{\llrcup}[1]{\xybox{%
  (-12,0)*{};
  (12,0)*{};
  (-9,0)*{}="t1";
  (9,0)*{}="t2";
  "t1";"t2" **[blue]\crv{(-9,-12) & (9,-12)}; ?(1)*[blue]\dir{>}
  ?(.5)*\dir{}+(0,-2)*{\scriptstyle{#1}};
}}
\newcommand{\llrcupr}[1]{\xybox{%
  (-12,0)*{};
  (12,0)*{};
  (-9,0)*{}="t1";
  (9,0)*{}="t2";
  "t1";"t2" **[black]\crv{(-9,-12) & (9,-12)}; ?(1)*[black]\dir{>}
  ?(.5)*\dir{}+(0,-2)*{\scriptstyle{#1}};
}}
\newcommand{\llrcupp}[1]{\xybox{%
  (-12,0)*{};
  (12,0)*{};
  (-9,0)*{}="t1";
  (9,0)*{}="t2";
  "t1";"t2" **[magenta]\crv{(-9,-12) & (9,-12)}; ?(1)*[magenta]\dir{>}
  ?(.5)*\dir{}+(0,-2)*{\scriptstyle{#1}};
}}
\newcommand{\llrcupg}[1]{\xybox{%
  (-12,0)*{};
  (12,0)*{};
  (-9,0)*{}="t1";
  (9,0)*{}="t2";
  "t1";"t2" **[green]\crv{(-9,-12) & (9,-12)}; ?(1)*[green]\dir{>}
  ?(.5)*\dir{}+(0,-2)*{\scriptstyle{#1}};
}}
\newcommand{\lllcup}[1]{\xybox{%
  (-12,0)*{};
  (12,0)*{};
  (-9,0)*{}="t1";
  (9,0)*{}="t2";
  "t1";"t2" **[blue]\crv{(-9,-12) & (9,-12)}; ?(0)*[blue]\dir{<}
  ?(.5)*\dir{}+(0,-2)*{\scriptstyle{#1}};
}}
\newcommand{\lllcupr}[1]{\xybox{%
  (-12,0)*{};
  (12,0)*{};
  (-9,0)*{}="t1";
  (9,0)*{}="t2";
  "t1";"t2" **[black]\crv{(-9,-12) & (9,-12)}; ?(0)*[black]\dir{<}
  ?(.5)*\dir{}+(0,-2)*{\scriptstyle{#1}};
}}

\newcommand{\lllcap}[1]{\xybox{%
  (-12,0)*{};
  (12,0)*{};
  (-9,0)*{}="t1";
  (9,0)*{}="t2";
  "t1";"t2" **[blue]\crv{(-9,12) & (9,12)}; ?(0)*[blue]\dir{<}
  ?(.5)*\dir{}+(0,2)*{\scriptstyle{#1}};
}}
\newcommand{\lllcapr}[1]{\xybox{%
  (-12,0)*{};
  (12,0)*{};
  (-9,0)*{}="t1";
  (9,0)*{}="t2";
  "t1";"t2" **[black]\crv{(-9,12) & (9,12)}; ?(0)*[black]\dir{<}
  ?(.5)*\dir{}+(0,2)*{\scriptstyle{#1}};
}}

\newcommand{\llrcap}[1]{\xybox{%
  (-12,0)*{};
  (12,0)*{};
  (-9,0)*{}="t1";
  (9,0)*{}="t2";
  "t1";"t2" **[blue]\crv{(-9,12) & (9,12)}; ?(1)*[blue]\dir{>}
  ?(.5)*\dir{}+(0,2)*{\scriptstyle{#1}};
}}
\newcommand{\llrcapr}[1]{\xybox{%
  (-12,0)*{};
  (12,0)*{};
  (-9,0)*{}="t1";
  (9,0)*{}="t2";
  "t1";"t2" **[black]\crv{(-9,12) & (9,12)}; ?(1)*[black]\dir{>}
  ?(.5)*\dir{}+(0,2)*{\scriptstyle{#1}};
}}
\newcommand{\llrcapp}[1]{\xybox{%
  (-12,0)*{};
  (12,0)*{};
  (-9,0)*{}="t1";
  (9,0)*{}="t2";
  "t1";"t2" **[magenta]\crv{(-9,12) & (9,12)}; ?(1)*[magenta]\dir{>}
  ?(.5)*\dir{}+(0,2)*{\scriptstyle{#1}};
}}
\newcommand{\llrcapg}[1]{\xybox{%
  (-12,0)*{};
  (12,0)*{};
  (-9,0)*{}="t1";
  (9,0)*{}="t2";
  "t1";"t2" **[green]\crv{(-9,12) & (9,12)}; ?(1)*[green]\dir{>}
  ?(.5)*\dir{}+(0,2)*{\scriptstyle{#1}};
}}

 \newcommand{\sgmt}[1]{\xybox{%
(-2,0)*{};
  (2,0)*{};
  (0,0)*{}; (0,-8)*{} **[#1]\dir{-};
}}
 \newcommand{\lblsgmt}[2]{\xybox{%
(-2,0)*{};
  (2,0)*{};
  (0,0)*{}; (0,-8)*{} **[#1]\dir{-};
  (0,-10)*{\scs #2};
}}
 
\newcommand{\topsgmt}[1]{\xybox{%
  (-2,0)*{};
  (2,0)*{};
  (0,0)*{}; (0,-8)*{} **[#1]\dir{-}; ?(0)*[#1]\dir{<};
}}

 \newcommand{\cross}[2]{ \xybox{
    (-3,-4)*{};(3,4)*{} **[#1]\crv{(-3,-1) & (3,1)};
    (3,-4)*{};(-3,4)*{} **[#2]\crv{(3,-1) & (-3,1)};
     (-8,0)*{};(8,0)*{};
     }}
 \newcommand{\lblcross}[4]{ \xybox{
    (-3,-4)*{};(3,4)*{} **[#1]\crv{(-3,-1) & (3,1)};
    (3,-4)*{};(-3,4)*{} **[#2]\crv{(3,-1) & (-3,1)};
    (-3,-6)*{\scs #3};
    (3,-6)*{\scs #4};
     (-8,0)*{};(8,0)*{};
     }}

              \newcommand{\topcross}[2]{\xybox{
 (-3,-4)*{};(3,4)*{} **[#1]\crv{(-3,-1) & (3,1)}?(1)*[#1]\dir{>} ;
    (3,-4)*{};(-3,4)*{} **[#2]\crv{(3,-1) & (-3,1)}?(1)*[#2]\dir{>};
     (-8,0)*{};(8,0)*{};
     }}

      \newcommand{\Rthreel}[6]{\xybox{
 (0,-1)*{\lblcross{#1}{#2}{#4}{#5}}; (9,-1)*{\lblsgmt{#3}{#6}};
 (-3,8)*{\sgmt{#2}}; (6,8)*{\cross{#1}{#3}};
 (0,16)*{\topcross{#2}{#3}}; (9,16)*{\topsgmt{#1}};
 }}
      \newcommand{\Rthreer}[6]{\xybox{
 (0,-1)*{\lblcross{#2}{#3}{#5}{#6}}; (-9,-1)*{\lblsgmt{#1}{#4}};
 (3,8)*{\sgmt{#2}}; (-6,8)*{\cross{#1}{#3}};
 (0,16)*{\topcross{#1}{#2}}; (-9,16)*{\topsgmt{#3}};
 }}





\allowdisplaybreaks[1]

\nc\Udot{\dot{\mathcal{U}}}
\nc\Sym{\operatorname{Sym}}

\nc\tP{\tilde{P}}
\nc\tl{\tilde{\lambda}}
\nc\tm{\tilde{\mu}}

\makeatletter
\newtheorem*{rep@theorem}{\rep@title}
\newcommand{\newreptheorem}[2]{%
\newenvironment{rep#1}[1]{%
 \def\rep@title{#2 \ref{##1}}%
 \begin{rep@theorem}}%
 {\end{rep@theorem}}}
\makeatother

\newreptheorem{theorem}{Theorem}
\newtheorem{lemma}{Lemma}
\newreptheorem{lemma}{Lemma}
\newreptheorem{cor}{Corollary}

%
\begin{document}
%


\title[Categorification of the internal braid group action]{Categorification of the internal braid group action for quantum groups I: 2-functoriality}

\author{Michael T.\  Abram}
\thanks{M.T.A. was supported by a USC Graduate School Dissertation Completion Fellowship.}

\author{Laffite Lamberto-Egan}

\author{Aaron D. Lauda}
\address{Department of Mathematics and Department of Physics, University of Southern California, Los Angeles}
\email{lauda@usc.edu}
\thanks{ A.D.L. was partially supported by the NSF grants DMS-1255334, DMS-1664240, DMS-1902092 and Army Research Office W911NF-20-1-0075.}

\author{David E.V. Rose}
\address{Department of Mathematics, University of North Carolina at Chapel Hill}
\email{davidrose@unc.edu}
\thanks{D.E.V.R. was partially supported by an NSA Young Investigator Grant 
and a Simons Collaboration Grant.}

\subjclass[2010]{Primary }

\keywords{categorified quantum group, Rickard complex, internal braid group action}

\date{\today}

\begin{abstract}
We define 2-functors on the categorified quantum group of a  simply-laced  Kac-Moody algebra that induce Lusztig's internal braid group action at the level of the Grothendieck group.
\end{abstract}

\maketitle

 \setcounter{tocdepth}{3}


\allowdisplaybreaks

%
\section{Introduction}
%

Geometric representation theory has motivated the study of categorical representation theory.
Rather than studying the action of Lie algebras $\mf{g}$, or quantum groups $\mathbf{U}_q(\mf{g})$,
on $\C(q)$-vector spaces $V$ with weight decompositions $V = \oplus_{\lambda}V_{\lambda}$,
categorical representation theory studies the action of these algebras on graded additive categories $\mathcal{V}$ with decomposition into graded additive subcategories $\mathcal{V}= \oplus_{\lambda} \mathcal{V}_{\lambda}$.
Rather than linear maps between spaces, Chevalley generators act by functors  $\mathcal{E}_i\onel \maps \mathcal{V}_{\lambda} \to \mathcal{V}_{\lambda+\alpha_i}$,
$\mathcal{F}_i\onel \maps \mathcal{V}_{\lambda} \to \mathcal{V}_{\lambda-\alpha_i}$
satisfying quantum group relations up to natural isomorphism of functors.
The novel and distinguishing feature of higher representation theory is that the natural transformations between such functors contain a wealth of information that is inaccessible within the realm of traditional representation theory.

Indeed, the essence of \emph{categorification} is to uncover this higher level structure and use it to further our understanding of traditional representation theory, as well as related fields.
In this article we will focus our attention on the categorical representation theory of the quantum group $\mathbf{U}_q(\mf{g})$ associated to a \emph{simply-laced} Kac-Moody algebra $\mf{g}$.
Categorified quantum groups are the objects that govern the higher structure and explicitly describe the natural transformations that arise in categorical representations.
More precisely, we focus on the higher representation theory of Lusztig's idempotent form $\U:=\dot{\mathbf{U}}_q(\mf{g})$.
This is a version of the quantum group that arises in geometric representation theory and is most appropriate for studying representations with integral weight decompositions.
For the precise definition of $\U$, see Section~\ref{sec:Udot}.

In most instances when $\U$ admits a categorical action as described above, the natural transformations between functors arise via the action of a categorified quantum group.
The latter is a graded, additive, linear $2$-category $\UcatD_Q$ associated to $\mf{g}$.
The objects in $\UcatD_Q$ are elements of the weight lattice $\lambda \in X$ of $\mf{g}$,
and the 1-morphisms are generated by Chevalley generators $\mathcal{E}_i\onel \maps \lambda \to \lambda+\alpha_i$, $\mathcal{F}_i\onel \maps \lambda \to \lambda-\alpha_i$ and identity 1-morphisms $\onel \maps \lambda \to \lambda$,
\ie any 1-morphism is given by a finite direct sum of grading shifts of composites of these generators.
The 2-morphisms specify maps between composites of Chevalley generators.  For example, there are 2-morphisms
\[
 \mathsf{X}_i \maps \cal{E}_i\onel \to \cal{E}_i \onel \la2\ra, \quad \text{and} \quad
 \mathsf{T}_{ij} \maps \cal{E}_i\cal{E}_j \onel \to \cal{E}_j\cal{E}_i \onel \la -\alpha_i\cdot \alpha_j\ra
\]
where here, and for the duration, $\cdot$ denotes the symmetric bilinear form specified by the Catan datum for $\frak{g}$ (see Section \ref{sec:Udot}).
A novel feature of the categorified quantum group is its diagrammatic generators-and-relations description in which all 2-morphisms are conveniently encoded in a 2-dimensional graphical calculus,
\eg the generating 2-morphisms above have the following depiction:
\[
\mathsf{X}_i \;\;:= \;\;\;\;
\vcenter{ \xy 0;/r.18pc/:
  (0,0)*{\sdotur{i}};
 (6,3)*{ \lambda};
 (-9,3)*{ \lambda +\alpha_j};
 (-10,0)*{};(10,0)*{};
 \endxy }
\qquad \quad
\mathsf{T}_{ij}\;\; := \;\; \;\;
   \vcenter{\xy (0,0)*{\ucrossrb{i}{j}};
 (9,3)*{ \lambda};
 (-13,3)*{ \lambda+\alpha_i+\alpha_j};\endxy }
\]
Key features are that $\cal{F}_i$ and $\cal{E}_i$ are biadjoint,
and endomorphisms of compositions of $\cal{E}_i$'s are given by the so-called \emph{KLR algebras} developed in ~\cite{CR,KL1,KL2,Rou2,RouQH}.
Taken together, the relations on 2-morphisms provide explicit isomorphisms lifting relations in $\U$, and further guarantee that $K_0(\UcatD_Q) \cong \U$,
where, $K_0$ denotes taking the split Grothendieck ring to \emph{decategorify}.
Otherwise, only shadows of this structure are visible at the decategorified level,
\eg Lusztig's canonical basis of $\U$ is recovered by taking the classes in $K_0(\UcatD_Q)$ of indecomposable 1-morphisms in $\UcatD_Q$.

Pioneering work of Chuang and Rouquier demonstrated the importance of the higher structure in categorical representation theory~\cite{CR}.
At the heart of their work is a beautiful categorification of the familiar fact that,
in any integrable representation $V=\oplus_{\lambda} V_\lambda$ of $\mf{sl}_2$, the Weyl group action gives rise to an isomorphism
\[
 \mathsf{t}1_{\l}  \maps V_{\lambda }  \xrightarrow{\cong} V_{-\lambda}
\]
between opposite weight spaces.
In the quantum setting, the Weyl group for $\mf{sl}_2$ (\ie the symmetric group $\frak{S}_2$) deforms to the two strand braid group $B_2$,
and the isomorphism $\mathsf{t}1_{\l}$ can be written in a completion of $\U(\mf{sl}_2)$ as the infinite sum
\begin{equation} \label{eq:intro-tau}
 \mathsf{t}1_{\l} =
\left\{
  \begin{array}{ll}
    \sum_{b\geq 0} (-q)^{b}F^{(\l+b)}E^{(b)} 1_{\l}, & \hbox{if $\l \geq 0$}, \\
    \sum_{a\geq 0} (-q)^{-\l+a}E^{(-\l+a)} F^{(a)}1_{\l}, & \hbox{if $\l \leq 0$},
  \end{array}
\right.
\end{equation}
where $E^{(a)}=E^a/[a]!$, $F^{(a)}=F^a/[a]!$ are the so-called \emph{divided powers},
and $[a]!=\prod_{m=1}^a\frac{q^m-q^{-m}}{q-q^{-1}}$ are quantum factorials.
Note that, when acting on an integrable module, only finitely many terms in this infinite sum are non-zero.
From the perspective of categorification, the crucial observation about equation \eqref{eq:intro-tau} is the occurrence of minus signs.

For those initiated in the categorification doctrine, the occurrence of minus signs immediately necessitates the departure from strictly additive categorification.
That is, we can no longer work with additive categories $\cal{V}_{\lambda}$, as there is no categorical analogue of subtraction therein.
To accommodate such minus signs within a categorical framework, one typically passes to derived, or more-generally triangulated, categories,
where the translation functor gives a categorical notion of multiplying by $-1$.
One manner for doing so is to take the categories of chain complexes $\Kom(\cal{V}_{\l})$ of the weight categories $\cal{V}_\l$ in an additive categorification,
and pass to their homotopy categories of complexes $\Com(\cal{V}_{\l})$.
See Section \ref{sec_KomU} for more details on homotopy categories of additive categories and their Grothendieck groups;
we note that we follow \cite{BKL-Casimir} in using the non-standard notation $\Com$ to denote the homotopy category,
so as not to confuse with our notation $K_0$ for taking the Grothendieck group/ring.
Under decategorification, the classes of such complexes are equal to the alternating sum of the classes of their terms in $K_0(\cal{V}_\l)$.

The alternating sum in \eqref{eq:intro-tau} suggests that a categorification of $\mathsf{t} 1_{\l}$ might be achieved using a chain complex whose differential is built from the 2-morphisms in $\UcatD_{Q}(\mf{sl}_2)$.
Indeed, Chuang and Rouquier's work determines chain complexes $\tau \onel$ and $\tau^{-1} \onel$, the so-called \emph{Rickard complexes},
that lift $\mathsf{t}1_{\l}$ and its inverse $\mathsf{t}^{-1}1_{\l}$ to the categorical setting~\cite{CR}.
The composition of complexes $\tau\tau^{-1} \onel$ and $\tau^{-1}\tau\onel$ are both isomorphic to the identity in $\Com(\UcatD_{Q}(\mf{sl}_2))$,
\ie the complexes are homotopy equivalent to (but, in fact, not equal to) $\onel$ in $\Kom(\UcatD_{Q}(\mf{sl}_2))$.
Using this, Chuang and Rouquier lifted the Weyl group action of $\mf{sl}_2$ to define equivalences
\[
\tau\onel \maps \Com(\cal{V}_{\l}) \xrightarrow{\cong} \Com(\cal{V}_{-\l})
\]
lifting $\mathsf{t}1_{\l}$
(to be precise, Chuang-Rouquier originally worked in the non-quantum and abelian/derived setting,
with the extension to the quantum and triangulated setting given in work of Rouquier~\cite{RouQH} and Cautis-Kamnitzer~\cite{CautisKam}).

For general $\mf{g}$,
the corresponding Weyl group action on integrable representations deforms to an action of the type-$\frak{g}$ braid group $B_{\mf{g}}$ in the quantum setting;
we will follow the standard terminology in referring to this as the \emph{quantum Weyl group} action.
Analogous to the $\frak{g}=\frak{sl}_2$ case, this action lifts to highly non-trivial braid group actions in categorical representation theory~\cite{CautisKam,RouQH}.
To illustrate their far reaching impact in mathematics, we recall just a handful of their many applications.
\begin{itemize}
\item Chuang and Rouquier use the equivalence induced by categorical $\mf{sl}_2$ actions on derived categories
of modules over the symmetric group in positive characteristic
to resolve Brou\'{e}'s Abelian deffect group conjecture for the symmetric group $\frak{S}_n$~\cite{CR}.

\item Cautis, Kamnitzer, and Licata use categorical $\mf{sl}_2$ actions to resolve a conjecture of Namikawa~\cite{Namikawa}
asserting the existence a of derived equivalence between
cotangent bundles of complementary Grassmannians $T^{\ast}G(k,N)$ and $T^{\ast}G(N-k,N)$~\cite{CKL,Cautis-equiv}.
These varieties are related by a stratified Mukai flop,
and the problem of constructing such equivalences had previously only been resolved in the $k=1$ case \cite{Kawamata0,Namikawa0}
and for $G(2,4)$ in work of Kawamata \cite{Kawamata}.
More generally, Cautis, Kamnitzer, and Licata construct categorical braid group actions on cotangent bundles to partial flag varieties
and Nakajima quiver varieties \cite{CautisKam,CKLquiver,Cautis-rigid}

\item Categorical representations of $\mf{sl}_m$, and the associated braid group actions,
can be used to categorify the $\mf{sl}_n$ Reshetikhin-Turaev quantum link invariants
via a categorical analogue of the skew Howe duality between $\mf{gl}_m$ and $\mf{gl}_n$~\cite{CKL,LQR,QR,Cautis}.
This perspective has led to the solution of a number of conjectures in link homology~\cite{RoseW,RoseQ},
and provides a framework for connecting link homologies deffined using wildly different machinery~\cite{Cautis,LQR,MacWeb}.
\end{itemize}

At the decategorified level, the braid group action on integrable modules of $\U_q(\mf{g})$ comes in several flavors
\begin{equation}\label{eq:QWG}
\begin{aligned}
\mathsf{t}_{i,e}'1_{\l} &=
 \sum_{a,b; a-b=\lambda_i} (-q)^{eb}F_i^{(a)}E_i^{(b)} 1_{\l}
  =\sum_{a,b; a-b=\lambda_i} (-q)^{eb}E_i^{(b)} F_i^{(a)}1_{\l}, \\
\mathsf{t}_{i,e}''1_{\l} &=
  \sum_{a,b; -a+b=\lambda_i} (-q)^{eb}E_i^{(a)}F_i^{(b)} 1_{\l}
  =  \sum_{a,b; -a+b=\lambda_i} (-q)^{eb}F_i^{(b)} E_i^{(a)} 1_{\l},
\end{aligned}
\end{equation}
where $e=\pm 1$, see Section \ref{sec:braid-int-modules} for more details.
Given the importance of these braid group actions, it is natural to ask how the braid group action
$B_{\mf{g}}$ on an integrable module interacts with the $\U_q(\mf{g})$ action.
This was answered by Lusztig~\cite[Proposition 37.1.2]{Lus4},
who showed that, for each node in the Dynkin diagram $i\in I$ and $e=\pm1$,
there exist algebra automorphisms $T'_{i,e}$ and $T''_{i,e}$ of $\U=\U_q(\mf{g})$ uniquely defined by the condition that, for any integrable $\U$-module
$V$, any $z \in V$,
and $u \in 1_{\nu} \U 1_{\l}$,
the following equations hold
\begin{equation}\label{eq:QWGcompat}
\begin{split}
    T_{i,e}' (u)\mathsf{t}_{i,e}'1_{\l}(z) &=   \mathsf{t}_{i,e}'1_{\nu}(uz), \\
    T_{i,e}''(u) \mathsf{t}_{i,e}''1_{\l}(z) &=   \mathsf{t}_{i,e}''1_{\nu}(uz) .
\end{split}
\end{equation}
Related operators were studied in finite type in \cite{Soil1,Soil2,Soil3},
then generalized to simply-laced Cartan data in \cite{Lus-T} and general Cartan data in \cite{Lus-T2}.
See Section~\ref{sec:braid-quant-group} for more details.

The algebra automorphisms $T_{i,e}'$ and $T_{i,e}''$ each define braid group actions on the algebra $\U$ itself that we call the internal braid group action.
This internal action plays an important role, \eg in the construction of the PBW basis for $\U$.
Lusztig goes on to give precise formulas for the action of $T_{i,e}'$ and $T_{i,e}''$ on the generators of $\U$,
that unsurprisingly involve minus signs,
\eg
\begin{equation} \label{eq:intro2}
 T'_{i,+1}(E_j 1_{\lambda})=
E_j E_i 1_{s_i(\lambda)}-q E_i E_j 1_{s_i(\lambda)}   \quad \text{if $i\cdot j=-1$.}
\end{equation}
where here $s_i$ are the simple reflections in the Weyl group.

We now describe the results contained in this article.  Throughout we let $\mf{g}$ be a simply-laced Kac-Moody algebra.


\subsection{Categorifying \texorpdfstring{$T_{i,e}'$}{T'i,e} and \texorpdfstring{$T_{i,e}''$}{T''i,e}}


In Section~\ref{sec:defTi} we define graded, additive 2-functors $\cal{T}_{i,e}', \cal{T}_{i,e}'' \maps  \UcatD_Q \to \Com(\UcatD_Q)$.
To do so, we first assign explicit chain complexes to generating 1-morphisms in $\UcatD_Q$ that lift the formulae defining $T_{i,e}'$ and $T_{i,e}''$,
\eg equation \eqref{eq:intro2} lifts to the assignment
\[
\cal{T}'_{i,+1}(\cal{E}_j \onell{\lambda})
=
\clubsuit \;\cal{E}_{j}\cal{E}_i \onell{s_i(\lambda)}
\xrightarrow{\ucrossbr{j}{i}}
\cal{E}_i\cal{E}_{j} \onell{s_i(\lambda)} \la 1 \ra
\quad \text{if $i \cdot j =-1$,}
\]
where (here, and throughout) $\clubsuit$ denotes the term in homological degree zero.
Functoriality then requires that the composite $xy\onel$ of composable $1$-morphisms $y\mathbf{1}_{\lambda'}$ and $x\onel$ is sent to the
composition of chain complexes $\cal{T}'_{i,+1}(y\onelp)\cal{T}'_{i,+1}(x\onel)$,
defined using composition of $1$-morphisms in $\UcatD_Q$ in a manner similar to taking tensor product of chain complexes.
To complete the definition of $\cal{T}_{i,e}'$ and $\cal{T}_{i,e}''$,
we then assign an explicit chain map $\cal{T}'_{i,+1}(\alpha) \maps \cal{T}'_{i,+1}(x\onel) \to \cal{T}'_{i,+1}(x'\onel)$
to each generating 2-morphism $\alpha \maps x\onel \to x'\onel$ in $\UcatD$,
\eg the 2-morphism $X_j \maps \cal{E}_j\onel \to \cal{E}_j \onel \la2\ra$ is sent by $T_{i,+1}'$ to
\begin{align*}
   \cal{T}'_i \left(    \xy 0;/r.18pc/:
  (0,0)*{\sdotu{j}};
 (6,3)*{ \lambda};
 (-9,3)*{ \lambda +\alpha_j};
 (-10,0)*{};(10,0)*{};
 \endxy \right)
 :=
  \vcenter{\xy 0;/r.18pc/:
  (-20,15)*+{\clubsuit \;\cal{E}_j\cal{E}_i \onell{s_i(\lambda)}\la 2\ra}="1";
  (-20,-15)*+{\clubsuit \;\cal{E}_j\cal{E}_i \onell{s_i(\lambda)}}="2";
   {\ar^{\xy (-12,0)*{\sdotu{j}}; (-6,0)*{\slineur{i}};  \endxy} "2";"1"};
  (25,15)*+{\cal{E}_i\cal{E}_j \onell{s_i(\lambda)}\la 3\ra }="3";
  (25,-15)*+{\cal{E}_i\cal{E}_j \onell{s_i(\lambda)}\la 1\ra}="4";
    {\ar_{\xy (-12,0)*{\slineur{i}}; (-6,0)*{\sdotu{j}}; \endxy} "4";"3"};
   {\ar^{\xy (0,0)*{\ucrossbr{j}{i}};\endxy   } "1";"3"};
   {\ar_{\xy (0,0)*{\ucrossbr{j}{i}};\endxy   } "2";"4"};
 \endxy}
\end{align*}
which is a chain map by the $i\neq j$ dot sliding relation, see (5) in Definition~\ref{defU_cat-cyc}.
Finally, we show that the images of relations in $\UcatD_Q$ are satisfied in $\Com(\UcatD_Q)$, up to homotopy.

Proving that $\cal{T}'_{i,+1}$ is a well-defined 2-functor requires an immense number of verifications.
The diagrammatic relations defining $\UcatD_Q$ involve strands colored by the Dynkin nodes of $\mf{g}$,
and depend on the adjacency of the colors involved.
For example, the relation involving the greatest number of strands is:
\begin{equation*}
\vcenter{\xy 0;/r.17pc/:
    (-4,-4)*{};(4,4)*{} **\crv{(-4,-1) & (4,1)}?(1)*\dir{};
    (4,-4)*{};(-4,4)*{} **\crv{(4,-1) & (-4,1)}?(1)*\dir{};
    (4,4)*{};(12,12)*{} **\crv{(4,7) & (12,9)}?(1)*\dir{};
    (12,4)*{};(4,12)*{} **\crv{(12,7) & (4,9)}?(1)*\dir{};
    (-4,12)*{};(4,20)*{} **\crv{(-4,15) & (4,17)}?(1)*\dir{};
    (4,12)*{};(-4,20)*{} **\crv{(4,15) & (-4,17)}?(1)*\dir{};
    (-4,4)*{}; (-4,12) **\dir{-};
    (12,-4)*{}; (12,4) **\dir{-};
    (12,12)*{}; (12,20) **\dir{-};
    (4,20); (4,21) **\dir{-}?(1)*\dir{>};
    (-4,20); (-4,21) **\dir{-}?(1)*\dir{>};
    (12,20); (12,21) **\dir{-}?(1)*\dir{>};
   (18,8)*{\lambda};  (-6,-3)*{\scs \ell};
  (6,-3)*{\scs j};
  (15,-3)*{\scs k};
\endxy}
 \;\; - \;\;
\vcenter{\xy 0;/r.17pc/:
    (4,-4)*{};(-4,4)*{} **\crv{(4,-1) & (-4,1)}?(1)*\dir{};
    (-4,-4)*{};(4,4)*{} **\crv{(-4,-1) & (4,1)}?(1)*\dir{};
    (-4,4)*{};(-12,12)*{} **\crv{(-4,7) & (-12,9)}?(1)*\dir{};
    (-12,4)*{};(-4,12)*{} **\crv{(-12,7) & (-4,9)}?(1)*\dir{};
    (4,12)*{};(-4,20)*{} **\crv{(4,15) & (-4,17)}?(1)*\dir{};
    (-4,12)*{};(4,20)*{} **\crv{(-4,15) & (4,17)}?(1)*\dir{};
    (4,4)*{}; (4,12) **\dir{-};
    (-12,-4)*{}; (-12,4) **\dir{-};
    (-12,12)*{}; (-12,20) **\dir{-};
    (4,20); (4,21) **\dir{-}?(1)*\dir{>};
    (-4,20); (-4,21) **\dir{-}?(1)*\dir{>};
    (-12,20); (-12,21) **\dir{-}?(1)*\dir{>};
  (10,8)*{\lambda};
  (-14,-3)*{\scs \ell};
  (-6,-3)*{\scs j};
  (6,-3)*{\scs k};
\endxy}
=
\left\{\\
 \begin{array}{ccc}
 t_{\ell j} \;\;
\xy 0;/r.17pc/:
  (4,12);(4,-12) **\dir{-}?(.5)*\dir{<};
  (-4,12);(-4,-12) **\dir{-}?(.5)*\dir{<} ;
  (12,12);(12,-12) **\dir{-}?(.5)*\dir{<} ;
  (-6,-9)*{\scs \ell};     (6.1,-9)*{\scs j};
  (14,-9)*{\scs \ell};
 \endxy & & \text{if $\ell=k$ and $\ell \cdot j=-1$}\\ \\
 0 & & \text{if $\ell \neq k$ or $\ell \cdot j\neq -1$}\\
\end{array}\right.
\end{equation*}
where $t_{\ell j}$ is a scalar defined in Section~\ref{sec:choice-scalars}.
Showing that $\cal{T}_{i,+1}'$ preserves this relation for all $i$ and all triples $j,k,\ell$ requires considering all possible types of adjacency relations between the nodes corresponding to
$i,j,k,\ell$, requiring $27$ essentially distinct case that need to be verified.
The complexity is further exacerbated by the fact that $\cal{T}_{i,+1}'$ often only preserves a relation up to homotopy.

Unfortunately, we are not aware of a means to define the 2-functors lifting Lusztig's formulae without explicitly constructing the chain homotopies for each relation and each possible coloring by nodes $i \in I$.
We have made every attempt to provide sufficient detail in this work to aid in any future applications of these 2-functors,
and in particular provide sufficient detail so that the relevant homotopies can be easily extracted.

Our main result in this article is the following theorem.
\begin{thm} \label{thm:A}
Let $\mf{g}$ be a simply-laced Kac-Moody algebra, then there is an explicitly defined 2-functor
\[
\cal{T}_{i,+1}'   \maps  \UcatD_Q(\mf{g})  \to \Com(\UcatD_Q(\mf{g}))
\]
so that the induced map $[\cal{T}_{i,+1}'] : \U_q(\mf{g}) \cong K_0(\UcatD_Q(\mf{g})) \to K_0(\Com(\UcatD_Q(\mf{g}))) \cong \U_q(\mf{g})$
agrees with $T_{i,+1}'$.
\end{thm}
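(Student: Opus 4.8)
The plan is to build $\cal{T}'_{i,+1}$ completely by hand, stratified by the generating data of $\UcatD_Q(\mf{g})$, verify that all defining relations survive in the homotopy category, and then read off the effect on Grothendieck groups. On objects we declare $\cal{T}'_{i,+1}(\lambda) = s_i(\lambda)$. On the generating $1$-morphisms $\cal{E}_j\onel$ and $\cal{F}_j\onel$ we assign explicit bounded complexes over $\UcatD_Q$ chosen to categorify Lusztig's formulae for $T'_{i,+1}(E_j 1_\lambda)$ and $T'_{i,+1}(F_j 1_\lambda)$; there are three cases governed by the adjacency of $i$ and $j$ in the Dynkin diagram, namely $j=i$ (a one-term complex, a grading shift of $\cal{F}_i$ resp. $\cal{E}_i$), $i\cdot j = 0$ (again a one-term complex $\cal{E}_j\onell{s_i(\lambda)}$ resp. $\cal{F}_j\onell{s_i(\lambda)}$), and $i\cdot j = -1$ (the two-term complex displayed in the introduction, and its $\cal{F}$-analogue). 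We extend to arbitrary $1$-morphisms by sending a composite to the corresponding composite of complexes, formed exactly as one forms tensor products of chain complexes but using horizontal composition in $\UcatD_Q$; this is functorial on $1$-morphisms by construction. The work of Section~\ref{sec:defTi} carries this out in detail.

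Next we define $\cal{T}'_{i,+1}$ on the generating $2$-morphisms of $\UcatD_Q$ — the dots $\mathsf{X}_j$, the same-colour and mixed-colour crossings $\mathsf{T}_{jk}$, and the cups and caps witnessing the biadjunctions of $\cal{E}_j$ and $\cal{F}_j$ — by writing down, in each case, an explicit chain map between the assigned complexes, built again out of dots, crossings, cups, and caps (as in the example for $\mathsf{X}_j$ recalled in the introduction). The first point requiring proof is that each such assignment really is a chain map, i.e. commutes with the differentials; this is checked using the defining relations of $\UcatD_Q$ — for instance the $i\neq j$ dot-sliding relation of Definition~\ref{defU_cat-cyc} handles the dot — and must be done in every adjacency case. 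Horizontal and vertical composites of generators are then sent to the corresponding composites of chain maps, which is automatic from the composite-of-complexes construction, so $\cal{T}'_{i,+1}$ is at least a candidate graded additive $2$-functor into $\Kom(\UcatD_Q(\mf{g}))$.

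The heart of the argument is showing that every defining relation of $\UcatD_Q$ becomes an equality after applying $\cal{T}'_{i,+1}$ once we pass from $\Kom$ to the homotopy category $\Com$; passing to $\Com$ is essential because many relations are preserved only up to an explicit chain homotopy. The relations are treated type by type: the cyclic biadjointness and planar-isotopy relations; the KLR/quiver-Hecke relations among the $\mathsf{X}$'s and $\mathsf{T}$'s (the nilHecke relations in a single colour, and the mixed-colour relations, culminating in the four-strand relation displayed above); and the $\mf{sl}_2$-type, bubble, and curl relations. Each must be verified for every functor colour $i \in I$ and every choice of the colours $j,k,\ell$ occurring, organised according to the adjacency types of those colours relative to $i$ — for the four-strand relation alone this yields the $27$ essentially distinct cases. \textbf{This is the main obstacle:} we do not know how to avoid exhibiting, case by case, the chain homotopies witnessing the relations, and the bulk of the paper is devoted to recording these homotopies explicitly (with enough detail that they can be reused in applications). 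Once this is done, $\cal{T}'_{i,+1}\maps \UcatD_Q(\mf{g}) \to \Com(\UcatD_Q(\mf{g}))$ is a well-defined graded additive $2$-functor.

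Finally we decategorify. Since $\cal{T}'_{i,+1}$ respects composition of $1$-morphisms, it induces a $\Z[q,q^{-1}]$-linear ring homomorphism $[\cal{T}'_{i,+1}]\maps K_0(\UcatD_Q(\mf{g})) \to K_0(\Com(\UcatD_Q(\mf{g})))$; here we use that $[C^\bullet] = \sum_n (-1)^n [C^n]$ gives an isomorphism $K_0(\Com(\UcatD_Q)) \cong K_0(\UcatD_Q)$ on bounded complexes (Section~\ref{sec_KomU}) and that this identification is multiplicative under composition of complexes, so $[\cal{T}'_{i,+1}]$ is genuinely a ring map. Composing with $K_0(\UcatD_Q(\mf{g})) \cong \U_q(\mf{g})$ produces an algebra endomorphism of $\U_q(\mf{g})$ that sends $1_\lambda \mapsto 1_{s_i(\lambda)}$ and, by the assignments of the first stage, sends $E_j 1_\lambda$ and $F_j 1_\lambda$ to the alternating sums of the classes of the terms of the assigned complexes, which are precisely Lusztig's formulae for $T'_{i,+1}(E_j 1_\lambda)$ and $T'_{i,+1}(F_j 1_\lambda)$. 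Since $\{E_j 1_\lambda, F_j 1_\lambda, 1_\lambda\}$ generate $\U_q(\mf{g})$ and $T'_{i,+1}$ is itself an algebra automorphism, the ring homomorphisms $[\cal{T}'_{i,+1}]$ and $T'_{i,+1}$ agree on generators, hence coincide, which is the assertion of the theorem.
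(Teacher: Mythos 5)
Your proposal is correct and follows essentially the same route as the paper: define $\cal{T}'_{i,+1}$ explicitly on objects, generating $1$-morphisms (via complexes lifting Lusztig's formulae, stratified by the adjacency of the strand label to $i$), and generating $2$-morphisms, then verify each defining relation case-by-case up to explicit chain homotopy, and finally decategorify by taking Euler characteristics of the assigned complexes. The only cosmetic difference is that the paper works with the reduced presentation of Remark~\ref{rem:presentation} (defining sideways/downward crossings and downward dots via caps, cups, and upward generators) to cut down the list of generators and relations to check, and defines the functor on $\Ucat_Q$ before extending to the Karoubi envelope formally.
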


At the level of 1-morphisms, such functors have already appeared at the categorical level in \cite{Cautis,CautisKam} and were given a geometric interpretation in
\cite{
Kato2,
Kato,
XZ,
Zhao
};
however, to our knowledge, no information about extending these maps to 2-morphisms has appeared previously.
As such, Theorem \ref{thm:A} initiates the study of Lusztig's operators at the 2-categorical level.
In fact, we conjecture much more.
At the decategorified level, Lusztig's operators are invertible and satisfy the braid relations.
These properties, combined with our forthcoming work, stated in Theorem \ref{thm:compat} below,
suggest the following:
\begin{conj}
Let $\mf{g}$ be a (simply-laced) Kac-Moody algebra, then $\cal{T}_{i,+1}'$ extends to an autoequivalence of $\Com(\UcatD_Q(\mf{g}))$
so that the induced automorphism $[\cal{T}_{i,+1}']$ of $\U_q(\mf{g}) \cong K_0(\Com(\UcatD_Q(\mf{g})))$ agrees with $T_{i,+1}'$.
Moreover, the $\cal{T}_{i,+1}'$ satisfy the braid relations.
\end{conj}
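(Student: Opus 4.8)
The plan is to deduce the conjecture from Theorem~\ref{thm:A} together with the known invertibility and braid relations of the Rickard complexes, in three steps: (i) extend $\cal{T}_{i,+1}'$ to an endo-2-functor of $\Com(\UcatD_Q(\mf{g}))$; (ii) produce a quasi-inverse; (iii) verify the braid relations. Granting (i)--(iii), the statement about Grothendieck groups is immediate, since an autoequivalence induces an automorphism of $K_0(\Com(\UcatD_Q)) \cong \U_q(\mf{g})$, this automorphism equals $T_{i,+1}'$ by Theorem~\ref{thm:A}, and braid relations descend to $K_0$.

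For step (i) one extends $\cal{T}_{i,+1}'$ along the inclusion $\UcatD_Q \hookrightarrow \Com(\UcatD_Q)$ by the now-standard device used in constructing categorical braid group actions \cite{CautisKam,RouQH}: lift to an explicit chain-level assignment, apply it termwise to a bounded complex of 1-morphisms, and totalize, inserting correction homotopies as needed so that the horizontal differential -- which a priori only squares to zero up to homotopy -- is rectified to an honest differential. Finiteness of the complexes $\cal{T}_{i,+1}'(x\onel)$ guarantees the totalizations are bounded, and well-definedness on the homotopy category (preservation of null-homotopies, hence of homotopy equivalences) then follows formally.

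Step (ii) is the crux. By Lusztig's identity $(T_{i,+1}')^{-1} = T_{i,-1}''$, the candidate quasi-inverse is the 2-functor $\cal{T}_{i,-1}''$, already among those constructed in this paper, and one must produce 2-natural isomorphisms $\cal{T}_{i,+1}' \circ \cal{T}_{i,-1}'' \cong \Id$ and $\cal{T}_{i,-1}'' \circ \cal{T}_{i,+1}' \cong \Id$ of endo-2-functors of $\Com(\UcatD_Q)$. On the generating 1-morphisms $\onel$, $\cal{E}_j\onel$ and $\cal{F}_j\onel$ the relevant composite complexes are a twist of the Rickard complex composed with its inverse, homotopy equivalent to the identity by the results of Chuang--Rouquier, Rouquier and Cautis--Kamnitzer \cite{CR,RouQH,CautisKam}; concretely this amounts to a Gaussian elimination collapsing the bicomplex $\cal{T}_{i,+1}'\cal{T}_{i,-1}''(x\onel)$ onto $x\onel$. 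The genuinely new content is to make these collapses natural in 2-morphisms: for every generating 2-morphism of $\UcatD_Q$ and every coloring by Dynkin nodes, one must construct an explicit homotopy filling the corresponding square and verify that the resulting data assembles into an invertible modification. This is the principal obstacle. It is an undertaking of the same flavor and order of magnitude as the proof of Theorem~\ref{thm:A}, and we are not aware of a conceptual shortcut -- such as a universal property pinning down $\cal{T}_{i,+1}'$ -- that would make invertibility formal.

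For step (iii) the most efficient route uses our forthcoming compatibility result, Theorem~\ref{thm:compat}, which furnishes on every categorical representation of $\UcatD_Q$ natural isomorphisms intertwining $\cal{T}_{i,+1}'$ with conjugation by the Rickard complex, categorifying the identities~\eqref{eq:QWGcompat}. Since the Rickard complexes satisfy the braid relations of $B_{\mf{g}}$ up to homotopy \cite{CautisKam,RouQH,LQR,MacWeb}, applying this compatibility to a sufficiently faithful categorical representation -- for instance $\UcatD_Q$ acting on itself, or its cyclotomic quotients -- transports the braid relations to the family $\cal{T}_{i,+1}'$. Alternatively, since the braid relations are local, one reduces to rank-two $\mf{g}$ and argues directly: on 1-morphisms this is the established braiding of Rickard complexes, while on 2-morphisms one performs the same flavor of coherence check as in step (ii). Either way the remaining obstruction is, once again, 2-morphism coherence -- now for the braiding isomorphisms in place of the inversion isomorphisms.
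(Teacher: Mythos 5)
This statement is a \emph{conjecture} in the paper; the authors give no proof and explicitly defer the key step, writing that ``the extension (of domain) to the homotopy category is a problem in obstruction theory that we plan to attack in future work.'' Your proposal is therefore not being measured against an existing argument, and, more importantly, it is not itself a proof: each of your three steps contains an unresolved obstruction that you yourself name. In step (ii) you state plainly that making the collapses $\cal{T}_{i,+1}'\cal{T}_{i,-1}''(x\onel)\simeq x\onel$ natural in 2-morphisms is ``the principal obstacle'' with no known shortcut, and in step (iii) you rely on the forthcoming Theorem~\ref{thm:compat} (not proved in this paper) plus a further unverified coherence check. A roadmap whose central steps are flagged as open is a research program, not a proof.

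The most concrete error is in step (i), where you claim that after ``inserting correction homotopies as needed'' the well-definedness on the homotopy category ``follows formally.'' It does not. Because $\cal{T}_{i,+1}'$ preserves the relations of $\UcatD_Q$ only up to homotopy, applying it termwise to the differential of a complex in $\Com(\UcatD_Q)$ yields a map squaring to zero only up to homotopy; rectifying this to an honest differential requires not just choosing homotopies but showing that the tower of higher obstructions (compatibilities among the chosen homotopies, essentially an $A_\infty$-type coherence problem) vanishes. This is precisely the obstruction-theoretic content the authors identify as the missing ingredient, and it is the reason the statement remains a conjecture rather than a corollary of Theorem~\ref{thm:A}. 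Until that rectification is carried out --- and until the inversion and braiding modifications of steps (ii) and (iii) are actually constructed at the level of 2-morphisms, in the same exhaustive case-by-case manner as the proof of Theorem~\ref{thm:A} --- the conjecture is not established.
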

The extension (of domain) to the homotopy category is a problem in obstruction theory that
we plan to attack in future work. Having done so, the proof of braid relations will be a straightforward (but tedious) check.

%
\subsection{Symmetries and the internal braid group action}
%
There are a number of other (anti)linear (anti)automorphisms $\und{\sigma},\und{\omega},\und{\psi}$ defined on $\U$, see section~\ref{sec_Usymm} for their definitions.
These (anti)involutions allow one to pass between the variants $T_{i,e}'$ and $T_{i,e}''$ of the internal braid group generators via conjugation,
\ie
\begin{align} \label{intro-T'_rel_T''}
\und{\sigma} T'_{i,e}\und{\sigma} &=T''_{i,-e}       &
\und{\omega} T'_{i,e}\und{\omega} =&T''_{i,e} \\
 \und{\psi} T'_{i,e}\und{\psi}        &=T'_{i,-e}   &
 \und{\psi} T''_{i,e} \und{\psi}      =&T''_{i,-e}.
\end{align}
In \cite{KL3} these symmetries were lifted to define 2-functors $\sigma,\omega,\psi$ on a certain version of the categorified quantum group.
Each has a natural interpretation in terms of symmetries of the graphical calculus for $\UcatD_{Q}$,
and, in the $\mf{sl}_2$ case, were extended to the homotopy category of complexes in \cite{BKL-Casimir}.

Recall (or see Section~\ref{sec:choice-scalars} below) that the definition of $\UcatD_Q$ requires a choice of scalar parameters $Q$;
it was recently shown that there is a natural normalization for the categorified quantum group associated to an arbitrary KLR algebra and choice of $Q$~\cite{BHLW2}.
This so-called \emph{cyclic version} of $\UcatD_Q$ satisfies the property that diagrams that are planar isotopic relative to their boundaries specify the same 2-morphism in $\UcatD_Q$,
a property that only holds up to scalars in previous formulations.
Given the utility of the cyclic version, we also prove the following result, which defines these symmetries in this setting.

\begin{thm} \label{thm:B}
There are  invertible 2-functors
$\sigma,\omega,\psi$ defined on the cyclic version of the categorified quantum group $\UcatD_Q$
that categorify the symmetries $\und{\sigma},\und{\omega},\und{\psi}$,
\ie
\[
[\sigma]=\und{\sigma}, \quad [\omega]=\und{\omega}, \quad
[\psi] = \und{\psi}
\]
in $K_0(\UcatD_Q(\mf{g})) \cong \U_q(\mf{g})$.
\end{thm}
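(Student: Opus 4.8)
The plan is to produce the three 2-functors $\sigma, \omega, \psi$ essentially by the same recipe used in \cite{KL3}, but carefully normalized so that they respect the cyclic relations of \cite{BHLW2}, and then to read off the decategorification statements from the known formulas for $\und\sigma, \und\omega, \und\psi$ on generators. Concretely, each of these symmetries has a transparent description as an operation on the graphical calculus: $\sigma$ reflects diagrams across a vertical axis (swapping the roles of left and right, hence sending $\cal E_i \cal E_j$ to $\cal E_j \cal E_i$ read the other way), $\omega$ is the ``rotation by $\pi$'' / bar-type involution exchanging $\cal E_i$ and $\cal F_i$, and $\psi$ reflects across a horizontal axis, reversing the height-direction of diagrams and hence reversing the grading. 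First I would fix, for each symmetry, its action on objects (the weight lattice: $\sigma$ and $\omega$ act as $\lambda \mapsto -\lambda$ and possibly a diagram-automorphism, $\psi$ fixes objects), on generating 1-morphisms, and on the generating 2-morphisms (dots, crossings, cups, caps), by applying the relevant geometric reflection and then bookkeeping the grading shifts.

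The substance of the proof is then a relation check: for each defining relation of $\UcatD_Q$ (in the cyclic presentation of Definition~\ref{defU_cat-cyc}) I must verify that applying the prescribed symmetry to both sides yields an equality of 2-morphisms in the \emph{cyclic} $\UcatD_{Q'}$, where $Q'$ is the image scalar datum. This is where the choice of scalars in Section~\ref{sec:choice-scalars} does real work: a reflection of a diagram changes the bubble and crossing relations by scalar factors that must be absorbed into a relabeling $Q \mapsto Q'$ of the parameters, and one needs the cyclic normalization to guarantee that isotopic diagrams still give equal 2-morphisms after the reflection (in the non-cyclic version this only held up to scalars, which is precisely the defect \cite{BHLW2} fixes). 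Since on the nose we want an (invertible) 2-functor $\UcatD_Q \to \UcatD_Q$ and not merely $\UcatD_Q \to \UcatD_{Q'}$, I would either invoke the fact that all choices of $Q$ (within a fixed family) give 2-equivalent 2-categories, or else restrict to a $Q$ stable under the relevant scalar transformation; this is a routine point in the literature but should be stated. Invertibility is immediate because each reflection is an involution (or, for $\omega$, squares to the identity up to the known 2-functor $\psi\sigma$ relation), so the candidate inverse is the same recipe applied again.

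Finally, the decategorification statements $[\sigma]=\und\sigma$, $[\omega]=\und\omega$, $[\psi]=\und\psi$ follow by evaluating the constructed 2-functors on the indecomposable 1-morphisms generating $K_0(\UcatD_Q) \cong \U_q(\mf g)$ and comparing with the definitions of $\und\sigma,\und\omega,\und\psi$ recalled in Section~\ref{sec_Usymm}: each symmetry sends a grading-shifted composite of $\cal E_i$'s and $\cal F_i$'s to the grading-shifted composite predicted by the corresponding algebra (anti)automorphism, and taking classes in the Grothendieck group collapses the 2-morphism data, leaving exactly the stated identity of maps $\U_q(\mf g)\to\U_q(\mf g)$. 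The anti-multiplicativity of $\sigma$ and $\psi$ (they are \emph{anti}automorphisms) is matched by the fact that horizontal/vertical reflection reverses composition of 1-morphisms, so no sign or ordering subtlety is lost.

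I expect the main obstacle to be the scalar bookkeeping for the cyclic structure: tracking how each reflection transforms the parameters $Q=(t_{ij})$ and the bubble/Reidemeister scalars, and verifying that with the cyclic normalization of \cite{BHLW2} every defining relation is sent to a genuine (not merely up-to-scalar) relation, so that the assignment on generators really extends to a 2-functor. The geometric intuition makes it clear what the answer should be, but making it precise requires going relation-by-relation through Definition~\ref{defU_cat-cyc}; the payoff of working in the cyclic version is that isotopy invariance removes the worst of the ambiguity, so the check, while lengthy, is mechanical once the transformed scalar datum $Q'$ is correctly identified.
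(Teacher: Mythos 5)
Your overall strategy is the same as the paper's: define each symmetry as an explicit operation on the graphical calculus (vertical reflection for $\sigma$, orientation reversal for $\omega$, horizontal reflection for $\psi$), check the defining relations of Definition~\ref{defU_cat-cyc} one by one, and observe that the decategorification statement is then immediate on generators. The one place where you diverge, and where your proposed fix is the wrong one, is the parameter bookkeeping. You assert that the reflections transform the choice of scalars $Q=(t_{ij})$ into a new datum $Q'$, and propose to repair this either by invoking a 2-equivalence $\UcatD_Q\simeq\UcatD_{Q'}$ or by restricting to $Q$ stable under the transformation. In fact the scalars $Q$ do not change at all: what changes is the auxiliary choice of \emph{bubble parameters} $C=(c_{i,\lambda})$, which in the cyclic normalization enter the bubble and extended $\mathfrak{sl}_2$ relations. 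Since $\sigma$ and $\omega$ send $\lambda\mapsto-\lambda$, the degree-zero bubble relation $\bigcirc=c_{i,\lambda}$ is carried to a relation whose value is $c_{i,-\lambda}^{-1}$, so the natural codomain is the 2-category $\Ucat'_Q$ with the \emph{same} $Q$ but primed bubble parameters $(c_{i,\lambda})':=c_{i,-\lambda}^{-1}$; one checks directly that these are still compatible with $Q$ in the sense of equation \eqref{eq:BubbCompat}, so no relabeling of $Q$ and no appeal to 2-equivalences between different scalar choices is needed. Your first fallback is actually dangerous for the theorem as stated: the 2-equivalences relating different $Q$'s rescale 2-morphisms non-canonically, which would muddy the claim that the induced map on $K_0$ is exactly $\und\sigma$, $\und\omega$, $\und\psi$; your second fallback would weaken the theorem to special $Q$. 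Two further details your relation check would be forced to produce: $\sigma$ and $\omega$ must scale every $ii$-crossing by $-1$ (otherwise, e.g., the dot-slide and extended $\mathfrak{sl}_2$ relations are sent to their negatives), and the various flavors of contravariance must be recorded honestly — $\sigma$ lands in $(\Ucat'_Q)^{\op}$, $\omega$ in $\Ucat'_Q$, and $\psi$ in $(\Ucat_Q)^{\co}$ with the grading shift negated — which is exactly what matches the (anti)linearity and (anti)multiplicativity of $\und\sigma,\und\omega,\und\psi$ on $K_0$.
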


Defining these 2-functors requires several subtle aspects involving the choice of scalars $Q$, so we include the details below in Section \ref{sec_symm}.
Using these symmetries, we use the categorical analogue of \eqref{intro-T'_rel_T''} to define the variants $\cal{T}_{i,-1}'$ and $\cal{T}_{i,e}''$ of the internal braid group action.

%
\subsection{Compatibility with Rickard complexes}
%

As noted above,
the defining feature of the internal braid group action at the decategorified level is its compatibility with the quantum Weyl group action,
given in equation \eqref{eq:QWGcompat}.
In a sequel to this paper \cite{ALLR2}, we show that our 2-functors $\cal{T}_{i,+1}'$ satisfy an analogous compatibility with the Rickard complexes.

To be precise, note that the first equality in equation \eqref{eq:QWGcompat} asserts that the actions of the elements
$T_{i,e}' (u)\mathsf{t}_{i,e}'1_{\l}$ and $\mathsf{t}_{i,e}'1_{\nu} u$ on the $\lambda$ weight space of any integrable representation agree
for all $u \in 1_{\nu} \U 1_{\l}$.
Equivalently, for any integrable representation $V=\oplus_{\lambda} V_\lambda$ there is an equality between the corresponding linear maps
$1_{\nu} \U 1_{\l} \to \Hom(V_\lambda,V_{s_i(\nu)})$.
At the categorical level,
the operation of composing with the complex $\tau_{i,+1}'$ defines a functor
\[
\tau_{i,+1}' \onenu (-)\onel \maps \Hom_{\Ucat_Q}(\l,\nu) \to \Hom_{\Com(\Ucat_Q)}(\l,s_i(\nu))
\]
and we can similarly consider the functor $\cal{T}_{i,e}'(-)\tau_{i,e}' \onel$,
which maps between the same $\Hom$-categories.
The main result of \cite{ALLR2} is the following:

\begin{thm} \label{thm:compat}
For all objects $\l, \nu$ in $\Ucat_Q$,
there is an isomorphism of functors
\begin{equation}
	\beth\maps \tau_{i,+1}' \onenu (-)\onel \cong \cal{T}_{i,e}'(-)\tau_{i,e}' \onel
\end{equation}
between $\Hom$-categories $\Hom_{\Ucat_Q}(\l,\nu) \to \Hom_{\Com(\Ucat_Q)}(\l,s_i(\nu))$.
\end{thm}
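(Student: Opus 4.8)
The plan is to exploit that the two functors under comparison are additive --- they commute with grading shifts and direct sums --- and interact cleanly with composition of $1$-morphisms: the functor $\tau_{i,+1}'\onenu(-)\onel$ sends a composite $xy$ of composable $1$-morphisms to $\bigl(\tau_{i,+1}'x\bigr)y$ on the nose, while $\cal{T}_{i,+1}'(-)\tau_{i,+1}'\onel$ sends it to $\cal{T}_{i,+1}'(x)\,\cal{T}_{i,+1}'(y)\,\tau_{i,+1}'\onel$ since $\cal{T}_{i,+1}'$ is a $2$-functor. It therefore suffices to construct the components of $\beth$ on the generating $1$-morphisms $\cal{E}_j\onel$ and $\cal{F}_j\onel$, to extend $\beth$ to arbitrary composites by the evident iterated formula (so that, e.g., $\beth_{\cal{E}_j\cal{F}_k\onel}=\bigl(\cal{T}_{i,+1}'(\cal{E}_j)\,\beth_{\cal{F}_k\onel}\bigr)\circ\bigl(\beth_{\cal{E}_j\onel}\,\cal{F}_k\bigr)$), and then to verify naturality with respect to the generating $2$-morphisms of $\UcatD_Q$: the dots $\mathsf{X}_j$, the KLR crossings $\mathsf{T}_{jk}$, and the units/counits of the $(\cal{E}_j,\cal{F}_j)$-biadjunctions. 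Associativity of composition makes the extension to composite $1$-morphisms automatically consistent, so no separate coherence hexagon must be checked by hand.

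Morally, the $1$-morphism components express that $\cal{T}_{i,+1}'$ is \emph{conjugation by the Rickard complex}: since $\tau_{i,+1}'$ is invertible in $\Com(\UcatD_Q)$ \cite{CautisKam,RouQH}, post-composing the sought isomorphism $\tau_{i,+1}'x\cong\cal{T}_{i,+1}'(x)\tau_{i,+1}'$ with a homotopy inverse rewrites it as $\tau_{i,+1}'\,x\,(\tau_{i,+1}')^{-1}\cong\cal{T}_{i,+1}'(x)$. On generators this reduces to matching the explicit complexes assigned by $\cal{T}_{i,+1}'$ in Section~\ref{sec:defTi} with the complexes obtained by sliding $\cal{E}_j$, respectively $\cal{F}_j$, through $\tau_{i,+1}'$. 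When $i\cdot j=0$ the two complexes commute and the identification is immediate; when $j=i$ it reduces to the homotopy equivalences of the $\mathfrak{sl}_2$ Rickard complex already appearing in the work of Chuang--Rouquier and Cautis--Kamnitzer (reflecting $s_i(\alpha_i)=-\alpha_i$); and when $i\cdot j=-1$ one needs the homotopy equivalence $\tau_{i,+1}'\cal{E}_j\onel\simeq\bigl[\,\cal{E}_j\cal{E}_i\,\tau_{i,+1}'\onel\;\to\;\cal{E}_i\cal{E}_j\,\tau_{i,+1}'\onel\la1\ra\,\bigr]$. The identification of these underlying complexes is known at the level of $1$-morphisms \cite{Cautis,CautisKam}; the plan is to upgrade it by writing down the chain map realizing $\beth_{\cal{E}_j\onel}$ explicitly from the diagrammatics and producing an explicit contracting homotopy for the resulting mapping cone from the $\cal{E}_i$-bubble evaluations and the KLR relations.

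With the base components fixed, naturality is a term-by-term verification: for each generating $2$-morphism $\alpha\maps x\onel\to x'\onel$ one substitutes the explicit chain map $\cal{T}_{i,+1}'(\alpha)$ produced in the proof of Theorem~\ref{thm:A}, together with the relevant components of $\beth$, and checks that the square of complexes so obtained commutes in $\Com(\UcatD_Q)$, i.e. up to an explicit homotopy. As in Theorem~\ref{thm:A}, this splits according to the adjacencies of the colors $j,k$ with $i$, the dot and biadjunction relations being comparatively routine and the crossing $\mathsf{T}_{jk}$ --- especially when both $j$ and $k$ are adjacent to $i$ --- requiring the heaviest diagrammatic bookkeeping.

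The hard part is twofold. First, making the $i\cdot j=-1$ base equivalence \emph{effective}: one wants an explicit contracting homotopy for the mapping cone, not merely a homotopy equivalence up to isomorphism, since that homotopy is exactly what feeds the subsequent naturality checks. Second, and more seriously, because $\cal{T}_{i,+1}'$ takes values in $\Com(\UcatD_Q)$ and only preserves the relations of $\UcatD_Q$ up to homotopy, each naturality square commutes only up to a homotopy that must be pinned down and shown to cohere with the homotopies arising in the proof of Theorem~\ref{thm:A}; as there, it is the bookkeeping of this web of higher homotopies --- rather than any individual square --- that makes the argument long, and this is precisely the analysis carried out in \cite{ALLR2}.
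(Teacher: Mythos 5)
The first thing to note is that this paper does not prove Theorem~\ref{thm:compat}: it is stated in the introduction only as the announced main result of the sequel \cite{ALLR2}, and no proof or proof sketch appears anywhere in the present text. There is therefore no in-paper argument against which to compare your proposal.

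Judged on its own terms, your outline is the natural one and is consistent with how the authors set the problem up: reduce to components $\beth_{\cal{E}_\ell\onel}$ and $\beth_{\cal{F}_\ell\onel}$ on generating $1$-morphisms, split according to the cases $\ell=i$, $i\cdot\ell=-1$, $i\cdot\ell=0$ exactly as in the piecewise definition of $\cal{T}'_{i,1}$ in Section~\ref{sec:Ti1mor}, extend multiplicatively (which is unproblematic because the $1$-morphisms of $\Ucat_Q$ are freely generated), and then verify naturality against the generating $2$-morphisms up to explicit homotopy. However, as written this is a plan rather than a proof: the entire mathematical content of the theorem --- the explicit chain maps realizing $\tau'_{i,1}\cal{E}_j\onel\simeq \cal{T}'_{i,1}(\cal{E}_j\onel)\,\tau'_{i,1}\onel$, their homotopy inverses, and the homotopies filling each naturality square --- is precisely what you defer, and none of it is exhibited; given how the analogous verifications for Theorem~\ref{thm:A} occupy all of Section~\ref{sec_proof_func}, this is where essentially all of the work lies. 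Two smaller cautions: the case $i\cdot\ell=0$ is not quite ``immediate,'' since sliding $\cal{E}_k$ through the divided powers of $\tau'_{i,1}$ via the quadratic KLR relation produces the scalars $t_{ki}^{\pm\lambda_i}$ that the authors had to build into the definition of $\cal{T}'_{i,1}$ on $k$-labelled caps and cups; and since $\tau'_{i,1}$ involves divided powers, the comparison really takes place in $\Com(\UcatD_Q)$ rather than $\Com(\Ucat_Q)$, a point worth making explicit if you carry the construction out.
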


%
\subsection{Applications of the internal braid group action}
%

\subsubsection{PBW basis and their categorifications}
In finite type, Lusztig's internal braid group action can be used to deduce the quantum PBW theorem for $\U^+(\mf{g})$,
providing a basis of monomials that are useful in many applications.
The KLR algebra provides a categorification of $\U^+(\mf{g})$ via its category of projective/finitely generated modules~\cite{KL1,KL2,RouQH}.
Therein, the indecomposable projective modules correspond to the canonical basis of $\U^+(\mf{g})$~\cite{VV}, while the simple modules corresponds to the dual canonical basis~\cite{BK2,Web4}.
At the categorical level, the analogues of PBW monomials lead to a rich theory of standard modules for KLR algebras.
In finite type, standard modules were first described in \cite{KR2} (see also \cite{BSS,BKlM,HMM,McNam1,McNam4,Kato}), and in affine type they were studied in \cite{Klesh4,KleshMuth,TingWeb,McNam2};
in these studies, the focus has been on finding specific modules over KLR algebras that lift a given PBW monomial.
In forthcoming work \cite{McNam3},
McNamara plans to use our 2-functors $\cal{T}_{i,+1}'$ to build projective resolutions of standard KLR modules,
producing a categorical lift of Lusztig's internal braid group construction of the PBW basis,
and giving a strengthening of Kato's results on reflection functors for KLR algebras~\cite{Kato}.

\subsubsection{Quantum Affine algebras}
There is no obstruction to defining the 2-functors $\cal{T}_{i,+1}'$ in arbitrary symmetrizable type,
except that the check of well-definedness is much-more involved.
For example, Lusztig provides the explicit formula
\begin{equation}\label{eq:TarbType}
T_{i,+1}'(E_{\ell} 1_\l) = \sum_{j=0}^{-i\cdot \ell}(-q)^{j}E_i^{(j)} E_{\ell}E_{i}^{(-i\cdot \ell-j)} 1_{s_i(\l)}
\end{equation}
in arbitrary type (compare to equation \eqref{eq:intro2} above),
which suggests that the categorified Lusztig operator $\cal{T}'_{i,1}$ should send $\cal{E}_{\ell} \onel$ to a complex of length $1-i\cdot\ell$.
It is not difficult to specify a complex lifting equation \eqref{eq:TarbType},
\eg we could set
\[
\cal{T}'_{i,1}(\cal{E}_\ell \onel) :=
\clubsuit \cal{E}_{\ell}\cal{E}_i^{(-i\cdot\ell)} \onell{s_i(\lambda)}
\xrightarrow{\xy 0;/r.15pc/:
 (0,-8);(0,0) **[black][|(3)]\dir{-} ?(.5)*[black][|(2)]\dir{>};
 (0,0);(-4,8)*{} **[black][|(1)]\crv{(-4,1)} ?(1)*[black][|(1)]\dir{>};
 (0,0);(4,8)*{} **[black][|(3)]\crv{(4,1)} ?(1)*[black][|(2)]\dir{>};
 (-4,-8);(0,8)*{} **[magenta][|(1)]\crv{(-4,0)} ?(1)*[magenta][|(1)]\dir{>};
 (-6,0)*{}; (6,0)*{};\endxy}
 \cal{E}_i\cal{E}_\ell\cal{E}_{i}^{(-i\cdot\ell-1)} \onell{s_i(\lambda)} \la 1 \ra
 \xrightarrow{\xy 0;/r.10pc/:
 (0,-8);(0,0) **[black][|(3)]\dir{-} ?(.5)*[black][|(2)]\dir{>};
 (0,0);(-4,8)*{} **[black][|(1)]\crv{(-4,1)} ?(1)*[black][|(1)]\dir{};
 (0,0);(4,8)*{} **[black][|(3)]\crv{(4,1)} ?(1)*[black][|(2)]\dir{>};
 (-8,16);(-8,24) **[black][|(3)]\dir{-} ?(1)*[black][|(2)]\dir{>};
 (-12,8);(-8,16)*{} **[black][|(1)]\crv{(-12,15)} ?(.3)*[black][|(2)]\dir{};
 (-4,8);(-8,16)*{} **[black][|(1)]\crv{(-4,15)} ?(.3)*[black][|(2)]\dir{};
 (-12,8);(-12,-8)*{} **[black][|(1)]\dir{-} ?(.3)*[black][|(2)]\dir{};
 (4,8);(4,24)*{} **[black][|(3)]\dir{-} ?(1)*[black][|(2)]\dir{>};
 (-8,-8);(-2,24)*{} **[magenta]\crv{(-8,16),(-2,0)} ?(1)*[magenta]\dir{>};
 (-16,0)*{}; (8,0)*{};
\endxy}
\cdots
\xrightarrow{\xy 0;/r.15pc/:
 (-8,0);(-8,8) **[black][|(3)]\dir{-} ?(1)*[black][|(2)]\dir{>};
 (-12,-8);(-8,0)*{} **[black][|(3)]\crv{(-12,-1)} ?(.3)*[black][|(2)]\dir{>};
 (-4,-8);(-8,0)*{} **[black][|(1)]\crv{(-4,-1)} ?(.3)*[black][|(2)]\dir{};
 (-8,-8);(-2,8)*{} **[magenta]\crv{(-2,0)} ?(1)*[magenta]\dir{>};
 (-16,-8)*{}; (0,-8)*{};
\endxy}
\cal{E}_i^{(-i\cdot\ell)}\cal{E}_\ell \onell{s_i(\lambda)} \la -i\cdot\ell \ra.
\]
Here, the terms in the differential are given using the \emph{thick calculus} from \cite{KLMS}, and an easy computation therein verifies that they square to zero.
The appearance of complexes containing more than two non-zero terms suggests that even more of the defining relations in $\UcatD_Q$
may be preserved by $\cal{T}'_{i,1}$ only up to homotopy, exacerbating the difficulty of checking that these 2-functors are well-defined.
Despite this, we note one interesting application of an extension of our 2-functors to non-simply-laced type:
it may be possible to promote Beck's description~\cite{Beck} of the loop presentation of affine algebras in terms of the internal braid group action to the categorical level,
giving a categorification of affine algebras in their loop realization.

\subsubsection{Link invariants and skew Howe duality}

As referenced above, one can study the $\mf{sl}_n$ quantum link invariants via $\U(\mf{sl}_m)$ representation theory using quantum skew Howe duality.
The latter is the quantum analogue of the duality arising from the commuting actions of $\U(\mf{sl}_n)$ and $\U(\mf{sl}_m)$ on the quantum exterior power $\wedge^N(\C^m_q \otimes \C^n_q)$.
The $\mf{sl}_n$ link invariants admit a formulation in terms of MOY calculus~\cite{MOY} and $\mf{sl}_n$ webs~\cite{Kuperberg,Kim,Mor},
certain trivalent graphs which specify the morphisms in a diagrammatic description of the category of $\U(\mf{sl}_n)$ representations.

Cautis, Kamnitzer, and Morrison show that skew Howe duality admits a graphical description in terms of so-called ladder webs,
and use this to give an entirely diagrammatic description of the full subcategory of quantum $\mf{sl}_n$ representations tensor generated by the fundamental representations~\cite{CKM}.
In this formulation, skew Howe duality specifies a representation of $\U(\mf{sl}_m)$ in which an $\mf{sl}_m$ weight $\lambda = (\lambda_1, \lambda_2, \dots, \lambda_{m-1})$ is sent to the
to the $m$-tuple $(a_1,a_2, \dots, a_m)$ that satisfies $0 \leq a_i\leq n$, $\sum_{i=1}^m a_i = N$ and $\lambda_i = a_{i}-a_{i+1}$,
and weights not satisfying these conditions are sent to zero.
This representation maps the generators of $\U(\mf{sl}_m)$ as follows:
\[
1_{\l } \mapsto
\hackcenter{
\begin{tikzpicture}[scale=.3]
\node at (-4.125,0) { $\dots$};
	\draw [very thick, ->] (-5.5,-2) to (-5.5,2);
	\draw [very thick, ->] (-3,-2) to (-3,2);
\node at (-5.5,-2.55) {\tiny $a_{1}$};	
\node at (-3,-2.55) {\tiny $a_{m}$};	
\end{tikzpicture}}
\;\; , \;\;
E_i^{(r)}1_{\l} \mapsto
\hackcenter{
\begin{tikzpicture}[scale=.3]
\node at (-4.375,0) { $\dots$};
\node at (4.5,0) { $\dots$};
	\draw [very thick, ->] (-5.5,-2) to (-5.5,2);
	\draw [very thick, ->] (-3.5,-2) to (-3.5,2);
	\draw [very thick, ->] (3.5,-2) to (3.5,2);
	\draw [very thick, ->] (5.5,-2) to (5.5,2);
	\draw [very thick, directed=.55] (1.25,-.5) to (1.25,2);
	\draw [very thick, directed=.55] (1.25,-.5) to (-1.25,.5);
	\draw [very thick, directed=.55] (-1.25,-2) to (-1.25,.5);
	\draw [very thick, directed=.55] (-1.25,.5) to (-1.25,2);
	\draw [very thick, directed=.55] (1.25,-2) to (1.25,-.5);
\node at (-5.5,-2.55) {\tiny $a_{1}$};	
\node at (-3.5,-2.55) {\tiny $a_{i-1}$};	
\node at (-1.25,-2.55) {\tiny $a_i$};	
\node at (1.25,-2.55) {\tiny $a_{i+1}$};
\node at (3.5,-2.55) {\tiny $a_{i+2}$};
\node at (5.5,-2.55) {\tiny $a_{m}$};	
	\node at (-1.5,2.5) {\tiny $a_i{+}r$};
	\node at (1.5,2.5) {\tiny $a_{i+1}{-}r$};
	\node at (0,0.75) {\tiny $r$};
\end{tikzpicture}}
\;\; , \;\;
F_i^{(r)}1_{\l} \mapsto
\hackcenter{
\begin{tikzpicture}[scale=.3]
\node at (-4.375,0) { $\dots$};
\node at (4.5,0) { $\dots$};
	\draw [very thick, ->] (-5.5,-2) to (-5.5,2);
	\draw [very thick, ->] (-3.5,-2) to (-3.5,2);
	\draw [very thick, ->] (3.5,-2) to (3.5,2);
	\draw [very thick, ->] (5.5,-2) to (5.5,2);
	\draw [very thick, directed=.55] (1.25,.5) to (1.25,2);
	\draw [very thick, directed=.55] (-1.25,-.5) to (1.25,.5);
	\draw [very thick, directed=.55] (-1.25,-2) to (-1.25,-.5);
	\draw [very thick, directed=.55] (-1.25,-.5) to (-1.25,2);
	\draw [very thick, directed=.55] (1.25,-2) to (1.25,.5);
\node at (-5.5,-2.55) {\tiny $a_{1}$};	
\node at (-3.5,-2.55) {\tiny $a_{i-1}$};	
\node at (-1.25,-2.55) {\tiny $a_i$};	
\node at (1.25,-2.55) {\tiny $a_{i+1}$};
\node at (3.5,-2.55) {\tiny $a_{i+2}$};
\node at (5.5,-2.55) {\tiny $a_{m}$};	
	\node at (-1.5,2.5) {\tiny $a_i{-}r$};
	\node at (1.5,2.5) {\tiny $a_{i+1}{+}r$};
	\node at (0,0.75) {\tiny $r$};
\end{tikzpicture}}
\]
Under this representation, the braiding on the category of $\U(\mf{sl}_n)$ is given by the quantum Weyl group action,
\ie diagrammatically, we have:
\begin{equation}\label{eq:qWgCrossing}
\mathsf{t}_i 1_{\l}  \mapsto
\hackcenter{
\begin{tikzpicture}[scale=.75]
\draw[very thick, directed=1] (-2.4,0) -- (-2.4,1.5);
\draw[very thick, directed=1] (-1.2,0) -- (-1.2,1.5);
\draw[very thick, directed=1] (-.4,0) -- (.4,1.5);
\draw[very thick ] (.4,0) -- (.1,.6);
\draw[very thick, directed=1 ] (-.1,.9) -- (-.4,1.5);
\draw[very thick, directed=1] (1.2,0) -- (1.2,1.5);
\draw[very thick, directed=1] (2.4,0) -- (2.4,1.5);
\node at (-2.4,-.3) {\tiny $a_{1}$};
\node at (-1.2,-.3) {\tiny $a_{i-1}$};
\node at (-.4,-.3) {\tiny $a_i$};
\node at (.4,-.3) {\tiny $a_{i+1}$};
\node at (2.4,-.3) {\tiny $a_{m}$};
\node at (1.25,-.3) {\tiny $a_{i+2}$};
\node at (-1.8,.75) {$\dots$};
\node at ( 1.8,.75) {$\dots$};
\end{tikzpicture}}
\qquad \quad
\mathsf{t}_i^{-1} 1_{\l}  \mapsto
\hackcenter{
\begin{tikzpicture}[scale=.75]
\draw[very thick, directed=1] (-2.4,0) -- (-2.4,1.5);
\draw[very thick, directed=1] (-1.2,0) -- (-1.2,1.5);
\draw[very thick, directed=1] (.4,0) -- (-.4,1.5);
\draw[very thick ] (-.4,0) -- (-.1,.6);
\draw[very thick, directed=1 ] (.1,.9) -- (.4,1.5);
\draw[very thick, directed=1] (1.2,0) -- (1.2,1.5);
\draw[very thick, directed=1] (2.4,0) -- (2.4,1.5);
\node at (-2.4,-.3) {\tiny $a_{1}$};
\node at (-1.2,-.3) {\tiny $a_{i-1}$};
\node at (-.4,-.3) {\tiny $a_i$};
\node at (.4,-.3) {\tiny $a_{i+1}$};
\node at (2.4,-.3) {\tiny $a_{m}$};
\node at (1.25,-.3) {\tiny $a_{i+2}$};
\node at (-1.8,.75) {$\dots$};
\node at ( 1.8,.75) {$\dots$};
\end{tikzpicture}}
\end{equation}
In this way, these link invariants can be computed and studied via the elements in $\U(\mf{sl}_m)$ corresponding to a given link diagram.

Under this correspondence, the internal braid group action plays an interesting role in the diagrammatic description of quantum $\mf{sl}_n$ link invariants,
as equation~\eqref{eq:QWGcompat} shows how to slide the image of an arbitrary element $u \in \U(\mf{sl}_m)$ through a crossing,
\ie it gives the equality:
\[
\hackcenter{
\begin{tikzpicture} [scale=.75]
\draw[very thick, directed=1] (-2.4,0) -- (-2.4,2.5);
\draw[very thick, directed=1] (-1.2,0) -- (-1.2,2.5);
\draw[very thick, directed=1] (-.4,1) -- (.4,2.5);
\draw[very thick ]            (.4,1) -- (.1,1.6);
\draw[very thick, directed=1 ] (-.1,1.9) -- (-.4,2.5);
\draw[very thick, directed=1] (1.2,0) -- (1.2,2.5);
\draw[very thick, directed=1] (2.4,0) -- (2.4,2.5);
\draw[very thick, directed=1] (.4,0) -- (.4,1);
\draw[very thick, directed=1] (-.4,0) -- (-.4,1);
\node at (-2.4,-.3) {\tiny $a_{1}$};
\node at (-1.2,-.3) {\tiny $a_{i-1}$};
\node at (-.4,-.3) {\tiny $a_i$};
\node at (.4,-.3) {\tiny $a_{i+1}$};
\node at (2.4,-.3) {\tiny $a_{m}$};
\node at (1.25,-.3) {\tiny $a_{i+2}$};
\draw [fill=white] (-2.75,0.3) rectangle (2.75,1.2);
\node at ( 0,.7) {$u$};
\end{tikzpicture}}
\;\; =\;\;
\hackcenter{
\begin{tikzpicture}[scale=.75]
\draw[very thick, directed=1] (-2.4,0) -- (-2.4,2.5);
\draw[very thick, directed=1] (-1.2,0) -- (-1.2,2.5);
\draw[very thick, directed=1] (-.4,0) -- (.4,1.5);
\draw[very thick ] (.4,0) -- (.1,.6);
\draw[very thick, directed=1 ] (-.1,.9) -- (-.4,1.5);
\draw[very thick, directed=1] (1.2,0) -- (1.2,2.5);
\draw[very thick, directed=1] (2.4,0) -- (2.4,2.5);
\draw[very thick, directed=1] (.4,2) -- (.4,2.5);
\draw[very thick, directed=1] (-.4,2) -- (-.4,2.5);
\node at (-2.4,-.3) {$\scs a_{1}$};
\node at (-1.2,-.3) {$\scs a_{i-1}$};
\node at (-.4,-.3) {$\scs a_i$};
\node at (.4,-.3) {$\scs a_{i+1}$};
\node at (2.4,-.3) {$\scs a_{m}$};
\node at (1.25,-.3) {$\scs a_{i+2}$};
\node at (-1.8,.75) {$\dots$};
\node at ( 1.8,.75) {$\dots$};
\draw [fill=white] (-2.75,1.2) rectangle (2.75,2.1);
\node at ( 0,1.65) {$T_{i,+1}'(u)$};
\end{tikzpicture}}
\]
where he we abuse notation in denoting elements in $\U(\mf{sl}_m)$ and their images under the skew Howe representation via the same symbols.

This entire story lifts to the categorical level, allowing for the study of Khovanov~\cite{LQR} and Khovanov-Rozansky homology~\cite{Cautis,QR}
following Cautis,Kamnitzer, and Licata's pioneering work in using categorical skew Howe duality to study algebro-geometric categorifications of the $\mf{sl}_n$ link polynomials~\cite{CKL}.
The crucial point is that equation \eqref{eq:qWgCrossing} lifts to map the Rickard complexes to the chain complexes assigned to crossings in $\mf{sl}_n$ link homology.

In the foam-based description of link homology~\cite{Dror,MSVFoam,QR},
categorical skew Howe duality maps generators in $\UcatD_Q(\mf{sl}_m)$ to explicit $\mf{sl}_n$ foams,
certain singular surfaces that categorify $\mf{sl}_n$ webs.
Theorem~\ref{thm:compat} then explicitly shows how to slide not only webs, but also foams mapping between them, through crossings in $\mf{sl}_n$ link homology.
At the level of 1-morphisms (webs), this interaction is key to the stability results used to define $\mf{sl}_n$ analogues of Jones Wenzl projectors~\cite{Roz,Cautis,Rose0},
and we anticipate that our extension to the level of $2$-morphisms will prove useful for future arguments in link homology.

\medskip

\noindent {\bf Acknowledgments.}
This project began as a collaboration between the third author and Mikhail Khovanov; the authors sincerely thank him for his generosity with ideas.
We also thank Andrea Appel, Ben Elias, and Peter McNamara for valuable discussions.
During the course of this work, A~.D.~L.~ was partially supported by the NSF grants DMS-1255334, DMS-1664240, DMS-1902092 and Army Research Office W911NF-20-1-0075,
and D.~E.~V.~R. was partially supported by NSA grants H-98230-16-1-0003 and H-98230-17-1-0211, and Simons Collaboration Grant 523992.

%
\section{The quantum group and Lusztig symmetries}
%

%
\subsection{The quantum group \texorpdfstring{$\mathbf{U}_q(\mf{g})$}{U}} \label{sec:Udot}
%

%
\subsubsection{ Root datum }
%

For  the remainder of  this article we restrict our attention to simply-laced Kac-Moody algebras.
These algebras are associated to a choice of  simply-laced Cartan datum
consisting of
\begin{itemize}
\item a finite set $I$, and
\item a $\Z$-valued symmetric bilinear form $\cdot$ on $\Z I$ satisfying $i \cdot i = 2$ for all $i \in I$ and $i \cdot j \in \{0,-1\}$ for $i \neq j$
\end{itemize}
and root datum given by:
\begin{itemize}
\item a free $\Z$-module $X$, called the \emph{weight lattice},
\item a choice of \emph{simple roots} $\{\alpha_i\}_{i \in I} \subset X$ and \emph{simple coroots} $\{h_i\}_{i \in I} \subset X^\vee = \Hom_{\Z}(X,\Z)$
that satisfy $\la h_i, \alpha_j \ra = 2\frac{i \cdot j}{i \cdot i}$, where here $\langle \cdot, \cdot \rangle \maps X^{\vee} \times X
\to \Z$ is the canonical pairing.
\end{itemize}
In this case, $a_{ij} := \langle h_i, \alpha_j \rangle = i \cdot j$, so $(a_{ij})_{i,j\in I}$ is a symmetric generalized Cartan matrix.
Given an arbitrary \emph{weight} $\l \in X$, we will often abbreviate $\la h_i, \lambda \ra$ by either $\la i,\lambda\ra$ or simply by $\lambda_i$.
We let $\{\Lambda_i\}_{i \in I} \subset X$ denote the \emph{fundamental weights}, which are characterized by the property that
$\langle h_i, \Lambda_j \rangle =\delta_{ij}$ for all $i,j \in I$.

We let $X^+ \subset X$ denote the \emph{dominant weights}, which are those of the form $\sum_i \lambda_i \Lambda_i$ for $\lambda_i \ge 0$.
Associated to a simply-laced Cartan datum is a graph $\Gamma$ without loops or multiple edges,
with a vertex for each $i \in I$ and an edge from vertex $i$ to vertex $j$ if and only if  $i \cdot j =-1$.

%
\subsubsection{The simply-laced quantum group}
%

The quantum group ${\bf U}={\bf U}_q(\mf{g})$ associated to a simply-laced root datum
is the unital, associative $\Q(q)$-algebra given by generators $E_i$, $F_i$, $K_{\mu}$ for
$i\in I$ and $\mu \in X^{\vee}$, subject to the relations:
\begin{center}
\begin{enumerate}[label=(\alph*)]
\item $K_0=1$ and $K_{\mu}K_{\mu'}=K_{\mu+\mu'}$ for all $\mu,\mu' \in X^{\vee}$,
\item $K_{\mu}E_i = q^{\la \mu,\alpha_i\ra}E_iK_{\mu}$ for all $i \in I$, $\mu \in X^{\vee}$,
\item $K_{\mu}F_i = q^{-\la \mu, \alpha_i\ra}F_iK_{\mu}$ for all $i \in I$, $\mu \in X^{\vee}$,
\item $E_iF_j - F_jE_i = \delta_{ij} \frac{K_{h_i}-K_{h_i}^{-1}}{q-q^{-1}}$,  where we set $K_i := K_{h_i}$, and
\item for all $i\neq j$
\[
\sum_{a+b=-  \la i, \alpha_j \ra +1 }(-1)^{a} E_i^{(a)}E_jE_i^{(b)} = 0 \quad \text{and} \quad \sum_{a+b=- \la i, \alpha_j \ra +1}(-1)^{a} F_i^{(a)}F_jF_i^{(b)} = 0
\]
where $E_i^{(a)}=E_i^a/[a]!$, $F_i^{(a)}=F_i^a/[a]!$, and $[a]!=\prod_{m=1}^a\frac{q^m-q^{-m}}{q-q^{-1}}$.
\end{enumerate} \end{center}

%
\subsubsection{The integral idempotented form of quantum group}
%

We will work with the idempotent form of ${\bf U}$,
which is adapted to the study of ${\bf U}$-modules with weight space decompositions.
This non-unital algebra is equipped with a collection of orthogonal idempotents,
hence can be described as a $\Q(q)$-linear category $\dot{{\bf U}}=\dot{{\bf U}}_q(\mf{g})$,  defined as follows.
The objects of $\dot{{\bf U}}$ are elements of $X$,
and the $\Hom$-space between $\l, \nu \in X$ is defined to be
\[
\dot{{\bf U}}(\l, \nu):={\bf U}/\left( \sum_{\mu\in X^{\vee}} {\bf U} (K_\mu-q^{\la\mu,\l\ra})+ \sum_{\mu\in X^{\vee}}(K_\mu-q^{\la\mu,\nu\ra})  {\bf U}\right).
\]

The identity morphism of $\l\in X$ is denoted by $1_\l$
and we will typically abbreviate the element $1_\mu x1_\l \in \dot{\bfU}(\l,\mu)$ determined by $x \in {\bfU}$ by either $1_\mu x$ or $x1_\l$,
\eg we have $E_i1_{\lambda} = 1_{\lambda+\alpha_i}E_i$ and $F_i1_{\lambda} = 1_{\lambda-\alpha_i}F_i$.
Composition in $\dot{\bfU}$ is induced by multiplication in ${\bf U}$, \ie
\[
(1_\mu x1_\nu) (1_\nu y 1_\l)=1_\mu xy 1_\l
\]
for $x,y \in \bfU$, $\l,\mu,\nu \in X$.
 The idempotent form $\dot{{\bf U}}$ admits an integral form,
defined as the $\Z[q,q^{-1}]$-lattice $\UA \subset \dot{{\bf U}}$
spanned by products of divided powers $E_i^{(a)}1_{\lambda}$ and $F_i^{(a)}1_{\lambda}$.

%
\subsection{(Anti)linear (anti)automorphisms of \texorpdfstring{$\U$}{U}} \label{sec_Usymm}
%

There are several $\Z[q,q^{-1}]$-(anti)linear (anti)automorphisms that will be used in this paper.
 For $f \in \Q(q)$, let $f \mapsto \bar{f}$ be the $\Q$-linear involution of $\Q(q)$ that sends $q$ to $q^{-1}$.
\begin{itemize}
	\item  The $\Q(q)$-linear algebra anti-involution $\und{\sigma} \maps {\bf U} \to {\bf U}$ is given by
\begin{align*}
\und{\sigma}(E_i) &= E_i \;\; , \;\; \und{\sigma}(F_i) =F_i \;\; , \;\; \und{\sigma}(K_i) = K_i^{-1}, \\
   \und{\sigma}(fx) &= f\und{\sigma}(x) \;\; \text{for} \; f\in \Q(q) \; \text{and} \; x \in {\bf U},\\
  \und{\sigma}(xy)& = \und{\sigma}(y)\und{\sigma}(x) \;\; \text{for} \; x,y \in {\bf U}.
\end{align*}
	\item The $\Q(q)$-linear algebra involution $\und{\omega}\maps {\bf U} \to {\bf U}$ is given by
\begin{align*}
\und{\omega}(E_i) &=F_i \;\; , \;\; \und{\omega}(F_i)=E_i \;\; , \;\; \und{\omega}(K_i) = K_i^{-1}, \\
\und{\omega}(fx) &=f\und{\omega}(x) \;\; \text{for} \; f\in \Q(q) \; \text{and} \; x \in {\bf U},\\
\und{\omega}(xy) &=\und{\omega}(x)\und{\omega}(y) \;\; \text{for} \; x,y \in {\bf U}.
\end{align*}
	\item The $\Q(q)$-antilinear algebra involution $\und{\psi} \maps {\bf U} \to {\bf U}$ is given by
\begin{align*}
\und{\psi}(E_i) &=E_i \;\; , \;\; \und{\psi}(F_i)=F_i \;\; , \;\; \und{\psi}(K_i) = K_i^{-1}, \\
\und{\psi}(fx) &=\bar{f}\und{\psi}(x) \;\; \text{for} \; f\in \Q(q) \; \text{and} \; x \in {\bf U},\\
\und{\psi}(xy) &=\und{\psi}(x)\und{\psi}(y) \;\; \text{for} \; x,y \in {\bf U}.
\end{align*}
\end{itemize}

These (anti)linear (anti)involutions pairwise commute and generate the group $G=(\Z_2)^3$ of (anti)linear (anti)automorphisms acting on ${\bf U}$.
The (anti)involutions $\und{\sigma}$, $\und{\omega}$, and  $\und{\psi}$  all extend to $\U$ and $\UA$ by setting
\[
\und{\sigma}(1_{\l}) = 1_{-\l} \;\; , \;\; \und{\omega}(1_{\l}) = 1_{-\l} \;\; , \;\; \und{\psi}(1_{\l})=1_{\l}.
\]
and taking the induced maps on each summand $1_{\l'}\U1_{\l}$.

%
\subsection{Quantum Weyl group action on integrable \texorpdfstring{$\U$}{U}-modules} \label{sec:braid-int-modules}
%

Let $V=\oplus_{\lambda} V_\lambda$ be an integrable $\U$-module, then, for $e=\pm 1$,
Lusztig \cite[5.2.1]{Lus4} defines linear maps $\mathsf{t}_{i,e}'$, $\mathsf{t}_{i,e}'' \maps V \to V$ by
\begin{align*}
 \mathsf{t}_{i,e}'(z) &= \sum_{a,b,c; a-b+c=\lambda_i} (-1)^b q^{e(-ac+b)}F_i^{(a)}E_i^{(b)}F_i^{(c)} z,   \\
 \mathsf{t}_{i,e}''(z) &= \sum_{a,b,c; -a+b-c=\lambda_i} (-1)^b q^{e(-ac+b)}E_i^{(a)}F_i^{(b)}E_i^{(c)} z,
\end{align*}
for $z \in V_\lambda$ that are commonly called the \emph{quantum Weyl group} elements.
They are mutually inverse automorphisms (specifically, they satisfy $\mathsf{t}_{i,e}'\mathsf{t}_{i,-e}'' = \Id = \mathsf{t}_{i,-e}''\mathsf{t}_{i,e}'$)
that satisfy the relations
\begin{align*}
\mathsf{t}_{i,e}'\mathsf{t}_{j,e}'\mathsf{t}_{i,e}' = \mathsf{t}_{j,e}' \mathsf{t}_{i,e}' \mathsf{t}_{j,e}'   \;\; &\text{and} \;\;
\mathsf{t}_{i,e}''\mathsf{t}_{j,e}''\mathsf{t}_{i,e}'' = \mathsf{t}_{j,e}''\mathsf{t}_{i,e}''\mathsf{t}_{j,e}'' \;\;
\text{if $i\cdot j=-1$}\\
\mathsf{t}_{i,e}'\mathsf{t}_{j,e}' = \mathsf{t}_{j,e}'\mathsf{t}_{i,e}' \;\; &\text{and} \;\;
\mathsf{t}_{i,e}''\mathsf{t}_{j,e}'' = \mathsf{t}_{j,e}''\mathsf{t}_{i,e}'' \;\;
\text{if $i\cdot j=0$}
\end{align*}
and thus define an action of type $\mf{g}$ braid group on any integrable module \cite[Theorem 39.4.3]{Lus4}.
This action on a particular weight space can be conveniently described by the infinite sums
\begin{align*}
 \mathsf{t}_{i,e}'1_{\l} &= \sum_{a,b,c; a-b+c=\lambda_i} (-1)^b q^{e(-ac+b)}F_i^{(a)}E_i^{(b)}F_i^{(c)} 1_{\l}   \\
 \mathsf{t}_{i,e}''1_{\l} &= \sum_{a,b,c; -a+b-c=\lambda_i} (-1)^b q^{e(-ac+b)}E_i^{(a)}F_i^{(b)}E_i^{(c)} 1_{\l}
\end{align*}
of elements in $\U$, from which the maps $\mathsf{t}_{i,e}'$ ,$\mathsf{t}_{i,e}''$ can be recovered by taking the sum over all $\l \in X$.
It was shown in \cite[Lemma 6.1.1]{CKM} that these elements admit the simpler form given in equation \eqref{eq:QWG} above,
\ie in fact all terms with $c \neq 0$ cancel.

%
\subsection{Lusztig's internal braid group action} \label{sec:braid-quant-group}
%

For each $i\in I$ and $e=\pm1$, Lusztig defines algebra automorphisms $T'_{i,e}$ and $T''_{i,e}$ of $\dot{{\bf U}}=\dot{{\bf U}}_q(\mf{g})$
defined uniquely by the compatibility with the quantum Weyl group action given in equation \eqref{eq:QWGcompat} above.
They are given explicitly in \cite[41.1.2]{Lus4} by
\begin{equation}\label{eq_def-Ti}
\begin{aligned}
T'_{i,e}(1_{\lambda}) &= 1_{s_i(\lambda)} \\
T'_{i,e}(E_\ell 1_{\lambda}) &=
\begin{cases}
-q^{-e(2+\lambda_i)}F_i1_{s_i(\lambda)} &\text{if } i=\ell \\
E_\ell E_i1_{s_i(\lambda)}-q^{e}E_iE_\ell1_{s_i(\lambda)} &\text{if } i\cdot\ell=-1\\
E_\ell1_{s_i(\lambda)} &\text{if } i\cdot\ell=0
\end{cases} \\
T'_{i,e}(F_\ell1_{\lambda})&=
\begin{cases}
-q^{e(\lambda_i)}E_i1_{s_i(\lambda)} &\text{if } i=\ell \\
F_iF_\ell 1_{s_i(\lambda)}-q^{-e}F_\ell F_i1_{s_i(\lambda)} &\text{if } i\cdot\ell=-1\\
F_\ell1_{s_i(\lambda)} &\text{if } i\cdot\ell=0
\end{cases}
\end{aligned}
\end{equation}
and
\begin{align*}
T''_{i,e}(1_{\lambda}) &= 1_{s_i(\lambda)} \\
T''_{i,e}(E_\ell 1_{\lambda}) &=
\begin{cases}
-q^{-e(\lambda_i)}F_i1_{s_i(\lambda)} &\text{if } i=\ell \\
E_iE_\ell1_{s_i(\lambda)}-q^{-e}E_\ell E_i1_{s_i(\lambda)} &\text{if } i\cdot\ell=-1\\
E_\ell1_{s_i(\lambda)} &\text{if } i\cdot\ell=0
\end{cases} \\
T''_{i,e}(F_\ell1_{\lambda})&=
\begin{cases}
-q^{e(\lambda_i-2)}E_i1_{s_i(\lambda)} &\text{if } i=\ell \\
F_\ell F_i1_{s_i(\lambda)}-q^{e}F_iF_\ell1_{s_i(\lambda)} &\text{if } i\cdot\ell=-1\\
F_\ell1_{s_i(\lambda)} &\text{if } i\cdot\ell=0
\end{cases}
\end{align*}
where $s_i$ is the Weyl group element corresponding to the simple root $\alpha_i$,
\ie $s_i(\lambda)=\lambda-\la i,\lambda \ra\alpha_i$.
Lusztig further shows \cite[41.1.1]{Lus4} that $(T'_{i,e})^{-1} = T''_{i,-e}$,
and that these automorphisms interact with the automorphisms from Section \ref{sec_Usymm} as in equation \eqref{intro-T'_rel_T''} above.
As a consequence, we see that both $T'_{i,e}$ and $T''_{i,e}$ are invariant under conjugation by the triple composite $\und{\sigma}\und{\omega}\und{\psi}$.

In what follows, we focus our attention on the automorphisms $T'_{i,1}$, since similar results can be deduced for $T''_{i,-1}$, $T''_{i,1}$, and $T'_{i,-1}$
using equation \eqref{intro-T'_rel_T''}. When the context is clear, we will abbreviate $T'_{i,1}$ by $T'_i$.
In \cite[39.2.4 and 39.2.5]{Lus4}, Lusztig shows that the $T'_i$ satisfy
\begin{align*}
T_i'T_\ell'T_i' &= T_\ell'T_i'T_\ell' \;\; \text{if } i\cdot \ell=-1\\
T_i'T_\ell' &= T_\ell'T_i' \;\; \text{if } i\cdot \ell=0
\end{align*}
and hence defined a type $\mf{g}$ braid group action on $\dot{{\bf U}}$.

%
\section{The categorified quantum group}
%

In this section, we recall the definition of the categorified quantum group $\Ucat_Q(\mf{g})$,
specifically the cyclic version from \cite{BHLW2}, and establish a number of additional properties needed for our arguments.

%
\subsection{Choice of scalars \texorpdfstring{$Q$}{Q}} \label{sec:choice-scalars}
%

Let $\Bbbk$ be a field, not necessarily algebraically closed, or characteristic zero.

\begin{definition}
A {\em choice of scalars $Q$} associated to a simply-laced Cartan datum, consist of elements
$\{ t_{ij} \}_{i,j \in I}$ satisfying:
\begin{itemize}
\item $t_{ii}=1$ for all $i \in I$ and $t_{ij} \in \Bbbk^{\times}$ for $i\neq j$,
 \item $t_{ij}=t_{ji}$ when $a_{ij}=0$.
\end{itemize}
We say that a choice of scalars $Q$ is {\em integral} if $t_{ij} =\pm 1$ for all $i,j \in I$.
\end{definition}

The choice of scalars $Q$ controls the form of the KLR algebra $R_Q$ that
categorifies the positive half of the quantum group $\U$, and
the 2-category $\Ucat_Q(\mf{g})$ is governed by the products $v_{ij}=t_{ij}^{-1}t_{ji}$ taken over all pairs $i,j\in I$,
which can be viewed as a $\Bbbk^{\times}$-valued 1-cocycle on the graph $\Gamma$ associated to the Cartan datum.

\begin{definition}
A choice of {\em bubble parameters $C$} consists of elements $c_{i,\l} \in \Bbbk^{\times}$ for $i\in I$ and $\lambda \in X$.
We say that they are {\em compatible} with the scalars $Q$ if
\begin{equation}\label{eq:BubbCompat}
c_{i,\lambda+\alpha_j}/c_{i,\lambda}=t_{ij}.
\end{equation}
\end{definition}

Given any choice of scalars $Q$, we obtain a compatible choice of bubble parameters by
fixing $c_{i,\lambda}$ for a representative in every coset of the root lattice in the weight lattice,
and then extending to entire weight lattice using equation \eqref{eq:BubbCompat}.
For a compatible choice, note that the bubble parameters remain constant along an $\mf{sl}_2$-string
since
\[
 c_{i,\l+n\alpha_i} = t_{ii}^n c_{i,\l} = c_{i,\l}.
\]

%
\subsection{Definition of the 2-category \texorpdfstring{$\Ucat_Q(\mf{g})$}{Ug}}
%

Recall that a \emph{graded linear} category is an additive category equipped with an auto-equivalence $\la 1 \ra$ called the \emph{shift} (see \eg \cite{Dror}),
and a graded additive 2-category is a category enriched in graded linear categories.
Throughout, we will use $\la t \ra$ to denote the auto-equivalence given by applying $\la 1 \ra$ $t$ times,
and $\la -t\ra$ to denote the auto-equivalence obtained by applying the inverse of $\la 1 \ra$ $t$ times.

\begin{definition} \label{defU_cat-cyc}
Fix a choice of scalars $Q$ and compatible bubble parameters $C$,
then the 2-category $\Ucat_Q:= \Ucat_Q^{cyc}(\mf{g})$ is the graded linear 2-category with:
\begin{itemize}
\item Objects: $\lambda \in X$,

\item $1$-morphisms: formal direct sums of shifts of compositions of the generating $1$-morphisms:
\[
\onel \;\; , \;\; \onell{\l+\alpha_i} \sE_i = \onell{\l+\alpha_i} \sE_i\onel = \sE_i \onel \;\; , \;\; \onell{\lambda-\alpha_i} \sF_i = \onell{\lambda-\alpha_i} \sF_i\onel = \sF_i\onel
\]
for $i \in I$ and $\l \in X$.

\item $2$-morphisms: $\Hom$-spaces are $\Bbbk$-vector spaces
spanned by (horizontal and vertical) compositions of the following decorated tangle-like diagrams.
\begin{align}
  \xy 0;/r.17pc/:
 (0,0)*{\sdotur{i}};
 (7,3)*{ \scs \lambda};
 (-9,3)*{\scs  \lambda+\alpha_i};
 (-10,0)*{};(10,0)*{};
 \endxy &\maps \cal{E}_i\onel \to \cal{E}_i\onel\la 2 \ra  & \quad
 &
    \xy 0;/r.17pc/:
 (0,0)*{\sdotdr{i}};
 (7,3)*{ \scs \lambda};
 (-9,3)*{\scs  \lambda-\alpha_i};
 (-10,0)*{};(10,0)*{};
 \endxy\maps \cal{F}_i\onel \to \cal{F}_i\onel\la 2 \ra  \nn \\
   & & & \nn \\
   \xy 0;/r.17pc/:
  (0,0)*{\xybox{
(0,0)*{\ucrossrr{i}{j}};
     (9,1)*{\scs  \lambda};
     (-10,0)*{};(10,0)*{};
     }};
  \endxy \;\;&\maps \cal{E}_i\cal{E}_j\onel  \to \cal{E}_j\cal{E}_i\onel\la -i\cdot j \ra  &
  &
   \xy 0;/r.17pc/:
  (0,0)*{\xybox{(0,0)*{\dcrossrr{i}{j}};
     (9,1)*{\scs  \lambda};
     (-10,0)*{};(10,0)*{};
     }};
  \endxy\;\; \maps \cal{F}_i\cal{F}_j\onel  \to \cal{F}_j\cal{F}_i\onel\la - i\cdot j \ra  \nn \\
     & & & \nn \\
   \xy 0;/r.17pc/:
  (0,0)*{\xybox{(0,0)*{\lcrossrr{i}{j}};
     (9,1)*{\scs  \lambda};
     (-10,0)*{};(10,0)*{};
     }};
  \endxy \;\;&\maps \cal{F}_i\cal{E}_j\onel  \to \cal{E}_j\cal{F}_i\onel   &
  &
   \xy 0;/r.17pc/:
  (0,0)*{\xybox{(0,0)*{\rcrossrr{i}{j}};
     (9,1)*{\scs  \lambda};
     (-10,0)*{};(10,0)*{};
     }};
  \endxy\;\; \maps \cal{E}_i\cal{F}_j\onel  \to \cal{F}_j\cal{E}_i\onel   \nn \\
  & & & \nn \\
     \xy 0;/r.17pc/:
    (0,0)*{\rcupr{i}};
    (8,-5)*{\scs  \lambda};
    (-10,0)*{};(10,0)*{};
    \endxy &\maps \onel  \to \cal{F}_i\cal{E}_i\onel\la 1 + \l_i \ra   &
    &
   \xy 0;/r.17pc/:
    (0,0)*{\lcupr{i}};
    (8,-5)*{\scs \lambda};
    (-10,0)*{};(10,0)*{};
    \endxy \maps \onel  \to\cal{E}_i\cal{F}_i\onel\la 1 - \l_i \ra  \nn \\
      & & & \nn \\
  \xy 0;/r.17pc/:
    (0,0)*{\lcapr{i}};
    (8,4)*{\scs  \lambda};
    (-10,0)*{};(10,0)*{};
    \endxy & \maps \cal{F}_i\cal{E}_i\onel \to\onel\la 1 + \l_i \ra  &
    &
 \xy 0;/r.17pc/:
    (0,0)*{\rcapr{i}};
    (8,4)*{\scs  \lambda};
    (-10,0)*{};(10,0)*{};
    \endxy \maps\cal{E}_i\cal{F}_i\onel  \to\onel\la 1 - \l_i \ra \nn
\end{align}
\end{itemize}
\end{definition}

Note that we follow the grading conventions in \cite{CLau,LQR}, which are opposite to those from \cite{KL3}.
We read such diagrams from right to left and bottom to top, and the identity 2-morphisms of the 1-morphisms $\cal{E}_i \onel$ and $\cal{F}_i \onel$ are
depicted by upward and downward oriented segments labeled by $i$, respectively.

The following local relations are imposed on the 2-morphisms.
\begin{enumerate}
\item \label{def:lr-adj}
Right and left adjunction:
\[
 \xy   0;/r.17pc/:
    (-8,0)*{}="1";
    (0,0)*{}="2";
    (8,0)*{}="3";
    (-8,-10);"1" **\dir{-};
    "1";"2" **\crv{(-8,8) & (0,8)} ?(0)*\dir{>} ?(1)*\dir{>};
    "2";"3" **\crv{(0,-8) & (8,-8)}?(1)*\dir{>};
    "3"; (8,10) **\dir{-};
    (12,-9)*{\lambda};
    (-6,9)*{\lambda+\alpha_i};
    \endxy
    \; =
    \;
\xy   0;/r.17pc/:
    (-8,0)*{}="1";
    (0,0)*{}="2";
    (8,0)*{}="3";
    (0,-10);(0,10)**\dir{-} ?(.5)*\dir{>};
    (5,8)*{\lambda};
    (-9,8)*{\lambda+\alpha_i};
    \endxy
\;\; , \qquad
    \xy  0;/r.17pc/:
    (8,0)*{}="1";
    (0,0)*{}="2";
    (-8,0)*{}="3";
    (8,-10);"1" **\dir{-};
    "1";"2" **\crv{(8,8) & (0,8)} ?(0)*\dir{<} ?(1)*\dir{<};
    "2";"3" **\crv{(0,-8) & (-8,-8)}?(1)*\dir{<};
    "3"; (-8,10) **\dir{-};
    (12,9)*{\lambda+\alpha_i};
    (-6,-9)*{\lambda};
    \endxy
    \; =
    \;
\xy  0;/r.17pc/:
    (8,0)*{}="1";
    (0,0)*{}="2";
    (-8,0)*{}="3";
    (0,-10);(0,10)**\dir{-} ?(.5)*\dir{<};
    (9,-8)*{\lambda+\alpha_i};
    (-6,-8)*{\lambda};
    \endxy
\]

\[
 \xy   0;/r.17pc/:
    (8,0)*{}="1";
    (0,0)*{}="2";
    (-8,0)*{}="3";
    (8,-10);"1" **\dir{-};
    "1";"2" **\crv{(8,8) & (0,8)} ?(0)*\dir{>} ?(1)*\dir{>};
    "2";"3" **\crv{(0,-8) & (-8,-8)}?(1)*\dir{>};
    "3"; (-8,10) **\dir{-};
    (12,9)*{\lambda};
    (-5,-9)*{\lambda+\alpha_i};
    \endxy
    \; =
    \;
    \xy 0;/r.17pc/:
    (8,0)*{}="1";
    (0,0)*{}="2";
    (-8,0)*{}="3";
    (0,-10);(0,10)**\dir{-} ?(.5)*\dir{>};
    (5,-8)*{\lambda};
    (-9,-8)*{\lambda+\alpha_i};
    \endxy
\;\; , \qquad
\xy   0;/r.17pc/:
    (-8,0)*{}="1";
    (0,0)*{}="2";
    (8,0)*{}="3";
    (-8,-10);"1" **\dir{-};
    "1";"2" **\crv{(-8,8) & (0,8)} ?(0)*\dir{<} ?(1)*\dir{<};
    "2";"3" **\crv{(0,-8) & (8,-8)}?(1)*\dir{<};
    "3"; (8,10) **\dir{-};
    (12,-9)*{\lambda+\alpha_i};
    (-6,9)*{\lambda};
    \endxy
    \; =
    \;
\xy   0;/r.17pc/:
    (-8,0)*{}="1";
    (0,0)*{}="2";
    (8,0)*{}="3";
    (0,-10);(0,10)**\dir{-} ?(.5)*\dir{<};
   (9,8)*{\lambda+\alpha_i};
    (-6,8)*{\lambda};
    \endxy
\]

\item \label{def:dot-cyc}
Dot cyclicity:
\begin{equation*}
 \xy 0;/r.17pc/:
    (-8,5)*{}="1";
    (0,5)*{}="2";
    (0,-5)*{}="2'";
    (8,-5)*{}="3";
    (-8,-10);"1" **\dir{-};
    "2";"2'" **\dir{-} ?(.5)*\dir{<};
    "1";"2" **\crv{(-8,12) & (0,12)} ?(0)*\dir{<};
    "2'";"3" **\crv{(0,-12) & (8,-12)}?(1)*\dir{<};
    "3"; (8,10) **\dir{-};
    (17,-9)*{\lambda+\alpha_i};
    (-12,9)*{\lambda};
    (0,4)*{\bullet};
    (10,8)*{\scs };
    (-10,-8)*{\scs };
    \endxy
\;\; = \;\;
      \xy 0;/r.17pc/:
 (0,10);(0,-10); **\dir{-} ?(.75)*\dir{<}+(2.3,0)*{\scriptstyle{}}
 ?(.1)*\dir{ }+(2,0)*{\scs };
 (0,0)*{\bullet};
 (-6,5)*{\lambda};
 (10,5)*{\lambda+\alpha_i};
 (-10,0)*{};(10,0)*{};(-2,-8)*{\scs };
 \endxy
\;\; = \;\;
   \xy 0;/r.17pc/:
    (8,5)*{}="1";
    (0,5)*{}="2";
    (0,-5)*{}="2'";
    (-8,-5)*{}="3";
    (8,-10);"1" **\dir{-};
    "2";"2'" **\dir{-} ?(.5)*\dir{<};
    "1";"2" **\crv{(8,12) & (0,12)} ?(0)*\dir{<};
    "2'";"3" **\crv{(0,-12) & (-8,-12)}?(1)*\dir{<};
    "3"; (-8,10) **\dir{-};
    (17,9)*{\lambda+\alpha_i};
    (-12,-9)*{\lambda};
    (0,4)*{\bullet};
    (-10,8)*{\scs };
    (10,-8)*{\scs };
    \endxy
\end{equation*}

\item  \label{def:cross-cyc}
Crossing cyclicity:
\begin{align*}
   \xy 0;/r.17pc/:
  (0,0)*{\xybox{
    (-4,4)*{};(4,-4)*{} **\crv{(-4,1) & (4,-1)}?(1)*\dir{>} ;
    (4,4)*{};(-4,-4)*{} **\crv{(4,1) & (-4,-1)}?(1)*\dir{>};
    (-6.5,-3)*{\scs i};
     (6.5,-3)*{\scs j};
     (9,1)*{\scs  \lambda};
     (-10,0)*{};(10,0)*{};
     }};
  \endxy
\;\; &= \;\;
  \xy 0;/r.17pc/:
  (0,0)*{\xybox{
    (4,-4)*{};(-4,4)*{} **\crv{(4,-1) & (-4,1)}?(1)*\dir{>};
    (-4,-4)*{};(4,4)*{} **\crv{(-4,-1) & (4,1)};
     (-4,4)*{};(18,4)*{} **\crv{(-4,16) & (18,16)} ?(1)*\dir{>};
     (4,-4)*{};(-18,-4)*{} **\crv{(4,-16) & (-18,-16)} ?(1)*\dir{<}?(0)*\dir{<};
     (-18,-4);(-18,12) **\dir{-};(-12,-4);(-12,12) **\dir{-};
     (18,4);(18,-12) **\dir{-};(12,4);(12,-12) **\dir{-};
     (22,1)*{ \lambda};
     (-10,0)*{};(10,0)*{};
     (-4,-4)*{};(-12,-4)*{} **\crv{(-4,-10) & (-12,-10)}?(1)*\dir{<}?(0)*\dir{<};
      (4,4)*{};(12,4)*{} **\crv{(4,10) & (12,10)}?(1)*\dir{>}?(0)*\dir{>};
      (-20,11)*{\scs j};(-10,11)*{\scs i};
      (20,-11)*{\scs j};(10,-11)*{\scs i};
     }};
  \endxy
\;\; = \;\;
\xy 0;/r.17pc/:
  (0,0)*{\xybox{
    (-4,-4)*{};(4,4)*{} **\crv{(-4,-1) & (4,1)}?(1)*\dir{>};
    (4,-4)*{};(-4,4)*{} **\crv{(4,-1) & (-4,1)};
     (4,4)*{};(-18,4)*{} **\crv{(4,16) & (-18,16)} ?(1)*\dir{>};
     (-4,-4)*{};(18,-4)*{} **\crv{(-4,-16) & (18,-16)} ?(1)*\dir{<}?(0)*\dir{<};
     (18,-4);(18,12) **\dir{-};(12,-4);(12,12) **\dir{-};
     (-18,4);(-18,-12) **\dir{-};(-12,4);(-12,-12) **\dir{-};
     (22,1)*{ \lambda};
     (-10,0)*{};(10,0)*{};
      (4,-4)*{};(12,-4)*{} **\crv{(4,-10) & (12,-10)}?(1)*\dir{<}?(0)*\dir{<};
      (-4,4)*{};(-12,4)*{} **\crv{(-4,10) & (-12,10)}?(1)*\dir{>}?(0)*\dir{>};
      (20,11)*{\scs i};(10,11)*{\scs j};
      (-20,-11)*{\scs i};(-10,-11)*{\scs j};
     }};
  \endxy \\
  \xy 0;/r.18pc/:
  (0,0)*{\xybox{
    (-4,-4)*{};(4,4)*{} **\crv{(-4,-1) & (4,1)}?(1)*\dir{>} ;
    (4,-4)*{};(-4,4)*{} **\crv{(4,-1) & (-4,1)}?(0)*\dir{<};
    (-5,-3)*{\scs j};
     (6.5,-3)*{\scs i};
     (9,2)*{ \lambda};
     (-12,0)*{};(12,0)*{};
     }};
  \endxy
\;\; &= \;\;
 \xy 0;/r.17pc/:
  (0,0)*{\xybox{
    (4,-4)*{};(-4,4)*{} **\crv{(4,-1) & (-4,1)}?(1)*\dir{>};
    (-4,-4)*{};(4,4)*{} **\crv{(-4,-1) & (4,1)};
     (-4,4);(-4,12) **\dir{-};
     (-12,-4);(-12,12) **\dir{-};
     (4,-4);(4,-12) **\dir{-};(12,4);(12,-12) **\dir{-};
     (16,1)*{\lambda};
     (-10,0)*{};(10,0)*{};
     (-4,-4)*{};(-12,-4)*{} **\crv{(-4,-10) & (-12,-10)}?(1)*\dir{<}?(0)*\dir{<};
      (4,4)*{};(12,4)*{} **\crv{(4,10) & (12,10)}?(1)*\dir{>}?(0)*\dir{>};
      (-14,11)*{\scs i};(-2,11)*{\scs j};
      (14,-11)*{\scs i};(2,-11)*{\scs j};
     }};
  \endxy
\;\; = \;\;
 \xy 0;/r.17pc/:
  (0,0)*{\xybox{
    (-4,-4)*{};(4,4)*{} **\crv{(-4,-1) & (4,1)}?(1)*\dir{<};
    (4,-4)*{};(-4,4)*{} **\crv{(4,-1) & (-4,1)};
     (4,4);(4,12) **\dir{-};
     (12,-4);(12,12) **\dir{-};
     (-4,-4);(-4,-12) **\dir{-};(-12,4);(-12,-12) **\dir{-};
     (16,1)*{\lambda};
     (10,0)*{};(-10,0)*{};
     (4,-4)*{};(12,-4)*{} **\crv{(4,-10) & (12,-10)}?(1)*\dir{>}?(0)*\dir{>};
      (-4,4)*{};(-12,4)*{} **\crv{(-4,10) & (-12,10)}?(1)*\dir{<}?(0)*\dir{<};
     }};
     (12,11)*{\scs j};(0,11)*{\scs i};
      (-17,-11)*{\scs j};(-5,-11)*{\scs i};
  \endxy \\
  \xy 0;/r.18pc/:
  (0,0)*{\xybox{
    (-4,-4)*{};(4,4)*{} **\crv{(-4,-1) & (4,1)}?(0)*\dir{<} ;
    (4,-4)*{};(-4,4)*{} **\crv{(4,-1) & (-4,1)}?(1)*\dir{>};
    (5.1,-3)*{\scs i};
     (-6.5,-3)*{\scs j};
     (9,2)*{ \lambda};
     (-12,0)*{};(12,0)*{};
     }};
  \endxy
\;\; &= \;\;
 \xy 0;/r.17pc/:
  (0,0)*{\xybox{
    (-4,-4)*{};(4,4)*{} **\crv{(-4,-1) & (4,1)}?(1)*\dir{>};
    (4,-4)*{};(-4,4)*{} **\crv{(4,-1) & (-4,1)};
     (4,4);(4,12) **\dir{-};
     (12,-4);(12,12) **\dir{-};
     (-4,-4);(-4,-12) **\dir{-};(-12,4);(-12,-12) **\dir{-};
     (16,-6)*{\lambda};
     (10,0)*{};(-10,0)*{};
     (4,-4)*{};(12,-4)*{} **\crv{(4,-10) & (12,-10)}?(1)*\dir{<}?(0)*\dir{<};
      (-4,4)*{};(-12,4)*{} **\crv{(-4,10) & (-12,10)}?(1)*\dir{>}?(0)*\dir{>};
      (14,11)*{\scs j};(2,11)*{\scs i};
      (-14,-11)*{\scs j};(-2,-11)*{\scs i};
     }};
  \endxy
\;\; = \;\;
  \xy 0;/r.17pc/:
  (0,0)*{\xybox{
    (4,-4)*{};(-4,4)*{} **\crv{(4,-1) & (-4,1)}?(1)*\dir{<};
    (-4,-4)*{};(4,4)*{} **\crv{(-4,-1) & (4,1)};
     (-4,4);(-4,12) **\dir{-};
     (-12,-4);(-12,12) **\dir{-};
     (4,-4);(4,-12) **\dir{-};(12,4);(12,-12) **\dir{-};
     (16,6)*{\lambda};
     (-10,0)*{};(10,0)*{};
     (-4,-4)*{};(-12,-4)*{} **\crv{(-4,-10) & (-12,-10)}?(1)*\dir{>}?(0)*\dir{>};
      (4,4)*{};(12,4)*{} **\crv{(4,10) & (12,10)}?(1)*\dir{<}?(0)*\dir{<};
      (-14,11)*{\scs i};(-2,11)*{\scs j};(14,-11)*{\scs i};(2,-11)*{\scs j};
     }};
  \endxy
\end{align*}

The next three relations imply that the $\cal{E}$'s (and $\cal{F}'s$) carry an action of the KLR algebra associated to $Q$.
\item \label{def:KLR-R2}
Quadratic KLR:
\[
 \vcenter{\xy 0;/r.17pc/:
    (-4,-4)*{};(4,4)*{} **\crv{(-4,-1) & (4,1)}?(1)*\dir{};
    (4,-4)*{};(-4,4)*{} **\crv{(4,-1) & (-4,1)}?(1)*\dir{};
    (-4,4)*{};(4,12)*{} **\crv{(-4,7) & (4,9)}?(1)*\dir{};
    (4,4)*{};(-4,12)*{} **\crv{(4,7) & (-4,9)}?(1)*\dir{};
    (8,8)*{\lambda};
    (4,12); (4,13) **\dir{-}?(1)*\dir{>};
    (-4,12); (-4,13) **\dir{-}?(1)*\dir{>};
  (-5.5,-3)*{\scs i};
     (5.5,-3)*{\scs j};
 \endxy}
\;\; = \;\;
\left\{
 \begin{array}{cl}
 0 & \text{if $i=j$, } \\
     t_{ij}\;\xy 0;/r.17pc/:
  (3,9);(3,-9) **\dir{-}?(0)*\dir{<}+(2.3,0)*{};
  (-3,9);(-3,-9) **\dir{-}?(0)*\dir{<}+(2.3,0)*{};
  (-5,-6)*{\scs i};     (5.1,-6)*{\scs j};
 \endxy &  \text{if $i\cdot j=0$,}\\
t_{ij} \vcenter{\xy 0;/r.17pc/:
  (3,9);(3,-9) **\dir{-}?(.5)*\dir{<}+(2.3,0)*{};
  (-3,9);(-3,-9) **\dir{-}?(.5)*\dir{<}+(2.3,0)*{};
  (-3,4)*{\bullet};
  (-5,-6)*{\scs i};     (5.1,-6)*{\scs j};
 \endxy} \;\; + \;\; t_{ji}
  \vcenter{\xy 0;/r.17pc/:
  (3,9);(3,-9) **\dir{-}?(.5)*\dir{<}+(2.3,0)*{};
  (-3,9);(-3,-9) **\dir{-}?(.5)*\dir{<}+(2.3,0)*{};
  (3,4)*{\bullet};
  (-5,-6)*{\scs i};     (5.1,-6)*{\scs j};
 \endxy} &  \text{if $i\cdot j=-1$}
 \end{array}
\right.
\]

\item \label{def:dot-slide}
Dot slide:
\[
\xy 0;/r.18pc/:
  (0,0)*{\xybox{
    (-4,-4)*{};(4,6)*{} **\crv{(-4,-1) & (4,1)}?(1)*\dir{>}?(.25)*{\bullet};
    (4,-4)*{};(-4,6)*{} **\crv{(4,-1) & (-4,1)}?(1)*\dir{>};
    (-5,-3)*{\scs i};
     (5.1,-3)*{\scs j};
     (-10,0)*{};(10,0)*{};
     }};
  \endxy
-
\xy 0;/r.18pc/:
  (0,0)*{\xybox{
    (-4,-4)*{};(4,6)*{} **\crv{(-4,-1) & (4,1)}?(1)*\dir{>}?(.75)*{\bullet};
    (4,-4)*{};(-4,6)*{} **\crv{(4,-1) & (-4,1)}?(1)*\dir{>};
    (-5,-3)*{\scs i};
     (5.1,-3)*{\scs j};
     (-10,0)*{};(10,0)*{};
     }};
  \endxy
=
\xy 0;/r.18pc/:
  (0,0)*{\xybox{
    (-4,-4)*{};(4,6)*{} **\crv{(-4,-1) & (4,1)}?(1)*\dir{>};
    (4,-4)*{};(-4,6)*{} **\crv{(4,-1) & (-4,1)}?(1)*\dir{>}?(.75)*{\bullet};
    (-5,-3)*{\scs i};
     (5.1,-3)*{\scs j};
     (-10,0)*{};(10,0)*{};
     }};
  \endxy
-
  \xy 0;/r.18pc/:
  (0,0)*{\xybox{
    (-4,-4)*{};(4,6)*{} **\crv{(-4,-1) & (4,1)}?(1)*\dir{>} ;
    (4,-4)*{};(-4,6)*{} **\crv{(4,-1) & (-4,1)}?(1)*\dir{>}?(.25)*{\bullet};
    (-5,-3)*{\scs i};
     (5.1,-3)*{\scs j};
     (-10,0)*{};(12,0)*{};
     }};
  \endxy
=
 \left\{\\
 \begin{array}{ccc}
     \xy 0;/r.18pc/:
  (0,0)*{\xybox{
    (-4,-4)*{};(-4,6)*{} **\dir{-}?(1)*\dir{>} ;
    (4,-4)*{};(4,6)*{} **\dir{-}?(1)*\dir{>};
    (-5,-3)*{\scs i};
     (5.1,-3)*{\scs i};
     (-10,0)*{};(12,0)*{};
     }};
  \endxy& & \text{if $i=j$}\\ \\
 0 & & \text{if $i\neq j$} \\
 \end{array}\right.
\]

\item \label{def:KLR-R3}
Cubic KLR:
\[
\vcenter{\xy 0;/r.17pc/:
    (-4,-4)*{};(4,4)*{} **\crv{(-4,-1) & (4,1)}?(1)*\dir{};
    (4,-4)*{};(-4,4)*{} **\crv{(4,-1) & (-4,1)}?(1)*\dir{};
    (4,4)*{};(12,12)*{} **\crv{(4,7) & (12,9)}?(1)*\dir{};
    (12,4)*{};(4,12)*{} **\crv{(12,7) & (4,9)}?(1)*\dir{};
    (-4,12)*{};(4,20)*{} **\crv{(-4,15) & (4,17)}?(1)*\dir{};
    (4,12)*{};(-4,20)*{} **\crv{(4,15) & (-4,17)}?(1)*\dir{};
    (-4,4)*{}; (-4,12) **\dir{-};
    (12,-4)*{}; (12,4) **\dir{-};
    (12,12)*{}; (12,20) **\dir{-};
    (4,20); (4,21) **\dir{-}?(1)*\dir{>};
    (-4,20); (-4,21) **\dir{-}?(1)*\dir{>};
    (12,20); (12,21) **\dir{-}?(1)*\dir{>};
   (18,8)*{\lambda};  (-6,-3)*{\scs i};
  (6,-3)*{\scs j};
  (15,-3)*{\scs k};
\endxy}
 \;\; - \;\;
\vcenter{\xy 0;/r.17pc/:
    (4,-4)*{};(-4,4)*{} **\crv{(4,-1) & (-4,1)}?(1)*\dir{};
    (-4,-4)*{};(4,4)*{} **\crv{(-4,-1) & (4,1)}?(1)*\dir{};
    (-4,4)*{};(-12,12)*{} **\crv{(-4,7) & (-12,9)}?(1)*\dir{};
    (-12,4)*{};(-4,12)*{} **\crv{(-12,7) & (-4,9)}?(1)*\dir{};
    (4,12)*{};(-4,20)*{} **\crv{(4,15) & (-4,17)}?(1)*\dir{};
    (-4,12)*{};(4,20)*{} **\crv{(-4,15) & (4,17)}?(1)*\dir{};
    (4,4)*{}; (4,12) **\dir{-};
    (-12,-4)*{}; (-12,4) **\dir{-};
    (-12,12)*{}; (-12,20) **\dir{-};
    (4,20); (4,21) **\dir{-}?(1)*\dir{>};
    (-4,20); (-4,21) **\dir{-}?(1)*\dir{>};
    (-12,20); (-12,21) **\dir{-}?(1)*\dir{>};
  (10,8)*{\lambda};
  (-14,-3)*{\scs i};
  (-6,-3)*{\scs j};
  (6,-3)*{\scs k};
\endxy}
\;\; = \;\;
 \left\{\\
 \begin{array}{cl}
 t_{ij} \;
\xy 0;/r.17pc/:
  (4,12);(4,-12) **\dir{-}?(.5)*\dir{<};
  (-4,12);(-4,-12) **\dir{-}?(.5)*\dir{<} ;
  (12,12);(12,-12) **\dir{-}?(.5)*\dir{<} ;
  (-6,-9)*{\scs i};     (6.1,-9)*{\scs j};
  (14,-9)*{\scs i};
 \endxy & \text{if $i=k$ and $i\cdot j=-1$}\\ \\
 0 & \text{if $i\neq k$ or $i\cdot j\neq -1$}\\
\end{array}\right.
\]

\item \label{def:mixed}
Mixed $EF$: for $i \neq j$
\begin{equation*}
 \vcenter{   \xy 0;/r.18pc/:
    (-4,-4)*{};(4,4)*{} **\crv{(-4,-1) & (4,1)}?(1)*\dir{>};
    (4,-4)*{};(-4,4)*{} **\crv{(4,-1) & (-4,1)}?(1)*\dir{<};?(0)*\dir{<};
    (-4,4)*{};(4,12)*{} **\crv{(-4,7) & (4,9)};
    (4,4)*{};(-4,12)*{} **\crv{(4,7) & (-4,9)}?(1)*\dir{>};
  (8,8)*{\lambda};(-6,-3)*{\scs i};
     (6,-3)*{\scs j};
 \endxy}
 \;\; = \;\;
\xy 0;/r.18pc/:
  (3,9);(3,-9) **\dir{-}?(.55)*\dir{>}+(2.3,0)*{};
  (-3,9);(-3,-9) **\dir{-}?(.5)*\dir{<}+(2.3,0)*{};
  (8,2)*{\lambda};(-5,-6)*{\scs i};     (5.1,-6)*{\scs j};
 \endxy
\qquad , \qquad
    \vcenter{\xy 0;/r.18pc/:
    (-4,-4)*{};(4,4)*{} **\crv{(-4,-1) & (4,1)}?(1)*\dir{<};?(0)*\dir{<};
    (4,-4)*{};(-4,4)*{} **\crv{(4,-1) & (-4,1)}?(1)*\dir{>};
    (-4,4)*{};(4,12)*{} **\crv{(-4,7) & (4,9)}?(1)*\dir{>};
    (4,4)*{};(-4,12)*{} **\crv{(4,7) & (-4,9)};
  (8,8)*{\lambda};(-6,-3)*{\scs i};
     (6,-3)*{\scs j};
 \endxy}
 \;\;=\;\;
\xy 0;/r.18pc/:
  (3,9);(3,-9) **\dir{-}?(.5)*\dir{<}+(2.3,0)*{};
  (-3,9);(-3,-9) **\dir{-}?(.55)*\dir{>}+(2.3,0)*{};
  (8,2)*{\lambda};(-5,-6)*{\scs i};     (5.1,-6)*{\scs j};
 \endxy
\end{equation*}

\item \label{def:UQ-bub}
Bubble relations:
\[
\xy 0;/r.18pc/:
(-12,0)*{\icbub{\l_i-1+m}{i}};
(-8,8)*{\lambda};
\endxy =
\begin{cases}
c_{i,\l} \Id_{\onel} & \text{if } m=0 \\
0 & \text{if } m<0
\end{cases}
\qquad , \qquad
\xy 0;/r.18pc/: (-12,0)*{\iccbub{-\l_i-1+m}{i}};
 (-8,8)*{\lambda};
 \endxy =
\begin{cases}
c_{i,\l}^{-1} \Id_{\onel} & \text{if } m=0 \\
0 & \text{if } m<0
\end{cases}
\]

\item \label{def:EF}
Extended ${\mathfrak{sl}}_2$ relations:

These final relations are the most-involved, and require the introduction of \emph{fake bubbles} --
positive degree endomorphisms of $\onel$ that are denoted by a bubble carrying a formal label by a negative number of dots.
They are defined by
\[
  \vcenter{\xy 0;/r.18pc/:
    (2,-11)*{\icbub{\l_i-1+j}{i}};
  (12,-2)*{\l};
 \endxy} \;\; =
 \left\{
 \begin{array}{cl}
  \;\; -\; c_{i,\l}\;\;
\displaystyle\sum_{\substack{a+b=j\\ b\geq 1}}
\;\; \vcenter{\xy 0;/r.18pc/:
    (2,0)*{\icbub{\l_i-1+a}{i}};
    (24,0)*{\iccbub{-\l_i-1+b}{i}};
  (12,8)*{\lambda};
 \endxy}  & \text{if $0 < j < -\l_i+1$} \\
   0 & \text{if $j \leq 0$. }
 \end{array}
\right.
\]
when $\l_i<0$, and by
\[
  \vcenter{\xy 0;/r.18pc/:
    (2,-11)*{\iccbub{-\l_i-1+j}{i}};
  (12,-2)*{\l};
 \endxy} \;\; =
 \left\{
 \begin{array}{cl}
  \;\; -\;c_{i,\l}^{-1}\;\;
\displaystyle\sum_{\substack{a+b=j\\ a\geq 1}}
\;\; \vcenter{\xy 0;/r.18pc/:
    (2,0)*{\icbub{\l_i-1+a}{i}};
    (24,0)*{\iccbub{-\l_i-1+b}{i}};
  (12,8)*{\lambda};
 \endxy}  & \text{if $0 <j < \l_i+1$} \\
   0 & \text{if $j \leq 0$. }
 \end{array}
\right.
\]
when $\l_i>0$.
The extended $\mf{sl}_2$ relations are then as follows, where we
employ the convention here (and throughout) that all summations are ``increasing'', \ie $\displaystyle \sum_{\substack{a+b+c\\=\mu}} X_{a,b,c}$ is zero if $\mu<0$.
\[
 \vcenter{\xy 0;/r.17pc/:
  (-8,0)*{};
  (8,0)*{};
  (-4,10)*{}="t1";
  (4,10)*{}="t2";
  (-4,-10)*{}="b1";
  (4,-10)*{}="b2";(-6,-8)*{\scs i};(6,-8)*{\scs i};
  "t1";"b1" **\dir{-} ?(.5)*\dir{<};
  "t2";"b2" **\dir{-} ?(.5)*\dir{>};
  (10,2)*{\l};
  \endxy}
\;\; = \;\; -\;\;
 \vcenter{   \xy 0;/r.17pc/:
    (-4,-4)*{};(4,4)*{} **\crv{(-4,-1) & (4,1)}?(1)*\dir{>};
    (4,-4)*{};(-4,4)*{} **\crv{(4,-1) & (-4,1)}?(1)*\dir{<};?(0)*\dir{<};
    (-4,4)*{};(4,12)*{} **\crv{(-4,7) & (4,9)};
    (4,4)*{};(-4,12)*{} **\crv{(4,7) & (-4,9)}?(1)*\dir{>};
  (8,8)*{\l};
     (-6,-3)*{\scs i};
     (6.5,-3)*{\scs i};
 \endxy}
  \;\; + \;\;
   \sum_{ \xy  (0,3)*{\scs a+b+c}; (0,0)*{\scs =\l_i-1};\endxy}
    \vcenter{\xy 0;/r.17pc/:
    (-12,10)*{\l};
    (-8,0)*{};
  (8,0)*{};
  (-4,-15)*{}="b1";
  (4,-15)*{}="b2";
  "b2";"b1" **\crv{(5,-8) & (-5,-8)}; ?(.05)*\dir{<} ?(.93)*\dir{<}
  ?(.8)*\dir{}+(0,-.1)*{\bullet}+(-3,2)*{\scs c};
  (-4,15)*{}="t1";
  (4,15)*{}="t2";
  "t2";"t1" **\crv{(5,8) & (-5,8)}; ?(.15)*\dir{>} ?(.95)*\dir{>}
  ?(.4)*\dir{}+(0,-.2)*{\bullet}+(3,-2)*{\scs \; a};
  (0,0)*{\iccbub{\scs \quad -\l_i-1+b}{i}};
  (7,-13)*{\scs i};
  (-7,13)*{\scs i};
  \endxy}
\]
\[
 \vcenter{\xy 0;/r.17pc/:
  (-8,0)*{};(-6,-8)*{\scs i};(6,-8)*{\scs i};
  (8,0)*{};
  (-4,10)*{}="t1";
  (4,10)*{}="t2";
  (-4,-10)*{}="b1";
  (4,-10)*{}="b2";
  "t1";"b1" **\dir{-} ?(.5)*\dir{>};
  "t2";"b2" **\dir{-} ?(.5)*\dir{<};
  (10,2)*{\l};
  \endxy}
\;\; = \;\;
  -\;\;\vcenter{\xy 0;/r.17pc/:
    (-4,-4)*{};(4,4)*{} **\crv{(-4,-1) & (4,1)}?(1)*\dir{<};?(0)*\dir{<};
    (4,-4)*{};(-4,4)*{} **\crv{(4,-1) & (-4,1)}?(1)*\dir{>};
    (-4,4)*{};(4,12)*{} **\crv{(-4,7) & (4,9)}?(1)*\dir{>};
    (4,4)*{};(-4,12)*{} **\crv{(4,7) & (-4,9)};
  (8,8)*{\l};(-6.5,-3)*{\scs i};  (6,-3)*{\scs i};
 \endxy}
  \;\; + \;\;
    \sum_{ \xy  (0,3)*{\scs a+b+c}; (0,0)*{\scs =-\l_i-1};\endxy}
    \vcenter{\xy 0;/r.17pc/:
    (-8,0)*{};
  (8,0)*{};
  (-4,-15)*{}="b1";
  (4,-15)*{}="b2";
  "b2";"b1" **\crv{(5,-8) & (-5,-8)}; ?(.1)*\dir{>} ?(.95)*\dir{>}
  ?(.8)*\dir{}+(0,-.1)*{\bullet}+(-3,2)*{\scs c};
  (-4,15)*{}="t1";
  (4,15)*{}="t2";
  "t2";"t1" **\crv{(5,8) & (-5,8)}; ?(.15)*\dir{<} ?(.9)*\dir{<}
  ?(.4)*\dir{}+(0,-.2)*{\bullet}+(3,-2)*{\scs a};
  (0,0)*{\icbub{\scs \quad\; \l_i-1 + b}{i}};
    (7,-13)*{\scs i};
  (-7,13)*{\scs i};
  (-10,10)*{\l};
  \endxy}
\]
\end{enumerate}

\begin{rem}\label{rem:presentation}
We will find it helpful to work with the reduced presentation for $\Ucat_Q$ where we restrict to the following generating 2-morphisms:
\begin{align*}
  \xy 0;/r.17pc/:
 (0,0)*{\sdotur{i}};
 (7,3)*{ \scs \lambda};
 (-9,3)*{\scs  \lambda+\alpha_i};
 (-10,0)*{};(10,0)*{};
 \endxy &\maps \cal{E}_i\onel \to \cal{E}_i\onel\la 2 \ra
 & \quad &
   \xy 0;/r.17pc/:
  (0,0)*{\xybox{
(0,0)*{\ucrossrr{i}{j}};
     (9,1)*{\scs  \lambda};
     (-10,0)*{};(10,0)*{};
     }};
  \endxy\maps \cal{E}_i\cal{E}_j\onel  \to \cal{E}_j\cal{E}_i\onel\la -i\cdot j \ra  \\
     \xy 0;/r.17pc/:
    (0,0)*{\rcupr{i}};
    (8,-5)*{\scs  \lambda};
    (-10,0)*{};(10,0)*{};
    \endxy &\maps \onel  \to \cal{F}_i\cal{E}_i\onel\la 1 + \l_i \ra   & &
   \xy 0;/r.17pc/:
    (0,0)*{\lcupr{i}};
    (8,-5)*{\scs \lambda};
    (-10,0)*{};(10,0)*{};
    \endxy \maps \onel  \to\cal{E}_i\cal{F}_i\onel\la 1 - \l_i \ra   \\
      & & &  \\
  \xy 0;/r.17pc/:
    (0,0)*{\lcapr{i}};
    (8,4)*{\scs  \lambda};
    (-10,0)*{};(10,0)*{};
    \endxy & \maps \cal{F}_i\cal{E}_i\onel \to\onel\la 1 + \l_i \ra  & &
 \xy 0;/r.17pc/:
    (0,0)*{\rcapr{i}};
    (8,4)*{\scs  \lambda};
    (-10,0)*{};(10,0)*{};
    \endxy \maps\cal{E}_i\cal{F}_i\onel  \to\onel\la 1 - \l_i \ra
\end{align*}
Indeed, the downward dot $2$-morphism and sideways and downward crossings can be defined in various ways by composing the upward versions with caps and cups,
and the cyclicity relations show that they do not depend on the choices made in doing so.
Further, Brundan~\cite{Brundan2} has shown that this presentation can be further simplified to agree with the one given by Rouquier~\cite{Rou2} that requires a smaller set of axioms,
together with the requirement that certain $2$-morphisms are (abstractly) invertible.
Although this further reduced presentation is helpful in checking that biadjointness and cyclicity hold in various $2$-representations,
it is not useful in our present work, as showing that the required maps are invertible essentially requires verifying the omitted axioms in $\Ucat_Q$.
\end{rem}

%
\subsection{Additional relations in \texorpdfstring{$\Ucat_Q$}{UQ}}
%

Here, we collect additional useful relations that will be used in later sections.

%
\subsubsection{Curl relations}\label{sec:curl}
%

Dotted curls can be reduced to simpler diagrams using the following.
\[
  \xy 0;/r.17pc/:
  (14,8)*{\l};
  (-3,-10)*{};(3,5)*{} **\crv{(-3,-2) & (2,1)}?(1)*\dir{>};?(.15)*\dir{>};
    (3,-5)*{};(-3,10)*{} **\crv{(2,-1) & (-3,2)}?(.85)*\dir{>} ?(.1)*\dir{>};
  (3,5)*{}="t1";  (9,5)*{}="t2";
  (3,-5)*{}="t1'";  (9,-5)*{}="t2'";
   "t1";"t2" **\crv{(4,8) & (9, 8)};
   "t1'";"t2'" **\crv{(4,-8) & (9, -8)};
   "t2'";"t2" **\crv{(10,0)} ;
(9.5,0)*{\bullet}+(3,2)*{\scs m};
   (-6,-8)*{\scs i};
 \endxy \;\; =
 \;\; -
   \sum_{ \xy  (0,3)*{\scs f_1+f_2 }; (0,0)*{\scs =m-\l_i};\endxy}
 \xy 0;/r.17pc/:
  (19,4)*{\l};
  (0,0)*{\bber{}};(-2,-8)*{\scs };
  (-2,-8)*{\scs i};
  (12,-2)*{\icbub{\l_i-1+f_2}{i}};
  (0,6)*{\bullet}+(3,-1)*{\scs f_1};
 \endxy
\quad , \quad
  \xy 0;/r.17pc/:
  (-14,8)*{\l};
  (3,-10)*{};(-3,5)*{} **\crv{(3,-2) & (-2,1)}?(1)*\dir{>};?(.15)*\dir{>};
    (-3,-5)*{};(3,10)*{} **\crv{(-2,-1) & (3,2)}?(.85)*\dir{>} ?(.1)*\dir{>};
  (-3,5)*{}="t1";  (-9,5)*{}="t2";
  (-3,-5)*{}="t1'";  (-9,-5)*{}="t2'";
   "t1";"t2" **\crv{(-4,8) & (-9, 8)};
   "t1'";"t2'" **\crv{(-4,-8) & (-9, -8)};
   "t2'";"t2" **\crv{(-10,0)} ;
(-9.5,0)*{\bullet}+(-3,2)*{\scs m};
   (6,-8)*{\scs i};
 \endxy \;\; = \;\;
 \sum_{ \xy  (0,3)*{\scs g_1+g_2 }; (0,0)*{\scs =m+\l_i};\endxy}
  \xy 0;/r.17pc/:
  (5,-8)*{\scs i};
  (-12,8)*{\l};
  (3,0)*{\bber{}};(2,-8)*{\scs};
  (-12,-2)*{\iccbub{-\l_i-1+g_2}{i}};
  (3,6)*{\bullet}+(3,-1)*{\scs g_1};
 \endxy
\]
Note that in \cite{Lau1,CLau} the $m=0$ cases of these relations were included in the defining list of relations,
but it was shown in \cite[Lemma 3.2]{BHLW2} that these relations (for arbitrary $m$) follow from the relations presented above.

%
\subsubsection{Infinite Grassmannian relations}\label{sec:inf}
%
These relations are obtained by equating the terms homogeneous in $t$ in the expression below.
\[
 \makebox[0pt][c]{ $
\left( \xy 0;/r.15pc/:
 (0,0)*{\iccbub{-\l_i-1}{i}};
  (4,8)*{\l};
 \endxy
 +
 \xy 0;/r.15pc/:
 (0,0)*{\iccbub{-\l_i-1+1}{i}};
  (4,8)*{\l};
 \endxy t
 + \cdots +
\xy 0;/r.15pc/:
 (0,0)*{\iccbub{-\l_i-1+\alpha}{i}};
  (4,8)*{\l};
 \endxy t^{\alpha}
 + \cdots
\right)
\left(\xy 0;/r.15pc/:
 (0,0)*{\icbub{\l_i-1}{i}};
  (4,8)*{\l};
 \endxy
 + \xy 0;/r.15pc/:
 (0,0)*{\icbub{\l_i-1+1}{i}};
  (4,8)*{\l};
 \endxy t
 +\cdots +
\xy 0;/r.15pc/:
 (0,0)*{\icbub{\l_i-1+\alpha}{i}};
 (4,8)*{\l};
 \endxy t^{\alpha}
 + \cdots
\right) = \Id_{\onel}$ }
\]
For low powers of $t$, these relations encode the definition of fake bubbles in terms of (real) bubbles,
and, for higher powers of $t$, they follow from the curl and extended $\mf{sl}_2$-relations.

%
\subsubsection{Bubble slides}\label{sec:bubble}
%
In what follows, we make use of the shorthand notation  for bubbles from \cite{KLMS}
\[
    \xy 0;/r.18pc/:
  (4,8)*{\lambda};
  (2,-2)*{\icbub{\spadesuit+\alpha}{i}};
 \endxy \;\; := \;\;
   \xy 0;/r.18pc/:
  (4,8)*{\lambda};
  (2,-2)*{\icbub{\la i,\lambda\ra-1+\alpha}{i}};
 \endxy
 \qquad
 \qquad
    \xy 0;/r.18pc/:
  (4,8)*{\lambda};
  (2,-2)*{\iccbub{\spadesuit+\alpha}{i}};
 \endxy \;\; := \;\;
   \xy 0;/r.18pc/:
  (4,8)*{\lambda};
  (2,-2)*{\iccbub{-\la i,\lambda\ra-1+\alpha}{i}};
 \endxy
\]
As long as $\alpha \geq 0$, this notation makes sense even when $\spadesuit+\alpha <0$,
in which case these are the fake bubbles defined in the previous section.

Counterclockwise bubbles can be slid through upward oriented lines via the following relations:
\begin{align*}
    \xy 0;/r.18pc/:
  (14,8)*{\lambda};
  (0,0)*{\bber{}};
  (0,-12)*{\scs j};
  (12,-2)*{\iccbub{\spadesuit+\alpha}{i}};
  (0,6)*{ }+(7,-1)*{\scs  };
 \endxy
&=
 \left\{
 \begin{array}{cl}
  \xsum{f=0}{\alpha}(\alpha+1-f)
   \xy 0;/r.18pc/:
  (0,8)*{\lambda+\alpha_j};
  (12,0)*{\bber{}};
  (12,-12)*{\scs j};
  (0,-2)*{\iccbub{\spadesuit+f}{i}};
  (12,6)*{\bullet}+(5,-1)*{\scs \alpha-f};
 \endxy
& \text{if } i=j \\ \\
     t_{ij}\;   \xy 0;/r.18pc/:
  (0,8)*{\lambda+\alpha_j};
  (12,0)*{\bber{}};
  (11,-12)*{\scs j};
  (0,-2)*{\iccbub{\spadesuit+\alpha}{i}};
 \endxy
 \quad + \quad  t_{ji} \;\;
  \xy 0;/r.18pc/:
  (0,8)*{\lambda+\alpha_j};
  (12,0)*{\bber{}};
  (12,-12)*{\scs j};
  (0,-2)*{\iccbub{\spadesuit+\alpha-1}{i}};
  (12,6)*{\bullet}+(5,-1)*{\scs };
 \endxy
& \text{if } a_{ij} =-1 \\
 \qquad \qquad t_{ij} \; \xy 0;/r.18pc/:
  (0,8)*{\lambda+\alpha_j};
  (12,0)*{\bber{}};
  (12,-12)*{\scs j};
  (0,-2)*{\iccbub{\spadesuit+\alpha}{i}};
 \endxy & \text{if } a_{ij}=0
 \end{array}
 \right. \\
\xy 0;/r.18pc/:
  (0,8)*{\lambda+\alpha_j};
  (12,0)*{\bber{}};
  (12,-12)*{\scs j};
  (0,-2)*{\iccbub{\spadesuit+\alpha}{i}};
  (12,6)*{}+(8,-1)*{\scs };
 \endxy
&=
\left\{
\begin{array}{cl}
    \xy 0;/r.18pc/:
  (15,8)*{\lambda};
  (0,0)*{\bber{}};
  (0,-12)*{\scs j};
  (12,-2)*{\iccbub{\spadesuit+(\alpha-2)}{i}};
  (0,6)*{\bullet }+(3,1)*{\scs 2};
 \endxy
  -2 \;
      \xy 0;/r.18pc/:
  (15,8)*{\lambda};
  (0,0)*{\bber{}};
  (0,-12)*{\scs j};
  (12,-2)*{\iccbub{\spadesuit+(\alpha-1)}{i}};
  (0,6)*{\bullet }+(5,-1)*{\scs };
 \endxy
 + \;\;
      \xy 0;/r.18pc/:
  (15,8)*{\lambda};
  (0,0)*{\bber{}};
  (0,-12)*{\scs j};
  (12,-2)*{\iccbub{\spadesuit+\alpha}{i}};
 \endxy
  &   \text{if } i=j \\
t_{ij}^{-1}\;
   \xsum{f=0}{\alpha}(-t_{ij}^{-1}t_{ji})^f
    \xy 0;/r.18pc/:
  (15,8)*{\lambda};
  (0,0)*{\bber{}};
  (0,-12)*{\scs j};
  (14,-2)*{\iccbub{\spadesuit+(\alpha-f)}{i}};
  (0,6)*{\bullet }+(3,1)*{\scs f};
 \endxy
    &   \text{if } a_{ij} =-1
\end{array}
\right.
\end{align*}
and we have similar relations involving clockwise bubbles:
\begin{align*}
    \xy 0;/r.18pc/:
  (15,8)*{\lambda};
  (11,0)*{\bber{}};
  (11,-12)*{\scs j};
  (0,-2)*{\icbub{\spadesuit+\alpha\quad }{i}};
 \endxy
&=
  \left\{\begin{array}{cl}
     \xsum{f=0}{\alpha}(\alpha+1-f)
     \xy 0;/r.18pc/:
  (18,8)*{\lambda};
  (0,0)*{\bber{}};
  (0,-12)*{\scs j};
  (14,-4)*{\icbub{\spadesuit+f}{i}};
  (0,6)*{\bullet }+(5,-1)*{\scs \alpha-f};
 \endxy
         & \text{if } i=j  \\
  t_{ji}\;\;  \xy 0;/r.18pc/:
  (18,8)*{\lambda};
  (0,0)*{\bber{}};
  (0,-12)*{\scs j};
  (12,-2)*{\icbub{\spadesuit+ \alpha-1}{i}};
    (0,6)*{\bullet }+(5,-1)*{\scs};
 \endxy
 \quad + \quad t_{ij} \;
\xy 0;/r.18pc/:
  (18,8)*{\lambda};
  (0,0)*{\bber{}};
  (0,-12)*{\scs j};
  (12,-2)*{\icbub{\spadesuit+ \alpha}{i}};
 \endxy
       &  \text{if } a_{ij} =-1\\
t_{ji} \;
    \xy 0;/r.18pc/:
  (15,8)*{\lambda};
  (0,0)*{\bber{}};
  (0,-12)*{\scs j};
  (12,-2)*{\icbub{\spadesuit+\alpha}{i}};
 \endxy          & \text{if } a_{ij} = 0
         \end{array}
 \right. \\
\xy 0;/r.18pc/:
  (15,8)*{\lambda};
  (0,0)*{\bber{}};
  (0,-12)*{\scs j};
  (12,-2)*{\icbub{\spadesuit+\alpha}{i}};
 \endxy
   &=
  \left\{
  \begin{array}{cl}
     \xy 0;/r.18pc/:
  (0,8)*{\lambda+\alpha_i};
  (12,0)*{\bber{}};
  (12,-12)*{\scs j};
  (0,-2)*{\icbub{\spadesuit+(\alpha-2)}{i}};
  (12,6)*{\bullet}+(3,-1)*{\scs 2};
 \endxy
   -2 \;
         \xy 0;/r.18pc/:
  (0,8)*{\lambda+\alpha_i};
  (12,0)*{\bber{}};
  (12,-12)*{\scs j};
  (0,-2)*{\icbub{\spadesuit+(\alpha-1)}{i}};
  (12,6)*{\bullet}+(8,-1)*{\scs };
 \endxy
 + \;\;
     \xy 0;/r.18pc/:
  (0,8)*{\lambda+\alpha_i};
  (12,0)*{\bber{}};
  (12,-12)*{\scs j};
  (0,-2)*{\icbub{\spadesuit+\alpha}{i}};
  (12,6)*{}+(8,-1)*{\scs };
 \endxy
  &   \text{if } i = j \\
  t_{ij}^{-1}\;\xsum{f=0}{\alpha} (-t_{ij}^{-1}t_{ji})^{f}
  \xy 0;/r.18pc/:
  (0,8)*{\lambda+\alpha_j};
  (14,0)*{\bber{}};
  (12,-12)*{\scs j};
  (0,-2)*{\icbub{\spadesuit+\alpha-f}{i}};
  (14,6)*{\bullet}+(3,-1)*{\scs f};
 \endxy &   \text{if } a_{ij} =-1.
  \end{array}
 \right.
\end{align*}
Both types of bubbles can then be slid through downward oriented lines
using these relations and the cyclicity of $\Ucat_Q(\mf{g})$.

%
\subsubsection{Triple intersections}\label{sec:triple}
%
We have
\begin{equation}\label{eq:otherR3}
\vcenter{\xy 0;/r.17pc/:
    (-4,-4)*{};(4,4)*{} **\crv{(-4,-1) & (4,1)}?(0)*\dir{};
    (4,-4)*{};(-4,4)*{} **\crv{(4,-1) & (-4,1)}?(0)*\dir{<};
    (4,4)*{};(12,12)*{} **\crv{(4,7) & (12,9)}?(1)*\dir{};
    (12,4)*{};(4,12)*{} **\crv{(12,7) & (4,9)}?(1)*\dir{};
    (-4,12)*{};(4,20)*{} **\crv{(-4,15) & (4,17)}?(1)*\dir{};
    (4,12)*{};(-4,20)*{} **\crv{(4,15) & (-4,17)}?(1)*\dir{};
    (-4,4)*{}; (-4,12) **\dir{-};
    (12,-4)*{}; (12,4) **\dir{-};
    (12,12)*{}; (12,20) **\dir{-};
    (4,20); (4,21) **\dir{-}?(1)*\dir{};
    (-4,20); (-4,21) **\dir{-}?(1)*\dir{>};
    (12,20); (12,21) **\dir{-}?(1)*\dir{>};
   (18,8)*{\lambda};  (-6,-3)*{\scs i};
  (6.5,-3)*{\scs j};
  (15,-3)*{\scs k};
\endxy}
\;\; - \;\;
\vcenter{\xy 0;/r.17pc/:
    (4,-4)*{};(-4,4)*{} **\crv{(4,-1) & (-4,1)}?(0)*\dir{};
    (-4,-4)*{};(4,4)*{} **\crv{(-4,-1) & (4,1)}?(0)*\dir{<};
    (-4,4)*{};(-12,12)*{} **\crv{(-4,7) & (-12,9)}?(1)*\dir{};
    (-12,4)*{};(-4,12)*{} **\crv{(-12,7) & (-4,9)}?(1)*\dir{};
    (4,12)*{};(-4,20)*{} **\crv{(4,15) & (-4,17)}?(1)*\dir{};
    (-4,12)*{};(4,20)*{} **\crv{(-4,15) & (4,17)}?(1)*\dir{};
    (4,4)*{}; (4,12) **\dir{-};
    (-12,-4)*{}; (-12,4) **\dir{-};
    (-12,12)*{}; (-12,20) **\dir{-};
    (4,20); (4,21) **\dir{-}?(1)*\dir{>};
    (-4,20); (-4,21) **\dir{-}?(1)*\dir{};
    (-12,20); (-12,21) **\dir{-}?(1)*\dir{>};
  (10,8)*{\lambda};
  (-14,-3)*{\scs i};
  (-6.5,-3)*{\scs j};
  (6,-3)*{\scs k};
\endxy}
=
\begin{cases}
\displaystyle
\sum_{\substack{a+b+c+d \\ = \l_i}}
\vcenter{\xy 0;/r.17pc/:
    (20,10)*{\l};
    (-8,0)*{};
  (8,0)*{};
  (-4,-15)*{}="b1";
  (4,-15)*{}="b2";
  "b2";"b1" **\crv{(5,-8) & (-5,-8)}; ?(.05)*\dir{<} ?(.93)*\dir{<}
  ?(.8)*\dir{}+(0,-.1)*{\bullet}+(-3,2)*{\scs c};
  (-4,15)*{}="t1";
  (4,15)*{}="t2";
  "t2";"t1" **\crv{(5,8) & (-5,8)}; ?(.15)*\dir{>} ?(.95)*\dir{>}
  ?(.4)*\dir{}+(0,-.2)*{\bullet}+(3,-2)*{\scs \; a};
  (0,0)*{\iccbub{\scs \quad \spadesuit+b}{i}};
  (7,-13)*{\scs i};
  (-7,13)*{\scs i};
(15,-15); (15,15) **\dir{-}?(1)*\dir{>};
 (17,-13)*{\scs i};
 (15,0)*{\bullet}+(3,2)*{\scs d};
  \endxy}
+
\sum_{\substack{a+b+c+d \\ = -\l_i-2}}
\vcenter{\xy 0;/r.17pc/:
    (12,10)*{\l};
    (-8,0)*{};
  (8,0)*{};
  (-4,-15)*{}="b1";
  (4,-15)*{}="b2";
  "b2";"b1" **\crv{(5,-8) & (-5,-8)}; ?(.07)*\dir{>} ?(.97)*\dir{>}
  ?(.8)*\dir{}+(0,-.1)*{\bullet}+(-3,2)*{\scs c};
  (-4,15)*{}="t1";
  (4,15)*{}="t2";
  "t2";"t1" **\crv{(5,8) & (-5,8)}; ?(.05)*\dir{<} ?(.9)*\dir{<}
  ?(.4)*\dir{}+(0,-.2)*{\bullet}+(3,-2)*{\scs \; a};
  (0,0)*{\iccbub{\scs \quad \spadesuit+b}{i}};
  (7,-13)*{\scs i};
  (-7,13)*{\scs i};
(-15,-15); (-15,15) **\dir{-}?(1)*\dir{>};
 (-17,-13)*{\scs i};
 (-15,0)*{\bullet}+(3,2)*{\scs d};
  \endxy}
  & \text{if } i=j=k \\ \\
0 & \text{else}
\end{cases}
\end{equation}
which is \cite[Proposition 5.8]{Lau1} when $i=j=k$, and follows from
cyclicity, the mixed $EF$ relation, and the cubic KLR relation in the other case.

%
\subsection{The 2-categories \texorpdfstring{$\UcatD_Q$}{UQ}, \texorpdfstring{$\Kom(\UcatD_Q)$}{Kom U}, and \texorpdfstring{$\Com(\UcatD_Q)$}{Com U}} \label{sec_KomU}
%


\subsubsection{Categories of complexes}
Given an additive category $\cal{M}$, we let $\Kom(\cal{M})$ denote the category of bounded
complexes in $\cal{M}$.
By convention, we work with \emph{cochain} complexes, so an object $(X,d)$ of $\Kom(\cal{M})$ is a collection of objects $X^i$ in $\cal{M}$
together with maps
\[
\cdots \xrightarrow{d_{i-2}} X^{i-1} \xrightarrow{d_{i-1}} X^i \xrightarrow{d_i} X^{i+1} \xrightarrow{d_{i+1}} \cdots
\]
such that $d_{i+1} d_i =0$ and only finitely many of the $X^i$'s are nonzero.
A morphism $f \maps (X,d) \to (Y,d')$ in $\Kom(\cal{M})$ consists of
a collection of morphisms $f_i \maps X^i \to Y^i$ in $\cal{M}$ such that $f_{i+1} d_i = d'_i f_i$.

Recall that morphisms $f,g \maps (X,d) \to (Y,d')$ in $\Kom(\cal{M})$ are called \emph{(chain) homotopic}, denoted by $f \sim g$,
if there exist morphisms $h^i \maps X^{i} \to Y^{i-1}$ such that
$f_i-g_i = h^{i+1}d_i+d'_{i-1}h^i$ for all $i$.
A morphism of complexes is said to be null-homotopic if it is homotopic to the zero map.

\begin{definition}
The homotopy category $\Com(\cal{M})$ is the addditive category with the same objects as $\Kom(\cal{M})$
with morphisms given by morphisms in $\Kom(\cal{M})$ modulo null-homotopic morphisms.
\end{definition}
We say that two complexes $(X,d_X)$ and $(Y,d_Y)$ are homotopy equivalent provided they are isomorphic in $\Com(\cal{M})$,
and denote this by $X \simeq Y$.

If $\cal{M}$ is monoidal, then $\Kom(\cal{M})$ is also monoidal, with the tensor product $(XY,d)$ of
$(X,d_X)$ and $(Y,d_Y)$ defined as follows:
\begin{equation}\label{eq:CompOfCom}
(XY)^i = \bigoplus_{r+s=i} X^{r}Y^{s} \;\; , \;\; d_i := \displaystyle\sum_{r+s=i} (d_X)_r \Id_{Y^s}+(-1)^{r} \Id_{X^r}(d_Y)_s
\end{equation}
Here, we denote the tensor product of objects and morphisms in $\cal{M}$ by juxtaposition.
Given chain maps $f \maps (X,d_X) \to (X',d_{X'})$ and $g \maps (Y,d_Y) \to (Y',d_{Y'})$ define the tensor product $fg \maps (XY,d) \to (X'Y',d')$ of chain maps by setting
\begin{equation} \label{eq_tensor_maps}
f_i =  \bigoplus_{r+s=i} f_r g_s.
\end{equation}
It is straightforward to check that if $f\sim f'$ and $g \sim g'$, then $fg \sim f'g'$,
so $\Com(\cal{M})$ inherits a monoidal structure from $\Kom(\cal{M})$.

\begin{rem}\label{rem:KomC}
More generally\footnote{Recall that a monoidal category can be interpreted as a $2$-category with only one object.},
if $\cal{C}$ is an additive $2$-category, we can consider the $2$-categories $\Kom(\cal{C})$ and $\Com(\cal{C})$ obtained by
taking complexes in each $\Hom$-category.
The above description of tensor product of complexes specifies how to take horizontal composition in $\Kom(\cal{C})$ and $\Com(\cal{C})$.
\end{rem}

\subsubsection{Karoubi envelope}

The Karoubi envelope $Kar(\cal{M})$ of a category $\cal{M}$ is the universal enlargement of $\cal{M}$ in which all idempotents split.
Recall that we say an idempotent $e \maps b\to b$ in a category $\cal{M}$ splits if there
exist morphisms $b \xrightarrow{g} b' \xrightarrow{h} b$ such that $e=hg$ and $gh = \Id_{b'}$.
The Karoubi envelope $Kar(\cal{M})$ admits an explicit description as the
category whose objects are pairs $(b,e)$, where $e \maps b \to b$ is an idempotent of $\cal{M}$,
and where morphisms are triples of the form
\[
 (e,f,e') \maps (b,e) \to (b',e')
\]
for $f \maps b \to b'$ in $\cal{M}$ satisfying $f=e'f=fe$.
Composition is induced from composition in $\cal{M}$, and the identity morphism is $(e,e,e) \maps (b,e) \to (b,e)$.

The identity map $\Id_b\maps b \to b$ is an idempotent,
and the assignment $b \mapsto (b, \Id_b)$ defines a fully faithful functor $\cal{M} \hookrightarrow Kar(\cal{M})$,
and this functor is universal among functors from $\cal{M}$ to idempotent split categories.
If $\cal{M}$ is additive then so is $Kar(\cal{M})$ and this embedding is additive;
in this case, for $(b,e)\in Kar(\cal{M})$, we have that $b \cong \im e \oplus \im (\Id_b-e)$ where $\im e := (b,e)$.
See \cite[Section 9]{Lau1} and references therein for more details.

The following result shows that the Karoubi envelope interacts nicely with passage to
(homotopy) categories of complexes.

\begin{prop}[{\cite[Propositions 3.6 and 3.7]{BKL-Casimir}}]\label{prop_Kar}
For any additive category $\cal{M}$ there is a canonical equivalence
$
\Kom\left(Kar(\cal{M})\right) \cong Kar\left(\Kom(\cal{M})\right).
$
Moreover, if $\cal{M}$ is $\Bbbk$-linear with finite-dimensional $\Hom$-spaces, then there is a canonical equivalence
$
\Com\left(Kar(\cal{M})\right) \cong Kar\left(\Com(\cal{M})\right).
$
\end{prop}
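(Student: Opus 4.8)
The plan is to establish the two equivalences in turn, bootstrapping the second from the first.

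For the first equivalence I would write down an explicit functor $F \maps \Kom(Kar(\cal{M})) \to Kar(\Kom(\cal{M}))$. A complex in $Kar(\cal{M})$ consists of terms $(X^i,e^i)$ and differentials represented by maps $d^i \maps X^i \to X^{i+1}$ in $\cal{M}$ with $d^i = e^{i+1}d^i = d^i e^i$; since $d^{i+1}d^i$ already equals $e^{i+2}(d^{i+1}d^i)e^i$, and this composite is the zero morphism of $Kar(\cal{M})$, one gets $d^{i+1}d^i=0$ genuinely in $\cal{M}$, so $(X^\bullet,d^\bullet)$ is an honest complex in $\cal{M}$ and $e^\bullet$ an idempotent chain endomorphism of it; declare $F$ to send the original complex to the pair $((X^\bullet,d^\bullet),e^\bullet)$. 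A morphism of complexes in $Kar(\cal{M})$ is precisely the same data as a morphism between the associated objects of $Kar(\Kom(\cal{M}))$, so $F$ is fully faithful essentially by inspection. For essential surjectivity, given $((Z,d_Z),\epsilon)$ in $Kar(\Kom(\cal{M}))$ one forms the complex in $Kar(\cal{M})$ with terms $(Z^i,\epsilon^i)$ and differentials $d_Z^i\epsilon^i$, and verifies that its image under $F$ is isomorphic to $((Z,d_Z),\epsilon)$, the isomorphism in each direction being represented by $\epsilon$ itself. This proves $\Kom(Kar(\cal{M}))\cong Kar(\Kom(\cal{M}))$ for any additive $\cal{M}$.

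Next I would check that $F$ descends to the homotopy level. The one point requiring thought: if $g$ is a morphism between objects of $Kar(\Kom(\cal{M}))$ carrying idempotents $\bar e,\bar f$, so that $g=\bar f g\bar e$, then $g$ admits a null-homotopy through maps of $\cal{M}$ if and only if it admits one through maps of $Kar(\cal{M})$ — given a null-homotopy $h^\bullet$ in $\cal{M}$, the conjugate $\bar f h\bar e$ is one in $Kar(\cal{M})$ (using that $\bar e,\bar f$ commute with the differentials and that $g=\bar f g\bar e$). Hence $F$ induces an equivalence $\Com(Kar(\cal{M}))\cong Kar(\Kom(\cal{M}))/(\sim)$, where $\sim$ identifies parallel morphisms with homotopic underlying chain maps, and this quotient embeds fully faithfully into $Kar(\Com(\cal{M}))$. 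It therefore remains to show this embedding is essentially surjective. Invoking the first equivalence once more, this reduces to showing that $\Com(Kar(\cal{M}))$ is idempotent complete: granting that, the natural fully faithful functor $\Com(Kar(\cal{M})) \to Kar(\Com(Kar(\cal{M}))) \cong Kar(\Com(\cal{M}))$ is an equivalence, where the last identification holds because $\Com(\cal{M}) \hookrightarrow \Com(Kar(\cal{M}))$ is fully faithful and every object of $\Com(Kar(\cal{M}))$ is a direct summand of one in its image.

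The crux — and the only place the $\Bbbk$-linearity and finite-dimensionality of $\Hom$-spaces enter — is the idempotent completeness of $\Com(Kar(\cal{M}))$, and I expect this to be the main obstacle. To handle it I would note that $Kar(\cal{M})$ is idempotent complete with finite-dimensional (hence Artinian, hence semiperfect) endomorphism rings, so it is a Krull--Schmidt category, and then invoke the fact that the bounded homotopy category of a Krull--Schmidt additive category is again Krull--Schmidt, and in particular idempotent complete; if one prefers to avoid citing this, it can be re-derived by decomposing each bounded complex over $Kar(\cal{M})$ into a minimal complex and a contractible one and analyzing the endomorphism rings of minimal complexes. Assembling the equivalences of the previous paragraphs then yields $\Com(Kar(\cal{M}))\cong Kar(\Com(\cal{M}))$, completing the argument.
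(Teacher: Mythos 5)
Your argument is correct. Note first that the paper itself offers no proof of this statement --- it is quoted verbatim from \cite{BKL-Casimir} (Propositions 3.6 and 3.7) --- so the comparison is really with the argument there. Your treatment of the first equivalence (the explicit functor $F$, full faithfulness by inspection, and essential surjectivity via the complex with terms $(Z^i,\epsilon^i)$ and differential $d_Z\epsilon$, with $\epsilon$ furnishing the isomorphism in both directions) is exactly the standard one. For the second equivalence your reduction is also sound: the conjugation $h \mapsto \bar{f}h\bar{e}$ correctly shows that null-homotopies can be taken compatible with the idempotents, so everything does come down to idempotent completeness of $\Com(Kar(\cal{M}))$, and your observation that every object of $\Com(Kar(\cal{M}))$ is a summand of one coming from $\Com(\cal{M})$ closes the essential-surjectivity argument. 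Where you diverge from the cited source is in how you obtain that idempotent completeness: you route through the Krull--Schmidt property of $Kar(\cal{M})$ and the (true, but nontrivial) fact that the bounded homotopy category of a Krull--Schmidt category is again Krull--Schmidt, via minimal complexes. A more direct argument, closer in spirit to the reference and worth knowing, is that for a bounded complex $X$ the algebra $\End_{\Kom(Kar(\cal{M}))}(X)$ is finite-dimensional over $\Bbbk$, and idempotents lift along any surjection from a finite-dimensional algebra (lift modulo the nilpotent Jacobson radical, then use that a surjection of semisimple algebras splits); the lifted strict idempotent then splits degreewise since $Kar(\cal{M})$ is idempotent complete. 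This avoids the minimal-complex machinery entirely, at the cost of the small ring-theoretic lemma; your route buys the stronger Krull--Schmidt conclusion for $\Com(\UcatD_Q)$, which is in fact used implicitly elsewhere in categorification arguments. Either way, the finite-dimensionality hypothesis enters at precisely the point you identified.
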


%
\subsubsection{Karoubi envelope of \texorpdfstring{$\Ucat_Q$}{U}}
%

\begin{definition}
The additive 2-category $\UcatD_Q$ has the same objects as $\Ucat_Q$ and has $\Hom$-categories given by
$\UcatD_Q(\lambda,\lambda') = Kar\left(\Ucat_Q(\lambda,\lambda')\right)$.
\end{definition}

Horizontal composition in $\UcatD_Q$ is induced from composition in $\Ucat_Q$ using the universal property of the
Karoubi envelope, and we similarly obtain an additive, fully-faithful 2-functor $\Ucat_Q \to \UcatD_Q$
that is universal with respect to splitting idempotents in the $\Hom$-categories $\UcatD_Q(\lambda,\lambda')$.
The significance of the 2-category $\UcatD_Q(\mf{g})$ is given by the following theorem.

\begin{thm} \label{thm_Groth}
(\cite{Lau1,KL3,Web5})
There is an isomorphism
$
\gamma\maps  \UA \xrightarrow{\cong} K_0(\UcatD_Q(\mf{g}))
$
where $K_0(\UcatD_Q)$ denotes the split Grothendieck ring of $\UcatD_Q$.
\end{thm}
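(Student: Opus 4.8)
The plan is to build the map $\gamma$ on algebra generators of $\UA$, check that it respects the defining relations by lifting each one to an honest direct-sum decomposition of $1$-morphisms in $\UcatD_Q$, and then treat surjectivity (routine) and injectivity (the real content) separately. First, recall that $\UA$ is generated over $\Z[q,q^{-1}]$ by the divided powers $E_i^{(a)}1_\lambda$ and $F_i^{(a)}1_\lambda$. On the categorical side, the quadratic KLR, dot-slide, and cubic KLR relations (relations~\ref{def:KLR-R2}, \ref{def:dot-slide}, \ref{def:KLR-R3} of Definition~\ref{defU_cat-cyc}) make $\mathrm{End}_{\Ucat_Q}(\cal{E}_i^a\onel)$ into a copy of the nilHecke algebra on $a$ strands; its standard primitive idempotent (the full twist capped by a staircase of dots) splits in $\UcatD_Q$ off a $1$-morphism $\cal{E}_i^{(a)}\onel$ with $\cal{E}_i^a\onel$ isomorphic to a direct sum of shifts of $\cal{E}_i^{(a)}\onel$ whose classes total $[a]!\,[\cal{E}_i^{(a)}\onel]$, and similarly for $\cal{F}$. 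I would then set $\gamma(1_\lambda) := [\onel]$, $\gamma(E_i^{(a)}1_\lambda) := [\cal{E}_i^{(a)}\onel]$, and $\gamma(F_i^{(a)}1_\lambda) := [\cal{F}_i^{(a)}\onel]$; since the shift $\la 1 \ra$ acts as multiplication by $q$ in $K_0$ and horizontal composition of $1$-morphisms becomes the product, this $\gamma$ is automatically $\Z[q,q^{-1}]$-linear and multiplicative on generators.

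To see that $\gamma$ is well defined, it suffices to realize each defining relation of $\UA$ by an isomorphism of $1$-morphisms in $\UcatD_Q$ and then apply $[-]$. The weight relations are immediate from $\cal{E}_i\onel = \onell{\lambda+\alpha_i}\cal{E}_i$; the divided-power identities $E_i^{(a)}E_i^{(b)}1_\lambda = \qbins{a+b}{a} E_i^{(a+b)}1_\lambda$ come from idempotent decompositions inside the nilHecke algebra; the commutator relation $E_iF_i1_\lambda - F_iE_i1_\lambda = [\lambda_i]1_\lambda$ lifts to $\cal{E}_i\cal{F}_i\onel \cong \cal{F}_i\cal{E}_i\onel \oplus \bigoplus^{[\lambda_i]}\onel$ for $\lambda_i \geq 0$ (and its mirror for $\lambda_i \leq 0$), which one extracts from the extended $\mf{sl}_2$ relations together with the adjunction and bubble relations of Definition~\ref{defU_cat-cyc} by writing down explicit mutually inverse $2$-morphisms; the mixed $EF$ relation for $i \neq j$ gives $\cal{E}_i\cal{F}_j\onel \cong \cal{F}_j\cal{E}_i\onel$; and the quantum Serre relations lift to an isomorphism identifying the two multiplicity spaces occurring in the decomposition of $\bigoplus_{a+b=1-a_{ij}}\cal{E}_i^{(a)}\cal{E}_j\cal{E}_i^{(b)}\onel$, again built from nilHecke-type idempotents. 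Applying $[-]$ to these isomorphisms recovers exactly the relations presenting $\UA$, so $\gamma$ descends to a $\Z[q,q^{-1}]$-algebra homomorphism.

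For surjectivity, I would use that, by construction of the Karoubi envelope, every $1$-morphism of $\UcatD_Q$ is a direct sum of shifts of summands of composites of $\cal{E}_i\onel$, $\cal{F}_i\onel$ and $\onel$; the class of such a composite is $\gamma$ of the corresponding monomial in the $E_i$, $F_i$, so $\gamma$ is onto. The hard part will be injectivity, which reduces to the \emph{nondegeneracy} of $\Ucat_Q$: one must show that the spanning set of diagrams in normal form is in fact a basis of each $2$-morphism space, so that the graded rank of $\mathrm{Hom}_{\UcatD_Q}(\onel\la t\ra, x\onel)$ equals the value dictated by Lusztig's bilinear form on $\UA$. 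Granting this, comparing graded ranks of $2$-Hom spaces between composites of the generators against those values produces a $\Z[q,q^{-1}]$-linear left inverse to $\gamma$, which forces injectivity --- this is the argument carried out in \cite{Lau1} for $\mf{sl}_2$ and in \cite{KL3} in general. Nondegeneracy itself is established by exhibiting a faithful $2$-representation of $\Ucat_Q$ on which the normal-form diagrams act by linearly independent operators: one uses the action on equivariant cohomology of partial flag varieties in the $\mf{sl}_2$ case, and, in the generality of the theorem, Webster's categorified tensor product $2$-representations \cite{Web5}, whose faithfulness yields the statement for arbitrary symmetric Kac--Moody type. Putting these pieces together shows $\gamma$ is an isomorphism.
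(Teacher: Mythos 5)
Your outline is correct and matches the standard argument: the paper itself gives no proof of Theorem~\ref{thm_Groth}, citing \cite{Lau1,KL3,Web5}, and your sketch --- $\gamma$ defined on divided powers via nilHecke idempotents, the defining relations lifted to explicit direct-sum decompositions of $1$-morphisms, surjectivity from the Karoubi construction, and injectivity reduced to nondegeneracy of the graphical calculus established through faithful $2$-representations (equivariant cohomology of flag varieties for $\mf{sl}_2$, Webster's tensor-product categorifications in general) --- is precisely the route taken in those references. The only detail worth flagging is that the theorem here concerns the cyclic version $\Ucat_Q^{cyc}$ with an arbitrary choice of scalars $Q$ and bubble parameters $C$, so one also needs the comparison of the $2$-categories for different choices of $(Q,C)$ from \cite{BHLW2} to transport the $K_0$ computation from the normalizations used in the original sources.
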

For $\frak{g} = \mf{sl}_2$, this theorem also holds over $\Z$ by the results in \cite{KLMS}.

%
\subsubsection{Karoubian envelopes of \texorpdfstring{$\Kom(\Ucat)$}{Kom U} and \texorpdfstring{$\Com(\Ucat)$}{Com U}}
%

Following Remark \ref{rem:KomC} above, we consider the $2$-categories $\Kom(\Ucat_Q)$ and $\Com(\Ucat_Q)$.
Noting that the 2-$\Hom$-spaces $\Ucat_Q(x,y\la t\ra)$ are finite-dimensional $\Bbbk$-vector space for each $t\in \Z$,
Proposition\ref{prop_Kar} gives equivalences
\[
  Kar(\Kom(\Ucat_Q)) \cong \Kom(\UcatD_Q) \;\; , \;\; Kar(\Com(\Ucat_Q)) \cong \Com(\UcatD_Q).
\]
We arrange the various $2$-categories built from $\Ucat_Q$ into the following organizational diagram,
wherein the horizontal arrows denote passage to the Karoubian envelope, and vertical arrows denote the canonical maps between the various categories of complexes.
\[
 \xy
  (-25,15)*+{\Ucat_Q}="tl";
  (25,15)*+{\UcatD = Kar(\Ucat_Q)}="tr";
  (-25,0)*+{\Kom(\Ucat_Q)}="ml";
  (25,0)*+{\Kom(\UcatD_Q) \cong Kar(\Kom(\Ucat))}="mr";
  (-25,-15)*+{\Com(\Ucat_Q)}="bl";
  (25,-15)*+{\Com(\UcatD_Q) \cong Kar(\Com(\Ucat))}="br";
    {\ar@{^{(}->} "tl";"ml"};
    {\ar@{->>} "ml";"bl"};
    {\ar@{^{(}->} "tr";"mr"};
    {\ar@{->>} "mr";"br"};
    {\ar@{^{(}->} "tl";"tr"};
    {\ar@{^{(}->} "ml";"mr"};
    {\ar@{^{(}->} "bl";"br"};
 \endxy
\]

Theorem~\ref{thm_Groth} and the main result of \cite{Rose} imply that
\[
K_0\left( Kar\left( \Com(\Ucat_Q) \right)\right) \cong
K_0\left( \Com\left( Kar(\Ucat_Q) \right)\right) \cong
K_0\left( Kar(\Ucat_Q) \right) \cong K_0(\UcatD_Q) \cong \UA
\]
where we employ the triangulated Grothendieck group for the categories of complexes.
We can hence view the Karoubi envelope of the homotopy category $\Com(\Ucat_Q)$ as a categorification of the integral idempotent form $\UA$ of the quantum group.

%
\subsection{Symmetries of Categorified Quantum Groups} \label{sec_symm}
%

In this section, we use symmetries of the diagrammatic relations in $\cal{U}_Q$ to define 2-functors $\sigma$, $\omega$, and $\psi$ (for a general choice of scalars $Q$ and bubble parameters $C$)
that lift the symmetries of quantum groups from Section \ref{sec_Usymm}
This extends the work of Khovanov and Lauda in \cite{KL3},
who defined such functors in the specific case where $t_{ij}=1=c_{i,\lambda}$ for all $i,j\in I$ and $\lambda\in X$.
These 2-functors extend naturally to 2-functors on $\UcatD_Q$, $\Kom(\UcatD_Q)$, and $\Com(\UcatD_Q)$ \cite{BKL-Casimir},
and induce the corresponding quantum group symmetries $\underline{\sigma}$, $\underline{\omega}$, and $\underline{\psi}$ on $\UA$ upon passing to $K_0$.
For this reason, we refer to them as symmetry $2$-functors.

Rather than being 2-endofunctors of $\cal{U}_Q$, some of these symmetries map between
versions $\cal{U}_Q$ and $\cal{U}'_Q$ of the categorified quantum group corresponding to \emph{different} bubble parameters.
(\emph{Caveat lector}: $\cal{U}'_Q$ should \textbf{not} be confused with $\cal{U}_{Q'}$ from \cite{CLau} which instead corresponds to a different choice of scalars $Q$.)
We define $\cal{U}'_Q$ to be the 2-category given in Definition \ref{defU_cat-cyc}, but with the bubble parameters for $\cal{U}_Q$ replaced by primed bubble parameters $(c_{i,\lambda})':=c_{i,-\lambda}^{-1}$.
The primed bubble parameters are still compatible with the choice of scalars $Q$ (used for both $\cal{U}_Q$ and $\cal{U}'_Q$),
since
\[
\frac{(c_{i,\lambda+\alpha_j})'}{(c_{i,\lambda})'} =
\frac{c_{i,-(\lambda+\alpha_j)}^{-1}} {c_{i,-\lambda}^{-1}} =
\frac{c_{i,-\lambda}}{c_{i,-\lambda-\alpha_j}}=t_{ij}.
\]
In addition to mapping between versions of the categorified quantum group corresponding to different bubble parameters,
the symmetry 2-functors possess various flavors of contravariance.
Nevertheless, they are morally pairwise-commuting involutions, as the double application of a given symmetry is the identity and the result of a composition does not depend on the order,
despite the domain and codomain being different versions of the categorified quantum group.
Given this, we will slightly abuse notation and refer to the symmetry and its inverse by the same symbol.

\subsubsection{Forms of 2-categorical contravariance }

Recall that a contravariant functor $\mathbf{C} \to \mathbf{D}$ can be rephrased in terms of a (covariant) functor $\mathbf{C} \to \mathbf{D}^{\op}$,
where $\mathbf{D}^{\op}$ is the opposite category, defined to have the same objects as in $\mathbf{D}$,
but with $\mathbf{D}^{\op}(x,y) := \mathbf{D}(y,x)$, \ie the direction of the morphisms is opposite to that in $\mathbf{D}$.
For a 2-category $\cal{C}$, we can take the opposite 2-category in various ways,
depending on whether we take the opposite at the 1-morphism or 2-morphism level (or both).
Denote by $\cal{C}^{\op}$ the 2-category with the same objects as $\cal{C}$, and where we've taken the opposite with respect to 1-morphisms,
\ie for objects $x,y$ in $\cal{C}$, we let the $\Hom$-categories be given by $\cal{C}^{\op}(x,y) := \cal{C}(y,x)$.
Let $\cal{C}^{\co}$ denote the 2-category with the same objects and 1-morphisms as $\cal{C}$, but with opposite 2-morphisms,
\ie for objects $x,y$ in $\cal{C}$, we let the $\Hom$-categories be given by $\cal{C}^{\co}(x,y) := \cal{C}(x,y)^{\op}$.
Finally, $\cal{C}^{\co\op}$ is the 2-category in which we've taken opposite 1-morphisms and 2-morphisms,
\ie $\cal{C}^{\co\op}(x,y) := \cal{C}(y,x)^{\op}$.

In the case of $\cal{U}_Q$, functors between these opposite 2-categories correspond to $\Z[q,q^{-1}]$-(anti)linear algebra (anti)automorphisms of $\UA$ upon taking the Grothendieck group,
as summarized in the following table:
\begin{align*}
\begin{tabular}{|l|l|}
  \hline
  {\bf 2-functor} & {\bf Induced map on $\UA$} \\ \hline \hline
  $\UcatD_Q \to \UcatD_Q$ &  $\Z[q,q^{-1}]$-linear
 homomorphism\\
  $\UcatD_Q \to \UcatD_Q^{\op}$ & $\Z[q,q^{-1}]$-linear
antihomomorphism \\
  $\UcatD_Q \to \UcatD_Q^{\co}$ & $\Z[q,q^{-1}]$-antilinear
 homomorphism \\
  $\UcatD_Q \to \UcatD_Q^{\co\op}$ & $\Z[q,q^{-1}]$-antilinear
antihomomorphism \\
  \hline
\end{tabular}
\end{align*}

In the following sections, we will explicitly describe $\sigma$, $\omega$, and $\psi$.
To do so, we will use the notational convention from \cite{KL3} that $\cal{E}_{-i} := \cal{F}_i$.

%
\subsubsection{The 2-functor \texorpdfstring{$\sigma \maps \Ucat_Q  \to \left(\Ucat'_Q\right)^{\op}$}{sigma} }
%
Consider the operation on the diagrammatic calculus or $\Ucat_Q$ that
reflects a diagram across a vertical axis,
replaces $\lambda \leftrightarrow -\lambda$,
and scales all $ii$-crossings by $-1$.
This operation is contravariant for composition of 1-morphisms, covariant for composition of 2-morphisms,
preserves the degree of a diagram,
and takes relations in $\Ucat_Q$ to those in $\cal{U}'_Q$.
As such, it defines an invertible 2-functor given explicitly as follows:
\begin{align*}
  \sigma \maps \Ucat_Q &\to \left(\Ucat'_Q\right)^{\op} \\
  \lambda &\mapsto  -\lambda \\
  \onem\cal{E}_{\pm i_1} \cal{E}_{\pm i_2} \cdots
 \cal{E}_{\pm i_m}\onel\la t\ra
 &\mapsto
 \mathbf{1}_{-\lambda} \cal{E}_{\pm i_m}\cdots
 \cal{E}_{\pm i_2}\cal{E}_{\pm i_1}\mathbf{1}_{-\mu}\la t\ra \\ \\
\vcenter{\xy 0;/r.15pc/:(0,0)*{\ucrossrr{i}{j}};\endxy} \mapsto (-1)^{\delta_{ij}}\vcenter{\xy 0;/r.15pc/: (0,0)*{\ucrossrr{j}{i}};\endxy} \; , \;
\vcenter{\xy 0;/r.15pc/:(0,0)*{\dcrossrr{i}{j}};\endxy} \mapsto (-1)^{\delta_{ij}}\vcenter{\xy 0;/r.15pc/: (0,0)*{\dcrossrr{j}{i}};\endxy} \; &, \;
\vcenter{\xy 0;/r.15pc/:(0,0)*{\lcrossrr{i}{j}};\endxy} \mapsto (-1)^{\delta_{ij}}\vcenter{\xy 0;/r.15pc/: (0,0)*{\rcrossrr{j}{i}};\endxy} \; , \;
\vcenter{\xy 0;/r.15pc/:(0,0)*{\rcrossrr{i}{j}};\endxy} \mapsto (-1)^{\delta_{ij}}\vcenter{\xy 0;/r.15pc/: (0,0)*{\lcrossrr{j}{i}};\endxy} \\ \\
\vcenter{\xy 0;/r.15pc/:(0,0)*{\rcapr{}};(-4,-5)*{\scs i};(5,5)*{\lambda};\endxy} \mapsto \vcenter{\xy 0;/r.15pc/: (-4,-5)*{\scs i}; (0,0)*{\lcapr{}};(5,5)*{-\lambda};\endxy} \quad , \quad
\vcenter{\xy 0;/r.15pc/:(0,0)*{\lcupr{}};(-4,6)*{\scs i}; (5,-3)*{\lambda};\endxy} \mapsto \vcenter{\xy 0;/r.15pc/: (-4,6)*{\scs i}; (0,0)*{\rcupr{}};(5,-3)*{-\lambda};\endxy} \quad &, \quad
\vcenter{\xy 0;/r.15pc/:(0,0)*{\rcupr{}};(-4,6)*{\scs i}; (5,-3)*{\lambda};\endxy} \mapsto \vcenter{\xy 0;/r.15pc/: (-4,6)*{\scs i}; (0,0)*{\lcupr{}};(5,-3)*{-\lambda};\endxy} \quad , \quad
\vcenter{\xy 0;/r.15pc/:(0,0)*{\lcapr{}};(-4,-5)*{\scs i}; (5,5)*{\lambda};\endxy} \mapsto \vcenter{\xy 0;/r.15pc/: (-4,-5)*{\scs i}; (0,0)*{\rcapr{}};(5,5)*{-\lambda};\endxy} \\ \\
\vcenter{\xy 0;/r.15pc/:(0,0)*{\slineur{}};\endxy} \mapsto \vcenter{\xy 0;/r.15pc/: (0,0)*{\slineur{}};\endxy} \quad , \quad
\vcenter{\xy 0;/r.15pc/:(0,0)*{\sdotur{}};\endxy} \mapsto \vcenter{\xy 0;/r.15pc/: (0,0)*{\sdotur{}};\endxy} \quad &, \quad
\vcenter{\xy 0;/r.15pc/:(0,0)*{\slinedr{}};\endxy} \mapsto \vcenter{\xy 0;/r.15pc/: (0,0)*{\slinedr{}};\endxy} \quad , \quad
\vcenter{\xy 0;/r.15pc/:(0,0)*{\sdotdr{}};\endxy} \mapsto \vcenter{\xy 0;/r.15pc/: (0,0)*{\sdotdr{}};\endxy}
\end{align*}
This extends to a 2-functor $\sigma\maps \Kom(\Ucat_Q)\to \Kom(\Ucat'_Q)^{\op}$
defined on 1-morphisms via
\[
(X,d) \mapsto \Big( \cdots \to \sigma(X^{i-1}) \xrightarrow{(-1)^{i-1}\sigma(d_{i-1})} \sigma(X^i) \xrightarrow{(-1)^i\sigma(d_i)} \sigma(X^{i+1}) \to \cdots \Big)
\]
and on 2-morphisms by applying $\sigma$ component-wise.
The alternating differential is essential here to preserve composition of 1-morphisms (contravariantly), due to the sign conventions in taking horizontal composition of complexes.

%
\subsubsection{The 2-functor \texorpdfstring{$\omega \maps \Ucat_Q \to \Ucat'_Q$}{omega} }
%

Consider the operation on the diagrammatic calculus for $\Ucat_Q$ that
reverses the orientation of each strand,
replaces $\lambda \leftrightarrow -\lambda$,
and scales all $ii$-crossings by $-1$.
This operation is covariant for composition of both 1-morphisms and 2-morphisms,
preserves the degree of a diagram,
and takes relations in $\Ucat_Q$ to those in $\cal{U}'_Q$.
This defines an invertible 2-functor given explicitly as follows:
\begin{align*}
  \omega \maps \Ucat_Q &\to \Ucat'_Q  \\
  \lambda &\mapsto  -\lambda \\
  \onem\cal{E}_{\pm i_1}\cal{E}_{\pm i_2} \cdots
 \cal{E}_{\pm i_m}\onel\la t\ra
 &\mapsto
 \mathbf{1}_{-\mu} \cal{E}_{\mp i_1} \cal{E}_{\mp i_2}\cdots
\cal{E}_{\mp i_m}\mathbf{1}_{-\lambda}\la t\ra \\ \\
\vcenter{\xy 0;/r.15pc/:(0,0)*{\ucrossrr{i}{j}};\endxy} \mapsto (-1)^{\delta_{ij}}\vcenter{\xy 0;/r.15pc/: (0,0)*{\dcrossrr{i}{j}};\endxy} \; , \;
\vcenter{\xy 0;/r.15pc/:(0,0)*{\dcrossrr{i}{j}};\endxy} \mapsto (-1)^{\delta_{ij}}\vcenter{\xy 0;/r.15pc/: (0,0)*{\ucrossrr{i}{j}};\endxy} \; &, \;
\vcenter{\xy 0;/r.15pc/:(0,0)*{\lcrossrr{i}{j}};\endxy} \mapsto (-1)^{\delta_{ij}}\vcenter{\xy 0;/r.15pc/: (0,0)*{\rcrossrr{i}{j}};\endxy} \; , \;
\vcenter{\xy 0;/r.15pc/:(0,0)*{\rcrossrr{i}{j}};\endxy} \mapsto (-1)^{\delta_{ij}}\vcenter{\xy 0;/r.15pc/: (0,0)*{\lcrossrr{i}{j}};\endxy} \\ \\
\vcenter{\xy 0;/r.15pc/:(0,0)*{\rcapr{}};(-4,-6)*{\scs i};(5,5)*{\lambda};\endxy} \mapsto \vcenter{\xy 0;/r.15pc/: (-4,-6)*{\scs i}; (0,0)*{\lcapr{}};(5,5)*{-\lambda};\endxy} \quad , \quad
\vcenter{\xy 0;/r.15pc/:(0,0)*{\lcupr{}};(-4,6)*{\scs i}; (5,-3)*{\lambda};\endxy} \mapsto \vcenter{\xy 0;/r.15pc/: (-4,6)*{\scs i}; (0,0)*{\rcupr{}};(5,-3)*{-\lambda};\endxy} \quad &, \quad
\vcenter{\xy 0;/r.15pc/:(0,0)*{\rcupr{}};(-4,6)*{\scs i}; (5,-3)*{\lambda};\endxy} \mapsto \vcenter{\xy 0;/r.15pc/: (-4,6)*{\scs i}; (0,0)*{\lcupr{}};(5,-3)*{-\lambda};\endxy} \quad , \quad
\vcenter{\xy 0;/r.15pc/:(0,0)*{\lcapr{}};(-4,-6)*{\scs i}; (5,5)*{\lambda};\endxy} \mapsto \vcenter{\xy 0;/r.15pc/: (-4,-6)*{\scs i}; (0,0)*{\rcapr{}};(5,5)*{-\lambda};\endxy} \\ \\
\vcenter{\xy 0;/r.15pc/:(0,0)*{\slineur{}};\endxy} \mapsto \vcenter{\xy 0;/r.15pc/: (0,0)*{\slinedr{}};\endxy} \quad , \quad
\vcenter{\xy 0;/r.15pc/:(0,0)*{\sdotur{}};\endxy} \mapsto \vcenter{\xy 0;/r.15pc/: (0,0)*{\sdotdr{}};\endxy}\quad &, \quad
\vcenter{\xy 0;/r.15pc/:(0,0)*{\slinedr{}};\endxy} \mapsto \vcenter{\xy 0;/r.15pc/: (0,0)*{\slineur{}};\endxy}\quad , \quad
\vcenter{\xy 0;/r.15pc/:(0,0)*{\sdotdr{}};\endxy} \mapsto \vcenter{\xy 0;/r.15pc/: (0,0)*{\sdotur{}};\endxy}
\end{align*}
This again extends to a 2-functor $\omega\maps \Kom(\Ucat_Q)\to \Kom(\Ucat'_Q)$ defined on 1-morphisms via
\[
(X,d) \mapsto \Big( \cdots \to \omega(X^{i-1}) \xrightarrow{\omega(d_{i-1})} \omega(X^i) \xrightarrow{\omega(d_i)} \omega(X^{i+1}) \to \cdots \Big)
\]
and on 2-morphisms by applying $\omega$ component-wise.

%
\subsubsection{The 2-functor  \texorpdfstring{$\psi \maps \Ucat_Q \to \left(\Ucat_Q\right)^{\co}$}{psi} }
%

Consider the operation on the diagrammatic calculus for $\Ucat_Q$ that
reflects a diagram across a horizontal axis,
and reverses the orientation.
This operation is covariant for composition of 1-morphisms, contravariant for composition of 2-morphisms, and preserves the relations in $\cal{U}_Q$.
It determines an invertible 2-functor given explicitly as follows:
\begin{align*}
  \psi \maps \Ucat_Q &\to \left(\Ucat_Q\right)^{\co} \\
  \lambda &\mapsto  \lambda \\
  \onem\cal{E}_{\pm i_1} \cal{E}_{\pm i_2} \cdots
 \cal{E}_{\pm i_m}\onel\la t\ra
 &\mapsto
 \onem \cal{E}_{\pm i_1}\cal{E}_{\pm i_2}\cdots
 \cal{E}_{\pm i_m}\onel\la -t\ra \\ \\
\vcenter{\xy 0;/r.15pc/:(0,0)*{\ucrossrr{i}{j}};\endxy} \mapsto \vcenter{\xy 0;/r.15pc/: (0,0)*{\ucrossrr{j}{i}};\endxy} \;\; , \;\;
\vcenter{\xy 0;/r.15pc/:(0,0)*{\dcrossrr{i}{j}};\endxy} \mapsto \vcenter{\xy 0;/r.15pc/: (0,0)*{\dcrossrr{j}{i}};\endxy} \;\; &, \;\;
\vcenter{\xy 0;/r.15pc/:(0,0)*{\lcrossrr{i}{j}};\endxy} \mapsto \vcenter{\xy 0;/r.15pc/: (0,0)*{\rcrossrr{j}{i}};\endxy} \;\; , \;\;
\vcenter{\xy 0;/r.15pc/:(0,0)*{\rcrossrr{i}{j}};\endxy} \mapsto \vcenter{\xy 0;/r.15pc/: (0,0)*{\lcrossrr{j}{i}};\endxy} \\ \\
\vcenter{\xy 0;/r.15pc/:(0,0)*{\rcapr{}};(-4,-5)*{\scs i};(5,5)*{\lambda};\endxy} \mapsto \vcenter{\xy 0;/r.15pc/: (-4,-5)*{\scs i}; (0,0)*{\lcupr{}};(5,-3)*{\lambda};\endxy} \quad , \quad
\vcenter{\xy 0;/r.15pc/:(0,0)*{\lcupr{}};(-4,6)*{\scs i}; (5,-3)*{\lambda};\endxy} \mapsto \vcenter{\xy 0;/r.15pc/: (-4,6)*{\scs i}; (0,0)*{\rcapr{}};(5,5)*{\lambda};\endxy} \quad &, \quad
\vcenter{\xy 0;/r.15pc/:(0,0)*{\rcupr{}};(-4,6)*{\scs i}; (5,-3)*{\lambda};\endxy} \mapsto \vcenter{\xy 0;/r.15pc/: (-4,6)*{\scs i}; (0,0)*{\lcapr{}};(5,5)*{\lambda};\endxy} \quad , \quad
\vcenter{\xy 0;/r.15pc/:(0,0)*{\lcapr{}};(-4,-5)*{\scs i}; (5,5)*{\lambda};\endxy} \mapsto \vcenter{\xy 0;/r.15pc/: (-4,-5)*{\scs i}; (0,0)*{\rcupr{}};(5,-3)*{\lambda};\endxy} \\ \\
\vcenter{\xy 0;/r.15pc/:(0,0)*{\slineur{}};\endxy} \mapsto \vcenter{\xy 0;/r.15pc/: (0,0)*{\slineur{}};\endxy} \quad , \quad
\vcenter{\xy 0;/r.15pc/:(0,0)*{\sdotur{}};\endxy} \mapsto \vcenter{\xy 0;/r.15pc/: (0,0)*{\sdotur{}};\endxy} \quad &, \quad
\vcenter{\xy 0;/r.15pc/:(0,0)*{\slinedr{}};\endxy} \mapsto \vcenter{\xy 0;/r.15pc/: (0,0)*{\slinedr{}};\endxy} \quad , \quad
\vcenter{\xy 0;/r.15pc/:(0,0)*{\sdotdr{}};\endxy} \mapsto \vcenter{\xy 0;/r.15pc/: (0,0)*{\sdotdr{}};\endxy}
\end{align*}
Note that $\psi$ must negate grading shift in order to be degree-preserving, due to 2-morphism contravariance.
As such, it descends to an antilinear map on the Grothendieck group.
This extends to a 2-functor $\psi\maps \Kom(\Ucat_Q) \to \Kom(\Ucat_Q)^{\co}$ given on 1-morphisms by
\[
(X,d) \mapsto \Big( \cdots \to \psi(X^{i+1}) \xrightarrow{\psi(d_{i})} \psi(X^i) \xrightarrow{\psi(d_{i-1})} \psi(X^{i-1}) \to \cdots \Big)
\]
and on 2-morphisms by applying $\psi$ component-wise.
Implicit in this formula is that $\psi$ negates the homological degree,
\ie for $(X,d)$ in $\Kom(\Ucat)$ we have $\psi(X)^{i}=\psi(X^{-i})$.

\subsubsection{Properties of symmetries of categorified quantum groups}

The symmetries $\sigma$, $\omega$, and $\psi$ are graded, additive, $\Bbbk$-linear 2-functors,
and it is immediate from their definitions that each squares to the identity.
Moreover, the induced 2-functors on categories of complexes descend to homotopy categories.
The following result is immediate from the above definitions.

\begin{reptheorem}{thm:B}
Under the isomorphism $K_0(\UcatD_Q) \cong \UA \cong K_0(\UcatD'_Q)$ (see Theorem \ref{thm_Groth}),
the 2-functors defined above descend to the corresponding symmetries:
$
[\sigma]=\und{\sigma} , [\omega]=\und{\omega} ,
[\psi] = \und{\psi} .
$
\end{reptheorem}

\begin{rem}
The symmetry $\omega\psi$
(which reflects a diagram across a horizontal axis,
sends $\lambda$ to $-\lambda$,
and scales all $ii$-crossings by $-1$)
is closely related  to the Chevalley involution introduced in \cite{Brundan2}.
There, Brundan uses this to move between the 2-categories $\cal{U}_Q^{\co}$ and $\cal{U}_{Q'}$ associated to different choices of scalars.
In the cyclic setting, changing the choice of scalars from $Q$ to $Q'$ is no longer necessary,
provided we change the choice of bubble parameters from $C$ to $C'$ as above.
\end{rem}

%
\section{Defining the categorical Lusztig operator \texorpdfstring{$\cal{T}'_{i,1}$}{T'i,1}} \label{sec:defTi}
%

In this section, we explicitly define additive, $\Bbbk$-linear 2-functors $\cal{T}'_{i,1}\maps \cal{U}_Q \to \Com(\cal{U}_Q)$ for each $i\in I$.
In Section \ref{sec:Ti1mor} we define $\cal{T}'_{i,1}$ on objects and generating 1-morphisms, and extend via additive 2-functoriality to all $1$-morphisms,
\ie we send the horizontal composition of generators to the appropriate horizontal composition of the complexes giving their images,
via equation \eqref{eq:CompOfCom}, and map direct sums to the corresponding direct sums.
In Section \ref{sec:Ti2mor}, we extend this definition to the 2-morphisms in $\cal{U}_Q$,
assigning explicit chain maps to generating 2-morphisms, again extending to all 2-morphisms as required by additive 2-functoriality.

Section \ref{sec_proof_func} is then devoted to showing that $\cal{T}'_{i,1}$ is well-defined,
\ie showing that it preserves all defining relations on 2-morphisms of $\Ucat$, up to chain homotopy.
We also explicitly compute the chain homotopies involved.
We note that this check is considerably lengthy due to the many relations that must be checked,
and the piecewise nature of the definition of the (categorified) Lusztig operator, specifically, its dependency on the value of the bilinear form on $I$.

\begin{reptheorem}{thm:A}
Let $\mf{g}$ be a simply-laced Kac-Moody algebra,
then the data given below defines a 2-functor
\[
\cal{T}_{i,+1}'   \maps   \UcatD_Q(\mf{g})  \to \Com(\UcatD_Q(\mf{g}))
\]
such that induces the map on $K_0(\UcatD_Q(\mf{g})) \cong \U_q(\mf{g})$ satisfies
$
[\cal{T}_{i,+1}']=T_{i,+1}'   \maps \U_q(\mf{g}) \to \U_q(\mf{g}).
$
\end{reptheorem}

Given this, we then define the other versions of the categorified Lusztig operators using the symmetries of categorified quantum groups from Section \ref{sec_symm}.
\begin{definition}  \label{def:other-Ts} Let
\[
\cal{T}''_{i,-1}:=\sigma\cal{T}'_{i,1}\sigma \;\; , \;\;
\cal{T}''_{i,1}:=\omega\cal{T}'_{i,1}\omega \;\; , \;\;
\cal{T}'_{i,-1}:=\psi\cal{T}'_{i,1}\psi
\]
where in each case we apply $\cal{T}'_{i,1}$ on the appropriate version of the categorified quantum group, as determined by the codomain of the categorified symmetry.
\end{definition}

The following result now follows from Theorems \ref{thm:A} and \ref{thm:B}.

\begin{cor}
Upon passing to $K_0(\UcatD_Q(\mf{g}))$, we have:
\begin{align*}
[\cal{T}''_{i,-1}]=&\ [\sigma\cal{T}'_{i,1}\sigma]=[\sigma][\cal{T}'_{i,1}][\sigma]=\und{\sigma}T'_{i,1}\und{\sigma}=T''_{i,-1}\\
[\cal{T}''_{i,1}]=&\ [\omega\cal{T}'_{i,1}\omega]=[\omega][\cal{T}'_{i,1}][\omega]=\und{\omega}T'_{i,1}\und{\omega}=T''_{i,1}\\
[\cal{T}'_{i,-1}]=&\ [\psi\cal{T}'_{i,1}\psi]=[\psi][\cal{T}'_{i,1}][\psi]=\und{\psi}T'_{i,1}\und{\psi}=T'_{i,-1}
\end{align*}
\end{cor}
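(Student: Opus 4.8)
The plan is to read off all three identities directly from the chain of equalities already displayed in the corollary statement, justifying each link in turn. The three categorified operators $\cal{T}''_{i,-1}$, $\cal{T}''_{i,1}$, $\cal{T}'_{i,-1}$ are \emph{defined} by conjugation in Definition~\ref{def:other-Ts}, so the leftmost equality in each line holds by definition; here one uses that the symmetries are (morally) involutions, so that conjugation makes sense even though domain and codomain are different versions of the categorified quantum group.

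The key step is the second equality in each line, namely that passing to $K_0$ is functorial with respect to composition of the (possibly contravariant, possibly antilinear) symmetry $2$-functors. Concretely, I would invoke the general principle that the assignment $F \mapsto [F]$, sending a graded additive $2$-functor to its induced map on split Grothendieck groups, respects composition: $[F \circ G] = [F] \circ [G]$. For the symmetries this requires tracking the flavor of (anti)variance recorded in the table of Section~\ref{sec_symm} --- e.g.\ $\sigma$ and $\omega$ are contravariant on $1$-morphisms (hence induce antihomomorphisms) while $\psi$ is antilinear (hence sends $q \mapsto q^{-1}$) --- but in each case the composites $[\sigma\cal{T}'_{i,1}\sigma]$, $[\omega\cal{T}'_{i,1}\omega]$, $[\psi\cal{T}'_{i,1}\psi]$ are computed by composing the three induced maps. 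I would also note here that $\sigma$ and $\omega$ land in the primed version $\Ucat'_Q$ with bubble parameters $(c_{i,\lambda})' = c_{i,-\lambda}^{-1}$, and that $\cal{T}'_{i,1}$ is applied on this primed version, as specified in Definition~\ref{def:other-Ts}; since the isomorphism $\gamma$ of Theorem~\ref{thm_Groth} identifies $K_0(\UcatD'_Q)$ with $\UA$ exactly as it does $K_0(\UcatD_Q)$, this change of bubble parameters is invisible on Grothendieck groups and does not affect the computation.

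The third equality in each line is then immediate: Theorem~\ref{thm:A} gives $[\cal{T}'_{i,1}] = T'_{i,1}$, while Theorem~\ref{thm:B} gives $[\sigma] = \und{\sigma}$, $[\omega] = \und{\omega}$, $[\psi] = \und{\psi}$, so the three middle expressions become $\und{\sigma}T'_{i,1}\und{\sigma}$, $\und{\omega}T'_{i,1}\und{\omega}$, and $\und{\psi}T'_{i,1}\und{\psi}$. Finally, the rightmost equality in each line is exactly Lusztig's compatibility \eqref{intro-T'_rel_T''} evaluated at $e=1$: the relations $\und{\sigma}T'_{i,e}\und{\sigma} = T''_{i,-e}$, $\und{\omega}T'_{i,e}\und{\omega} = T''_{i,e}$, and $\und{\psi}T'_{i,e}\und{\psi} = T'_{i,-e}$ specialize to $T''_{i,-1}$, $T''_{i,1}$, and $T'_{i,-1}$ respectively.

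The only real obstacle is the bookkeeping in the functoriality step: one must confirm that $[-]$ genuinely respects composition of $2$-functors of mixed (anti)variance, so that the contravariances of $\sigma$ and $\omega$ cancel in the conjugate $\sigma\cal{T}'_{i,1}\sigma$ to again yield a linear automorphism of $\UA$, and likewise that the two applications of the antilinear $\psi$ compose to a linear map. Given the table in Section~\ref{sec_symm} and the established values of $[\sigma], [\omega], [\psi], [\cal{T}'_{i,1}]$, this is a routine verification rather than a substantive difficulty, which is precisely why the statement is only a corollary.
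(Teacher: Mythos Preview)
Your proposal is correct and matches the paper's approach: the paper simply states that the corollary follows from Theorems~\ref{thm:A} and~\ref{thm:B} without further elaboration, and you have spelled out precisely the chain of reasoning (definition, functoriality of $K_0$, the two theorems, and Lusztig's relations~\eqref{intro-T'_rel_T''}) that makes this immediate.
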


Recall from the introduction that, while a similar categorification has previously been defined on 1-morphisms \cite{Cautis},
our definition extends to the 2-morphisms in $\UcatD_Q(\mf{g})$, meaning that our categorified Lusztig operators help illuminate the higher structure of categorified quantum groups.

We now proceed with the definition, regularly abbreviating $\cal{T}'_{i,1}$ simply by $\cal{T}'_i$.
In addition, we will make use of color in the diagrammatic calculus for $\UcatD_Q$ in specifying $\cal{T}'_i$ as follows:
strands which are $i$-labeled (\ie their label agrees with subscript on $\cal{T}'_i$) will be black,
those whose labels $j$ and $j'$ satisfy $i \cdot j = -1 = i \cdot j'$ will be {\color{blue}blue} and {\color{magenta}magenta} (respectively),
and those with label $k$ satisfying $i \cdot k = 0$ will be {\color{green}green},
unless stated otherwise.

%
\subsection{\texorpdfstring{$\cal{T}'_{i,1}$}{T'i,1} on objects and 1-morphisms}\label{sec:Ti1mor}
%

On objects, we define the 2-functor $\cal{T}'_{i,1}$ by
\[
\cal{T}_i'(\lambda) = s_i(\lambda)
\]
where $s_i$ is the corresponding Weyl group element, defined by $s_i(\l)=\l-\l_i\alpha_i$.
On generating 1-morphisms, we define
\begin{align*}
\cal{T}_i'(\onel) &= \clubsuit \;\onell{s_i(\lambda)} \\ \\
\cal{T}_i'(\cal{E}_{\ell}\onel) &=
\left\{
\begin{array}{cl}
\cal{F}_i\onell{s_i(\lambda)} \la -2-\lambda_i\ra \longrightarrow \clubsuit\ 0 & \text{if } i =\ell \\ \\
\clubsuit \;\cal{E}_{\ell}\cal{E}_i\onell{s_i(\lambda)} \xrightarrow{\xy 0;/r.15pc/:(0,0)*{\ucrossbr{\ell}{i}};\endxy} \cal{E}_i\cal{E}_{\ell}\onell{s_i(\lambda)} \la 1 \ra & \text{if }i \cdot \ell =-1 \\ \\
\clubsuit \; \cal{E}_{\ell}\onell{s_i(\lambda)} & \text{if }i \cdot \ell =0
\end{array}
\right. \\ \\
\cal{T}_i'(\cal{F}_{\ell}\onel) &=
\left\{
\begin{array}{cl}
\clubsuit\ 0 \longrightarrow \cal{E}_i\onell{s_i(\lambda)}  \la \lambda_i \ra & \text{if } i =\ell \\ \\
\cal{F}_{\ell}\cal{F}_i\onell{s_i(\lambda)} \la -1 \ra \xrightarrow{\xy 0;/r.15pc/:(0,0)*{\dcrossbr{\ell}{i}};\endxy} \clubsuit \;\cal{F}_i\cal{F}_{\ell}\onell{s_i(\lambda)} & \text{if }i \cdot \ell =-1 \\ \\
\clubsuit \; \cal{F}_\ell\onell{s_i(\lambda)} & \text{if }i \cdot \ell =0
\end{array}
\right.
\end{align*}
where we have omitted all non-zero terms in these complexes, and we follow our convention in denoting homological degree zero with a $\clubsuit$.
Since each of these complexes has at most two nonzero terms, it is trivial that the square of the differential is zero.

%
\subsection{Definition of \texorpdfstring{$\cal{T}'_{i,1}$}{T'i,1} on 2-morphisms}\label{sec:Ti2mor}
%

The 2-functor $\cal{T}'_{i,1}$ is given on generating 2-morphisms as follows. In these equations,
we let our strand labels satisfy $i \cdot j = -1 = i\cdot j'$ and $i \cdot k =0$, and follow the color conventions specified above.
We will omit labelling the weight $s_i(\lambda)$ in the far right region of the diagrams in the codomain,
and in most cases will also only show the non-zero terms in our complexes.
Additionally, we will depict complexes of the form
\[
W \xrightarrow{\left(\begin{smallmatrix} \alpha \\ \beta \end{smallmatrix}\right)} X \oplus Y \xrightarrow{\left(\begin{smallmatrix} \gamma & \delta \end{smallmatrix}\right)} Z
\]
as anti-commutative squares with arrows labeled by the corresponding maps, 
\eg equation \eqref{eq:cubemap} depicts a chain map between such complexes.
In all cases, the chain map condition easily follows from the defining relations in $\UcatD_Q$.

\subsubsection{Definition of \texorpdfstring{$\cal{T}'_{i,1}$}{T'i,1} on upwards dot 2-morphisms}\label{def:updot}

\[
  \cal{T}'_i \left(    \xy 0;/r.18pc/:
 (0,0)*{\sdotur{i}};
 (6,3)*{ \lambda};
 (-9,3)*{ \lambda +\alpha_i};
 (-4,0)*{};(10,0)*{};
 \endxy \right)
 :=
\vcenter{ \xy 0;/r.18pc/:
 (0,12)*+{\cal{F}_i\onell{s_i(\lambda)}\la -\lambda_i \ra}="1";
 (0,-12)*+{\cal{F}_i\onell{s_i(\lambda)}\la -2-\lambda_i \ra}="2";
 {\ar^{\xy
 (0,0)*{\sdotdr{i}}; \endxy\;\;}"2";"1" };
 \endxy}
\quad , \quad
   \cal{T}'_i \left(    \xy 0;/r.18pc/:
  (0,0)*{\sdotug{k}};
 (6,3)*{ \lambda};
 (-9,3)*{ \lambda +\alpha_k};
 (-10,0)*{};(10,0)*{};
 \endxy \right)
 :=
\vcenter{\xy 0;/r.18pc/:
 (0,12)*+{\clubsuit\ \cal{E}_k\onell{s_i(\lambda)}\la 2 \ra}="1";
 (0,-12)*+{\clubsuit\ \cal{E}_k\onell{s_i(\lambda)}}="2";
 {\ar^{\xy
 (0,0)*{\sdotug{k}}; \endxy\;\;}"2";"1" };
 \endxy}
\]

\[
   \cal{T}'_i \left(    \xy 0;/r.18pc/:
  (0,0)*{\sdotu{j}};
 (6,3)*{ \lambda};
 (-9,3)*{ \lambda +\alpha_j};
 (-10,0)*{};(10,0)*{};
 \endxy \right)
 :=
  \vcenter{\xy 0;/r.18pc/:
  (-20,12)*+{\clubsuit \;\cal{E}_j\cal{E}_i\onell{s_i(\lambda)}\la 2\ra}="1";
  (-20,-12)*+{\clubsuit \;\cal{E}_j\cal{E}_i\onell{s_i(\lambda)}}="2";
   {\ar^{\xy (-12,0)*{\sdotu{j}}; (-6,0)*{\slineur{i}};  \endxy} "2";"1"};
  (25,12)*+{\cal{E}_i\cal{E}_j\onell{s_i(\lambda)}\la 3\ra }="3";
  (25,-12)*+{\cal{E}_i\cal{E}_j\onell{s_i(\lambda)}\la 1\ra}="4";
    {\ar_{\xy (-12,0)*{\slineur{i}}; (-6,0)*{\sdotu{j}}; \endxy} "4";"3"};
   {\ar^{\xy (0,0)*{\ucrossbr{j}{i}};\endxy   } "1";"3"};
   {\ar^{\xy (0,0)*{\ucrossbr{j}{i}};\endxy   } "2";"4"};
 \endxy}
\]

\subsubsection{Definition of \texorpdfstring{$\cal{T}'_{i,1}$}{T'i,1} on upwards crossing 2-morphisms}\label{def:upcross}

\begin{align*}
    \cal{T}'_i \left(  \xy 0;/r.18pc/:
  (0,0)*{\ucrossrr{i}{i}};
 (6,3)*{ \lambda};
 \endxy  \right) :=     \vcenter{\xy 0;/r.18pc/:
 (0,12)*+{\cal{F}_i\cal{F}_i\onell{s_i(\lambda)}\la -8-2\lambda_i \ra}="1";
 (0,-12)*+{\cal{F}_i\cal{F}_i\onell{s_i(\lambda)}\la -6-2\lambda_i\ra}="2";
 {\ar^{\xy (-7,0)*{-};
  (0,0)*{\dcrossrr{i}{i}}; \endxy}"2";"1" };
 \endxy}
 \quad &, \quad
 \cal{T}'_i \left(  \xy 0;/r.18pc/:
  (0,0)*{\ucrossgg{k}{k'}};
 (6,3)*{ \lambda};
 \endxy  \right)  :=     \vcenter{\xy 0;/r.18pc/:
 (0,12)*+{\clubsuit\ \cal{E}_{k'}\cal{E}_k\onell{s_i(\lambda)}\la -k\cdot k' \ra}="1";
 (0,-12)*+{\clubsuit\ \cal{E}_k\cal{E}_{k'}\onell{s_i(\lambda)}}="2";
 {\ar^{ \xy
 (0,0)*{\ucrossgg{k}{k'}}; \endxy}"2";"1" };
 \endxy}
 \\ \\
  \cal{T}'_i \left(  \xy 0;/r.18pc/:
  (0,0)*{\ucrossrg{i}{k}};
 (6,3)*{ \lambda};
 \endxy  \right)
 :=\vcenter{
    \xy 0;/r.18pc/:
  (-30,12)*+{\cal{E}_k\cal{F}_i\onell{s_i(\lambda)}\la -2-\lambda_i \ra}="1";
  (-30,-12)*+{\cal{F}_i\cal{E}_k\onell{s_i(\lambda)}\la-2-\lambda_i \ra}="2";
   {\ar^{ \vcenter{\xy 0;/r.18pc/:
 (3,0)*{\lcrossrg{i}{k}}; (-5,0)*{t_{ki}};
 \endxy} } "2";"1"};
 \endxy}
 \quad &, \quad
\cal{T}'_i \left(  \xy 0;/r.18pc/:
  (0,0)*{\ucrossgr{k}{i}};
 (6,3)*{ \lambda};
 \endxy  \right)
  :=\vcenter{
\xy 0;/r.18pc/:
  (-30,12)*+{\cal{F}_i\cal{E}_k\onell{s_i(\lambda)}\la-2-\lambda_i \ra }="1";
  (-30,-12)*+{\cal{E}_k\cal{F}_i\onell{s_i(\lambda)}\la-2-\lambda_i \ra}="2";
   {\ar^{ \vcenter{\xy 0;/r.18pc/:
 (3,0)*{\rcrossgr{k}{i}};
 \endxy} } "2";"1"};
 \endxy}
 \end{align*}
\[
  \cal{T}'_i \left(  \xy 0;/r.18pc/:
  (0,0)*{\ucrossrb{i}{j}};
 (6,3)*{ \lambda};
 \endxy  \right)
 :=
   \xy 0;/r.18pc/:
  (-35,15)*+{\cal{E}_j\cal{E}_i\cal{F}_i\onell{s_i(\lambda)}\la-1-\lambda_i \ra}="1";
  (-35,-15)*+{\cal{F}_i\cal{E}_j\cal{E}_i\onell{s_i(\lambda)}\la-1-\lambda_i \ra}="2";
   {\ar^{\vcenter{\xy 0;/r.15pc/:
 (-3,0)*{\lcrossrb{i}{j}};
  (6,0)*{\slinenr{i}};
 (3,8.5)*{\lcrossrr{}{}};
 (-6,8.5)*{\slineu{}};
 \endxy }} "2";"1"};
  (35,15)*+{\clubsuit\ \cal{E}_i\cal{E}_j\cal{F}_i\onell{s_i(\lambda)}\la -\lambda_i \ra}="3";
  (35,-15)*+{\clubsuit\ \cal{F}_i\cal{E}_i\cal{E}_j\onell{s_i(\lambda)}\la -\lambda_i \ra}="4";
   {\ar_{\vcenter{ \xy 0;/r.15pc/:
 (-12,4)*{-}; (-3,0)*{\lcrossrr{i}{i}};
  (6,0)*{\slinen{j}};
 (3,8.5)*{\lcrossrb{}{}};
 (-6,8.5)*{\slineur{}};
 \endxy}} "4";"3"};
   {\ar^-{\xy  (3,0)*{\slinedr{i}};(-6,0)*{\ucrossbr{j}{i}};\endxy   } "1";"3"};
   {\ar^-{\xy (-9,0)*{\slinedr{i}};(0,0)*{\ucrossbr{j}{i}}; (-13,1)*{-}; \endxy   } "2";"4"};
 \endxy
 \]
\[
\cal{T}'_i \left(  \xy 0;/r.18pc/:
  (0,0)*{\ucrossbr{j}{i}};
 (6,3)*{ \lambda};
 \endxy  \right)
  :=
     \xy 0;/r.18pc/:
  (-35,15)*+{\cal{F}_i\cal{E}_j\cal{E}_i\onell{s_i(\lambda)}\la -\lambda_i \ra}="1";
  (-35,-15)*+{\cal{E}_j\cal{E}_i\cal{F}_i\onell{s_i(\lambda)}\la -2 -\lambda_i \ra}="2";
   {\ar^{\vcenter{\xy 0;/r.15pc/:
 (3,0)*{\rcrossrr{i}{i}};
  (-6,0)*{\slinen{j}};
 (-3,8.5)*{\rcrossbr{}{}};
 (6,8.5)*{\sdotur{}}; (-12,4)*{t_{ij}};
 \endxy}\vcenter{\xy 0;/r.15pc/:
 (-12,4)*{-t_{ij}}; (3,0)*{\rcrossrr{i}{i}};
  (-6,0)*{\slinen{j}};
 (-3,8.5)*{\rcrossbr{}{}};
 (6,8.5)*{\slineur{}}; (-4.5,11)*[black]{\scs\bullet};
 \endxy}}  "2";"1"};
  (32,15)*+{\clubsuit\ \cal{F}_i\cal{E}_i\cal{E}_j\onell{s_i(\lambda)}\la 1-\lambda_i \ra }="3";
  (32,-15)*+{\clubsuit\ \cal{E}_i\cal{E}_j\cal{F}_i\onell{s_i(\lambda)}\la -1-\lambda_i \ra }="4";
  {\ar_{\vcenter{\xy 0;/r.15pc/:
    (3,0)*{\rcrossbr{j}{i}};
    (-6,0)*{\slinenr{i}};
    (-3,8.5)*{\rcrossrr{}{}}; (-12,4)*{t_{ij}};
    (6,8.5)*{\slineu{}}; (-5,11.3)*[black]{\scs \bullet};
    \endxy}\vcenter{\xy 0;/r.15pc/:
    (-12,4)*{-t_{ij}}; (3,0)*{\rcrossbr{j}{i}};
    (-6,0)*{\slinenr{i}};
    (-3,8.5)*{\rcrossrr{}{}};
    (6,8.5)*{\slineu{}};(-1,11)*[black]{\scs \bullet}; \endxy}} "4";"3"};
   {\ar^-{\xy  (3,0)*{\slinedr{i}};(-6,0)*{\ucrossbr{j}{i}};\endxy   } "2";"4"};
   {\ar^-{\xy (-9,0)*{\slinedr{i}};(0,0)*{\ucrossbr{j}{i}}; (-13,1)*{-}; \endxy   } "1";"3"};
 \endxy
\]
\[
     \cal{T}'_i \left(  \xy 0;/r.18pc/:
  (0,0)*{\ucrossbg{j}{k}};
 (6,3)*{ \lambda};
 \endxy  \right)  :=
 \xy 0;/r.18pc/:
  (-35,15)*+{\clubsuit\;\cal{E}_k\cal{E}_j\cal{E}_i\onell{s_i(\lambda)}
  \la -j\cdot k\ra}="1";
  (-35,-15)*+{\clubsuit \;\cal{E}_j\cal{E}_i\cal{E}_k\onell{s_i(\lambda)}}="2";
   {\ar^{ \vcenter{\xy 0;/r.15pc/:
 (-13,4)*{t_{ki}^{-1}}; (3,0)*{\ncrossrg{i}{k}};
  (-6,0)*{\slinen{j}};
 (-3,8.5)*{\ucrossbg{}{}};
 (6,8.5)*{\slineur{}};
 \endxy}} "2";"1"};
  (35,15)*+{\cal{E}_k\cal{E}_i\cal{E}_j\onell{s_i(\lambda)}\la 1-j\cdot k\ra}="3";
  (35,-15)*+{\cal{E}_i\cal{E}_j\cal{E}_k\onell{s_i(\lambda)}\la1\ra}="4";
  {\ar_{ \vcenter{\xy 0;/r.15pc/:
 (-13,4)*{t_{ki}^{-1}}; (3,0)*{\ncrossbg{j}{k}};
  (-6,0)*{\slinenr{i}};
 (-3,8.5)*{\ucrossrg{}{}};
 (6,8.5)*{\slineu{}};
 \endxy}} "4";"3"};
   {\ar^-{\xy  (3,0)*{\slineug{k}};(-6,0)*{\ucrossbr{j}{i}};\endxy   } "2";"4"};
   {\ar^-{\xy (-9,0)*{\slineug{k}};(0,0)*{\ucrossbr{j}{i}};\endxy   } "1";"3"};
 \endxy
\]
\[
      \cal{T}'_i \left(  \xy 0;/r.18pc/:
  (0,0)*{\ucrossgb{k}{j}};
 (6,3)*{ \lambda};
 \endxy  \right)  :=\vcenter{
 \xy 0;/r.18pc/:
  (-35,15)*+{\clubsuit\;\cal{E}_j\cal{E}_i\cal{E}_k\onell{s_i(\lambda)}
  \la -k\cdot j\ra}="1";
  (-35,-15)*+{\clubsuit \;\cal{E}_k\cal{E}_j\cal{E}_i\onell{s_i(\lambda)}}="2";
   {\ar^{ \vcenter{\xy 0;/r.15pc/:
 (-3,0)*{\ncrossgb{k}{j}};
  (6,0)*{\slinenr{i}};
 (3,8.5)*{\ucrossgr{}{}};
 (-6,8.5)*{\slineu{}};
 \endxy}} "2";"1"};
  (35,15)*+{\cal{E}_i\cal{E}_j\cal{E}_k\onell{s_i(\lambda)}\la 1-k\cdot j\ra}="3";
  (35,-15)*+{\cal{E}_k\cal{E}_i\cal{E}_j\onell{s_i(\lambda)}\la1\ra}="4";
  {\ar_{\vcenter{\xy 0;/r.15pc/:
 (-3,0)*{\ncrossgr{k}{i}};
  (6,0)*{\slinen{j}};
 (3,8.5)*{\ucrossgb{}{}};
 (-6,8.5)*{\slineur{}};
 \endxy} } "4";"3"};
   {\ar^-{\xy  (-3,0)*{\slineug{k}};(6,0)*{\ucrossbr{j}{i}};\endxy   } "2";"4"};
   {\ar^-{\xy (9,0)*{\slineug{k}};(0,0)*{\ucrossbr{j}{i}};\endxy   } "1";"3"};
 \endxy}
\]
$\cal{T}'_i \left(  \xy 0;/r.18pc/:
(0,0)*{\ucrossbp{j}{j'}};
(6,3)*{ \lambda};
\endxy  \right)
:=$
\begin{center}
\begin{equation}\label{eq:cubemap}
 \xy 0;/r.20pc/:
 (-60,-40)*+{\clubsuit \;\cal{E}_j\cal{E}_i\cal{E}_{j'}\cal{E}_i \onell{s_i(\lambda)}}="bl";
 (-20,-20)*+{\cal{E}_i\cal{E}_j\cal{E}_{j'}\cal{E}_i \onell{s_i(\lambda)}\la1\ra}="bt";
 (25,-60)*+{\cal{E}_j\cal{E}_i\cal{E}_i\cal{E}_{j'} \onell{s_i(\lambda)}\la1\ra}="bb";
 (60,-35)*+{\cal{E}_i\cal{E}_j\cal{E}_i\cal{E}_{j'} \onell{s_i(\lambda)}\la2\ra}="br";
  {\ar^<<<<<<<<<<<<{\xy 0;/r.14pc/:
    (-6,0)*{\ucrossbr{j}{i}}; (9,0)*{\slineur{i}}; (3,0)*{\slineup{j'}}; \endxy \;\;\;
  } "bl";"bt"};
  {\ar_{\xy 0;/r.14pc/:(6,0)*{\ucrosspr{j'}{i}}; (-3,0)*{\slineur{i}};
    (-9,0)*{\slineu{j}};\endxy \;\;
  } "bl";"bb"};
  {\ar^>>>>>>>>{\xy 0;/r.14pc/:
    (6,0)*{\ucrosspr{j'}{i}}; (-9,0)*{\slineur{i}}; (-13,1)*{-}; (-3,0)*{\slineu{j}};\endxy
  } "bt";"br"};
  {\ar_{\xy 0;/r.14pc/:
    (-6,0)*{\ucrossbr{j}{i}}; (9,0)*{\slineup{j'}}; (3,0)*{\slineur{i}};\endxy
 } "bb";"br"};
 (-60,40)*+{\clubsuit \;\cal{E}_{j'}\cal{E}_i\cal{E}_j\cal{E}_i \onell{s_i(\lambda)}\la-j\cdot j'\ra}="tl";
 (-20,60)*+{\cal{E}_i\cal{E}_{j'}\cal{E}_j\cal{E}_i \onell{s_i(\lambda)}\la1-j\cdot j'\ra}="tt";
 (25,20)*+{\cal{E}_{j'}\cal{E}_i\cal{E}_i\cal{E}_j \onell{s_i(\lambda)}\la1-j\cdot j'\ra}="tb";
 (60,45)*+{\cal{E}_i\cal{E}_{j'}\cal{E}_i\cal{E}_j \onell{s_i(\lambda)}\la2-j\cdot j'\ra}="tr";
 {\ar^<<<<<<<<<<<<{\xy 0;/r.18pc/:
    \endxy \;\;\;
  } "tl";"tt"};
  {\ar^<<<<<<<<<<<<{\;\;\;\xy 0;/r.18pc/:\endxy \;\;} "tl";"tb"};
  {\ar^{\;\;\;\xy 0;/r.18pc/:
   \endxy
  } "tt";"tr"};
  {\ar_{\xy 0;/r.18pc/:
    \endxy
 } "tb";"tr"};
  {\ar^{\vcenter{\xy 0;/r.14pc/:
        (0,0)*{\ncrossrp{i}{j'}};
        (9,0)*{\slinenr{i}};
        (-9,0)*{\slinen{j}};
        (-6,8.5)*{\ncrossbp{}{}};
        (6,8.5)*{\ncrossrr{}{}};
        (0,16.5)*{\ucrossbr{}{}};
        (9,16.5)*{\slineur{}};
        (-9,16.5)*{\slineup{}};
        (-14,6)*{\scs t_{ij}^{-1}}; \endxy}
  } "bl";"tl"};
  {\ar_{\vcenter{\xy 0;/r.14pc/:
        (0,0)*{\ncrossbr{j}{i}};
        (9,0)*{\slinenp{j'}};
        (-9,0)*{\slinenr{i}};
        (-6,8.5)*{\ncrossrr{}{}};
        (6,8.5)*{\ncrossbp{}{}};
        (0,16.5)*{\ucrossrp{}{}};
        (9,16.5)*{\slineu{}};
        (-9,16.5)*{\slineur{}};
        (-16,6)*{\scs -t_{ij}^{-1}}; \endxy}
 } "br";"tr"};
  {\ar_>>>>>>>>>>>>>>{\xy 0;/r.14pc/:
        (0,0)*{\ucrossrr{i}{i}};
        (7,0)*{\slineu{j}};
        (-7,0)*{\slineu{j}};
        (-20,0)*{\scs -\delta_{jj'}v_{ij}};
    \endxy} "bb";"tb"};
  {\ar^-{\xy 0;/r.15pc/:
        (0,0)*{\ucrossbp{j}{j'}};
        (7,0)*{\slineur{i}};
        (-7,0)*{\slineur{i}};
        (-16,0)*{\scs t_{ij}^{-1}t_{ij'}};
    \endxy} "bt";"tt"};
  {\ar^{} "bt";"tb"};
  {\ar@/_1.3pc/@{->} "bb";"tt"};
  (-8,9)*{\xy 0;/r.14pc/:
        (-14,6)*{\scs t_{ij}^{-1}};
        (0,0)*{\ncrossbp{j}{j'}};
        (9,0)*{\slinenr{i}};
        (-9,0)*{\slinenr{i}};
        (-6,8.5)*{\ncrossrp{}{}};
        (6,8.5)*{\ncrossbr{}{}};
        (0,16.5)*{\ucrossrr{}{}};
        (9,16.5)*{\slineu{}};
        (-9,16.5)*{\slineup{}};
    \endxy};
   (5,41)*{\xy 0;/r.14pc/:
        (-14,6)*{\scs t_{ij}^{-1}};
        (0,0)*{\ncrossrr{i}{i}};
        (9,0)*{\slinenp{j'}};
        (-9,0)*{\slinen{j}};
        (-6,8.5)*{\ncrossbr{}{}};
        (6,8.5)*{\ncrossrp{}{}};
        (0,16.5)*{\ucrossbp{}{}};
        (9,16.5)*{\slineur{}};
        (-9,16.5)*{\slineur{}};
      \endxy};
 \endxy
\end{equation}
\end{center}
In this last diagram, we have omitted the differentials on the codomain,
so as not to overcrowd it;
they are given analogously to those in the domain, with $j \leftrightarrow j'$.
Recall also that $v_{ij} := t_{ij}^{-1} t_{ji}$.

\subsubsection{Definition of \texorpdfstring{$\cal{T}'_{i,1}$}{T'i,1} on cap and cup 2-morphisms}\label{def:capcup}
\begin{align*}
\cal{T}_i'\left( \xy 0;/r.18pc/:
    (0,0)*{\rcapr{i}};
    (8,5)*{ \lambda};
    \endxy\right)
:= \vcenter{ \xy 0;/r.18pc/:
 (0,12)*+{\clubsuit\ \onell{s_i(\lambda)}\la1-\lambda_i\ra}="1";
 (0,-12)*+{\clubsuit\ \cal{F}_i\cal{E}_i\onell{s_i(\lambda)}}="2";
 {\ar^{ \xy
    (0,0)*{\lcapr{i}};
    (-10,0)*{c_{i,\lambda}};
    \endxy}"2";"1" };
 \endxy}
\quad &, \quad
\cal{T}_i'\left( \xy 0;/r.18pc/:
    (0,0)*{\rcupr{i}};
    (8,5)*{ \lambda};
    \endxy\right)
:= \vcenter{ \xy 0;/r.18pc/:
 (0,-12)*+{\clubsuit\ \onell{s_i(\lambda)}}="1";
 (0,12)*+{\clubsuit\ \cal{E}_i\cal{F}_i\onell{s_i(\lambda)}\la1 +\lambda_i\ra}="2";
 {\ar^{ \xy
    (0,0)*{\lcupr{i}};
    (-10,0)*{c_{i,\lambda}^{-1}};
    \endxy}"1";"2" };
 \endxy} \\ \\
\cal{T}_i'\left( \xy 0;/r.18pc/:
    (0,0)*{\lcupr{i}};
    (8,5)*{ \lambda};
    \endxy\right)
:= \vcenter{\xy 0;/r.18pc/:
 (0,-12)*+{\clubsuit\ \onell{s_i(\lambda)}}="1";
 (0,12)*+{\clubsuit\ \cal{F}_i\cal{E}_i\onell{s_i(\lambda)}\la 1-\lambda_i\ra}="2";
 {\ar^{ \xy
    (0,0)*{\rcupr{i}};
    (-10,0)*{c_{i,\lambda}};
    \endxy}"1";"2" };
 \endxy}
\quad &, \quad
 \cal{T}_i'\left( \xy 0;/r.18pc/:
    (0,0)*{\lcapr{i}};
    (8,5)*{ \lambda};
    \endxy\right)
:= \vcenter{ \xy 0;/r.18pc/:
 (0,12)*+{\clubsuit\ \onell{s_i(\lambda)}\la1+\lambda_i \ra}="1";
 (0,-12)*+{\clubsuit\ \cal{E}_i\cal{F}_i\onell{s_i(\lambda)}}="2";
 {\ar^{ \xy
    (0,0)*{\rcapr{i}};
    (-10,0)*{c_{i,\lambda}^{-1}};
    \endxy}"2";"1" };
 \endxy}\end{align*}
Note that the maps have the correct degree since
the rightmost region in all the images is labeled by $s_i(\l)$, and
$1 \pm \la i, s_i(\lambda) \ra =
1 \pm \la i,
\lambda -\lambda_i\alpha_i\ra
= 1\pm\lambda_i\mp 2\lambda_i
=1 \mp \lambda_i$.

\begin{align*}
\cal{T}_i'\left( \xy 0;/r.18pc/:
    (0,0)*{\rcapg{k}};
    (8,5)*{ \lambda};
    \endxy\right)
:= \vcenter{\xy 0;/r.18pc/:
 (0,12)*+{\clubsuit\ \onell{s_i(\lambda)}\la1-\lambda_k\ra}="1";
 (0,-12)*+{\clubsuit\ \cal{E}_k\cal{F}_k\onell{s_i(\lambda)}}="2";
 {\ar^{ \xy
    (0,0)*{\rcapg{k}};(-10,0)*{t_{ki}^{\lambda_i}};
    \endxy}"2";"1" };
 \endxy}
\quad &, \quad
\cal{T}_i'\left( \xy 0;/r.18pc/:
    (0,0)*{\rcupg{k}};
    (8,5)*{ \lambda};
    \endxy\right)
:= \vcenter{\xy 0;/r.18pc/:
 (0,-12)*+{\clubsuit\ \onell{s_i(\lambda)}}="1";
 (0,12)*+{\clubsuit\ \cal{F}_k\cal{E}_k\onell{s_i(\lambda)}\la1 +\lambda_k\ra}="2";
 {\ar^{ \xy
    (0,0)*{\rcupg{k}};(-10,0)*{t_{ki}^{-\lambda_i}};
    \endxy}"1";"2" };
 \endxy} \\ \\
\cal{T}_i'\left( \xy 0;/r.18pc/:
    (0,0)*{\lcupg{k}};
    (8,5)*{ \lambda};
    \endxy\right)
:= \vcenter{\xy 0;/r.18pc/:
 (0,-12)*+{\clubsuit\ \onell{s_i(\lambda)}}="1";
 (0,12)*+{\clubsuit\ \cal{E}_k\cal{F}_k\onell{s_i(\lambda)}\la 1-\lambda_k\ra}="2";
 {\ar^{ \xy
    (0,0)*{\lcupg{k}};
    \endxy}"1";"2" };
 \endxy}
\quad &, \quad
\cal{T}_i'\left( \xy 0;/r.18pc/:
    (0,0)*{\lcapg{k}};
    (8,5)*{ \lambda};
    \endxy\right)
:= \vcenter{ \xy 0;/r.18pc/:
 (0,12)*+{\clubsuit\ \onell{s_i(\lambda)}\la1+\lambda_k \ra}="1";
 (0,-12)*+{\clubsuit\ \cal{F}_k\cal{E}_k\onell{s_i(\lambda)}}="2";
 {\ar^{ \xy
    (0,0)*{\lcapg{k}};
    \endxy}"2";"1" };
 \endxy}
 \end{align*}
Again, the maps have the correct degree since
$1 \pm \la k, s_i(\lambda) \ra =
1 \pm \la k,
\lambda -\lambda_i\alpha_i\ra
=1 \pm \lambda_k$.

\[
\cal{T}_i'\left( \xy
    (0,0)*{\rcup{j}};
    (6,-2)*{ \lambda};
    \endxy\right):=\vcenter{\xy 0;/r.20pc/:
 (-55,40)*+{\cal{F}_j\cal{F}_i\cal{E}_j\cal{E}_i \onell{s_i(\lambda)}\la \lambda_j\ra}="bl";
 (-15,60)*+{\clubsuit \;\cal{F}_i\cal{F}_j\cal{E}_j\cal{E}_i \onell{s_i(\lambda)}\la 1+\lambda_j\ra}="bt";
 (15,20)*+{\clubsuit \;\cal{F}_j\cal{F}_i\cal{E}_i\cal{E}_j \onell{s_i(\lambda)}\la 1+\lambda_j\ra}="bb";
 (55,40)*+{\cal{F}_i\cal{F}_j\cal{E}_i\cal{E}_j \onell{s_i(\lambda)}\la 2+\lambda_j\ra}="br";
  {\ar^-<<<<<<<{\xy 0;/r.15pc/:
    (-6,0)*{\dcrossbr{}{}}; (9,0)*{\slineur{}}; (3,0)*{\slineu{}}; \endxy
  } "bl";"bt"};
  {\ar_{\xy 0;/r.15pc/:(6,0)*{\ucrossbr{}{}}; (-3,0)*{\slinedr{}}; (-14,0)*{\scs -};
    (-9,0)*{\slined{}};\endxy \;\;
  } "bl";"bb"};
  {\ar^->>>>>>>>>>>>>>>{\xy 0;/r.15pc/:
    (6,0)*{\ucrossbr{}{}}; (-9,0)*{\slinedr{}}; (-3,0)*{\slined{}};\endxy
  } "bt";"br"};
  {\ar_<<<<<<<<<<<<<<<<{\xy 0;/r.15pc/:
    (-6,0)*{\dcrossbr{}{}}; (9,0)*{\slineu{}}; (3,0)*{\slineur{}};\endxy
 } "bb";"br"};
 (-55,-5)*+{0}="1";
 (0,-5)*+{\clubsuit \;\onell{s_i(\lambda)}}="2";
 (55,-5)*+{0}="3";
 {\ar "1";"2"};
 {\ar "2";"3"};
 {\ar "bl";"1"};{\ar "br";"3"};
 {\ar@/^1.2pc/"bt";"2";};
 {\ar@/_1pc/ "bb";"2";};
   (-28,5)*{\xy
    (0,0)*{\rcupcuprb{}{}}; (-17,0)*{(-1)^{\lambda_j}c_{j,\lambda}};
   \endxy};
   (32,5)*{\xy
 (0,0)*{\rcupcupbr{}{}}; (-17,0)*{(-1)^{\lambda_j+1}c_{j,\lambda}};
     \endxy};
 \endxy}
\]
\[
 \cal{T}_i'\left( \xy
    (0,0)*{\rcap{j}};
    (6,5)*{ \lambda};
    \endxy\right):=
 \vcenter{\xy 0;/r.20pc/:
 (-55,-40)*+{\cal{E}_j\cal{E}_i\cal{F}_j\cal{F}_i \onell{s_i(\lambda)}\la \lambda_j-2\ra}="bl";
 (-15,-20)*+{\clubsuit \;\cal{E}_i\cal{E}_j\cal{F}_j\cal{F}_i \onell{s_i(\lambda)}\la \lambda_j-1\ra}="bt";
 (15,-60)*+{\clubsuit \;\cal{E}_j\cal{E}_i\cal{F}_i\cal{F}_j \onell{s_i(\lambda)}\la \lambda_j-1\ra}="bb";
 (55,-40)*+{\cal{E}_i\cal{E}_j\cal{F}_i\cal{F}_j \onell{s_i(\lambda)}\la \lambda_j\ra}="br";
  {\ar^-<<<<<<<{\xy 0;/r.15pc/:
    (-6,0)*{\ucrossbr{}{}}; (9,0)*{\slinedr{}}; (3,0)*{\slined{}}; \endxy
  } "bl";"bt"};
  {\ar_{\xy 0;/r.15pc/:(6,0)*{\dcrossbr{}{}}; (-3,0)*{\slineur{}};
    (-9,0)*{\slineu{}};\endxy \;\;
  } "bl";"bb"};
  {\ar^->>>>>>>>>>>>>>>{\xy 0;/r.15pc/:
    (6,0)*{\dcrossbr{}{}}; (-9,0)*{\slineur{}}; (-14,0)*{\scs -}; (-3,0)*{\slineu{}};\endxy
  } "bt";"br"};
  {\ar_-<<<<<<<<<<<<<<<<{\xy 0;/r.15pc/:
    (-6,0)*{\ucrossbr{}{}}; (9,0)*{\slined{}}; (3,0)*{\slinedr{}};\endxy
 } "bb";"br"};
 (-55,5)*+{0}="1";
 (0,5)*+{\clubsuit \;\onell{s_i(\lambda)}}="2";
 (55,5)*+{0}="3";
 {\ar "1";"2"};
 {\ar "2";"3"};
 {\ar "bl";"1"};{\ar "br";"3"};
 {\ar@/^1pc/ "bt";"2"};
 (-34,-5)*{\xy (0,0)*{\rcapcaprb{}{}};
  (-19,0)*{(-1)^{\lambda_j+1}c_{j,\lambda}^{-1}}; \endxy};
 {\ar@/_1.3pc/ "bb";"2"};
 (26,-5)*{\xy (0,0)*{\rcapcapbr{}{}};
  (-17,0)*{(-1)^{\lambda_j}c_{j,\lambda}^{-1}}; \endxy};
 \endxy}
\]
\[
 \cal{T}_i'\left( \xy
    (0,0)*{\lcup{j}};
    (6,-2)*{ \lambda};
    \endxy\right):=\vcenter{\xy 0;/r.20pc/:
 (-55,40)*+{\cal{E}_j\cal{E}_i\cal{F}_j\cal{F}_i \onell{s_i(\lambda)}\la -\lambda_j\ra}="bl";
 (-15,60)*+{\clubsuit \; \cal{E}_i\cal{E}_j\cal{F}_j\cal{F}_i \onell{s_i(\lambda)}\la 1-\lambda_j\ra}="bt";
 (15,20)*+{\clubsuit \;\cal{E}_j\cal{E}_i\cal{F}_i\cal{F}_j \onell{s_i(\lambda)}\la 1-\lambda_j\ra}="bb";
 (55,40)*+{\cal{E}_i\cal{E}_j\cal{F}_i\cal{F}_j \onell{s_i(\lambda)}\la 2-\lambda_j\ra}="br";
  {\ar^-<<<<<<<{\xy 0;/r.15pc/:
    (-6,0)*{\ucrossbr{}{}}; (9,0)*{\slinedr{}};
     (3,0)*{\slined{}}; \endxy
  } "bl";"bt"};
  {\ar_{\xy 0;/r.15pc/:(6,0)*{\dcrossbr{}{}}; (-3,0)*{\slineur{}};
    (-9,0)*{\slineu{}};\endxy \;\;
  } "bl";"bb"};
  {\ar^->>>>>>>>>>>>>>>{\xy 0;/r.15pc/:
    (6,0)*{\dcrossbr{}{}}; (-9,0)*{\slineur{}};(-13,0)*{\scs -}; (-3,0)*{\slineu{}};\endxy
  } "bt";"br"};
  {\ar_<<<<<<<<<<<<<<<<{\xy 0;/r.15pc/:
    (-6,0)*{\ucrossbr{}{}}; (9,0)*{\slined{}}; (3,0)*{\slinedr{}};\endxy
 } "bb";"br"};
 (-55,-5)*+{0}="1";
 (0,-5)*+{\clubsuit \;\onell{s_i(\lambda)}}="2";
 (55,-5)*+{0}="3";
 {\ar "1";"2"};
 {\ar "2";"3"};
 {\ar "1";"bl"};{\ar "3";"br"};
 {\ar@/^1.2pc/ "2";"bt"};
 {\ar@/^.5pc/ "2";"bb"};
   (-25,0)*{\xy
    (0,0)*{\lcupcuprb{}{}}; \endxy};
 (-31,8)*{(-t_{ij})^{\lambda_i}c_{i,\lambda-\alpha_j}^{-1}
    (-1)^{\lambda_j}c_{j,\lambda}};
  (25,0)*{\xy
   (0,0)*{\lcupcupbr{}{}}; \endxy};
 (30,8)*{(-t_{ij})^{\lambda_i}c_{i,\lambda-\alpha_j}^{-1}
   (-1)^{\lambda_j}c_{j,\lambda}};
 \endxy}
\]
\[
 \cal{T}_i'\left( \xy
    (0,0)*{\lcap{j}};
    (6,5)*{ \lambda};
    \endxy\right):=\vcenter{\xy 0;/r.20pc/:
 (-55,-40)*+{\cal{F}_j\cal{F}_i\cal{E}_j\cal{E}_i \onell{s_i(\lambda)}\la -2-\lambda_j\ra}="bl";
 (-15,-20)*+{\clubsuit \; \cal{F}_i\cal{F}_j\cal{E}_j\cal{E}_i \onell{s_i(\lambda)}\la -1-\lambda_j\ra}="bt";
 (15,-60)*+{\clubsuit \; \cal{F}_j\cal{F}_i\cal{E}_i\cal{E}_j \onell{s_i(\lambda)}\la -1-\lambda_j\ra}="bb";
 (55,-40)*+{\cal{F}_i\cal{F}_j\cal{E}_i\cal{E}_j \onell{s_i(\lambda)}\la -\lambda_j\ra}="br";
  {\ar^-<<<<<<<{\xy 0;/r.15pc/:
    (-6,0)*{\dcrossbr{}{}}; (9,0)*{\slineur{}};
     (3,0)*{\slineu{}}; \endxy
  } "bl";"bt"};
  {\ar_{\xy 0;/r.15pc/:(6,0)*{\ucrossbr{}{}}; (-3,0)*{\slinedr{}};(-13,0)*{\scs -};
    (-9,0)*{\slined{}};  \endxy \;\;
  } "bl";"bb"};
  {\ar^->>>>>>>>>>>>>>>{\xy 0;/r.15pc/:
    (6,0)*{\ucrossbr{}{}}; (-9,0)*{\slinedr{}}; (-3,0)*{\slined{}};\endxy
  } "bt";"br"};
  {\ar_-<<<<<<<<<<<<<<<<{\xy 0;/r.15pc/:
    (-6,0)*{\dcrossbr{}{}}; (9,0)*{\slineu{}}; (3,0)*{\slineur{}};\endxy
 } "bb";"br"};
 (-55,5)*+{0}="1";
 (0,5)*+{\clubsuit \;\onell{s_i(\lambda)}}="2";
 (55,5)*+{0}="3";
 {\ar "1";"2"};
 {\ar "2";"3"};
 {\ar "bl";"1"};{\ar "br";"3"};
 {\ar@/_.5pc/ "bt";"2"};
 (-25,0)*{\xy (0,0)*{\lcapcaprb{}{}};\endxy };
  (-32,-8)*{(-t_{ij})^{1-\lambda_i}c_{i,\lambda}
 (-1)^{\lambda_j}c_{j,\lambda}^{-1}};
 {\ar@/_1.3pc/ "bb";"2"};
 (25,0)*{\xy (0,0)*{\lcapcapbr{}{}};\endxy};
 (31,-8)*{(-t_{ij})^{1-\lambda_i}c_{i,\lambda}
 (-1)^{\lambda_j}c_{j,\lambda}^{-1}};
 \endxy}
 \]

As above, a simple computation shows that the maps have the correct degree, \eg
\begin{align*}
\deg \left( \;\; \vcenter{\xy 0;/r.15pc/:
   (0,0)*{\lcapcapbr{}{}}; (5,6)*{\scs s_i({\lambda})};
 \endxy } \;\;\right) &= 1 + \la j, s_i(\lambda) \ra + 1+ \la i, s_i(\lambda)+\alpha_j \\
 &= 2 + \lambda_j - \lambda_i (j\cdot i) +\lambda_i-\lambda_i (i\cdot i)+ i\cdot j = 1 + \lambda_j
\end{align*}

%
\section{Proof that categorified Lusztig operators are well-defined} \label{sec_proof_func}
%

In this section we show that $\cal{T}'_{i,1}$ is well-defined,
\ie that $\cal{T}'_{i,1}$ preserves the defining relations in $\Ucat_Q$,
up to chain homotopy.
We'll see, however, that many cases do not require a chain homotopy.
\eg $\cal{T}'_{i,1}$ holds on the nose for any relation that does not involve
$j$-labeled strands (for $i\cdot j=-1$), since
here the complexes involved have only one non-zero term (in the same homological degree),
precluding the existence of non-trivial chain homotopies.
A complete proof consists of checking many cases for each relation,
since $\cal{T}'_{i,1}$ is defined in a piecewise manner
that depends on the connectivity of the graph associated to the simply-laced root datum.

To simplify this task, we'll work with the presentation of $\Ucat_Q$ implicit in Remark \ref{rem:presentation}.
Specifically, we view downward dot and sideways and downward crossing 2-morphisms as defined in terms of
cap/cup 2-morphisms and their upward analogues (in the case of downward dots, we choose the presentation
in terms of right-oriented caps/cups).
It follows that $\cal{T}'_{i,1}$ is already fixed on these 2-morphisms (by 2-functoriality),
and we record its value on these composite 2-morphisms in Appendix~\ref{sec:composite}.
We make extensive use of these computations in the sections that follow.

Throughout, we will continue with our convention that the labels $j,j' \in I$ satisfy $i \cdot j = -1 = i \cdot j'$
and correspond to {\color{blue}blue} and {\color{magenta}magenta} strands,
while the labels $k,k' \in I$ satisfy $i \cdot k = 0 = i \cdot k'$ and correspond to {\color{green}green} strands.
We also let $\ell \in I$ denote an arbitrary label.

%
\subsection{Adjunction relations}
%

We verify the right and left adjunction relations given in
Definition~\ref{defU_cat-cyc} relation \eqref{def:lr-adj}.
\begin{prop}
For all $\ell \in I$ the equalities
\begin{align*}
\cal{T}'_i\left({\xy 0;/r.15pc/:
(-8,0)*{\xy \rcapr{} \endxy};
(-8,0)*{\xy \slinenr{} \endxy};
(0,0)*{\xy \rcupr{} \endxy};
(8,8)*{\xy \slineur{} \endxy};
 (17,4)*{ \lambda};
 (-9,-5)*{\scs \ell};(20,0)*{};
\endxy}\right)
&=\cal{T}'_i\left({\xy (0,0)*{\slineur{}}; (-2,-3)*{\scs \ell};(4,2)*{\lambda};\endxy}\right)
=
\cal{T}'_i\left({\xy 0;/r.15pc/:
(0,0)*{\xy \lcapr{} \endxy};
(8,0)*{\xy \slinenr{} \endxy};
(-8,0)*{\xy \lcupr{} \endxy};
(-8,8)*{\xy \slineur{} \endxy};
 (17,4)*{ \lambda};
  (13,-5)*{\scs \ell};(16,0)*{};
\endxy}\right)
\\
\cal{T}'_i\left({\xy 0;/r.15pc/:
(-8,0)*{\xy \lcapr{} \endxy};
(-8,0)*{\xy \slinedr{} \endxy};
(0,0)*{\xy \lcupr{} \endxy};
(8,8)*{\xy \slinenr{} \endxy};
 (17,4)*{ \lambda};
 (-9,-5)*{\scs \ell};(20,0)*{};
\endxy}\right)
&=
\cal{T}'_i\left({\xy (0,0)*{\slinedr{}}; (-2,-3)*{\scs \ell};(4,2)*{\lambda};\endxy}\right)
=
\cal{T}'_i\left({\xy 0;/r.15pc/:
(0,0)*{\xy \rcapr{} \endxy};
(8,0)*{\xy \slinedr{} \endxy};
(-8,0)*{\xy \rcupr{} \endxy};
(-8,8)*{\xy \slinenr{} \endxy};
 (17,4)*{ \lambda};
 (13,-5)*{\scs \ell};(16,0)*{};
\endxy}\right)
\end{align*}
hold in $\Com(\Ucat_Q)$.
\end{prop}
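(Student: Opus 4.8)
The plan is as follows. Each displayed zigzag diagram is, by construction, a vertical composite of horizontal composites of the generating cup, cap, and identity $2$-morphisms of $\Ucat_Q$. Since $\cal{T}'_i$ is defined on these generators in Section~\ref{sec:Ti2mor} and extended by additive $2$-functoriality --- with horizontal composition of $2$-morphisms sent to tensor product of chain maps as in \eqref{eq_tensor_maps} and vertical composition sent to ordinary composition --- applying $\cal{T}'_i$ to a zigzag diagram produces the corresponding composite of the chain maps $\cal{T}'_i(\mathrm{cup})$, $\cal{T}'_i(\mathrm{cap})$, and identity chain maps, using for the downward-oriented caps and cups the composite $2$-morphisms recorded in Appendix~\ref{sec:composite}. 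The middle entry of each of the two equations is $\id_{\cal{E}_\ell\onel}$, resp.\ $\id_{\cal{F}_\ell\onel}$, hence is sent to the identity chain map of $\cal{T}'_i(\cal{E}_\ell\onel)$, resp.\ $\cal{T}'_i(\cal{F}_\ell\onel)$. So in every case the claim reduces to the assertion that an explicit zigzag composite of chain maps equals the identity; we expect this equality to hold on the nose in $\Kom(\Ucat_Q)$, not merely up to homotopy. We organize the verification according to the adjacency of $\ell$ to $i$.

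\textbf{The cases $\ell=i$ and $i\cdot\ell=0$.} In both cases $\cal{T}'_i$ sends every generating $1$-morphism to a complex with a single nonzero term --- in homological degree $-1$ for $\cal{E}_i$, degree $+1$ for $\cal{F}_i$, and degree $0$ for $\cal{E}_k,\cal{F}_k$ with $i\cdot k=0$ --- and homological degrees add under horizontal composition, so every complex occurring in the zigzag, including $\cal{T}'_i(\cal{F}_\ell\cal{E}_\ell\onel)$ and $\cal{T}'_i(\cal{E}_\ell\cal{F}_\ell\onel)$, is concentrated in a single homological degree. Hence the zigzag composite of chain maps is determined by its one nonzero component, which is the underlying zigzag of $2$-morphisms of $\Ucat_Q$ --- a zigzag built, inspecting Section~\ref{def:capcup}, from cup and cap $2$-morphisms of opposite orientation to the ones we started with --- scaled by the product of the scalars $\cal{T}'_i$ attaches to the cup and cap. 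For $\ell=i$ these scalars are $c_{i,\mu}$ and $c_{i,\nu}^{-1}$ for a cap and cup occurring at weights $\mu,\nu$ differing by a multiple of $\alpha_i$; since the bubble parameters are constant along $\mf{sl}_2$-strings (equation~\eqref{eq:BubbCompat} and the remark following it), $c_{i,\mu}=c_{i,\nu}$ and the scalars cancel. For $i\cdot\ell=0$ they are $t_{\ell i}^{\pm\mu_i}$ for a cap and cup at weights differing by a multiple of $\alpha_\ell$; since $\langle h_i,\alpha_\ell\rangle = i\cdot\ell=0$ the integer $\mu_i$ is unchanged along that string, so again the scalars cancel. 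What is left is exactly the corresponding adjunction relation \eqref{def:lr-adj} in $\Ucat_Q$, which holds by hypothesis.

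\textbf{The case $i\cdot\ell=-1$.} Here is where the real work lies. Writing $\ell=j$ with $i\cdot j=-1$, the complex $\cal{T}'_i(\cal{E}_j\onel)$ is the genuine two-term complex with $\cal{E}_j\cal{E}_i\onell{s_i(\lambda)}$ in homological degree $0$ and $\cal{E}_i\cal{E}_j\onell{s_i(\lambda)}\la1\ra$ in homological degree $1$, and the $j$-colored cup and cap $2$-morphisms are sent to the explicit chain maps displayed in Section~\ref{def:capcup} --- chain maps into or out of the ``square'' complexes $\cal{T}'_i(\cal{F}_j\cal{E}_j\onel)$ and $\cal{T}'_i(\cal{E}_j\cal{F}_j\onel)$, further unpacked for downward orientations in Appendix~\ref{sec:composite}. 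Tensoring with the identity chain maps on $\cal{T}'_i(\cal{E}_i\onel)$ and $\cal{T}'_i(\cal{F}_i\onel)$ and composing, the zigzag composite has two components, in homological degrees $0$ and $1$, which we compute directly. Each is a composite containing a $ji$-crossing followed by an $ij$-crossing together with a cup, a cap, and possibly a dot, which we simplify using the quadratic KLR relation \eqref{def:KLR-R2} for $i\cdot j=-1$, the dot-slide relation \eqref{def:dot-slide}, the mixed $EF$ relation \eqref{def:mixed}, adjunction \eqref{def:lr-adj}, and the bubble relations \eqref{def:UQ-bub}; the accumulated scalars $c_{i,\cdot}$, $c_{j,\cdot}$ and the powers of $t_{ij},t_{ji},v_{ij}$ (with $v_{ij}=t_{ij}^{-1}t_{ji}$) cancel, leaving $\id$ on each term. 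We do not expect a chain homotopy to be needed here: a homotopy would be a degree $-1$ morphism $\cal{E}_i\cal{E}_j\onell{s_i(\lambda)}\to\cal{E}_j\cal{E}_i\onell{s_i(\lambda)}$, of which there are none, so any chain endomorphism of this complex homotopic to the identity equals the identity --- but in any event the computation establishes the equality in $\Com(\Ucat_Q)$.

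\textbf{Where the obstacle lies.} The cases $\ell=i$ and $i\cdot\ell=0$ are bookkeeping: everything collapses to a single-term complex and the statement becomes an adjunction relation already available in $\Ucat_Q$, once one checks the bubble-parameter and $t$-scalars cancel. All of the content is in the case $i\cdot\ell=-1$, which does not reduce to an established relation of $\Ucat_Q$ but must be matched against the explicit chain maps of Section~\ref{def:capcup} and Appendix~\ref{sec:composite}. The anticipated difficulty is purely computational: tracking the scalars $c_{i,\lambda}$, $c_{j,\lambda}$, $t_{ij}$, $t_{ji}$, $v_{ij}$ decorating the four $j$-colored cup and cap images, and repeatedly applying \eqref{def:KLR-R2} to collapse the double-crossing configurations, so that both the degree-$0$ and degree-$1$ components of the zigzag composite reduce to the identity. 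The remaining three of the four zigzag identities are verified by the same method, with the roles of the cup/cap orientations and of $\cal{E}$ versus $\cal{F}$ interchanged.
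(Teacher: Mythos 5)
Your overall strategy is the same as the paper's: the two outer cases collapse to one-term complexes where the statement reduces to the adjunction relation in $\Ucat_Q$ after checking that the scalars cancel (via $c_{i,\lambda+n\alpha_i}=c_{i,\lambda}$ for $\ell=i$ and via $\langle h_i,\alpha_k\rangle=0$ for $i\cdot\ell=0$), and the $i\cdot\ell=-1$ case is handled componentwise on the two-term complex $\cal{T}'_i(\cal{E}_j\onel)$, resp.\ $\cal{T}'_i(\cal{F}_j\onel)$. However, your description of what that last computation actually looks like is off in a way worth correcting: the chain maps $\cal{T}'_i$ assigns to the $j$-colored cups and caps have components (into and out of the homological-degree-zero part of the square complexes $\cal{T}'_i(\cal{F}_j\cal{E}_j\onel)$, $\cal{T}'_i(\cal{E}_j\cal{F}_j\onel)$) that are \emph{nested cup and cap diagrams with no crossings and no dots} --- the crossings occur only in the differentials of those square complexes, which never enter the zigzag composite because $\cal{T}'_i(\onel)$ is concentrated in a single degree. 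Consequently each component of the composite is just a doubled adjunction zigzag (one $i$-colored, one $j$-colored) times a product of scalars, and the verification needs only the adjunction relation together with the cancellations $c_{j,\lambda+\alpha_j}=c_{j,\lambda}$ and the stated powers of $(-1)$ and $(-t_{ij})$; the quadratic KLR, dot-slide, mixed $EF$, and bubble relations you list play no role here. Your parenthetical appeal to the absence of degree $-1$ morphisms to rule out nontrivial homotopies is a nice observation but also unnecessary, since the equality holds on the nose.
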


\begin{proof}
When $\ell=i$ or $\ell=k$ with $i \cdot k =0$,
these relations follow from a straightforward computation,
provided one is careful with the relevant parameters.
For example,
the first equality follows from the computation:
\[
\cal{T}'_i\left({\xy 0;/r.15pc/:
(-8,0)*{\xy \rcapr{} \endxy};
(-8,0)*{\xy \slinenr{} \endxy};
(0,0)*{\xy \rcupr{} \endxy};
(8,8)*{\xy \slineur{} \endxy};
 (17,4)*{ \lambda};
  (-9,-5)*{\scs i};
 (-9,0)*{};(20,0)*{};
\endxy}\right)
= c_{i,\lambda+\alpha_i}c_{i,\lambda}^{-1}
{\xy 0;/r.15pc/:
(-8,0)*{\xy \lcapr{} \endxy};
(-8,0)*{\xy \slinedr{i} \endxy};
(0,0)*{\xy \lcupr{} \endxy};
(8,8)*{\xy \slinenr{} \endxy};
 (20,4)*{s_i(\lambda)};
 (-9,0)*{};(16,0)*{};
\endxy}={\xy (0,0)*{\slinedr{i}}; (7,2)*{s_i(\lambda)}; \endxy}
=\cal{T}'_i\left({\xy (0,0)*{\slineur{i}}; (4,2)*{\lambda};\endxy}\right)
\]
when $\ell=i$,
and from
\[
\cal{T}'_i\left({\xy 0;/r.15pc/:
(-8,0)*{\xy \rcapg{} \endxy};
(-8,0)*{\xy \slineng{} \endxy};
(0,0)*{\xy \rcupg{} \endxy};
(8,8)*{\xy \slineug{} \endxy};
 (17,4)*{ \lambda};
   (-9,-5)*{\scs k};
 (-9,0)*{};(20,0)*{};
\endxy}\right)
= t_{ki}^{(\lambda_i+i\cdot k)-\lambda_i}{\xy 0;/r.15pc/:
(-8,0)*{\xy \rcapg{} \endxy};
(-8,0)*{\xy \slineng{k} \endxy};
(0,0)*{\xy \rcupg{} \endxy};
(8,8)*{\xy \slineug{} \endxy};
 (20,4)*{s_i(\lambda)};
 (-9,0)*{};(16,0)*{};
\endxy}={\xy (0,0)*{\slineug{k}}; (7,2)*{s_i(\lambda)}; \endxy}
=\cal{T}'_i\left({\xy (0,0)*{\slineug{k}}; (4,2)*{\lambda};\endxy}\right).
\]
when $\ell=k$.
We omit the other checks, as they are completely analogous.

For $\ell=j$, the coefficients are more delicate.
As $\cal{T}_i'(\cal{E}_j\onel)$ is a 2-term chain complex,
we will use an ordered pair to describe its chain endomorphisms
(with the convention that the first term in the pair corresponds to lower homological degree).
\begin{align*}
\cal{T}'_i\left({\xy 0;/r.15pc/:
(-8,0)*{\xy \rcap{} \endxy};
(-8,0)*{\xy \slinen{} \endxy};
(0,0)*{\xy \rcup{} \endxy};
(8,8)*{\xy \slineu{} \endxy};
  (-9,-5)*{\scs j};
 (10,-7)*{ \lambda}; (-3,11)*{\lambda+\alpha_j};
 (-13,0)*{};(15,0)*{};
\endxy}\right)
=&\left({\xy 0;/r.15pc/:
(-3,-4)*{\xy \rcapr{} \endxy};
(-3,-4)*{\xy \slinenr{i} \endxy};
(-7,-4)*{\xy \slinen{j} \endxy};
(4,-4)*{\xy \llrcupr{} \endxy};
(-8,-4)*{\xy \llrcap{} \endxy};
(11,-4)*{\xy \rcup{} \endxy};
(19,4)*{\xy \slineu{} \endxy};
(23,4)*{\xy \slineur{} \endxy};
 (-12,0)*{};(28,0)*{};
 (-29,2)*{(-1)^{\lambda_j+2}c_{j,\lambda+\alpha_j}^{-1}};
 (-29,-6)*{(-1)^{\lambda_j}c_{j,\lambda}};
\endxy}
\; , \;
{\xy 0;/r.15pc/:
(-3,-4)*{\xy \rcap{} \endxy};
(-3,-4)*{\xy \slinen{j} \endxy};
(-7,-4)*{\xy \slinenr{i} \endxy};
(4,-4)*{\xy \llrcup{} \endxy};
(-8,-4)*{\xy \llrcapr{} \endxy};
(11,-4)*{\xy \rcupr{} \endxy};
(19,4)*{\xy \slineur{} \endxy};
(23,4)*{\xy \slineu{} \endxy};
 (-12,0)*{};(28,0)*{};
 (-30,2)*{(-1)^{(\lambda_j+2)+1}c_{j,\lambda+\alpha_j}^{-1}};
 (-30,-6)*{(-1)^{\lambda_j+1}c_{j,\lambda}};
\endxy}\right) \\
=&\left({\xy (0,0)*{\slineu{j}}; (4,0)*{\slineur{i}}; (10,2)*{s_i(\lambda)}; \endxy}
\; , \;
{\xy (0,0)*{\slineur{i}}; (4,0)*{\slineu{j}}; (10,2)*{s_i(\lambda)}; \endxy}\right)
=\cal{T}'_i\left({\xy (0,0)*{\slineu{j}}; (4,2)*{\lambda};\endxy}\right)
\end{align*}

\begin{align*}
\cal{T}'_i\left({\xy 0;/r.15pc/:
(0,0)*{\xy \rcap{} \endxy};
(8,0)*{\xy \slined{} \endxy};
(-8,0)*{\xy \rcup{} \endxy};
(-8,8)*{\xy \slinen{} \endxy};
  (-9,5)*{\scs j};
 (13,7)*{ \lambda};  (-3,-8)*{\lambda-\alpha_j};
 (-13,0)*{};(17,0)*{};
\endxy}\right)
=&\left({\xy 0;/r.15pc/:
(11,-4)*{\xy \rcap{} \endxy};
(19,-4)*{\xy \slined{} \endxy};
(23,-4)*{\xy \slinedr{} \endxy};
(-8,-4)*{\xy \llrcup{} \endxy};
(4,-4)*{\xy \llrcapr{} \endxy};
(-3,-4)*{\xy \rcupr{} \endxy};
(-3,4)*{\xy \slinenr{} \endxy};
(-7,4)*{\xy \slinen{} \endxy};
(-1,8)*{\scs i};
(-5,8)*{\scs j};
 (-12,0)*{};(28,0)*{};
 (-28,2)*{(-1)^{\lambda_j+1}c_{j,\lambda}^{-1}};
 (-32,-6)*{(-1)^{(\lambda_j-2)+1}c_{j,\lambda-\alpha_j}};\endxy}
 \; , \;
{\xy 0;/r.15pc/:
(11,-4)*{\xy \rcapr{} \endxy};
(19,-4)*{\xy \slinedr{} \endxy};
(23,-4)*{\xy \slined{} \endxy};
(-8,-4)*{\xy \llrcupr{} \endxy};
(4,-4)*{\xy \llrcap{} \endxy};
(-3,-4)*{\xy \rcup{} \endxy};
(-3,4)*{\xy \slinen{} \endxy};
(-7,4)*{\xy \slinenr{} \endxy};
(-1,8)*{\scs j};
(-5,8)*{\scs i};
 (-12,0)*{};(28,0)*{};
 (-22,2)*{(-1)^{\lambda_j}c_{j,\lambda}^{-1}};
 (-30,-6)*{(-1)^{\lambda_j-2}c_{j,\lambda-\alpha_j}};
 \endxy}\right)\\
=&\left({\xy (0,0)*{\slined{j}}; (4,0)*{\slinedr{i}}; (10,2)*{s_i(\lambda)}; \endxy}
\; , \;
{\xy (0,0)*{\slinedr{i}}; (4,0)*{\slined{j}}; (10,2)*{s_i(\lambda)}; \endxy}\right)
=\cal{T}'_i\left({\xy (0,0)*{\slined{j}}; (4,2)*{\lambda};\endxy}\right)
\end{align*}
\begin{align*}
\cal{T}'_i\left({\xy 0;/r.15pc/:
(0,0)*{\xy \lcap{} \endxy};
(8,0)*{\xy \slinen{} \endxy};
(-8,0)*{\xy \lcup{} \endxy};
(-8,8)*{\xy \slineu{} \endxy};
  (-9,5)*{\scs j};
 (13,7)*{ \lambda};  (-3,-8)*{\lambda+\alpha_j};
 (-13,0)*{};(17,0)*{};
\endxy}\right)
=&
{\xy
(0,3)*{(-t_{ij})^{1-\lambda_i}c_{i,\lambda} (-1)^{\lambda_j}c_{j,\lambda}^{-1}};
(0,-3)*{(-t_{ij})^{\lambda_i-1}c_{i,\lambda}^{-1} (-1)^{\lambda_j+2}c_{j,\lambda+\alpha_j}};
\endxy}
\left({\xy 0;/r.15pc/:
 (-8,-10)*{};(28,10)*{};
(11,-4)*{\xy \lcap{} \endxy};
(19,-4)*{\xy \slinen{j} \endxy};
(23,-4)*{\xy \slinenr{i} \endxy};
(-8,-4)*{\xy \lllcup{} \endxy};
(4,-4)*{\xy \lllcapr{} \endxy};
(-3,-4)*{\xy \lcupr{} \endxy};
(-3,4)*{\xy \slineur{} \endxy};
(-7,4)*{\xy \slineu{} \endxy};
\endxy}
\; , \;
{\xy 0;/r.15pc/:
 (-8,-10)*{};(28,10)*{};
(11,-4)*{\xy \lcapr{} \endxy};
(19,-4)*{\xy \slinenr{i} \endxy};
(23,-4)*{\xy \slinen{j} \endxy};
(-8,-4)*{\xy \lllcupr{} \endxy};
(4,-4)*{\xy \lllcap{} \endxy};
(-3,-4)*{\xy \lcup{} \endxy};
(-3,4)*{\xy \slineu{} \endxy};
(-7,4)*{\xy \slineur{} \endxy};
\endxy}\right) \\
&=\left({\xy (0,0)*{\slineu{j}}; (4,0)*{\slineur{i}}; (10,2)*{s_i(\lambda)}; \endxy}
\; , \;
{\xy (0,0)*{\slineur{i}}; (4,0)*{\slineu{j}}; (10,2)*{s_i(\lambda)}; \endxy}\right)
=\cal{T}'_i\left({\xy (0,0)*{\slineu{j}}; (4,2)*{\lambda};\endxy}\right)
\end{align*}
\begin{align*}
\cal{T}'_i\left(
{\xy 0;/r.15pc/:
(-8,0)*{\xy \lcap{} \endxy};
(-8,0)*{\xy \slined{} \endxy};
(0,0)*{\xy \lcup{} \endxy};
(8,8)*{\xy \slinen{} \endxy};
  (-9,-5)*{\scs j};
 (10,-7)*{ \lambda}; (-3,11)*{\lambda-\alpha_j};
 (-13,0)*{};(15,0)*{};
\endxy}\right)
&=
{\xy
(0,3)*{(-t_{ij})^{1-(\lambda_i+1)}c_{i,\lambda-\alpha_j} (-1)^{\lambda_j-2}c_{j,\lambda-\alpha_j}^{-1}};
(0,-3)*{(-t_{ij})^{\lambda_i}c_{i,\lambda-\alpha_j}^{-1} (-1)^{\lambda_j}c_{j,\lambda}};
\endxy}
\left({\xy 0;/r.15pc/:
 (-8,-10)*{};(28,10)*{};
(-3,-4)*{\xy \lcapr{} \endxy};
(-3,-4)*{\xy \slinedr{} \endxy};
(-7,-4)*{\xy \slined{} \endxy};
(4,-4)*{\xy \lllcupr{} \endxy};
(-8,-4)*{\xy \lllcap{} \endxy};
(11,-4)*{\xy \lcup{} \endxy};
(19,4)*{\xy \slinen{} \endxy};
(23,4)*{\xy \slinenr{} \endxy};
(21,8)*{\scs j};
(25,8)*{\scs i};
\endxy}
\; , \;
{\xy 0;/r.15pc/:
 (-8,-10)*{};(28,10)*{};
(-3,-4)*{\xy \lcap{} \endxy};
(-3,-4)*{\xy \slined{} \endxy};
(-7,-4)*{\xy \slinedr{} \endxy};
(4,-4)*{\xy \lllcup{} \endxy};
(-8,-4)*{\xy \lllcapr{} \endxy};
(11,-4)*{\xy \lcupr{} \endxy};
(19,4)*{\xy \slinenr{} \endxy};
(23,4)*{\xy \slinen{} \endxy};
(21,8)*{\scs i};
(25,8)*{\scs j};
\endxy}\right) \\
&=\left({\xy (0,0)*{\slineu{j}}; (4,0)*{\slineur{i}}; (10,2)*{s_i(\lambda)}; \endxy}
\; , \;
{\xy (0,0)*{\slinedr{i}}; (4,0)*{\slined{j}}; (10,2)*{s_i(\lambda)}; \endxy}\right)
=\cal{T}'_i\left({\xy (0,0)*{\slined{j}}; (4,2)*{\lambda};\endxy}\right)
\end{align*}
\end{proof}


%
\subsection{Dot cyclicity}\label{proof:dotcyc}
%

We verify
the dot cyclicity relation given in Definition~\ref{defU_cat-cyc} relation \eqref{def:dot-cyc}.
Recall that, in our presentation given by Remark \ref{rem:presentation},
the downward dot morphism is defined in terms of the upward dot morphism and \emph{rightward}
cap/cup morphisms. Dot cyclicity is then equivalent to the following.

\begin{prop}
For $\ell \in I$, the relation
\[
\cal{T}'_i \left( \xy 0;/r.12pc/:
(0,0)*{\sdotur{}};
(-3,6)*{\slcapr{}};
(-6,0)*{\slinenr{}};
(-6,-8)*{\slinedr{}};
(3,-5)*{\slcupr{}};
(6,0)*{\slinedr{}};
(6,8)*{\slinenr{}};
 (12,3)*{ \lambda};
 (-9,-3)*{\scs \ell};
 (15,0)*{};
\endxy \right)
=\cal{T}'_i \left(\xy 0;/r.18pc/:
 (0,0)*{\sdotdr{\ell}};
 (6,3)*{ \lambda};
 (-3,0)*{};  (10,0)*{};
 \endxy \right)
\]
holds in $\Com(\Ucat_Q)$.
\end{prop}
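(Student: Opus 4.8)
The plan is to argue by cases according to the adjacency of $\ell$ and $i$, exactly as in the preceding adjunction argument, computing both sides of the asserted equality directly from the definition of $\cal{T}'_i$ on generators (Sections~\ref{def:updot} and~\ref{def:capcup}) extended by $2$-functoriality, together with the values of $\cal{T}'_i$ on the composite downward dot recorded in Appendix~\ref{sec:composite}. The point is that, in the reduced presentation of Remark~\ref{rem:presentation}, the downward dot is fixed as one specific cyclic rotation of the upward dot (using rightward cup and cap), so the right-hand side is computed by $2$-functoriality from that arrangement, while the left-hand side is the \emph{other} rotation: although the two diagrams are equal in $\Ucat_Q$ by relation~\eqref{def:dot-cyc}, one must check that $\cal{T}'_i$ sends them to the same chain map (up to homotopy).

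For $\ell = i$, and for $\ell = k$ with $i\cdot k = 0$, the complexes $\cal{T}'_i(\cal{E}_\ell\onel)$ and $\cal{T}'_i(\cal{F}_\ell\onel)$ are concentrated in a single homological degree, so a chain endomorphism is just a $2$-morphism of $\Ucat_Q$, and the claim reduces to the dot cyclicity relation~\eqref{def:dot-cyc} in $\Ucat_Q$ — which holds on the nose, $\Ucat_Q$ being cyclic — after matching scalars. These scalars cancel: the two $i$-colored turnbacks produced by the rotation contribute $c_{i,\mu}$ and $c_{i,\mu}^{-1}$ at weights differing by a multiple of $\alpha_i$, and bubble parameters are constant along $\mf{sl}_2$-strings; for $\ell = k$ the accompanying powers of $t_{ki}$ match because $i\cdot k = 0$. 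This is entirely parallel to the $\ell = i$ and $\ell = k$ computations in the adjunction proof above.

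The substantive case is $\ell = j$ with $i\cdot j = -1$. Here $\cal{T}'_i(\cal{F}_j\onel)$ is the genuine two-term complex $\cal{F}_j\cal{F}_i\onell{s_i(\lambda)}\la -1\ra \to \cal{F}_i\cal{F}_j\onell{s_i(\lambda)}$, so both sides are chain endomorphisms of it, each given by a pair of $2$-morphisms of $\Ucat_Q$ (one in each homological degree, with the prescribed grading shifts). One forms the left-hand side as the horizontal composite of complexes for the rotated diagram, using the value of $\cal{T}'_i$ on the relevant rightward cup of $\cal{E}_j$-/$\cal{F}_j$-type (the four-term ``square'' complexes of Section~\ref{def:capcup}), on the upward dot on $\cal{E}_j$ (Section~\ref{def:updot}), and on the rightward cap, carrying the remaining strand along as an identity; the right-hand side is read off from Appendix~\ref{sec:composite}. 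Expanding and comparing componentwise, each of the two resulting identities of $2$-morphisms in $\Ucat_Q$ follows from a combination of adjunction~\eqref{def:lr-adj}, dot cyclicity~\eqref{def:dot-cyc}, the $i\neq j$ dot-slide relation~\eqref{def:dot-slide}, and the curl relations of Section~\ref{sec:curl} (to resolve the dotted turnbacks that appear). The scalar prefactors also match: the $i$-colored cap and cup contribute reciprocal powers of $c_{i,\bullet}$ that cancel along the $\mf{sl}_2$-string, and the $(-t_{ij})^{\lambda_i}$ and $c_{j,\bullet}^{\pm 1}$ factors accompanying the $\cal{E}_j$- and $\cal{F}_j$-turnbacks cancel exactly as in the $\ell = j$ part of the adjunction proof.

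The main obstacle is purely the bookkeeping in this last case: correctly composing the anti-commutative squares attached to the two-term complexes with the longer cap/cup complexes, keeping straight the homological-degree shifts, the Koszul signs from~\eqref{eq:CompOfCom}, and the many scalar factors. If on some component the equality should hold only up to homotopy, one exhibits the null-homotopy — a single $2$-morphism between the two relevant summands, built from a dotted cap or cup — and verifies the homotopy identity via one application of a curl relation; but I expect the relation to hold on the nose, as dot cyclicity is among the simplest relations and no homotopy was needed in the adjunction check.
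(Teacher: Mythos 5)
Your proposal is correct and follows essentially the same route as the paper: a case analysis on $i\cdot\ell$, direct computation of the left-rotated diagram from the definitions of $\cal{T}'_i$ on caps, cups, and the upward dot, comparison with the recorded value on the downward dot, and scalar cancellation via the compatibility $c_{i,\lambda+\alpha_j}/c_{i,\lambda}=t_{ij}$ and constancy of bubble parameters along $\mf{sl}_2$-strings. The only overstatement is in the $\ell=j$ case, where no dot-slide or curl relations are needed (planar isotopy in the cyclic $\Ucat_Q$ suffices) and, as you correctly anticipate, the identity holds on the nose with no homotopy.
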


\begin{proof} We compute the left-hand side, and verify the relations by comparing to the
results of Section~\ref{sec:downdot}, which give the value of $\cal{T}'_i$ on downward oriented dot 2-morphisms.
\begin{align*}
\cal{T}'_i \left( \xy 0;/r.12pc/:
(0,0)*{\sdotur{}};
(-3,6)*{\slcapr{}};
(-6,0)*{\slinenr{}};
(-6,-8)*{\slinedr{}};
(3,-5)*{\slcupr{}};
(6,0)*{\slinedr{}};
(6,8)*{\slinenr{}};
 (12,3)*{ \lambda};
  (-9,-3)*{\scs i};
 (-9,0)*{};(15,0)*{};
\endxy \right)
&= c_{i,\l}c_{i,\l-\alpha_i}^{-1}\vcenter{\xy 0;/r.12pc/:
(0,0)*{\sdotdr{}};
(-3,6)*{\srcapr{}};
(-6,0)*{\slineur{}};
(-6,-8)*{\slinenr{}};
(3,-5)*{\srcupr{}};
(6,0)*{\slinenr{}};
(6,8)*{\slineur{}};
(-9,0)*{};(9,0)*{};\endxy}
=\xy 0;/r.18pc/:
 (0,0)*{\sdotur{i}};
 (-4,0)*{};(4,0)*{};
 \endxy
=:\cal{T}'_i \left(\xy 0;/r.18pc/:
 (0,0)*{\sdotdr{i}};
 (6,3)*{ \lambda};
 (-4,0)*{};(10,0)*{};
 \endxy \right) \\
\cal{T}'_i \left(\xy  0;/r.12pc/:
(0,0)*{\sdotu{}};
(-3,6)*{\slcap{}};
(-6,0)*{\slinen{}};
(-6,-8)*{\slined{}};
(3,-5)*{\slcup{}};
(6,0)*{\slined{}};
(6,8)*{\slinen{}};
 (12,3)*{ \lambda};
  (-9,-3)*{\scs j};
 (-9,0)*{};(15,0)*{};
 \endxy \right)
 &=
 {\xy
(0,3)*{(-t_{ij})^{1-(\lambda_i+1)}c_{i,\lambda-\alpha_j} (-1)^{\lambda_j-2}c_{j,\lambda-\alpha_j}^{-1}};
(0,-3)*{(-t_{ij})^{\lambda_i}c_{i,\lambda-\alpha_j}^{-1} (-1)^{\lambda_j}c_{j,\lambda}};
\endxy}
\left({\xy 0;/r.15pc/:
 (-8,-10)*{};(28,10)*{};
(-3,-4)*{\xy \lcapr{} \endxy};
(-3,-4)*{\xy \slinedr{i} \endxy};
(-7,-4)*{\xy \slined{j} \endxy};
(4,-4)*{\xy \lllcupr{} \endxy};
(-8,-4)*{\xy \lllcap{} \endxy};
(11,-4)*{\xy \lcup{} \endxy};
(19,4)*{\xy \slinen{} \endxy};
(23,4)*{\xy \slinenr{} \endxy};
(12.75,-1)*{\bullet};
\endxy}
\; , \;
{\xy 0;/r.15pc/:
 (-8,-10)*{};(28,10)*{};
(-3,-4)*{\xy \lcap{} \endxy};
(-3,-4)*{\xy \slined{j} \endxy};
(-7,-4)*{\xy \slinedr{i} \endxy};
(4,-4)*{\xy \lllcup{} \endxy};
(-8,-4)*{\xy \lllcapr{} \endxy};
(11,-4)*{\xy \lcupr{} \endxy};
(19,4)*{\xy \slinenr{} \endxy};
(23,4)*{\xy \slinen{} \endxy};
(6.75,-1.5)*{\bullet};
\endxy}\right) \\ \\
&=
\left({\xy (0,0)*{\sdotd{j}}; (4,0)*{\slinedr{i}}; (10,2)*{s_i(\lambda)}; \endxy}
\; , \;
{\xy (0,0)*{\slinedr{i}}; (4,0)*{\sdotd{j}}; (10,2)*{s_i(\lambda)}; \endxy}\right)
=: \cal{T}'_i\left({\xy (0,0)*{\sdotd{j}}; (4,2)*{\lambda};\endxy}\right)\\
\cal{T}'_i \left( \xy 0;/r.12pc/:
(0,0)*{\sdotug{}};
(-3,6)*{\slcapg{}};
(-6,0)*{\slineng{}};
(-6,-8)*{\slinedg{}};
(3,-5)*{\slcupg{}};
(6,0)*{\slinedg{}};
(6,8)*{\slineng{}};
 (12,3)*{ \lambda};
  (-9,-3)*{\scs k};
 (-9,0)*{};(15,0)*{};
\endxy \right)
&=
\vcenter{\xy 0;/r.12pc/:
(0,0)*{\sdotug{}};
(-3,6)*{\slcapg{}};
(-6,0)*{\slineng{}};
(-6,-8)*{\slinedg{}};
(3,-5)*{\slcupg{}};
(6,0)*{\slinedg{}};
(6,8)*{\slineng{}};
 (-9,0)*{};(9,0)*{};\endxy}
=\xy 0;/r.18pc/:
 (0,0)*{\sdotdg{k}};
 (-4,0)*{};(4,0)*{};
 \endxy
=: \cal{T}'_i \left(\xy 0;/r.18pc/:
 (0,0)*{\sdotdg{k}};
 (6,3)*{ \lambda};
 (-4,0)*{};(10,0)*{};
 \endxy \right)
\end{align*}
\end{proof}


%
\subsection{Crossing cyclicity}\label{proof:crosscyc}
%

We now verify the crossing cyclicity realtions given in Definition~\ref{defU_cat-cyc} relation~\eqref{def:cross-cyc}.
Note that it suffices to prove cyclicity for the downward crossing,
as the relations for the sideways crossings follow from this and the adjunction relations.
As before, we will use the value of the downward crossing from Section~\ref{sec:down-crossing},
where (by definition) it is given in terms of the upward crossing and \emph{rightward} cap/cup 2-morphisms.

\begin{prop}
For all $\ell,\ell' \in I$, the equation
\[
\cal{T}'_i \left(
\xy0;/r.11pc/:
(0,0)*{\reflectbox{\xy
(-9,0)*{\slinenr{}};
(-9,8)*{\slinenr{}};
(-6,-6)*{\srcupr{}};
(6,7)*{\srcapr{}};
(9,0)*{\slinenr{}};
(9,-8)*{\slinedr{}};
(0,0)*{\ucrossrr{}{}};
(-15,0)*{\slinenr{}};
(-15,8)*{\slinenr{}};
(-6,-9)*{\llrcupr{}};
(6,10)*{\llrcapr{}};
(15,0)*{\slinenr{}};
(15,-8)*{\slinedr{}};
\endxy} };
(-15,-15)*{\scs \ell};(-9,-15)*{\scs \ell'};(18,0)*{};
\endxy \xy 0;/r.12pc/:(0,2)*{\lambda};\endxy \right)
 =
\cal{T}'_i \left(  \xy 0;/r.18pc/:
  (0,0)*{\dcrossrr{\ell}{\ell'}};
 (6,3)*{ \lambda};
 \endxy  \right)
\]
holds in $\Com(\Ucat_Q)$.
\end{prop}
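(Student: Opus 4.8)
The plan is to reduce the verification to an explicit, finite case check. Since the sideways crossings are, in the reduced presentation of Remark~\ref{rem:presentation}, built from the upward crossing together with rightward caps and cups, and since $\cal{T}'_i$ has already been shown to respect the adjunction relations, the sideways cases follow from the downward case by composing with the (homotopy) equations for $\cal{T}'_i$ on cups and caps; so it suffices to treat the downward crossing. Moreover, in that same presentation the downward crossing $\cal{F}_\ell\cal{F}_{\ell'}\onel \to \cal{F}_{\ell'}\cal{F}_\ell\onel$ is \emph{itself} defined as one particular rotation of the upward crossing, so the right-hand side of the identity is again an explicit composite of the chain maps $\cal{T}'_i$ assigns to upward crossings, cups, caps, and (via Appendix~\ref{sec:composite} / Section~\ref{sec:down-crossing}) to downward cups and caps. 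Thus the content is an equality, up to chain homotopy in $\Com(\Ucat_Q)$, between two such composite chain maps $\cal{T}'_i(\cal{F}_\ell\cal{F}_{\ell'}\onel) \to \cal{T}'_i(\cal{F}_{\ell'}\cal{F}_\ell\onel)$ (up to grading shift). Because $\cal{T}'_i$ respects adjunction on the nose, we may freely isotope cups and caps within these composites, which further reduces the task to checking that $\cal{T}'_i$ takes the single ``slide a crossing past a cup/cap'' move to a homotopy, for each admissible coloring of the two strands.

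\textbf{The easy cases.} First I would split according to the adjacency of $\ell$ and $\ell'$ with respect to $i$. If neither $\ell$ nor $\ell'$ equals a node $j$ with $i\cdot j=-1$ -- that is, $\ell,\ell'\in\{i\}\cup\{k: i\cdot k=0\}$ -- then for each such pair at least one of the complexes $\cal{T}'_i(\cal{F}_\ell\cal{F}_{\ell'}\onel)$, $\cal{T}'_i(\cal{F}_{\ell'}\cal{F}_\ell\onel)$ is concentrated in a single homological degree (it is the tensor of complexes each concentrated in a single degree, giving $\cal{E}_i\cal{E}_i$, $\cal{E}_k\cal{F}_i$, $\cal{E}_k\cal{E}_{k'}$, etc.). There is then no room for a nontrivial homotopy, so the required identity must hold on the nose; it follows from the crossing-cyclicity relation of Definition~\ref{defU_cat-cyc}\eqref{def:cross-cyc} applied componentwise, once one bookkeeps the scalars ($c_{i,\lambda}^{\pm1}$, $t_{ki}^{\pm\lambda_i}$, and the sign $\cal{T}'_i$ attaches to the $ii$-crossing) introduced on the caps, cups, and $ii$-crossing, and checks that these cancel -- exactly the kind of cancellation already carried out in the adjunction and dot-cyclicity verifications above.

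\textbf{The substantive cases.} The remaining cases are those in which at least one of $\ell,\ell'$ is adjacent to $i$. When exactly one of them is such a $j$, both complexes in question have two nonzero terms, and I would compute the two composite chain maps directly from the formulas for $\cal{T}'_i$ on the mixed upward crossings (Section~\ref{def:upcross}) and on the composite downward crossings (Appendix~\ref{sec:composite}), then verify the chain-map identity, producing the connecting homotopy explicitly when the two composites differ by a boundary; the simplifications needed are instances of the quadratic and cubic KLR relations, the mixed $EF$ relation, and the dot-slide relation of $\Ucat_Q$. The genuinely hard case is $\ell=j$, $\ell'=j'$ with $i\cdot j=i\cdot j'=-1$: here each side of the equation is a chain map between the length-three ``cube'' complexes of the type appearing in \eqref{eq:cubemap}, with four nonzero terms, and one must track the entire collection of coefficients ($t_{ij}^{-1}$, $t_{ij'}$, $v_{ij}=t_{ij}^{-1}t_{ji}$, the bubble parameters $c_{i,\lambda}^{\pm1}$, and the signs) together with the $-\delta_{jj'}v_{ij}$ term, and exhibit an explicit homotopy between the two composites. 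The subcases $j=j'$ and $j\neq j'$ must be separated, since the additional curl and bubble terms that arise when the two $j$-labeled strands can interact -- via the extended $\mathfrak{sl}_2$ relations of Definition~\ref{defU_cat-cyc}\eqref{def:EF} and the curl relations of Section~\ref{sec:curl} -- only occur when $j=j'$.

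\textbf{Main obstacle.} I expect this last case -- the comparison of the two cube-shaped chain maps for a pair of $i$-neighbours -- to be the crux, both because of the size of the diagrams and because the homotopy is genuinely nontrivial there. The strategy will be to reduce both composites to a common normal form using cyclicity and the KLR relations in $\Ucat_Q$, peel off the terms that already agree on the nose, and identify the remainder with $d h + h d$ for a homotopy $h$ built from a single crossing-with-cap/cup diagram, in parallel with the homotopies constructed for the other cube-shaped relations elsewhere in Section~\ref{sec_proof_func}.
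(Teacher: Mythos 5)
Your overall skeleton matches the paper's: reduce to the downward crossing (the sideways cases follow from it together with adjunction), then check each coloring of the two strands against the explicit values of $\cal{T}'_i$ on the composite $2$-morphisms recorded in Sections~\ref{sec:sideways-crossing} and~\ref{sec:down-crossing}. Your observation that the cases with $\ell,\ell'\in\{i\}\cup\{k : i\cdot k=0\}$ leave no room for a homotopy is also correct.

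The problem is with your description of the ``substantive cases.'' You anticipate that the $ij$-, $ji$- and especially the $jj'$-colorings will require nontrivial chain homotopies, and for $j=j'$ you invoke curl terms, bubble terms, and the extended $\mathfrak{sl}_2$ relations. None of that occurs here: crossing cyclicity is one of the relations that $\cal{T}'_i$ preserves \emph{on the nose} in every case, including the cube-shaped $jj'$ case. In the mixed $ij$/$ji$ cases the two composites agree exactly component by component (after simplifying with the cyclicity and adjunction relations of $\Ucat_Q$, which hold strictly in the cyclic version $\Ucat_Q^{cyc}$), and in the $jj'$ case the left-hand side is visibly a single scalar $C$ times the right-hand side, so the whole verification collapses to showing $C=1$. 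The actual content of the proof is therefore precisely the part you wave at in one clause --- ``bookkeeps the scalars \dots and checks that these cancel'' --- namely tracking the factors $c_{i,\lambda}^{\pm1}$, $(-t_{ij})^{\pm\lambda_i}$, $(-1)^{\lambda_j}c_{j,\lambda}^{\pm1}$, $t_{ki}^{\pm\lambda_i}$ that $\cal{T}'_i$ attaches to caps, cups and crossings and verifying that their product reproduces the scalars in the definitions of Section~\ref{sec:down-crossing}. Since you neither carry out these computations nor correctly identify that they (rather than homotopy constructions) are the entire difficulty, the proposal as written does not establish the proposition; the machinery you set up for producing homotopies in the $j=j'$ subcase is solving a problem that does not arise.
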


\begin{proof}
We compute the left-hand side,
considering the three possibilities for each $\ell,\ell' \in I$ in relation to the fixed node $i \in I$.
For both stands labeled $i$, we have
\[
\cal{T}'_i \left( \reflectbox{\xy 0;/r.10pc/:
(-9,0)*{\slinenr{}};
(-9,8)*{\slinenr{}};
(-6,-6)*{\srcupr{}};
(6,7)*{\srcapr{}};
(9,0)*{\slinenr{}};
(9,-8)*{\slinedr{}};
(0,0)*{\ucrossrr{}{}};
(-15,0)*{\slinenr{}};
(-15,8)*{\slinenr{}};
(-6,-9)*{\llrcupr{}};
(6,10)*{\llrcapr{}};
(15,0)*{\slinenr{}};
(15,-8)*{\slinedr{}};
(15,-15)*{\reflectbox{$\scs i$}};
(9,-15)*{\reflectbox{$\scs i$}};
(-18,0)*{};(18,0)*{};
\endxy} \xy 0;/r.12pc/:(0,2)*{\lambda};\endxy \right)
=
-c_{i,\lambda}c_{i,\lambda-\alpha_i}c^{-1}_{i,\lambda-2\alpha_i}c^{-1}_{i,\lambda-\alpha_i}
\reflectbox{\xy 0;/r.10pc/:
(-9,0)*{\slinenr{}};
(-9,8)*{\slinenr{}};
(-6,-6)*{\slcupr{}};
(6,7)*{\slcapr{}};
(9,0)*{\slineur{}};
(9,-8)*{\slinenr{}};
(0,0)*{\dcrossrr{}{}};
(-15,0)*{\slinenr{}};
(-15,8)*{\slinenr{}};
(-6,-9)*{\lllcupr{}};
(6,10)*{\lllcapr{}};
(15,0)*{\slineur{}};
(15,-8)*{\slinenr{}};
(15,-15)*{\reflectbox{$\scs i$}};
(9,-15)*{\reflectbox{$\scs i$}};
(-18,0)*{};(18,0)*{};
\endxy}
=     \vcenter{\xy 0;/r.18pc/:
  (-7,0)*{-};(0,0)*{\ucrossrr{i}{i}}; \endxy}
 =:    \cal{T}'_i \left(  \xy 0;/r.18pc/:
  (0,0)*{\dcrossrr{i}{i}};
 (6,3)*{ \lambda};
 \endxy  \right)
\]

For strands labeled $i$ and $j$, we have
\begin{align*}
\cal{T}'_i \left( \reflectbox{\xy 0;/r.10pc/:
(-9,0)*{\slinen{}};
(-9,8)*{\slinen{}};
(-6,-6)*{\srcup{}};
(6,7)*{\srcap{}};
(9,0)*{\slinen{}};
(9,-8)*{\slined{}};
(0,0)*{\ucrossbr{}{}};
(-15,0)*{\slinenr{}};
(-15,8)*{\slinenr{}};
(-6,-9)*{\llrcupr{}};
(6,10)*{\llrcapr{}};
(15,0)*{\slinenr{}};
(15,-8)*{\slinedr{}};
(15,-15)*{\reflectbox{$\scs i$}};
(9,-15)*{\reflectbox{$\scs j$}};
(-18,0)*{};(18,0)*{};
\endxy} \xy 0;/r.12pc/:(0,2)*{\lambda};\endxy \right)
&=
{\xy
(0,3)*{c^{-1}_{i,\l-\alpha_i-\alpha_j} (-t_{ij})^{1-(\l_i+1)} c_{i,\l-\alpha_j} (-1)^{\l_j-2} c^{-1}_{j,\l-\alpha_j}};
(0,-3)*{c_{i,\l} (-t_{ij})^{\l_i-2} c^{-1}_{i,\l-\alpha_i-\alpha_j} (-1)^{\l_j+1} c_{j,\l-\alpha_i}};
\endxy}
 \left(\vcenter{\xy 0;/r.15pc/:
 (-3,0)*{\rcrossrb{i}{j}};
  (6,0)*{\slinedr{i}};
 (3,8)*{\rcrossrr{}{}};
 (-6,8)*{\slinen{}}; (-10,4)*{-};
 \endxy},\vcenter{\xy 0;/r.15pc/:
 (-3,0)*{\rcrossrr{i}{i}};
  (6,0)*{\slined{j}};
 (3,8)*{\rcrossrb{}{}};
 (-6,8)*{\slinenr{}};
 \endxy}\right)\\
&=
\left(\vcenter{\xy 0;/r.15pc/:
 (-3,0)*{\rcrossrb{i}{j}};
  (6,0)*{\slinedr{i}};
 (3,8)*{\rcrossrr{}{}};
 (-6,8)*{\slinen{}}; (-17,3)*{t_{ij}^{-1}t_{ji}^{-1}};
 \endxy},\vcenter{\xy 0;/r.15pc/:
 (-17,3)*{-t_{ij}^{-1}t_{ji}^{-1}}; (-3,0)*{\rcrossrr{i}{i}};
  (6,0)*{\slined{j}};
 (3,8)*{\rcrossrb{}{}};
 (-6,8)*{\slinenr{}};
 \endxy}\right)
=:  \cal{T}'_i \left(  \xy 0;/r.18pc/:
  (0,0)*{\dcrossrb{i}{j}};
 (6,3)*{ \lambda};
 \endxy  \right)\\
\cal{T}'_i \left( \reflectbox{\xy 0;/r.10pc/:
(-9,0)*{\slinenr{}};
(-9,8)*{\slinenr{}};
(-6,-6)*{\srcupr{}};
(6,7)*{\srcapr{}};
(9,0)*{\slinenr{}};
(9,-8)*{\slinedr{}};
(0,0)*{\ucrossrb{}{}};
(-15,0)*{\slinen{}};
(-15,8)*{\slinen{}};
(-6,-9)*{\llrcup{}};
(6,10)*{\llrcap{}};
(15,0)*{\slinen{}};
(15,-8)*{\slined{}};
(15,-15)*{\reflectbox{$\scs j$}};
(9,-15)*{\reflectbox{$\scs i$}};
(-18,0)*{};(18,0)*{};
\endxy} \xy 0;/r.12pc/:(0,2)*{\lambda};\endxy \right)
&=
 {\xy
(0,3)*{(-t_{ij})^{2-\l_i} c_{i,\l-\alpha_j} (-1)^{\l_j-1} c^{-1}_{j,\l-\alpha_i} c^{-1}_{i,\l}};
(0,-3)*{t_{ij} c_{i,\l-\alpha_j} (-t_{ij})^{\l_i} c^{-1}_{i,\l-\alpha_j} (-1)^{\l_j} c_{j,\l} };
\endxy}
\left(\vcenter{\xy 0;/r.15pc/:
 (3,0)*{\lcrossrr{i}{i}};
  (-6,0)*{\slined{j}};
 (-3,8)*{\lcrossbr{}{}};
 (6,8)*{\slinenr{}}; (5,-.5)*[black]{\bullet};
 \endxy}
\vcenter{\xy 0;/r.15pc/:
 (-12,4)*{-}; (3,0)*{\lcrossrr{i}{i}};
  (-6,0)*{\slined{j}};
 (-3,8)*{\lcrossbr{}{}};
 (6,8)*{\slinenr{}}; (1,-.5)*[black]{\bullet};
 \endxy},
 \vcenter{\xy 0;/r.15pc/:
    (3,0)*{\lcrossbr{j}{i}};
    (-6,0)*{\slinedr{i}};
    (-3,8)*{\lcrossrr{}{}};
    (6,8)*{\slinen{}}; (-6,0)*[black]{\bullet};
    \endxy}
 \vcenter{\xy 0;/r.15pc/:
    (-12,4)*{-};
    (3,0)*{\lcrossbr{j}{i}};
    (-6,0)*{\slinedr{i}};
    (-3,8)*{\lcrossrr{}{}};
    (6,8)*{\slinen{}};  (5,-.5)*[black]{\bullet};
    \endxy}\right) \\
&=
t_{ij}^2t_{ji}
\left(\vcenter{\xy 0;/r.15pc/:
 (3,0)*{\lcrossrr{i}{i}};
  (-6,0)*{\slined{j}};
 (-3,8)*{\lcrossbr{}{}};
 (6,8)*{\slinenr{}}; (1,-.5)*[black]{\bullet};
 \endxy}
\vcenter{\xy 0;/r.15pc/:
 (-12,4)*{-}; (3,0)*{\lcrossrr{i}{i}};
  (-6,0)*{\slined{j}};
 (-3,8)*{\lcrossbr{}{}};
 (6,8)*{\slinenr{}}; (5,-.5)*[black]{\bullet};
 \endxy},
 \vcenter{\xy 0;/r.15pc/:
    (3,0)*{\lcrossbr{j}{i}};
    (-6,0)*{\slinedr{i}};
    (-3,8)*{\lcrossrr{}{}};
    (6,8)*{\slinen{}}; (5,-.5)*[black]{\bullet};
    \endxy}
 \vcenter{\xy 0;/r.15pc/:
    (-12,4)*{-};
    (3,0)*{\lcrossbr{j}{i}};
    (-6,0)*{\slinedr{i}};
    (-3,8)*{\lcrossrr{}{}};
    (6,8)*{\slinen{}}; (-6,0)*[black]{\bullet};
    \endxy}\right)
=: \cal{T}'_i \left(  \xy 0;/r.18pc/:
  (0,0)*{\dcrossbr{j}{i}};
 (6,3)*{ \lambda};
 \endxy  \right)
 \end{align*}

For crossings in which at least one strand is $k$-labeled and no strand is $j$-labeled,
the relations are trivial to check. We compute:
\begin{align*}
\cal{T}'_i \left( \reflectbox{\xy 0;/r.10pc/:
(-9,0)*{\slineng{}};
(-9,8)*{\slineng{}};
(-6,-6)*{\srcupg{}};
(6,7)*{\srcapg{}};
(9,0)*{\slineng{}};
(9,-8)*{\slinedg{}};
(0,0)*{\ucrossgg{}{}};
(-15,0)*{\slineng{}};
(-15,8)*{\slineng{}};
(-6,-9)*{\llrcupg{}};
(6,10)*{\llrcapg{}};
(15,0)*{\slineng{}};
(15,-8)*{\slinedg{}};
(15,-15)*{\reflectbox{$\scs k$}};
(9,-15)*{\reflectbox{$\scs k'$}};
(-18,0)*{};(18,0)*{};
\endxy}\xy 0;/r.12pc/:(0,2)*{\lambda};\endxy \right)
&= \reflectbox{\xy 0;/r.10pc/:
(-9,0)*{\slineng{}};
(-9,8)*{\slineng{}};
(-6,-6)*{\srcupg{}};
(6,7)*{\srcapg{}};
(9,0)*{\slineng{}};
(9,-8)*{\slinedg{}};
(0,0)*{\ucrossgg{}{}};
(-15,0)*{\slineng{}};
(-15,8)*{\slineng{}};
(-6,-9)*{\llrcupg{}};
(6,10)*{\llrcapg{}};
(15,0)*{\slineng{}};
(15,-8)*{\slinedg{}};
(15,-15)*{\reflectbox{$\scs k$}};
(9,-15)*{\reflectbox{$\scs k'$}};
(-18,0)*{};(18,0)*{};
\endxy}
=     \vcenter{\xy 0;/r.18pc/:
 (0,0)*{\dcrossgg{k}{k'}}; \endxy}
=: \cal{T}'_i \left(  \xy 0;/r.18pc/:
  (0,0)*{\dcrossgg{k}{k'}};
 (6,3)*{ \lambda};
 \endxy  \right) \\
\cal{T}'_i \left( \reflectbox{\xy 0;/r.10pc/:
(-9,0)*{\slineng{}};
(-9,8)*{\slineng{}};
(-6,-6)*{\srcupg{}};
(6,7)*{\srcapg{}};
(9,0)*{\slineng{}};
(9,-8)*{\slinedg{}};
(0,0)*{\ucrossgr{}{}};
(-15,0)*{\slinenr{}};
(-15,8)*{\slinenr{}};
(-6,-9)*{\llrcupr{}};
(6,10)*{\llrcapr{}};
(15,0)*{\slinenr{}};
(15,-8)*{\slinedr{}};
(15,-15)*{\reflectbox{$\scs i$}};
(9,-15)*{\reflectbox{$\scs k$}};
(-18,0)*{};(18,0)*{};
\endxy}\xy 0;/r.12pc/:(0,2)*{\lambda};\endxy \right)
&=
c^{-1}_{i,\l-\alpha_i-\alpha_k} c_{i,\l} t_{ki}
\reflectbox{\xy 0;/r.10pc/:
(-9,0)*{\slineng{}};
(-9,8)*{\slineng{}};
(-6,-6)*{\srcupg{}};
(6,7)*{\srcapg{}};
(9,0)*{\slineng{}};
(9,-8)*{\slinedg{}};
(0,0)*{\rcrossgr{}{}};
(-15,0)*{\slinenr{}};
(-15,8)*{\slineur{}};
(-6,-9)*{\lllcupr{}};
(6,10)*{\lllcapr{}};
(15,0)*{\slinenr{}};
(15,-8)*{\slinenr{}};
(15,-15)*{\reflectbox{$\scs i$}};
(9,-15)*{\reflectbox{$\scs k$}};
(-18,0)*{};(18,0)*{};
\endxy}
=  \vcenter{\xy 0;/r.18pc/:
 (3,0)*{\rcrossrg{i}{k}}; (-6,0)*{t_{ki}^2};
 \endxy}
 =:  \cal{T}'_i \left(  \xy 0;/r.18pc/:
  (0,0)*{\dcrossrg{i}{k}};
 (6,3)*{ \lambda};
 \endxy  \right)
\\
\cal{T}'_i \left( \reflectbox{\xy 0;/r.10pc/:
(-9,0)*{\slinenr{}};
(-9,8)*{\slinenr{}};
(-6,-6)*{\srcupr{}};
(6,7)*{\srcapr{}};
(9,0)*{\slinenr{}};
(9,-8)*{\slinedr{}};
(0,0)*{\ucrossrg{}{}};
(-15,0)*{\slineng{}};
(-15,8)*{\slineng{}};
(-6,-9)*{\llrcupg{}};
(6,10)*{\llrcapg{}};
(15,0)*{\slineng{}};
(15,-8)*{\slinedg{}};
(15,-15)*{\reflectbox{$\scs k$}};
(9,-15)*{\reflectbox{$\scs i$}};
(-18,0)*{};(18,0)*{};
\endxy}\xy 0;/r.12pc/:(0,2)*{\lambda};\endxy  \right)
&=
c_{i,\l-\alpha_k} c^{-1}_{i,\l-\alpha_i}
\reflectbox{\xy 0;/r.10pc/:
(-9,0)*{\slinenr{}};
(-9,8)*{\slineur{}};
(-6,-6)*{\slcupr{}};
(6,7)*{\slcapr{}};
(9,0)*{\slinenr{}};
(9,-8)*{\slinenr{}};
(0,0)*{\lcrossrg{}{}};
(-15,0)*{\slineng{}};
(-15,8)*{\slineng{}};
(-6,-9)*{\llrcupg{}};
(6,10)*{\llrcapg{}};
(15,0)*{\slineng{}};
(15,-8)*{\slinedg{}};
(15,-15)*{\reflectbox{$\scs k$}};
(9,-15)*{\reflectbox{$\scs i$}};
(-18,0)*{};(18,0)*{};
\endxy}
=\vcenter{\xy 0;/r.18pc/:
 (3,0)*{\lcrossgr{k}{i}}; (-6,0)*{t_{ki}^{-1}};
 \endxy}
=: \cal{T}'_i \left(  \xy 0;/r.18pc/:
  (0,0)*{\dcrossgr{k}{i}};
 (6,3)*{ \lambda};
 \endxy  \right)
\end{align*}

For strands labelled $j$ and $k$, we compute:
\begin{align*}
\cal{T}'_i \left( \reflectbox{\xy 0;/r.10pc/:
(-9,0)*{\slineng{}};
(-9,8)*{\slineng{}};
(-6,-6)*{\srcupg{}};
(6,7)*{\srcapg{}};
(9,0)*{\slineng{}};
(9,-8)*{\slinedg{}};
(0,0)*{\ucrossgb{}{}};
(-15,0)*{\slinen{}};
(-15,8)*{\slinen{}};
(-6,-9)*{\llrcup{}};
(6,10)*{\llrcap{}};
(15,0)*{\slinen{}};
(15,-8)*{\slined{}};
(15,-15)*{\reflectbox{$\scs j$}};
(9,-15)*{\reflectbox{$\scs k$}};
(-18,0)*{};(18,0)*{};
\endxy} \xy  0;/r.12pc/:(0,2)*{\lambda};\endxy \right)
&=
 {\xy
(0,3)*{(-t_{ij})^{1-(\l_i+1)} c_{i,\l-\alpha_j-\alpha_k} (-1)^{\l_j-2-j\cdot k} c^{-1}_{j,\l-\alpha_j-\alpha_k}};
(0,-3)*{t^{-1}_{ki} (-t_{ij})^{\l_i} c^{-1}_{i,\l-\alpha_j} (-1)^{\l_j} c_{j,\l}};
\endxy}
\left(\vcenter{\xy 0;/r.15pc/:
 (3,0)*{\dcrossrg{i}{k}};
  (-6,0)*{\slined{j}};
 (-3,8)*{\ncrossbg{}{}};
 (6,8)*{\slinenr{}};
 \endxy},
 \vcenter{\xy 0;/r.15pc/:
 (3,0)*{\dcrossbg{j}{k}};
  (-6,0)*{\slinedr{i}};
 (-3,8)*{\ncrossrg{}{}};
 (6,8)*{\slinen{}};
 \endxy}\right)\\
&=
(-1)^{j \cdot k} t^{-2}_{ki} t_{jk}
\left(\vcenter{\xy 0;/r.15pc/:
 (3,0)*{\dcrossrg{i}{k}};
  (-6,0)*{\slined{j}};
 (-3,8)*{\ncrossbg{}{}};
 (6,8)*{\slinenr{}};
 \endxy},
 \vcenter{\xy 0;/r.15pc/:
 (3,0)*{\dcrossbg{j}{k}};
  (-6,0)*{\slinedr{i}};
 (-3,8)*{\ncrossrg{}{}};
 (6,8)*{\slinen{}};
 \endxy}\right)
=: \cal{T}'_i \left(  \xy 0;/r.18pc/:
  (0,0)*{\dcrossbg{j}{k}};
 (6,3)*{ \lambda};
 \endxy  \right)\\
\cal{T}'_i \left( \reflectbox{\xy 0;/r.10pc/:
(-9,0)*{\slinen{}};
(-9,8)*{\slinen{}};
(-6,-6)*{\srcup{}};
(6,7)*{\srcap{}};
(9,0)*{\slinen{}};
(9,-8)*{\slined{}};
(0,0)*{\ucrossbg{}{}};
(-15,0)*{\slineng{}};
(-15,8)*{\slineng{}};
(-6,-9)*{\llrcupg{}};
(6,10)*{\llrcapg{}};
(15,0)*{\slineng{}};
(15,-8)*{\slinedg{}};
(15,-15)*{\reflectbox{$\scs k$}};
(9,-15)*{\reflectbox{$\scs j$}};
(-18,0)*{};(18,0)*{};
\endxy}\xy  0;/r.12pc/:(0,2)*{\lambda};\endxy \right)
&=
{\xy
(0,3)*{(-t_{ij})^{1-(\l_i+1)} c_{i,\l-\alpha_j} (-1)^{\l_j-2} c^{-1}_{j,\l-\alpha_j}};
(0,-3)*{(-t_{ij})^{\l_i} c^{-1}_{i,\l-\alpha_k-\alpha_j} (-1)^{\l_j-j \cdot k} c_{j,\l-\alpha_k}};
\endxy}
\left(\vcenter{\xy 0;/r.15pc/:
 (-3,0)*{\dcrossgb{k}{j}};
  (6,0)*{\slinedr{i}};
 (3,8)*{\ncrossgr{}{}};
 (-6,8)*{\slinen{}};
 \endxy},\vcenter{\xy 0;/r.15pc/:
 (-3,0)*{\dcrossgr{k}{i}};
  (6,0)*{\slined{j}};
 (3,8)*{\ncrossgb{}{}};
 (-6,8)*{\slinenr{}};
 \endxy}\right)\\
&=
(-1)^{j \cdot k} t_{ki} t^{-1}_{jk}
\left(\vcenter{\xy 0;/r.15pc/:
 (-3,0)*{\dcrossgb{k}{j}};
  (6,0)*{\slinedr{i}};
 (3,8)*{\ncrossgr{}{}};
 (-6,8)*{\slinen{}};
 \endxy},\vcenter{\xy 0;/r.15pc/:
 (-3,0)*{\dcrossgr{k}{i}};
  (6,0)*{\slined{j}};
 (3,8)*{\ncrossgb{}{}};
 (-6,8)*{\slinenr{}};
 \endxy}\right)
=\cal{T}'_i \left(  \xy 0;/r.18pc/:
  (0,0)*{\dcrossgb{k}{j}};
 (6,3)*{ \lambda};
 \endxy  \right)\end{align*}

Finally, in the case of a crossing between strands labeled $j$ and $j'$,
it's clear that
\[
\cal{T}'_i \left( \reflectbox{\xy 0;/r.10pc/:
(-9,0)*{\slinenp{}};
(-9,8)*{\slinenp{}};
(-6,-6)*{\srcupp{}};
(6,7)*{\srcapp{}};
(9,0)*{\slinenp{}};
(9,-8)*{\slinedp{}};
(0,0)*{\ucrosspb{}{}};
(-15,0)*{\slinen{}};
(-15,8)*{\slinen{}};
(-6,-9)*{\llrcup{}};
(6,10)*{\llrcap{}};
(15,0)*{\slinen{}};
(15,-8)*{\slined{}};
(15,-15)*{\reflectbox{$\scs j$}};
(9,-15)*{\reflectbox{$\scs j'$}};
(-18,0)*{};(18,0)*{};
\endxy} \right)
= C \cdot
\cal{T}'_i \left(  \xy 0;/r.18pc/:
  (0,0)*{\dcrossbp{j}{j'}};
 (6,3)*{ \lambda};
 \endxy  \right)
\]
for some scalar $C$.
A direct computation shows that
\[
C =
t_{ij}^{-1} t_{ij'} c_{i,\l-\alpha_{j'}} c^{-1}_{i,\l-\alpha_j} c_{j,\l} c^{-1}_{j,\l-\alpha_{j'}}
c_{j',\l-\alpha_j} c^{-1}_{j',\l}
(c_{j',\l-\alpha_j} c^{-1}_{j',\l}  c_{j,\l} c^{-1}_{j,\l-\alpha_{j'}})^{-1} = 1
\]
\end{proof}

%
\subsection{Quadratic KLR}
%

\begin{prop}\label{prop:quadKLR}
$\cal{T}_i'$ preserves the quadratic KLR relation.
\end{prop}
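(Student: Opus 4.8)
The plan is to verify that $\cal{T}'_i$ preserves the quadratic KLR relation (relation \eqref{def:KLR-R2} in Definition~\ref{defU_cat-cyc}) by a direct case analysis on the labels of the two strands relative to the fixed node $i$. Since the left-hand side of the relation involves two crossings and the right-hand side is either $0$, a scalar multiple of two parallel strands, or a sum of two dotted parallel strands (depending on $i\cdot j$), we must apply $\cal{T}'_i$ to both sides and compare in $\Com(\Ucat_Q)$. The key structural observation is that $\cal{T}'_i$ sends a single strand to a complex with at most two terms, so a pair of strands maps to a complex with at most four terms (and only those with a $j$- or $j'$-labeled strand, for $i\cdot j=-1$, actually have more than one term); consequently, for most colorings, the complexes in question have a single nonzero term concentrated in one homological degree, and the relation is preserved \emph{on the nose} with no chain homotopy required — exactly as foreshadowed in the opening of Section~\ref{sec_proof_func}.

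First I would enumerate the cases. By the defining formulas, the nontrivial colorings are: (1) both strands labeled $i$; (2) one strand $i$, one strand $j$ with $i\cdot j=-1$, in both orders; (3) one strand $i$, one strand $k$ with $i\cdot k=0$, in both orders; (4) both strands $k,k'$ with $i\cdot k=i\cdot k'=0$; (5) strands $j,k$ in both orders; (6) strands $j,j'$ with $i\cdot j=i\cdot j'=-1$ (including $j=j'$); and (7) strands $k, k'$ where the relation's right-hand side uses $t_{kk'}$ etc. For the cases where neither strand is $j$-colored (i.e. cases (1), (3), (4), and the $k$-only subcase of the quadratic relation), the images are all single-term complexes: here $\cal{T}'_i$ applied to the double crossing is, by the definitions in Section~\ref{def:upcross}, literally a double crossing (possibly with a sign or a scalar $t_{\bullet}$, or reflected to $\cal{F}$-strands in the $i=\ell$ case), and one invokes the quadratic KLR relation \emph{in $\Ucat_Q$ itself} — which holds — together with bookkeeping of the scalars $t_{ij}, t_{ji}, v_{ij}$ and the signs introduced by $\cal{T}'_i$ on $ii$-crossings. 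I would record these scalar computations compactly, checking that the accumulated prefactor on the left matches that on the right (e.g. in the $i=\ell$ case the sign $(-1)^2=1$ from squaring the $-1$ on the $ii$-crossing is exactly right).

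The main obstacle — and where genuine work lies — is case (6), strands labeled $j$ and $j'$ (with the diagonal $j=j'$ being the worst), since here both strands map to two-term complexes and the double crossing maps, via equation~\eqref{eq:cubemap} composed with itself, to a map of four-term complexes (an anti-commutative cube). The quadratic relation then demands that this composite be chain homotopic to: zero if $j=j'$ (since $i\cdot j' = -1 \ne 0$ forces the "$t_{jj'}(\text{dot on left}) + t_{j'j}(\text{dot on right})$" branch), or to $t_{jj'}(\cdot)+t_{j'j}(\cdot)$ if $j\cdot j'=-1$, or to $t_{jj'}(\cdot)$ if $j\cdot j'=0$. For $j=j'$ one must exhibit an explicit null-homotopy of the relevant endomorphism of the cube, using the quadratic KLR relation for the pair $(j,j)$ inside $\Ucat_Q$ (which gives $0$) together with the $R3$ (cubic KLR), dot-slide, and mixed $EF$ relations to resolve the crossings of $i$-strands that appear in \eqref{eq:cubemap}; for the off-diagonal subcases one similarly matches the surviving maps after homotopy against the $t_{jj'}$-scaled dotted strands, now appealing to the quadratic KLR relation for $(j,j')$ in $\Ucat_Q$. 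I would extract the homotopy componentwise from the cube in \eqref{eq:cubemap}, checking the homotopy identity $f - g = hd + d'h$ term-by-term; the $\delta_{jj'}v_{ij}$ term appearing in \eqref{eq:cubemap} is precisely the feature that makes the diagonal case come out to $0$ up to homotopy rather than to a nonzero multiple of a parallel-strand diagram, and tracking it carefully is the crux. The remaining mixed case (5), strands $j$ and $k$, produces a two-term-by-one-term situation; here the right-hand side of the quadratic relation is $0$ (as $j\cdot k\in\{0,-1\}$ but the labels differ from $i$ in a way that... ) and the check reduces, as in the analogous crossing-cyclicity computations above, to the quadratic relation for $(j,k)$ in $\Ucat_Q$ with scalar prefactors $(-1)^{j\cdot k}$, $t_{ki}^{\pm}$, $t_{jk}^{\pm}$ collected from the definitions in Section~\ref{def:upcross}, and no homotopy is needed since one factor is a single-term complex.
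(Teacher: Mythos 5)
Your overall strategy --- case analysis on the colorings, reduction to the defining relations of $\Ucat_Q$, and explicit null-homotopies where the relation only holds up to homotopy --- is the same as the paper's. However, there are two concrete problems with the execution.

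First, you never actually verify the $ij$- and $ji$-labeled cases (one strand labeled $i$, the other labeled $j$ with $i\cdot j=-1$), which you list as case (2) and then drop. Your implicit principle for case (5) --- ``no homotopy is needed since one factor is a single-term complex'' --- is false, and the $ij$/$ji$ cases are exactly where it fails: $\cal{T}'_i(\cal{E}_i\onel)$ is a one-term complex and $\cal{T}'_i(\cal{E}_j\onel)$ is a two-term complex, so the composite is a two-term complex, yet the image of the double crossing equals $\cal{T}'_i\bigl(t_{ij}(\text{dot on }i)+t_{ji}(\text{dot on }j)\bigr)$ \emph{plus} a nonzero error term built from the composite $\cal{E}_j\cal{E}_i\to\cal{E}_i\cal{E}_j\to\cal{E}_j\cal{E}_i$ of the two $ji$-crossings tensored with $\cal{F}_i$. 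That error term is only null-homotopic (the homotopy being, up to sign, the single crossing $\cal{E}_j\cal{E}_i\to\cal{E}_i\cal{E}_j$ tensored with $\cal{F}_i$, viewed as a degree $-1$ map of the two-term complexes), and exhibiting these homotopies is an essential part of the proof. A two-term complex admits plenty of nontrivial degree $-1$ maps, so concentration of one factor in a single homological degree buys you nothing here.

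Second, you have the $jj'$ dichotomy backwards. In the diagonal case $j=j'$ the composite of the two cube maps from equation \eqref{eq:cubemap} simplifies (using the quadratic and dot-slide relations for the $i$-strands, and crucially the $-\delta_{jj'}v_{ij}$ component) to \emph{exactly} zero on the nose --- no homotopy is required. It is the off-diagonal cases $j\neq j'$ (both $j\cdot j'=0$ and $j\cdot j'=-1$) where the composite equals $\cal{T}'_i$ of the right-hand side plus a nonzero error term, and one must produce an explicit null-homotopy of that error term as a map between the two three-term complexes computing $\cal{T}'_i(\cal{E}_j\cal{E}_{j'}\onel)$ and its shift. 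So the place you identify as ``the worst'' ($j=j'$) is actually the easy subcase, and the subcases you dismiss as ``similar'' are where the homotopies must be constructed. As written, the proposal would not close either the $ij$/$ji$ cases or the off-diagonal $jj'$ cases.
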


\begin{proof}
We verify relation \eqref{def:KLR-R2} in Definition~\ref{defU_cat-cyc}, first considering
the cases that do not require homotopies.
We compute:
\begin{align*}
\cal{T}'_i\left({\xy 0;/r.15pc/: (0,-4)*{\ucrossrr{i}{i}}; (0,4)*{\ucrossrr{}{}}; \endxy}\right)
&=(-1)^2{\xy 0;/r.15pc/: (0,-4)*{\dcrossrr{i}{i}}; (0,4)*{\dcrossrr{}{}}; \endxy}
=0\\
\cal{T}'_i\left({\xy 0;/r.15pc/: (0,-4)*{\ucrossgg{k}{k'}}; (0,4)*{\ucrossgg{}{}}; \endxy}\right)
&={\xy 0;/r.15pc/: (0,-4)*{\ucrossgg{k}{k'}}; (0,4)*{\ucrossgg{}{}}; \endxy}
=\left\{
\begin{array}{cl}
0& \text{if $k=k'$}\\
\cal{T}'_i\left(t_{kk'}{\xy (0,0)*{\slineug{k}}; (3,0)*{\slineug{k'}}; \endxy}\right)& \text{if $k\cdot k'=0$}\\
\cal{T}'_i\left(t_{kk'}{\xy (0,0)*{\sdotug{k}}; (3,0)*{\slineug{k'}}; \endxy}+t_{k'k}{\xy (0,0)*{\slineug{k}}; (3,0)*{\sdotug{k'}}; \endxy}\right)& \text{if $k\cdot k'=-1$}
\end{array}\right.  \\
\cal{T}'_i\left(\xy 0;/r.15pc/: (0,-4)*{\ucrossgr{k}{i}}; (0,4)*{\ucrossrg{}{}}; \endxy\right)
&=t_{ki}\xy 0;/r.15pc/: (0,-4)*{\rcrossgr{k}{i}}; (0,4)*{\lcrossrg{}{}}; \endxy
=t_{ki}\xy  (-2,0)*{\slineug{k}}; (2,0)*{\slinedr{i}}; \endxy
=\cal{T}'_i\left(t_{ki}\xy  (-2,0)*{\slineug{k}}; (2,0)*{\slineur{i}}; \endxy\right)\\ \\
\cal{T}'_i\left(\xy 0;/r.15pc/: (0,-4)*{\ucrossrg{i}{k}}; (0,4)*{\ucrossgr{}{}}; \endxy\right)
&=t_{ki}\xy 0;/r.15pc/: (0,-4)*{\lcrossrg{i}{k}}; (0,4)*{\rcrossgr{}{}}; \endxy
=t_{ki}\xy (-2,0)*{\slinedr{i}}; (2,0)*{\slineug{k}}; \endxy
=\cal{T}'_i\left(t_{ki}\xy (-2,0)*{\slineur{i}}; (2,0)*{\slineug{k}}; \endxy\right)
\end{align*}
Our next four cases concern endomorphisms of chain complexes concentrated in two adjacent homological degrees;
we denote endomorphisms of such complexes using ordered pairs. We compute:
\begin{align*}
\cal{T}'_i \left(\xy 0;/r.15pc/: (0,-4)*{\ucrossgb{k}{j}}; (0,4)*{\ucrossbg{}{}}; \endxy\right)
&=\left(t_{ki}^{-1}\xy 0;/r.10pc/:
 (-3,-10)*{\ncrossgb{k}{j}};
  (6,-10)*{\slinenr{i}};
 (3,-1.5)*{\ucrossgr{}{}};
 (-6,-1.5)*{\slineu{}};
 (3,7)*{\ncrossrg{}{}};
  (-6,7)*{\slinen{}};
 (-3,15.5)*{\ucrossbg{}{}};
 (6,15.5)*{\slineur{}};
 \endxy
 \; , \;
t_{ki}^{-1}\xy 0;/r.10pc/:
 (-3,-10)*{\ncrossgr{k}{i}};
  (6,-10)*{\slinen{j}};
 (3,-1.5)*{\ucrossgb{}{}};
 (-6,-1.5)*{\slineur{}};
 (3,7)*{\ncrossbg{}{}};
  (-6,7)*{\slinenr{}};
 (-3,15.5)*{\ucrossrg{}{}};
 (6,15.5)*{\slineu{}};
 \endxy\right) \\
 & = \left\{\begin{array}{cl}
\left(t_{kj}\xy (-3,0)*{\slineug{k}}; (0,0)*{\slineu{j}}; (3,0)*{\slineur{i}}; \endxy
\; , \;
t_{kj}\xy (-3,0)*{\slineug{k}}; (0,0)*{\slineur{i}}; (3,0)*{\slineu{j}}; \endxy\right) &\text{\ if $j\cdot k=0$} \\ \\
\left(t_{kj}\xy (-3,0)*{\sdotug{k}}; (0,0)*{\slineu{j}}; (3,0)*{\slineur{i}};\endxy + t_{jk}\xy (-3,0)*{\slineug{k}}; (0,0)*{\sdotu{j}}; (3,0)*{\slineur{i}}; \endxy
\; , \;
t_{kj}\xy (-3,0)*{\sdotug{k}}; (0,0)*{\slineur{i}}; (3,0)*{\slineu{j}}; \endxy + t_{jk}\xy (-3,0)*{\slineug{k}}; (0,0)*{\slineur{i}}; (3,0)*{\sdotu{j}}; \endxy\right) &\text{\ if $j\cdot k=-1$}
\end{array}\right. \\
&= \left\{\begin{array}{cl}
\cal{T}'_i\left(t_{kj}\xy (-2,0)*{\slineug{k}}; (2,0)*{\slineu{j}}; \endxy\right) &\text{\ if $j\cdot k=0$} \\
\cal{T}'_i\left( t_{kj}\xy (-2,0)*{\sdotug{k}}; (2,0)*{\slineu{j}}; \endxy + t_{jk}\xy (-2,0)*{\slineug{k}}; (2,0)*{\sdotu{j}}; \endxy \right) &\text{\ if $j\cdot k=-1$}
\end{array}\right.
\end{align*}
and similarly:
\[
\cal{T}'_i \left(\xy 0;/r.15pc/: (0,-4)*{\ucrossbg{j}{k}}; (0,4)*{\ucrossgb{}{}}; \endxy\right)
= \left(t_{ki}^{-1}\xy 0;/r.10pc/:
 (3,-10)*{\ncrossrg{i}{k}};
  (-6,-10)*{\slinen{j}};
 (-3,-1.5)*{\ucrossbg{}{}};
 (6,-1.5)*{\slineur{}};
 (-3,7)*{\ncrossgb{}{}};
  (6,7)*{\slinenr{}};
 (3,15.5)*{\ucrossgr{}{}};
 (-6,15.5)*{\slineu{}};
 \endxy
\; , \;
t_{ki}^{-1}\xy 0;/r.10pc/:
 (3,-10)*{\ncrossbg{j}{k}};
  (-6,-10)*{\slinenr{i}};
 (-3,-1.5)*{\ucrossrg{}{}};
 (6,-1.5)*{\slineu{}};
 (-3,7)*{\ncrossgr{}{}};
  (6,7)*{\slinen{}};
 (3,15.5)*{\ucrossgb{}{}};
 (-6,15.5)*{\slineur{}};
 \endxy\right)  \\
= \left\{\begin{array}{cl}
\cal{T}'_i\left(t_{jk}\xy (-2,0)*{\slineu{j}}; (2,0)*{\slineug{k}}; \endxy\right) &\text{\ if $j\cdot k=0$} \\
\cal{T}'_i\left(t_{jk}\xy (-2,0)*{\sdotu{j}}; (2,0)*{\slineug{k}}; \endxy+t_{kj}\xy (-2,0)*{\slineu{j}}; (2,0)*{\sdotug{k}}; \endxy\right) &\text{\ if $j\cdot k=-1$}
\end{array}\right.
\]

The remaining cases only hold up to chain homotopy. We compute:
\begin{align*}
\cal{T}'_i \left(\xy 0;/r.15pc/: (0,-4)*{\ucrossbr{j}{i}}; (0,4)*{\ucrossrb{}{}}; \endxy\right)
&= t_{ij}\left({\xy (0,-5)*{\xy 0;/r.15pc/:
 (3,0)*{\rcrossrr{i}{i}};
  (-6,0)*{\slinen{j}};
 (-3,8)*{\rcrossbr{}{}};
 (6,8)*{\slinenr{}}; (6,11)*[black]{\bullet};
 \endxy}; (0,5)*{\xy 0;/r.15pc/:
 (-3,0)*{\lcrossrb{}{}};
  (6,0)*{\slinenr{}};
 (3,8)*{\lcrossrr{}{}};
 (-6,8)*{\slineu{}};
 \endxy}; \endxy}
-{\xy (0,-5)*{\xy 0;/r.15pc/:
 (3,0)*{\rcrossrr{i}{i}};
  (-6,0)*{\slinen{j}};
 (-3,8)*{\rcrossbr{}{}};
 (6,8)*{\slineur{}}; (-4.5,10.5)*[black]{\bullet};
 \endxy}; (0,5)*{\xy 0;/r.15pc/:
 (-3,0)*{\lcrossrb{}{}};
  (6,0)*{\slinenr{}};
 (3,8)*{\lcrossrr{}{}};
 (-6,8)*{\slineu{}};
 \endxy}; \endxy}
 ,
{\xy (0,-5)*{\xy 0;/r.15pc/:
    (3,0)*{\rcrossbr{j}{i}};
    (-6,0)*{\slinenr{i}};
    (-3,8)*{\rcrossrr{}{}};
    (6,8)*{\slineu{}};
    \endxy}; (0,5)*{ \xy 0;/r.15pc/:
    (-3,0)*{\lcrossrr{}{}};
  (6,0)*{\slinen{}};
 (3,8)*{\lcrossrb{}{}};
 (-6,8)*{\slineur{}};
 \endxy};
(-.5,2)*[black]{\bullet};
 \endxy}
-{\xy (0,-5)*{\xy 0;/r.15pc/:
    (3,0)*{\rcrossbr{j}{i}};
    (-6,0)*{\slinenr{i}};
    (-3,8)*{\rcrossrr{}{}};
    (6,8)*{\slineu{}};
    \endxy}; (0,5)*{ \xy 0;/r.15pc/:
    (-3,0)*{\lcrossrr{}{}};
  (6,0)*{\slinen{}};
 (3,8)*{\lcrossrb{}{}};
 (-6,8)*{\slineur{}};
 \endxy};
(-3,-1)*[black]{\bullet};
 \endxy}\right) \\
&= t_{ij}\left(
-{\xy 0;/r.18pc/:
 (0,-4)*{\slinenr{i}};
 (0,4)*{\slineur{}};
  (-6,-4)*{\slinen{j}};
  (-6,4)*{\slineu{}}; (0, 0)*[black]{\bullet};
 (6,-4)*{\slinedr{i}};
  (6,4)*{\slinenr{}}; \endxy}
+{\xy 0;/r.18pc/:
 (0,-4)*{\slinenr{i}};
 (0,4)*{\slineur{}};
  (-6,-4)*{\slinen{j}};
  (-6,4)*{\slineu{}}; (6, 0)*[black]{\bullet};
 (6,-4)*{\slinedr{i}};
  (6,4)*{\slinenr{}}; \endxy}
\; , \;
- {\xy 0;/r.18pc/:
 (-6,-4)*{\slinenr{i}};
 (-6,4)*{\slineur{}};
  (0,-4)*{\slinen{j}};
  (0,4)*{\slineu{}}; (-6, 0)*[black]{\bullet};
 (6,-4)*{\slinedr{i}};
  (6,4)*{\slinenr{}}; \endxy}
+{\xy 0;/r.18pc/:
 (-6,-4)*{\slinenr{i}};
 (-6,4)*{\slineur{}};
  (0,-4)*{\slinen{j}};
  (0,4)*{\slineu{}}; (6, 0)*[black]{\bullet};
 (6,-4)*{\slinedr{i}};
  (6,4)*{\slinenr{}}; \endxy}\right) \\
&= \left(- \hspace{-5pt} {\xy 0;/r.18pc/:
 (-3,4)*{\ucrossrb{}{}};
 (-3,-4)*{\ucrossbr{j}{i}};
 (6,-4)*{\slinedr{i}};
  (6,4)*{\slinenr{}}; \endxy}
+t_{ji}{\xy 0;/r.18pc/:
 (0,-4)*{\slinenr{i}};
 (0,4)*{\slineur{}};
  (-6,-4)*{\slinen{j}};
  (-6,4)*{\slineu{}}; (-6, 0)*{\bullet};
 (6,-4)*{\slinedr{i}};
  (6,4)*{\slinenr{}}; \endxy}
+t_{ij}{\xy 0;/r.18pc/:
 (0,-4)*{\slinenr{i}};
 (0,4)*{\slineur{}};
  (-6,-4)*{\slinen{j}};
  (-6,4)*{\slineu{}}; (6, 0)*[black]{\bullet};
 (6,-4)*{\slinedr{i}};
  (6,4)*{\slinenr{}}; \endxy}
\; , \;
- \hspace{-5pt} {\xy 0;/r.18pc/:
 (-3,4)*{\ucrossbr{}{}};
 (-3,-4)*{\ucrossrb{i}{j}};
 (6,-4)*{\slinedr{i}};
  (6,4)*{\slinenr{}}; \endxy}
+t_{ji}{\xy 0;/r.18pc/:
 (-6,-4)*{\slinenr{i}};
 (-6,4)*{\slineur{}};
  (0,-4)*{\slinen{j}};
  (0,4)*{\slineu{}}; (0, 0)*{\bullet};
 (6,-4)*{\slinedr{i}};
  (6,4)*{\slinenr{}}; \endxy}
+t_{ij}{\xy 0;/r.18pc/:
 (-6,-4)*{\slinenr{i}};
 (-6,4)*{\slineur{}};
  (0,-4)*{\slinen{j}};
  (0,4)*{\slineu{}}; (6, 0)*[black]{\bullet};
 (6,-4)*{\slinedr{i}};
  (6,4)*{\slinenr{}}; \endxy}\right)\\
&= \cal{T}'_i\left(t_{ij}{\xy (0,0)*{\slineu{j}}; (4,0)*{\sdotur{i}}; \endxy}+t_{ji}{\xy (0,0)*{\sdotu{j}}; (4,0)*{\slineur{i}}; \endxy}\right)
+\left(-{\xy 0;/r.12pc/:
 (-3,4)*{\ucrossrb{}{}};
 (-3,-4)*{\ucrossbr{j}{i}};
 (6,-4)*{\slinedr{i}};
  (6,4)*{\slinenr{}}; \endxy}
\; , \;
-{\xy 0;/r.12pc/:
 (-3,4)*{\ucrossbr{}{}};
 (-3,-4)*{\ucrossrb{i}{j}};
 (6,-4)*{\slinedr{i}};
  (6,4)*{\slinenr{}}; \endxy}\right)
\end{align*}
where in the second step we make use of the equality
\[
\xy 0;/r.15pc/:
(0,4)*{\lcrossrr{}{}};
(0,-4)*{\rcrossrr{i}{i}};
(2,3)*[black]{\bullet};
\endxy -
\xy 0;/r.15pc/:
(0,4)*{\lcrossrr{}{}};
(0,-4)*{\rcrossrr{i}{i}};
(-2,-1)*[black]{\bullet};
\endxy
= \quad
-{\xy 0;/r.15pc/:
 (0,-4)*{\slinenr{i}};
 (0,4)*{\slineur{}};
(0, 0)*[black]{\bullet};
 (6,-4)*{\slinedr{i}};
  (6,4)*{\slinenr{}}; \endxy}
+{\xy 0;/r.15pc/:
 (0,-4)*{\slinenr{i}};
 (0,4)*{\slineur{}};
(6, 0)*[black]{\bullet};
 (6,-4)*{\slinedr{i}};
  (6,4)*{\slinenr{}}; \endxy}
\]
which holds in any weight.
The result now follows since the chain endomorphism $\left(-{\xy 0;/r.12pc/:
(-3,4)*{\ucrossrb{}{}};
(-3,-4)*{\ucrossbr{j}{i}};
(6,-4)*{\slinedr{i}};
(6,4)*{\slinenr{}}; \endxy}
\; , \;
- {\xy 0;/r.12pc/:
(-3,4)*{\ucrossbr{}{}};
(-3,-4)*{\ucrossrb{i}{j}};
(6,-4)*{\slinedr{i}};
(6,4)*{\slinenr{}}; \endxy}\right)$
is null-homotopic with homotopy $h: \cal{T}_i'(\cal{E}_j\cal{E}_i\onel) \to \cal{T}_i'(\cal{E}_j\cal{E}_i\onel \la 2 \ra)$ given by:
\begin{align*}
     \xy 0;/r.15pc/:
  (-45,15)*+{\cal{E}_j\cal{E}_i\cal{F}_i\onell{s_i(\lambda)}\la -\l_i \ra}="1";
  (-45,-15)*+{\cal{E}_j\cal{E}_i\cal{F}_i\onell{s_i(\lambda)}\la -2-\l_i \ra}="2";
  (45,15)*+{\clubsuit\cal{E}_i\cal{E}_j\cal{F}_i\onell{s_i(\lambda)}\la 1-\l_i \ra}="3";
  (45,-15)*+{\clubsuit\cal{E}_i\cal{E}_j\cal{F}_i\onell{s_i(\lambda)}\la -1-\l_i \ra}="4";
   {\ar_<<<<<{\xy 0;/r.12pc/:
  (-6,0)*{\ucrossrb{}{}};(-13,0)*{-};
  (3,0)*{\slinedr{}}; \endxy}@/_1pc/ "4";"1"};
   {\ar^-{\xy  0;/r.12pc/:(3,0)*{\slinedr{}};(-6,0)*{\ucrossbr{}{}}; (7,0)*{}; \endxy   } "2";"4"};
   {\ar^-{\xy  0;/r.12pc/:(3,0)*{\slinedr{}};(-6,0)*{\ucrossbr{}{}}; (7,0)*{};\endxy   } "1";"3"};
 \endxy \end{align*}
We similarly compute:
\begin{align*}
\cal{T}'_i \left(\xy 0;/r.15pc/: (0,-4)*{\ucrossrb{i}{j}}; (0,4)*{\ucrossbr{}{}}; \endxy\right)
&= t_{ij}\left({\xy (0,5)*{\xy 0;/r.15pc/:
 (3,0)*{\rcrossrr{}{}};
  (-6,0)*{\slinen{}};
 (-3,8)*{\rcrossbr{}{}};
 (6,8)*{\slineur{}};
 \endxy};
(0,-5)*{\xy 0;/r.15pc/:
 (-3,0)*{\lcrossrb{i}{j}};
  (6,0)*{\slinenr{i}};
 (3,8)*{\lcrossrr{}{}};
 (-6,8)*{\slineu{}};
 \endxy};
(4,8)*[black]{\bullet};
\endxy}
-{\xy (0,5)*{\xy 0;/r.15pc/:
 (3,0)*{\rcrossrr{}{}};
  (-6,0)*{\slinen{}};
 (-3,8)*{\rcrossbr{}{}};
 (6,8)*{\slineur{}}; (-5,11)*[black]{\bullet};
 \endxy}; (0,-5)*{\xy 0;/r.15pc/:
 (-3,0)*{\lcrossrb{i}{j}};
  (6,0)*{\slinenr{i}};
 (3,8)*{\lcrossrr{}{}};
 (-6,8)*{\slineu{}};
 \endxy}; \endxy}
 ,
{\xy (0,5)*{\xy 0;/r.15pc/:
    (3,0)*{\rcrossbr{}{}};
    (-6,0)*{\slinenr{}};
    (-3,8)*{\rcrossrr{}{}};
    (6,8)*{\slineu{}}; (-1.5,10.5)*[black]{\bullet};
    \endxy}; (0,-5)*{ \xy 0;/r.15pc/:
    (-3,0)*{\lcrossrr{i}{i}};
  (6,0)*{\slinen{j}};
 (3,8)*{\lcrossrb{}{}};
 (-6,8)*{\slineur{}};
 \endxy}; \endxy}
-{\xy (0,5)*{\xy 0;/r.15pc/:
    (3,0)*{\rcrossbr{}{}};
    (-6,0)*{\slinenr{}};
    (-3,8)*{\rcrossrr{}{}};
    (6,8)*{\slineu{}}; (-5,11)*[black]{\bullet};
    \endxy}; (0,-5)*{ \xy 0;/r.15pc/:
    (-3,0)*{\lcrossrr{i}{i}};
  (6,0)*{\slinen{j}};
 (3,8)*{\lcrossrb{}{}};
 (-6,8)*{\slineur{}};
 \endxy}; \endxy}\right) \\
&= \left(-{\xy 0;/r.18pc/:
 (3,4)*{\ucrossrb{}{}};
 (3,-4)*{\ucrossbr{j}{i}};
 (-6,-4)*{\slinedr{i}};
  (-6,4)*{\slinenr{}}; \endxy} \hspace{-5pt}
+t_{ji}{\xy 0;/r.18pc/:
 (6,-4)*{\slinenr{i}};
 (6,4)*{\slineur{}};
  (0,-4)*{\slinen{j}};
  (0,4)*{\slineu{}}; (0, 0)*{\bullet};
 (-6,-4)*{\slinedr{i}};
  (-6,4)*{\slinenr{}}; \endxy}
+t_{ij}{\xy 0;/r.18pc/:
 (6,-4)*{\slinenr{i}};
 (6,4)*{\slineur{}};
  (0,-4)*{\slinen{j}};
  (0,4)*{\slineu{}}; (-6, 0)*[black]{\bullet};
 (-6,-4)*{\slinedr{i}};
  (-6,4)*{\slinenr{}}; \endxy}
  \; , \;
-{\xy 0;/r.18pc/:
 (3,4)*{\ucrossbr{}{}};
 (3,-4)*{\ucrossrb{i}{j}};
 (-6,-4)*{\slinedr{i}};
  (-6,4)*{\slinenr{}}; \endxy} \hspace{-5pt}
+t_{ji}{\xy 0;/r.18pc/:
 (0,-4)*{\slinenr{i}};
 (0,4)*{\slineur{}};
  (6,-4)*{\slinen{j}};
  (6,4)*{\slineu{}}; (6, 0)*{\bullet};
 (-6,-4)*{\slinedr{i}};
  (-6,4)*{\slinenr{}}; \endxy}
+t_{ij}{\xy 0;/r.18pc/:
 (0,-4)*{\slinenr{i}};
 (0,4)*{\slineur{}};
  (6,-4)*{\slinen{j}};
  (6,4)*{\slineu{}}; (-6, 0)*[black]{\bullet};
 (-6,-4)*{\slinedr{i}};
  (-6,4)*{\slinenr{}}; \endxy}\right)\\
&= \cal{T}'_i\left(t_{ij}{\xy (0,0)*{\slineu{j}}; (-4,0)*{\sdotur{i}}; \endxy}+t_{ji}{\xy (0,0)*{\sdotu{j}}; (-4,0)*{\slineur{i}}; \endxy}\right)
+\left(-{\xy 0;/r.12pc/:
 (3,4)*{\ucrossrb{}{}};
 (3,-4)*{\ucrossbr{j}{i}};
 (-6,-4)*{\slinedr{i}};
  (-6,4)*{\slinenr{}}; \endxy}
  \; , \;
-{\xy 0;/r.12pc/:
 (3,4)*{\ucrossbr{}{}};
 (3,-4)*{\ucrossrb{i}{j}};
 (-6,-4)*{\slinedr{i}};
  (-6,4)*{\slinenr{}}; \endxy}\right)
\end{align*}
where in this case we use the equality
\[
\xy 0;/r.15pc/:
(0,4)*{\rcrossrr{}{}};
(0,-4)*{\lcrossrr{i}{i}};
(2,6.5)*[black]{\bullet};
\endxy -
\xy 0;/r.15pc/:
(0,4)*{\rcrossrr{}{}};
(0,-4)*{\lcrossrr{i}{i}};
(-2,6.5)*[black]{\bullet};
\endxy
= \quad
{\xy 0;/r.15pc/:
 (0,-4)*{\slinenr{i}};
 (0,4)*{\slineur{}};
(0, 0)*[black]{\bullet};
 (6,-4)*{\slinedr{i}};
  (6,4)*{\slinenr{}}; \endxy}
-{\xy 0;/r.15pc/:
 (0,-4)*{\slinenr{i}};
 (0,4)*{\slineur{}};
(6, 0)*[black]{\bullet};
 (6,-4)*{\slinedr{i}};
  (6,4)*{\slinenr{}}; \endxy}
\]
which again holds in any weight.
The relation is verified since the chain endomorphism $\left(-{\xy 0;/r.12pc/:
 (3,4)*{\ucrossrb{}{}};
 (3,-4)*{\ucrossbr{j}{i}};
 (-6,-4)*{\slinedr{i}};
  (-6,4)*{\slinenr{}}; \endxy}
  \; , \;
-{\xy 0;/r.12pc/:
 (3,4)*{\ucrossbr{}{}};
 (3,-4)*{\ucrossrb{i}{j}};
 (-6,-4)*{\slinedr{i}};
  (-6,4)*{\slinenr{}}; \endxy}\right)$
is null-homotopic, with homotopy given by
\[
     \xy 0;/r.15pc/:
  (-45,15)*+{\cal{F}_i\cal{E}_j\cal{E}_i\onell{s_i(\lambda)}\la 1-\l_i \ra}="1";
  (-45,-15)*+{\cal{F}_i\cal{E}_j\cal{E}_i\onell{s_i(\lambda)}\la -1-\l_i \ra}="2";
  (45,15)*+{\clubsuit\cal{F}_i\cal{E}_i\cal{E}_j\onell{s_i(\lambda)}\la 2-\l_i \ra}="3";
  (45,-15)*+{\clubsuit\cal{F}_i\cal{E}_i\cal{E}_j\onell{s_i(\lambda)}\la -\l_i \ra}="4";
  {\ar_<<<<<{\xy 0;/r.12pc/:
  (6,0)*{\ucrossrb{}{}};
  (-3,0)*{\slinedr{}}; \endxy}@/_1pc/ "4";"1"};
   {\ar^-{-\xy  0;/r.12pc/:(-3,0)*{\slinedr{}};(6,0)*{\ucrossbr{}{}}; (7,0)*{}; \endxy   } "2";"4"};
   {\ar^-{-\xy  0;/r.12pc/:(-3,0)*{\slinedr{}};(6,0)*{\ucrossbr{}{}}; (7,0)*{};\endxy   } "1";"3"};
 \endxy
\]

Finally, we compute the case in which strands are labeled $j$ and $j'$ with $i\cdot j = -1 = i \cdot j'$.
In this case,
\[
\cal{T}_i'(\cal{E}_{j}\cal{E}_{j'}\onel) =
\clubsuit \cal{E}_{j} \cal{E}_{i} \cal{E}_{j'} \cal{E}_{i} \onell{s_i(\l)} \to
\cal{E}_{j} \cal{E}_{i} \cal{E}_{i} \cal{E}_{j'} \onell{s_i(\l)} \la 1 \ra \oplus \cal{E}_{i} \cal{E}_{j} \cal{E}_{j'} \cal{E}_{i} \onell{s_i(\l)} \la 1 \ra \to
\cal{E}_{i} \cal{E}_{j} \cal{E}_{i} \cal{E}_{j'} \onell{s_i(\l)} \la 2 \ra
\]
and we denote the relevant endomorphism as an ordered triple. We abuse notation for the component mapping between the terms in homological degree one:
technically this should be given by a $2 \times 2$ matrix, but, in the interest of space, we add all terms in the relevant matrix, as the components are distinguished by their (co)domains.
We compute:
\begin{align*}
&\cal{T}'_i\left(\xy 0;/r.15pc/: (0,-5)*{\ucrossbp{j}{j'}}; (0,4)*{\ucrosspb{}{}}; \endxy\right)
=t_{ij}^{-1}t_{ij'}^{-1}\left(
{\xy  (0,-7)*{\xy 0;/r.15pc/:
    (0,-.75)*{\ncrossrp{i}{j'}}; (9,-.75)*{\slinenr{i}}; (-9,-.75)*{\slinen{j}};
    (-6,8)*{\ncrossbp{}{}};(6,8)*{\ncrossrr{}{}};
    (0,16)*{\ucrossbr{}{}}; (9,16)*{\slineur{}}; (-9,16)*{\slineup{}};  \endxy};
(0,8.5)*{\xy 0;/r.15pc/:
    (0,0)*{\ncrossrb{}{}}; (9,0)*{\slinenr{}}; (-9,0)*{\slinenp{}};
    (-6,8)*{\ncrosspb{}{}};(6,8)*{\ncrossrr{}{}};
    (0,16)*{\ucrosspr{}{}}; (9,16)*{\slineur{}}; (-9,16)*{\slineu{}};  \endxy};
\endxy}, \;
{\xy (-12,0)*{\delta_{jj'}t_{ji}^2}; (0,-3)*{\xy 0;/r.18pc/:
    (0,0)*{\ucrossrr{i}{i}}; (7,0)*{\slinenp{j'}}; (-7,0)*{\slinen{j}};
    \endxy}; (0,3.5)*{\xy 0;/r.18pc/:
    (0,0)*{\ucrossrr{}{}}; (7,0)*{\slineup{}}; (-7,0)*{\slineu{}};
    \endxy}; \endxy}
+{\xy (0,-7)*{\xy 0;/r.15pc/:
     (0,-.75)*{\ncrossrr{i}{i}}; (9,-.75)*{\slinenp{j'}}; (-9,-.75)*{\slinen{j}};
    (-6,8)*{\ncrossbr{}{}};(6,8)*{\ncrossrp{}{}};
    (0,16)*{\ucrossbp{}{}}; (9,16)*{\slineur{}}; (-9,16)*{\slineur{}};\endxy}; (0,8.5)*{\xy 0;/r.15pc/:
    (0,0)*{\ncrosspb{}{}}; (9,0)*{\slinenr{}}; (-9,0)*{\slinenr{}};
    (-6,8)*{\ncrossrb{}{}};(6,8)*{\ncrosspr{}{}};
    (0,16)*{\ucrossrr{}{}}; (9,16)*{\slineup{}}; (-9,16)*{\slineu{}};\endxy}; \endxy}
-{\xy (-12,0)*{t_{ji}\delta_{jj'}}; (0,-7)*{\xy 0;/r.19pc/:
    (0,1)*{\xy 0;/r.15pc/: (0,-.75)*{\ucrossrr{i}{i}}; \endxy}; (7,-.75)*{\slineup{j'}}; (-7,-.75)*{\slineu{j}};
    \endxy}; (0,4.5)*{\xy 0;/r.15pc/:
     (0,0)*{\ncrossrr{}{}}; (9,0)*{\slinenp{}}; (-9,0)*{\slinen{}};
    (-6,8)*{\ncrossbr{}{}};(6,8)*{\ncrossrp{}{}};
    (0,16)*{\ucrossbp{}{}}; (9,16)*{\slineur{}}; (-9,16)*{\slineur{}};\endxy}; \endxy}
+{\xy (-10,0)*{t_{ij}}; (0,-3)*{\xy 0;/r.15pc/:
     (0,-.75)*{\ncrossrr{i}{i}}; (9,-.75)*{\slinenp{j'}}; (-9,-.75)*{\slinen{j}};
    (-6,8)*{\ncrossbr{}{}};(6,8)*{\ncrossrp{}{}};
    (0,16)*{\ucrossbp{}{}}; (9,16)*{\slineur{}}; (-9,16)*{\slineur{}};\endxy}; (0,7.5)*{\xy 0;/r.19pc/:
    (0,0)*{\xy 0;/r.15pc/: (0,0)*{\ucrosspb{}{}}; \endxy}; (7,0)*{\slineur{}}; (-7,0)*{\slineur{}};  \endxy}; \endxy}\right.\\
&\hspace{1.25in}\left.+ {\xy (-11,0)*{t_{ij}t_{ij'}}; (0,-3)*{\xy 0;/r.18pc/:
    (0,-.75)*{\ucrossbp{j}{j'}}; (7,-.75)*{\slinenr{i}}; (-7,-.75)*{\slinenr{i}};
    \endxy}; (0,3.5)*{\xy 0;/r.18pc/:
    (0,0)*{\ucrosspb{}{}}; (7,0)*{\slineur{}}; (-7,0)*{\slineur{}};
    \endxy}; \endxy}
+{\xy (0,-7)*{\xy 0;/r.15pc/:
     (0,-.75)*{\ncrossbp{j}{j'}}; (9,-.75)*{\slinenr{i}}; (-9,-.75)*{\slinenr{i}};
    (-6,8)*{\ncrossrp{}{}};(6,8)*{\ncrossbr{}{}};
    (0,16)*{\ucrossrr{}{}}; (9,16)*{\slineu{}}; (-9,16)*{\slineup{}};\endxy}; (0,8.5)*{\xy 0;/r.15pc/:
    (0,0)*{\ncrossrr{}{}}; (9,0)*{\slinen{}}; (-9,0)*{\slinenp{}};
    (-6,8)*{\ncrosspr{}{}};(6,8)*{\ncrossrb{}{}};
    (0,16)*{\ucrosspb{}{}}; (9,16)*{\slineur{}}; (-9,16)*{\slineur{}};\endxy}; \endxy}
+{\xy (-10,0)*{t_{ij'}}; (0,-7)*{\xy 0;/r.19pc/:
    (0,1)*{\xy 0;/r.15pc/: (0,-.75)*{\ucrossbp{j}{j'}}; \endxy}; (7,.125)*{\slineur{i}}; (-7,.125)*{\slineur{i}};
    \endxy}; (0,4.5)*{\xy 0;/r.15pc/:
     (0,0)*{\ncrosspb{}{}}; (9,0)*{\slinenr{}}; (-9,0)*{\slinenr{}};
    (-6,8)*{\ncrossrb{}{}};(6,8)*{\ncrosspr{}{}};
    (0,16)*{\ucrossrr{}{}}; (9,16)*{\slineup{}}; (-9,16)*{\slineu{}};\endxy}; \endxy}
-{\xy (-12,0)*{t_{ji}\delta_{jj'}}; (0,-3)*{\xy 0;/r.15pc/:
     (0,-.75)*{\ncrossbp{j}{j'}}; (9,-.75)*{\slinenr{i}}; (-9,-.75)*{\slinenr{i}};
    (-6,8)*{\ncrossrp{}{}};(6,8)*{\ncrossbr{}{}};
    (0,16)*{\ucrossrr{}{}}; (9,16)*{\slineu{}}; (-9,16)*{\slineup{}};\endxy}; (0,7.75)*{\xy 0;/r.19pc/:
    (0,0)*{\xy 0;/r.15pc/: (0,0)*{\ucrossrr{}{}}; \endxy}; (7,0)*{\slineu{}}; (-7,0)*{\slineup{}};  \endxy}; \endxy} ,
{\xy (0,-7)*{\xy 0;/r.15pc/:
    (0,-.75)*{\ncrossbr{j}{i}}; (9,-.75)*{\slinenp{j}}; (-9,-.75)*{\slinenr{i}};
    (-6,8)*{\ncrossrr{}{}};(6,8)*{\ncrossbp{}{}};
    (0,16)*{\ucrossrp{}{}}; (9,16)*{\slineu{}}; (-9,16)*{\slineur{}};\endxy};
(0,8.5)*{\xy 0;/r.15pc/:
    (0,0)*{\ncrosspr{}{}}; (9,0)*{\slinen{}}; (-9,0)*{\slinenr{}};
    (-6,8)*{\ncrossrr{}{}};(6,8)*{\ncrosspb{}{}};
    (0,16)*{\ucrossrb{}{}}; (9,16)*{\slineup{}}; (-9,16)*{\slineur{}}; \endxy}; \endxy }\right)\\
&=t_{ij}^{-1}t_{ij'}^{-1}\left({\xy (-10,0)*{t_{ij}}; (0,-7)*{\xy 0;/r.15pc/:
    (0,-.75)*{\ncrossrp{i}{j'}}; (9,-.75)*{\slinenr{i}}; (-9,-.75)*{\slinen{j}};
    (-6,8)*{\ncrossbp{}{}};(6,8)*{\ncrossrr{}{}};
    (3,16)*{\slinenr{}}; (-3,16)*{\slineu{}}; (9,16)*{\slineur{}}; (-9,16)*{\slineup{}};  \endxy};
(0,8)*{\xy 0;/r.15pc/:
    (3,0)*{\sdotr{}}; (-3,0)*{\slinen{}}; (9,0)*{\slinenr{}}; (-9,0)*{\slinenp{}};
    (-6,8)*{\ncrosspb{}{}};(6,8)*{\ncrossrr{}{}};
    (0,16)*{\ucrosspr{}{}}; (9,16)*{\slineur{}}; (-9,16)*{\slineu{}};  \endxy}; \endxy}
+ {\xy (-10,0)*{t_{ji}}; (0,-7)*{\xy 0;/r.15pc/:
    (0,-.75)*{\ncrossrp{i}{j'}}; (9,-.75)*{\slinenr{i}}; (-9,-.75)*{\slinen{j}};
    (-6,8)*{\ncrossbp{}{}};(6,8)*{\ncrossrr{}{}};
    (3,16)*{\slineur{}}; (-3,16)*{\slinen{}}; (9,16)*{\slineur{}}; (-9,16)*{\slineup{}};  \endxy};
(0,8)*{\xy 0;/r.15pc/:
    (3,0)*{\slinenr{}}; (-3,0)*{\sdot{}}; (9,0)*{\slinenr{}}; (-9,0)*{\slinenp{}};
    (-6,8)*{\ncrosspb{}{}};(6,8)*{\ncrossrr{}{}};
    (0,16)*{\ucrosspr{}{}}; (9,16)*{\slineur{}}; (-9,16)*{\slineu{}};  \endxy}; \endxy} ,
{\xy (0,-7)*{\xy 0;/r.15pc/:
     (0,-.75)*{\ncrossrr{i}{i}}; (9,-.75)*{\slinenp{j'}}; (-9,-.75)*{\slinen{j}};
    (-6,8)*{\ncrossbr{}{}};(6,8)*{\ncrossrp{}{}};
    (0,16)*{\ucrossbp{}{}}; (9,16)*{\slineur{}}; (-9,16)*{\slineur{}};\endxy}; (0,8.5)*{\xy 0;/r.15pc/:
    (0,0)*{\ncrosspb{}{}}; (9,0)*{\slinenr{}}; (-9,0)*{\slinenr{}};
    (-6,8)*{\ncrossrb{}{}};(6,8)*{\ncrosspr{}{}};
    (0,16)*{\ucrossrr{}{}}; (9,16)*{\slineup{}}; (-9,16)*{\slineu{}};\endxy}; \endxy}
+{\xy (-10,0)*{t_{ij}}; (0,-3)*{\xy 0;/r.15pc/:
     (0,-.75)*{\ncrossrr{i}{i}}; (9,-.75)*{\slinenp{j'}}; (-9,-.75)*{\slinen{j}};
    (-6,8)*{\ncrossbr{}{}};(6,8)*{\ncrossrp{}{}};
    (0,16)*{\ucrossbp{}{}}; (9,16)*{\slineur{}}; (-9,16)*{\slineur{}};\endxy}; (0,7.5)*{\xy 0;/r.19pc/:
    (0,0)*{\xy 0;/r.15pc/: (0,0)*{\ucrosspb{}{}}; \endxy}; (7,0)*{\slineur{}}; (-7,0)*{\slineur{}};  \endxy}; \endxy}
+ {\xy (-11,0)*{t_{ij}t_{ij'}}; (0,-3)*{\xy 0;/r.18pc/:
    (0,-.75)*{\ucrossbp{j}{j'}}; (7,-.75)*{\slinenr{i}}; (-7,-.75)*{\slinenr{i}};
    \endxy}; (0,3.5)*{\xy 0;/r.18pc/:
    (0,0)*{\ucrosspb{}{}}; (7,0)*{\slineur{}}; (-7,0)*{\slineur{}};
    \endxy}; \endxy}
+{\xy (-10,0)*{t_{ij'}}; (0,-7)*{\xy 0;/r.19pc/:
    (0,1)*{\xy 0;/r.15pc/: (0,-.75)*{\ucrossbp{j}{j'}}; \endxy}; (7,.125)*{\slineur{i}}; (-7,.125)*{\slineur{i}};
    \endxy}; (0,4.5)*{\xy 0;/r.15pc/:
     (0,0)*{\ncrosspb{}{}}; (9,0)*{\slinenr{}}; (-9,0)*{\slinenr{}};
    (-6,8)*{\ncrossrb{}{}};(6,8)*{\ncrosspr{}{}};
    (0,16)*{\ucrossrr{}{}}; (9,16)*{\slineup{}}; (-9,16)*{\slineu{}};\endxy}; \endxy} , \right.\\
&\hspace{1.5in}\left.{\xy (-10,0)*{t_{j'i}}; (0,-7)*{\xy 0;/r.15pc/:
    (0,-.75)*{\ncrossbr{j}{i}}; (9,-.75)*{\slinenp{j'}}; (-9,-.75)*{\slinenr{i}};
    (-6,8)*{\ncrossrr{}{}};(6,8)*{\ncrossbp{}{}};
    (3,16)*{\slinenp{}}; (-3,16)*{\slineur{}}; (9,16)*{\slineu{}}; (-9,16)*{\slineur{}};  \endxy};
(0,8.5)*{\xy 0;/r.15pc/:
    (3,0)*{\sdotp{}}; (-3,0)*{\slinenr{}}; (9,0)*{\slinen{}}; (-9,0)*{\slinenr{}};
    (-6,8)*{\ncrossrr{}{}};(6,8)*{\ncrosspb{}{}};
    (0,16)*{\ucrossrb{}{}}; (9,16)*{\slineup{}}; (-9,16)*{\slineur{}};  \endxy}; \endxy}
+{\xy (-10,0)*{t_{ij'}}; (0,-7)*{\xy 0;/r.15pc/:
    (0,-.75)*{\ncrossbr{j}{i}}; (9,-.75)*{\slinenp{j'}}; (-9,-.75)*{\slinenr{i}};
    (-6,8)*{\ncrossrr{}{}};(6,8)*{\ncrossbp{}{}};
    (3,16)*{\slineup{}}; (-3,16)*{\slinenr{}}; (9,16)*{\slineu{}}; (-9,16)*{\slineur{}};  \endxy};
(0,8.5)*{\xy 0;/r.15pc/:
    (3,0)*{\slinenp{}}; (-3,0)*{\sdotr{}}; (9,0)*{\slinen{}}; (-9,0)*{\slinenr{}};
    (-6,8)*{\ncrossrr{}{}};(6,8)*{\ncrosspb{}{}};
    (0,16)*{\ucrossrb{}{}}; (9,16)*{\slineup{}}; (-9,16)*{\slineur{}};  \endxy}; \endxy}\right)
\end{align*}
which vanishes if $j=j'$, as desired.
If $j\neq j'$, we instead have
\begin{align*}
&=\left({\xy 0;/r.18pc/:
 (-12,0)*{t_{ij'}^{-1}}; (3,-12.75)*{\ncrossrp{i}{j'}};
  (-6,-12.75)*{\slinen{j}};
 (-3,-4)*{\ucrossbp{}{}};
 (6,4)*{\slinenr{}};
(-3,4)*{\ncrosspb{}{}};
  (9,-4)*{\ucrossrr{}{}};
 (3,12)*{\ucrosspr{}{}};
 (-6,12)*{\slineu{}};   (12,4)*{\slinenr{}}; (12,-12.75)*{\slinenr{i}}; (12,12)*{\slineur{}};
 \endxy} ,
 {\xy (-12,0)*{t_{ij}^{-1}t_{ij'}^{-1}}; (0,-7)*{\xy 0;/r.15pc/:
     (0,-.75)*{\ncrossrr{i}{i}}; (9,-.75)*{\slinenp{j'}}; (-9,-.75)*{\slinen{j}};
    (-6,8)*{\ncrossbr{}{}};(6,8)*{\ncrossrp{}{}};
    (0,16)*{\ucrossbp{}{}}; (9,16)*{\slineur{}}; (-9,16)*{\slineur{}};\endxy}; (0,8.5)*{\xy 0;/r.15pc/:
    (0,0)*{\ncrosspb{}{}}; (9,0)*{\slinenr{}}; (-9,0)*{\slinenr{}};
    (-6,8)*{\ncrossrb{}{}};(6,8)*{\ncrosspr{}{}};
    (0,16)*{\ucrossrr{}{}}; (9,16)*{\slineup{}}; (-9,16)*{\slineu{}};\endxy}; \endxy}
+{\xy (-10,0)*{t_{ij'}^{-1}}; (0,-3)*{\xy 0;/r.15pc/:
     (0,-.75)*{\ncrossrr{i}{i}}; (9,-.75)*{\slinenp{j'}}; (-9,-.75)*{\slinen{j}};
    (-6,8)*{\ncrossbr{}{}};(6,8)*{\ncrossrp{}{}};
    (0,16)*{\ucrossbp{}{}}; (9,16)*{\slineur{}}; (-9,16)*{\slineur{}};\endxy}; (0,7.5)*{\xy 0;/r.19pc/:
    (0,0)*{\xy 0;/r.15pc/: (0,0)*{\ucrosspb{}{}}; \endxy}; (7,0)*{\slineur{}}; (-7,0)*{\slineur{}};  \endxy}; \endxy}
+ {\xy (0,-3)*{\xy 0;/r.18pc/:
    (0,-.75)*{\ucrossbp{j}{j'}}; (7,-.75)*{\slinenr{i}}; (-7,-.75)*{\slinenr{i}};
    \endxy}; (0,3.5)*{\xy 0;/r.18pc/:
    (0,0)*{\ucrosspb{}{}}; (7,0)*{\slineur{}}; (-7,0)*{\slineur{}};
    \endxy}; \endxy}
+{\xy (-10,0)*{t_{ij}^{-1}}; (0,-7)*{\xy 0;/r.19pc/:
    (0,1)*{\xy 0;/r.15pc/: (0,-.75)*{\ucrossbp{j}{j'}}; \endxy}; (7,.125)*{\slineur{i}}; (-7,.125)*{\slineur{i}};
    \endxy}; (0,4.5)*{\xy 0;/r.15pc/:
     (0,0)*{\ncrosspb{}{}}; (9,0)*{\slinenr{}}; (-9,0)*{\slinenr{}};
    (-6,8)*{\ncrossrb{}{}};(6,8)*{\ncrosspr{}{}};
    (0,16)*{\ucrossrr{}{}}; (9,16)*{\slineup{}}; (-9,16)*{\slineu{}};\endxy}; \endxy} ,
{\xy 0;/r.18pc/:
 (-20,0)*{-t_{ij}^{-1}}; (-3,-12.75)*{\ncrossbr{j}{i}};
  (6,-12.75)*{\slinenp{j'}};
 (3,-4)*{\ucrossbp{}{}};
 (-9,-4)*{\ucrossrr{}{}};
(-6,4)*{\slinenr{}};
(3,4)*{\ncrosspb{}{}};
 (-3,12)*{\ucrossrb{}{}};
 (6,12)*{\slineup{}}; (-12,4)*{\slinenr{}};  (-12,12)*{\slineur{}}; (-12,-12.75)*{\slinenr{i}};
 \endxy}\right).
\end{align*}

If $j\cdot j'=0$ this simplifies to
\begin{align*}
=\;\; \cal{T}'_i
\left(t_{jj'}{\xy (0,0)*{\slineu{j}}; (4,0)*{\slineup{j'}}; \endxy}\right)
+t_{jj'}\left({t_{ij'}^{-1}\xy (-6,3)*{\slineu{}}; (-6,-4)*{\slinen{}};
(-6,-8.5)*{\scs j};
(-2.5,-8.5)*{\scs i};
(1.5,-8.5)*{\scs j'};
(5.5,-8.5)*{\scs i};
0;/r.15pc/:(2,0)*{\Rthreer{black}{magenta}{black}{}{}{}};
\endxy},
v_{ij'}{\xy (0,0)*{\sucrossrr{i}{i}}; (6,0)*{\sdotup{j'}}; (-6,0)*{\slineu{j}}; \endxy}
-v_{ij}{\xy (0,0)*{\sucrossrr{i}{i}}; (-6,0)*{\sdotu{j}}; (6,0)*{\slineup{j'}}; \endxy}
-{\xy (-6,0)*{\slineu{j}}; (-2,0)*{\slineur{i}}; (2,0)*{\slineur{i}}; (6,0)*{\slineup{j'}}; \endxy}\right.
\\
\left.
\hspace{2in}+{\xy (-10,0)*{t_{ij'}}; (0,0)*{\xy 0;/r.15pc/:
     (0,-.75)*{\ncrossrr{i}{i}}; (9,-.75)*{\slinenp{j'}}; (-9,-.75)*{\slinen{j}};
    (-6,16)*{\ucrossbr{}{}};(6,8)*{\ncrossrp{}{}};
    (3,16)*{\slineup{}}; (-3,8)*{\slinenr{}}; (9,16)*{\slineur{}}; (-9,8)*{\slinen{}};\endxy}; \endxy}
+{\xy (-10,0)*{t_{ij}^{-1}}; (0,0)*{\xy 0;/r.15pc/:
     (0,0)*{\ucrossrr{}{}}; (9,0)*{\slineup{}}; (-9,0)*{\slineu{}};
    (-6,-8)*{\ncrossrb{}{}};(6,-16.75)*{\ncrosspr{j'}{i}};
    (3,-8)*{\slinenr{}}; (-3,-16.75)*{\slinen{j}}; (9,-8)*{\slinenp{}}; (-9,-16.75)*{\slinenr{i}};\endxy}; \endxy},
-t_{ij}^{-1}{\xy (6,3)*{\slineup{}}; (6,-4)*{\slinenp{}};
(6,-8.5)*{\scs j'};
(-5.5,-8.5)*{\scs i};
(-1.5,-8.5)*{\scs j};
(2.5,-8.5)*{\scs i};
0;/r.15pc/:(-2,0)*{\Rthreel{black}{blue}{black}{}{}{}}; \endxy}\right)
\end{align*}
and if $j \cdot j'=-1$ we have
\begin{align*}
& = \cal{T}'_i\left(t_{jj'}{\xy (0,0)*{\sdotu{j}}; (4,0)*{\slineup{j'}}; \endxy}
+t_{j'j}{\xy (0,0)*{\slineu{j}}; (4,0)*{\sdotup{j'}}; \endxy}\right)
+
\left(t_{jj'}{t_{ij'}^{-1}\xy (-6,3)*{\slineu{}}; (-6,-4)*{\slinen{}}; (-6,0)*{\bullet};
(-6,-8.5)*{\scs j};
(-2.5,-8.5)*{\scs i};
(1.5,-8.5)*{\scs j'};
(5.5,-8.5)*{\scs i};
0;/r.15pc/:(2,0)*{\Rthreer{black}{magenta}{black}{}{}{}}; \endxy}
+t_{j'j}{t_{ij'}^{-1}\xy (-6,3)*{\slineu{}};  (-6,-4)*{\slinen{}};
(-6,-8.5)*{\scs j};
(-2.5,-8.5)*{\scs i};
(1.5,-8.5)*{\scs j'};
(5.5,-8.5)*{\scs i};
0;/r.15pc/:(2,0)*{\Rthreer{black}{magenta}{black}{}{}{}};
(8,0)*{\bullet};
\endxy} ,
(t_{jj'}v_{ij'} - t_{j'j}v_{ij}){\xy (0,0)*{\sucrossrr{i}{i}}; (6,0)*{\sdotup{j'}}; (-6,0)*{\sdotu{j}}; \endxy} \right.
\\
& \quad
 \left.
 +t_{j'j}v_{ij'}{\xy (0,0)*{\sucrossrr{i}{i}}; (6,0)*{\sdotup{j'}}; (-6,0)*{\slineu{j}}; (7.5,2)*{\scs 2};\endxy}
-t_{jj'}v_{ij}{\xy (0,0)*{\sucrossrr{i}{i}}; (-6,0)*{\sdotu{j}}; (-4.5,2)*{\scs 2}; (6,0)*{\slineup{j'}}; \endxy}
-t_{jj'}{\xy (-6,0)*{\sdotu{j}}; (-2,0)*{\slineur{i}}; (2,0)*{\slineur{i}}; (6,0)*{\slineup{j'}}; \endxy}
-t_{j'j}{\xy (-6,0)*{\slineu{j}}; (-2,0)*{\slineur{i}}; (2,0)*{\slineur{i}}; (6,0)*{\sdotup{j'}}; \endxy}
+t_{jj'}{\xy (-10,0)*{t_{ij'}^{-1}}; (0,0)*{\xy 0;/r.15pc/:
     (0,0)*{\ncrossrr{}{}}; (9,0)*{\slinenp{}}; (-9,0)*{\sdot{}};
    (-6,16)*{\ucrossbr{}{}};(6,8)*{\ncrossrp{}{}};
    (3,16)*{\slineup{}}; (-3,8)*{\slinenr{}}; (9,16)*{\slineur{}}; (-9,8)*{\slinen{}};\endxy};
    (-6,-8.5)*{\scs j};
(-2.5,-8.5)*{\scs i};
(1.5,-8.5)*{\scs i};
(5.5,-8.5)*{\scs j'};
\endxy}
\right.
\\
& \quad \left.
+t_{j'j}{\xy (-10,0)*{t_{ij'}^{-1}}; (0,0)*{\xy 0;/r.15pc/:
     (0,0)*{\ncrossrr{}{}}; (9,0)*{\sdotp{}}; (-9,0)*{\slinen{}};
    (-6,16)*{\ucrossbr{}{}};(6,8)*{\ncrossrp{}{}};
    (3,16)*{\slineup{}}; (-3,8)*{\slinenr{}}; (9,16)*{\slineur{}}; (-9,8)*{\slinen{}};\endxy};
    (-6,-8.5)*{\scs j};
(-2.5,-8.5)*{\scs i};
(1.5,-8.5)*{\scs i};
(5.5,-8.5)*{\scs j'};
\endxy}
+t_{jj'}{\xy (-10,0)*{t_{ij}^{-1}}; (0,0)*{\xy 0;/r.15pc/:
     (0,0)*{\ucrossrr{}{}}; (9,0)*{\slineup{}}; (-9,0)*{\sdotu{}};
    (-6,-8)*{\ncrossrb{}{}};(6,-16)*{\ncrosspr{}{}};
    (3,-8)*{\slinenr{}}; (-3,-16)*{\slinen{}}; (9,-8)*{\slinenp{}}; (-9,-16)*{\slinenr{}};\endxy};
(-6,-8.5)*{\scs i};
(-2.5,-8.5)*{\scs j};
(1.5,-8.5)*{\scs j'};
(5.5,-8.5)*{\scs i};
\endxy}
+t_{j'j}{\xy (-10,0)*{t_{ij}^{-1}}; (0,0)*{\xy 0;/r.15pc/:
     (0,0)*{\ucrossrr{}{}}; (9,0)*{\slineup{}}; (-9,0)*{\slineu{}};
    (-6,-8)*{\ncrossrb{}{}};(6,-16)*{\ncrosspr{}{}};
    (3,-8)*{\slinenr{}}; (-3,-16)*{\slinen{}}; (9,-8)*{\sdotp{}}; (-9,-16)*{\slinenr{}};\endxy};
(-6,-8.5)*{\scs i};
(-2.5,-8.5)*{\scs j};
(1.5,-8.5)*{\scs j'};
(5.5,-8.5)*{\scs i};
\endxy} ,
-t_{jj'}t_{ij}^{-1}{\xy (6,3)*{\slineup{}}; (6,-4)*{\slinenp{}};
(-5,-8.5)*{\scs i};
(-1.5,-8.5)*{\scs j};
(2.5,-8.5)*{\scs i};
(6.5,-8.5)*{\scs j'};
0;/r.15pc/:(-2,0)*{\Rthreel{black}{blue}{black}{}{}{}};
(-8,0)*{\bullet};
\endxy}
-t_{j'j}t_{ij}^{-1}{\xy (6,3)*{\slineup{}}; (6,-4)*{\slinenp{}}; (6,0)*{\bullet};
(-5,-8.5)*{\scs i};
(-1.5,-8.5)*{\scs j};
(2.5,-8.5)*{\scs i};
(6.5,-8.5)*{\scs j'};
0;/r.15pc/:(-2,0)*{\Rthreel{black}{blue}{black}{}{}{}}; \endxy}\right).
\end{align*}
In both cases,
the second summand (\ie the ``error term'' preventing the relation from holding on the nose) is null-homotopic.
The nonzero terms of both null-homotopies are given in the following diagram by the arrows labeled with the brackets
(with the homotopy for the $j \cdot j'=0$ case on the first line, and the $j \cdot j'=-1$ case on the second).
\begin{align*}
 \xy 0;/r.12pc/:
 (-60,-40)*+{\clubsuit \;\cal{E}_j\cal{E}_i\cal{E}_{j'}\cal{E}_i \onell{s_i(\lambda)}}="bl";
 (-20,-20)*+{\cal{E}_i\cal{E}_j\cal{E}_{j'}\cal{E}_i \onell{s_i(\lambda)}\la 1\ra}="bt";
 (25,-60)*+{\cal{E}_j\cal{E}_i\cal{E}_i\cal{E}_{j'} \onell{s_i(\lambda)}\la 1\ra}="bb";
 (60,-35)*+{\cal{E}_i\cal{E}_j\cal{E}_i\cal{E}_{j'} \onell{s_i(\lambda)}\la 2\ra}="br";
  {\ar^{} "bl";"bt"};
  {\ar_{\;\;\;\xy 0;/r.10pc/:(6,0)*{\ucrosspr{}{}}; (-3,0)*{\slineur{}};
  (-9,0)*{\slineu{}};\endxy \;\;} "bl";"bb"};
  {\ar^{\;\;\;-\xy 0;/r.10pc/:
    (6,0)*{\ucrosspr{}{}}; (-9,0)*{\slineur{}}; (-3,0)*{\slineu{}};\endxy
  } "bt";"br"};
  {\ar_<<<<<<<<<{\xy 0;/r.10pc/:
    (-6,0)*{\ucrossbr{}{}}; (9,0)*{\slineup{}}; (3,0)*{\slineur{}}; \endxy \;\;
 } "bb";"br"};
 (-60,40)*+{\clubsuit \;\cal{E}_{j}\cal{E}_i\cal{E}_{j'}\cal{E}_i \onell{s_i(\lambda)}\la -2j\cdot j'\ra}="tl";
 (-10,60)*+{\cal{E}_i\cal{E}_{j}\cal{E}_{j'}\cal{E}_i \onell{s_i(\lambda)}\la -2j\cdot j'+1\ra}="tt";
 (25,20)*+{\cal{E}_{j}\cal{E}_i\cal{E}_i\cal{E}_{j'} \onell{s_i(\lambda)}\la -2j\cdot j'+1\ra}="tb";
 (60,45)*+{\cal{E}_i\cal{E}_{j}\cal{E}_i\cal{E}_{j'} \onell{s_i(\lambda)}\la -2j\cdot j'+2\ra}="tr";
  {\ar^>>>>>>>>>>>>{\xy 0;/r.10pc/:
    (-6,0)*{\ucrossbr{}{}}; (9,0)*{\slineur{}}; (3,0)*{\slineup{}}; \endxy \;\;\;\;\;\;
  } "tl";"tt"};
  {\ar^>>>>>>{\;\;\;\xy 0;/r.10pc/:(6,0)*{\ucrosspr{}{}}; (-3,0)*{\slineur{}};
  (-9,0)*{\slineu{}};\endxy \;\;} "tl";"tb"};
  {\ar^{} "tt";"tr"};
  {\ar^>>>>>>>>>{\xy 0;/r.10pc/:
    (-6,0)*{\ucrossbr{}{}}; (9,0)*{\slineup{}}; (3,0)*{\slineur{}}; \endxy \;\;
 } "tb";"tr"};
 {\ar@/_.5pc/@{->} "br";"tb" };
{\ar@/_2pc/@{->} "bb";"tl"};
(-32,7)*{\scalebox{5}{\}}};
(-65,-5)*{\xy 0;/r.12pc/: (-24,6)*{+ t_{j'j}t_{ij'}^{-1}}; (0,-.5)*{\ncrossrr{i}{i}}; (9,-.5)*{\sdotp{j'}};  (-9,-.5)*{\slinen{j}};(-9,8.5)*{\slineu{}};(-3,8.5)*{\slineur{}}; (6,8.5)*{\ucrossrp{}{}}; \endxy};
(-112,-5)*{\xy 0;/r.12pc/: (-24,6)*{t_{jj'}t_{ij'}^{-1}}; (0,-.5)*{\ncrossrr{i}{i}}; (9,-.5)*{\slinenp{j'}}; (-9,-.5)*{\sdot{j}}; (-9,8.5)*{\slineu{}};(-3,8.5)*{\slineur{}}; (6,8.5)*{\ucrossrp{}{}}; \endxy};
(-65,20)*{\xy 0;/r.12pc/: (-24,6)*{t_{jj'}t_{ij'}^{-1}}; (0,-.5)*{\ncrossrr{i}{i}}; (9,-.5)*{\slinenp{j'}}; (-9,-.5)*{\slinen{j}};(-9,8.5)*{\slineu{}};(-3,8.5)*{\slineur{}}; (6,8.5)*{\ucrossrp{}{}}; \endxy};
(60,-6)*{\scalebox{5}{\{}};
(90,-18)*{\xy 0;/r.12pc/: (-6,-.5)*{\ncrossrb{i}{j}}; (9,-.5)*{\slinenp{j'}}; (3,-.5)*{\slinenr{i}}; (-9,8.5)*{\sdotu{}}; (0,8.5)*{\ucrossrr{}{}}; (9,8.5)*{\slineup{}}; (-25,6)*{-t_{jj'}t_{ij}^{-1}}; \endxy};
(140,-18)*{\xy 0;/r.12pc/: (-6,-.5)*{\ncrossrb{i}{j}}; (9,-.5)*{\sdotp{j'}}; (3,-.5)*{\slinenr{i}}; (-9,8.5)*{\slineu{}}; (0,8.5)*{\ucrossrr{}{}}; (9,8.5)*{\slineup{}}; (-25,6)*{-t_{j'j}t_{ij}^{-1}}; \endxy};
(90,7)*{\xy 0;/r.12pc/: (-6,-.5)*{\ncrossrb{i}{j}}; (9,-.5)*{\slinenp{j'}}; (3,0)*{\slinenr{i}}; (-9,8.5)*{\slineu{}}; (0,8.5)*{\ucrossrr{}{}}; (9,8.5)*{\slineup{}}; (-25,6)*{-t_{jj'}t_{ij}^{-1}}; \endxy};
 \endxy
\end{align*}
\end{proof}

%
\subsection{Dot slide}
%

\begin{prop}
$\cal{T}_i'$ preserve the KLR dot sliding relation.
\end{prop}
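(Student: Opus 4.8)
The proof that $\cal{T}'_i$ preserves the dot-sliding relation \eqref{def:dot-slide} proceeds by case analysis on the colors of the two strands relative to $i$. The relation has three source cases ($i=j$ shape, $i \neq j$ shape on the right-hand side), and once a color assignment is fixed the images of both sides are chain maps between explicitly described (at most four-term) complexes, so the verification reduces to comparing chain maps degree-by-degree using the relations in $\UcatD_Q$ collected in Section~\ref{proof:crosscyc} and Appendix~\ref{sec:composite}.

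First I would dispose of the cases where no strand is $j$-colored (for $i \cdot j = -1$): when both strands are $i$-colored, $k$-colored, or one of each, the target complexes $\cal{T}'_i(\cal{E}_\ell \cal{E}_{\ell'}\onel)$ are concentrated in a single homological degree, so there are no nontrivial homotopies and the relation must hold on the nose. Here one simply applies $\cal{T}'_i$ to each generating crossing/dot using Sections~\ref{def:updot}, \ref{def:upcross}, tracks the scalar bookkeeping (powers of $t_{ki}$, $c_{i,\lambda}$, and signs picked up from $ii$-crossings), and checks that the dot-sliding relation itself — together with the KLR quadratic relation for the residual scalar $\delta_{jj}$ or $\delta_{kk'}$ terms — yields equality. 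This is entirely parallel to the treatment of the non-homotopy cases in the proof of Proposition~\ref{prop:quadKLR}.

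The substantive cases are the mixed $i$--$j$ and $j$--$j'$ ones, where the target complexes genuinely have two (respectively four) nonzero terms. Here I would expand $\cal{T}'_i$ applied to each side of \eqref{def:dot-slide} as a tuple of 2-morphisms (ordered pair for the $i$--$j$ case, ordered quadruple for the $j$--$j'$ case, with the same abuse of notation for the middle matrix component as in Proposition~\ref{prop:quadKLR}), using the computed values of $\cal{T}'_i$ on upward crossings with a dot composed on an incoming strand. After simplifying each component via the dot-slide, curl, and KLR relations, the difference of the two sides will be an explicit chain endomorphism which must be exhibited as null-homotopic; the homotopy will be built, exactly as in the quadratic-KLR proof, from $\cal{T}'_i$ of a crossing-with-dot diagram sitting diagonally in the relevant square or pair, and I would display its nonzero components in an $\xy$-diagram. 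The scalars appearing in these homotopies track the same $t_{ij}^{\pm 1}$, $v_{ij}$, $c_{i,\lambda}$ data as before.

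The main obstacle is the $j$--$j'$ case: since $\cal{T}'_i(\cal{E}_j \cal{E}_{j'}\onel)$ is the four-term ``cube'' complex of \eqref{eq:cubemap}, a dot on the $j$- or $j'$-strand must be resolved on each of the four vertices, producing a tuple whose middle ($2\times 2$) component involves cross terms, and the relevant error term only vanishes up to a homotopy supported on two of the diagonal edges of the cube — and on whether $j = j'$, $j \cdot j' = 0$, or $j \cdot j' = -1$, which must be treated separately. As in Proposition~\ref{prop:quadKLR}, when $j=j'$ the $\delta_{jj'}$-terms must be shown to cancel outright, while for $j \neq j'$ one must carefully match the residual diagrams (including thick-calculus-free dotted $R3$-type terms) against $\cal{T}'_i$ of the expected right-hand side of the dot-slide relation before identifying the homotopy. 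I expect this bookkeeping to be long but mechanical given the computations already carried out in the preceding subsections; no genuinely new idea is needed beyond the homotopy-construction template established for quadratic KLR.
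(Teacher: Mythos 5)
Your overall strategy matches the paper's: dispose of the non-$j$-colored cases by noting the target complexes are concentrated in a single homological degree (so no nontrivial homotopies can exist and the relation must hold on the nose), then handle the mixed $i$--$j$ and $j$--$j'$ cases by explicit computation, exhibiting a null-homotopy for the error term exactly on the template of the quadratic-KLR proof. The notational quibble that the $j$--$j'$ chain maps are recorded as ordered triples (with a matrix middle entry) rather than quadruples is immaterial.

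There is, however, one concrete misstep in your plan for the $j$--$j'$ case: you have the dichotomy backwards. You predict that for $j = j'$ the $\delta_{jj'}$-terms ``cancel outright'' while for $j \neq j'$ one must identify a homotopy. In fact the opposite happens: after simplification the image of the left-hand side of the dot-slide relation equals $\delta_{jj'}\,\cal{T}'_i(\Id_{\cal{E}_j\cal{E}_j\onel})$ plus an error term that is itself proportional to $\delta_{jj'}$, so for $j \neq j'$ both sides vanish identically and the relation holds on the nose with no homotopy, whereas for $j = j'$ a genuinely nonzero error term survives and must be killed by an explicit homotopy (supported, as you anticipated, on two diagonal edges of the square, with components built from $t_{ij}^{-1}$ times crossing diagrams). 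You appear to have pattern-matched from the quadratic-KLR proof, where the roles are reversed because there the right-hand side of the relation is zero precisely when the two labels coincide; for the dot-slide relation the right-hand side is $\delta_{jj'}$ times the identity, which flips which subcase carries the nontrivial content. A related omission: within the $i$--$j$ family only the $ij$-crossing with a dot on the $i$-strand requires a homotopy; the dotted-$j$ subcase and both dotted $ji$-crossings hold on the nose. Since constructing the homotopy is the only nontrivial part of the argument, aiming it at the wrong subcase is the one place your plan would actually fail as written, though the error would surface (and be correctable) once the diagrammatic computation is carried out.
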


\begin{proof}
We verify relation~\eqref{def:dot-slide} from Definition~\ref{defU_cat-cyc},
only exhibiting the computations for crossings involving $j$- and $j'$-labeled strands
(for $j \cdot i = -1 = j' \cdot i$), as all others are completely straightforward.
For $ij$-crossings with dotted $i$-labeled strand, we compute:
\begin{align*}
\cal{T}'_i\left({\xy (0,0)*{\ucrossrb{i}{j}}; (-2,-.5)*[black]{\bullet}; (12,0)*{\ucrossrb{i}{j}}; (13.5,2.5)*[black]{\bullet}; (6,0)*{-}; \endxy}\right)
&= \left(
{\xy 0;/r.18pc/: (-3,-4.75)*{\lcrossrb{i}{j}}; (6,-4.75)*{\slinenr{i}}; (3,4)*{\lcrossrr{}{}};  (-4.75,-4.5)*[black]{\bullet};
   (-6,4)*{\slineu{}}; \endxy }
-{\xy 0;/r.18pc/: (-3,-4.75)*{\lcrossrb{i}{j}}; (6,-4.75)*{\slinenr{i}}; (3,4)*{\lcrossrr{}{}};  (4.75,6.5)*[black]{\bullet};
   (-6,4)*{\slineu{}}; \endxy } ,\;
-{ \xy 0;/r.18pc/: (-3,-4.75)*{\lcrossrr{i}{i}}; (6,-4.75)*{\slinen{j}}; (3,4)*{\lcrossrb{}{}}; (-4.75,-4.5)*[black]{\bullet};
   (-6,4)*{\slineur{}}; \endxy}
+{ \xy 0;/r.18pc/: (-3,-4.75)*{\lcrossrr{i}{i}}; (6,-4.75)*{\slinen{j}}; (3,4)*{\lcrossrb{}{}}; (4.75,6.5)*[black]{\bullet};
   (-6,4)*{\slineur{}}; \endxy}\right)\\
&= \left(
{\xy 0;/r.18pc/: (-3,-4.75)*{\lcrossrb{i}{j}}; (6.2,-4)*{\slinenr{}}; (6,6)*{\scs i};
(3,7)*{\xy 0;/r.14pc/: (0,0)*{\lcupr{}}; \endxy};
  (3.1,3)*{\xy 0;/r.14pc/: (0,0)*{\lcapr{}}; \endxy};  (-6,4)*{\slineu{}}; \endxy },\;
-{ \xy 0;/r.18pc/: (-3.1,-1.25)*{\xy 0;/r.14pc/: (0,0)*{\lcupr{}}; \endxy};
(-3,-5.75)*{\xy 0;/r.14pc/: (0,0)*{\lcapr{}}; \endxy};
(6,-4.75)*{\slinen{j}}; (1,-1.5)*{\scs i}; (0,-9)*{\scs i};
(3,4)*{\lcrossrb{}{}}; (-6.2,4.2)*{\slineur{}}; \endxy}\right)
=\left(
{\xy 0;/r.18pc/: (3,-4.75)*{\ucrossbr{j}{i}}; (-6.2,-4)*{\slinedr{}}; (6,6)*{\scs i};
(3,7)*{\xy 0;/r.14pc/: (0,0)*{\lcupr{}}; \endxy};
  (-3.1,3)*{\xy 0;/r.14pc/: (0,0)*{\lcapr{}}; \endxy};
  (6,1)*{};(-6,9)*{} **[blue][|(1)]\crv{(6,5) & (-6,6)};
    \endxy
  },\;
-{\xy 0;/r.18pc/:
    (3.1,-1.5)*{
    \xy 0;/r.14pc/: (0,0)*{\lcupr{}}; \endxy
            };
 (6,-7)*{};(-6,1)*{} **[blue][|(1)]\crv{(6,-3) & (-6,-2)};
(-3,-5)*{\xy 0;/r.14pc/: (0,0)*{\lcapr{}}; \endxy};
(-3,4)*{\ucrossbr{}{}};
(6.25,4)*{\slinenr{}};
(.5,3)*{\scs i};
(0,-9)*{\scs i};
(6,-9)*{\scs j};
\endxy}\right)
\end{align*}
which is null-homotopic, as desired, via the homotopy:
\begin{align*}
   \xy 0;/r.15pc/:
  (-40,15)*+{\cal{E}_j\cal{E}_i\cal{F}_i\onell{s_i(\lambda)}\la 1-\l_i \ra}="1";
  (-40,-15)*+{\cal{F}_i\cal{E}_j\cal{E}_i\onell{s_i(\lambda)}\la -1-\l_i \ra}="2";
  (40,15)*+{\clubsuit\ \cal{E}_i\cal{E}_j\cal{F}_i\onell{s_i(\lambda)}\la 2-\l_i \ra}="3";
  (40,-15)*+{\clubsuit\ \cal{F}_i\cal{E}_i\cal{E}_j\onell{s_i(\lambda)}\la -\l_i \ra}="4";
    {\ar_<<<<<{\xy 0;/r.15pc/:
 (3,-1)*{\xy 0;/r.14pc/: (0,0)*{\lcupr{}}; \endxy};
 (6,-7)*{};(-6,2)*{} **[blue][|(1)]\crv{(6,-3) & (-6,-3)};?(1)*[blue][|(1)]\dir{>};
(-3,-5)*{\xy 0;/r.14pc/: (0,0)*{\lcapr{}}; \endxy}; (-10,-3)*{\scs -};
 \endxy}@/^1pc/ "4";"1"};
   {\ar^-{\xy  0;/r.15pc/: (3,0)*{\slinedr{}};(-6,0)*{\ucrossbr{}{}};\endxy   } "1";"3"};
   {\ar_-{\xy 0;/r.15pc/: (-9,0)*{\slinedr{}};(0,0)*{\ucrossbr{}{}}; (-13,0)*{\scs -}; \endxy   } "2";"4"};
 \endxy  \end{align*}
 For the $ij$-crossing with dotted $j$-labeled strand, we have:
\[
\cal{T}'_i\left(
\hspace{-5pt}
{\xy (0,0)*{\ucrossrb{i}{j}}; (-1.25,2.5)*{\bullet}; \endxy}
\hspace{-5pt}
\right)
=\left(
{\xy 0;/r.18pc/: (-3,-4.75)*{\lcrossrb{i}{j}}; (6,-4.75)*{\slinenr{i}}; (3,4)*{\lcrossrr{}{}};
(-6,4)*{\slineu{}}; (-6,5)*{\bullet}; \endxy }, \;
-{ \xy 0;/r.18pc/: (-3,-4.75)*{\lcrossrr{i}{i}}; (6,-4.75)*{\slinen{j}}; (3,4)*{\lcrossrb{}{}}; (1.5,6)*{\bullet};
   (-6,4)*{\slineur{}}; \endxy}\right)
=\left({\xy 0;/r.18pc/: (-3,-4.75)*{\lcrossrb{i}{j}}; (6,-4.75)*{\slinenr{i}}; (3,4)*{\lcrossrr{}{}};  (-1,-5)*{\bullet};
   (-6,4)*{\slineu{}}; \endxy }, \;
-{ \xy 0;/r.18pc/: (-3,-4.75)*{\lcrossrr{i}{i}}; (6,-4.75)*{\slinen{j}}; (3,4)*{\lcrossrb{}{}};
(6,-3)*{\bullet}; (-6,4)*{\slineur{}}; \endxy}\right)
=\cal{T}'_i\left(
\hspace{-5pt}
{\xy (0,0)*{\ucrossrb{i}{j}}; (2,-.25)*{\bullet}; \endxy}
\hspace{-5pt}
\right).
\]
For dotted $ji$-crossings, neither case requires a chain homotopy, so we omit the computations,
which are straightforward.

Finally,we consider dotted $jj'$-crossings.
As in the proof of Proposition \ref{prop:quadKLR}, our chain maps here map between complexes supported in
three adjacent homological degrees, and we denote them as ordered triples.
We have
\begin{align*}
&\cal{T}'_i\left(\xy0;/r.18pc/: (0,0)*{\ucrossbp{j}{j'}}; (-2,-.25)*{\bullet}; (12,0)*{\ucrossbp{j}{j'}};
(13.75,3)*{\bullet}; (6,1)*{-};\endxy\right) =
t_{ij}^{-1}\left(
{\xy 0;/r.14pc/: (0,-8)*{\ncrossrp{}{}}; (9,-8)*{\slinenr{}}; (-9,-8)*{\slinen{}};
    (-6,0)*{\ncrossbp{}{}};(6,0)*{\ncrossrr{}{}};
    (0,8)*{\ucrossbr{}{}}; (9,8)*{\slineur{}}; (-9,8)*{\slineup{}};
(-9,-13)*{\scs j};
(-3,-13)*{\scs i};
(3,-13)*{\scs j'};
(9,-13)*{\scs i};
(-9,-6)*{\bullet};
\endxy}
-{\xy 0;/r.14pc/: (0,-8)*{\ncrossrp{}{}}; (9,-8)*{\slinenr{}}; (-9,-8)*{\slinen{}};
    (-6,0)*{\ncrossbp{}{}};(6,0)*{\ncrossrr{}{}};
    (0,8)*{\ucrossbr{}{}}; (9,8)*{\slineur{}}; (-9,8)*{\slineup{}};
(-9,-13)*{\scs j};
(-3,-13)*{\scs i};
(3,-13)*{\scs j'};
(9,-13)*{\scs i};
(1.5,10)*{\bullet};
\endxy} ,
{\xy 0;/r.14pc/:
     (0,-8)*{\ncrossrr{}{}}; (9,-8)*{\slinenp{}}; (-9,-8)*{\slinen{}};
    (-6,0)*{\ncrossbr{}{}};(6,0)*{\ncrossrp{}{}};
    (0,8)*{\ucrossbp{}{}}; (9,8)*{\slineur{}}; (-9,8)*{\slineur{}};
(-9,-13)*{\scs j};
(-3,-13)*{\scs i};
(3,-13)*{\scs i};
(9,-13)*{\scs j'};
(-9,-6)*{\bullet};
\endxy}
-{\xy 0;/r.14pc/:
     (0,-8)*{\ncrossrr{}{}}; (9,-8)*{\slinenp{}}; (-9,-8)*{\slinen{}};
    (-6,0)*{\ncrossbr{}{}};(6,0)*{\ncrossrp{}{}};
    (0,8)*{\ucrossbp{}{}}; (9,8)*{\slineur{}}; (-9,8)*{\slineur{}};
(-9,-13)*{\scs j};
(-3,-13)*{\scs i};
(3,-13)*{\scs i};
(9,-13)*{\scs j'};
(1.5,10)*{\bullet};
\endxy}
-{\xy  (0,0)*{\sucrossrr{i}{i}}; (5,0)*{\slineu{j}}; (-5,0)*{\sdotu{j}}; (-12,0)*{\delta_{jj'}t_{ji}}; \endxy}
\right. \\
&
\left.
+ {\xy  (0,0)*{\sucrossrr{i}{i}}; (5,0)*{\sdotu{j}}; (-5,0)*{\slineu{j}}; (-12,0)*{\delta_{jj'}t_{ji}}; \endxy}
+{\xy 0;/r.14pc/:
     (0,-8)*{\ncrossbp{}{}}; (9,-8)*{\slinenr{}}; (-9,-8)*{\slinenr{}};
    (-6,0)*{\ncrossrp{}{}};(6,0)*{\ncrossbr{}{}};
    (0,8)*{\ucrossrr{}{}}; (9,8)*{\slineu{}}; (-9,8)*{\slineup{}};
(-9,-13)*{\scs i};
(-3,-13)*{\scs j};
(3,-13)*{\scs j'};
(9,-13)*{\scs i};
(-1.5,-8.5)*{\bullet};
\endxy}
-{\xy 0;/r.14pc/:
     (0,-8)*{\ncrossbp{}{}}; (9,-8)*{\slinenr{}}; (-9,-8)*{\slinenr{}};
    (-6,0)*{\ncrossrp{}{}};(6,0)*{\ncrossbr{}{}};
    (0,8)*{\ucrossrr{}{}}; (9,8)*{\slineu{}}; (-9,8)*{\slineup{}};
(-9,-13)*{\scs i};
(-3,-13)*{\scs j};
(3,-13)*{\scs j'};
(9,-13)*{\scs i};
(9,7)*{\bullet};
\endxy}
+{\xy (0,0)*{\sucrossbp{j}{j'}}; (5,0)*{\slineur{i}}; (-5,0)*{\slineur{i}}; (-1.5,-.75)*{\bullet}; (-8,0)*{t_{ij'}}; \endxy}
-{\xy (0,0)*{\sucrossbp{j}{j'}}; (5,0)*{\slineur{i}}; (-5,0)*{\slineur{i}}; (1.25,3)*{\bullet}; (-8,0)*{t_{ij'}}; \endxy},
-{\xy 0;/r.14pc/:
    (0,-8)*{\ncrossbr{}{}}; (9,-8)*{\slinenp{}}; (-9,-8)*{\slinenr{}};
    (-6,0)*{\ncrossrr{}{}};(6,0)*{\ncrossbp{}{}};
    (0,8)*{\ucrossrp{}{}}; (9,8)*{\slineu{}}; (-9,8)*{\slineur{}};
(-9,-13)*{\scs i};
(-3,-13)*{\scs j};
(3,-13)*{\scs i};
(9,-13)*{\scs j'};
(-1.5,-8.5)*{\bullet};
\endxy}
+{\xy 0;/r.14pc/:
    (0,-8)*{\ncrossbr{}{}}; (9,-8)*{\slinenp{}}; (-9,-8)*{\slinenr{}};
    (-6,0)*{\ncrossrr{}{}};(6,0)*{\ncrossbp{}{}};
    (0,8)*{\ucrossrp{}{}}; (9,8)*{\slineu{}}; (-9,8)*{\slineur{}};
(-9,-13)*{\scs i};
(-3,-13)*{\scs j};
(3,-13)*{\scs i};
(9,-13)*{\scs j'};
(9,7)*{\bullet};
\endxy}
\right) \\
& =
\delta_{jj'}\left(
t_{ij}^{-1}{\xy (-6,3)*{\slineu{}}; (-6,-4)*{\slinen{}}; 0;/r.15pc/:(2,0)*{\Rthreer{black}{blue}{black}{}{}{}};
(-10,-13)*{\scs j};
(-4,-13)*{\scs i};
(2,-13)*{\scs j};
(8,-13)*{\scs i};
\endxy}
+{\xy (-6,0)*{\slineu{j}}; (-2,0)*{\slineur{i}}; (2,0)*{\slineu{j}}; (6,0)*{\slineur{i}}; \endxy} , \;
t_{ij}^{-1}{\xy 0;/r.14pc/:
     (0,-8)*{\ncrossrr{}{}}; (9,-8)*{\slinen{}}; (-9,-8)*{\slinen{}};
    (-6,8)*{\ucrossbr{}{}};(6,0)*{\ncrossrb{}{}};
    (3,8)*{\slineu{}}; (-3,0)*{\slinenr{}}; (9,8)*{\slineur{}}; (-9,0)*{\slinen{}};
(-9,-13)*{\scs j};
(-3,-13)*{\scs i};
(3,-13)*{\scs i};
(9,-13)*{\scs j};
\endxy}
-v_{ij}{\xy (0,0)*{\sucrossrr{i}{i}}; (5,0)*{\slineu{j}}; (-5,0)*{\sdotu{j}}; \endxy}
+v_{ij}{\xy (0,0)*{\sucrossrr{i}{i}}; (5,0)*{\sdotu{j}}; (-5,0)*{\slineu{j}}; \endxy}
\right.
\nn\\
& \hspace{165pt}
\left.
 +t_{ij}^{-1}{\xy 0;/r.14pc/:
     (0,8)*{\ucrossrr{}{}}; (9,8)*{\slineu{}}; (-9,8)*{\slineu{}};
    (-6,0)*{\ncrossrb{}{}};(6,-8)*{\ncrossbr{}{}};
    (3,0)*{\slinenr{}}; (-3,-8)*{\slinen{}}; (9,0)*{\slinen{}}; (-9,-8)*{\slinenr{}};
(-9,-13)*{\scs i};
(-3,-13)*{\scs j};
(3,-13)*{\scs j};
(9,-13)*{\scs i};
\endxy}
+{\xy (-6,0)*{\slineur{i}}; (-2,0)*{\slineu{j}}; (2,0)*{\slineu{j}}; (6,0)*{\slineur{i}}; \endxy},
-t_{ij}^{-1}{\xy (6,3)*{\slineu{}}; (6,-4)*{\slinen{}};0;/r.15pc/:(-2,0)*{\Rthreel{black}{blue}{black}{}{}{}};
(-8,-13)*{\scs i};
(-2,-13)*{\scs j};
(4,-13)*{\scs i};
(10,-13)*{\scs j};
\endxy}
+{\xy (-6,0)*{\slineur{i}}; (-2,0)*{\slineu{j}}; (2,0)*{\slineur{i}}; (6,0)*{\slineu{j}}; \endxy}
\right) \\
& =
\left\{\begin{array}{l}
\cal{T}'_i\left({\xy (0,0)*{\slineu{j}}; (4,0)*{\slineu{j}}; \endxy}\right)
+
\left(
t_{ij}^{-1}{\xy (-6,3)*{\slineu{}}; (-6,-4)*{\slinen{}}; 0;/r.14pc/:(2,0)*{\Rthreer{black}{blue}{black}{}{}{}};
(-10,-13)*{\scs j};
(-4,-13)*{\scs i};
(2,-13)*{\scs j};
(8,-13)*{\scs i};
\endxy},
t_{ij}^{-1}{\xy 0;/r.14pc/:
     (0,-8)*{\ncrossrr{}{}}; (9,-8)*{\slinen{}}; (-9,-8)*{\slinen{}};
    (-6,8)*{\ucrossbr{}{}};(6,0)*{\ncrossrb{}{}};
    (3,8)*{\slineu{}}; (-3,0)*{\slinenr{}}; (9,8)*{\slineur{}}; (-9,0)*{\slinen{}};
(-9,-13)*{\scs j};
(-3,-13)*{\scs i};
(3,-13)*{\scs i};
(9,-13)*{\scs j};
\endxy}
-v_{ij}{\xy (0,0)*{\sucrossrr{i}{i}}; (5,0)*{\slineu{j}}; (-5,0)*{\sdotu{j}}; \endxy}
+v_{ij}{\xy (0,0)*{\sucrossrr{i}{i}}; (5,0)*{\sdotu{j}}; (-5,0)*{\slineu{j}}; \endxy}
-{\xy (-6,0)*{\slineu{j}}; (-2,0)*{\slineur{i}}; (2,0)*{\slineur{i}}; (6,0)*{\slineu{j}}; \endxy}
\right. \\
\hspace{165pt}\left.
+ t_{ij}^{-1}{\xy 0;/r.14pc/:
     (0,8)*{\ucrossrr{}{}}; (9,8)*{\slineu{}}; (-9,8)*{\slineu{}};
    (-6,0)*{\ncrossrb{}{}};(6,-8)*{\ncrossbr{}{}};
    (3,0)*{\slinenr{}}; (-3,-8)*{\slinen{}}; (9,0)*{\slinen{}}; (-9,-8)*{\slinenr{}};
(-9,-13)*{\scs i};
(-3,-13)*{\scs j};
(3,-13)*{\scs j};
(9,-13)*{\scs i};
\endxy},
-t_{ij}^{-1}{\xy (6,3)*{\slineu{}}; (6,-4)*{\slinen{}};0;/r.15pc/:(-2,0)*{\Rthreel{black}{blue}{black}{}{}{}};
(-8,-13)*{\scs i};
(-2,-13)*{\scs j};
(4,-13)*{\scs i};
(10,-13)*{\scs j};
\endxy} \right) \quad \text{if }j=j'\\
0 \quad \text{if }j\neq j'.
\end{array}\right.
\end{align*}
The relation thus holds on the nose unless $j=j'$, in which case the ``error term''
is null-homotopic, with homotopy given by:
\[
{ \xy 0;/r.12pc/:
 (-65,-40)*+{\clubsuit \;\cal{E}_j\cal{E}_i\cal{E}_{j}\cal{E}_i \onell{s_i(\lambda)}}="bl";
 (-20,-20)*+{\cal{E}_i\cal{E}_j\cal{E}_{j}\cal{E}_i \onell{s_i(\lambda)}\la 1\ra}="bt";
 (25,-60)*+{\cal{E}_j\cal{E}_i\cal{E}_i\cal{E}_{j} \onell{s_i(\lambda)}\la 1\ra}="bb";
 (65,-35)*+{\cal{E}_i\cal{E}_j\cal{E}_i\cal{E}_{j} \onell{s_i(\lambda)}\la 2\ra}="br";
  {\ar^{\xy 0;/r.18pc/:
    \endxy \;\;\;
  } "bl";"bt"};
  {\ar_{\xy 0;/r.18pc/:\endxy \;\;
  } "bl";"bb"};
  {\ar^{\xy 0;/r.18pc/:
    \endxy
  } "bt";"br"};
  {\ar_{\xy 0;/r.18pc/:
   \endxy
 } "bb";"br"};
 (-65,30)*+{\clubsuit \;\cal{E}_{j}\cal{E}_i\cal{E}_j\cal{E}_i \onell{s_i(\lambda)}}="tl";
 (-10,50)*+{\cal{E}_i\cal{E}_{j}\cal{E}_j\cal{E}_i \onell{s_i(\lambda)}\la 1\ra}="tt";
 (25,10)*+{\cal{E}_{j}\cal{E}_i\cal{E}_i\cal{E}_j \onell{s_i(\lambda)}\la 1\ra}="tb";
 (65,35)*+{\cal{E}_i\cal{E}_{j}\cal{E}_i\cal{E}_j \onell{s_i(\lambda)}\la 2\ra}="tr";
  {\ar^{\xy 0;/r.12pc/:
    (-6,0)*{\ucrossbr{}{}}; (9,0)*{\slineur{}}; (3,0)*{\slineu{}}; \endxy \;\;\;\;\;\;\;\;\;\;
  } "tl";"tt"};
  {\ar^{\;\;\;\xy 0;/r.12pc/:(6,0)*{\ucrossbr{}{}}; (-3,0)*{\slineur{}};
  (-9,0)*{\slineu{}};\endxy \;\;} "tl";"tb"};
  {\ar^{\;\;\;-\xy 0;/r.12pc/:
    (6,0)*{\ucrossbr{}{}}; (-9,0)*{\slineur{}}; (-3,0)*{\slineu{}};\endxy
  } "tt";"tr"};
  {\ar_{\;\;\;\;\;\;\; \xy 0;/r.12pc/:
    (-6,0)*{\ucrossbr{}{}}; (9,0)*{\slineu{}}; (3,0)*{\slineur{}}; \endxy } "tb";"tr"};
 {\ar@/_-2pc/@{->} "br";"tb" };
{\ar@/_2.3pc/@{->} "bb";"tl"};
(-40,0)*{\xy 0;/r.12pc/: (-18,4)*{t_{ij}^{-1}}; (0,-.75)*{\ncrossrr{i}{i}}; (9,-.75)*{\slinen{j}}; (-9,-.75)*{\slinen{j}};
    (-9,8)*{\slineu{}};(-3,8)*{\slineur{}}; (6,8)*{\ucrossrb{}{}}; \endxy};
(50,-10)*{\xy 0;/r.12pc/: (-6,-.75)*{\ncrossrb{i}{j}}; (9,-.75)*{\slinen{j}}; (3,-.75)*{\slinenr{i}}; (-9,8)*{\slineu{}};
    (0,8)*{\ucrossrr{}{}}; (9,8)*{\slineu{}}; (-20,4)*{-t_{ij}^{-1}}; \endxy};
 \endxy}
\]
The verification that
$
\cal{T}'_i\left(\xy 0;/r.18pc/: (0,0)*{\ucrossbp{j}{j'}}; (-1.75,3.25)*{\bullet};
(12,0)*{\ucrossbp{j}{j'}}; (14,-.25)*{\bullet}; (6,1)*{-};\endxy\right) \sim
\delta_{jj'}
\cal{T}'_i\left({\xy (0,0)*{\slineu{j}}; (4,0)*{\slineu{j}}; \endxy}\right)$
is almost identical to the above case, so we omit the details.
\end{proof}

%
\subsection{Cubic KLR}
%

\begin{prop}
$\cal{T}_i'$ preserves the cubic KLR relation.
\end{prop}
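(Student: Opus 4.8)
The plan is to verify relation \eqref{def:KLR-R3} of Definition~\ref{defU_cat-cyc} by checking it for every coloring of the three strands by Dynkin nodes, organized according to the adjacency of each label with the base node $i$: a strand is ``$i$-type'' if its label is $i$, ``$j$-type'' if its label $\ell$ satisfies $i\cdot\ell=-1$, and ``$k$-type'' if $i\cdot\ell=0$; when two $j$-type (or two $k$-type) labels occur we further record whether they agree. The observation that governs the difficulty is that $\cal{T}'_i(\cal{E}_\ell\onel)$ is concentrated in a single homological degree when $\ell=i$ (it is $\cal{F}_i\la\,\cdot\,\ra\to 0$) or $i\cdot\ell=0$, and is a genuine two-term complex $\clubsuit\,\cal{E}_\ell\cal{E}_i\to\cal{E}_i\cal{E}_\ell\la 1\ra$ only when $i\cdot\ell=-1$. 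Hence the images of the two sides of \eqref{def:KLR-R3} are supported in a single homological degree, and no chain homotopy can intervene, unless at least one of the three strands is $j$-type.

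First I would dispose of the colorings with no $j$-type strand. For these, $\cal{T}'_i$ replaces each crossing by the crossing prescribed in Section~\ref{def:upcross}, up to the scalars recorded there (a sign for each $ii$-crossing, a factor $t_{ki}^{\pm 1}$ for each sideways $ik$-crossing, etc.), and the identity reduces to a direct computation in $\Ucat_Q$: the cubic KLR relation \eqref{def:KLR-R3} itself when the base node does not occur as the middle strand, and the triple-intersection relation \eqref{eq:otherR3} of Section~\ref{sec:triple} --- together with the bubble relations of Section~\ref{sec:bubble} --- when the base node sits in the middle (there $\cal{T}'_i$ turns the middle strand into $\cal{F}_i$, producing exactly the mixed-orientation configuration of \eqref{eq:otherR3}). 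The only real task is bookkeeping of scalars, and checking that the correction term of \eqref{def:KLR-R3} --- present precisely when the two outer labels coincide and are adjacent to the middle one --- is matched termwise by $t_{\bullet\bullet}$ times $\cal{T}'_i$ of three parallel strands.

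Next come the colorings with exactly one $j$-type strand: here $\cal{T}'_i$ sends each side to a two-term complex, and I would compute both sides explicitly using the formulas of Sections~\ref{def:upcross} and~\ref{sec:composite} and the cyclicity, mixed-$EF$, and quadratic/cubic KLR relations, exactly as in the last cases of the proof of Proposition~\ref{prop:quadKLR}. Several of these hold on the nose (notably when the $j$-type strand is an outer strand); the remainder produce an error term that I would verify is null-homotopic, recording the single nonzero component of the homotopy as a map between the two two-term complexes involved.

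The main obstacle is the colorings with two or three $j$-type strands, and in particular the configurations for which \eqref{def:KLR-R3} carries a correction term --- outer strands sharing a $j$-type label with the middle strand equal to $i$ or adjacent to that $j$-node, and the all-$j$-type case in which the three labels together with $i$ form a triangle in $\Gamma$ (which is permitted for simply-laced Kac--Moody data). Here $\cal{T}'_i$ sends each side to a complex supported in three, respectively four, consecutive homological degrees, obtained by composing the cube-shaped chain maps of Section~\ref{def:upcross} (notably \eqref{eq:cubemap}), and I would proceed exactly as in the $jj'$-crossing case of Proposition~\ref{prop:quadKLR}: expand $\cal{T}'_i$ of the left-hand side as a difference of two such composites, simplify each matrix entry using the relations of Sections~\ref{sec:curl}--\ref{sec:triple}, compare with $\cal{T}'_i$ of the right-hand side, and exhibit the surviving error term as an explicit null-homotopy, displayed via its nonzero arrows in an anticommutative-cube diagram as elsewhere in Section~\ref{sec_proof_func}. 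No idea beyond the techniques already used for the quadratic KLR and dot-slide relations is needed; what makes this step demanding is the sheer length of the computations and the necessity of tracking bubble and fake-bubble contributions coming from \eqref{eq:otherR3} whenever $i$ is the middle strand.
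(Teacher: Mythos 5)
Your plan matches the paper's proof: the same case analysis over the $27$ adjacency types with respect to $i$, the same observation that nontrivial homotopies can only arise when a strand of $j$-type is present (since otherwise both sides live in a single homological degree), and the same strategy of direct computation via the composite-morphism formulas of Section~\ref{sec:composite}, relation \eqref{eq:otherR3}, and the bubble slides, followed by exhibiting explicit null-homotopies for the residual error terms. The only slight imprecision is your heuristic for which colorings genuinely require nonzero homotopies --- the paper's list is exactly $iji$, $jkj'$, $jij'$, and $jj'j$, so for instance $kjk'$ (a middle $j$-type strand) holds on the nose while $jij'$ with $j\neq j'$ (no correction term in \eqref{def:KLR-R3}) does not --- but since you propose to compute every case explicitly this does not affect the correctness of the approach.
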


\begin{proof}
We verify relation~\eqref{def:KLR-R3} in Definition~\ref{defU_cat-cyc}, \ie the ``Reidemeister III''-like KLR relation.
There are 27 cases to consider,
depending on whether the label $\ell$ of each strand satisfies $i \cdot \ell = 2, -1,$ or $0$.
To cover multiple cases at once, we will make use of the following notation, setting
\[
\Delta_{abc} = \begin{cases}
t_{ab} & \text{if } a=c \text{ and } a\cdot b=-1 \\
0 & \text{else} \\
\end{cases}
\]
Note that $\Delta_{abc}=\Delta_{cba}$.

The relation holds on nose (\ie does not require a non-zero homotopy),
except for the strand labelings in the following list:
\[
iji \;\; , \;\; jkj' \;\; , \;\; jij' \;\; , \;\; jj'j 
\]
where we continue with our conventions for strand labelings
($i\cdot j = -1 = i \cdot j'$ and $i \cdot k = 0 =i \cdot k'$).
In the interest of space, we will explicitly exhibit three representative cases
that do not require homotopies (to give the flavor of the computations required),
exhibit the homotopy and verify the relation in the $iji$-labeled case,
and exhibit the homotopy (but not include all the computations involved for the verification)
in the remaining three cases.

In the $jii$-labeled case, the relation holds on the nose via the following computation,
where, as above, we denote the chain map as an ordered pair.
\begin{align*}
&\cal{T}'_i\left({\xy 0;/r.15pc/: (0,0)*{\Rthreel{blue}{black}{black}{}{}{}};
(-6,-13)*{\scs j};
(0,-13)*{\scs i};
(6,-13)*{\scs i};
\endxy}
-{\xy 0;/r.15pc/: (0,0)*{\Rthreer{blue}{black}{black}{}{}{}};
(-6,-13)*{\scs j};
(0,-13)*{\scs i};
(6,-13)*{\scs i};
\endxy}\right)
=
\\
& \; t_{ij}^2\left(
-{\xy 0;/r.12pc/:
   (6,-16)*{\slinedr{}}; (-3,-16)*{\rcrossrr{}{}}; (-12,-16)*{\slinen{}};
   (6,-8)*{\slinenr{}}; (0,-8)*{\sdotr{}}; (-9,-8)*{\rcrossbr{}{}};
   (3,0)*{\rcrossrr{}{}}; (-6,0)*{\slinen{}}; (-12,0)*{\slinenr{}};
   (6,8)*{\slinenr{}}; (-3,8)*{\rcrossbr{}{}}; (-12,8)*{\slinenr{}};
   (6,16)*{\sdotur{}}; (0,16)*{\slineu{}}; (-9,16)*{\dcrossrr{}{}};
(-12,-22)*{\scs j};
(-6,-22)*{\scs i};
(0,-22)*{\scs i};
(6,-22)*{\scs i};
\endxy}
+{\xy 0;/r.12pc/:
   (6,-16)*{\slinedr{}}; (-3,-16)*{\rcrossrr{}{}}; (-12,-16)*{\slinen{}};
   (6,-8)*{\slinenr{}}; (0,-8)*{\sdotr{}}; (-9,-8)*{\rcrossbr{}{}};
   (3,0)*{\rcrossrr{}{}}; (-6,0)*{\slinen{}}; (-12,0)*{\slinenr{}};
   (6,8)*{\slinenr{}}; (-3,8)*{\rcrossbr{}{}}; (-12,8)*{\slinenr{}}; (-5,11)*[black]{\scs \bullet};
   (6,16)*{\slineur{}}; (0,16)*{\slineu{}}; (-9,16)*{\dcrossrr{}{}};
(-12,-22)*{\scs j};
(-6,-22)*{\scs i};
(0,-22)*{\scs i};
(6,-22)*{\scs i};
\endxy}
+{\xy 0;/r.12pc/:
   (6,-16)*{\slinedr{}}; (-3,-16)*{\rcrossrr{}{}}; (-12,-16)*{\slinen{}};
   (6,-8)*{\slinenr{}}; (0,-8)*{\slinenr{}}; (-9,-8)*{\rcrossbr{}{}}; (-11,-5)*[black]{\scs \bullet};
   (3,0)*{\rcrossrr{}{}}; (-6,0)*{\slinen{}}; (-12,0)*{\slinenr{}};
   (6,8)*{\slinenr{}}; (-3,8)*{\rcrossbr{}{}}; (-12,8)*{\slinenr{}};
   (6,16)*{\sdotur{}}; (0,16)*{\slineu{}}; (-9,16)*{\dcrossrr{}{}};
(-12,-22)*{\scs j};
(-6,-22)*{\scs i};
(0,-22)*{\scs i};
(6,-22)*{\scs i};
\endxy}
-{\xy 0;/r.12pc/:
   (6,-16)*{\slinedr{}}; (-3,-16)*{\rcrossrr{}{}}; (-12,-16)*{\slinen{}};
   (6,-8)*{\slinenr{}}; (0,-8)*{\slinenr{}}; (-9,-8)*{\rcrossbr{}{}}; (-11,-5)*[black]{\scs \bullet};
   (3,0)*{\rcrossrr{}{}}; (-6,0)*{\slinen{}}; (-12,0)*{\slinenr{}};
   (6,8)*{\slinenr{}}; (-3,8)*{\rcrossbr{}{}}; (-12,8)*{\slinenr{}}; (-5,11)*[black]{\scs \bullet};
   (6,16)*{\slineur{}}; (0,16)*{\slineu{}}; (-9,16)*{\dcrossrr{}{}};
(-12,-22)*{\scs j};
(-6,-22)*{\scs i};
(0,-22)*{\scs i};
(6,-22)*{\scs i};
\endxy}
  +{\xy 0;/r.12pc/:
   (3,-16)*{\dcrossrr{}{}}; (-6,-16)*{\slinenr{}}; (-12,-16)*{\slinen{}};
   (6,-8)*{\slinenr{}}; (-3,-8)*{\rcrossrr{}{}}; (-12,-8)*{\slinen{}};
   (6,0)*{\slinenr{}}; (0,0)*{\sdotr{}}; (-9,0)*{\rcrossbr{}{}};
   (3,8)*{\rcrossrr{}{}}; (-6,8)*{\slinen{}}; (-12,8)*{\slinenr{}};
   (6,16)*{\sdotur{}}; (-3,16)*{\rcrossbr{}{}}; (-12,16)*{\slinenr{}};
(-12,-22)*{\scs j};
(-6,-22)*{\scs i};
(0,-22)*{\scs i};
(6,-22)*{\scs i};
\endxy}
-{\xy 0;/r.12pc/:
   (3,-16)*{\dcrossrr{}{}}; (-6,-16)*{\slinenr{}}; (-12,-16)*{\slinen{}};
   (6,-8)*{\slinenr{}}; (-3,-8)*{\rcrossrr{}{}}; (-12,-8)*{\slinen{}};
   (6,0)*{\slinenr{}}; (0,0)*{\slinenr{}}; (-9,0)*{\rcrossbr{}{}};
   (3,8)*{\rcrossrr{}{}}; (-6,8)*{\slinen{}}; (-12,8)*{\sdotr{}};
   (6,16)*{\sdotur{}}; (-3,16)*{\rcrossbr{}{}}; (-12,16)*{\slinenr{}};
(-12,-22)*{\scs j};
(-6,-22)*{\scs i};
(0,-22)*{\scs i};
(6,-22)*{\scs i};
\endxy}
-{\xy 0;/r.12pc/:
   (3,-16)*{\dcrossrr{}{}}; (-6,-16)*{\slinenr{}}; (-12,-16)*{\slinen{}};
   (6,-8)*{\slinenr{}}; (-3,-8)*{\rcrossrr{}{}}; (-12,-8)*{\slinen{}};
   (6,0)*{\slinenr{}}; (0,0)*{\sdotr{}}; (-9,0)*{\rcrossbr{}{}};
   (3,8)*{\rcrossrr{}{}}; (-6,8)*{\slinen{}}; (-12,8)*{\slinenr{}};
   (6,16)*{\slineur{}}; (-3,16)*{\rcrossbr{}{}}; (-12,16)*{\slinenr{}};
   (-5,19)*[black]{\scs \bullet};
(-12,-22)*{\scs j};
(-6,-22)*{\scs i};
(0,-22)*{\scs i};
(6,-22)*{\scs i};
\endxy}
+{\xy 0;/r.12pc/:
   (3,-16)*{\dcrossrr{}{}}; (-6,-16)*{\slinenr{}}; (-12,-16)*{\slinen{}};
   (6,-8)*{\slinenr{}}; (-3,-8)*{\rcrossrr{}{}}; (-12,-8)*{\slinen{}};
   (6,0)*{\slinenr{}}; (0,0)*{\slinenr{}}; (-9,0)*{\rcrossbr{}{}};
   (3,8)*{\rcrossrr{}{}}; (-6,8)*{\slinen{}}; (-12,8)*{\sdotr{}};
   (6,16)*{\slineur{}}; (-3,16)*{\rcrossbr{}{}}; (-12,16)*{\slinenr{}};
   (-5,19)*[black]{\scs \bullet};
(-12,-22)*{\scs j};
(-6,-22)*{\scs i};
(0,-22)*{\scs i};
(6,-22)*{\scs i};
\endxy},\right.
\\
&\left. \;\;
 -{\xy 0;/r.12pc/:
   (6,-16)*{\slinedr{}}; (-3,-16)*{\rcrossbr{}{}}; (-12,-16)*{\slinenr{}};
   (6,-8)*{\slinenr{}}; (0,-8)*{\slinen{}}; (-9,-8)*{\rcrossrr{}{}};
   (3,0)*{\rcrossbr{}{}}; (-6,0)*{\slinenr{}}; (-12,0)*{\sdotr{}};
   (6,8)*{\slinen{}}; (-3,8)*{\rcrossrr{}{}}; (-12,8)*{\slinenr{}}; (-5,11)*[black]{\scs \bullet};
   (6,16)*{\slineu{}}; (0,16)*{\slineur{}}; (-9,16)*{\dcrossrr{}{}};
(-12,-22)*{\scs i};
(-6,-22)*{\scs j};
(0,-22)*{\scs i};
(6,-22)*{\scs i};
\endxy}
+{\xy 0;/r.12pc/:
   (6,-16)*{\slinedr{}}; (-3,-16)*{\rcrossbr{}{}}; (-12,-16)*{\slinenr{}};
   (6,-8)*{\slinenr{}}; (0,-8)*{\slinen{}}; (-9,-8)*{\rcrossrr{}{}};
   (3,0)*{\rcrossbr{}{}}; (-6,0)*{\slinenr{}}; (-12,0)*{\sdotr{}};
   (6,8)*{\slinen{}}; (-3,8)*{\rcrossrr{}{}}; (-12,8)*{\slinenr{}};
   (6,16)*{\slineu{}}; (0,16)*{\sdotur{}}; (-9,16)*{\dcrossrr{}{}};
(-12,-22)*{\scs i};
(-6,-22)*{\scs j};
(0,-22)*{\scs i};
(6,-22)*{\scs i};
\endxy}
+{\xy 0;/r.12pc/:
   (6,-16)*{\slinedr{}}; (-3,-16)*{\rcrossbr{}{}}; (-12,-16)*{\slinenr{}};
   (6,-8)*{\slinenr{}}; (0,-8)*{\slinen{}}; (-9,-8)*{\rcrossrr{}{}};
   (3,0)*{\rcrossbr{}{}}; (-6,0)*{\sdotr{}}; (-12,0)*{\slinenr{}};
   (6,8)*{\slinen{}}; (-3,8)*{\rcrossrr{}{}}; (-12,8)*{\slinenr{}}; (-5,11)*[black]{\scs \bullet};
   (6,16)*{\slineu{}}; (0,16)*{\slineur{}}; (-9,16)*{\dcrossrr{}{}};
(-12,-22)*{\scs i};
(-6,-22)*{\scs j};
(0,-22)*{\scs i};
(6,-22)*{\scs i};
\endxy}
-{\xy 0;/r.12pc/:
   (6,-16)*{\slinedr{}}; (-3,-16)*{\rcrossbr{}{}}; (-12,-16)*{\slinenr{}};
   (6,-8)*{\slinenr{}}; (0,-8)*{\slinen{}}; (-9,-8)*{\rcrossrr{}{}};
   (3,0)*{\rcrossbr{}{}}; (-6,0)*{\sdotr{}}; (-12,0)*{\slinenr{}};
   (6,8)*{\slinen{}}; (-3,8)*{\rcrossrr{}{}}; (-12,8)*{\slinenr{}};
   (6,16)*{\slineu{}}; (0,16)*{\sdotur{}}; (-9,16)*{\dcrossrr{}{}};
(-12,-22)*{\scs i};
(-6,-22)*{\scs j};
(0,-22)*{\scs i};
(6,-22)*{\scs i};
\endxy}
 +{\xy 0;/r.12pc/:
   (3,-16)*{\dcrossrr{}{}}; (-6,-16)*{\slinen{}}; (-12,-16)*{\slinenr{}};
   (6,-8)*{\slinenr{}}; (-3,-8)*{\rcrossbr{}{}}; (-12,-8)*{\slinenr{}};
   (6,0)*{\slinenr{}}; (0,0)*{\slinen{}}; (-9,0)*{\rcrossrr{}{}};
   (3,8)*{\rcrossbr{}{}}; (-6,8)*{\slinenr{}}; (-12,8)*{\sdotr{}};(-4.5,18.5)*[black]{\scs \bullet};
   (6,16)*{\slineu{}}; (-3,16)*{\rcrossrr{}{}}; (-12,16)*{\slinenr{}};
(-12,-22)*{\scs i};
(-6,-22)*{\scs j};
(0,-22)*{\scs i};
(6,-22)*{\scs i};
\endxy}
-{\xy 0;/r.12pc/:
   (3,-16)*{\dcrossrr{}{}}; (-6,-16)*{\slinen{}}; (-12,-16)*{\slinenr{}};
   (6,-8)*{\slinenr{}}; (-3,-8)*{\rcrossbr{}{}}; (-12,-8)*{\slinenr{}};
   (6,0)*{\slinenr{}}; (0,0)*{\slinen{}}; (-9,0)*{\rcrossrr{}{}};
   (3,8)*{\rcrossbr{}{}}; (-6,8)*{\sdotr{}}; (-12,8)*{\slinenr{}};(-4.75,18.5)*[black]{\scs \bullet};
   (6,16)*{\slineu{}}; (-3,16)*{\rcrossrr{}{}}; (-12,16)*{\slinenr{}};
(-12,-22)*{\scs i};
(-6,-22)*{\scs j};
(0,-22)*{\scs i};
(6,-22)*{\scs i};
\endxy}
-{\xy 0;/r.12pc/:
   (3,-16)*{\dcrossrr{}{}}; (-6,-16)*{\slinen{}}; (-12,-16)*{\slinenr{}};
   (6,-8)*{\slinenr{}}; (-3,-8)*{\rcrossbr{}{}}; (-12,-8)*{\slinenr{}};
   (6,0)*{\slinenr{}}; (0,0)*{\slinen{}}; (-9,0)*{\rcrossrr{}{}};
   (3,8)*{\rcrossbr{}{}}; (-6,8)*{\slinenr{}}; (-12,8)*{\sdotr{}};(-1.5,18)*[black]{\scs \bullet};
   (6,16)*{\slineu{}}; (-3,16)*{\rcrossrr{}{}}; (-12,16)*{\slinenr{}};
(-12,-22)*{\scs i};
(-6,-22)*{\scs j};
(0,-22)*{\scs i};
(6,-22)*{\scs i};
\endxy}
+{\xy 0;/r.12pc/:
   (3,-16)*{\dcrossrr{}{}}; (-6,-16)*{\slinen{}}; (-12,-16)*{\slinenr{}};
   (6,-8)*{\slinenr{}}; (-3,-8)*{\rcrossbr{}{}}; (-12,-8)*{\slinenr{}};
   (6,0)*{\slinenr{}}; (0,0)*{\slinen{}}; (-9,0)*{\rcrossrr{}{}};
   (3,8)*{\rcrossbr{}{}}; (-6,8)*{\sdotr{}}; (-12,8)*{\slinenr{}};(-1.5,18)*[black]{\scs \bullet};
   (6,16)*{\slineu{}}; (-3,16)*{\rcrossrr{}{}}; (-12,16)*{\slinenr{}};
(-12,-22)*{\scs i};
(-6,-22)*{\scs j};
(0,-22)*{\scs i};
(6,-22)*{\scs i};
\endxy}\right)
\end{align*}
To simplify, we use the dot slide relation to move all dots to the top,
and apply the cubic KLR relation to cancel terms, arriving at:
\begin{align*}
 &= t_{ij}^2\left(
{\xy 0;/r.12pc/:
   (6,-16)*{\slinedr{}}; (-3,-16)*{\rcrossrr{}{}}; (-12,-16)*{\slinen{}};
   (6,-8)*{\slinenr{}}; (0,-8)*{\slinenr{}}; (-9,-8)*{\rcrossbr{}{}};
   (3,-1)*{\ssrcapr{}}; (3,2)*{\ssrcupr{}};(-6,0)*{\slinen{}}; (-12,0)*{\slinenr{}};
   (6,8)*{\slinenr{}}; (-3,8)*{\rcrossbr{}{}}; (-12,8)*{\slinenr{}};
   (6,16)*{\sdotur{}}; (0,16)*{\slineu{}}; (-9,16)*{\dcrossrr{}{}};
(-12,-22)*{\scs j};
(-6,-22)*{\scs i};
(0,-22)*{\scs i};
(6,-22)*{\scs i};
\endxy}
-{\xy 0;/r.12pc/:
   (6,-16)*{\slinedr{}}; (-3,-16)*{\rcrossrr{}{}}; (-12,-16)*{\slinen{}};
   (6,-8)*{\slinenr{}}; (0,-8)*{\slinenr{}}; (-9,-8)*{\rcrossbr{}{}};
   (3,-1)*{\ssrcapr{}}; (3,2)*{\ssrcupr{}};(-6,0)*{\slinen{}}; (-12,0)*{\slinenr{}};
   (6,8)*{\slinenr{}}; (-3,8)*{\rcrossbr{}{}}; (-12,8)*{\slinenr{}}; (-10.25,18)*[black]{\scs \bullet};
   (6,16)*{\slineur{}}; (0,16)*{\slineu{}}; (-9,16)*{\dcrossrr{}{}};
(-12,-22)*{\scs j};
(-6,-22)*{\scs i};
(0,-22)*{\scs i};
(6,-22)*{\scs i};
\endxy}
-{\xy 0;/r.12pc/:
   (6,-16)*{\slinedr{}}; (-3,-16)*{\rcrossrr{}{}}; (-12,-16)*{\slinen{}};
   (6,-8)*{\slinenr{}}; (0,-8)*{\slinenr{}}; (-9,-8)*{\rcrossbr{}{}};
   (3,-1)*{\ssrcapr{}}; (3,2)*{\ssrcupr{}}; (-6,0)*{\slinen{}}; (-12,0)*{\slinenr{}};
   (6,8)*{\slinenr{}}; (-3,8)*{\rcrossbr{}{}}; (-12,8)*{\slinenr{}};
   (6,16)*{\slineur{}}; (0,16)*{\slineu{}}; (-12,16)*{\slinedr{}}; (-6,16)*{\slinedr{}};
(-12,-22)*{\scs j};
(-6,-22)*{\scs i};
(0,-22)*{\scs i};
(6,-22)*{\scs i};
\endxy}
-{\xy 0;/r.12pc/:
   (6,-16)*{\slinedr{}}; (-3,-16)*{\rcrossrr{}{}}; (-12,-16)*{\slinen{}};
   (6,-8)*{\slinenr{}}; (0,-8)*{\slinenr{}}; (-9,-8)*{\rcrossbr{}{}};
   (3,0)*{\rcrossrr{}{}}; (-6,0)*{\slinen{}}; (-12,0)*{\slinenr{}};
   (6,8)*{\slinenr{}}; (-3,8)*{\rcrossbr{}{}}; (-12,8)*{\slinenr{}};
   (6,16)*{\sdotur{}}; (0,16)*{\slineu{}}; (-12,16)*{\slinedr{}}; (-6,16)*{\slinedr{}};
(-12,-22)*{\scs j};
(-6,-22)*{\scs i};
(0,-22)*{\scs i};
(6,-22)*{\scs i};
\endxy}
+{\xy 0;/r.12pc/:
   (6,-16)*{\slinedr{}}; (-3,-16)*{\rcrossrr{}{}}; (-12,-16)*{\slinen{}};
   (6,-8)*{\slinenr{}}; (0,-8)*{\slinenr{}}; (-9,-8)*{\rcrossbr{}{}};
   (3,0)*{\rcrossrr{}{}}; (-6,0)*{\slinen{}}; (-12,0)*{\slinenr{}};
   (6,8)*{\slinenr{}}; (-3,8)*{\rcrossbr{}{}}; (-12,8)*{\slinenr{}};
   (6,16)*{\sdotur{}}; (0,16)*{\slineu{}};  (-12,16)*{\slinedr{}}; (-6,16)*{\slinedr{}};
(-12,-22)*{\scs j};
(-6,-22)*{\scs i};
(0,-22)*{\scs i};
(6,-22)*{\scs i};
\endxy}
-{\xy 0;/r.12pc/:
   (3,-16)*{\dcrossrr{}{}}; (-6,-16)*{\slinenr{}}; (-12,-16)*{\slinen{}};
   (6,-8)*{\slinenr{}}; (-3,-8)*{\rcrossrr{}{}}; (-12,-8)*{\slinen{}};
   (6,0)*{\slinenr{}}; (0,0)*{\slinenr{}}; (-9,0)*{\rcrossbr{}{}};
   (3,7)*{\ssrcapr{}};(3,10)*{\ssrcupr{}}; (-6,8)*{\slinen{}}; (-12,8)*{\slinenr{}};
   (6,16)*{\sdotur{}}; (-3,16)*{\rcrossbr{}{}}; (-12,16)*{\slinenr{}};
(-12,-22)*{\scs j};
(-6,-22)*{\scs i};
(0,-22)*{\scs i};
(6,-22)*{\scs i};
\endxy}
+{\xy 0;/r.12pc/:
   (3,-16)*{\dcrossrr{}{}}; (-6,-16)*{\slinenr{}}; (-12,-16)*{\slinen{}};
   (6,-8)*{\slinenr{}}; (-3,-8)*{\rcrossrr{}{}}; (-12,-8)*{\slinen{}};
   (6,0)*{\slinenr{}}; (0,0)*{\slinenr{}}; (-9,0)*{\rcrossbr{}{}};
   (3,7)*{\ssrcapr{}};(3,10)*{\ssrcupr{}};(-6,8)*{\slinen{}}; (-12,8)*{\slinenr{}};(-5,19)*[black]{\scs \bullet};
   (6,16)*{\slineur{}}; (-3,16)*{\rcrossbr{}{}}; (-12,16)*{\slinenr{}};
(-12,-22)*{\scs j};
(-6,-22)*{\scs i};
(0,-22)*{\scs i};
(6,-22)*{\scs i};
\endxy},
\right.\\
& \qquad \qquad \left.
-{\xy 0;/r.12pc/:
   (6,-16)*{\slinedr{}}; (-3,-16)*{\rcrossbr{}{}}; (-12,-16)*{\slinenr{}};
   (6,-8)*{\slinenr{}}; (0,-8)*{\slinen{}}; (-9,-8)*{\rcrossrr{}{}};
   (3,0)*{\rcrossbr{}{}}; (-6,0)*{\slinenr{}}; (-12,0)*{\slinenr{}};
   (6,8)*{\slinen{}}; (-3,8)*{\rcrossrr{}{}}; (-12,8)*{\slinenr{}};
   (6,16)*{\slineu{}}; (0,16)*{\sdotur{}}; (-12,16)*{\slinedr{}};(-6,16)*{\slinedr{}};
(-12,-22)*{\scs i};
(-6,-22)*{\scs j};
(0,-22)*{\scs i};
(6,-22)*{\scs i};
\endxy}
+{\xy 0;/r.12pc/:
   (6,-16)*{\slinedr{}}; (-3,-16)*{\rcrossbr{}{}}; (-12,-16)*{\slinenr{}};
   (6,-8)*{\slinenr{}}; (0,-8)*{\slinen{}}; (-9,-8)*{\rcrossrr{}{}};
   (3,0)*{\rcrossbr{}{}}; (-6,0)*{\slinenr{}}; (-12,0)*{\slinenr{}};
   (6,8)*{\slinen{}}; (-3,8)*{\rcrossrr{}{}}; (-12,8)*{\slinenr{}};
   (6,16)*{\slineu{}}; (0,16)*{\sdotur{}}; (-12,16)*{\slinedr{}};(-6,16)*{\slinedr{}};
(-12,-22)*{\scs i};
(-6,-22)*{\scs j};
(0,-22)*{\scs i};
(6,-22)*{\scs i};
\endxy}
-{\xy 0;/r.12pc/:
   (6,-16)*{\slinedr{}}; (-3,-16)*{\rcrossbr{}{}}; (-12,-16)*{\slinenr{}};
   (6,-8)*{\slinenr{}}; (0,-8)*{\slinen{}}; (-9,-8)*{\rcrossrr{}{}};
   (3,0)*{\rcrossbr{}{}}; (-6,0)*{\slinenr{}}; (-12,0)*{\slinenr{}};
   (6,8)*{\slinen{}}; (-3,7)*{\ssrcapr{}};(-3,10)*{\ssrcupr{}}; (-12,8)*{\slinenr{}}; (-10.25,18)*[black]{\scs \bullet};
   (6,16)*{\slineu{}}; (0,16)*{\slineur{}}; (-9,16)*{\dcrossrr{}{}};
(-12,-22)*{\scs i};
(-6,-22)*{\scs j};
(0,-22)*{\scs i};
(6,-22)*{\scs i};
\endxy}
-{\xy 0;/r.12pc/:
   (6,-16)*{\slinedr{}}; (-3,-16)*{\rcrossbr{}{}}; (-12,-16)*{\slinenr{}};
   (6,-8)*{\slinenr{}}; (0,-8)*{\slinen{}}; (-9,-8)*{\rcrossrr{}{}};
   (3,0)*{\rcrossbr{}{}}; (-6,0)*{\slinenr{}}; (-12,0)*{\slinenr{}};
   (6,8)*{\slinen{}}; (-3,7)*{\ssrcapr{}};(-3,10)*{\ssrcupr{}}; (-12,8)*{\slinenr{}};
   (6,16)*{\slineu{}}; (0,16)*{\slineur{}}; (-12,16)*{\slinedr{}};(-6,16)*{\slinedr{}};
(-12,-22)*{\scs i};
(-6,-22)*{\scs j};
(0,-22)*{\scs i};
(6,-22)*{\scs i};
\endxy}
+
{\xy 0;/r.12pc/:
   (6,-16)*{\slinedr{}}; (-3,-16)*{\rcrossbr{}{}}; (-12,-16)*{\slinenr{}};
   (6,-8)*{\slinenr{}}; (0,-8)*{\slinen{}}; (-9,-8)*{\rcrossrr{}{}};
   (3,0)*{\rcrossbr{}{}}; (-6,0)*{\slinenr{}}; (-12,0)*{\slinenr{}};
   (6,8)*{\slinen{}}; (-3,7)*{\ssrcapr{}};(-3,10)*{\ssrcupr{}}; (-12,8)*{\slinenr{}};
   (6,16)*{\slineu{}}; (0,16)*{\sdotur{}}; (-9,16)*{\dcrossrr{}{}};
(-12,-22)*{\scs i};
(-6,-22)*{\scs j};
(0,-22)*{\scs i};
(6,-22)*{\scs i};
\endxy}
+{\xy 0;/r.12pc/:
   (3,-16)*{\dcrossrr{}{}}; (-6,-16)*{\slinen{}}; (-12,-16)*{\slinenr{}};
   (6,-8)*{\slinenr{}}; (-3,-8)*{\rcrossbr{}{}}; (-12,-8)*{\slinenr{}};
   (6,0)*{\slinenr{}}; (0,0)*{\slinen{}}; (-9,0)*{\rcrossrr{}{}};
   (3,8)*{\rcrossbr{}{}}; (-6,8)*{\slinenr{}}; (-12,8)*{\slinenr{}};(-4.5,18.5)*[black]{\scs \bullet};
   (6,16)*{\slineu{}}; (-3,15)*{\ssrcapr{}};(-3,18)*{\ssrcupr{}}; (-12,16)*{\slinenr{}};
(-12,-22)*{\scs i};
(-6,-22)*{\scs j};
(0,-22)*{\scs i};
(6,-22)*{\scs i};
\endxy}
-{\xy 0;/r.12pc/:
   (3,-16)*{\dcrossrr{}{}}; (-6,-16)*{\slinen{}}; (-12,-16)*{\slinenr{}};
   (6,-8)*{\slinenr{}}; (-3,-8)*{\rcrossbr{}{}}; (-12,-8)*{\slinenr{}};
   (6,0)*{\slinenr{}}; (0,0)*{\slinen{}}; (-9,0)*{\rcrossrr{}{}};
   (3,8)*{\rcrossbr{}{}}; (-6,8)*{\slinenr{}}; (-12,8)*{\slinenr{}};(-4.5,18.5)*[black]{\scs \bullet};
   (6,16)*{\slineu{}}; (-3,15)*{\ssrcapr{}};(-3,18)*{\ssrcupr{}}; (-12,16)*{\slinenr{}};
(-12,-22)*{\scs i};
(-6,-22)*{\scs j};
(0,-22)*{\scs i};
(6,-22)*{\scs i};
\endxy}\right)\\
& =0.
\end{align*}
For the $jik$-labeled case, we have:
\begin{align*}
\cal{T}'_i\left({\xy 0;/r.15pc/: (0,0)*{\Rthreel{blue}{black}{green}{}{}{}};
(-6,-13)*{\scs j};
(0,-13)*{\scs i};
(6,-13)*{\scs k};
\endxy}\right)
&= t_{ij}\left(
{\xy 0;/r.12pc/:
   (6,-16)*{\slineng{}}; (-3,-16)*{\rcrossrr{}{}}; (-12,-16)*{\slinen{}};
   (6,-8)*{\slineng{}}; (0,-8)*{\sdotr{}}; (-9,-8)*{\rcrossbr{}{}};
   (3,0)*{\ucrossrg{}{}}; (-6,0)*{\slinen{}}; (-12,0)*{\slinenr{}};
   (6,8)*{\slinenr{}}; (-3,8)*{\ucrossbg{}{}}; (-12,8)*{\slinenr{}};
   (6,16)*{\slineur{}}; (0,16)*{\slineu{}}; (-9,16)*{\lcrossrg{}{}};
(-12,-22)*{\scs j};
(-6,-22)*{\scs i};
(0,-22)*{\scs i};
(6,-22)*{\scs k};
\endxy}
-{\xy 0;/r.12pc/:
   (6,-16)*{\slineng{}}; (-3,-16)*{\rcrossrr{}{}}; (-12,-16)*{\slinen{}};
   (6,-8)*{\slineng{}}; (0,-8)*{\slinenr{}}; (-9,-8)*{\rcrossbr{}{}};
   (3,0)*{\ucrossrg{}{}}; (-6,0)*{\slinen{}}; (-12,0)*{\sdotr{}};
   (6,8)*{\slinenr{}}; (-3,8)*{\ucrossbg{}{}}; (-12,8)*{\slinenr{}};
   (6,16)*{\slineur{}}; (0,16)*{\slineu{}}; (-9,16)*{\lcrossrg{}{}};
(-12,-22)*{\scs j};
(-6,-22)*{\scs i};
(0,-22)*{\scs i};
(6,-22)*{\scs k};
\endxy},
{\xy 0;/r.12pc/:
   (6,-16)*{\slineng{}}; (-3,-16)*{\rcrossbr{}{}}; (-12,-16)*{\slinenr{}};
   (6,-8)*{\slineng{}}; (0,-8)*{\slinen{}}; (-9,-8)*{\rcrossrr{}{}};
   (3,0)*{\ucrossbg{}{}}; (-6,0)*{\slinenr{}}; (-12,0)*{\sdotr{}};
   (6,8)*{\slinen{}}; (-3,8)*{\ucrossrg{}{}}; (-12,8)*{\slinenr{}};
   (6,16)*{\slineu{}}; (0,16)*{\slineur{}}; (-9,16)*{\lcrossrg{}{}};
(-12,-22)*{\scs i};
(-6,-22)*{\scs j};
(0,-22)*{\scs i};
(6,-22)*{\scs k};
\endxy}
-{\xy 0;/r.12pc/:
   (6,-16)*{\slineng{}}; (-3,-16)*{\rcrossbr{}{}}; (-12,-16)*{\slinenr{}};
   (6,-8)*{\slineng{}}; (0,-8)*{\slinen{}}; (-9,-8)*{\rcrossrr{}{}};
   (3,0)*{\ucrossbg{}{}}; (-6,0)*{\sdotr{}}; (-12,0)*{\slinenr{}};
   (6,8)*{\slinen{}}; (-3,8)*{\ucrossrg{}{}}; (-12,8)*{\slinenr{}};
   (6,16)*{\slineu{}}; (0,16)*{\slineur{}}; (-9,16)*{\lcrossrg{}{}};
(-12,-22)*{\scs i};
(-6,-22)*{\scs j};
(0,-22)*{\scs i};
(6,-22)*{\scs k};
\endxy}\right)\\
&= t_{ij}\left(
{\xy 0;/r.12pc/:
   (3,-16)*{\lcrossrg{}{}}; (-6,-16)*{\slinenr{}}; (-12,-16)*{\slinen{}};
   (6,-8)*{\slinenr{}}; (-3,-8)*{\ucrossrg{}{}}; (-12,-8)*{\slinen{}};
   (6,0)*{\slinenr{}}; (0,0)*{\slinenr{}}; (-9,0)*{\ucrossbg{}{}};
   (3,8)*{\rcrossrr{}{}}; (-6,8)*{\slinen{}}; (-12,8)*{\slineng{}};
   (6,16)*{\sdotur{}}; (-3,16)*{\rcrossbr{}{}}; (-12,16)*{\slineug{}};
(-12,-22)*{\scs j};
(-6,-22)*{\scs i};
(0,-22)*{\scs i};
(6,-22)*{\scs k};
\endxy}
-{\xy 0;/r.12pc/:
   (3,-16)*{\lcrossrg{}{}}; (-6,-16)*{\slinenr{}}; (-12,-16)*{\slinen{}};
   (6,-8)*{\slinenr{}}; (-3,-8)*{\ucrossrg{}{}}; (-12,-8)*{\slinen{}};
   (6,0)*{\slinenr{}}; (0,0)*{\slinenr{}}; (-9,0)*{\ucrossbg{}{}};
   (3,8)*{\rcrossrr{}{}}; (-6,8)*{\slinen{}}; (-12,8)*{\slineng{}};
   (6,16)*{\slineur{}}; (-3,16)*{\rcrossbr{}{}}; (-12,16)*{\slineug{}};(-4.75,18.5)*[black]{\scs \bullet};
(-12,-22)*{\scs j};
(-6,-22)*{\scs i};
(0,-22)*{\scs i};
(6,-22)*{\scs k};
\endxy},
{\xy 0;/r.12pc/:
   (3,-16)*{\lcrossrg{}{}}; (-6,-16)*{\slinen{}}; (-12,-16)*{\slinenr{}};
   (6,-8)*{\slinenr{}}; (-3,-8)*{\ucrossbg{}{}}; (-12,-8)*{\slinenr{}};
   (6,0)*{\slinenr{}}; (0,0)*{\slinen{}}; (-9,0)*{\ucrossrg{}{}};
   (3,8)*{\rcrossbr{}{}}; (-6,8)*{\slinenr{}}; (-12,8)*{\slineng{}};
   (6,16)*{\slineu{}}; (-3,16)*{\rcrossrr{}{}}; (-12,16)*{\slineug{}};(-4.75,18.5)*[black]{\scs \bullet};
(-12,-22)*{\scs i};
(-6,-22)*{\scs j};
(0,-22)*{\scs i};
(6,-22)*{\scs k};
\endxy}
-{\xy 0;/r.12pc/:
   (3,-16)*{\lcrossrg{}{}}; (-6,-16)*{\slinen{}}; (-12,-16)*{\slinenr{}};
   (6,-8)*{\slinenr{}}; (-3,-8)*{\ucrossbg{}{}}; (-12,-8)*{\slinenr{}};
   (6,0)*{\slinenr{}}; (0,0)*{\slinen{}}; (-9,0)*{\ucrossrg{}{}};
   (3,8)*{\rcrossbr{}{}}; (-6,8)*{\slinenr{}}; (-12,8)*{\slineng{}};
   (6,16)*{\slineu{}}; (-3,16)*{\rcrossrr{}{}}; (-12,16)*{\slineug{}}; (-1.5,18)*[black]{\scs \bullet};
(-12,-22)*{\scs i};
(-6,-22)*{\scs j};
(0,-22)*{\scs i};
(6,-22)*{\scs k};
\endxy}\right)
=\cal{T}'_i\left({\xy 0;/r.15pc/: (0,0)*{\Rthreer{blue}{black}{green}{}{}{}};
(-6,-13)*{\scs j};
(0,-13)*{\scs i};
(6,-13)*{\scs k};
\endxy}\right)
\end{align*}
where in the middle step we use dot sliding, and equation \eqref{eq:otherR3}.

The $kjk'$-labeled case is given by:
\begin{align*}
\cal{T}'_i\left({\xy 0;/r.15pc/: (0,0)*{\Rthreel{green}{blue}{green}{}{}{}};
(-6,-13)*{\scs k};
(0,-13)*{\scs j};
(6,-13)*{\scs k'};
\endxy}\right)
&= \left(
t_{k'i}^{-1}\vcenter{\xy 0;/r.12pc/: (3,0)*{\ucrossgb{}{}};(12,0)*{\slinenr{}};(18,0)*{\slineng{}};
    (0,8)*{\slinen{}};(9,8)*{\ucrossgr{}{}};(18,8)*{\slineng{}};
    (0,16)*{\slinen{}};(6,16)*{\slinenr{}};(15,16)*{\ucrossgg{}{}};
    (0,24)*{\slinen{}};(9,24)*{\ucrossrg{}{}};(18,24)*{\slineng{}};
    (3,32)*{\ucrossbg{}{}};(12,32)*{\slineur{}};(18,32)*{\slineug{}};
(0,-6)*{\scs k};
(6,-6)*{\scs j};
(12,-6)*{\scs i};
(18,-6)*{\scs k'};
\endxy},
t_{k'i}^{-1}\vcenter{\xy 0;/r.12pc/: (3,0)*{\ucrossgr{}{}};(12,0)*{\slinen{}};(18,0)*{\slineng{}};
    (0,8)*{\slinenr{}};(9,8)*{\ucrossgb{}{}};(18,8)*{\slineng{}};
    (0,16)*{\slinenr{}};(6,16)*{\slinen{}};(15,16)*{\ucrossgg{}{}};
    (0,24)*{\slinenr{}};(9,24)*{\ucrossbg{}{}};(18,24)*{\slineng{}};
    (3,32)*{\ucrossrg{}{}};(12,32)*{\slineu{}};(18,32)*{\slineug{}};
(0,-6)*{\scs k};
(6,-6)*{\scs i};
(12,-6)*{\scs j};
(18,-6)*{\scs k'};
\endxy}\right)\\
&=\left(t_{k'i}^{-1}\vcenter{\xy 0;/r.12pc/: (-3,0)*{\ucrossrg{}{}};(-12,0)*{\slinen{}};(-18,0)*{\slineng{}};
    (0,8)*{\slinenr{}};(-9,8)*{\ucrossbg{}{}};(-18,8)*{\slineng{}};
    (0,16)*{\slinenr{}};(-6,16)*{\slinen{}};(-15,16)*{\ucrossgg{}{}};
    (0,24)*{\slinenr{}};(-9,24)*{\ucrossgb{}{}};(-18,24)*{\slineng{}};
    (-3,32)*{\ucrossgr{}{}};(-12,32)*{\slineu{}};(-18,32)*{\slineug{}};
(-18,-6)*{\scs k};
(-12,-6)*{\scs j};
(-6,-6)*{\scs i};
(0,-6)*{\scs k'};
\endxy}
+\Delta_{kjk'}t_{k'i}^{-1}\vcenter{\xy 0;/r.12pc/:
    (-6,0)*{\slineng{}}; (0,0)*{\slinen{}}; (9,0)*{\ucrossrg{}{}};
    (-6,8)*{\slineng{}}; (0,8)*{\slinen{}}; (6,8)*{\slineng{}}; (12,8)*{\slinenr{}};
    (-6,16)*{\slineng{}}; (0,16)*{\slinen{}}; (6,16)*{\slineng{}}; (12,16)*{\slinenr{}};
    (-6,24)*{\slineng{}}; (0,24)*{\slinen{}}; (6,24)*{\slineng{}}; (12,24)*{\slinenr{}};
    (-6,32)*{\slineug{}}; (0,32)*{\slineu{}}; (9,32)*{\ucrossgr{}{}};
(-6,-6)*{\scs k};
(0,-6)*{\scs j};
(6,-6)*{\scs i};
(12,-6)*{\scs k'};
\endxy},
t_{k'i}^{-1}\vcenter{\xy 0;/r.12pc/: (-3,0)*{\ucrossbg{}{}};(-12,0)*{\slinenr{}};(-18,0)*{\slineng{}};
    (0,8)*{\slinen{}};(-9,8)*{\ucrossrg{}{}};(-18,8)*{\slineng{}};
    (0,16)*{\slinen{}};(-6,16)*{\slinenr{}};(-15,16)*{\ucrossgg{}{}};
    (0,24)*{\slinen{}};(-9,24)*{\ucrossgr{}{}};(-18,24)*{\slineng{}};
    (-3,32)*{\ucrossgb{}{}};(-12,32)*{\slineur{}};(-18,32)*{\slineug{}};
(-18,-6)*{\scs k};
(-12,-6)*{\scs i};
(-6,-6)*{\scs j};
(0,-6)*{\scs k'};
\endxy}
+\Delta_{kjk'}t_{k'i}^{-1}\vcenter{\xy 0;/r.12pc/:
    (6,0)*{\slineng{}}; (0,0)*{\slinen{}}; (-9,0)*{\ucrossgr{}{}};
    (6,8)*{\slineng{}}; (0,8)*{\slinen{}}; (-6,8)*{\slineng{}}; (-12,8)*{\slinenr{}};
    (6,16)*{\slineng{}}; (0,16)*{\slinen{}}; (-6,16)*{\slineng{}}; (-12,16)*{\slinenr{}};
    (6,24)*{\slineng{}}; (0,24)*{\slinen{}}; (-6,24)*{\slineng{}}; (-12,24)*{\slinenr{}};
    (6,32)*{\slineug{}}; (0,32)*{\slineu{}}; (-9,32)*{\ucrossrg{}{}};
(-12,-6)*{\scs k};
(-6,-6)*{\scs i};
(0,-6)*{\scs j};
(6,-6)*{\scs k'};
\endxy}\right)\\
&= \cal{T}'_i\left({\xy 0;/r.15pc/: (0,0)*{\Rthreer{green}{blue}{green}{}{}{}};
(-6,-13)*{\scs k};
(0,-13)*{\scs j};
(6,-13)*{\scs k'};
\endxy}
+\Delta_{kjk'}{\xy 0;/r.18pc/:(-6,-6)*{\slineng{}}; (0,-6)*{\slinen{}}; (6,-6)*{\slineng{}};
   (-6,0)*{\slineng{}}; (0,0)*{\slinen{}}; (6,0)*{\slineng{}}; (-6,6)*{\slineug{}}; (0,6)*{\slineu{}}; (6,6)*{\slineug{}};
(-6,-11)*{\scs k};
(0,-11)*{\scs j};
(6,-11)*{\scs k'};
\endxy}\right)
\end{align*}
and all others that don't require a non-zero homotopy are given similarly to these cases.

We now consider the cases listed above that
require chain homotopies.
Considering the
$iji$-labeled case, we compute that
\[
\cal{T}'_i\left({\xy 0;/r.15pc/: (0,0)*{\Rthreel{black}{blue}{black}{}{}{}};
(-6,-13)*{\scs i};
(0,-13)*{\scs j};
(6,-13)*{\scs i};
\endxy}
-{\xy 0;/r.15pc/: (0,0)*{\Rthreer{black}{blue}{black}{}{}{}};
(-6,-13)*{\scs i};
(0,-13)*{\scs j};
(6,-13)*{\scs i};
\endxy}
-t_{ij}{\xy 0;/r.18pc/:(-6,-6)*{\slinenr{i}}; (0,-6)*{\slinen{j}}; (6,-6)*{\slinenr{i}};
   (-6,0)*{\slinenr{}}; (0,0)*{\slinen{}}; (6,0)*{\slinenr{}}; (-6,6)*{\slineur{}}; (0,6)*{\slineu{}}; (6,6)*{\slineur{}}; \endxy}\right)
= (\phi_1,\phi_2)
\]
where
\begin{align*}
\phi_1=&
-t_{ij}{\xy 0;/r.14pc/:
   (-3,-16)*{\lcrossrb{}{}}; (6,-16)*{\slinenr{}{}}; (12,-16)*{\slinedr{}};
   (-6,-8)*{\slinen{}}; (3,-8)*{\lcrossrr{}{}}; (12,-8)*{\slinenr{}};
   (-6,0)*{\slinen{}}; (0,0)*{\slinenr{}}; (9,0)*{\dcrossrr{}{}};
   (-6,8)*{\slinen{}}; (3,8)*{\rcrossrr{}{}}; (12,8)*{\slinenr{}};
   (-3,16)*{\rcrossbr{}{}}; (6,16)*{\sdotur{}}; (12,16)*{\slinenr{}};
(-6,-22)*{\scs i};
(0,-22)*{\scs j};
(6,-22)*{\scs i};
(12,-22)*{\scs i};
\endxy}
+t_{ij}{\xy 0;/r.14pc/:
   (-3,-16)*{\lcrossrb{}{}}; (6,-16)*{\slinenr{}{}}; (12,-16)*{\slinedr{}};
   (-6,-8)*{\slinen{}}; (3,-8)*{\lcrossrr{}{}}; (12,-8)*{\slinenr{}};
   (-6,0)*{\slinen{}}; (0,0)*{\slinenr{}}; (9,0)*{\dcrossrr{}{}};
   (-6,8)*{\slinen{}}; (3,8)*{\rcrossrr{}{}}; (12,8)*{\slinenr{}}; (-5,19)*[black]{\scs \bullet};
   (-3,16)*{\rcrossbr{}{}}; (6,16)*{\slineur{}}; (12,16)*{\slinenr{}};
(-6,-22)*{\scs i};
(0,-22)*{\scs j};
(6,-22)*{\scs i};
(12,-22)*{\scs i};
\endxy}
+t_{ij}{\xy 0;/r.14pc/:
   (3,-16)*{\rcrossrr{}{}}; (-6,-16)*{\slinen{}{}}; (-12,-16)*{\slinedr{}};
   (6,-8)*{\sdotr{}}; (-3,-8)*{\rcrossbr{}{}}; (-12,-8)*{\slinenr{}};
   (6,0)*{\slinenr{}}; (0,0)*{\slinen{}}; (-9,0)*{\dcrossrr{}{}};
   (6,8)*{\slinenr{}}; (-3,8)*{\lcrossrb{}{}}; (-12,8)*{\slinenr{}};
   (3,16)*{\lcrossrr{}{}}; (-6,16)*{\slineu{}}; (-12,16)*{\slinenr{}};
(-12,-22)*{\scs i};
(-6,-22)*{\scs j};
(0,-22)*{\scs i};
(6,-22)*{\scs i};
\endxy}
-t_{ij}{\xy 0;/r.14pc/:
   (3,-16)*{\rcrossrr{}{}}; (-6,-16)*{\slinen{}{}}; (-12,-16)*{\slinedr{}};
   (6,-8)*{\slinenr{}}; (-3,-8)*{\rcrossbr{}{}}; (-12,-8)*{\slinenr{}};
   (6,0)*{\slinenr{}}; (0,0)*{\slinen{}}; (-9,0)*{\dcrossrr{}{}};
   (6,8)*{\slinenr{}}; (-3,8)*{\lcrossrb{}{}}; (-12,8)*{\slinenr{}}; (-4.75,-5.5)*[black]{\scs \bullet};
   (3,16)*{\lcrossrr{}{}}; (-6,16)*{\slineu{}}; (-12,16)*{\slinenr{}};
(-12,-22)*{\scs i};
(-6,-22)*{\scs j};
(0,-22)*{\scs i};
(6,-22)*{\scs i};
\endxy}
-t_{ij}{\xy 0;/r.14pc/:
   (6,-16)*{\slinedr{}}; (0,-16)*{\slinenr{}}; (-6,-16)*{\slinen{}{}}; (-12,-16)*{\slinedr{}};
   (6,-8)*{\slinenr{}}; (0,-8)*{\slinenr{}}; (-6,-8)*{\slinen{}}; (-12,-8)*{\slinenr{}};
   (6,0)*{\slinenr{}}; (0,0)*{\slinenr{}}; (-6,0)*{\slinen{}}; (-12,0)*{\slinenr{}};
   (6,8)*{\slinenr{}}; (0,8)*{\slinenr{}}; (-6,8)*{\slinen{}}; (-12,8)*{\slinenr{}};
   (6,16)*{\slinenr{}}; (0,16)*{\slineur{}}; (-6,16)*{\slineu{}}; (-12,16)*{\slinenr{}};
(-12,-22)*{\scs i};
(-6,-22)*{\scs j};
(0,-22)*{\scs i};
(6,-22)*{\scs i};
\endxy} \\
&= t_{ij}\hspace{-0.4pc}\displaystyle\sum_{\substack{a+b+c+d\\ =\la i,s_i(\lambda) \ra-2}}{\xy 0;/r.13pc/:
   (-3,-16)*{\lcrossrb{}{}}; (9,-16)*{\rcapr{}}; (9,-14)*[black]{\scs \bullet}; (9,-11)*{\scs b};
   (-6,-8)*{\slinen{}}; (0,-8)*{\slinenr{}}; (2,0)*{\scs d};
   (-6,0)*{\slinen{}}; (0,0)*{\sdotr{}}; (12,0)*{\ccbubr{\scs \spadesuit +c}{}};
   (-6,8)*{\slinen{}}; (0,8)*{\slinenr{}};
   (-3,16)*{\rcrossbr{}{}}; (9,17.5)*{\lcupr{}};  (9,16.25)*[black]{\scs \bullet}; (9,13)*{\scs a+1};
(-6,-22)*{\scs i};
(0,-22)*{\scs j};
(5,-22)*{\scs i};
(13,8)*{\scs i};
(14,18)*{\scs i};
\endxy}
+t_{ij}{\xy 0;/r.13pc/:
   (3,-16)*{\rcrossrr{}{}}; (-9,-16)*{\lcrossrb{}{}};
   (6,-8)*{\slinenr{}}; (-12,-8)*{\slinen{}}; (-12,0)*{\slinen{}}; (-6,-8)*{\slinenr{}}; (0,-8)*{\slinenr{}};
   (6,0)*{\slinenr{}}; (-3,0)*{\dcrossrr{}{}}; (3,18)*{\xy 0;/r.11pc/: (0,0)*{\lcupr{}{}}; \endxy};
   (-12,8)*{\slinen{}}; 
    (6,5)*{};(0,5)*{} **\crv{(6,10) & (0,10)}?(0)*\dir{};
   (-6,8)*{\slinenr{}};
   (-9,16)*{\rcrossbr{}{}};
(-12,-22)*{\scs i};
(-6,-22)*{\scs j};
(6,-22)*{\scs i};
(7,18)*{\scs i};
\endxy}
-t_{ij}\hspace{-0.4pc}\displaystyle\sum_{\substack{a+b+c+d\\ =\la i,s_i(\lambda) \ra-2}}{\xy 0;/r.13pc/:
   (-3,-16)*{\lcrossrb{}{}}; (9,-16)*{\rcapr{}}; (9,-14)*[black]{\scs \bullet}; (9,-11)*{\scs b};
   (-6,-8)*{\slinen{}}; (0,-8)*{\slinenr{}}; (6,0)*{\scs d+1};
   (-6,0)*{\slinen{}}; (0,0)*{\sdotr{}}; (16,0)*{\ccbubr{\scs \spadesuit +c}{}};
   (-6,8)*{\slinen{}}; (0,8)*{\slinenr{}};
   (-3,16)*{\rcrossbr{}{}}; (9,17.5)*{\lcupr{}};  (9,16.25)*[black]{\scs \bullet};
   (9,13)*{\scs a};
(-6,-22)*{\scs i};
(0,-22)*{\scs j};
(5,-22)*{\scs i};
(13,8)*{\scs i};
(14,18)*{\scs i};
\endxy}
-t_{ij}\hspace{-0.4pc}\displaystyle\sum_{\substack{a+b+c\\ =\la i,s_i(\lambda) \ra-1}}{\xy 0;/r.13pc/:
   (-6,-16)*{\slinedr{}}; (0,-16)*{\slinen{}}; (9,-16)*{\rcapr{}}; (9,-14)*[black]{\scs \bullet}; (9,-11)*{\scs b};
   (-6,-8)*{\slinenr{}}; (0,-8)*{\slinen{}};
   (-6,0)*{\slinenr{}}; (0,0)*{\slinen{}}; (9,0)*{\ccbubr{\scs \spadesuit +c}{}};
   (-6,8)*{\slinenr{}}; (0,8)*{\slinen{}};
   (-6,16)*{\slinenr{}}; (0,16)*{\slineu{}}; (9,17.5)*{\lcupr{}}; (9,16.25)*[black]{\scs \bullet}; (9,13)*{\scs a};
(-6,-22)*{\scs i};
(0,-22)*{\scs j};
(5,-22)*{\scs i};
(13,8)*{\scs i};
(14,18)*{\scs i};
\endxy} \\
&= -t_{ij}\displaystyle\sum_{\substack{b+c+d\\ =\la i,s_i(\lambda) \ra-1}}{\xy 0;/r.14pc/:
   (-3,-16)*{\lcrossrb{}{}}; (9,-16)*{\rcapr{}}; (9,-14)*[black]{\scs \bullet}; (9,-11)*{\scs b};
   (-6,-8)*{\slinen{}}; (0,-8)*{\slinenr{}}; (2,0)*{\scs d};
   (-6,0)*{\slinen{}}; (0,0)*{\sdotr{}}; (12,0)*{\ccbubr{\scs \spadesuit +c}{}};
   (-6,8)*{\slinen{}}; (0,8)*{\slinenr{}};
   (-3,16)*{\rcrossbr{}{}}; (9,17.5)*{\lcupr{}};
(-6,-22)*{\scs i};
(0,-22)*{\scs j};
(5,-22)*{\scs i};
(13,8)*{\scs i};
(14,18)*{\scs i};
\endxy}
+t_{ij}{\xy 0;/r.14pc/:
   (3,-16)*{\rcrossrr{}{}}; (-9,-16)*{\lcrossrb{}{}};
   (3,-8)*{\lcrossrr{}{}}; (-6,-8)*{\slinenr{}}; (-12,-8)*{\slinen{}};
   (-12,0)*{\slinen{}};
   (6,-3)*{}; (-6,13)*{} **\crv{(6,3) & (-6,7)}?(0)*\dir{};
   (-12,8)*{\slinen{}};
   (0,-3)*{};(-6,-3)*{} **\crv{(0,1) & (-6,1)}?(0)*\dir{};
   (3,18)*{\xy 0;/r.12pc/: (0,0)*{\lcupr{}{}};\endxy}; (-9,16)*{\rcrossbr{}{}};
(-12,-22)*{\scs i};
(-6,-22)*{\scs j};
(0,-22)*{\scs i};
(6,-22)*{\scs i};
(8,18)*{\scs i};
\endxy}
=-t_{ij}\vcenter{\xy 0;/r.17pc/:
   (6,-4)*{\slinedr{}}; (-3,-4)*{\ucrossbr{}{}}; (-12,-4)*{\slinedr{}};
   (-9,20)*{\rcrossbr{}{}};
   (0,1)*{};(-12,17)*{} **[blue]\crv{(0,4) & (-12,14)}?(0)*\dir{};
   (6,1)*{};(-6,17)*{} **[black]\crv{(6,4) & (-6,14)}?(0)*\dir{};
   (-6,1)*{};(-12,1)*{} **\crv{(-6,6) & (-12,6)}?(0)*\dir{};
   (3,22)*{\xy 0;/r.13pc/: (0,0)*{\lcupr{}{}};\endxy};
(-12,-9)*{\scs i};
(-6,-9)*{\scs j};
(0,-9)*{\scs i};
(6,-9)*{\scs i};
(8,20)*{\scs i};
\endxy}
\end{align*}
and
\begin{align*}
\phi_2 &=
-t_{ij}{\xy 0;/r.15pc/:
   (-3,-16)*{\lcrossrr{}{}}; (6,-16)*{\slinen{}{}}; (12,-16)*{\slinedr{}};
   (-6,-8)*{\slinenr{}}; (3,-8)*{\lcrossrb{}{}}; (12,-8)*{\slinenr{}};
   (-6,0)*{\slinenr{}}; (0,0)*{\slinen{}}; (9,0)*{\dcrossrr{}{}};
   (-6,8)*{\slinenr{}}; (3,8)*{\rcrossbr{}{}}; (12,8)*{\slinenr{}}; (-1.5,18)*[black]{\scs \bullet};
   (-3,16)*{\rcrossrr{}{}}; (6,16)*{\slineu{}}; (12,16)*{\slinenr{}};
(-6,-22)*{\scs i};
(0,-22)*{\scs i};
(6,-22)*{\scs j};
(12,-22)*{\scs i};
\endxy}
+t_{ij}{\xy 0;/r.15pc/:
   (-3,-16)*{\lcrossrr{}{}}; (6,-16)*{\slinen{}{}}; (12,-16)*{\slinedr{}};
   (-6,-8)*{\slinenr{}}; (3,-8)*{\lcrossrb{}{}}; (12,-8)*{\slinenr{}};
   (-6,0)*{\slinenr{}}; (0,0)*{\slinen{}}; (9,0)*{\dcrossrr{}{}};
   (-6,8)*{\slinenr{}}; (3,8)*{\rcrossbr{}{}}; (12,8)*{\slinenr{}}; (-5,18.75)*[black]{\scs \bullet};
   (-3,16)*{\rcrossrr{}{}}; (6,16)*{\slineu{}}; (12,16)*{\slinenr{}};
(-6,-22)*{\scs i};
(0,-22)*{\scs i};
(6,-22)*{\scs j};
(12,-22)*{\scs i};
\endxy}
+t_{ij}{\xy 0;/r.15pc/:
   (3,-16)*{\rcrossbr{}{}}; (-6,-16)*{\slinenr{}{}}; (-12,-16)*{\slinedr{}};
   (6,-8)*{\slinen{}}; (-3,-8)*{\rcrossrr{}{}}; (-12,-8)*{\slinenr{}};
   (6,0)*{\slinen{}}; (0,0)*{\slinenr{}}; (-9,0)*{\dcrossrr{}{}};
   (6,8)*{\slinen{}}; (-3,8)*{\lcrossrr{}{}}; (-12,8)*{\slinenr{}}; (-1.5,-6)*[black]{\scs \bullet};
   (3,16)*{\lcrossrb{}{}}; (-6,16)*{\slineur{}}; (-12,16)*{\slinenr{}};
(-12,-22)*{\scs i};
(-6,-22)*{\scs i};
(0,-22)*{\scs j};
(6,-22)*{\scs i};
\endxy}
-t_{ij}{\xy 0;/r.15pc/:
   (3,-16)*{\rcrossbr{}{}}; (-6,-16)*{\slinenr{}{}}; (-12,-16)*{\slinedr{}};
   (6,-8)*{\slinen{}}; (-3,-8)*{\rcrossrr{}{}}; (-12,-8)*{\slinenr{}};
   (6,0)*{\slinen{}}; (0,0)*{\slinenr{}}; (-9,0)*{\dcrossrr{}{}};
   (6,8)*{\slinen{}}; (-3,8)*{\lcrossrr{}{}}; (-12,8)*{\slinenr{}}; (-5,-5.25)*[black]{\scs \bullet};
   (3,16)*{\lcrossrb{}{}}; (-6,16)*{\slineur{}}; (-12,16)*{\slinenr{}};
(-12,-22)*{\scs i};
(-6,-22)*{\scs i};
(0,-22)*{\scs j};
(6,-22)*{\scs i};
\endxy}
-t_{ij}{\xy 0;/r.15pc/:
   (6,-16)*{\slinedr{}}; (-6,-16)*{\slinenr{}}; (0,-16)*{\slinen{}{}}; (-12,-16)*{\slinedr{}};
   (6,-8)*{\slinenr{}}; (-6,-8)*{\slinenr{}}; (0,-8)*{\slinen{}}; (-12,-8)*{\slinenr{}};
   (6,0)*{\slinenr{}}; (-6,0)*{\slinenr{}}; (0,0)*{\slinen{}}; (-12,0)*{\slinenr{}};
   (6,8)*{\slinenr{}}; (-6,8)*{\slinenr{}}; (0,8)*{\slinen{}}; (-12,8)*{\slinenr{}};
   (6,16)*{\slinenr{}}; (-6,16)*{\slineur{}}; (0,16)*{\slineu{}}; (-12,16)*{\slinenr{}};
(-12,-22)*{\scs i};
(-6,-22)*{\scs i};
(0,-22)*{\scs j};
(6,-22)*{\scs i};
\endxy} \\
&=
-t_{ij}\displaystyle\sum_{\substack{a+b+c+d\\ =\la i, s_i(\lambda)+\alpha_j \ra-1}}
{\xy 0;/r.15pc/:
  (3.5,-15)*{\xy 0;/r.15pc/:(0,0)*{\rcrossbr{}{}}; \endxy};
  (-7,-15)*{\xy 0;/r.15pc/:(0,0)*{\slinenr{}}; \endxy};
  (0.5,-10)*{};(-7,-10)*{} **\crv{(0.5,-5) & (-7,-5)}?(0)*\dir{};
  (-3,-6.5)*[black]{\scs \bullet}; (-3,-9)*{\scs b};
  (-13,-8)*{\slinenr{}}; (-13,-16)*{\slinedr{}};
   (6.5,-10)*{}; (6.5,12)*{} **[blue]\dir{-};
   (-13,0)*{\sdotr{}}; (-17,0)*{\scs d};
  (0.5,12)*{};(-7,12)*{} **\crv{(0.5,7) & (-7,7)}?(0)*\dir{};
  (-13,8)*{\slinenr{}};
  (3.5,15)*{\xy 0;/r.15pc/:(0,0)*{\lcrossrb{}{}}; \endxy};
  (-7,15)*{\xy 0;/r.15pc/:(0,0)*{\slineur{}}; \endxy};
 (-7,0)*{\smccbubr{}{}};  (.5,2)*{\scs \spadesuit+c}; (-13,16)*{\slinenr{}};
(-12,-22)*{\scs i};
(-7,-22)*{\scs i};
(0,-22)*{\scs j};
(6,-22)*{\scs i};
\endxy}
+t_{ij}{\xy 0;/r.15pc/:
   (3,-16)*{\rcrossbr{}{}}; (-6,-16)*{\slinenr{}{}}; (-12,-16)*{\slinedr{}};
   (6,-8)*{\slinen{}}; (-3,-8)*{\rcrossrr{}{}}; (-12,-8)*{\slinenr{}};
   (6,0)*{\slinen{}}; (-3,0)*{\lcrossrr{}{}}; (-12,0)*{\slinenr{}};
   (0,5)*{};(-12,13)*{} **\crv{(0,9) & (-12,9)}?(0)*\dir{};
   (6,8)*{\slinen{}};
   (0,13)*{};(-6,13)*{} **\crv{(0,8) & (-6,8)}?(0)*\dir{};
   (3,16)*{\lcrossrb{}{}}; (-6,16)*{\slineur{}}; (-12,16)*{\slinenr{}};
   (-6,5)*{};(-12,5)*{} **\crv{(-6,10) & (-12,10)}?(0)*\dir{};
(-12,-22)*{\scs i};
(-6,-22)*{\scs i};
(0,-22)*{\scs j};
(6,-22)*{\scs i};
\endxy}
=-t_{ij}{\xy 0;/r.15pc/:
    (6,0)*{\slinen{}};
   (0,5)*{};(-6,5)*{} **\crv{(0,1) & (-6,1)}?(0)*\dir{};
   (3,8)*{\lcrossrb{}{}}; (-6,8)*{\slineur{}}; (-12,8)*{\slinenr{}};
   (0,-3)*{}; (-12,5)*{} **\crv{(0,1) & (-12,1)}?(0)*\dir{};
   (-6,-11)*{};(-12,-11)*{} **\crv{(-6,-6) & (-12,-6)}?(1)*\dir{>};
   (3,-8)*{\rcrossbr{}{}};
(-12,-14)*{\scs i};
(-6,-14)*{\scs i};
(0,-14)*{\scs j};
(6,-14)*{\scs i};
\endxy}
\end{align*}
In both computations, we make extensive use of equation \eqref{eq:otherR3}.
It follows that this chain map is null-homotopic, with homotopy given by:
\[
{\xy 0;/r.15pc/:
  (-50,17.5)*+{\cal{F}_i\cal{E}_j\cal{E}_i\cal{F}_i\onell{s_i(\lambda)}\la -2\l_i-5 \ra}="1";
  (-50,-17.5)*+{\cal{F}_i\cal{E}_j\cal{E}_i\cal{F}_i\onell{s_i(\lambda)}\la -2\l_i-5 \ra}="2";
  (50,17.5)*+{\cal{F}_i\cal{E}_i\cal{E}_j\cal{F}_i\onell{s_i(\lambda)}\la -2\l_i-4 \ra}="3";
  (50,-17.5)*+{\cal{F}_i\cal{E}_i\cal{E}_j\cal{F}_i\onell{s_i(\lambda)}\la -2\l_i-4\ra}="4";
 {\ar@/^1pc/ "4";"1"};
   (10,0)*{ t_{ij} \;\;
   \vcenter{\xy 0;/r.15pc/:
   (6,5)*{};(0,5)*{} **\crv{(6,0) & (0,0)}?(0)*\dir{<};
   (-6,-10)*{};(-12,-10)*{} **\crv{(-6,-5) & (-12,-5)}?(1)*\dir{>};
   (0,-10)*{};(-6,-5)*{} **[blue]\crv{(0,-7.5) & (-6,-7.5)}?(1)*[blue]\dir{};
   (0,0)*{};(-6,-5)*{} **[blue]\crv{(0,-2.5) & (-6,-2.5)}?(1)*[blue]\dir{};
   (0,0)*{};(-6,5)*{} **[blue]\crv{(0,2.5) & (-6,0)}?(1)*[blue]\dir{>};
   (6,-10)*{};(-12,5)*{} **\crv{(6,-2.5) & (-12,-2.5)}?(0)*\dir{<};
\endxy}};
   {\ar_-{-\xy  0;/r.15pc/:(-6,0)*{\slinedr{}};(3,0)*{\ucrossbr{}{}}; (12,0)*{\slinedr{}}; (15,0)*{}; \endxy   } "2";"4"};
   {\ar^-{-\xy 0;/r.15pc/: (-6,0)*{\slinedr{}};(3,0)*{\ucrossbr{}{}}; (12,0)*{\slinedr{}}; (15,0)*{};\endxy   } "1";"3"};
 \endxy}
\]

For the remaining cases, we
provide the explicit homotopy between the relevant maps. We have
\[
\cal{T}'_i\left({\xy 0;/r.15pc/: (0,0)*{\Rthreel{blue}{green}{magenta}{}{}{}};
(-6,-13)*{\scs j};
(0,-13)*{\scs k};
(6,-13)*{\scs j'};
\endxy}\right)
\sim
\cal{T}'_i\left({\xy 0;/r.15pc/: (0,0)*{\Rthreer{blue}{green}{magenta}{}{}{}};
(-6,-13)*{\scs j};
(0,-13)*{\scs k};
(6,-13)*{\scs j'};
\endxy}
+\Delta_{jkj'}{\xy 0;/r.18pc/:(-6,-6)*{\slinen{}}; (0,-6)*{\slineng{}}; (6,-6)*{\slinenp{}};
   (-6,0)*{\slinen{}}; (0,0)*{\slineng{}}; (6,0)*{\slinenp{}}; (-6,6)*{\slineu{}}; (0,6)*{\slineug{}}; (6,6)*{\slineup{}};
(-6,-13)*{\scs j};
(0,-13)*{\scs k};
(6,-13)*{\scs j'};
\endxy}\right)
\]
via the homotopy
\[
{\xy 0;/r.15pc/:
	(-100,15)*+{\clubsuit \cal{E}_{j'} \cal{E}_i \cal{E}_k \cal{E}_j \cal{E}_i \onell{s_i(\lambda)}}="1";
	(0,15)*+{\cal{E}_{i} \cal{E}_{j'} \cal{E}_k \cal{E}_j \cal{E}_i \onell{s_i(\lambda)} \la 1 \ra
  		\oplus \cal{E}_{j'} \cal{E}_{i} \cal{E}_k \cal{E}_i \cal{E}_j \onell{s_i(\lambda)} \la 1 \ra}="2";
	(100,15)*+{\cal{E}_{i} \cal{E}_{j'} \cal{E}_k \cal{E}_i \cal{E}_j \onell{s_i(\lambda)} \la 2 \ra}="3";
	(-100,-15)*+{\clubsuit \cal{E}_{j} \cal{E}_i \cal{E}_k \cal{E}_{j'} \cal{E}_i \onell{s_i(\lambda)}}="4";
	(0,-15)*+{\cal{E}_{i} \cal{E}_{j} \cal{E}_k \cal{E}_{j'} \cal{E}_i \onell{s_i(\lambda)} \la 1 \ra
  		\oplus \cal{E}_{j} \cal{E}_{i} \cal{E}_k \cal{E}_i \cal{E}_{j'} \onell{s_i(\lambda)} \la 1 \ra}="5";
	(100,-15)*+{\cal{E}_{i} \cal{E}_{j} \cal{E}_k \cal{E}_i \cal{E}_{j'} \onell{s_i(\lambda)} \la 2 \ra}="6";
	{\ar  "1";"2"};
	{\ar  "2";"3"};
	{\ar  "4";"5"};
	{\ar  "5";"6"};
	{\ar@/^1pc/  "5";"1"};
	{\ar@/^1pc/  "6";"2"};
	(-50,0)*{\left(\begin{smallmatrix} 0 & h^1 \end{smallmatrix}\right)};
	(50,0)*{\left(\begin{smallmatrix} 0 \\ h^2 \end{smallmatrix}\right)};
\endxy}
\]
with
\[
h^1 =
\Delta_{jkj'}t_{ki}^{-1}t_{ij}^{-1}\vcenter{\xy 0;/r.12pc/:
   (0,0)*{\slinen{}};(9,0)*{\ucrossrg{}{}};(18,0)*{\slinenr{}};(24,0)*{\slinenp{}};
   (0,8)*{\slinen{}};(6,8)*{\slineng{}};(15,8)*{\ucrossrr{}{}};(24,8)*{\slinenp{}};
   (0,16)*{\slinen{}};(9,16)*{\ucrossgr{}{}};(18,16)*{\slinenr{}};(24,16)*{\slinenp{}};
   (0,24)*{\slineu{}};(6,24)*{\slineur{}};(12,24)*{\slineug{}};(21,24)*{\ucrossrp{}{}};
(0,-6)*{\scs j}; (6,-6)*{\scs i}; (12,-6)*{\scs k}; (18,-6)*{\scs i};(24,-6)*{\scs j'};
\endxy}
\;\; , \;\;
h^2 =
-\Delta_{jkj'}t_{ki}^{-1}t_{ij}^{-1}\vcenter{\xy 0;/r.12pc/:
   (3,0)*{\ucrossrb{}{}};(12,0)*{\slineng{}};(18,0)*{\slinenr{}};(24,0)*{\slinenp{}};
   (0,8)*{\slinen{}};(9,8)*{\ucrossrg{}{}};(18,8)*{\slinenr{}};(24,8)*{\slinenp{}};
   (0,16)*{\slinen{}};(6,16)*{\slineng{}};(15,16)*{\ucrossrr{}{}};(24,16)*{\slinenp{}};
   (0,24)*{\slineu{}};(9,24)*{\ucrossgr{}{}};(18,24)*{\slineur{}};(24,24)*{\slineup{}};
(0,-6)*{\scs i}; (6,-6)*{\scs j}; (12,-6)*{\scs k}; (18,-6)*{\scs i};(24,-6)*{\scs j'};
\endxy}
\]

For the $jij'$-labeled case, we have
\[
\cal{T}'_i\left({\xy 0;/r.15pc/: (0,0)*{\Rthreel{blue}{black}{magenta}{}{}{}};
(-6,-13)*{\scs j};
(0,-13)*{\scs i};
(6,-13)*{\scs j'};
\endxy}\right)
\sim
\cal{T}'_i\left({\xy 0;/r.15pc/: (0,0)*{\Rthreer{blue}{black}{magenta}{}{}{}};
(-6,-13)*{\scs j};
(0,-13)*{\scs i};
(6,-13)*{\scs j'};
\endxy}
+\Delta_{jij'}{\xy 0;/r.18pc/:(-6,-6)*{\slinen{}}; (0,-6)*{\slinenr{}}; (6,-6)*{\slinenp{}};
   (-6,0)*{\slinen{}}; (0,0)*{\slinenr{}}; (6,0)*{\slinenp{}}; (-6,6)*{\slineu{}}; (0,6)*{\slineur{}}; (6,6)*{\slineup{}};
(-6,-13)*{\scs j};
(0,-13)*{\scs i};
(6,-13)*{\scs j'};
\endxy}\right)
\]
with chain homotopy given by (here we indicate the signs on the differential since they are not the usual ones,
given the homological shift on $\cal{T}_i'(\cal{E}_i \onel)$):
\[
{\xy 0;/r.15pc/:
	(-90,21)*+{\cal{E}_{j'} \cal{E}_i \cal{F}_i \cal{E}_j \cal{E}_i \onell{s_i(\lambda)} \la -1-\l_i \ra }="1";
	(0,27)*+{\cal{E}_{i} \cal{E}_{j'} \cal{F}_i \cal{E}_j \cal{E}_i \onell{s_i(\lambda)} \la -\l_i \ra};
  	(0,21)*+{\oplus};
	(0,15)*+{\cal{E}_{j'} \cal{E}_{i} \cal{F}_i \cal{E}_i \cal{E}_j \onell{s_i(\lambda)} \la -\l_i \ra};
	(90,21)*+{\cal{E}_{i} \cal{E}_{j'} \cal{F}_i \cal{E}_i \cal{E}_j \onell{s_i(\lambda)} \la 1-\l_i \ra}="3";
	(-90,-21)*+{\cal{E}_{j} \cal{E}_i \cal{F}_i \cal{E}_{j'} \cal{E}_i \onell{s_i(\lambda)} \la -1-\l_i \ra}="4";
	(0,-15)*+{\cal{E}_{i} \cal{E}_{j} \cal{F}_i \cal{E}_{j'} \cal{E}_i \onell{s_i(\lambda)} \la -\l_i \ra};
  	(0,-21)*+{\oplus};
  	(0,-27)*+{\cal{E}_{j} \cal{E}_{i} \cal{F}_i \cal{E}_i \cal{E}_{j'} \onell{s_i(\lambda)} \la -\l_i \ra};
	(90,-21)*+{\cal{E}_{i} \cal{E}_{j} \cal{F}_i \cal{E}_i \cal{E}_{j'} \onell{s_i(\lambda)} \la 1-\l_i \ra}="6";
	{\ar^-{\left(\begin{smallmatrix} + \\ -  \end{smallmatrix}\right)}  "1"; (-30,21)*{}};
	{\ar^-{\left(\begin{smallmatrix} + & +  \end{smallmatrix}\right)}  (30,21)*{};"3"};
	{\ar^-{\left(\begin{smallmatrix} + \\ -  \end{smallmatrix}\right)}  "4"; (-30,-21)*{}};
	{\ar^-{\left(\begin{smallmatrix} + & +  \end{smallmatrix}\right)}  (30,-21)*{};"6"};
	{\ar@/_1pc/  (0,-10)*{};"1"};
	{\ar@/^1pc/  "6";(0,10)*{}};
	(-40,0)*{\left(\begin{smallmatrix} h^1_1 & h^1_2 \end{smallmatrix}\right)};
	(40,0)*{\left(\begin{smallmatrix} h^2_1 \\ h^2_2 \end{smallmatrix}\right)};
\endxy}
\]
for
\[
h_1^1 =
- \vcenter{\xy 0;/r.12pc/:
   (0,0)*{\slinenr{}};(6,0)*{\slinen{}};(15,0)*{\lcrossrp{}{}};(24,0)*{\slinenr{}};
   (0,8)*{\slinenr{}};(9,8)*{\ucrossbp{}{}};(18,8)*{\slinenr{}};(24,8)*{\slinenr{}};
   (0,16)*{\slinenr{}};(6,16)*{\slinenp{}};(15,16)*{\rcrossbr{}{}};(24,16)*{\slinenr{}};
   (3,24)*{\ucrossrp{}{}};(12,24)*{\slinenr{}};(18,24)*{\slineu{}};(24,24)*{\slineur{}};
(0,-6)*{\scs i}; (6,-6)*{\scs j}; (12,-6)*{\scs i}; (18,-6)*{\scs j'};(24,-6)*{\scs i};
\endxy}
\;\; , \;\;
h_2^1 =
- \vcenter{\xy 0;/r.15pc/:
	(-6,10)*{};(0,10)*{} **\crv{(-6,5) & (0,5)}?(0)*\dir{<};
	(6,-10)*{};(0,-10)*{} **\crv{(6,-5) & (0,-5)}?(1)*\dir{>};
	(-6,-10)*{};(12,10)*{} **\crv{(-6,0) & (12,0)}?(1)*\dir{>};
	(-12,-10)*{};(6,10)*{} **[blue]\crv{(-12,0) & (6,0)}?(1)*[blue]\dir{>};
	(12,-10)*{};(-12,10)*{} **[magenta]\crv{(12,0) & (-12,0)}?(1)*[magenta]\dir{>};
	(-12,-12)*{\scs j}; (-6,-12)*{\scs i}; (0,12)*{\scs i}; (6,-12)*{\scs i};(12,-12)*{\scs j'};
\endxy}
\;\; , \;\;
h_1^2 =
\vcenter{\xy 0;/r.12pc/:
   (-3,-8)*{\ucrossrp{}{}};(-12,-8)*{\slinedr{}};(-18,-8)*{\slinen{}};(-24,-8)*{\slinenr{}};
   (0,0)*{\slinenr{}};(-6,0)*{\slinenp{}};(-15,0)*{\rcrossbr{}{}};(-24,0)*{\slinenr{}};
   (0,8)*{\slinenr{}};(-9,8)*{\ucrossbp{}{}};(-18,8)*{\slinenr{}};(-24,8)*{\slinenr{}};
   (0,16)*{\slineur{}};(-6,16)*{\slineu{}};(-15,16)*{\lcrossrp{}{}};(-24,16)*{\slineur{}};
(-24,-14)*{\scs i}; (-18,-14)*{\scs j}; (-12,-14)*{\scs i}; (-6,-14)*{\scs i};(0,-14)*{\scs j'};
   \endxy}
\;\; , \;\;
h_2^2 =
- \vcenter{\xy 0;/r.15pc/:
	(-6,10)*{};(0,10)*{} **\crv{(-6,5) & (0,5)}?(0)*\dir{<};
	(6,-10)*{};(0,-10)*{} **\crv{(6,-5) & (0,-5)}?(1)*\dir{>};
	(-6,-10)*{};(12,10)*{} **[blue]\crv{(-6,0) & (12,0)}?(1)*[blue]\dir{>};
	(-12,-10)*{};(6,10)*{} **\crv{(-12,0) & (6,0)}?(1)*\dir{>};
	(12,-10)*{};(-12,10)*{} **[magenta]\crv{(12,0) & (-12,0)}?(1)*[magenta]\dir{>};
	(-12,-12)*{\scs i}; (-6,-12)*{\scs j}; (0,12)*{\scs i}; (6,-12)*{\scs i};(12,-12)*{\scs j'};
\endxy}
\]

For the final case with $jj'j$-labeled strands, we have
\[
\cal{T}'_i\left({\xy 0;/r.15pc/: (0,0)*{\Rthreel{blue}{magenta}{blue}{}{}{}};
(-6,-13)*{\scs j};
(0,-13)*{\scs j'};
(6,-13)*{\scs j};
\endxy}\right)
\sim
\cal{T}'_i\left({\xy 0;/r.15pc/: (0,0)*{\Rthreer{blue}{magenta}{blue}{}{}{}};
(-6,-13)*{\scs j};
(0,-13)*{\scs j'};
(6,-13)*{\scs j};
\endxy}
+\Delta_{jj'j}{\xy 0;/r.18pc/:(-6,-6)*{\slinen{}}; (0,-6)*{\slinenp{}}; (6,-6)*{\slinen{}};
(-6,0)*{\slinen{}}; (0,0)*{\slinenp{}}; (6,0)*{\slinen{}}; (-6,6)*{\slineu{}}; (0,6)*{\slineup{}}; (6,6)*{\slineu{}};
(-6,-13)*{\scs j};
(0,-13)*{\scs j'};
(6,-13)*{\scs j};
\endxy}\right)
\]
The relevant homotopy maps between chain complexes given at the
triple composition of two term chain complexes,
and is non-zero only when $j \neq j'$.
We'll exhibit the homotopy assuming this, and that $j\cdot j' =0$,
as the homotopy is more involved in the $j\cdot j'=-1$ case.
The latter is only possible when the graph $\Gamma$ corresponding to our Cartan datum has
a length-three cycle (which in finite- or affine-type only occurs for $\widehat{\mathfrak{sl}_3}$).

The relevant homotopy is given as follows, where here,
in the interest of space, we employ the notation from \cite{KL3} in denoting
$\cal{E}_{\ell_1 \cdots \ell_k}:= \cal{E}_{\ell_1} \cdots \cal{E}_{\ell_k}$.
We also indicate the signs of the non-zero terms in the differentials,
which are all given up to sign by the relevant $ji$- (or $j'i$-) crossing.
\[
{\xy 0;/r.15pc/:
	(-90,26)*+{\clubsuit \cal{E}_{jij'iji}}="1";
	(-30,37)*+{\cal{E}_{i j j' i j i} \la 1 \ra};
	(-30,31)*+{\oplus};
	(-30,26)*+{\cal{E}_{j i i j' j i} \la 1 \ra}="2";
	(-30,20)*+{\oplus};
	(-30,15)*+{\cal{E}_{j i j' i i j} \la 1 \ra}="2b";
	(30,37)*+{\cal{E}_{i j i j' j i} \la 2 \ra};
	(30,31)*+{\oplus};
	(30,26)*+{\cal{E}_{i j j' i i j} \la 2 \ra}="3";
	(30,20)*+{\oplus};
	(30,15)*+{\cal{E}_{j i i j' i j} \la 2 \ra}="3b";
	(90,26)*+{\clubsuit \cal{E}_{i j i j' i j} \la 3 \ra}="4";
	{\ar^-{\left(\begin{smallmatrix} +  \\ + \\ +  \end{smallmatrix}\right)}  "1"; "2"};
	{\ar^-{\left(\begin{smallmatrix} - & + & 0 \\ - & 0 & + \\ 0 & - & +  \end{smallmatrix}\right)}  "2"; "3"};
	{\ar^-{\left(\begin{smallmatrix} + & - & +  \end{smallmatrix}\right)}  "3"; "4"};
	{\ar^-{\left(\begin{smallmatrix} 0 & 0 & h^1_3  \end{smallmatrix}\right)}@/^1pc/  "2b"; "1"};
	{\ar^-{\left(\begin{smallmatrix} 0 & 0 & 0 \\ 0 & 0 & h^2_{23} \\ 0 & h^2_{32} & h^2_{33}
		\end{smallmatrix}\right)}@/^1pc/  "3b"; "2b"};
	{\ar^-{\left(\begin{smallmatrix} 0 \\ 0 \\ h^3_3  \end{smallmatrix}\right)}@/^1pc/  "4"; "3b"};
\endxy}
\]
Herein, the maps in the homotopy are given by:
\[
h_3^1 = t_{ij}^{-1} v_{ij} t_{ij'}^{-1} t_{jj'}
\vcenter{
\xy 0;/r.12pc/:
	(-15,-1)*{\slinen{j}};
	(-6,-1)*{\ncrossrp{i}{j'}};
	(6,-.7)*{\ncrossrr{i}{i}};
	(15,-1)*{\slinen{j}};
	(-15,8)*{\slinen{}};
	(-9,8)*{\slinenp{}};
	(0,8)*{\ncrossrr{}{}};
	(9,8)*{\slinenr{}};
	(15,8)*{\slinen{}};
	(-15,16)*{\slinen{}};
	(-6,16)*{\ncrosspr{}{}};
	(6,16)*{\ncrossrr{}{}};
	(15,16)*{\slinen{}};
	(-15,24)*{\slineu{}};
	(-9,24)*{\slineur{}};
	(-3,24)*{\slineup{}};
	(3,24)*{\slineur{}};
	(12,24)*{\ucrossrb{}{}};
\endxy
}
\quad , \quad
h_3^3 = - t_{ij}^{-1} v_{ij} t_{ij'}^{-1} t_{jj'}
\vcenter{
\xy 0;/r.12pc/:
	(-12,-1)*{\ncrossrb{i}{j}};
	(-3,-1)*{\slinenr{i}};
	(3,-1)*{\slinenp{j'}};
	(9,-1)*{\slinenr{i}};
	(15,-1)*{\slinen{j}};
	(-15,8)*{\slinen{}};
	(-6,8)*{\ncrossrr{}{}};
	(6,8)*{\ncrosspr{}{}};
	(15,8)*{\slinen{}};
	(-15,16)*{\slinen{}};
	(-9,16)*{\slinenr{}};
	(0,16)*{\ncrossrr{}{}};
	(9,16)*{\slinenp{}};
	(15,16)*{\slinen{}};
	(-15,24)*{\slineu{}};
	(-6,24)*{\ucrossrr{}{}};
	(6,24)*{\ucrossrp{}{}};
	(15,24)*{\slineu{}};
\endxy
}
\]
\[
h_{32}^2 = - t_{ij}^{-1} v_{ij} t_{ij'}^{-1} t_{jj'}
\vcenter{
\xy 0;/r.12pc/:
	(-12,-1)*{\ncrossrb{i}{j}};
	(-3,-1)*{\slinenp{j'}};
	(3,-1)*{\slinenr{i}};
	(9,-1)*{\slinenr{i}};
	(15,-1)*{\slinen{j}};
	(-15,8)*{\slinen{}};
	(-6,8)*{\ncrossrp{}{}};
	(6,8)*{\ncrossrr{}{}};
	(15,8)*{\slinen{}};
	(-15,16)*{\slinen{}};
	(-9,16)*{\slinenp{}};
	(0,16)*{\ncrossrr{}{}};
	(9,16)*{\slinenr{}};
	(15,16)*{\slinen{}};
	(-15,24)*{\slineu{}};
	(-6,24)*{\ucrosspr{}{}};
	(6,24)*{\ucrossrr{}{}};
	(15,24)*{\slineu{}};
\endxy
}
\;\; , \;\;
h_{33}^2 = - v_{ij} t_{ij'}^{-1} t_{jj'}
\vcenter{
\xy 0;/r.12pc/:
	(-15,-1)*{\slinen{j}};
	(-6,-1)*{\ncrossrr{i}{i}};
	(3,-1)*{\slinenp{j'}};
	(9,-1)*{\slinenr{i}};
	(15,-1)*{\slinen{j}};
	(-15,8)*{\slinen{}};
	(-9,8)*{\slinenr{}};
	(0,8)*{\ncrossrp{}{}};
	(9,8)*{\slinenr{}};
	(15,8)*{\slinen{}};
	(-15,16)*{\slineu{}};
	(-9,16)*{\slineur{}};
	(-3,16)*{\slineup{}};
	(6,16)*{\ucrossrr{}{}};
	(15,16)*{\slineu{}};
\endxy
}
\;\; , \;\;
h_{23}^2 = - t_{ij}^{-1} v_{ij} t_{ij'}^{-1} t_{jj'}
\vcenter{
\xy 0;/r.12pc/:
	(-15,-1)*{\slinen{j}};
	(-6,-1)*{\ncrossrr{i}{i}};
	(6,-.7)*{\ncrosspr{i}{i}};
	(15,-1)*{\slinen{j}};
	(-15,8)*{\slinen{}};
	(-9,8)*{\slinenr{}};
	(0,8)*{\ncrossrr{}{}};
	(9,8)*{\slinenp{}};
	(15,8)*{\slinen{}};
	(-15,16)*{\slinen{}};
	(-6,16)*{\ncrossrr{}{}};
	(6,16)*{\ncrossrp{}{}};
	(15,16)*{\slinen{}};
	(-15,24)*{\slineu{}};
	(-9,24)*{\slineur{}};
	(-3,24)*{\slineur{}};
	(3,24)*{\slineup{}};
	(12,24)*{\ucrossrb{}{}};
\endxy
}
\]

\end{proof}

%
\subsection{Bubble relations}
%

We now verify that $\cal{T}_i'$ preserves relation~\eqref{def:UQ-bub} in Definition~\ref{defU_cat-cyc} .

\begin{prop}
\[
\cal{T}_i' \left(
\xy 0;/r.18pc/:
(-12,0)*{\icbub{\l_i-1+m}{\ell}};
(-8,8)*{\lambda};
\endxy
\right)
=
\begin{cases}
c_{\ell,\l} \Id_{\onell{s_{i}(\l)}} & \text{if } m=0 \\
0 & \text{if } m<0
\end{cases}
\qquad , \qquad
\cal{T}_i' \left(
\xy 0;/r.18pc/: (-12,0)*{\iccbub{-\l_i-1+m}{\ell}};
 (-8,8)*{\lambda};
 \endxy
 \right)
 =
\begin{cases}
c_{\ell,\l}^{-1}\Id_{\onell{s_{i}(\l)}} & \text{if } m=0 \\
0 & \text{if } m<0
\end{cases}
\]
\end{prop}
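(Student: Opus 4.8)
The plan is to split into three cases according to the adjacency of $\ell$ relative to the fixed node $i$, since $\cal{T}'_i$ is defined piecewise. In each case I would compute the image of the bubble 2-morphism directly from the definitions in Section~\ref{def:capcup} (for the cap/cup pieces) and Section~\ref{def:updot} (for the dot), using that a bubble is built from a cap, a cup, and a string of dots, and then simplify inside $\Com(\Ucat_Q)$.

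\emph{Case $\ell = i$.} Here $\cal{T}'_i(\onell{\l+\alpha_i}\cal{E}_i)$ is the two-term complex with $\cal{F}_i$ in homological degree $-1$ and $0$ in degree zero, and $\cal{T}'_i(\onel\cal{F}_i)$ is the complex with $0$ in degree $-1$ and $\cal{E}_i$ in degree zero. A clockwise bubble $\xy 0;/r.12pc/:(-3,0)*{\icbub{\l_i-1+m}{i}};\endxy$ is a composite $\onel \to \cal{E}_i\cal{F}_i\onel \to \onel$ (with $m$ dots inserted); applying $\cal{T}'_i$ sends each factor to a chain map between the corresponding complexes, and since the middle complex $\cal{T}'_i(\cal{E}_i\cal{F}_i\onel)$ is concentrated in degrees $\{-1,0\}$ with a single nonzero term in degree zero, the composite chain map is supported entirely in homological degree zero. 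There it becomes, up to the bubble parameters $c_{i,\l}^{\pm 1}$ contributed by the cap and cup, a counterclockwise $i$-bubble with $m$ dots in weight $s_i(\l)$ (the orientation flips because caps/cups get reversed); I would then invoke the bubble relation~\eqref{def:UQ-bub} and the fake bubble definitions~\eqref{def:EF} \emph{in} $\Ucat_Q$ to conclude that this is $c_{i,s_i(\l)} \Id$ when $m=0$ and $0$ when $m<0$, after which the compatibility relation $c_{i,\lambda+\alpha_j}/c_{i,\lambda}=t_{ij}$ and the observation that bubble parameters are constant along $\mf{sl}_2$-strings (so $c_{i,s_i(\l)} = c_{i,\l}$) give the claimed answer. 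The analogous computation handles the counterclockwise bubble, using $c_{i,\l}^{-1}$.

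\emph{Case $i \cdot \ell = 0$ (green strands).} This is the easiest: $\cal{T}'_i$ sends $\cal{E}_\ell\onel$, $\cal{F}_\ell\onel$ and all the relevant cap/cup/dot generators to complexes concentrated in homological degree zero (with only a scalar $t_{\ell i}^{\pm\lambda_i}$ decoration on the caps/cups), so the bubble maps literally to the corresponding $\ell$-bubble in $\Ucat_Q$ in weight $s_i(\l)$, scaled by a product $t_{\ell i}^{\lambda_i} t_{\ell i}^{-\lambda_i} = 1$ of the cap and cup scalars. The bubble relation in $\Ucat_Q$ then gives $c_{\ell, s_i(\l)}\Id$ for $m=0$; since $i \cdot \ell = 0$ we have $s_i(\l)$ and $\l$ in the same coset modulo the $\ell$-part, and the compatibility relation for bubble parameters forces $c_{\ell, s_i(\l)} = c_{\ell,\l}$, as needed. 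The $m<0$ case is immediate.

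\emph{Case $i \cdot \ell = -1$ (so $\ell = j$ for a blue strand).} This is the main obstacle, because $\cal{T}'_i(\cal{E}_j\onel)$ and $\cal{T}'_i(\cal{F}_j\onel)$ are genuine two-term complexes, so $\cal{T}'_i(\cal{E}_j\cal{F}_j\onel)$ is a three-term complex and the image of the bubble is a priori a chain endomorphism of a complex supported in three homological degrees. I would use the explicit formulas for $\cal{T}'_i$ on the cap and cup 2-morphisms $\xy 0;/r.12pc/:(0,0)*{\rcup{j}};\endxy$, $\xy 0;/r.12pc/:(0,0)*{\rcap{j}};\endxy$ from Section~\ref{def:capcup} (the large anti-commutative-square diagrams, with scalars involving $(-1)^{\lambda_j}c_{j,\lambda}^{\pm 1}$), together with the value of $\cal{T}'_i$ on the downward dot recorded in Appendix~\ref{sec:composite}, and compose them. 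The key point is that the composite lands in a single homological degree (degree zero, where the $\clubsuit$ terms sit), because the outer complex $\cal{T}'_i(\onel) = \clubsuit\,\onell{s_i(\l)}$ is concentrated there; the nonzero-degree terms of the intermediate complex can only contribute maps that are forced to vanish or to cancel after reducing via the defining relations of $\Ucat_Q$ (curl relations of Section~\ref{sec:curl}, mixed $EF$, and the bubble slides of Section~\ref{sec:bubble}). After this reduction the degree-zero component is an $\ell$-bubble in weight $s_i(\l)$ with $m$ dots, multiplied by the product of the cap and cup scalars $(-1)^{\lambda_j}c_{j,\lambda} \cdot (-1)^{\lambda_j}c_{j,\lambda}^{-1} = 1$; the bubble relation~\eqref{def:UQ-bub} in $\Ucat_Q$ finishes it, once one checks $c_{j,s_i(\l)} = c_{j,\l}$, which again follows from~\eqref{eq:BubbCompat} since $s_i(\l) - \l = -\lambda_i\alpha_i$ is an $i$-multiple of $\alpha_i$ and bubble parameters are constant along $i$-strings. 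The bookkeeping of signs and scalars, and verifying that all higher-degree contributions really do cancel, is where essentially all the work lies; the $m<0$ statement is then automatic since it already holds termwise in $\Ucat_Q$.
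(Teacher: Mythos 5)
Your overall strategy --- compute the image of each bubble by composing the images of its constituent cap, cup, and dots, and then evaluate via the bubble relations --- is the same one the paper uses (the proposition is deduced from the explicit bubble computations of Section~\ref{sec:Ti-bub}). But the execution contains errors that would break the argument. The recurring one is the claim $c_{\ell,s_i(\l)}=c_{\ell,\l}$ for $\ell\neq i$: constancy of bubble parameters along $\mf{sl}_2$-strings is the statement $c_{i,\l+n\alpha_i}=c_{i,\l}$ and applies only to the $i$-labelled parameter; for $\ell\neq i$ the compatibility relation \eqref{eq:BubbCompat} gives $c_{\ell,s_i(\l)}=c_{\ell,\l-\l_i\alpha_i}=t_{\ell i}^{-\l_i}c_{\ell,\l}$, which is not $c_{\ell,\l}$ in general. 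Relatedly, in the $i\cdot\ell=0$ case the two cap/cup generators that actually compose to a clockwise bubble are $\lcupg{}$ and $\rcapg{}$, which acquire scalars $1$ and $t_{\ell i}^{\l_i}$ respectively, so the image carries an overall factor $t_{\ell i}^{\l_i}$ rather than the $t_{\ell i}^{\l_i}t_{\ell i}^{-\l_i}=1$ you assert; it is exactly this factor that cancels the $t_{\ell i}^{-\l_i}$ in $c_{\ell,s_i(\l)}$. Your two mistakes happen to compensate numerically, but neither step is justified as written.

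The more serious gap is in the $i\cdot\ell=-1$ case. The image of a $j$-labelled bubble is \emph{not} a single $j$-bubble in weight $s_i(\l)$ with trivial scalar: the caps and cups are sent to nested $j$- and $i$-caps/cups, so the degree-zero component of the composite is a nested double bubble. One must separate it using the bubble slide relations of Section~\ref{sec:bubble}, which yields
$t_{ji}^{\l_i}c_{i,\l}^{-1}\sum_{h=0}^{m}(-v_{ij})^{-h}$ times the product of a clockwise $j$-bubble of degree $m-h$ and a clockwise $i$-bubble of degree $h$ (this is the computation carried out in Section~\ref{sec:Ti-bub}). The $m=0$ evaluation then requires the degenerate $i$-bubble to contribute $c_{i,s_i(\l)}=c_{i,\l}$, cancelling the prefactor $c_{i,\l}^{-1}$, and the correct identity $c_{j,s_i(\l)}=t_{ji}^{-\l_i}c_{j,\l}$ to cancel $t_{ji}^{\l_i}$. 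Without the nested $i$-bubble and the bubble slide step your argument cannot produce the right scalar, and the appeal to ``cancellation of higher-degree terms'' is both unnecessary and not the issue: the bubble is an endomorphism of $\cal{T}'_i(\onel)=\clubsuit\,\onell{s_i(\l)}$, a one-term complex, so only the homological degree zero component can be nonzero in the first place.
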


\begin{proof}
We'll give the proof only in the clockwise case, as the counter-clockwise case is completely analogous.
The computations in Section \ref{sec:Ti-bub} show that
\[
  \cal{T}'_i \left(\; \xy 0;/r.18pc/:
 (-12,0)*{\icbub{\la \ell, \lambda \ra -1 + m}{\ell}};
 (-8,8)*{\lambda};
 \endxy \;\right)
=
\left\{
  \begin{array}{ll}
    c_{i,\lambda}^2\xy 0;/r.17pc/:
 (-12,0)*{\ccbubr{-\la i, s_i(\lambda) \ra -1 + m}{i\;}};
 (-5,8)*{s_i(\lambda)};
\endxy & \hbox{if $\ell=i$} \\
& \\
    t_{ki}^{\lambda_i}\xy 0;/r.16pc/:
 (-12,0)*{\cbubg{\la k, s_i(\lambda) \ra -1 + m}{k\; \;}};
 (-5,8)*{s_i(\lambda)};
\endxy & \hbox{if $\ell=k$} \\
& \\
t_{ji}^{\lambda_i}c_{i,\lambda}^{-1}\displaystyle\sum_{h=0}^{m} (-v_{ij})^{-h}
     \xy 0;/r.18pc/:
 (-6,0)*{\cbub{\spadesuit + m-h}{j}};
 (6,0)*{\cbubr{\spadesuit + h}{i}};
 (15,8)*{s_i(\lambda)};
 \endxy & \hbox{if $\ell =j$}
  \end{array}
\right.
\]
which immediately gives the result in the $m<0$ case.

For $m=0$, we compute:
\begin{align*}
c_{i,\lambda}^2
\xy 0;/r.18pc/:
	(-12,0)*{\ccbubr{-\la i, s_i(\lambda) \ra -1}{i}};
	(-8,8)*{s_i(\lambda)};
\endxy
&= c_{i,\lambda}^2c_{i,s_i(\lambda)}^{-1}\Id_{\onell{s_i(\lambda)}}
=c_{i,\lambda}^2c_{i,\lambda-\lambda_i\alpha_i}^{-1} \Id_{\onell{s_i(\lambda)}}
=c_{i,\lambda}^2c_{i,\lambda}^{-1}\Id_{\onell{s_i(\lambda)}}
=c_{i,\lambda}\Id_{\onell{s_i(\lambda)}}
\\ \\
t_{ki}^{\lambda_i}\xy 0;/r.18pc/:
 (-12,0)*{\cbubg{\la k, s_i(\lambda) \ra -1}{k}};
 (-8,8)*{s_i(\lambda)};
 \endxy
&=t_{ki}^{\lambda_i}c_{k,s_i(\lambda)}\Id_{\onell{s_i(\lambda)}}
=t_{ki}^{\lambda_i}c_{k,\lambda-\lambda_i\alpha_i} \Id_{\onell{s_i(\lambda)}}
=t_{ki}^{\lambda_i}t_{ki}^{-\lambda_i}c_{k,\lambda} \Id_{\onell{s_i(\lambda)}}
=c_{k,\lambda}\Id_{\onell{s_i(\lambda)}}
\\ \\
t_{ji}^{\lambda_i}c_{i,\lambda}^{-1}
     \xy 0;/r.18pc/:
 (-6,0)*{\cbub{\clubsuit + 0}{j}};
 (6,0)*{\cbubr{\clubsuit + 0}{i}};
 (18,2)*{s_i(\lambda)};
 \endxy
&=t_{ji}^{\lambda_i}c_{i,\lambda}^{-1} c_{j,s_i(\lambda)}c_{i,s_i(\lambda)}\Id_{\onell{s_i(\lambda)}}
=t_{ji}^{\lambda_i}c_{j,\lambda-\lambda_i\alpha_i} \Id_{\onell{s_i(\lambda)}} \\
&=t_{ji}^{\lambda_i}c_{j,\lambda}t_{ji}^{-\lambda_i} \Id_{\onell{s_i(\lambda)}}
=c_{j,\lambda}\Id_{\onell{s_i(\lambda)}}
\end{align*}
\end{proof}

In Section~\ref{sec:Ti-bub} we verify that the infinite Grassmannian relations from Section~\ref{sec:inf}
are preserved by $\cal{T}_i'$.

%
\subsection{Mixed $EF$ relation}
%

We now verify relation~\eqref{def:mixed} in Definition~\ref{defU_cat-cyc}.

\begin{prop}
$\cal{T}_i'$ preserves the mixed $EF$ relations.
\end{prop}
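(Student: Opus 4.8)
The plan is to verify that $\cal{T}'_i$ preserves both equalities in relation~\eqref{def:mixed}, namely
\[
\vcenter{   \xy 0;/r.18pc/:
    (-4,-4)*{};(4,4)*{} **\crv{(-4,-1) & (4,1)}?(1)*\dir{>};
    (4,-4)*{};(-4,4)*{} **\crv{(4,-1) & (-4,1)}?(1)*\dir{<};?(0)*\dir{<};
    (-4,4)*{};(4,12)*{} **\crv{(-4,7) & (4,9)};
    (4,4)*{};(-4,12)*{} **\crv{(4,7) & (-4,9)}?(1)*\dir{>};
  (8,8)*{\lambda};(-6,-3)*{\scs \ell};
     (6,-3)*{\scs \ell'};
 \endxy}
\;=\;
\vcenter{\xy 0;/r.18pc/:
  (3,9);(3,-9) **\dir{-}?(.55)*\dir{>};
  (-3,9);(-3,-9) **\dir{-}?(.5)*\dir{<};
  (8,2)*{\lambda};(-5,-6)*{\scs \ell};     (5.1,-6)*{\scs \ell'};
 \endxy}
\]
(and its mirror image), for $\ell \neq \ell'$. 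Since $\cal{T}'_i$ is defined piecewise according to the values of $i\cdot\ell$ and $i\cdot\ell'$, the verification splits into cases indexed by those values. The crucial structural observation is that a sideways crossing $\mathsf{T}_{\ell\ell'}\maps \cal{F}_\ell\cal{E}_{\ell'}\onel \to \cal{E}_{\ell'}\cal{F}_\ell\onel$ is, in our reduced presentation (Remark~\ref{rem:presentation}), composed from an upward $\ell\ell'$-crossing together with cap/cup 2-morphisms, so $\cal{T}'_i$ of it is already fixed by 2-functoriality; the explicit values are recorded in Appendix~\ref{sec:composite} (Section~\ref{sec:down-crossing} and the adjunction computations), and these are exactly what one substitutes.

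First I would dispatch the cases that require no homotopy: whenever neither $\ell$ nor $\ell'$ equals a $j$-type label (i.e.\ both are $i$ or $k$-type), all complexes involved have a single nonzero term concentrated in homological degree zero, so $\cal{T}'_i$ of each side is a single diagram, and the relation follows by directly applying the (ungraded) mixed $EF$ relation in $\Ucat_Q$ together with bookkeeping of the scalars $c_{i,\l}$, $t_{ki}$, $v_{ij}$ that appear in the definitions of $\cal{T}'_i$ on caps/cups. The degree check is automatic from the grading-shift computations already carried out in Section~\ref{def:capcup}. Next I would handle the mixed cases where exactly one strand is $j$-type: here $\cal{T}'_i$ of one of the identity 1-morphisms $\cal{E}_{\ell}\onel$ or $\cal{F}_{\ell}\onel$ is a two-term complex, so $\cal{T}'_i$ of each side of the relation is a chain map of two-term complexes, which we record as an ordered pair (as in the crossing-cyclicity and quadratic-KLR proofs). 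In each slot the required equality again reduces to the mixed $EF$ relation in $\Ucat_Q$ applied between the appropriate pairs of strands, after the scalar prefactors — which are read off from the composite-diagram table — are seen to match. I expect these to still hold on the nose, because the mixed $EF$ relation in $\Ucat_Q$ replaces an $EF$-crossing-pair by identities (no bubbles, no dots are produced), so no ``error terms'' supported in a different homological degree can arise.

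The potential obstacle is the case $\ell = j$, $\ell' = j'$ with $i\cdot j = -1 = i\cdot j'$, where $\cal{T}'_i(\cal{F}_j\cal{E}_{j'}\onel)$ and $\cal{T}'_i(\cal{E}_{j'}\cal{F}_j\onel)$ are complexes supported in three adjacent homological degrees (built as in the $\ucrossbp{j}{j'}$ definition, equation~\eqref{eq:cubemap}, via the Karoubi-split $EF$-resolution), so $\cal{T}'_i$ of the relation becomes an equality of chain maps of length-three complexes, denoted by ordered triples with the middle component a $2\times 2$ matrix. My plan here is to expand both sides using the composite-diagram formulas from Appendix~\ref{sec:composite}, slide bubbles and dots using Sections~\ref{sec:bubble} and~\ref{sec:curl}, and apply the mixed $EF$ relation, the cubic KLR relation, and the triple-intersection identity~\eqref{eq:otherR3} termwise, exactly as in the cubic-KLR proof for the $jij'$ case. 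If a nonzero ``error term'' appears (which I anticipate it will \emph{not}, since mixed $EF$ introduces neither dots nor bubbles), it will necessarily be a chain map between complexes concentrated in homological degree zero on one side, hence genuinely zero; otherwise the two triples agree on the nose. Either way the relation in $\Com(\Ucat_Q)$ holds. Throughout, I would continue to suppress the routine degree verifications, citing that the grading shifts in the definitions of $\cal{T}'_i$ on generating 2-morphisms were chosen (and checked in Sections~\ref{def:updot}--\ref{def:capcup}) precisely so that all component maps are degree-preserving.
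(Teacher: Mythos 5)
Your case analysis starts out on the right track (the $k$-only and $ik$ cases do hold on the nose, and the structural setup via the composite-diagram tables in Appendix~\ref{sec:composite} is exactly how the paper proceeds), but the proposal contains a genuine gap at precisely the point where the real work lies. You assert that the cases in which one strand is $i$-labeled and the other is $j$-labeled (with $i\cdot j=-1$) ``still hold on the nose, because the mixed $EF$ relation \ldots\ produces no dots or bubbles, so no error terms can arise.'' This heuristic is false. The error terms do not come from the right-hand side of the relation; they come from simplifying the \emph{composite} of $\cal{T}'_i$ applied to the two sideways crossings. Those images (recorded in Section~\ref{sec:sideways-crossing}) already carry dots and scalar-weighted sums of diagrams, and composing them forces applications of the quadratic and cubic KLR relations, whose correction terms survive as a nonzero chain endomorphism of the two-term complex $\cal{T}'_i(\cal{F}_j\cal{E}_i\onel)$. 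Concretely, the paper computes that $\cal{T}'_i$ of (sideways double crossing minus identity) equals a pair of the form $\bigl(-t_{ij}^{-1}(\text{double crossing}),\,-t_{ij}^{-1}(\text{triple crossing})\bigr)$, which is nonzero in $\Kom(\Ucat_Q)$ and must be killed by an explicitly constructed homotopy. Your proposal predicts no such homotopy is needed and therefore does not supply one; without it the verification fails.

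The same problem recurs, more severely, in the $jj'$ case. Here both $\cal{T}'_i(\cal{F}_j\cal{E}_{j'}\onel)$ and $\cal{T}'_i(\cal{E}_{j'}\cal{F}_j\onel)$ are three-term complexes (neither is concentrated in degree zero, so your fallback ``it will necessarily be a chain map between complexes concentrated in homological degree zero on one side, hence genuinely zero'' does not apply), and the difference between the two sides of the relation is a nonzero chain map $(\phi_1,\dots,\phi_6)$ whose null-homotopy requires four nontrivial components $h^1_1,h^1_2,h^2_1,h^2_2$. The paper handles this case jointly with the $j$-labeled extended $\mathfrak{sl}_2$ relation in Proposition~\ref{prop:extendsl2-jjp}, and obtains the $FE$ variant from the $EF$ one via the symmetry $\omega$ (Corollary~\ref{cor:other-extended}); these homotopies, together with those in the $ij$ and $ji$ cases, constitute essentially all of the content of the proposition, and your argument as written omits them.
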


\begin{proof}
All cases involving $k$-labeled strands hold on the nose, and are trivial to verify.
The following computations exhibit half the requisite checks, and the remaining follow almost identically.
Throughout, we make extensive use of the computations from Section \ref{sec:sideways-crossing}.
\[
\cal{T}'_i\left({\xy 0;/r.15pc/: (0,-4.75)*{\rcrossgg{k}{k'}}; (0,4)*{\lcrossgg{}{}}; \endxy}
-{\xy 0;/r.18pc/: (3,-4)*{\slinedg{k'}}; (3,4)*{\slineng{}}; (-3,-4)*{\slineng{k}}; (-3,4)*{\slineug{}}; \endxy}\right)
={\xy 0;/r.15pc/: (0,-4.75)*{\rcrossgg{k}{k'}}; (0,4)*{\lcrossgg{}{}}; \endxy}
-{\xy 0;/r.18pc/: (3,-4)*{\slinedg{k'}}; (3,4)*{\slineng{}}; (-3,-4)*{\slineng{k}}; (-3,4)*{\slineug{}}; \endxy}=0
\quad , \quad
\cal{T}'_i\left({\xy 0;/r.15pc/: (0,-4.75)*{\rcrossrg{i}{k}}; (0,4)*{\lcrossgr{}{}}; \endxy}
-{\xy 0;/r.18pc/: (3,-4)*{\slinedg{i}}; (3,4)*{\slineng{}}; (-3,-4)*{\slinenr{k}}; (-3,4)*{\slineur{}}; \endxy}\right)
=t_{ki}^{-1}{\xy 0;/r.15pc/: (0,-4.75)*{\dcrossrg{i}{k}}; (0,4)*{\dcrossgr{}{}}; \endxy}
-{\xy 0;/r.18pc/: (3,-4)*{\slinedg{i}}; (3,4)*{\slineng{}}; (-3,-4)*{\slinedr{k}}; (-3,4)*{\slinenr{}}; \endxy}=0
\]
\[
\cal{T}'_i \left({\xy 0;/r.15pc/: (0,-4.75)*{\rcrossgb{k}{j}}; (0,4)*{\lcrossbg{}{}}; \endxy}
-{\xy 0;/r.18pc/: (3,-4)*{\slined{j}}; (3,4)*{\slinen{}}; (-3,-4)*{\slineng{k}}; (-3,4)*{\slineug{}}; \endxy}\right)
=
\left({\xy 0;/r.11pc/:
 (-3,-10.5)*{\rcrossgb{k}{j}};
  (6,-10.5)*{\slinedr{i}};
 (3,-1.25)*{\rcrossgr{}{}};
 (-6,-1.25)*{\slinen{}};
 (3,7)*{\lcrossrg{}{}};
  (-6,7)*{\slined{}};
 (-3,15)*{\lcrossbg{}{}};
 (6,15)*{\slinenr{}}; (15,0)*{};
 \endxy}
-{\xy 0;/r.18pc/:
(0,-4)*{\slined{j}}; (0,4)*{\slinen{}}; (-3,-4)*{\slineng{k}}; (-3,4)*{\slineug{ }};
  (3,-4)*{\slinedr{i}}; (3,4)*{\slinenr{}};
\endxy},
{\xy 0;/r.10pc/:
 (-3,-10.5)*{\rcrossgr{k}{i}};
  (6,-10.5)*{\slined{j}};
 (3,-1.25)*{\rcrossgb{}{}};
 (-6,-1.25)*{\slinenr{}};
 (3,7)*{\lcrossbg{}{}};
  (-6,7)*{\slinenr{}};
 (-3,15)*{\lcrossrg{}{}};
 (6,15)*{\slinen{}};
 \endxy}
-{\xy 0;/r.18pc/: (3,-4)*{\slined{j}}; (3,4)*{\slinen{}}; (-3,-4)*{\slineng{k}}; (-3,4)*{\slineug{}};
  (0,-4)*{\slinedr{i}}; (0,4)*{\slinenr{}}; \endxy}\right)=0
\]
\[
\cal{T}'_i \left(
{\xy 0;/r.15pc/: (0,-4.75)*{\rcrossbg{j}{k}}; (0,4)*{\lcrossgb{}{}}; \endxy}
-{\xy 0;/r.18pc/: (-3,-4)*{\slinen{j}}; (-3,4)*{\slineu{}}; (3,-4)*{\slinedg{k}}; (3,4)*{\slineng{}}; \endxy}\right)
=
\left({\xy 0;/r.11pc/:
 (3,-10.5)*{\rcrossrg{i}{k}};
  (-6,-10.5)*{\slinen{j}};
 (-3,-1.25)*{\rcrossbg{}{}};
 (6,-1.25)*{\slineur{}};
 (-3,7)*{\lcrossgb{}{}};
  (6,7)*{\slinenr{}};
 (3,15)*{\lcrossgr{}{}};
 (-6,15)*{\slineu{}};
 \endxy}
-{\xy 0;/r.18pc/: (-3,-4)*{\slinen{j}}; (-3,4)*{\slineu{}}; (3,-4)*{\slinedg{k}}; (3,4)*{\slineng{}};
  (0,-4)*{\slinenr{i}}; (0,4)*{\slineur{}}; \endxy},
{\xy 0;/r.11pc/:
 (3,-10.5)*{\rcrossbg{j}{k}};
  (-6,-10.5)*{\slinenr{i}};
 (-3,-1.25)*{\rcrossrg{}{}};
 (6,-1.25)*{\slineu{}};
 (-3,7)*{\lcrossgr{}{}};
  (6,7)*{\slinen{}};
 (3,15)*{\lcrossgb{}{}};
 (-6,15)*{\slineur{}}; (-15,0)*{};
 \endxy}
-{\xy 0;/r.18pc/: (0,-4)*{\slinen{j}}; (0,4)*{\slineu{}}; (3,-4)*{\slinedg{k}}; (3,4)*{\slineng{}};
  (-3,-4)*{\slinenr{i}}; (-3,4)*{\slineur{}}; \endxy}\right)=0.
\]

We now consider the cases requiring homotopies.
We compute
\begin{align*}
\cal{T}'_i \left({\xy 0;/r.15pc/: (0,-4.75)*{\rcrossbr{j}{i}}; (0,4)*{\lcrossrb{}{}}; \endxy}
-{\xy 0;/r.18pc/: (-3,-4)*{\slinen{j}}; (-3,4)*{\slineu{}}; (3,-4)*{\slinedr{i}}; (3,4)*{\slinenr{}}; \endxy}\right)
&= \left(t_{ij}^{-1}{\xy 0;/r.14pc/:
(0,-9)*{\xy 0;/r.14pc/:
 (3,-.75)*{\ucrossrr{i}{i}};
  (-6,-.75)*{\slinen{j}};
 (-3,8)*{\ucrossbr{}{}};
 (6,8)*{\slinenr{}};
 \endxy};
(0,8)*{\xy 0;/r.14pc/:
 (-3,0)*{\ucrossrb{}{}};
  (6,0)*{\slinenr{}};
 (3,8)*{\ucrossrr{}{}};
 (-6,8)*{\slineu{}};
(1.5,10.25)*[black]{\scs \bullet};
 \endxy}; \endxy}
-t_{ij}^{-1}{\xy 0;/r.14pc/:
(0,-9)*{\xy 0;/r.14pc/:
 (3,-.75)*{\ucrossrr{i}{i}};
  (-6,-.75)*{\slinen{j}};
 (-3,8)*{\ucrossbr{}{}};
 (6,8)*{\slinenr{}};
 \endxy};
(0,8)*{\xy 0;/r.14pc/:
 (-3,0)*{\ucrossrb{}{}};
  (6,0)*{\slinenr{}};
 (3,8)*{\ucrossrr{}{}};
 (-6,8)*{\slineu{}};
(-5,-.75)*[black]{\scs \bullet};
 \endxy}; \endxy}
-{\xy (0,4)*{\slineur{}}; (-3,4)*{\slineu{}}; (3,4)*{\slineur{}};
        (0,-4)*{\slinenr{i}}; (-3,-4)*{\slinen{j}}; (3,-4)*{\slinenr{i}}; \endxy} , \;
-t_{ij}^{-1}{\xy 0;/r.14pc/:
(0,-9)*{\xy 0;/r.14pc/:
    (3,-.75)*{\ucrossbr{j}{i}};
    (-6,-.75)*{\slinenr{i}};
    (-3,8)*{\ucrossrr{}{}};
    (6,8)*{\slinen{}};    \endxy};
(0,8)*{\xy  (-3,0)*{\ucrossrr{}{}};
  (6,0)*{\slinen{}};
 (3,8)*{\ucrossrb{}{}};
 (-6,8)*{\slineur{}};
(-5,-.75)*[black]{\scs \bullet};
 \endxy}; \endxy}
+t_{ij}^{-1}{\xy 0;/r.14pc/:
(0,-9)*{\xy 0;/r.14pc/:
    (3,-.75)*{\ucrossbr{j}{i}};
    (-6,-.75)*{\slinenr{i}};
    (-3,8)*{\ucrossrr{}{}};
    (6,8)*{\slinen{}};    \endxy};
(0,8)*{\xy (-3,0)*{\ucrossrr{}{}};
  (6,0)*{\slinen{}};
 (3,8)*{\ucrossrb{}{}};
 (-6,8)*{\sdotur{}}; \endxy}; \endxy}
-{\xy (-3,4)*{\slineur{}}; (0,4)*{\slineu{}}; (3,4)*{\slineur{}};
        (-3,-4)*{\slinenr{i}}; (0,-4)*{\slinen{j}}; (3,-4)*{\slinenr{i}}; \endxy}\right) \\
&=\left(-{\xy (-2,0)*{\slineu{j}}; (3,0)*{\sucrossrr{i}{i}}; (1.5,-1)*[black]{\scs \bullet}; \endxy}
-t_{ij}^{-1}t_{ji}{\xy (-2,0)*{\sdotu{j}}; (3,0)*{\sucrossrr{i}{i}}; \endxy} ,
-t_{ij}^{-1}
{\xy 0;/r.11pc/: (0,0)*{\Rthreel{black}{blue}{black}{}{}{}};
(-6,-13)*{\scs i};
(0,-13)*{\scs j};
(6,-13)*{\scs i};
\endxy}
\right)
=\left(-t_{ij}^{-1}{\xy 0;/r.12pc/: (-3,0)*{\ucrossrb{}{}}; (6,0)*{\slinenr{}}; (-6,8)*{\slineu{}}; (3,8)*{\ucrossrr{}{}};
   (-3,-8.75)*{\ucrossbr{j}{i}}; (6,-8.75)*{\slinenr{i}}; \endxy} ,
-t_{ij}^{-1}
{\xy 0;/r.11pc/: (0,0)*{\Rthreel{black}{blue}{black}{}{}{}};
(-6,-13)*{\scs i};
(0,-13)*{\scs j};
(6,-13)*{\scs i};
\endxy}
\right)
\end{align*}
which is null-homotopic, with homotopy given by
\[
{\xy 0;/r.15pc/:
  (-50,15)*+{\cal{E}_j\cal{E}_i\cal{E}_i\onell{s_i(\lambda)}\la \lambda_i\ra}="1";
  (-50,-15)*+{\cal{E}_j\cal{E}_i\cal{E}_i\onell{s_i(\lambda)}\la \lambda_i \ra}="2";
  (50,15)*+{\cal{E}_i\cal{E}_j\cal{E}_i\onell{s_i(\lambda)}\la \lambda_i+1 \ra}="3";
  (50,-15)*+{\cal{E}_i\cal{E}_j\cal{E}_i\onell{s_i(\lambda)}\la \lambda_i+1 \ra}="4";
{\ar@/^1pc/ "4";"1"};
(10,1)*{\xy 0;/r.15pc/: (-3,-4.75)*{\ucrossrb{i}{j}}; (6,-4.75)*{\slinenr{i}}; (-6,4)*{\slineu{}}; (3,4)*{\ucrossrr{}{}};
(-15,0)*{-t_{ij}^{-1}};\endxy};
   {\ar_{\xy  0;/r.15pc/: (3,0)*{\slineur{i}};(-6,0)*{\ucrossbr{j}{i}}; (9,0)*{}; \endxy   } "2";"4"};
   {\ar^{\xy  0;/r.15pc/: (3,0)*{\slineur{i}};(-6,0)*{\ucrossbr{j}{i}}; (9,0)*{}; \endxy   } "1";"3"};
 \endxy}
\]
and
\begin{align*}
\cal{T}'_i \left({\xy 0;/r.15pc/: (0,-4.75)*{\lcrossrb{i}{j}}; (0,4)*{\rcrossbr{}{}}; \endxy}
-{\xy 0;/r.18pc/: (3,-4)*{\slinen{j}}; (3,4)*{\slineu{}}; (-3,-4)*{\slinedr{i}}; (-3,4)*{\slinenr{}}; \endxy}\right)
&= \left(t_{ij}^{-1}{\xy 0;/r.14pc/:
(0,8)*{\xy 0;/r.14pc/:
 (3,0)*{\ucrossrr{}{}};
  (-6,0)*{\slinen{}};
 (-3,8)*{\ucrossbr{}{}};
 (6,8)*{\slineur{}};
 \endxy};
(0,-9)*{\xy 0;/r.14pc/:
 (-3,-.75)*{\ucrossrb{i}{j}};
  (6,-.75)*{\slinenr{i}};
 (3,8)*{\ucrossrr{}{}};
 (-6,8)*{\slinen{}};
(1.75,10)*[black]{\scs \bullet};
 \endxy}; \endxy}
-t_{ij}^{-1}{\xy 0;/r.14pc/:
(0,8)*{\xy 0;/r.14pc/:
 (3,0)*{\ucrossrr{}{}};
  (-6,0)*{\slinen{}};
 (-3,8)*{\ucrossbr{}{}};
 (6,8)*{\slineur{}};
 \endxy};
(0,-9)*{\xy 0;/r.14pc/:
 (-3,-.75)*{\ucrossrb{i}{j}};
  (6,-.75)*{\slinenr{i}};
 (3,8)*{\ucrossrr{}{}};
 (-6,8)*{\slinen{}};
(-5,-.75)*[black]{\scs \bullet};
 \endxy}; \endxy}
-{\xy (-3,4)*{\slineur{}}; (0,4)*{\slineu{}}; (3,4)*{\slineur{}};
        (-3,-4)*{\slinenr{i}}; (0,-4)*{\slinen{j}}; (3,-4)*{\slinenr{i}}; \endxy} , \;
-t_{ij}^{-1}{\xy 0;/r.14pc/:
(0,8)*{\xy 0;/r.14pc/:
    (3,0)*{\ucrossbr{}{}};
    (-6,0)*{\slinenr{}};
    (-3,8)*{\ucrossrr{}{}};
    (6,8)*{\slineu{}};    \endxy};
(0,-9)*{\xy  (-3,-.75)*{\ucrossrr{i}{i}};
  (6,-.75)*{\slinen{j}};
 (3,8)*{\ucrossrb{}{}};
 (-6,8)*{\slinenr{}};
(-5,-.75)*[black]{\scs \bullet};
 \endxy}; \endxy}
+t_{ij}^{-1}{\xy 0;/r.15pc/:
(0,8)*{\xy 0;/r.15pc/:
    (3,0)*{\ucrossbr{}{}};
    (-6,0)*{\slinenr{}};
    (-3,8)*{\ucrossrr{}{}};
    (6,8)*{\slineu{}};    \endxy};
(0,-9)*{\xy (-3,-.75)*{\ucrossrr{i}{i}};
  (6,-.75)*{\slinen{j}};
 (3,8)*{\ucrossrb{}{}};
 (-6,8)*{\sdotr{}}; \endxy}; \endxy}
-{\xy (-3,4)*{\slineur{}}; (3,4)*{\slineu{}}; (0,4)*{\slineur{}};
        (-3,-4)*{\slinenr{i}}; (3,-4)*{\slinen{j}}; (0,-4)*{\slinenr{i}}; \endxy}\right)\\
&= \left(t_{ij}^{-1}
{\xy 0;/r.11pc/: (0,0)*{\Rthreer{black}{blue}{black}{}{}{}};
(-6,-13)*{\scs i};
(0,-13)*{\scs j};
(6,-13)*{\scs i};
\endxy} ,
{\xy (3,0)*{\slineu{j}}; (-3,0)*{\sucrossrr{i}{i}}; (-2,2.5)*[black]{\scs \bullet}; \endxy}
+t_{ij}^{-1}t_{ji}{\xy (3,0)*{\sdotu{j}}; (-3,0)*{\sucrossrr{i}{i}}; \endxy}\right)
=\left(t_{ij}^{-1}
{\xy 0;/r.11pc/: (0,0)*{\Rthreer{black}{blue}{black}{}{}{}};
(-6,-13)*{\scs i};
(0,-13)*{\scs j};
(6,-13)*{\scs i};
\endxy} , \;
t_{ij}^{-1}{\xy 0;/r.12pc/: (3,8)*{\ucrossbr{}{}}; (-6,8)*{\slineur{}}; (6,-8.75)*{\slinen{j}}; (-3,-8.75)*{\ucrossrr{i}{i}};
   (3,0)*{\ucrossrb{}{}}; (-6,0)*{\slinenr{}}; \endxy}\right)
\end{align*}
is null-homotopic with homotopy given by
\begin{align*}
{     \xy 0;/r.15pc/:
  (-50,15)*+{\cal{E}_i\cal{E}_j\cal{E}_i\onell{s_i(\lambda)}\la \lambda_i-1 \ra}="1";
  (-50,-15)*+{\cal{E}_i\cal{E}_j\cal{E}_i\onell{s_i(\lambda)}\la \lambda_i-1 \ra}="2";
  (50,15)*+{\cal{E}_i\cal{E}_i\cal{E}_j\onell{s_i(\lambda)}\la \lambda_i \ra}="3";
  (50,-15)*+{\cal{E}_i\cal{E}_i\cal{E}_j\onell{s_i(\lambda)}\la \lambda_i \ra}="4";
{\ar@/^1pc/ "4";"1"};
(10,1)*{\xy 0;/r.15pc/: (3,4)*{\ucrossrb{}{}}; (-15,1)*{-t_{ij}^{-1}}; (-6,4)*{\slineur{}};
(6,-4.75)*{\slinen{j}}; (-3,-4.75)*{\ucrossrr{i}{i}}; \endxy};
   {\ar_{\xy  0;/r.15pc/: (-3,0)*{\slineur{i}};(6,0)*{\ucrossbr{j}{i}}; (-7,1)*{-};(9,0)*{}; \endxy   } "2";"4"};
   {\ar^{\xy  0;/r.15pc/: (-3,0)*{\slineur{i}};(6,0)*{\ucrossbr{j}{i}}; (-7,1)*{-};(9,0)*{}; \endxy   } "1";"3"};
 \endxy}
\end{align*}

Similar computations show that
\[
\cal{T}'_i \left({\xy 0;/r.15pc/: (0,-4.75)*{\lcrossbr{j}{i}}; (0,4)*{\rcrossrb{}{}}; \endxy}
-{\xy 0;/r.18pc/: (-3,-4)*{\slined{j}}; (-3,4)*{\slinen{}}; (3,-4)*{\slinenr{i}}; (3,4)*{\slineur{}}; \endxy}\right)
=
\left(t_{ij}^{-1}{\xy 0;/r.12pc/: (-3,0)*{\dcrossrb{}{}}; (6,0)*{\slinedr{}}; (-6,8)*{\slinen{}}; (3,8)*{\dcrossrr{}{}};
   (-3,-8.75)*{\dcrossbr{j}{i}}; (6,-8.75)*{\slinedr{i}}; \endxy} , \;
t_{ij}^{-1}{\xy 0;/r.12pc/: (-3,-8.75)*{\dcrossrb{i}{j}}; (6,-8.75)*{\slinedr{i}}; (-6,0)*{\slinen{}}; (3,0)*{\dcrossrr{}{}};
   (-3,8)*{\dcrossbr{}{}}; (6,8)*{\slinedr{}}; \endxy}\right)
\]
which is null-homotopic via the homotopy
\[
     {\xy 0;/r.15pc/:
  (-50,15)*+{\cal{F}_j\cal{F}_i\cal{F}_i\onell{s_i(\lambda)}\la -\lambda_i-3 \ra}="1";
  (-50,-15)*+{\cal{F}_j\cal{F}_i\cal{F}_i\onell{s_i(\lambda)}\la -\lambda_i-3 \ra}="2";
  (50,15)*+{\cal{F}_i\cal{F}_j\cal{F}_i\onell{s_i(\lambda)}\la -\lambda_i-2 \ra}="3";
  (50,-15)*+{\cal{F}_i\cal{F}_j\cal{F}_i\onell{s_i(\lambda)}\la -\lambda_i-2 \ra}="4";
{\ar@/^1pc/ "4";"1"};
(10,2)*{\xy 0;/r.15pc/: (-3,-4.75)*{\dcrossrb{i}{j}}; (-12,0)*{t_{ij}^{-1}}; (6,-4.75)*{\slinedr{i}};
(-6,4)*{\slinen{}}; (3,4)*{\dcrossrr{}{}}; \endxy};
   {\ar_{\xy  0;/r.15pc/: (3,0)*{\slinedr{i}};(-6,0)*{\dcrossbr{j}{i}}; (9,0)*{}; \endxy   } "2";"4"};
   {\ar^{\xy  0;/r.15pc/: (3,0)*{\slinedr{i}};(-6,0)*{\dcrossbr{j}{i}}; (9,0)*{}; \endxy   } "1";"3"};
 \endxy}
\]
and that
\[
\cal{T}'_i \left({\xy 0;/r.15pc/: (0,-4.75)*{\rcrossrb{i}{j}}; (0,4)*{\lcrossbr{}{}}; \endxy}
-{\xy 0;/r.18pc/: (3,-4)*{\slined{j}}; (3,4)*{\slinen{}}; (-3,-4)*{\slinenr{i}}; (-3,4)*{\slineur{}}; \endxy}\right)
=
\left(-t_{ij}^{-1}{\xy 0;/r.12pc/: (3,-8.75)*{\dcrossbr{j}{i}}; (-6,-8.75)*{\slinedr{i}};
(6,0)*{\slinen{}}; (-3,0)*{\dcrossrr{}{}}; (3,8)*{\dcrossrb{}{}}; (-6,8)*{\slinedr{}}; \endxy} , \;
-t_{ij}^{-1}{\xy 0;/r.12pc/: (3,8)*{\dcrossbr{}{}}; (-6,8)*{\slinedr{}};
(6,-8.75)*{\slined{j}}; (-3,-8.75)*{\dcrossrr{i}{i}}; (3,0)*{\dcrossrb{}{}}; (-6,0)*{\slinenr{}}; \endxy}\right)
\]
which is null-homotopic with homotopy given by
\[
{     \xy 0;/r.15pc/:
  (-50,15)*+{\cal{F}_i\cal{F}_j\cal{F}_i\onell{s_i(\lambda)}\la -\lambda_i-4 \ra}="1";
  (-50,-15)*+{\cal{F}_i\cal{F}_j\cal{F}_i\onell{s_i(\lambda)}\la -\lambda_i-4 \ra}="2";
  (50,15)*+{\cal{F}_i\cal{F}_i\cal{F}_j\onell{s_i(\lambda)}\la -\lambda_i-3 \ra}="3";
  (50,-15)*+{\cal{F}_i\cal{F}_i\cal{F}_j\onell{s_i(\lambda)}\la -\lambda_i-3 \ra}="4";
{\ar@/^1pc/ "4";"1"};
(10,2)*{\xy 0;/r.15pc/: (3,4)*{\dcrossrb{}{}}; (-14,0)*{t_{ij}^{-1}}; (-6,4)*{\slinenr{}};
(6,-4.75)*{\slined{j}}; (-3,-4.75)*{\dcrossrr{i}{i}}; \endxy};
   {\ar_{\xy  0;/r.15pc/: (-3,0)*{\slinedr{i}};(6,0)*{\dcrossbr{j}{i}}; (-7,1)*{-};(9,0)*{}; \endxy   } "2";"4"};
   {\ar^{\xy  0;/r.15pc/: (-3,0)*{\slinedr{i}};(6,0)*{\dcrossbr{j}{i}}; (-7,1)*{-};(9,0)*{}; \endxy   } "1";"3"};
 \endxy}
\]
The final case, involving $j$- and $j'$-labeled strands (with $j \neq j'$), will be addressed in
Proposition \ref{prop:extendsl2-jjp} below.
\end{proof}

%
\subsection{Extended \texorpdfstring{$\mathfrak{sl}_2$}{sl(2)} relations}
%
We now verify relation~\eqref{def:EF} in Definition~\ref{defU_cat-cyc}.

\begin{prop}
$\cal{T}_i'$ preserves the the extended $\mathfrak{sl}_2$ relations
in the $i$- and $k$-labeled cases.
\end{prop}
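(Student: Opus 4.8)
The plan is to exploit the fact that, when all strands are labeled $i$ or $k$ (with $i\cdot k=0$), every complex occurring in the extended $\mathfrak{sl}_2$ relations \eqref{def:EF} is concentrated in a single homological degree. Indeed $\cal{T}'_i(\cal{E}_i\onel)$ has its unique nonzero term (namely $\cal{F}_i\onell{s_i(\lambda)}\la -2-\lambda_i\ra$) in homological degree $-1$, $\cal{T}'_i(\cal{F}_i\onel)$ has its nonzero term ($\cal{E}_i\onell{s_i(\lambda)}\la\lambda_i\ra$) in degree $+1$, and $\cal{T}'_i(\cal{E}_k\onel)$, $\cal{T}'_i(\cal{F}_k\onel)$, $\cal{T}'_i(\onel)$ are concentrated in degree $0$. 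Forming the composites via \eqref{eq:CompOfCom} one reads off that $\cal{T}'_i(\cal{F}_i\cal{E}_i\onel)$, $\cal{T}'_i(\cal{E}_i\cal{F}_i\onel)$ and all the $k$-labeled analogues are concentrated in homological degree zero. Since any chain homotopy between chain maps of complexes concentrated in a single degree is forced to vanish, it suffices to check that $\cal{T}'_i$ preserves \eqref{def:EF} \emph{on the nose}, i.e.\ as an identity of $2$-morphisms in $\Ucat_Q$ at the object $s_i(\lambda)$.

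For the $k$-labeled case $\cal{T}'_i$ acts essentially trivially: it relabels $\lambda\mapsto s_i(\lambda)$, fixes the $kk$-crossing and the $k$-labeled dots, and rescales the $k$-labeled cups, caps, and bubbles by powers of $t_{ki}$, as recorded in Section~\ref{sec:Ti1mor}, Section~\ref{def:capcup} and Section~\ref{sec:Ti-bub}. Applying $\cal{T}'_i$ term-by-term to \eqref{def:EF} thus yields exactly \eqref{def:EF} at $s_i(\lambda)$ with a product of $t_{ki}^{\pm\lambda_i}$-factors inserted into each summand; one checks these factors cancel, using $\la k,s_i(\lambda)\ra=\lambda_k$ (valid since $i\cdot k=0$, so all bubble degrees are unchanged), the compatibility $c_{k,s_i(\lambda)}=t_{ki}^{-\lambda_i}c_{k,\lambda}$, and the image of $k$-labeled real and fake bubbles computed in Section~\ref{sec:Ti-bub}. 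This identifies $\cal{T}'_i$ of \eqref{def:EF} at $\lambda$ with \eqref{def:EF} at $s_i(\lambda)$, which holds in $\Ucat_Q$.

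The $i$-labeled case is the crux. Here $\cal{T}'_i$ interchanges $\cal{E}$ and $\cal{F}$: up to grading shift, $\cal{T}'_i(\cal{F}_i\cal{E}_i\onel)$ is concentrated in degree zero with degree-zero object $\cal{E}_i\cal{F}_i\onell{s_i(\lambda)}$, $\cal{T}'_i(\cal{E}_i\cal{F}_i\onel)$ likewise with object $\cal{F}_i\cal{E}_i\onell{s_i(\lambda)}$, and $\cal{T}'_i(\onel)=\onell{s_i(\lambda)}$. Using the recorded values of $\cal{T}'_i$ on the generating $2$-morphisms and their composites (Appendix~\ref{sec:composite}) — the $ii$-crossing, sent to minus the downward $ii$-crossing (Section~\ref{sec:down-crossing}); the cups and caps, rescaled by $c_{i,\lambda}^{\pm1}$ (Section~\ref{def:capcup}); the downward dot (Section~\ref{sec:downdot}); and the real and fake $i$-labeled bubbles, sent to oppositely oriented bubbles at $s_i(\lambda)$ rescaled by $c_{i,\lambda}^{\pm2}$ with degree label transformed via $\la i,s_i(\lambda)\ra=-\lambda_i$ (Section~\ref{sec:Ti-bub}) — one applies $\cal{T}'_i$ to each term of the first relation in \eqref{def:EF}. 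The dumbbell identity, the figure-eight term, and the bubble sum $\sum_{a+b+c=\lambda_i-1}$ then combine into precisely the shape of the \emph{second} relation in \eqref{def:EF} at weight $s_i(\lambda)$: the summation range $a+b+c=\lambda_i-1$ matches $a+b+c=-\la i,s_i(\lambda)\ra-1$, the sign of the figure-eight is preserved, the sign from the $ii$-crossing is absorbed, and every factor of $c_{i,\lambda}$ cancels between the two sides. The symmetric argument (or cyclicity, already verified) handles the second relation in \eqref{def:EF}; the $j$-labeled case is deferred to Proposition~\ref{prop:extendsl2-jjp}.

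The main obstacle is this last matching: one must simultaneously track the grading shifts $\la -2-\lambda_i\ra$ and $\la\lambda_i\ra$ carried by $\cal{T}'_i(\cal{E}_i\onel)$ and $\cal{T}'_i(\cal{F}_i\onel)$, the sign from the $ii$-crossing, and the bookkeeping of powers of $c_{i,\lambda}$, and in particular confirm that the relation still transforms correctly in the corner cases $\lambda_i\le 0$, where one side of \eqref{def:EF} carries an empty ``increasing'' sum and the identification of fake bubbles at $\lambda$ with (fake) bubbles at $s_i(\lambda)$ from Section~\ref{sec:Ti-bub} is needed. After a careful expansion using the curl relations of Section~\ref{sec:curl} and the bubble-slide relations of Section~\ref{sec:bubble}, we expect every term to find its partner and the relation to hold identically in $\Ucat_Q$.
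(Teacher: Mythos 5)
Your proposal is correct and follows essentially the same route as the paper: the paper likewise observes that for $i$- and $k$-labeled strands all complexes involved sit in a single homological degree, so the relation must hold on the nose, and then verifies term-by-term that the signs from the $ii$-crossings square to $+1$, the factors $c_{i,\lambda}^{\pm1}$ (resp.\ $t_{ki}^{\pm\lambda_i}$) cancel, and the summation bound $a+b+c=\pm\lambda_i-1$ becomes $\mp\la i,s_i(\lambda)\ra-1$, so that the image of one extended $\mathfrak{sl}_2$ relation at $\lambda$ is exactly the other one at $s_i(\lambda)$. The only cosmetic difference is that the paper carries out the cancellation directly without invoking the curl or bubble-slide relations, which are not needed for this matching.
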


\begin{proof}
In these cases, the relations hold on the nose, as we confirm.
\begin{align*}
\cal{T}'_i\left({\xy 0;/r.18pc/: (0,-4.5)*{\rcrossrr{i}{i}}; (0,4)*{\lcrossrr{}{}}; \endxy}
+{\xy 0;/r.18pc/: (-3,-4)*{\slinenr{i}}; (-3,4)*{\slineur{}}; (3,-4)*{\slinedr{i}}; (3,4)*{\slinenr{}}; \endxy}
-\displaystyle\sum_{\substack{a+b+c\\ =\lambda_i-1}}
{\xy 0;/r.15pc/:
   (9,-8)*{\rcapr{}}; (9,-6.25)*[black]{\scs \bullet}; (9,-3)*{\scs b};
   (15,0)*{\smccbubr{\scs \spadesuit +c}{}};
   (9,9.5)*{\lcupr{}}; (9,8.25)*[black]{\scs \bullet}; (9,5)*{\scs a};
   (13,-6)*{\scs i};(14,9)*{\scs i}; (23,8)*{\l};
\endxy}
\right)
&= (-1)^2{\xy 0;/r.18pc/: (0,-4.5)*{\lcrossrr{i}{i}}; (0,4)*{\rcrossrr{}{}}; \endxy}
+{\xy 0;/r.18pc/: (-3,-4)*{\slinedr{i}}; (-3,4)*{\slinenr{}}; (3,-4)*{\slinenr{i}}; (3,4)*{\slineur{}}; \endxy}
-c_{i,\l}^2c_{i,\l}^{-2}\displaystyle\sum_{\substack{a+b+c\\ =\lambda_i-1}}
{\xy 0;/r.15pc/:
   (9,-8)*{\lcapr{}}; (9,-6.25)*[black]{\scs \bullet}; (9,-3)*{\scs b};
   (15,0)*{\smcbubr{\scs \spadesuit +c}{}};
   (9,9.5)*{\rcupr{}}; (9,8.25)*[black]{\scs \bullet}; (9,5)*{\scs a};
   (13,-6)*{\scs i};(14,9)*{\scs i}; (24,9)*{s_i(\l)};
\endxy}\\
&= {\xy 0;/r.18pc/: (0,-4.5)*{\lcrossrr{i}{i}}; (0,4)*{\rcrossrr{}{}}; \endxy}
+{\xy 0;/r.18pc/: (-3,-4)*{\slinedr{i}}; (-3,4)*{\slinenr{}}; (3,-4)*{\slinenr{i}}; (3,4)*{\slineur{}}; \endxy}
-\displaystyle\sum_{\substack{a+b+c=\\ -\la i,s_i(\lambda)\ra-1}}
{\xy 0;/r.15pc/:
   (9,-8)*{\lcapr{}}; (9,-6.25)*[black]{\scs \bullet}; (9,-3)*{\scs b};
   (15,0)*{\smcbubr{\scs \spadesuit +c}{}};
   (9,9.5)*{\rcupr{}}; (9,8.25)*[black]{\scs \bullet}; (9,5)*{\scs a};
   (13,-6)*{\scs i};(14,9)*{\scs i}; (24,9)*{s_i(\l)};
\endxy}=0 \\
\cal{T}'_i\left({\xy 0;/r.18pc/: (0,-4.5)*{\lcrossrr{i}{i}}; (0,4)*{\rcrossrr{}{}}; \endxy}
+{\xy 0;/r.18pc/: (-3,-4)*{\slinedr{i}}; (-3,4)*{\slinenr{}}; (3,-4)*{\slinenr{i}}; (3,4)*{\slineur{}}; \endxy}
-\displaystyle\sum_{\substack{a+b+c\\ =-\lambda_i-1}}
{\xy 0;/r.15pc/:
   (9,-8)*{\lcapr{}}; (9,-6.25)*[black]{\scs \bullet}; (9,-3)*{\scs b};
   (15,0)*{\smcbubr{\scs \spadesuit +c}{}};
   (9,9.5)*{\rcupr{}}; (9,8.25)*[black]{\scs \bullet}; (9,5)*{\scs a};
   (13,-6)*{\scs i};(14,9)*{\scs i}; (23,8)*{\l};
\endxy}\right)
&= (-1)^2{\xy 0;/r.18pc/: (0,-4.5)*{\rcrossrr{i}{i}}; (0,4)*{\lcrossrr{}{}}; \endxy}
+{\xy 0;/r.18pc/: (-3,-4)*{\slinenr{i}}; (-3,4)*{\slineur{}}; (3,-4)*{\slinedr{i}}; (3,4)*{\slinenr{}}; \endxy}
-c_{i,\l}^2c_{i,\l}^{-2}\displaystyle\sum_{\substack{a+b+c\\ =-\lambda_i-1}}
{\xy 0;/r.15pc/:
   (9,-8)*{\rcapr{}}; (9,-6.25)*[black]{\scs \bullet}; (9,-3)*{\scs b};
   (15,0)*{\smccbubr{\scs \spadesuit +c}{}};
   (9,9.5)*{\lcupr{}}; (9,8.25)*[black]{\scs \bullet}; (9,5)*{\scs a};
   (13,-6)*{\scs i};(14,9)*{\scs i}; (24,9)*{s_i(\l)};
\endxy} \\
&= {\xy 0;/r.18pc/: (0,-4.5)*{\rcrossrr{i}{i}}; (0,4)*{\lcrossrr{}{}}; \endxy}
+{\xy 0;/r.18pc/: (-3,-4)*{\slinenr{i}}; (-3,4)*{\slineur{}}; (3,-4)*{\slinedr{i}}; (3,4)*{\slinenr{}}; \endxy}
-\displaystyle\sum_{\substack{a+b+c=\\ \la i,s_i(\lambda)\ra-1}}
{\xy 0;/r.15pc/:
   (9,-8)*{\rcapr{}}; (9,-6.25)*[black]{\scs \bullet}; (9,-3)*{\scs b};
   (15,0)*{\smccbubr{\scs \spadesuit +c}{}};
   (9,9.5)*{\lcupr{}}; (9,8.25)*[black]{\scs \bullet}; (9,5)*{\scs a};
   (13,-6)*{\scs i};(14,9)*{\scs i}; (24,9)*{s_i(\l)};
\endxy}=0 \\
\cal{T}'_i\left({\xy 0;/r.18pc/: (0,-4.5)*{\rcrossgg{k}{k}}; (0,4)*{\lcrossgg{}{}}; \endxy}
+{\xy 0;/r.18pc/: (-3,-4)*{\slineng{k}}; (-3,4)*{\slineug{}}; (3,-4)*{\slinedg{k}}; (3,4)*{\slineng{}}; \endxy}
-\displaystyle\sum_{\substack{a+b+c\\ =\lambda_k-1}}
{\xy 0;/r.15pc/:
   (9,-8)*{\rcapg{}}; (9,-6.25)*[black]{\scs \bullet}; (9,-3)*{\scs b};
   (15,0)*{\smccbubg{\scs \spadesuit +c}{}};
   (9,9.5)*{\lcupg{}}; (9,8.25)*[black]{\bullet}; (9,5)*{\scs a};
   (14,-6)*{\scs k};(14,9)*{\scs k}; (23,8)*{\l};
\endxy}\right)
&= {\xy 0;/r.18pc/: (0,-4.5)*{\rcrossgg{k}{k}}; (0,4)*{\lcrossgg{}{}}; \endxy}
+{\xy 0;/r.18pc/: (-3,-4)*{\slineng{k}}; (-3,4)*{\slineug{}}; (3,-4)*{\slinedg{k}}; (3,4)*{\slineng{}}; \endxy}
-t_{ki}^{\l_i}t_{ki}^{-\l_i}\displaystyle\sum_{\substack{a+b+c\\ =\lambda_k-1}}
{\xy 0;/r.15pc/:
   (9,-8)*{\rcapg{}}; (9,-6.25)*[black]{\scs \bullet}; (9,-3)*{\scs b};
   (15,0)*{\smccbubg{\scs \spadesuit +c}{}};
   (9,9.5)*{\lcupg{}}; (9,8.25)*[black]{\scs \bullet}; (9,5)*{\scs a};
   (14,-6)*{\scs k};(14,9)*{\scs k}; (24,9)*{s_i(\l)};
\endxy}\\
&= {\xy 0;/r.18pc/: (0,-4.5)*{\rcrossgg{k}{k}}; (0,4)*{\lcrossgg{}{}}; \endxy}
+{\xy 0;/r.18pc/: (-3,-4)*{\slineng{k}}; (-3,4)*{\slineug{}}; (3,-4)*{\slinedg{k}}; (3,4)*{\slineng{}}; \endxy}
-\displaystyle\sum_{\substack{a+b+c=\\ \la k,s_i(\lambda)\ra-1}}
{\xy 0;/r.15pc/:
   (9,-8)*{\rcapg{}}; (9,-6.5)*[black]{\scs \bullet}; (9,-3)*{\scs b};
   (15,0)*{\smccbubg{\scs \spadesuit +c}{}};
   (9,9.5)*{\lcupg{}}; (9,8)*[black]{\scs \bullet}; (9,5)*{\scs a};
   (14,-6)*{\scs k};(14,9)*{\scs k}; (24,9)*{s_i(\l)};
\endxy}=0\\
\cal{T}'_i\left({\xy 0;/r.18pc/: (0,-4.5)*{\lcrossgg{k}{k}}; (0,4)*{\rcrossgg{}{}}; \endxy}
+{\xy 0;/r.18pc/: (-3,-4)*{\slinedg{k}}; (-3,4)*{\slineng{}}; (3,-4)*{\slineng{k}}; (3,4)*{\slineug{}}; \endxy}
-\displaystyle\sum_{\substack{a+b+c\\ =-\lambda_k-1}}
{\xy 0;/r.15pc/:
   (9,-8)*{\lcapg{}}; (9,-6.25)*[black]{\scs \bullet}; (9,-3)*{\scs b};
   (15,0)*{\smcbubg{\scs \spadesuit +c}{}};
   (9,9.5)*{\rcupg{}}; (9,8.25)*[black]{\scs \bullet}; (9,5)*{\scs a};
   (14,-6)*{\scs k};(14,9)*{\scs k}; (23,8)*{\l};
\endxy}\right)
&= {\xy 0;/r.18pc/: (0,-4.5)*{\lcrossgg{k}{k}}; (0,4)*{\rcrossgg{}{}}; \endxy}
+{\xy 0;/r.18pc/: (-3,-4)*{\slinedg{k}}; (-3,4)*{\slineng{}}; (3,-4)*{\slineng{k}}; (3,4)*{\slineug{}}; \endxy}
-t_{ki}^{\l_i}t_{ki}^{-\l_i}\displaystyle\sum_{\substack{a+b+c\\ =-\lambda_k-1}}
{\xy 0;/r.15pc/:
   (9,-8)*{\lcapg{}}; (9,-6.25)*[black]{\scs \bullet}; (9,-3)*{\scs b};
   (15,0)*{\smcbubg{\scs \spadesuit +c}{}};
   (9,9.5)*{\rcupg{}}; (9,8.25)*[black]{\scs \bullet}; (9,5)*{\scs a};
   (14,-6)*{\scs k};(14,9)*{\scs k}; (24,9)*{s_i(\l)};
\endxy}\\
&= {\xy 0;/r.18pc/: (0,-4.5)*{\lcrossgg{k}{k}}; (0,4)*{\rcrossgg{}{}}; \endxy}
+{\xy 0;/r.18pc/: (-3,-4)*{\slinedg{k}}; (-3,4)*{\slineng{}}; (3,-4)*{\slineng{k}}; (3,4)*{\slineug{}}; \endxy}
-\displaystyle\sum_{\substack{a+b+c=\\ -\la k,s_i(\lambda)\ra-1}}
{\xy 0;/r.15pc/:
   (9,-8)*{\lcapg{}}; (9,-6.25)*[black]{\scs \bullet}; (9,-3)*{\scs b};
   (15,0)*{\smcbubg{\scs \spadesuit +c}{}};
   (9,9.5)*{\rcupg{}}; (9,8.25)*[black]{\scs \bullet}; (9,5)*{\scs a};
   (14,-6)*{\scs k};(14,9)*{\scs k}; (24,9)*{s_i(\l)};
\endxy}=0
\end{align*}
\end{proof}

We conclude by considering the outstanding relations,
\ie the $j$-labeled extended $\mathfrak{sl}_2$ relations, and the $jj'$-labeled mixed $EF$ relation.
These are the most-involved relations, in part because the homotopies involved are not-necessarily unique.
Indeed, if $\underline{\Hom}(X,Y)$ denotes the chain complex of all homogeneous maps
(that are not-necessarily degree zero, or chain maps) between complexes $X$ and $Y$,
then given any element $\alpha \in \underline{\Hom}^{-2}(X,Y)$,
the element $d(\alpha)= d_Y \alpha - \alpha d_X$ can be added to any homotopy $h$ without
affecting $d_Y h + h d_X$.
Our previous cases have not admitted such an $\alpha$, but in the present case there exist
(many) such $\alpha$, given by any map
$\cal{E}_i\cal{E}_{j'}\cal{F}_i\cal{F}_j\onell{s_i(\l)}\la 1\ra\to
\cal{E}_{j'}\cal{E}_i\cal{F}_j\cal{F}_i\onell{s_i(\l)}\la -1\ra$.

\begin{prop} \label{prop:extendsl2-jjp}
The relation
 \begin{equation} \label{eq:EFj'j}
\cal{T}'_i\left(-{\xy 0;/r.18pc/: (0,-4.75)*{\rcrosspb{j'}{j}}; (0,4)*{\lcrossbp{}{}}; \endxy}
+(-1)^{\delta_{jj'}}{\xy 0;/r.18pc/: (-3,-4)*{\slinenp{j'}}; (-3,4)*{\slineup{}}; (3,-4)*{\slined{j}}; (3,4)*{\slinen{}}; \endxy}
+\delta_{jj'}\sum_{\substack{a+b+c=\\ \lambda_j-1}}
{\xy 0;/r.19pc/:
   (9,-7)*{\srcap{}}; (8,-6)*{\bullet}; (7,-3)*{\scs b};
   (15,0)*{\medccbub{\scs \spadesuit +c}{}};
   (9,7)*{\slcup{}}; (8,6.25)*{\bullet}; (7,4)*{\scs a};
   (13,-6)*{\scs j};(14,9)*{\scs j}; (21,8)*{\l};
\endxy}
\right) \sim 0
\end{equation}
holds in $\Com(\Ucat_Q)$.
\end{prop}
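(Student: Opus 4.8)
The plan is to compute both $2$-morphisms appearing on the left of \eqref{eq:EFj'j}, together with the bubble correction, as chain endomorphisms of the $3$-term complex $C := \cal{T}'_i(\cal{E}_{j'}\cal{F}_j\onell{\l})$, and then to exhibit an explicit chain homotopy witnessing that their prescribed combination is null-homotopic. First I would expand $C = \cal{T}'_i(\cal{E}_{j'}\onell{\l-\alpha_j})\,\cal{T}'_i(\cal{F}_j\onell{\l})$ using the definitions of $\cal{T}'_i$ on $\cal{E}_{j'}\onel$ and $\cal{F}_j\onel$ from Section \ref{sec:Ti1mor} together with the tensor product \eqref{eq:CompOfCom}: the complex $C$ is supported in homological degrees $-1,0,1$, with
\[
C^{-1} = \cal{E}_{j'}\cal{E}_i\cal{F}_j\cal{F}_i\onell{s_i(\l)}\la -1 \ra , \quad
C^0 = \cal{E}_{j'}\cal{E}_i\cal{F}_i\cal{F}_j\onell{s_i(\l)} \oplus \cal{E}_i\cal{E}_{j'}\cal{F}_j\cal{F}_i\onell{s_i(\l)} , \quad
C^1 = \cal{E}_i\cal{E}_{j'}\cal{F}_i\cal{F}_j\onell{s_i(\l)}\la 1 \ra ,
\]
and differentials built from the $ji$- and $j'i$-crossings with the Koszul signs dictated by \eqref{eq:CompOfCom}. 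Next I compute the three ingredients of \eqref{eq:EFj'j} as chain maps on $C$: the sideways-crossing bigon via $2$-functoriality and the values of $\cal{T}'_i$ on sideways crossings recorded in Section \ref{sec:sideways-crossing} (and the composite $2$-morphisms of Appendix \ref{sec:composite}); the signed identity term, which is immediate; and, when $j=j'$, the bubble sum, using the computation of $\cal{T}'_i$ on a $j$-labeled bubble from Section \ref{sec:Ti-bub}, which expresses it as a sum of products of node-$i$ and node-$j$ bubbles.

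Forming the combination on the left of \eqref{eq:EFj'j} and simplifying each matrix entry --- repeatedly invoking the quadratic and cubic KLR relations, the mixed $EF$ relation \eqref{def:mixed}, the curl relations of Section \ref{sec:curl}, the triple-intersection identity \eqref{eq:otherR3}, and, when $j=j'$, the extended $\mathfrak{sl}_2$ relation \eqref{def:EF} at node $j$ --- produces a chain endomorphism $\Phi$ of $C$ all of whose nonzero components factor through an $\cal{E}_i$- or $\cal{F}_i$-strand, exactly the shape that admits a homotopy. I would then split into the cases $j\cdot j'=0$, $j\cdot j'=-1$ (which can occur only when the graph $\Gamma$ has a $3$-cycle), and $j=j'$, and in each write down homotopy maps $h^1\maps C^1\to C^0$ and $h^0\maps C^0\to C^{-1}$ (with $h^{-1}=0$ forced by $C^{-2}=0$), each matrix component being a single $ji$- or $j'i$-crossing composed with a cap--cup pair --- and, in the $j=j'$ case, a bubble insertion --- modeled on the homotopies already produced in the mixed $EF$ cases above and in Proposition \ref{prop:quadKLR} and the cubic KLR case. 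The homotopy equation $\Phi^n = d^{n-1}h^n + h^{n+1}d^n$ is then checked component by component: for $j\neq j'$ the $\delta_{jj'}$ term is absent and the verification reduces to direct applications of the KLR, mixed $EF$, curl, and triple-intersection relations, while for $j=j'$ one must additionally reconcile the node-$j$ bubbles produced by expanding the bigon via \eqref{def:EF} with those coming from the $\delta_{jj'}$ summand, bringing both to a common form by the infinite Grassmannian relations (Section \ref{sec:inf}) and the bubble-slide relations (Section \ref{sec:bubble}).

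The main obstacle is this $j=j'$ bubble bookkeeping, compounded by the non-uniqueness of the homotopy noted in the text: since $\underline{\Hom}^{-2}(C,C)\neq 0$ --- it contains every map $\cal{E}_i\cal{E}_{j'}\cal{F}_i\cal{F}_j\onell{s_i(\l)}\la 1 \ra\to\cal{E}_{j'}\cal{E}_i\cal{F}_j\cal{F}_i\onell{s_i(\l)}\la -1 \ra$ --- the homotopy must be \emph{chosen} rather than merely solved for, and a careless choice obstructs the bubble cancellations. I expect the workable choice to be the minimal ``single crossing plus cap--cup (plus bubble)'' form described above; once that ansatz is fixed, the remaining verifications, although lengthy, are routine manipulations with \eqref{def:EF}, \eqref{def:mixed}, \eqref{eq:otherR3}, the curl relations, and the (fake) bubble identities, and will also settle the outstanding $jj'$-labeled case of the mixed $EF$ relation deferred from the previous subsection.
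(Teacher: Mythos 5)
Your overall strategy coincides with the paper's: expand $\cal{T}'_i(\cal{E}_{j'}\cal{F}_j\onel)$ as the three-term complex you describe, compute the combination on the left of \eqref{eq:EFj'j} component-by-component as a chain endomorphism (the paper records these components as $\phi_1,\dots,\phi_6$), and then exhibit an explicit homotopy with only two nonzero pieces, from homological degree $1$ to $0$ and from $0$ to $-1$. Your identification of the ingredients --- the sideways-crossing values from Section \ref{sec:sideways-crossing} and Appendix \ref{sec:composite}, the image of the $j$-labeled bubble as a sum of products of $i$- and $j$-bubbles from Section \ref{sec:Ti-bub}, and simplification via the curl, triple-intersection, mixed $EF$ and extended $\mathfrak{sl}_2$ relations --- is also exactly the route the paper takes.

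The gap is in your ansatz for the homotopy, which is the crux of the proof. You propose that for $j\neq j'$ each component is ``a single $ji$- or $j'i$-crossing composed with a cap--cup pair,'' reserving bubble insertions for the $j=j'$ case. That form cannot satisfy $\Phi = dh+hd$: after simplifying the chain-map components with the curl and extended $\mathfrak{sl}_2$ relations at node $i$, they contain truncated sums over dotted $i$-labeled bubbles weighted by powers of $-v_{ij'}$ \emph{even when} $j\neq j'$ (the $(-1)^{\delta_{jj'}}$ prefactors appearing there are merely signs, not $\delta_{jj'}$'s), and the homotopy must reproduce these. Indeed, in the paper's homotopy three of the four components ($h^1_2$, $h^2_1$, $h^2_2$) contain, already for $j\neq j'$, summands of the shape $\sum_{d+e+f+g}(-v_{ij'})^{g}(\cdots)$ carrying an $i$-labeled bubble of degree $\spadesuit+f$ together with dotted caps and cups; only $h^1_1$ is a pure $\delta_{jj'}$ term. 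While the degree count (and $C^{-2}=0$) forces a homotopy of your general shape, a proof that commits to the ``minimal'' single-crossing guess will fail at the verification step; supplying the correct bubble-laden components --- which is also where the non-uniqueness you rightly flag must be resolved --- is the substantive content of the argument.
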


\begin{proof}
The left-hand side of \eqref{eq:EFj'j} is
given by the following:
\begin{equation}\label{eq:j'jchainmap}
{\xy 0;/r.15pc/:
	(-90,15)*+{\clubsuit \cal{E}_{j'} \cal{E}_i \cal{F}_j \cal{F}_i \onell{s_i(\lambda)} \la -1 \ra}="1";
	(0,15)*+{\cal{E}_{i} \cal{E}_{j'} \cal{F}_j \cal{F}_i \onell{s_i(\lambda)}
  		\oplus \cal{E}_{j'} \cal{E}_{i} \cal{F}_i \cal{F}_j \onell{s_i(\lambda)} }="2";
	(90,15)*+{\cal{E}_{i} \cal{E}_{j'} \cal{F}_i \cal{F}_j \onell{s_i(\lambda)} \la 1 \ra}="3";
	(-90,-15)*+{\clubsuit \cal{E}_{j'} \cal{E}_i \cal{F}_j \cal{F}_i \onell{s_i(\lambda)} \la -1 \ra}="4";
	(0,-15)*+{\cal{E}_{i} \cal{E}_{j'} \cal{F}_j \cal{F}_i \onell{s_i(\lambda)}
  		\oplus \cal{E}_{j'} \cal{E}_{i} \cal{F}_i \cal{F}_j \onell{s_i(\lambda)} }="5";
	(90,-15)*+{\cal{E}_{i} \cal{E}_{j'} \cal{F}_i \cal{F}_j \onell{s_i(\lambda)} \la 1 \ra}="6";
	{\ar  "1";"2"};
	{\ar  "2";"3"};
	{\ar  "4";"5"};
	{\ar  "5";"6"};
	{\ar^-{\phi_1}  "4";"1"};
	{\ar^-{\left(\begin{smallmatrix} \phi_2 & \phi_4 \\ \phi_3 & \phi_5 \end{smallmatrix}\right)}  "5";"2"};
	{\ar^-{\phi_6}  "6";"3"};
\endxy}
\end{equation}
where the components of the chain map are given as follows
(which can be verified by completely simplifying both
sides of the equalities):
\begin{align*}
\phi_1
&=
\vcenter{\xy 0;/r.11pc/:
    (0,40)*{\lcrossbr{}{}}; (9,40)*{\slinenr{}};
    (-9,40)*{\slineup{}};
    (-6,32)*{\ncrossbp{}{}};(6,32)*{\ncrossrr{}{}};
    (0,24)*{\lcrossrp{}{}}; (9,24)*{\slinenr{}}; (-9,24)*{\slined{}};
    (0,16)*{\rcrosspr{}{}}; (9,16)*{\slineur{}}; (-9,16)*{\slinen{}};
    (-6,8)*{\ncrosspb{}{}};(6,8)*{\ncrossrr{}{}};
    (0,-1)*{\rcrossrb{i}{j}}; (9,-1)*{\slinedr{i}}; (-9,-1)*{\slinenp{j'}};\endxy}
+(-1)^{\delta_{jj'}}
\vcenter{\xy 0;/r.11pc/:
    (9,40)*{\slinenr{}};(-9,40)*{\slineup{}};
    (-3,40)*{\slineur{}};(3,40)*{\slinen{}};
    (9,32)*{\slinenr{}};(-9,32)*{\slinenp{}};
    (-3,32)*{\slinenr{}};(3,32)*{\slinen{}};    (9,24)*{\slinenr{}};(-9,24)*{\slinenp{}};
    (-3,24)*{\slinenr{}};(3,24)*{\slinen{}};    (9,16)*{\slinenr{}};(-9,16)*{\slinenp{}};
    (-3,16)*{\slinenr{}};(3,16)*{\slinen{}};
    (9,8)*{\slinenr{}};(-9,8)*{\slinenp{}};
    (-3,8)*{\slinenr{}};(3,8)*{\slinen{}};
    (-3,-1)*{\slinenr{i}};(3,-1)*{\slined{j}}; (9,-1)*{\slinedr{i}}; (-9,-1)*{\slinenp{j'}};\endxy}
=
(-1)^{\delta_{jj'}}
\sum_{\substack{d+e+f\\=-\lambda_i-1}} \;
\vcenter{\xy 0;/r.11pc/:
    (-3,24)*{\slineur{}}; (6,24)*{\dcrossrb{}{}};
    (-9,24)*{\slineup{}};
    (-9,16)*{\slinenp{}};(9,16)*{\slinen{}};(0,18)*{\slcupr{}};
    (0,17.5)*[black]{\scs\bullet};(-3,14)*{\scs d};
    (20,13)*{\ccbubr{\scs\spadesuit+f}{i}};
    (-9,8)*{\slinenp{}};(9,8)*{\slinen{}};(0,7.5)*{\srcapr{}};
    (0,9.25)*[black]{\scs\bullet};(3,11)*{\scs e};
    (-3,-1)*{\slineur{i}}; (6,-1)*{\dcrossbr{j}{}}; (-9,-1)*{\slinenp{j'}};
    (9,32)*{\scs i};
\endxy}
+\delta_{jj'}t_{ji}
\sum_{\substack{a+c=\\ \l_i+\l_j-1}}
\vcenter{\xy 0;/r.11pc/:
    (-6,48)*{\ucrossrb{}{}}; (9,48)*{\slinenr{}};(3,48)*{\slinen{}};
    (0,42)*{\slcup{}};(0,41)*{\scs\bullet};
    (0,44)*{\scriptscriptstyle a};
    (-9,45)*{};(9,29)*{} **[black]\crv{(-9,37)& (9,37)}?(0)*[black]\dir{<};
    (9,45)*{};(-9,29)*{} **[black]\crv{(9,37)& (-9,37)}?(1)*[black]\dir{>};
    (0,25)*{\ccbub{\scriptscriptstyle\spadesuit+c}{j}};
    (9,24)*{\slinenr{}}; (-9,24)*{\slinenr{}};
    (-9,5)*{};(9,21)*{} **[black]\crv{(-9,13)& (9,13)}?(1)*[black]\dir{>};
    (9,5)*{};(-9,21)*{} **[black]\crv{(9,13)& (-9,13)}?(0)*[black]\dir{<};
    (0,6.5)*{\srcap{}};
    (-6,-1.5)*{\ucrossbr{j}{i}}; (3,-1.5)*{\slined{}};
    (9,-1.5)*{\slinedr{i}};
    (3,57)*{\scs j};
\endxy} \\
& \hspace{210pt}
+\delta_{jj'}\sum_{\substack{a+b+c= \\ \l_i+\l_j-2}}
\vcenter{\xy 0;/r.11pc/:
    (-6,48)*{\ucrossrb{}{}}; (9,48)*{\slinenr{}};(3,48)*{\slinen{}};
    (0,42)*{\slcup{}};(0,41)*{\scs\bullet};
    (0,44)*{\scriptscriptstyle a};
    (-9,45)*{};(9,29)*{} **[black]\crv{(-9,37)& (9,37)}?(0)*[black]\dir{<};
    (9,45)*{};(-9,29)*{} **[black]\crv{(9,37)& (-9,37)}?(1)*[black]\dir{>};
    (0,25)*{\ccbub{\scriptscriptstyle\spadesuit+c}{j}};
    (9,24)*{\slinenr{}}; (-9,24)*{\slinenr{}};
    (-9,5)*{};(9,21)*{} **[black]\crv{(-9,13)& (9,13)}?(1)*[black]\dir{>};
    (9,5)*{};(-9,21)*{} **[black]\crv{(9,13)& (-9,13)}?(0)*[black]\dir{<};
    (0,6.5)*{\srcap{}};(0,8.5)*{\scs\bullet};
    (0,5)*{\scriptscriptstyle b};
    (-6,0)*{\ucrossbr{}{}}; (6,0)*{\dcrossrb{}{}};
    (-9,-9)*{\slinen{j}};(-3,-9)*{\slinenr{i}}; (6,-9)*{\dcrossbr{}{i}};
    (3,57)*{\scs j};
\endxy}
+\delta_{jj'}t_{ij}\sum_{\substack{a+b+c= \\ \l_i+\l_j-2}}
\vcenter{\xy 0;/r.11pc/:
    (-6,48)*{\ucrossrb{}{}}; (9,48)*{\slinenr{}};(3,48)*{\slinen{}};
    (0,42)*{\slcup{}};(0,41)*{\scs\bullet};
    (0,44)*{\scriptscriptstyle a};
    (-9,45)*{};(9,29)*{} **[black]\crv{(-9,37)& (9,37)}?(0)*[black]\dir{<};
    (9,45)*{};(-9,29)*{} **[black]\crv{(9,37)& (-9,37)}?(1)*[black]\dir{>};
    (0,25)*{\ccbub{\scriptscriptstyle\spadesuit+c}{j}};
    (-9,29)*{}; (-9,22)*{} **\dir{-};
     (9,29)*{}; (9,22)*{} **\dir{-};
    (0,16)*{\llrcupr{}};
    (9,-3);(-9,-3) **[blue]\crv{(9,10) & (-9,10)}; ?(0)*[blue]\dir{<};
    (0,6.5)*{\scs\bullet};(0,10)*{\scriptscriptstyle b};
    (0,-1)*{\srcapr{}{}};
   (-9,-9)*{\slinen{j}};(-3,-9)*{\slinenr{i}}; (6,-9)*{\dcrossbr{}{i}};
   (3,57)*{\scs j};
\endxy}
\end{align*}
\begin{align*}
\phi_2
&=
-t_{ji}^2\delta_{jj'}\vcenter{\xy 0;/r.12pc/:
    (0,24)*{\slcup{}};
    (0,16)*{\lbigcrossrr{}{}};
    (0,9)*{\srcup{}}; (0,14)*{\slcap{}};
    (0,-1)*{\rbigcrossrr{i}{i}}; (0,-3.5)*{\srcap{}};
    (2.5,7)*{\scs j}; (2.5,30)*{\scs j}; (-2.5,-10)*{\scs j};
    \endxy}
+\vcenter{\xy 0;/r.11pc/:
    (0,40)*{\lcrossbp{}{}}; (9,40)*{\slinenr{}}; (-9,40)*{\slineur{}};
    (-6,32)*{\ncrossbr{}{}};(6,32)*{\ncrossrp{}{}};
    (0,24)*{\lcrossrr{}{}}; (9,24)*{\slinenp{}}; (-9,24)*{\slined{}};
    (0,16)*{\rcrossrr{}{}}; (9,16)*{\slineup{}}; (-9,16)*{\slinen{}};
    (-6,8)*{\ncrossrb{}{}};(6,8)*{\ncrosspr{}{}};
    (0,-1)*{\rcrosspb{j'}{j}}; (9,-1)*{\slinedr{i}}; (-9,-1)*{\slinenr{i}}; \endxy}
+(-1)^{\delta_{jj'}}
\vcenter{\xy 0;/r.11pc/:
    (9,40)*{\slinenr{}};(-9,40)*{\slineur{}};
    (-3,40)*{\slineup{}};(3,40)*{\slinen{}};
    (9,32)*{\slinenr{}};(-9,32)*{\slinenr{}};
    (-3,32)*{\slinenp{}};(3,32)*{\slinen{}};    (9,24)*{\slinenr{}};(-9,24)*{\slinenr{}};
    (-3,24)*{\slinenp{}};(3,24)*{\slinen{}};    (9,16)*{\slinenr{}};(-9,16)*{\slinenr{}};
    (-3,16)*{\slinenp{}};(3,16)*{\slinen{}};
    (9,8)*{\slinenr{}};(-9,8)*{\slinenr{}};
    (-3,8)*{\slinenp{}};(3,8)*{\slinen{}};
    (-3,-1)*{\slinenp{j'}};(3,-1)*{\slined{j}}; (9,-1)*{\slinedr{i}}; (-9,-1)*{\slinenr{i}};\endxy}
-\delta_{jj'}t_{ij}
\sum_{\substack{a+b+c\\ =\lambda_j-1}}
\sum_{h=0}^c (-v_{ij})^{-\l_i-h}
\vcenter{\xy 0;/r.14pc/:
    (0,24)*{\lcupcuprb{}{}};
    (0,24)*{\scs \bullet};(0,26.5)*{\scs a};
    (17.5,20)*{\ccbub{\scriptscriptstyle\spadesuit + c-h}{j}};
    (12,5,6)*{\ccbubr{\scriptscriptstyle\spadesuit + h}{i}};
    (0,-.5)*{\rcapcaprb{}{}};
    (0,0)*{\scs \bullet};(0,-2.5)*{\scs b};
    (-3,-6)*{\scs j}; (-6,-6)*{\scs i}; (3,30)*{\scs j}; (6,30)*{\scs i};
\endxy} \\
&=
\delta_{jj'}t_{ji}
\sum_{\substack{a+c=\\ \l_i+\l_j-1}}
\vcenter{\xy 0;/r.11pc/:
    (-6,57)*{\ucrossbr{}{}}; (9,57)*{\slinenr{}};(3,57)*{\slinen{}};
    (-6,49)*{\ucrossrb{}{}}; (9,49)*{\slinenr{}};(3,49)*{\slinen{}}; (0,43)*{\slcup{}};(0,42)*{\scs\bullet};
    (3,42)*{\scriptscriptstyle a};
    (-9,46)*{};(9,30)*{} **[black]\crv{(-9,38)& (9,38)}?(0)*[black]\dir{<};
    (9,46)*{};(-9,30)*{} **[black]\crv{(9,38)& (-9,38)}?(1)*[black]\dir{>};
    (0,25.5)*{\ccbub{\scriptscriptstyle\spadesuit+c}{j}};
    (9,25)*{\slinenr{}}; (-9,25)*{\slinenr{}};
    (-9,6)*{};(9,22)*{} **[black]\crv{(-9,14)& (9,14)}?(1)*[black]\dir{>};
    (9,6)*{};(-9,22)*{} **[black]\crv{(9,14)& (-9,14)}?(0)*[black]\dir{<};
    (0,8)*{\srcap{}};
    (-9,3)*{\scs i}; (-3,3)*{\scs j}; (9,3)*{\scs i}; (-3,65)*{\scs j};
    \endxy}
+t_{ij'}^{-1}(-1)^{\delta_{jj'}}
\sum_{\substack{d+e+f+g\\=-\l_i-2}}
(-v_{ij'})^{g}\;\;
\vcenter{\xy 0;/r.11pc/:
    (-6,40)*{\ucrosspr{}{}};(6,40)*{\dcrossrb{}{}};
    (-9,32)*{\slineup{}};(9,32)*{\slinen{}};
    (0,34)*{\slcupr{}};
    (0,33)*[black]{\scs \bullet};(-3,31)*{\scriptscriptstyle d};
    (9,24)*{\slinen{}};(-9,24)*{\slinenp{}};
    (20,22)*{\ccbubr{\scriptscriptstyle \spadesuit+f}{i}};
    (-13,16)*{\scriptscriptstyle g};(-9,16)*{\sdotp{}};
    (0,15.5)*[black]{\scs \bullet};(0,18)*{\scriptscriptstyle e};
    (0,14)*{\srcapr{}};(9,16)*{\slinen{}};
    (-6,8)*{\ucrossrp{}{}};
    (3,8)*{\slinedr{}};(9,8)*{\slined{}};
    (6,-1)*{\dcrossbr{j}{}};
    (-3,-1)*{\slineup{j'}}; (-9,-1)*{\slineur{i}};
    (9,48)*{\scs i};
    \endxy}
+\delta_{jj'}t_{ji}
\sum_{\substack{a+b+c= \\ \l_i+\l_j-1}}
\vcenter{\xy 0;/r.11pc/: (0,44)*{\slcup{}};(0,43)*{\scs\bullet};
    (3,42)*{\scriptscriptstyle a};
    (-9,46)*{};(9,30)*{} **[black]\crv{(-9,38)& (9,38)}?(0)*[black]\dir{<};
    (9,46)*{};(-9,30)*{} **[black]\crv{(9,38)& (-9,38)}?(1)*[black]\dir{>};
    (0,26)*{\ccbub{\scriptscriptstyle\spadesuit+c}{j}};
    (9,25)*{\slinenr{}}; (-9,25)*{\slinenr{}};
    (-9,5)*{};(9,22)*{} **[black]\crv{(-9,14)& (9,14)}?(1)*[black]\dir{>};
    (9,5)*{};(-9,22)*{} **[black]\crv{(9,14)& (-9,14)}?(0)*[black]\dir{<};
    (0,6.5)*{\srcap{}};(0,8)*{\scs\bullet};
    (0,4.5)*{\scriptscriptstyle b};(-9,0)*{\slinenr{}};
    (-3,0)*{\slinen{}}; (6,0)*{\dcrossrb{}{}};
    (6,-9)*{\dcrossbr{}{i}};
    (-3,-9)*{\slineu{j}};(-9,-9)*{\slineur{i}};
    (3,50)*{\scs j};
\endxy}\\
& \hspace{270pt}
+\delta_{jj'}t_{ij}
\sum_{\substack{a+b+c= \\ \l_i+\l_j-2}}
\vcenter{\xy 0;/r.11pc/: (0,44)*{\slcup{}};(0,43)*{\scs\bullet};
    (3,42)*{\scriptscriptstyle a};
    (-9,46)*{};(9,30)*{} **[black]\crv{(-9,38)& (9,38)}?(0)*[black]\dir{<};
    (9,46)*{};(-9,30)*{} **[black]\crv{(9,38)& (-9,38)}?(1)*[black]\dir{>};(-6,41)*[black]{\scs \bullet};
    (0,26)*{\ccbub{\scriptscriptstyle\spadesuit+c}{j}};
    (9,25)*{\slinenr{}}; (-9,25)*{\slinenr{}};
    (-9,5)*{};(9,22)*{} **[black]\crv{(-9,14)& (9,14)}?(1)*[black]\dir{>};
    (9,5)*{};(-9,22)*{} **[black]\crv{(9,14)& (-9,14)}?(0)*[black]\dir{<};
    (0,6.5)*{\srcap{}};(0,8)*{\scs\bullet};
    (0,4.5)*{\scriptscriptstyle b};(-9,0)*{\slinenr{}};
    (-3,0)*{\slinen{}}; (6,0)*{\dcrossrb{}{}};
    (6,-9)*{\dcrossbr{}{i}};
    (-3,-9)*{\slineu{j}};(-9,-9)*{\slineur{i}};
    (3,50)*{\scs j};
\endxy}
\end{align*}
\begin{align*}
\phi_3
&=
\delta_{jj'}t_{ji}\vcenter{\xy 0;/r.11pc/:
    (0,32)*{\lcrossrr{}{}}; (9,32)*{\slinen{}}; (-9,32)*{\slineu{}};
    (-6,24)*{\ncrossrb{}{}};(6,24)*{\ncrossbr{}{}};
    (0,16)*{\lcross{}{}}; (9,16)*{\slinenr{}}; (-9,16)*{\slinedr{}}; (0,10)*{\srcup{}};
    (0,2)*{\rbigcrossrr{i}{i}}; (0,-.5)*{\srcap{}};
    (9,40)*{\scs j}; (-2.5,-7)*{\scs j};
    \endxy}
-t_{ij}\vcenter{\xy 0;/r.11pc/:
    (0,35)*{\slcupr{}};
    (0,27)*{\lbigcrossbp{}{}}; (0,22.5)*{\slcapr{}};
    (0,16)*{\rcrossrr{}{}}; (9,16)*{\slineup{}}; (-9,16)*{\slinen{}};
    (-6,8)*{\ncrossrb{}{}};(6,8)*{\ncrosspr{}{}};
    (0,-1)*{\rcrosspb{j'}{j}}; (9,0)*{\slinedr{}}; (-9,-1)*{\slinenr{i}}; (3,41)*{\scs i};
    \endxy}
-\delta_{jj'}t_{ij}
\sum_{\substack{a+b+c\\ =\lambda_j-1}}
\sum_{h=0}^c (-v_{ij})^{-\l_i-h}
\vcenter{\xy 0;/r.14pc/:
    (0,24)*{\lcupcupbr{}{}};
    (0,22)*{\scs \bullet};(0,18)*{\scs a};
    (17.5,20)*{\ccbub{\scriptscriptstyle\spadesuit + c-h}{j}};
    (12.5,6)*{\ccbubr{\scriptscriptstyle\spadesuit + h}{i}};
    (0,-.5)*{\rcapcaprb{}{}};
    (0,0)*{\scs \bullet};(0,-4)*{\scs b};
    (-3,-6)*{\scs j}; (-6,-6)*{\scs i}; (3,30)*{\scs i}; (6,30)*{\scs j};
\endxy}\\
&=
\delta_{jj'}t_{ji}
\sum_{\substack{a+c=\\\l_i+\l_j-1}}
\vcenter{\xy 0;/r.11pc/:
    (-6,57)*{\ucrossrb{}{}};(3,57)*{\slinedr{}};(9,57)*{\slined{}};
    (6,49)*{\dcrossbr{}{}};(-3,49)*{\slinen{}};(-9,49)*{\slinenr{}};
    (0,43)*{\slcup{}};(0,42)*{\scs\bullet};
    (3,41.5)*{\scriptscriptstyle a};
    (-9,46)*{};(9,30)*{} **[black]\crv{(-9,38)& (9,38)}?(0)*[black]\dir{<};
    (9,46)*{};(-9,30)*{} **[black]\crv{(9,38)& (-9,38)}?(1)*[black]\dir{>};
    (0,25.5)*{\ccbub{\scriptscriptstyle\spadesuit+c}{j}};
    (9,25)*{\slinenr{}}; (-9,25)*{\slinenr{}};
    (-9,6)*{};(9,22)*{} **[black]\crv{(-9,14)& (9,14)}?(1)*[black]\dir{>};
    (9,6)*{};(-9,22)*{} **[black]\crv{(9,14)& (-9,14)}?(0)*[black]\dir{<};
    (0,8)*{\srcap{}};
    (-9,3)*{\scs i}; (-3,3)*{\scs j}; (9,3)*{\scs i}; (9,65)*{\scs j};
    \endxy}
-t_{ij}t_{ij'}^{-1}(-1)^{\delta_{jj'}}
\sum_{\substack{e+f+g\\ = -\l_i-1}}
(-v_{ij'})^{g}\;
\vcenter{\xy 0;/r.13pc/:
    (-9,32)*{\slineup{}};(9,32)*{\slinen{}};
    (0,34)*{\slcupr{}};
    (9,24)*{\slinen{}};(-9,24)*{\slinenp{}};
    (18,16)*{\ccbubr{\scriptscriptstyle \spadesuit+f}{i}};
    (-13,16)*{\scriptscriptstyle g};(-9,16)*{\sdotp{}};
    (0,15.5)*[black]{\scs \bullet};(0,18)*{\scriptscriptstyle e};
    (0,14)*{\srcapr{}};(9,16)*{\slinen{}};
    (-6,8)*{\ucrossrp{}{}};(3,8)*{\slinenr{}};
    (9,8)*{\slinen{}};(6,-1)*{\dcrossbr{j}{i}};
    (-3,-1)*{\slineup{j'}};(-9,0)*{\slineur{}};
    (3,40)*{\scs i};
\endxy}
-\delta_{jj'}t_{ij}\sum_{\substack{a+b+c= \\ \l_i+\l_j-2}}
\vcenter{\xy 0;/r.11pc/:
    (0,44)*{\lcupcupbr{}{}};(0,42)*{\scs\bullet};
    (3,41)*{\scriptscriptstyle a};
    (0,35)*{\lllcapr{}};
    (1,26)*{\ccbub{\scriptscriptstyle\spadesuit+c\;\;}{j}};
    (9,25)*{\slinenr{}}; (-9,25)*{\slinenr{}};
    (-9,5)*{};(9,22)*{} **[black]\crv{(-9,14)& (9,14)}?(1)*[black]\dir{>};
    (9,5)*{};(-9,22)*{} **[black]\crv{(9,14)& (-9,14)}?(0)*[black]\dir{<};
    (0,7)*{\srcap{}};(0,9)*{\scs\bullet};
    (-2,10.5)*{\scriptscriptstyle b};(-9,0)*{\slinenr{}};
    (-3,0)*{\slinen{}};(6,0)*{\dcrossrb{}{}};
    (-9,-9)*{\slineur{i}};
    (-3,-9)*{\slineu{j}};(6,-8)*{\dcrossbr{}{}};
    (3,51)*{\scs i}; (6,51)*{\scs j};
\endxy}
\end{align*}
\begin{align*}
\phi_4
&=
t_{ij'}\vcenter{\xy 0;/r.11pc/:
    (0,32)*{\lcrossbp{}{}}; (9,32)*{\slinenr{}}; (-9,32)*{\slineur{}};
    (-6,24)*{\ncrossbr{}{}};(6,24)*{\ncrossrp{}{}};
    (0,16)*{\lcrossrr{}{}}; (9,16)*{\slinenp{}}; (-9,16)*{\slined{}}; (0,10)*{\srcupr{}};
    (-.2,1.5)*{\rbigcrosspb{j'}{j}}; (0,-1)*{\srcapr{}};
    (9,40)*{\scs i}; (-2.5,-7.5)*{\scs i};
    \endxy}
-\delta_{jj'}t_{ji}\vcenter{\xy 0;/r.11pc/:
    (0,34)*{\slcup{}};
    (0,26)*{\lbigcrossrr{}{}}; (0,22.5)*{\slcap{}};
    (0,16)*{\rcross{}{}}; (9,16)*{\slineur{}}; (-9,16)*{\slinenr{}};
    (-6,8)*{\ncrossbr{}{}};(6,8)*{\ncrossrb{}{}};
    (0,-1)*{\rcrossrr{i}{i}}; (9,-.25)*{\slined{}}; (-9,-1)*{\slinen{j}};
    (3,40)*{\scs j};
\endxy}
+\delta_{jj'}t_{ij}
\sum_{\substack{a+b+c\\ =\lambda_j-1}}
\sum_{h=0}^c (-v_{ij})^{-\l_i-h}
\vcenter{\xy 0;/r.14pc/:
    (0,24)*{\lcupcuprb{}{}};
    (0,24.5)*{\scs \bullet};(0,27)*{\scs a};
    (17.5,20)*{\ccbub{\scriptscriptstyle\spadesuit + c-h}{j}};
    (12.5,6)*{\ccbubr{\scriptscriptstyle\spadesuit + h}{i}};
    (0,-.5)*{\rcapcapbr{}{}};
    (0,2)*{\scs \bullet};(0,5)*{\scs b};
    (-3,-6)*{\scs i}; (-6,-6)*{\scs j}; (3,30)*{\scs j}; (6,30)*{\scs i};
\endxy}
    \\
&=
(-1)^{\delta_{jj'}}
\left(\sum_{\substack{d+e+f=\\-\l_i-1}}
\vcenter{\xy 0;/r.12pc/:
    (-6,32)*{\ucrosspr{}{}};(3,32)*{\slined{}};(9,32)*{\slinedr{}};
    (6,24)*{\dcrossrb{}{}};(-3,24)*{\slinenr{}};(-9,24)*{\slinenp{}};
    (0,18)*{\slcupr{}};(0,17.5)*[black]{\scs\bullet};
    (0,21)*{\scriptscriptstyle d};
    (-9,16)*{\slinenp{}};(9,16)*{\slinen{}};
    (-9,7)*{\slinenp{j'}};
    (20,20)*{\ccbubr{\scriptscriptstyle\spadesuit+f}{i}};
    (0,7)*{\srcapr{}};(0,8.5)*[black]{\scs\bullet};
    (0,11)*{\scriptscriptstyle e};(9,7)*{\slined{j}};
    (-3,3)*{\scs i}; (9,40)*{\scs i};
\endxy}
- t_{ij'}^{-1}
\sum_{\substack{d+e+f+g\\=-\l_i-2}}
(-v_{ij'})^{g} \;
\vcenter{\xy 0;/r.12pc/:
    (-6,33)*{\ucrosspr{}{}};(3,33)*{\slined{}};(9,33)*{\slinedr{}};
    (6,25)*{\dcrossrb{}{}};(-3,25)*{\slinenr{}};
    (-9,23)*{\slinenp{}}; (-9,25)*{\slinenp{}};
(-3,22);(3,22) **[black]\crv{(-3,18) & (3,18)};
    (0,19)*[black]{\scs\bullet};
    (-.5,22)*{\scriptscriptstyle d};
    (-9,15)*{\sdotp{}};(9,15)*{\slinen{}}; (9,17)*{\slinen{}};
    (-13,16)*{\scriptscriptstyle g}; (-6,7)*{\ucrossrp{}{}}; (-6,-2)*{\ucrosspr{j'}{i}};
    (20,20)*{\ccbubr{\scriptscriptstyle\spadesuit+f}{i}};
    (-3,12);(3,12) **[black]\crv{(-3,16) & (3,16)};
(0,15)*[black]{\scs\bullet};(9,7)*{\slinen{}};(3,7)*{\slinenr{}};(3,0)*{\slinedr{}};
    (.5,12)*{\scriptscriptstyle e};(9,0)*{\slined{}};
    (9,-6)*{\scs j}; (9,41)*{\scs i};
\endxy}
\right)
+ \delta_{jj'}t_{ij}
\sum_{\substack{a+b+c= \\ \l_i+\l_j-2}}
\vcenter{\xy 0;/r.11pc/:
    (-6,57)*{\ucrossbr{}{}};(3,57)*{\slinen{}};(9,57)*{\slinenr{}};
    (-6,49)*{\ucrossrb{}{}};(3,49)*{\slinen{}};(9,49)*{\slinenr{}};
    (0,43)*{\slcup{}};(0,42)*{\scs\bullet};
    (3,41)*{\scriptscriptstyle a};
    (-9,46)*{};(9,30)*{} **[black]\crv{(-9,38)& (9,38)}?(0)*[black]\dir{<};
    (9,46)*{};(-9,30)*{} **[black]\crv{(9,38)& (-9,38)}?(1)*[black]\dir{>};
    (9,25)*{\slinenr{}}; (-9,25)*{\slinenr{}};
    (1,25)*{\ccbub{\scriptscriptstyle\spadesuit+c\;\;}{j}};
    (0,16)*{\llrcupr{}};
    (0,5)*{\rcapcapbr{}{}};
    (0,8)*{\scs\bullet};(3,10)*{\scriptscriptstyle b};
    (3,65)*{\scs j}; (9,65)*{\scs i}; (3,-.5)*{\scs i}; (6,-.5)*{\scs j};
\endxy} \\
&\hspace{90pt}
-\delta_{jj'}t_{ji}
\sum_{\substack{a+b+c=\\ \l_i+\l_j-1}}
\vcenter{\xy 0;/r.11pc/:
    (0,43)*{\slcup{}};(0,42)*{\scs \bullet};
    (3,41)*{\scriptscriptstyle a};
    (-9,46)*{};(9,30)*{} **[black]\crv{(-9,38)& (9,38)}?(0)*[black]\dir{<};
    (9,46)*{};(-9,30)*{} **[black]\crv{(9,38)& (-9,38)}?(1)*[black]\dir{>};
    (9,25)*{\slinenr{}}; (-9,25)*{\slinenr{}};
    (0,26)*{\ccbub{\scriptscriptstyle\spadesuit+c\;}{j}};
    (-9,5)*{};(9,22)*{} **[black]\crv{(-9,14)& (9,14)}?(1)*[black]\dir{>};
    (9,5)*{};(-9,22)*{} **[black]\crv{(9,14)& (-9,14)}?(0)*[black]\dir{<};
    (0,7)*{\srcap{}};(0,8.5)*{\scs\bullet};
    (0,5.5)*{\scriptscriptstyle b};(6,0)*{\dcrossrb{}{}};
    (-3,0)*{\slinen{}};(-9,0)*{\slinenr{}};
    (-6,-9)*{\ucrossbr{j}{i}};
    (3,-9)*{\slinedr{i}};(9,-8)*{\slined{}};
    (3,49)*{\scs j};
\endxy}
+\delta_{jj'}\sum_{\substack{a+b+c= \\ \l_i+\l_j-2}}
\vcenter{\xy 0;/r.11pc/:
    (-6,57)*{\ucrossbr{}{}}; (9,57)*{\slinenr{}};(3,57)*{\slinen{}};
    (-6,49)*{\ucrossrb{}{}}; (9,49)*{\slinenr{}};(3,49)*{\slinen{}};
    (0,43)*{\slcup{}};(0,42)*{\scs\bullet};
    (0,46)*{\scriptscriptstyle a};
    (-9,46)*{};(9,30)*{} **[black]\crv{(-9,38)& (9,38)}?(0)*[black]\dir{<};
    (9,46)*{};(-9,30)*{} **[black]\crv{(9,38)& (-9,38)}?(1)*[black]\dir{>};
    (1,26)*{\ccbub{\scriptscriptstyle\spadesuit+c\;}{j}};
    (9,25)*{\slinenr{}}; (-9,25)*{\slinenr{}};
    (-9,5)*{};(9,22)*{} **[black]\crv{(-9,14)& (9,14)}?(1)*[black]\dir{>};
    (9,5)*{};(-9,22)*{} **[black]\crv{(9,14)& (-9,14)}?(0)*[black]\dir{<};
    (0,7)*{\srcap{}};(0,8.5)*{\scs\bullet};
    (0,4)*{\scriptscriptstyle b};
    (-6,-1)*{\ucrossbr{j}{i}}; (6,-1)*{\dcrossrb{i}{}};
    (3,65)*{\scs j};
\endxy}
-\delta_{jj'}t_{ij}
\sum_{\substack{a+b+c=\\ \l_i+\l_j-2}}
\vcenter{\xy 0;/r.11pc/:
    (0,43)*{\slcup{}};(0,42)*{\scs \bullet};
    (3,41)*{\scriptscriptstyle a}; (-6,41)*[black]{\scs \bullet};
    (-9,46)*{};(9,30)*{} **[black]\crv{(-9,38)& (9,38)}?(0)*[black]\dir{<};
    (9,46)*{};(-9,30)*{} **[black]\crv{(9,38)& (-9,38)}?(1)*[black]\dir{>};
    (9,25)*{\slinenr{}}; (-9,25)*{\slinenr{}};
    (0,26)*{\ccbub{\scriptscriptstyle\spadesuit+c\;}{j}};
    (-9,5)*{};(9,22)*{} **[black]\crv{(-9,14)& (9,14)}?(1)*[black]\dir{>};
    (9,5)*{};(-9,22)*{} **[black]\crv{(9,14)& (-9,14)}?(0)*[black]\dir{<};
    (0,7)*{\srcap{}};(0,8.5)*{\scs\bullet};
    (0,5.5)*{\scriptscriptstyle b};(6,0)*{\dcrossrb{}{}};
    (-3,0)*{\slinen{}};(-9,0)*{\slinenr{}};
    (-6,-9)*{\ucrossbr{j}{i}};
    (3,-9)*{\slinedr{i}};(9,-8)*{\slined{}};
    (3,49)*{\scs j};
\endxy}
\end{align*}
\begin{align*}
\phi_5
&=
-t_{ij}t_{ij'}\vcenter{\xy 0;/r.11pc/:
    (0,24)*{\slcupr{}};
    (0.2,16.5)*{\lbigcrossbp{}{}};
    (0,9)*{\srcupr{}}; (0,14)*{\slcapr{}};
    (0,-1)*{\rbigcrosspb{j'}{j}}; (0,-3.5)*{\srcapr{}};
    (2.5,30)*{\scs i}; (2.5,7)*{\scs i}; (-2.5,-9)*{\scs i};
    \endxy}
+\vcenter{\xy 0;/r.11pc/:
    (0,40)*{\lcrossrr{}{}}; (9,40)*{\slinen{}}; (-9,40)*{\slineup{}};
    (-6,32)*{\ncrossrp{}{}};(6,32)*{\ncrossbr{}{}};
    (0,24)*{\lcrossbp{}{}}; (9,24)*{\slinenr{}}; (-9,24)*{\slinedr{}};
    (0,16)*{\rcrosspb{}{}}; (9,16)*{\slineur{}}; (-9,16)*{\slinenr{}};
    (-6,8)*{\ncrosspr{}{}};(6,8)*{\ncrossrb{}{}};
    (0,-1)*{\rcrossrr{i}{i}}; (9,-1)*{\slined{j}}; (-9,-1)*{\slinenp{j'}}; \endxy}
+(-1)^{\delta_{jj'}}
\vcenter{\xy 0;/r.11pc/:
    (9,40)*{\slinen{}};(-9,40)*{\slineup{}};
    (-3,40)*{\slineur{}};(3,40)*{\slinenr{}};
    (9,32)*{\slinen{}};(-9,32)*{\slinenp{}};
    (-3,32)*{\slinenr{}};(3,32)*{\slinenr{}};    (9,24)*{\slinen{}};(-9,24)*{\slinenp{}};
    (-3,24)*{\slinenr{}};(3,24)*{\slinenr{}};    (9,16)*{\slinen{}};(-9,16)*{\slinenp{}};
    (-3,16)*{\slinenr{}};(3,16)*{\slinenr{}};
    (9,8)*{\slinen{}};(-9,8)*{\slinenp{}};
    (-3,8)*{\slinenr{}};(3,8)*{\slinenr{}};
    (-3,-1)*{\slinenr{i}};(3,-1)*{\slinedr{i}}; (9,-1)*{\slined{j}}; (-9,-1)*{\slinenp{j'}};\endxy}
+\delta_{jj'}t_{ij}
\sum_{\substack{a+b+c\\ =\lambda_j-1}}
\sum_{h=0}^c (-v_{ij})^{-\l_i-h}
\vcenter{\xy 0;/r.14pc/:
    (0,24)*{\lcupcupbr{}{}};
    (0,22)*{\scs \bullet};(0,19.5)*{\scs a};
    (17.5,20)*{\ccbub{\scriptscriptstyle\spadesuit + c-h}{j}};
    (12.5,6)*{\ccbubr{\scriptscriptstyle\spadesuit + h}{i}};
    (0,-.5)*{\rcapcapbr{}{}};
    (0,2)*{\scs \bullet};(0,5)*{\scs b};
    (-3,-6)*{\scs i}; (-6,-6)*{\scs j}; (3,30)*{\scs i}; (6,30)*{\scs j};
\endxy} \\
&=
t_{ij}t_{ij'}^{-1}(-1)^{\delta_{jj'}}
\sum_{\substack{d+f+g=\\-\l_i-1}}
(-v_{ij'})^{g}\;
\vcenter{\xy 0;/r.12pc/:
    (-9,32)*{\slineup{}};(9,32)*{\slinen{}};
    (0,34)*{\slcupr{}};
    (9,24)*{\slinen{}};(-9,24)*{\slinenp{}};
    (21,16)*{\ccbubr{\scriptscriptstyle \spadesuit+f}{i}};
    (-13,16)*{\scriptscriptstyle g};(-9,16)*{\sdotp{}};
    (0,16.5)*[black]{\scs \bullet};(0,19)*{\scriptscriptstyle e};
    (0,15)*{\srcapr{}};(9,16)*{\slined{}};
    (-6,-1)*{\ucrosspr{j'}{i}};(-6,8)*{\ucrossrp{}{}};
    (3,8)*{\slinenr{}};(9,8)*{\slinen{}};
    (3,0)*{\slinedr{}};(9,-1)*{\slined{j}};
    (3,39)*{\scs i};
    \endxy}
+(-1)^{\delta_{jj'}}
\sum_{\substack{d+e+f\\=-\l_i-1}}
\vcenter{\xy 0;/r.12pc/:
    (-9,32)*{\slineup{}};(-3,32)*{\slineur{}};
    (6,32)*{\dcrossbr{}{}};
    (0,18)*{\slcupr{}};(0,17)*[black]{\scs\bullet};
    (0,14)*{\scriptscriptstyle d};
    (-9,24)*{\slinenp{}}; (-3,24)*{\slinenr{}};
    (-9,16)*{\slinenp{}};(6,24)*{\dcrossrb{}{}};(9,16)*{\slinen{}};
    (20,16)*{\ccbubr{\scriptscriptstyle \spadesuit+f}{i}};
    (-9,8)*{\slinenp{}};(9,8)*{\slinen{}};
    (-9,-1)*{\slinenp{j'}};(9,-1)*{\slined{j}};
    (0,0)*{\srcapr{}};(0,1)*[black]{\scs\bullet};
    (0,4)*{\scriptscriptstyle e};
    (3,39)*{\scs i}; (-3,-4)*{\scs i};
    \endxy}
+\delta_{jj'}\sum_{\substack{a+b+c= \\ \l_i+\l_j-2}}
\vcenter{\xy 0;/r.11pc/:
    (-6,49)*{\ucrossrb{}{}}; (6,49)*{\dcrossbr{}{}};
    (0,43)*{\slcup{}};(0,42)*{\scs\bullet};
    (3,41.5)*{\scriptscriptstyle a};
    (-9,46)*{};(9,30)*{} **[black]\crv{(-9,38)& (9,38)}?(0)*[black]\dir{<};
    (9,46)*{};(-9,30)*{} **[black]\crv{(9,38)& (-9,38)}?(1)*[black]\dir{>};
    (1,26)*{\ccbub{\scriptscriptstyle\spadesuit+c\;}{j}};
    (9,25)*{\slinenr{}}; (-9,25)*{\slinenr{}};
    (-9,5)*{};(9,22)*{} **[black]\crv{(-9,14)& (9,14)}?(1)*[black]\dir{>};
    (9,5)*{};(-9,22)*{} **[black]\crv{(9,14)& (-9,14)}?(0)*[black]\dir{<};
    (0,7)*{\srcap{}};(0,8.5)*{\scs\bullet};
    (3,9.5)*{\scriptscriptstyle b};
    (-6,-1)*{\ucrossbr{j}{i}}; (6,-1)*{\dcrossrb{i}{}};
    (9,57)*{\scs j};
\endxy} \\
&
\hspace{180pt}
+\delta_{jj'}t_{ij}
\sum_{\substack{a+b+c= \\ \l_i+\l_j-2}}
\vcenter{\xy 0;/r.11pc/:
    (0,44)*{\lcupcupbr{}{}};(0,42)*{\scs\bullet};
    (3,41)*{\scriptscriptstyle a};
    (0,35)*{\lllcapr{}};
    (1,26)*{\ccbub{\scriptscriptstyle\spadesuit+c\;\;}{j}};
    (9,25)*{\slinenr{}}; (-9,25)*{\slinenr{}};
    (-9,5)*{};(9,22)*{} **[black]\crv{(-9,14)& (9,14)}?(1)*[black]\dir{>};
    (9,5)*{};(-9,22)*{} **[black]\crv{(9,14)& (-9,14)}?(0)*[black]\dir{<};
    (0,7)*{\srcap{}};(0,9)*{\scs\bullet};
    (-2,10.25)*{\scriptscriptstyle b};(-9,0)*{\slinenr{}};
    (-3,0)*{\slinen{}};(6,0)*{\dcrossrb{}{}};(9,-8)*{\slined{}};
    (-6,-9)*{\ucrossbr{j}{i}};(3,-8)*{\slinedr{}};
    (3,51)*{\scs i}; (6,51)*{\scs j};
\endxy}
+\delta_{jj'}t_{ij}\sum_{\substack{a+b+c= \\ \l_i+\l_j-2}}
\vcenter{\xy 0;/r.11pc/:
    (-9,57)*{\slineu{}};(-3,57)*{\slineur{}};(6,57)*{\dcrossbr{}{}};
    (-6,49)*{\ucrossrb{}{}};(3,49)*{\slinen{}};(9,49)*{\slinenr{}};
    (0,43)*{\slcup{}};(0,42)*{\scs\bullet};
    (3,41)*{\scriptscriptstyle a};
    (-9,46)*{};(9,30)*{} **[black]\crv{(-9,38)& (9,38)}?(0)*[black]\dir{<};
    (9,46)*{};(-9,30)*{} **[black]\crv{(9,38)& (-9,38)}?(1)*[black]\dir{>};
    (9,25)*{\slinenr{}}; (-9,25)*{\slinenr{}};
    (1,25)*{\ccbub{\scriptscriptstyle\spadesuit+c\;\;}{j}};
    (0,16)*{\llrcupr{}};
    (0,5)*{\rcapcapbr{}{}};
    (0,8)*{\scs\bullet};(3,10)*{\scriptscriptstyle b};
    (3,65)*{\scs j}; (9,65)*{\scs i}; (3,-.5)*{\scs i}; (6,-.5)*{\scs j};
\endxy}
\end{align*}
\begin{align*}
\phi_6
&=
\vcenter{\xy 0;/r.11pc/:
    (0,40)*{\lcrossrp{}{}}; (9,40)*{\slinen{}};
    (-9,40)*{\slineur{}};
    (-6,32)*{\ncrossrr{}{}};(6,32)*{\ncrossbp{}{}};
    (0,24)*{\lcrossbr{}{}}; (9,24)*{\slinenp{}}; (-9,24)*{\slinedr{}};
    (0,16)*{\rcrossrb{}{}}; (9,16)*{\slineup{}}; (-9,16)*{\slinenr{}};
    (-6,8)*{\ncrossrr{}{}};(6,8)*{\ncrosspb{}{}};
    (0,-1)*{\rcrosspr{j'}{i}}; (9,-1)*{\slined{j}}; (-9,-1)*{\slinenr{i}};\endxy}
+(-1)^{\delta_{jj'}}
\vcenter{\xy 0;/r.11pc/:
    (9,40)*{\slinen{}};(-9,40)*{\slineur{}};
    (-3,40)*{\slineup{}};(3,40)*{\slinenr{}};
    (9,32)*{\slinen{}};(-9,32)*{\slinenr{}};
    (-3,32)*{\slinenp{}};(3,32)*{\slinenr{}};    (9,24)*{\slinen{}};(-9,24)*{\slinenr{}};
    (-3,24)*{\slinenp{}};(3,24)*{\slinenr{}};    (9,16)*{\slinen{}};(-9,16)*{\slinenr{}};
    (-3,16)*{\slinenp{}};(3,16)*{\slinenr{}};
    (9,8)*{\slinen{}};(-9,8)*{\slinenr{}};
    (-3,8)*{\slinenp{}};(3,8)*{\slinenr{}};
    (-3,-1)*{\slinenp{j'}};(3,-1)*{\slinedr{i}}; (9,-1)*{\slined{j}}; (-9,-1)*{\slinenr{i}};\endxy} \\
& = t_{ij}t_{ij'}^{-1}(-1)^{\delta_{jj'}}
\sum_{\substack{e+f+g\\=-\l_i-1}}
(-v_{ij'})^{g}\;\;
\vcenter{\xy 0;/r.11pc/:
    (-6,40)*{\ucrosspr{}{}};(3,40)*{\slinenr{}};(9,40)*{\slinen{}};
    (-9,32)*{\slineup{}};(9,32)*{\slinen{}};
    (0,34)*{\slcupr{}};
    (9,24)*{\slinen{}};(-9,24)*{\slinenp{}};
    (20,22)*{\ccbubr{\scriptscriptstyle \spadesuit+f}{i}};
    (-13,16)*{\scriptscriptstyle g};(-9,16)*{\sdotp{}};
    (0,16.5)*[black]{\scs \bullet};(2,18.5)*{\scriptscriptstyle e};
    (0,15)*{\srcapr{}};(9,16)*{\slinen{}};
    (-6,7)*{\ucrossrp{i}{j'}};
    (3,8)*{\slinedr{}};(9,7)*{\slined{j}};
    (3,48)*{\scs i};
    \endxy}
+t_{ij'}^{-1}(-1)^{\delta_{jj'}}
\sum_{\substack{d+e+f+g\\=-\l_i-2}}
(-v_{ij'})^{g}\;\;
\vcenter{\xy 0;/r.11pc/:
    (-9,48)*{\slineur{}};(-3,48)*{\slineup{}};
    (6,48)*{\dcrossbr{}{}};
    (-6,40)*{\ucrosspr{}{}};(6,40)*{\dcrossrb{}{}};
    (-9,32)*{\slineup{}};(9,32)*{\slinen{}};
    (0,34)*{\slcupr{}};
    (0,33)*[black]{\scs \bullet};(-3,31)*{\scriptscriptstyle d};
    (9,24)*{\slinen{}};(-9,24)*{\slinenp{}};
    (20,22)*{\ccbubr{\scriptscriptstyle \spadesuit+f}{i}};
    (-13,16)*{\scriptscriptstyle g};(-9,16)*{\sdotp{}};
    (0,16.5)*[black]{\scs \bullet};(2,18.5)*{\scriptscriptstyle e};
    (0,15)*{\srcapr{}};(9,16)*{\slinen{}};
    (-6,7)*{\ucrossrp{i}{j'}};
    (3,8)*{\slinedr{}};(9,7)*{\slined{j}};
    (3,56)*{\scs i};
    \endxy}  \\
& \hspace{90pt}
+\delta_{jj'}t_{ji}
\sum_{\substack{a+b+c= \\ \l_i+\l_j-1}}
\vcenter{\xy 0;/r.11pc/:
    (-9,49)*{\slineur{}};(-3,49)*{\slineu{}}; (6,49)*{\dcrossbr{}{}};
    (0,43)*{\slcup{}};(0,42)*{\scs\bullet};
    (3,41)*{\scriptscriptstyle a};
    (-9,46)*{};(9,30)*{} **[black]\crv{(-9,34)& (9,38)}?(0)*[black]\dir{<};
    (9,46)*{};(-9,30)*{} **[black]\crv{(9,34)& (-9,38)}?(1)*[black]\dir{>};
    (1,26)*{\ccbub{\scriptscriptstyle\spadesuit+c\;\;}{j}};
    (9,25)*{\slinenr{}}; (-9,25)*{\slinenr{}};
    (-9,5)*{};(9,22)*{} **[black]\crv{(-9,14)& (9,14)}?(1)*[black]\dir{>};
    (9,5)*{};(-9,22)*{} **[black]\crv{(9,14)& (-9,14)}?(0)*[black]\dir{<};
    (0,7)*{\srcap{}};(0,8.5)*{\scs\bullet};
    (0,5)*{\scriptscriptstyle b};(-9,-1)*{\slinenr{i}};
    (-3,-1)*{\slinen{j}}; (6,-1)*{\dcrossrb{i}{}};
    (9,57)*{\scs j};
\endxy}
+\delta_{jj'}t_{ij}
\sum_{\substack{a+b+c= \\ \l_i+\l_j-2}}
\vcenter{\xy 0;/r.11pc/:
    (-9,49)*{\slineur{}};(-3,49)*{\slineu{}}; (6,49)*{\dcrossbr{}{}};
    (0,43)*{\slcup{}};(0,42)*{\scs\bullet}; (-6,39.5)*[black]{\scs \bullet};
    (3,41)*{\scriptscriptstyle a};
    (-9,46)*{};(9,30)*{} **[black]\crv{(-9,34)& (9,38)}?(0)*[black]\dir{<};
    (9,46)*{};(-9,30)*{} **[black]\crv{(9,34)& (-9,38)}?(1)*[black]\dir{>};
    (1,26)*{\ccbub{\scriptscriptstyle\spadesuit+c\;\;}{j}};
    (9,25)*{\slinenr{}}; (-9,25)*{\slinenr{}};
    (-9,5)*{};(9,22)*{} **[black]\crv{(-9,14)& (9,14)}?(1)*[black]\dir{>};
    (9,5)*{};(-9,22)*{} **[black]\crv{(9,14)& (-9,14)}?(0)*[black]\dir{<};
    (0,7)*{\srcap{}};(0,8.5)*{\scs\bullet};
    (0,5)*{\scriptscriptstyle b};(-9,-1)*{\slinenr{i}};
    (-3,-1)*{\slinen{j}}; (6,-1)*{\dcrossrb{i}{}};
    (9,57)*{\scs j};
\endxy}
+\delta_{jj'}t_{ij}
\sum_{\substack{a+b+c= \\ \l_i+\l_j-2}}
\vcenter{\xy 0;/r.11pc/:
    (-6,51)*{\ucrossbr{}{}};(3,51)*{\slinenr{}}; (9,51)*{\slinen{}};
    (0,45)*{\slcupr{}};
    (-9,48)*{};(9,48)*{} **[blue]\crv{(-9,38)& (9,38)}?(0)*[blue]\dir{<};
    (0,40.5)*{\scs\bullet}; (2,42.5)*{\scriptscriptstyle a};
    (0,34)*{\lllcapr{}};
    (0,25)*{\ccbub{\scriptscriptstyle\spadesuit+c\;}{j}};
    (9,24)*{\slinenr{}}; (-9,24)*{\slinenr{}};
    (-9,4)*{};(9,21)*{} **[black]\crv{(-9,13)& (9,13)}?(1)*[black]\dir{>};
    (9,4)*{};(-9,21)*{} **[black]\crv{(9,13)& (-9,13)}?(0)*[black]\dir{<};
    (0,6)*{\srcap{}};(0,7.75)*{\scs\bullet};
    (0,4)*{\scriptscriptstyle b};(-9,-2)*{\slinenr{i}};
    (-3,-2)*{\slinen{j}};(6,-1)*{\dcrossrb{}{}};
    (3,59)*{\scs i}; (9,59)*{\scs j};
\endxy}
\end{align*}

The chain map given in equation \eqref{eq:j'jchainmap} is thus null-homotopic,
with homotopy given by
\[
{\xy 0;/r.15pc/:
	(-90,15)*+{\clubsuit \cal{E}_{j'} \cal{E}_i \cal{F}_j \cal{F}_i \onell{s_i(\lambda)} \la -1 \ra}="1";
	(0,15)*+{\cal{E}_{i} \cal{E}_{j'} \cal{F}_j \cal{F}_i \onell{s_i(\lambda)}
  		\oplus \cal{E}_{j'} \cal{E}_{i} \cal{F}_i \cal{F}_j \onell{s_i(\lambda)} }="2";
	(90,15)*+{\cal{E}_{i} \cal{E}_{j'} \cal{F}_i \cal{F}_j \onell{s_i(\lambda)} \la 1 \ra}="3";
	(-90,-15)*+{\clubsuit \cal{E}_{j'} \cal{E}_i \cal{F}_j \cal{F}_i \onell{s_i(\lambda)} \la -1 \ra}="4";
	(0,-15)*+{\cal{E}_{i} \cal{E}_{j'} \cal{F}_j \cal{F}_i \onell{s_i(\lambda)}
  		\oplus \cal{E}_{j'} \cal{E}_{i} \cal{F}_i \cal{F}_j \onell{s_i(\lambda)} }="5";
	(90,-15)*+{\cal{E}_{i} \cal{E}_{j'} \cal{F}_i \cal{F}_j \onell{s_i(\lambda)} \la 1 \ra}="6";
	{\ar  "1";"2"};
	{\ar  "2";"3"};
	{\ar  "4";"5"};
	{\ar  "5";"6"};
	{\ar@/^1pc/_-{\left(\begin{smallmatrix} h^1_1 & h^1_2 \end{smallmatrix}\right)}  "5";"1"};
	{\ar@/^1pc/_-{\left(\begin{smallmatrix} h^2_1 \\ h^2_2 \end{smallmatrix}\right)}  "6";"2"};
\endxy}
\]
where
\begin{align*}
h^1_1
&=
\delta_{jj'}t_{ji}
\sum_{\substack{a+c=\\ \l_i+\l_j-1}}
\vcenter{\xy 0;/r.11pc/:
    (-6,48)*{\ucrossrb{}{}}; (9,48)*{\slinenr{}};(3,48)*{\slinen{}};
    (0,42)*{\slcup{}};(0,41)*{\scs\bullet};
    (0,44)*{\scriptscriptstyle a};
    (-9,45)*{};(9,29)*{} **[black]\crv{(-9,37)& (9,37)}?(0)*[black]\dir{<};
    (9,45)*{};(-9,29)*{} **[black]\crv{(9,37)& (-9,37)}?(1)*[black]\dir{>};
    (0,25)*{\ccbub{\scriptscriptstyle\spadesuit+c}{}};
    (9,24)*{\slinenr{}}; (-9,24)*{\slinenr{}};
    (-9,5)*{};(9,21)*{} **[black]\crv{(-9,13)& (9,13)}?(1)*[black]\dir{>};
    (9,5)*{};(-9,21)*{} **[black]\crv{(9,13)& (-9,13)}?(0)*[black]\dir{<};
    (0,6.5)*{\srcap{}};
    (-9,2)*{\scs i}; (-3,2)*{\scs j}; (3,57)*{\scs j}; (9,2)*{\scs i};
\endxy}
    \\
h^1_2
&=
(-1)^{\delta_{jj'}}
\sum_{\substack{d+e+f\\=-\lambda_i-1}} \;
\vcenter{\xy 0;/r.12pc/:
    (-3,24)*{\slineur{}}; (6,24)*{\dcrossrb{}{}};
    (-9,24)*{\slineup{}};
    (-9,16)*{\slinenp{}};(9,16)*{\slinen{}};(0,18)*{\slcupr{}};
    (0,17.25)*[black]{\scs\bullet};(-3,14)*{\scs d};
    (20,13)*{\ccbubr{\scs\spadesuit+f}{i}};
    (-9,8)*{\slinenp{}};(9,8)*{\slined{}};(0,7.5)*{\srcapr{}};
    (0,9.25)*[black]{\scs\bullet};(3,11)*{\scs e};
    (-9,2)*{\scs j'}; (-3,2)*{\scs i}; (9,2)*{\scs j};  (9,32)*{\scs i};
\endxy}
+\delta_{jj'}\sum_{\substack{a+b+c= \\ \l_i+\l_j-2}}
\vcenter{\xy 0;/r.11pc/:
    (-6,48)*{\ucrossrb{}{}}; (9,48)*{\slinenr{}};(3,48)*{\slinen{}};
    (0,42)*{\slcup{}};(0,41)*{\scs\bullet};
    (0,44)*{\scriptscriptstyle a};
    (-9,45)*{};(9,29)*{} **[black]\crv{(-9,37)& (9,37)}?(0)*[black]\dir{<};
    (9,45)*{};(-9,29)*{} **[black]\crv{(9,37)& (-9,37)}?(1)*[black]\dir{>};
    (0,25)*{\ccbub{\scriptscriptstyle\spadesuit+c}{j}};
    (9,24)*{\slinenr{}}; (-9,24)*{\slinenr{}};
    (-9,5)*{};(9,21)*{} **[black]\crv{(-9,13)& (9,13)}?(1)*[black]\dir{>};
    (9,5)*{};(-9,21)*{} **[black]\crv{(9,13)& (-9,13)}?(0)*[black]\dir{<};
    (0,6.5)*{\srcap{}};(0,8.5)*{\scs\bullet};
    (0,5)*{\scriptscriptstyle b};
    (-6,0)*{\ucrossbr{}{}}; (6,0)*{\dcrossrb{}{}};
    (-9,-6)*{\scs j}; (-3,-6)*{\scs i}; (3,57)*{\scs j}; (3,-6)*{\scs i};
\endxy}
+\delta_{jj'}t_{ij}\sum_{\substack{a+b+c= \\ \l_i+\l_j-2}}
\vcenter{\xy 0;/r.11pc/:
    (-6,48)*{\ucrossrb{}{}}; (9,48)*{\slinenr{}};(3,48)*{\slinen{}};
    (0,42)*{\slcup{}};(0,41)*{\scs\bullet};
    (0,44)*{\scriptscriptstyle a};
    (-9,45)*{};(9,29)*{} **[black]\crv{(-9,37)& (9,37)}?(0)*[black]\dir{<};
    (9,45)*{};(-9,29)*{} **[black]\crv{(9,37)& (-9,37)}?(1)*[black]\dir{>};
    (0,25)*{\ccbub{\scriptscriptstyle\spadesuit+c}{}};
    (-9,29)*{}; (-9,22)*{} **\dir{-};
     (9,29)*{}; (9,22)*{} **\dir{-};
    (0,16)*{\llrcupr{}};
    (9,-3);(-9,-3) **[blue]\crv{(9,10) & (-9,10)}; ?(0)*[blue]\dir{<};
    (0,6.5)*{\scs\bullet};(0,10)*{\scriptscriptstyle b};
    (0,-1)*{\srcapr{}{}};
    (-9,-6)*{\scs j}; (-3,-6)*{\scs i}; (3,57)*{\scs j}; (9,57)*{\scs i};
\endxy}
\\
h^2_1
&=
-t_{ij'}^{-1}(-1)^{\delta_{jj'}}
\sum_{\substack{d+e+f+g\\=-\l_i-2}}
(-v_{ij'})^{g}\;\;
\vcenter{\xy 0;/r.11pc/:
    (-6,40)*{\ucrosspr{}{}};(6,40)*{\dcrossrb{}{}};
    (-9,32)*{\slineup{}};(9,32)*{\slinen{}};
    (0,34)*{\slcupr{}};
    (0,33)*[black]{\scs \bullet};(-3,31)*{\scs d};
    (9,24)*{\slinen{}};(-9,24)*{\slinenp{}};
    (20,22)*{\ccbubr{\scriptscriptstyle \spadesuit+f}{i}};
    (-13,16)*{\scs g};(-9,16)*{\sdotp{}};
    (0,16.5)*[black]{\scs \bullet};(2,18.5)*{\scs e};
    (0,15)*{\srcapr{}};(9,16)*{\slinen{}};
    (-6,7)*{\ucrossrp{i}{j'}};
    (3,8)*{\slinedr{}};(9,7)*{\slined{j}};
    (9,48)*{\scs i};
    \endxy}
-\delta_{jj'}t_{ji}
\sum_{\substack{a+b+c= \\ \l_i+\l_j-1}}
\vcenter{\xy 0;/r.11pc/:
    (0,43)*{\slcup{}};(0,42)*{\scs\bullet};
    (3,41)*{\scs a};
    (-9,46)*{};(9,30)*{} **[black]\crv{(-9,34)& (9,38)}?(0)*[black]\dir{<};
    (9,46)*{};(-9,30)*{} **[black]\crv{(9,34)& (-9,38)}?(1)*[black]\dir{>};
    (1,26)*{\ccbub{\scriptscriptstyle\spadesuit+c\;\;}{j}};
    (9,25)*{\slinenr{}}; (-9,25)*{\slinenr{}};
    (-9,5)*{};(9,22)*{} **[black]\crv{(-9,14)& (9,14)}?(1)*[black]\dir{>};
    (9,5)*{};(-9,22)*{} **[black]\crv{(9,14)& (-9,14)}?(0)*[black]\dir{<};
    (0,7)*{\srcap{}};(0,8.5)*{\scs\bullet};
    (0,5)*{\scs b};(-9,-1)*{\slinenr{i}};
    (-3,-1)*{\slinen{j}}; (6,-1)*{\dcrossrb{i}{}};
    (3,49)*{\scs j};
\endxy}
-\delta_{jj'}t_{ij}
\sum_{\substack{a+b+c= \\ \l_i+\l_j-2}}
\vcenter{\xy 0;/r.11pc/:
    (0,43)*{\slcup{}};(0,42)*{\scs\bullet}; (-6,39.5)*[black]{\scs \bullet};
    (3,41)*{\scs a};
    (-9,46)*{};(9,30)*{} **[black]\crv{(-9,34)& (9,38)}?(0)*[black]\dir{<};
    (9,46)*{};(-9,30)*{} **[black]\crv{(9,34)& (-9,38)}?(1)*[black]\dir{>};
    (1,26)*{\ccbub{\scriptscriptstyle\spadesuit+c\;\;}{j}};
    (9,25)*{\slinenr{}}; (-9,25)*{\slinenr{}};
    (-9,5)*{};(9,22)*{} **[black]\crv{(-9,14)& (9,14)}?(1)*[black]\dir{>};
    (9,5)*{};(-9,22)*{} **[black]\crv{(9,14)& (-9,14)}?(0)*[black]\dir{<};
    (0,7)*{\srcap{}};(0,8.5)*{\scs\bullet};
    (0,5)*{\scs b};(-9,-1)*{\slinenr{i}};
    (-3,-1)*{\slinen{j}}; (6,-1)*{\dcrossrb{i}{}};
    (3,49)*{\scs j};
\endxy} \\
h^2_2
&=
t_{ij}t_{ij'}^{-1}(-1)^{\delta_{jj'}}
\sum_{\substack{e+f+g\\=-\l_i-1}}
(-v_{ij'})^{g}\;\;
\vcenter{\xy 0;/r.12pc/:
    (-9,32)*{\slineup{}};(9,32)*{\slinen{}};
    (0,34)*{\slcupr{}};
    (9,24)*{\slinen{}};(-9,24)*{\slinenp{}};
    (20,22)*{\ccbubr{\scriptscriptstyle \spadesuit+f}{i}};
    (-13,16)*{\scs g};(-9,16)*{\sdotp{}};
    (0,16.5)*[black]{\scs \bullet};(2,18.5)*{\scs e};
    (0,15)*{\srcapr{}};(9,16)*{\slinen{}};
    (-6,7)*{\ucrossrp{i}{j'}};
    (3,8)*{\slinedr{}};(9,7)*{\slined{j}};
    (3,40)*{\scs i};
    \endxy}
+\delta_{jj'}t_{ij}
\sum_{\substack{a+b+c= \\ \l_i+\l_j-2}}
\vcenter{\xy 0;/r.11pc/:
    (0,45)*{\slcupr{}};
    (-9,48)*{};(9,48)*{} **[blue]\crv{(-9,38)& (9,38)}?(0)*[blue]\dir{<};
    (0,40.5)*{\scs\bullet}; (2,42.75)*{\scs a};
    (0,34)*{\lllcapr{}};
    (0,25)*{\ccbub{\scriptscriptstyle\spadesuit+c\;}{j}};
    (9,24)*{\slinenr{}}; (-9,24)*{\slinenr{}};
    (-9,4)*{};(9,21)*{} **[black]\crv{(-9,13)& (9,13)}?(1)*[black]\dir{>};
    (9,4)*{};(-9,21)*{} **[black]\crv{(9,13)& (-9,13)}?(0)*[black]\dir{<};
    (0,6)*{\srcap{}};(0,7.75)*{\scs\bullet};
    (0,4)*{\scs b};(-9,-2)*{\slinenr{i}};
    (-3,-2)*{\slinen{j}};(6,-1)*{\dcrossrb{}{}};
    (3,51)*{\scs i}; (9,51)*{\scs j};
\endxy}
\end{align*}
\end{proof}

It remains to verify the $FE$ version of equation \eqref{eq:EFj'j}.
We can proceed to compute as above, but in this case we can obtain the relation via
a trick using the symmetry $\omega$.
Indeed, note that, up to scalar factors, each map determining
$\cal{T}'_i \left( \hspace{-2.5pt} \xy 0;/r.15pc/:
(0,0)*{\rcrosspb{j'}{j}}; (6,3)*{ \lambda};
\endxy  \right)$ is given by applying $\omega$ to the corresponding component in
$\cal{T}'_i \left( \hspace{-2.5pt} \xy 0;/r.15pc/:
(0,0)*{\lcrossbp{j}{j'}}; (6,3)*{ \lambda};
\endxy  \right)$ and exchanging the roles of $j$ and $j'$.
Upon taking the composition, the discrepancies between the relevant scalars cancel,
and we find that the maps determining
$\cal{T}'_i \left( \xy 0;/r.18pc/: (0,4)*{\rcrosspb{}{}}; (0,-4.75)*{\lcrossbp{j}{j'}}; \endxy \right)$
are given by applying $\omega$ to the $\phi_i$'s.
Similarly, the other terms in the relation are obtained by those in equation \eqref{eq:EFj'j} via
$\omega$. It follows that we can ``apply $\omega$'' to the proof of
Proposition \ref{prop:extendsl2-jjp} (in weight $-\l$) to obtain the following.

\begin{cor}\label{cor:other-extended}
The relation
\[
\cal{T}'_i\left(-{\xy 0;/r.18pc/: (0,-4)*{\lcrosspb{j'}{j}}; (0,4)*{\rcrossbp{}{}}; \endxy}
+(-1)^{\delta_{jj'}}{\xy 0;/r.18pc/: (-3,-4)*{\slinedp{j'}}; (-3,4)*{\slinenp{}};
(3,-4)*{\slinen{j}}; (3,4)*{\slineu{}}; \endxy}
+\delta_{jj'}\sum_{\substack{a+b+c=\\ -\lambda_j-1}}
{\xy 0;/r.15pc/:
   (9,-8)*{\lcap{}}; (9,-6.5)*{\bullet}; (9,-3)*{\scs b};
   (17.5,0)*{\medcbub{\scs \spadesuit +c}{}};
   (9,9.5)*{\rcup{}}; (9,8)*{\bullet}; (9,5)*{\scs a};
   (13.5,-6)*{\scs j};(14,9)*{\scs j}; (21,8)*{\l};
\endxy}\right) \sim 0
\]
holds in $\Com(\Ucat_Q)$.
\end{cor}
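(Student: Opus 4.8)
The plan is to deduce this relation from Proposition~\ref{prop:extendsl2-jjp} by transporting the already-verified $EF$-version through the symmetry $2$-functor $\omega$ of Section~\ref{sec_symm}, rather than recomputing the chain-map components and the homotopy from scratch. Recall that $\omega$ is an invertible, graded, additive, $\Bbbk$-linear $2$-functor, covariant for composition of both $1$- and $2$-morphisms, which reverses the orientation of every strand (so that $\cal{E}_\ell$ and $\cal{F}_\ell$ are exchanged), sends $\lambda \mapsto -\lambda$, and scales $\ell\ell$-crossings by $-1$; it passes to categories of complexes and descends to homotopy categories, so in particular it carries a null-homotopic chain map to a null-homotopic one, sending a chain homotopy to its componentwise $\omega$-image. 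Since $\omega$ interchanges $\cal{E}$ and $\cal{F}$, it exchanges the mixed $EF$ and $FE$ relations, in opposite weights.

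First I would record the compatibility of the categorified Lusztig operator with $\omega$ on the generating $2$-morphisms appearing in these relations. Using the explicit values of $\cal{T}'_i$ on dots, caps, cups, and sideways and downward crossings from Section~\ref{sec:Ti2mor}, together with the computations of $\cal{T}'_i$ on composite $2$-morphisms in Appendix~\ref{sec:composite}, one checks that for each such generator $\alpha$ the chain map $\cal{T}'_i\bigl(\omega(\alpha)\bigr)$ agrees with $\omega\bigl(\cal{T}'_i(\alpha)\bigr)$ after interchanging the colours $j \leftrightarrow j'$ and multiplying by an explicit scalar. This scalar is a product of ratios of bubble parameters, powers of the $t_{ij}$, and signs, forced by the weight change $\lambda \leftrightarrow -\lambda$ and by $\omega$'s rescaling of same-coloured crossings, and of exactly the shape that recurs throughout Section~\ref{sec_proof_func}. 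In particular, the component maps defining $\cal{T}'_i$ of the two sideways crossings that enter Corollary~\ref{cor:other-extended} are, up to such scalars and the $j \leftrightarrow j'$ swap, the $\omega$-images of those defining $\cal{T}'_i$ of the two sideways crossings that enter equation~\eqref{eq:EFj'j}.

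Next I would compose. The chain endomorphism whose nullity up to homotopy is asserted by Corollary~\ref{cor:other-extended} is $\cal{T}'_i$ applied to the relevant composite of the two sideways crossings, together with the identity $2$-morphism term and the bubble-correction sum. Since $\cal{T}'_i$ is compatible with composition of $1$- and $2$-morphisms by construction, applying it to the composite multiplies the two compatibility scalars attached to the individual crossings, and these are mutually inverse, so the discrepancy cancels. Hence the component maps of this chain endomorphism are precisely $\omega$, in weight $-\lambda$, of the maps $\phi_1,\dots,\phi_6$ of Proposition~\ref{prop:extendsl2-jjp}, with $j$ and $j'$ interchanged. The identity $2$-morphism term is $\omega$-invariant up to the sign $(-1)^{\delta_{jj'}}$, which $\omega$ fixes, and the bubble-correction sum transforms into the one of Corollary~\ref{cor:other-extended} because $\omega$ reverses the orientations of the cap, cup, and bubble while the sign of the bubble label is governed by $\lambda \mapsto -\lambda$. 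Therefore the entire chain endomorphism of Corollary~\ref{cor:other-extended} is $\omega$, in weight $-\lambda$, of the chain endomorphism~\eqref{eq:j'jchainmap}. Applying $\omega$ to the explicit null-homotopy $(h^1_1, h^1_2, h^2_1, h^2_2)$ constructed in the proof of Proposition~\ref{prop:extendsl2-jjp} --- taking componentwise $\omega$-images and swapping $j \leftrightarrow j'$ --- then yields a null-homotopy for it, and since $\omega$ descends to homotopy categories, the desired relation holds in $\Com(\Ucat_Q)$. This argument treats the cases $j=j'$ and $j \neq j'$ uniformly.

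The main obstacle is the scalar bookkeeping in the two middle steps: determining the precise scalar relating $\cal{T}'_i \circ \omega$ to $\omega \circ \cal{T}'_i$ on each generator, and verifying that the two scalars attached to the two sideways crossings are mutually inverse, so that no residual factor survives the composition and the six maps $\phi_i$ and the homotopy $h$ really are the $\omega$-images of their weight-$(-\lambda)$ counterparts with no extra scalar. A minor additional subtlety is that $\omega$ takes values in the $2$-category $\cal{U}'_Q$, whose bubble parameters are the primed ones $(c_{i,\lambda})' = c_{i,-\lambda}^{-1}$; since $\cal{T}'_i$ is defined for an arbitrary compatible choice of bubble parameters, one may either work on the target with that convention or absorb the difference into the compatibility scalars, and either way it does not affect the conclusion.
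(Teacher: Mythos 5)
Your proposal is correct and is essentially the paper's own argument: the authors likewise observe that, up to scalar factors that cancel upon composition, the components of $\cal{T}'_i$ applied to the relevant sideways crossings are the $\omega$-images (with $j \leftrightarrow j'$ exchanged, in weight $-\lambda$) of those entering equation~\eqref{eq:EFj'j}, and then ``apply $\omega$'' to the proof of Proposition~\ref{prop:extendsl2-jjp}, including its explicit null-homotopy. No substantive difference in approach.
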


\appendix

%
\section{Computation of \texorpdfstring{$\cal{T}'_{i,1}$}{T'i,1} for composite 2-morphisms } \label{sec:composite}
%

In light of Remark \ref{rem:presentation}, we can compute the value of $\cal{T}'_{i,1}$ on
downward dot and sideways and downward crossing $2$-morphisms in terms of the presentation of these $2$-morphisms in terms of
upward dot and crossing $2$-morphisms and cap/cup $2$-morphisms.
In Sections \ref{sec:downdot}, \ref{sec:sideways-crossing}, and \ref{sec:down-crossing}, we compute this value,
and in Section \ref{sec:Ti-bub}, we compute the value of $\cal{T}'_{i,1}$ on bubbles.
Throughout, we employ our conventions that $i\cdot j = -1 = i \cdot j'$ and $i \cdot k=0=i \cdot k'$,
but assume no other relation between $j$, $j'$, $k$, and $k'$.

%
\subsection{Value of \texorpdfstring{$\cal{T}'_{i,1}$}{T'i,1} for downward dot 2-morphisms} \label{sec:downdot}
%

We compute $\cal{T}'_{i,1}$ on downward dot 2-morphisms using the right cyclicity relation.
Each of the following is a direct consequence of the definitions in Sections \ref{def:updot} and \ref{def:capcup}.
\[
\cal{T}'_i \left(\xy 0;/r.18pc/:
 (0,0)*{\sdotdr{i}};
 (6,3)*{ \lambda};
 (-4,0)*{};(9,0)*{};
 \endxy \right)
 :=
\cal{T}'_i \left( \xy 0;/r.12pc/:
(0,0)*{\xy \sdotur{} \endxy};
(0,0)*{\xy \rcapr{} \endxy};
(8,0)*{\xy \slinenr{} \endxy};
(8,-8)*{\xy \slinedr{} \endxy};
(-8,-8)*{\xy \rcupr{} \endxy};
(-8,0)*{\xy \slinenr{} \endxy};
(-8,8)*{\xy \slinedr{} \endxy};
 (15,3)*{ \lambda};
 (-10,0)*{};(16,0)*{};
\endxy \right) \vspace{-0.1in}
=
 \vcenter{\xy 0;/r.17pc/:
 (0,15)*+{\cal{E}_i\onell{s_i(\lambda)}\la 2+\lambda_i \ra}="1";
 (0,-15)*+{\cal{E}_i\onell{s_i(\lambda)}\la \lambda_i \ra}="2";
 {\ar^{\xy
 (0,0)*{\sdotur{i}}; \endxy}"2";"1" };\endxy}
, \;\;
\cal{T}'_i \left(\xy 0;/r.18pc/:
  (0,0)*{\sdotdg{k}};
 (6,3)*{ \lambda};
 (-4,0)*{};(9,0)*{};
 \endxy \right)
 :=
\cal{T}'_i \left( \xy 0;/r.12pc/:
(0,0)*{\xy \sdotug{} \endxy};
(0,0)*{\xy \rcapg{} \endxy};
(8,0)*{\xy \slineng{} \endxy};
(8,-8)*{\xy \slinedg{} \endxy};
(-8,-8)*{\xy \rcupg{} \endxy};
(-8,0)*{\xy \slineng{} \endxy};
(-8,8)*{\xy \slinedg{} \endxy};
 (14,3)*{ \lambda};
 (-10,0)*{};(16,0)*{};
\endxy \right)
= \vspace{-0.1in}
\vcenter{\xy 0;/r.17pc/:
 (0,15)*+{\clubsuit \cal{F}_k\onell{s_i(\lambda)}\la2\ra}="1";
 (0,-15)*+{\clubsuit \cal{F}_k\onell{s_i(\lambda)}}="2";
 {\ar^{\xy (0,0)*{\sdotdg{k}};\endxy}"2";"1" };
  \endxy}
\]
\[
   \cal{T}'_i \left(\xy  0;/r.18pc/:
  (0,0)*{\sdotd{j}};
 (6,3)*{ \lambda};
 (-4,0)*{};(10,0)*{};
 \endxy \right)
 :=
\cal{T}'_i \left( \xy 0;/r.12pc/:
(0,0)*{\xy \sdotu{} \endxy};
(0,0)*{\xy \rcap{} \endxy};
(8,0)*{\xy \slinen{} \endxy};
(8,-8)*{\xy \slined{} \endxy};
(-8,-8)*{\xy \rcup{} \endxy};
(-8,0)*{\xy \slinen{} \endxy};
(-8,8)*{\xy \slined{} \endxy};
 (15,3)*{ \lambda};
 (-10,0)*{};(16,0)*{};
\endxy \right)
=
  \xy  0;/r.17pc/:
  (-25,15)*+{\cal{F}_j\cal{F}_i\onell{s_i(\lambda)}\la1\ra}="1";
  (-25,-15)*+{\cal{F}_j\cal{F}_i\onell{s_i(\lambda)}\la-1\ra}="2";
   {\ar^{\xy (-12,0)*{\sdotd{j}}; (-6,0)*{\slinedr{i}};  \endxy} "2";"1"};
  (25,15)*+{\clubsuit \cal{F}_i\cal{F}_j\onell{s_i(\lambda)}\la2\ra}="3";
  (25,-15)*+{\clubsuit \cal{F}_i\cal{F}_j\onell{s_i(\lambda)}}="4";
    {\ar_{\xy (-12,0)*{\slinedr{i}}; (-6,0)*{\sdotd{j}}; \endxy} "4";"3"};
   {\ar^{\xy (0,0)*{\dcrossbr{j}{i}};\endxy   } "1";"3"};
   {\ar^{\xy (0,0)*{\dcrossbr{j}{i}};\endxy   } "2";"4"};
 \endxy
\]
This agrees with the value in terms of left cyclicity, which is verified in Section \ref{proof:dotcyc}.

%
\subsection{Value of \texorpdfstring{$\cal{T}'_{i,1}$}{T'i,1} on sideways crossing 2-morphisms}
\label{sec:sideways-crossing}

We explicitly compute the value on sideways crossings in terms of the images of upward crossings, caps, and cups.
As above, each follows via a direct (by sometimes tedious) computation using the definitions in Sections \ref{def:upcross} and \ref{def:capcup}.
In the interest of space, we will omit displaying the domain and codomain of the image when they are 1-term compexes,
as, save for the relevant shifts, they can be read from the diagram.
\[
    \cal{T}'_i \left( \hspace{-0.1in}
    \xy 0;/r.18pc/:
  (0,0)*{\rcrossrr{i}{i}};
 (6,3)*{ \lambda};
 \endxy  \right)
:= - \hspace{-.25cm}\lcrossrr{i}{i}
, \quad
\cal{T}'_i \left(\hspace{-0.1in}  \xy 0;/r.18pc/:
  (0,0)*{\lcrossrr{i}{i}};
 (6,3)*{ \lambda};
 \endxy  \right)
:= - \hspace{-.25cm} \rcrossrr{i}{i}
 \]
\[
  \cal{T}'_i \left(\xy 0;/r.17pc/:
  (0,0)*{\rcrossrb{i}{j}};
 (6,3)*{ \lambda};
 \endxy  \right)
 :=
\cal{T}'_i \left( \xy 0;/r.12pc/:
(-9,0)*{\slinen{}};
(-9,8)*{\slinen{}};
(-6,-6)*{\srcup{}};
(6,7)*{\srcap{}};
(9,0)*{\slinen{}};
(9,-8)*{\slined{}};
(0,0)*{\ucrossbr{}{}};
(3,-8)*{\slinenr{}};
(-3,8)*{\slineur{}};
(-12,0)*{};(12,0)*{};
\endxy \right)=
   \xy 0;/r.17pc/:
  (-35,22)*+{\cal{F}_j\cal{F}_i\cal{F}_i\onell{s_i(\lambda)}\la-3 -\lambda_i \ra}="1";
  (-35,-22)*+{\cal{F}_i\cal{F}_j\cal{F}_i\onell{s_i(\lambda)}\la-4 -\lambda_i \ra}="2";
   {\ar^{\vcenter{\xy 0;/r.15pc/:
(0,10)*{\xy
(-14,4)*{t_{ij}t_{ji}};
 (-3,0)*{\dcrossrb{i}{j}};
  (6,0)*{\slinedr{i}};
 (3,8)*{\dcrossrr{}{}};
 (-6,8)*{\slinen{}};
(6,2)*[black]{\bullet};
 \endxy};
(0,-10)*{\xy
(-16,4)*{-t_{ij}t_{ji}};
 (-3,0)*{\dcrossrb{i}{j}};
  (6,0)*{\slinedr{i}};
 (3,8)*{\dcrossrr{}{}};
 (-6,8)*{\slinen{}};
(5,10.5)*[black]{\bullet};
 \endxy}; \endxy }} "2";"1"};
  (35,22)*+{\clubsuit\cal{F}_i\cal{F}_j\cal{F}_i\onell{s_i(\lambda)}\la-2-\lambda_i \ra}="3";
  (35,-22)*+{\clubsuit\cal{F}_i\cal{F}_i\cal{F}_j\onell{s_i(\lambda)}\la-3-\lambda_i \ra}="4";
   {\ar_{\vcenter{\xy 0;/r.15pc/:
(0,10)*{\xy
(-14,4)*{t_{ij}t_{ji}};
 (-3,0)*{\dcrossrr{i}{i}};
  (6,0)*{\slined{j}};
 (3,8)*{\dcrossrb{}{}};
 (-6,8)*{\slinenr{}};
(5,10.5)*[black]{\bullet};
 \endxy};
(0,-10)*{\xy
(-16,4)*{-t_{ij}t_{ji}};
 (-3,0)*{\dcrossrr{i}{i}};
  (6,0)*{\slined{j}};
 (3,8)*{\dcrossrb{}{}};
 (-6,8)*{\slinenr{}};
(-1,.5)*[black]{\bullet};
 \endxy}; \endxy }} "4";"3"};
   {\ar^-{\xy  0;/r.15pc/: (3,0)*{\slinedr{i}};(-6,0)*{\dcrossbr{j}{i}};\endxy   } "1";"3"};
   {\ar^-{\xy 0;/r.15pc/: (-9,0)*{\slinedr{i}};(-13,1)*{-};(0,0)*{\dcrossbr{j}{i}}; \endxy   } "2";"4"};
 \endxy
 \]
 \[
\cal{T}'_i \left(  \xy 0;/r.17pc/:
  (0,0)*{\lcrossbr{j}{i}};
 (6,3)*{ \lambda};
 \endxy  \right)
  :=
\cal{T}'_i \left( \xy 0;/r.12pc/:
(-3,-8)*{\slinenr{}};
(3,8)*{\slineur{}};
(0,0)*{\ucrossrb{}{}};
(9,0)*{\slinen{}};
(9,8)*{\slinen{}};
(6,-6)*{\slcup{}};
(-6,7)*{\slcap{}};
(-9,0)*{\slinen{}};
(-9,-8)*{\slined{}};
(-12,0)*{};(12,0)*{};
\endxy \right)
=
     \xy 0;/r.17pc/:
  (-35,18)*+{\cal{F}_i\cal{F}_j\cal{F}_i\onell{s_i(\lambda)}\la-4 -\lambda_i \ra}="1";
  (-35,-18)*+{\cal{F}_j\cal{F}_i\cal{F}_i\onell{s_i(\lambda)}\la -3 -\lambda_i \ra}="2";
   {\ar^{\vcenter{\xy 0;/r.16pc/:
 (3,0)*{\dcrossrr{i}{i}};
  (-6,0)*{\slined{j}};
 (-3,8.5)*{\dcrossbr{}{}};
 (6,8.5)*{\slinenr{}}; (-16,4)*{t_{ij}^{-2}t_{ji}^{-1}};
 \endxy}}  "2";"1"};
  (35,18)*+{\clubsuit\;\cal{F}_i\cal{F}_i\cal{F}_j\onell{s_i(\lambda)}
  \la -3-\lambda_i \ra}="3";
  (35,-18)*+{\clubsuit\;\cal{F}_i\cal{F}_j\cal{F}_i\onell{s_i(\lambda)}
  \la -2-\lambda_i \ra}="4";
  {\ar_{\vcenter{\xy 0;/r.16pc/:
    (3,0)*{\dcrossbr{j}{i}};
    (-6,0)*{\slinedr{i}};
    (-3,8.5)*{\dcrossrr{}{}};
    (6,8.5)*{\slinen{}}; (-17,4)*{-t_{ij}^{-2}t_{ji}^{-1}}; (-14,0)*{};
    \endxy}} "4";"3"};
   {\ar^-{\xy  0;/r.15pc/: (3,0)*{\slinedr{i}};(-6,0)*{\dcrossbr{j}{i}};\endxy   } "2";"4"};
   {\ar^-{\xy 0;/r.15pc/: (-9,0)*{\slinedr{i}};(0,0)*{\dcrossbr{j}{i}}; (-13,1)*{-}; \endxy   } "1";"3"};
 \endxy
 \]
 \[
\cal{T}'_i \left(  \xy 0;/r.17pc/:
  (0,0)*{\rcrossbr{j}{i}};
 (6,3)*{ \lambda};
 \endxy  \right)
  :=
\cal{T}'_i \left( \xy 0;/r.12pc/:
(-9,0)*{\slinenr{}};
(-9,8)*{\slinenr{}};
(-6,-6)*{\srcupr{}};
(6,7)*{\srcapr{}};
(9,0)*{\slinenr{}};
(9,-8)*{\slinedr{}};
(0,0)*{\ucrossrb{}{}};
(3,-8)*{\slinen{}};
(-3,8)*{\slineu{}};
(-12,0)*{};(12,0)*{};
\endxy \right)
=
     \xy 0;/r.17pc/:
  (-30,18)*+{\cal{E}_i\cal{E}_j\cal{E}_i\onell{s_i(\lambda)}\la \lambda_i-1 \ra}="1";
  (-30,-18)*+{\cal{E}_j\cal{E}_i\cal{E}_i\onell{s_i(\lambda)}\la \lambda_i \ra}="2";
   {\ar^{\vcenter{\xy 0;/r.16pc/:
 (3,0)*{\ucrossrr{i}{i}};
  (-6,0)*{\slinen{j}};(-12,3)*{t_{ij}^{-1}};
 (-3,8.5)*{\ucrossbr{}{}};
 (6,8.5)*{\slineur{}};
 \endxy}}  "2";"1"};
  (35,18)*+{\cal{E}_i\cal{E}_i\cal{E}_j\onell{s_i(\lambda)}\la \lambda_i \ra}="3";
  (35,-18)*+{\cal{E}_i\cal{E}_j\cal{E}_i\onell{s_i(\lambda)}\la \lambda_i+1 \ra}="4";
  {\ar_{\vcenter{\xy 0;/r.16pc/:
    (3,0)*{\ucrossbr{j}{i}};
    (-6,0)*{\slinenr{i}};
    (-3,8.5)*{\ucrossrr{}{}};
    (6,8.5)*{\slineu{}}; (-13,3)*{-t_{ij}^{-1}};
    \endxy}} "4";"3"};
   {\ar^-{\xy  0;/r.15pc/:
   (3,0)*{\slineur{i}};(-6,0)*{\ucrossbr{j}{i}};\endxy   } "2";"4"};
   {\ar^-{\xy 0;/r.15pc/:
   (-9,0)*{\slineur{i}};(0,0)*{\ucrossbr{j}{i}}; (-13,1)*{-}; \endxy   } "1";"3"};
 \endxy
 \]
 \[
\cal{T}'_i \left(  \xy 0;/r.17pc/:
  (0,0)*{\lcrossrb{i}{j}};
 (6,3)*{ \lambda};
 \endxy  \right)
 :=
\cal{T}'_i \left( \xy 0;/r.12pc/:
(-3,-8)*{\slinen{}};
(3,8)*{\slineu{}};
(0,0)*{\ucrossbr{}{}};
(9,0)*{\slinenr{}};
(9,8)*{\slinenr{}};
(6,-6)*{\slcupr{}};
(-6,7)*{\slcapr{}};
(-9,0)*{\slinenr{}};
(-9,-8)*{\slinedr{}};
(-12,0)*{};(12,0)*{};
\endxy \right)=
   \xy 0;/r.17pc/:
  (-30,22)*+{\cal{E}_j\cal{E}_i\cal{E}_i\onell{s_i(\lambda)}\la \lambda_i \ra}="1";
  (-30,-22)*+{\cal{E}_i\cal{E}_j\cal{E}_i\onell{s_i(\lambda)}\la \lambda_i-1 \ra}="2";
   {\ar^{\vcenter{\xy 0;/r.15pc/: (-15,0)*{};
(0,10)*{\xy
 (-3,0)*{\ucrossrb{i}{j}};
  (6,0)*{\slinenr{i}};
 (3,8)*{\ucrossrr{}{}};
 (-6,8)*{\slineu{}};
(1.5,10)*[black]{\bullet};
 \endxy};
(0,-10)*{\xy
(-12,4)*{-};
 (-3,0)*{\ucrossrb{i}{j}};
  (6,0)*{\slinenr{i}};
 (3,8)*{\ucrossrr{}{}};
 (-6,8)*{\slineu{}};
(-5,0)*[black]{\bullet};
 \endxy}; \endxy }} "2";"1"};
  (35,22)*+{\cal{E}_i\cal{E}_j\cal{E}_i\onell{s_i(\lambda)}\la \lambda_i+1 \ra}="3";
  (35,-22)*+{\cal{E}_i\cal{E}_i\cal{E}_j\onell{s_i(\lambda)}\la \lambda_i \ra}="4";
   {\ar_{\vcenter{\xy 0;/r.15pc/:
(0,10)*{\xy
 (-3,0)*{\ucrossrr{i}{i}};
  (6,0)*{\slinen{j}};
 (3,8)*{\ucrossrb{}{}};
 (-6,8)*{\slineur{}};
(-5,0)*[black]{\bullet};
 \endxy};
(0,-10)*{\xy
(-12,4)*{-};
 (-3,0)*{\ucrossrr{i}{i}};
  (6,0)*{\slinen{j}};
 (3,8)*{\ucrossrb{}{}};
 (-6,8)*{\slineur{}};
(-6,8)*[black]{\bullet};
 \endxy}; \endxy }} "4";"3"};
   {\ar^-{\xy  0;/r.15pc/: (3,0)*{\slineur{i}};(-6,0)*{\ucrossbr{j}{i}};\endxy   } "1";"3"};
   {\ar^-{-\xy 0;/r.15pc/: (-9,0)*{\slineur{i}};(0,0)*{\ucrossbr{j}{i}}; \endxy   } "2";"4"};
 \endxy
\]
\[
  \cal{T}'_i \left(  \xy 0;/r.18pc/:
  (0,0)*{\rcrossrg{i}{k}};
 (6,3)*{ \lambda};
 \endxy  \right)
 :=
\cal{T}'_i \left( \xy 0;/r.12pc/:
(-9,0)*{\slineng{}};
(-9,8)*{\slineng{}};
(-6,-6)*{\srcupg{}};
(6,7)*{\srcapg{}};
(9,0)*{\slineng{}};
(9,-8)*{\slinedg{}};
(0,0)*{\ucrossgr{}{}};
(3,-8)*{\slinenr{}};
(-3,8)*{\slineur{}};
(-12,0)*{};(12,0)*{};
\endxy \right)
= t_{ki}^{-2} \hspace{-.25cm} \dcrossrg{i}{k}
, \quad
\cal{T}'_i \left(  \xy 0;/r.18pc/:
  (0,0)*{\lcrossgr{k}{i}};
 (6,3)*{ \lambda};
 \endxy  \right)
 :=
\cal{T}'_i \left( \xy 0;/r.12pc/:
(-3,-8)*{\slinenr{}};
(3,8)*{\slineur{}};
(0,0)*{\ucrossrg{}{}};
(9,0)*{\slineng{}};
(9,8)*{\slineng{}};
(6,-6)*{\slcupg{}};
(-6,7)*{\slcapg{}};
(-9,0)*{\slineng{}};
(-9,-8)*{\slinedg{}};
(-12,0)*{};(12,0)*{};
\endxy \right)
= t_{ki} \hspace{-.25cm} \dcrossgr{k}{i}
\]
\[
\cal{T}'_i \left(\xy 0;/r.18pc/:
  (0,0)*{\rcrossgr{k}{i}};
 (6,3)*{ \lambda};
 \endxy  \right)
 :=
\cal{T}'_i \left( \xy 0;/r.12pc/:
(-9,0)*{\slinenr{}};
(-9,8)*{\slinenr{}};
(-6,-6)*{\srcupr{}};
(6,7)*{\srcapr{}};
(9,0)*{\slinenr{}};
(9,-8)*{\slinedr{}};
(0,0)*{\ucrossrg{}{}};
(3,-8)*{\slineng{}};
(-3,8)*{\slineug{}};
(-12,0)*{};(12,0)*{};
\endxy \right)
= \hspace{-.25cm} \ucrossgr{k}{i}
, \quad
\cal{T}'_i \left(  \xy 0;/r.18pc/:
  (0,0)*{\lcrossrg{i}{k}};
 (6,3)*{ \lambda};
 \endxy  \right)
 :=
\cal{T}'_i \left( \xy 0;/r.12pc/:
(-3,-8)*{\slineng{}};
(3,8)*{\slineug{}};
(0,0)*{\ucrossgr{}{}};
(9,0)*{\slinenr{}};
(9,8)*{\slinenr{}};
(6,-6)*{\slcupr{}};
(-6,7)*{\slcapr{}};
(-9,0)*{\slinenr{}};
(-9,-8)*{\slinedr{}};
(-12,0)*{};(12,0)*{};
\endxy \right)
= t_{ik}^{-1} \hspace{-.25cm} \ucrossrg{i}{k}
\]
\[
      \cal{T}'_i \left(  \xy 0;/r.18pc/:
  (0,0)*{\rcrossgg{k}{k'}};
 (6,3)*{ \lambda};
 \endxy  \right)
 :=
\cal{T}'_i \left( \xy 0;/r.12pc/:
(-9,0)*{\slineng{}};
(-9,8)*{\slineng{}};
(-6,-6)*{\srcupg{}};
(6,7)*{\srcapg{}};
(9,0)*{\slineng{}};
(9,-8)*{\slinedg{}};
(0,0)*{\ucrossgg{}{}};
(3,-8)*{\slineng{}};
(-3,8)*{\slineug{}};
(-12,0)*{};(12,0)*{};
\endxy \right)
= \hspace{-.25cm} \rcrossgg{k}{k'}
, \quad
\cal{T}'_i \left(  \xy 0;/r.18pc/:
  (0,0)*{\lcrossgg{k'}{k}};
 (6,3)*{ \lambda};
 \endxy  \right)
 :=
\cal{T}'_i \left( \xy 0;/r.12pc/:
(-3,-8)*{\slineng{}};
(3,8)*{\slineug{}};
(0,0)*{\ucrossgg{}{}};
(9,0)*{\slineng{}};
(9,8)*{\slineng{}};
(6,-6)*{\slcupg{}};
(-6,7)*{\slcapg{}};
(-9,0)*{\slineng{}};
(-9,-8)*{\slinedg{}};
(-12,0)*{};(12,0)*{};
\endxy \right)
= \lcrossgg{k'}{k}
\]
\[
     \cal{T}'_i \left(  \xy 0;/r.18pc/:
  (0,0)*{\rcrossbg{j}{k}};
 (6,3)*{ \lambda};
 \endxy  \right)
:=
\cal{T}'_i \left( \xy 0;/r.12pc/:
(-9,0)*{\slineng{}};
(-9,8)*{\slineng{}};
(-6,-6)*{\srcupg{}};
(6,7)*{\srcapg{}};
(9,0)*{\slineng{}};
(9,-8)*{\slinedg{}};
(0,0)*{\ucrossgb{}{}};
(3,-8)*{\slinen{}};
(-3,8)*{\slineu{}};
(-12,0)*{};(12,0)*{};
\endxy \right)
=
 \xy 0;/r.18pc/:
  (-25,15)*+{\clubsuit\;\cal{F}_k\cal{E}_j\cal{E}_i
  \onell{s_i(\lambda)}}="1";
  (-25,-15)*+{\clubsuit \;\cal{E}_j\cal{E}_i\cal{F}_k\onell{s_i(\lambda)}}="2";
   {\ar^{ \vcenter{\xy 0;/r.18pc/:
 (3,0)*{\rcrossrg{i}{k}};
  (-6,0)*{\slinen{j}};
 (-3,8.5)*{\rcrossbg{}{}}; (-12,4)*{t_{ki}};
 (6,8.5)*{\slineur{}};
 \endxy}} "2";"1"};
  (30,15)*+{\cal{F}_k\cal{E}_i\cal{E}_j\onell{s_i(\lambda)}\la1\ra}="3";
  (30,-15)*+{\cal{E}_i\cal{E}_j\cal{F}_k\onell{s_i(\lambda)}\la1\ra}="4";
  {\ar_{ \vcenter{\xy 0;/r.18pc/:
 (3,0)*{\rcrossbg{j}{k}};
  (-6,0)*{\slinenr{i}};
 (-3,8.5)*{\rcrossrg{}{}}; (-12,4)*{t_{ki}};
 (6,8.5)*{\slineu{}};
 \endxy}} "4";"3"};
   {\ar^-{\xy 0;/r.15pc/: (3,0)*{\slinedg{k}};(-6,0)*{\ucrossbr{j}{i}};\endxy   } "2";"4"};
   {\ar^-{\xy 0;/r.15pc/:
   (-9,0)*{\slinedg{k}};(0,0)*{\ucrossbr{j}{i}};\endxy   } "1";"3"};
 \endxy
\]
\[
\cal{T}'_i \left(\xy 0;/r.18pc/:
  (0,0)*{\lcrossgb{k}{j}};
 (6,3)*{ \lambda};
 \endxy  \right)
:=
\cal{T}'_i \left( \xy 0;/r.12pc/:
(-3,-8)*{\slinen{}};
(3,8)*{\slineu{}};
(0,0)*{\ucrossbg{}{}};
(9,0)*{\slineng{}};
(9,8)*{\slineng{}};
(6,-6)*{\slcupg{}};
(-6,7)*{\slcapg{}};
(-9,0)*{\slineng{}};
(-9,-8)*{\slinedg{}};
(-12,0)*{};(12,0)*{};
\endxy \right)
=
 \xy 0;/r.18pc/:
  (-25,15)*+{\clubsuit \;\cal{E}_j\cal{E}_i\cal{F}_k\onell{s_i(\lambda)}}="1";
  (-25,-15)*+{\clubsuit \;\cal{F}_k\cal{E}_j\cal{E}_i\onell{s_i(\lambda)}}="2";
   {\ar^{\; \vcenter{\xy 0;/r.18pc/:
 (-3,0)*{\lcrossgb{k}{j}};
  (6,0)*{\slinenr{i}};
 (3,8.5)*{\lcrossgr{}{}};
 (-6,8.5)*{\slineu{}}; (-13,4)*{t_{ki}^{-1}};
 \endxy}} "2";"1"};
  (30,15)*+{\cal{E}_i\cal{E}_j\cal{F}_k\onell{s_i(\lambda)}\la1\ra}="3";
  (30,-15)*+{\cal{F}_k\cal{E}_i\cal{E}_j\onell{s_i(\lambda)}\la1\ra}="4";
  {\ar_{\;\vcenter{\xy 0;/r.18pc/:
 (-3,0)*{\lcrossgr{k}{i}};
  (6,0)*{\slinen{j}};
 (3,8.5)*{\lcrossgb{}{}};
 (-6,8.5)*{\slineur{}}; (-13,4)*{t_{ki}^{-1}};
 \endxy} } "4";"3"};
   {\ar^-{\xy 0;/r.15pc/:
   (-3,0)*{\slinedg{k}};(6,0)*{\ucrossbr{j}{i}};\endxy   } "2";"4"};
   {\ar^-{\xy  0;/r.15pc/:
   (9,0)*{\slinedg{k}};(0,0)*{\ucrossbr{j}{i}};\endxy   } "1";"3"};
 \endxy
\]
\[      \cal{T}'_i \left(\xy 0;/r.18pc/:
  (0,0)*{\rcrossgb{k}{j}};
 (6,3)*{ \lambda};
 \endxy  \right)
:=
\cal{T}'_i \left( \xy 0;/r.12pc/:
(-9,0)*{\slinen{}};
(-9,8)*{\slinen{}};
(-6,-6)*{\srcup{}};
(6,7)*{\srcap{}};
(9,0)*{\slinen{}};
(9,-8)*{\slined{}};
(0,0)*{\ucrossbg{}{}};
(3,-8)*{\slineng{}};
(-3,8)*{\slineug{}};
(-12,0)*{};(12,0)*{};
\endxy \right)
=
 \vcenter{\xy 0;/r.16pc/:
  (-30,15)*+{\cal{F}_j\cal{F}_i\cal{E}_k
  \onell{s_i(\lambda)}\la-1\ra}="1";
  (-30,-15)*+{\cal{E}_k\cal{F}_j\cal{F}_i
  \onell{s_i(\lambda)}\la-1\ra}="2";
   {\ar^-{ \vcenter{\xy 0;/r.15pc/:
 (-3,0)*{\rcrossgb{k}{j}};
  (6,0)*{\slinedr{i}};
 (3,8.5)*{\rcrossgr{}{}};
 (-6,8.5)*{\slinen{}};
(-20,10)*{(-1)^{j\cdot k}};
(-16,2)*{t_{ki}^{-1}t_{jk}};
 \endxy}} "2";"1"};
  (30,15)*+{\clubsuit \;\cal{F}_i\cal{F}_j\cal{E}_k\onell{s_i(\lambda)}}="3";
  (30,-15)*+{\clubsuit \;\cal{E}_k\cal{F}_i\cal{F}_j\onell{s_i(\lambda)}}="4";
  {\ar_-{\vcenter{\xy 0;/r.15pc/:
 (-7,0)*{\rcrossgr{k}{i}};
  (2,0)*{\slined{j}};
 (-1,8.5)*{\rcrossgb{}{}};
 (-10,8.5)*{\slinenr{}};
(-24,10)*{(-1)^{j\cdot k}};
(-20,2)*{t_{ki}^{-1}t_{jk}};
 \endxy} } "4";"3"};
   {\ar^-{\xy 0;/r.15pc/: (-3,0)*{\slineug{k}};(6,0)*{\dcrossbr{j}{i}};\endxy   } "2";"4"};
   {\ar^-{\xy 0;/r.15pc/: (9,0)*{\slineug{k}};(0,0)*{\dcrossbr{j}{i}};\endxy   } "1";"3"};
 \endxy}
\]
\[
\cal{T}'_i \left(  \xy 0;/r.18pc/:
  (0,0)*{\lcrossbg{j}{k}};
 (6,3)*{ \lambda};
 \endxy  \right)
:=
\cal{T}'_i \left( \xy 0;/r.12pc/:
(-3,-8)*{\slineng{}};
(3,8)*{\slineug{}};
(0,0)*{\ucrossgb{}{}};
(9,0)*{\slinen{}};
(9,8)*{\slinen{}};
(6,-6)*{\slcup{}};
(-6,7)*{\slcap{}};
(-9,0)*{\slinen{}};
(-9,-8)*{\slined{}};
(-12,0)*{};(12,0)*{};
\endxy \right)
=
 \vcenter{\xy 0;/r.16pc/:
  (-30,15)*+{\cal{E}_k\cal{F}_j\cal{F}_i
  \onell{s_i(\lambda)}\la-1\ra}="1";
  (-30,-15)*+{\cal{F}_j\cal{F}_i\cal{E}_k
  \onell{s_i(\lambda)}\la-1\ra}="2";
   {\ar^-{ \vcenter{\xy 0;/r.15pc/:
 (3,0)*{\lcrossrg{i}{k}};
  (-6,0)*{\slined{j}};
 (-3,8.5)*{\lcrossbg{}{}};
 (6,8.5)*{\slinenr{}};
 (-20,10)*{(-1)^{j\cdot k}};
(-16,2)*{t_{ik}t_{jk}^{-1}};
 \endxy}} "2";"1"};
  (30,15)*+{\clubsuit \;\cal{E}_k\cal{F}_i\cal{F}_j\onell{s_i(\lambda)}}="3";
  (30,-15)*+{\clubsuit \;\cal{F}_i\cal{F}_j\cal{E}_k\onell{s_i(\lambda)}}="4";
  {\ar_-{ \vcenter{\xy 0;/r.15pc/:
 (3,0)*{\lcrossbg{j}{k}};
  (-6,0)*{\slinedr{i}};
 (-3,8.5)*{\lcrossrg{}{}};
 (6,8.5)*{\slinen{}};
 (-20,10)*{(-1)^{j\cdot k}};
(-16,2)*{t_{ik}t_{jk}^{-1}};
 \endxy}} "4";"3"};
   {\ar^-{\xy  0;/r.15pc/: (3,0)*{\slineug{k}};(-6,0)*{\dcrossbr{j}{i}};\endxy   } "2";"4"};
   {\ar^-{\xy 0;/r.15pc/: (-9,0)*{\slineug{k}};(0,0)*{\dcrossbr{j}{i}};\endxy   } "1";"3"};
 \endxy}
\]

$\cal{T}'_i \left(  \xy 0;/r.15pc/:
  (0,0)*{\rcrosspb{j'}{j}}; (-20,0)*{(-1)^{j\cdot j'}t_{jj'}^{-1}};
 (6,3)*{ \lambda};
 \endxy  \right)
 :=
\cal{T}'_i \left( \xy 0;/r.10pc/:
(-9,0)*{\slinen{}};
(-9,8)*{\slinen{}};
(-6,-6)*{\srcup{}};
(6,7)*{\srcap{}};
(9,0)*{\slinen{}};
(9,-8)*{\slined{}};
(0,0)*{\ucrossbp{}{}};
(3,-8)*{\slinenp{}};
(-3,8)*{\slineup{}};
(-12,0)*{};(12,0)*{};(-32,0)*{(-1)^{j\cdot j'}t_{jj'}^{-1}};
\endxy \right) =$
\[
 \xy 0;/r.15pc/:
 (-65,-35)*+{\cal{E}_{j'}\cal{E}_i\cal{F}_{j}\cal{F}_i \onell{s_i(\lambda)}\la-1\ra}="bl";
 (-20,-15)*+{\clubsuit\;\cal{E}_i\cal{E}_{j'}\cal{F}_{j}\cal{F}_i \onell{s_i(\lambda)}}="bt";
 (25,-55)*+{\clubsuit\;\cal{E}_{j'}\cal{E}_i\cal{F}_i\cal{F}_{j} \onell{s_i(\lambda)}}="bb";
 (65,-30)*+{\cal{E}_i\cal{E}_{j'}\cal{F}_i\cal{F}_{j} \onell{s_i(\lambda)}\la1\ra}="br";
  {\ar_<<<<<<<<<<<<<<<{\xy 0;/r.12pc/:
    (-6,0)*{\ucrosspr{j'}{i}}; (9,0)*{\slinedr{i}}; (3,0)*{\slined{j}}; \endxy \;\;\;
  } "bl";"bt"};
  {\ar_{\xy 0;/r.12pc/:(6,0)*{\dcrossbr{j}{i}}; (-3,0)*{\slineur{i}};
    (-9,0)*{\slineup{j'}};\endxy \;\;
  } "bl";"bb"};
  {\ar^>>{\xy 0;/r.12pc/:
    (6,0)*{\dcrossbr{j}{i}}; (-9,0)*{\slineur{i}}; (-13,1)*{-}; (-3,0)*{\slineup{j'}};\endxy
  } "bt";"br"};
  {\ar_>>>>{\xy 0;/r.12pc/:
    (-6,0)*{\ucrosspr{j'}{i}}; (9,0)*{\slined{j}}; (3,0)*{\slinedr{i}};\endxy
 } "bb";"br"};
  (-65,40)*+{\cal{F}_{j}\cal{F}_i\cal{E}_{j'}\cal{E}_i \onell{s_i(\lambda)}\la-1\ra}="tl";
 (-20,60)*+{\clubsuit \;\cal{F}_i\cal{F}_{j}\cal{E}_{j'}\cal{E}_i \onell{s_i(\lambda)}}="tt";
 (25,20)*+{\clubsuit \;\cal{F}_{j}\cal{F}_i\cal{E}_i\cal{E}_{j'} \onell{s_i(\lambda)}}="tb";
 (65,45)*+{\cal{F}_i\cal{F}_{j}\cal{E}_i\cal{E}_{j'} \onell{s_i(\lambda)}\la1\ra}="tr";
  {\ar^<<<<<<<<<<<<{\xy 0;/r.12pc/:
    \endxy} "tl";"tt"};
  {\ar^<<<<<<<<<<<<{\;\;\;\xy 0;/r.12pc/:\endxy \;\;} "tl";"tb"};
  {\ar^{\;\;\;\xy 0;/r.12pc/:\endxy} "tt";"tr"};
  {\ar_{\xy 0;/r.12pc/:
    \endxy
 } "tb";"tr"};
    {\ar^{\vcenter{\xy 0;/r.10pc/:
    (0,0)*{\rcrossrb{i}{j}}; (9,0)*{\slinedr{i}}; (-9,0)*{\slinenp{j'}};
    (-6,8.5)*{\ncrosspb{}{}};(6,8.5)*{\ncrossrr{}{}};
    (0,16.5)*{\rcrosspr{}{}}; (9,16.5)*{\slineur{}}; (-9,16.5)*{\slinen{}}; (-16,6)*{t_{ij}^{-1}}; \endxy}
  } "bl";"tl"};
  {\ar_{\vcenter{\xy 0;/r.10pc/:
    (0,0)*{\rcrosspr{j'}{i}}; (9,0)*{\slined{j}}; (-9,0)*{\slinenr{i}};
    (-6,8.5)*{\ncrossrr{}{}};(6,8.5)*{\ncrosspb{}{}};
    (0,16.5)*{\rcrossrb{}{}}; (9,16.5)*{\slineup{}}; (-9,16.5)*{\slinenr{}}; (-16,6)*{t_{ij}^{-1}}; \endxy}
 } "br";"tr"};
   {\ar_>>>>>>>>>{\xy 0;/r.10pc/:
    (0,0)*{\rbigcrosspb{j'}{j}}; (0,-2.5)*{\srcapr{}}; (0,8)*{\srcupr{}}; (-22,2)*{-t_{ij}^{-1}t_{ij'}};
(-2.5,15)*{\scs i}; (-2.5,-9)*{\scs i};
    \endxy} "bb";"tb"};
  {\ar^->>>>>>>>>>>>>>>>>>>>>>{\xy 0;/r.10pc/:
    (0,0)*{\rbigcrossrr{i}{i}}; (0,-2.5)*{\srcap{}}; (0,8)*{\srcup{}}; (-20,2)*{\delta_{jj'}v_{ij}};
(-2.5,-9)*{\scs j}; (-2.5,15)*{\scs j};
    \endxy} "bt";"tt"};
    {\ar@/_1.3pc/@{->} "bt";"tb"};
  {\ar@/_.9pc/@{->} "bb";"tt"};
  (-6.5,2.5)*{\xy 0;/r.10pc/:
    (-18,6)*{\scs -t_{ij}^{-1}}; (0,0)*{\rcrosspb{j'}{j}}; (9,0)*{\slinedr{i}}; (-9,0)*{\slinenr{i}};
    (-6,8.5)*{\ncrossrb{}{}};(6,8.5)*{\ncrosspr{}{}};
    (0,16.5)*{\rcrossrr{}{}}; (9,16.5)*{\slineup{}}; (-9,16.5)*{\slinen{}};\endxy};
   (6,41)*{\xy 0;/r.10pc/:
    (-16,6)*{t_{ij}^{-1}}; (0,0)*{\rcrossrr{i}{i}}; (9,0)*{\slined{j}}; (-9,0)*{\slinenp{j'}};
    (-6,8.5)*{\ncrosspr{}{}};(6,8.5)*{\ncrossrb{}{}};
    (0,16.5)*{\rcrosspb{}{}}; (9,16.5)*{\slineur{}}; (-9,16.5)*{\slinenr{}};\endxy};
 \endxy
\]

$\cal{T}'_i \left(  \xy 0;/r.15pc/:
  (0,0)*{\lcrossbp{j}{j'}}; (-20,0)*{(-1)^{j\cdot j'+1}t_{jj'}};
 (6,3)*{ \lambda};
 \endxy  \right)
 :=
\cal{T}'_i \left( \xy 0;/r.10pc/:
(-3,-8)*{\slinenp{}};
(3,8)*{\slineup{}};
(0,0)*{\ucrosspb{}{}};
(9,0)*{\slinen{}};
(9,8)*{\slinen{}};
(6,-6)*{\slcup{}};
(-6,7)*{\slcap{}};
(-9,0)*{\slinen{}};
(-9,-8)*{\slined{}};
(-12,0)*{};(12,0)*{};
(-34,0)*{(-1)^{j\cdot j'+1}t_{jj'}};
\endxy \right)=$
\[
 \xy 0;/r.15pc/:
 (-65,-35)*+{\cal{F}_{j}\cal{F}_i\cal{E}_{j'}\cal{E}_i \onell{s_i(\lambda)}\la-1\ra}="bl";
 (-20,-15)*+{\clubsuit\;\cal{F}_i\cal{F}_{j}\cal{E}_{j'}\cal{E}_i \onell{s_i(\lambda)}}="bt";
 (25,-55)*+{\clubsuit\;\cal{F}_{j}\cal{F}_i\cal{E}_i\cal{E}_{j'} \onell{s_i(\lambda)}}="bb";
 (65,-30)*+{\cal{F}_i\cal{F}_{j}\cal{E}_i\cal{E}_{j'} \onell{s_i(\lambda)}\la1\ra}="br";
  {\ar_<<<<<<<<<<<<<<<{\xy 0;/r.12pc/:
    (-6,0)*{\dcrossbr{j}{i}}; (9,0)*{\slineur{i}}; (3,0)*{\slineup{j'}}; \endxy \;\;\;
  } "bl";"bt"};
  {\ar_{\xy 0;/r.12pc/:(6,0)*{\ucrosspr{j'}{i}}; (-13,1)*{-}; (-3,0)*{\slinedr{i}};
    (-9,0)*{\slined{j}};\endxy \;\;
  } "bl";"bb"};
  {\ar^>>{\xy 0;/r.12pc/:
    (6,0)*{\ucrosspr{j'}{i}}; (-9,0)*{\slinedr{i}}; (-3,0)*{\slined{j}};\endxy
  } "bt";"br"};
  {\ar_>>>>{\xy 0;/r.12pc/:
    (-6,0)*{\dcrossbr{j}{i}}; (9,0)*{\slineup{j'}}; (3,0)*{\slineur{i}};\endxy
 } "bb";"br"};
 (-65,40)*+{\cal{E}_{j'}\cal{E}_i\cal{F}_{j}\cal{F}_i \onell{s_i(\lambda)}\la-1\ra}="tl";
 (-20,60)*+{\clubsuit \;\cal{E}_i\cal{E}_{j'}\cal{F}_{j}\cal{F}_i \onell{s_i(\lambda)}}="tt";
 (25,20)*+{\clubsuit \;\cal{E}_{j'}\cal{E}_i\cal{F}_i\cal{F}_{j} \onell{s_i(\lambda)}}="tb";
 (65,45)*+{\cal{E}_i\cal{E}_{j'}\cal{F}_i\cal{F}_{j} \onell{s_i(\lambda)}\la1\ra}="tr";
 {\ar^<<<<<<<<<<<<{\xy 0;/r.12pc/:
    \endxy \;\;\;
  } "tl";"tt"};
  {\ar^<<<<<<<<<<<<{\;\;\;\xy 0;/r.12pc/:\endxy \;\;} "tl";"tb"};
  {\ar^{\;\;\;\xy 0;/r.12pc/:
   \endxy
  } "tt";"tr"};
  {\ar_{\xy 0;/r.12pc/:
    \endxy
 } "tb";"tr"};
  {\ar^{\vcenter{\xy 0;/r.10pc/:
    (0,0)*{\lcrossrp{i}{j'}}; (9,0)*{\slinenr{i}}; (-9,0)*{\slined{j}};
    (-6,8.5)*{\ncrossbp{}{}};(6,8.5)*{\ncrossrr{}{}};
    (0,16.5)*{\lcrossbr{}{}}; (9,16.5)*{\slinenr{}}; (-9,16.5)*{\slineup{}}; (-16,6)*{t_{ij}}; \endxy}
  } "bl";"tl"};
  {\ar_{\vcenter{\xy 0;/r.10pc/:
    (0,0)*{\lcrossbr{j}{i}}; (9,0)*{\slinenp{j'}}; (-9,0)*{\slinedr{i}};
    (-6,8.5)*{\ncrossrr{}{}};(6,8.5)*{\ncrossbp{}{}};
    (0,16.5)*{\lcrossrp{}{}}; (9,16.5)*{\slinen{}}; (-9,16.5)*{\slineur{}}; (-16,6)*{t_{ij}}; \endxy}
 } "br";"tr"};
  {\ar_>>>>>>>>>{\xy 0;/r.10pc/:
    (0,0)*{\lbigcrossbp{j}{j'}}; (0,-2.5)*{\slcapr{}}; (0,8)*{\slcupr{}}; (-16,2)*{t_{ij}^{2}};
(2.5,-9)*{\scs i}; (2.5,15)*{\scs i};
    \endxy} "bb";"tb"};
  {\ar^->>>>>>>>>>>>>>>>>>>>>>{\xy 0;/r.10pc/:
    (0,0)*{\lbigcrossrr{i}{i}}; (0,-2.5)*{\slcap{}}; (0,8)*{\slcup{}}; (-27,2)*{-\delta_{jj'}t_{ij}t_{j'i}};
(2.5,-9)*{\scs j}; (2.5,15)*{\scs j};
    \endxy} "bt";"tt"};
  {\ar@/_1.2pc/@{->} "bt";"tb"};
  {\ar@/_.9pc/@{->} "bb";"tt"};
  (-6,4)*{\xy 0;/r.10pc/:
    (-15,6)*{t_{ij}}; (0,0)*{\lcrossbp{j}{j'}}; (9,0)*{\slinenr{i}}; (-9,0)*{\slinedr{i}};
    (-6,8.5)*{\ncrossrp{}{}};(6,8.5)*{\ncrossbr{}{}};
    (0,16.5)*{\lcrossrr{}{}}; (9,16.5)*{\slinen{}}; (-9,16.5)*{\slineup{}};\endxy};
   (6,41)*{\xy 0;/r.10pc/:
    (-18,6)*{-t_{ij}}; (0,0)*{\lcrossrr{i}{i}}; (9,0)*{\slinenp{j'}}; (-9,0)*{\slined{j}};
    (-6,8.5)*{\ncrossbr{}{}};(6,8.5)*{\ncrossrp{}{}};
    (0,16.5)*{\lcrossbp{}{}}; (9,16.5)*{\slinenr{}}; (-9,16.5)*{\slineur{}};\endxy};
 \endxy
\]

%
\subsection{Value of \texorpdfstring{$\cal{T}'_{i,1}$}{T'i,1} on downwards crossing 2-morphisms} \label{sec:down-crossing}
%

\[
    \cal{T}'_i \left(  \xy 0;/r.18pc/:
  (0,0)*{\dcrossrr{i}{i}};
 (6,3)*{ \lambda};
 \endxy  \right)
:=
\cal{T}'_i \left( \xy 0;/r.10pc/:
(6,7)*{\srcapr{}};
(0,0)*{\ucrossrr{}{}};
(9,0)*{\slinenr{}};
(9,-8)*{\slinedr{}};
(15,0)*{\slinenr{}};
(15,-8)*{\slinedr{}};
(-6,-9)*{\llrcupr{}};
(6,10)*{\llrcapr{}};
(-6,-6)*{\srcupr{}};
(-9,0)*{\slinenr{}};
(-9,8)*{\slinenr{}};
(-15,0)*{\slinenr{}};
(-15,8)*{\slinenr{}};
(-18,0)*{};(18,0)*{};
\endxy \right)
= - \hspace{-.25cm}\ucrossrr{i}{i}
, \quad
 \cal{T}'_i \left(  \xy 0;/r.18pc/:
  (0,0)*{\dcrossgg{k}{k'}};
 (6,3)*{ \lambda};
 \endxy  \right)
:=
\cal{T}'_i \left( \xy 0;/r.10pc/:
(-9,0)*{\slineng{}};
(-9,8)*{\slineng{}};
(-6,-6)*{\srcupg{}};
(6,7)*{\srcapg{}};
(9,0)*{\slineng{}};
(9,-8)*{\slinedg{}};
(0,0)*{\ucrossgg{}{}};
(-15,0)*{\slineng{}};
(-15,8)*{\slineng{}};
(-6,-9)*{\llrcupg{}};
(6,10)*{\llrcapg{}};
(15,0)*{\slineng{}};
(15,-8)*{\slinedg{}};
(-18,0)*{};(18,0)*{};
\endxy \right)
= \hspace{-.25cm} \dcrossgg{k}{k'}
\]

\[
  \cal{T}'_i \left(  \xy 0;/r.18pc/:
  (0,0)*{\dcrossrg{i}{k}};
 (6,3)*{ \lambda};
 \endxy  \right)
:=
\cal{T}'_i \left( \xy 0;/r.10pc/:
(-9,0)*{\slinenr{}};
(-9,8)*{\slinenr{}};
(-6,-6)*{\srcupr{}};
(6,7)*{\srcapr{}};
(9,0)*{\slinenr{}};
(9,-8)*{\slinedr{}};
(0,0)*{\ucrossrg{}{}};
(-15,0)*{\slineng{}};
(-15,8)*{\slineng{}};
(-6,-9)*{\llrcupg{}};
(6,10)*{\llrcapg{}};
(15,0)*{\slineng{}};
(15,-8)*{\slinedg{}};
(-18,0)*{};(18,0)*{};
\endxy \right)
= t_{ki}^2 \hspace{-.25cm} \rcrossrg{i}{k}
, \quad
\cal{T}'_i \left(  \xy 0;/r.18pc/:
  (0,0)*{\dcrossgr{k}{i}};
 (6,3)*{ \lambda};
 \endxy  \right)
  :=
\cal{T}'_i \left( \xy 0;/r.10pc/:
(-9,0)*{\slineng{}};
(-9,8)*{\slineng{}};
(-6,-6)*{\srcupg{}};
(6,7)*{\srcapg{}};
(9,0)*{\slineng{}};
(9,-8)*{\slinedg{}};
(0,0)*{\ucrossgr{}{}};
(-15,0)*{\slinenr{}};
(-15,8)*{\slinenr{}};
(-6,-9)*{\llrcupr{}};
(6,10)*{\llrcapr{}};
(15,0)*{\slinenr{}};
(15,-8)*{\slinedr{}};
(-18,0)*{};(18,0)*{};
\endxy \right) = t_{ki}^{-1} \hspace{-.25cm} \lcrossgr{k}{i}
\]

\[
  \cal{T}'_i \left(  \xy 0;/r.15pc/:
  (0,0)*{\dcrossrb{i}{j}};
 (6,3)*{ \lambda};
 \endxy  \right)
 :=
\cal{T}'_i \left( \xy 0;/r.10pc/:
(-9,0)*{\slinenr{}};
(-9,8)*{\slinenr{}};
(-6,-6)*{\srcupr{}};
(6,7)*{\srcapr{}};
(9,0)*{\slinenr{}};
(9,-8)*{\slinedr{}};
(0,0)*{\ucrossrb{}{}};
(-15,0)*{\slinen{}};
(-15,8)*{\slinen{}};
(-6,-9)*{\llrcup{}};
(6,10)*{\llrcap{}};
(15,0)*{\slinen{}};
(15,-8)*{\slined{}};
(-18,0)*{};(18,0)*{};
\endxy \right)
=
   \xy 0;/r.15pc/:
  (-35,20)*+{\clubsuit\ \cal{F}_j\cal{F}_i\cal{E}_i\onell{s_i(\lambda)}\la\lambda_i-1 \ra}="1";
  (-35,-15)*+{\clubsuit\ \cal{E}_i\cal{F}_j\cal{F}_i\onell{s_i(\lambda)}\la\lambda_i-1 \ra}="2";
   {\ar^{\vcenter{\xy 0;/r.15pc/:
 (-3,0)*{\rcrossrb{i}{j}};
  (6,0)*{\slinedr{i}};
 (3,8.5)*{\rcrossrr{}{}};
 (-6,8.5)*{\slinen{}}; (-17,3)*{t_{ij}^{-1}t_{ji}^{-1}};
 \endxy }} "2";"1"};
  (35,20)*+{\cal{F}_i\cal{F}_j\cal{E}_i\onell{s_i(\lambda)}\la \lambda_i \ra}="3";
  (35,-15)*+{\cal{E}_i\cal{F}_i\cal{F}_j\onell{s_i(\lambda)}\la \lambda_i \ra}="4";
   {\ar_{\vcenter{ \xy 0;/r.15pc/:
 (-17,3)*{-t_{ij}^{-1}t_{ji}^{-1}}; (-3,0)*{\rcrossrr{i}{i}};
  (6,0)*{\slined{j}};
 (3,8.5)*{\rcrossrb{}{}};
 (-6,8.5)*{\slinenr{}};
 \endxy}} "4";"3"};
   {\ar^-{\xy  0;/r.15pc/: (3,0)*{\slineur{i}};(-6,0)*{\dcrossbr{j}{i}};\endxy   } "1";"3"};
   {\ar^-{\xy 0;/r.15pc/: (-9,0)*{\slineur{i}};(0,0)*{\dcrossbr{j}{i}}; (-13,1)*{-}; \endxy   } "2";"4"};
 \endxy
\]
\[
\cal{T}'_i \left(  \xy 0;/r.15pc/:
  (0,0)*{\dcrossbr{j}{i}};
 (6,3)*{ \lambda};
 \endxy  \right)
:=
\cal{T}'_i \left( \xy 0;/r.10pc/:
(-9,0)*{\slinen{}};
(-9,8)*{\slinen{}};
(-6,-6)*{\srcup{}};
(6,7)*{\srcap{}};
(9,0)*{\slinen{}};
(9,-8)*{\slined{}};
(0,0)*{\ucrossbr{}{}};
(-15,0)*{\slinenr{}};
(-15,8)*{\slinenr{}};
(-6,-9)*{\llrcupr{}};
(6,10)*{\llrcapr{}};
(15,0)*{\slinenr{}};
(15,-8)*{\slinedr{}};
(-18,0)*{};(18,0)*{};
\endxy \right)
=
     \vcenter{\xy 0;/r.15pc/:
  (-30,27)*+{\clubsuit\ \cal{E}_i\cal{F}_j\cal{F}_i\onell{s_i(\lambda)}\la \lambda_i+1 \ra}="1";
  (-30,-27)*+{\clubsuit\ \cal{F}_j\cal{F}_i\cal{E}_i\onell{s_i(\lambda)}\la \lambda_i-1 \ra}="2";
   {\ar^{\vcenter{\xy 0;/r.15pc/:
 (0,12)*{\xy 0;/r.15pc/: (3,0)*{\lcrossrr{i}{i}}; (-16,4)*{t_{ij}^2t_{ji}};
  (-6,0)*{\slined{j}};
 (-3,8.5)*{\lcrossbr{}{}};
 (6,8.5)*{\slinenr{}};
 (1,0)*[black]{\bullet};
 \endxy};
 (0,-12)*{\xy 0;/r.15pc/:
 (-16,4)*{-t_{ij}^2t_{ji}}; (3,0)*{\lcrossrr{i}{i}};
  (-6,0)*{\slined{j}};
 (-3,8.5)*{\lcrossbr{}{}};
 (6,8.5)*{\slinenr{}};
 (5,0)*[black]{\bullet};
 \endxy};\endxy}}  "2";"1"};
  (35,27)*+{ \cal{E}_i\cal{F}_i\cal{F}_j\onell{s_i(\lambda)}\la\lambda_i+2 \ra}="3";
  (35,-27)*+{ \cal{F}_i\cal{F}_j\cal{E}_i\onell{s_i(\lambda)}\la \lambda_i \ra}="4";
  {\ar_{\vcenter{\xy 0;/r.15pc/:
  (0,12)*{\xy 0;/r.15pc/:
    (3,0)*{\lcrossbr{j}{i}}; (-16,4)*{t_{ij}^2t_{ji}};
    (-6,0)*{\slinedr{i}};
    (-3,8.5)*{\lcrossrr{}{}};
    (6,8.5)*{\slinen{}}; (5,0)*[black]{\bullet};
    \endxy};
    (0,-12)*{\xy 0;/r.15pc/:
    (-16,4)*{-t_{ij}^2t_{ji}};
    (3,0)*{\lcrossbr{j}{i}};
    (-6,0)*{\slinedr{i}};
    (-3,8)*{\lcrossrr{}{}};
    (6,8)*{\slinen{}};  (-6,2)*[black]{\bullet}; \endxy};\endxy}} "4";"3"};
   {\ar_{\xy  0;/r.15pc/:
   (3,0)*{\slineur{i}};(-6,0)*{\dcrossbr{j}{i}};\endxy   } "2";"4"};
   {\ar^-{\xy 0;/r.15pc/:(-9,0)*{\slineur{i}};(0,0)*{\dcrossbr{j}{i}}; (-13,1)*{-}; \endxy   } "1";"3"};
 \endxy}
\]

$
     \cal{T}'_i \left(  \xy 0;/r.18pc/:
  (0,0)*{\dcrossbg{j}{k}};
 (6,3)*{ \lambda};
 \endxy  \right)
:=
\cal{T}'_i \left( \xy 0;/r.10pc/:
(-9,0)*{\slinen{}};
(-9,8)*{\slinen{}};
(-6,-6)*{\srcup{}};
(6,7)*{\srcap{}};
(9,0)*{\slinen{}};
(9,-8)*{\slined{}};
(0,0)*{\ucrossbg{}{}};
(-15,0)*{\slineng{}};
(-15,8)*{\slineng{}};
(-6,-9)*{\llrcupg{}};
(6,10)*{\llrcapg{}};
(15,0)*{\slineng{}};
(15,-8)*{\slinedg{}};
(-18,0)*{};(18,0)*{};
\endxy \right)
$
\[= 
\xy 0;/r.18pc/:
  (-35,15)*+{\clubsuit\;\cal{F}_k\cal{F}_j\cal{F}_i
  \onell{s_i(\lambda)}\la -1-j\cdot k\ra}="1";
  (-35,-15)*+{\clubsuit \;\cal{F}_j\cal{F}_i\cal{F}_k\onell{s_i(\lambda)}\la-1\ra}="2";
   {\ar^{ \vcenter{\xy 0;/r.15pc/:
(-20,10)*{(-1)^{j\cdot k}};
(-16,2)*{t_{ki}^{-2}t_{jk}};
(3,0)*{\dcrossrg{i}{k}};
  (-6,0)*{\slined{j}};
 (-3,8.5)*{\ncrossbg{}{}};
 (6,8.5)*{\slinenr{}};
 \endxy}} "2";"1"};
  (35,15)*+{\cal{F}_k\cal{F}_i\cal{F}_j\onell{s_i(\lambda)}\la -j\cdot k\ra}="3";
  (35,-15)*+{\cal{F}_i\cal{F}_j\cal{F}_k\onell{s_i(\lambda)}}="4";
  {\ar_{ \vcenter{\xy 0;/r.15pc/:
(-20,10)*{(-1)^{j\cdot k}};
(-16,2)*{t_{ki}^{-2}t_{jk}};
 (3,0)*{\dcrossbg{j}{k}};
  (-6,0)*{\slinedr{i}};
 (-3,8.5)*{\ncrossrg{}{}};
 (6,8.5)*{\slinen{}};
 \endxy}} "4";"3"};
   {\ar^-{\xy  0;/r.15pc/:
   (3,0)*{\slinedg{k}};(-6,0)*{\dcrossbr{j}{i}};\endxy   } "2";"4"};
   {\ar^-{\xy 0;/r.15pc/:
   (-9,0)*{\slinedg{k}};(0,0)*{\dcrossbr{j}{i}};\endxy   } "1";"3"};
 \endxy
\]
$
\cal{T}'_i \left(  \xy 0;/r.18pc/:
  (0,0)*{\dcrossgb{k}{j}};
 (6,3)*{ \lambda};
 \endxy  \right)
 :=
\cal{T}'_i \left( \xy 0;/r.10pc/:
(-9,0)*{\slineng{}};
(-9,8)*{\slineng{}};
(-6,-6)*{\srcupg{}};
(6,7)*{\srcapg{}};
(9,0)*{\slineng{}};
(9,-8)*{\slinedg{}};
(0,0)*{\ucrossgb{}{}};
(-15,0)*{\slinen{}};
(-15,8)*{\slinen{}};
(-6,-9)*{\llrcup{}};
(6,10)*{\llrcap{}};
(15,0)*{\slinen{}};
(15,-8)*{\slined{}};
(-18,0)*{};(18,0)*{};
\endxy \right)
$
\[
=
 \xy 0;/r.18pc/:
  (-35,15)*+{\clubsuit\;\cal{F}_j\cal{F}_i\cal{F}_k
  \onell{s_i(\lambda)}\la -1-j\cdot k\ra}="1";
  (-35,-15)*+{\clubsuit \;\cal{F}_k\cal{F}_j\cal{F}_i\onell{s_i(\lambda)}\la-1\ra}="2";
   {\ar^{ \vcenter{\xy 0;/r.15pc/:
 (-3,0)*{\dcrossgb{k}{j}};
(-20,10)*{(-1)^{j\cdot k}};
(-16,2)*{t_{ki} t_{jk}^{-1}};
  (6,0)*{\slinedr{i}};
 (3,8.5)*{\ncrossgr{}{}};
 (-6,8.5)*{\slinen{}};
 \endxy}} "2";"1"};
  (35,15)*+{\cal{F}_i\cal{F}_j\cal{F}_k\onell{s_i(\lambda)}\la -j\cdot k\ra}="3";
  (35,-15)*+{\cal{F}_k\cal{F}_i\cal{F}_j\onell{s_i(\lambda)}}="4";
  {\ar_{\vcenter{\xy 0;/r.15pc/:
 (-3,0)*{\dcrossgr{k}{i}};
(-20,10)*{(-1)^{j\cdot k}};
(-16,2)*{t_{ki} t_{jk}^{-1}};
  (6,0)*{\slined{j}};
 (3,8.5)*{\ncrossgb{}{}};
 (-6,8.5)*{\slinenr{}};
 \endxy} } "4";"3"};
   {\ar^-{\xy 0;/r.15pc/:  (-3,0)*{\slinedg{k}};(6,0)*{\dcrossbr{j}{i}};\endxy   } "2";"4"};
   {\ar^-{\xy 0;/r.15pc/:
   (9,0)*{\slinedg{k}};(0,0)*{\dcrossbr{j}{i}};\endxy   } "1";"3"};
 \endxy
\]

$\cal{T}'_i \left(  \xy 0;/r.18pc/:
  (0,0)*{\dcrossbp{j}{j'}};
 (6,3)*{ \lambda};(-15,0)*{t_{jj'}^{-1}t_{j'j}};
 \endxy  \right)
=\cal{T}'_i \left( \xy 0;/r.10pc/:
(-9,0)*{\slinen{}};
(-9,8)*{\slinen{}};
(-6,-6)*{\srcup{}};
(6,7)*{\srcap{}};
(9,0)*{\slinen{}};
(9,-8)*{\slined{}};
(0,0)*{\ucrossbp{}{}};
(-15,0)*{\slinenp{}};
(-15,8)*{\slinenp{}};
(-6,-9)*{\llrcupp{}};
(6,10)*{\llrcapp{}};
(15,0)*{\slinenp{}};
(15,-8)*{\slinedp{}};
(-18,0)*{};(18,0)*{};
(-32,0)*{t_{jj'}^{-1}t_{j'j}};
\endxy \right)=$
\begin{align*}
 \xy 0;/r.20pc/:
 (-60,-40)*+{\cal{F}_j\cal{F}_i\cal{F}_{j'}\cal{F}_i \onell{s_i(\lambda)}\la-2\ra}="bl";
 (-20,-20)*+{\cal{F}_i\cal{F}_j\cal{F}_{j'}\cal{F}_i \onell{s_i(\lambda)}\la-1\ra}="bt";
 (25,-60)*+{\cal{F}_j\cal{F}_i\cal{F}_i\cal{F}_{j'} \onell{s_i(\lambda)}\la-1\ra}="bb";
 (60,-35)*+{\clubsuit\;\cal{F}_i\cal{F}_j\cal{F}_i\cal{F}_{j'} \onell{s_i(\lambda)}}="br";
  {\ar^<<<<<<<<<<<<{\xy 0;/r.14pc/:
    (-6,0)*{\dcrossbr{j}{i}}; (9,0)*{\slinedr{i}}; (3,0)*{\slinedp{j'}}; \endxy \;\;\;
  } "bl";"bt"};
  {\ar_{\xy 0;/r.14pc/:(6,0)*{\dcrosspr{j'}{i}}; (-3,0)*{\slinedr{i}};(-13,1)*{-};
    (-9,0)*{\slined{j}};\endxy \;\;
  } "bl";"bb"};
  {\ar^>>>>>>>>{\xy 0;/r.14pc/:
    (6,0)*{\dcrosspr{j'}{i}}; (-9,0)*{\slinedr{i}}; (-3,0)*{\slined{j}};\endxy
  } "bt";"br"};
  {\ar_{\xy 0;/r.14pc/:
    (-6,0)*{\dcrossbr{j}{i}}; (9,0)*{\slinedp{j'}}; (3,0)*{\slinedr{i}};\endxy
 } "bb";"br"};
 (-60,40)*+{\cal{F}_{j'}\cal{F}_i\cal{F}_j\cal{F}_i \onell{s_i(\lambda)}\la-2-j\cdot j'\ra}="tl";
 (-20,60)*+{\cal{F}_i\cal{F}_{j'}\cal{F}_j\cal{F}_i \onell{s_i(\lambda)}\la-1-j\cdot j'\ra}="tt";
 (25,20)*+{\cal{F}_{j'}\cal{F}_i\cal{F}_i\cal{F}_j \onell{s_i(\lambda)}\la-1-j\cdot j'\ra}="tb";
 (60,45)*+{\clubsuit\;\cal{F}_i\cal{F}_{j'}\cal{F}_i\cal{F}_j \onell{s_i(\lambda)}\la-j\cdot j'\ra}="tr";
 {\ar^<<<<<<<<<<<<{\xy 0;/r.14pc/:
    \endxy \;\;\;
  } "tl";"tt"};
  {\ar^<<<<<<<<<<<<{\;\;\;\xy 0;/r.14pc/:\endxy \;\;} "tl";"tb"};
  {\ar^{\;\;\;\xy 0;/r.14pc/:
   \endxy
  } "tt";"tr"};
  {\ar_{\xy 0;/r.14pc/:
    \endxy
 } "tb";"tr"};
  {\ar^{\vcenter{\xy 0;/r.14pc/:
        (0,0)*{\dcrossrp{i}{j'}};
        (9,0)*{\slinedr{i}};
        (-9,0)*{\slined{j}};
        (-6,8.5)*{\ncrossbp{}{}};
        (6,8.5)*{\ncrossrr{}{}};
        (0,16)*{\ncrossbr{}{}};
        (9,16)*{\slinenr{}};
        (-9,16)*{\slinenp{}};
        (-16,6)*{\scs -t_{ij}^{-1}}; \endxy}
  } "bl";"tl"};
  {\ar_{\vcenter{\xy 0;/r.14pc/:
        (0,0)*{\dcrossbr{j}{i}};
        (9,0)*{\slinedp{j'}};
        (-9,0)*{\slinedr{i}};
        (-6,8.5)*{\ncrossrr{}{}};
        (6,8.5)*{\ncrossbp{}{}};
        (0,16)*{\ncrossrp{}{}};
        (9,16)*{\slinen{}};
        (-9,16)*{\slinenr{}};
        (-14,6)*{\scs t_{ij}^{-1}};
    \endxy}
 } "br";"tr"};
  {\ar_>>>>>>>>>>>>>>{\xy 0;/r.14pc/:
        (0,0)*{\dcrossrr{i}{i}};
        (7,0)*{\slined{j}};
        (-7,0)*{\slined{j}};
        (-19,0)*{\scs -\delta_{jj'}v_{ij}};
    \endxy} "bb";"tb"};
  {\ar^-{\xy 0;/r.14pc/:
        (0,0)*{\dcrossbp{j}{j'}};
        (7,0)*{\slinedr{i}};
        (-7,0)*{\slinedr{i}};
        (-17,0)*{\scs t_{ij}^{-1}t_{ij'}};
    \endxy} "bt";"tt"};
  {\ar^{} "bt";"tb"};
  {\ar@/_.7pc/@{->} "bb";"tt"};
  (-8.5,8)*{\xy 0;/r.13pc/:
        (-14,6)*{\scs t_{ij}^{-1}};
        (0,0)*{\dcrossbp{j}{j'}};
        (9,0)*{\slinedr{i}};
        (-9,0)*{\slinedr{i}};
        (-6,8.5)*{\ncrossrp{}{}};
        (6,8.5)*{\ncrossbr{}{}};
        (0,16)*{\ncrossrr{}{}};
        (9,16)*{\slinen{}};
        (-9,16)*{\slinenp{}};
    \endxy};
   (5,41)*{\xy 0;/r.14pc/:
        (-14,6)*{\scs t_{ij}^{-1}};
        (0,0)*{\dcrossrr{i}{i}};
        (9,0)*{\slinedp{j'}};
        (-9,0)*{\slined{j}};
        (-6,8.5)*{\ncrossbr{}{}};
        (6,8.5)*{\ncrossrp{}{}};
        (0,16)*{\ncrossbp{}{}};
        (9,16)*{\slinenr{}};
        (-9,16)*{\slinenr{}};\endxy};
 \endxy
\end{align*}

%
\subsection{Computation of \texorpdfstring{$\cal{T}'_{i,1}$}{T'i,1} on bubble 2-morphisms} \label{sec:Ti-bub}
%

We compute the image of bubble $2$-morphisms,
and use them to  explicitly verify that $\cal{T}'_{i,1}$ preserves the infinite Grassmannian relation.

\begin{align*}
\cal{T}'_i \left(\; \xy 0;/r.18pc/:
 (-12,0)*{\cbubr{\la i, \lambda \ra -1 + \alpha}{i}};
 (-8,8)*{\lambda};
 \endxy \;\right)
\; =\; &c_{i,\lambda}^2\xy 0;/r.18pc/:
 (-12,0)*{\ccbubr{\la i, \lambda \ra -1 + \alpha}{i}};
 (-8,8)*{s_i(\lambda)};
 \endxy
\; =\;c_{i,\lambda}^2\xy 0;/r.18pc/:
 (-12,0)*{\ccbubr{-\la i, s_i(\lambda) \ra -1 + \alpha}{i}};
 (-8,8)*{s_i(\lambda)};
 \endxy\\
 \cal{T}'_i \left(\; \xy 0;/r.18pc/:
 (-12,0)*{\ccbubr{-\la i, \lambda \ra -1 + \alpha}{i}};
 (-8,8)*{\lambda};
 \endxy \;\right)
\; =\;&c_{i,\lambda}^{-2}\xy 0;/r.18pc/:
 (-12,0)*{\cbubr{-\la i, \lambda \ra -1 + \alpha}{i}};
 (-8,8)*{s_i(\lambda)};
 \endxy
\; =\;c_{i,\lambda}^{-2}\xy 0;/r.18pc/:
 (-12,0)*{\cbubr{\la i, s_i(\lambda) \ra -1 + \alpha}{i}};
 (-8,8)*{s_i(\lambda)};
 \endxy \\
\cal{T}'_i \left(\; \xy 0;/r.18pc/:
 (-12,0)*{\cbubg{\la k, \lambda \ra -1 + \alpha}{k}};
 (-8,8)*{\lambda};
 \endxy \;\right)
\; =\;&t_{ki}^{\lambda_i}\xy 0;/r.18pc/:
 (-12,0)*{\cbubg{\la k, \lambda \ra -1 + \alpha}{k}};
 (-8,8)*{s_i(\lambda)};
 \endxy
\; =\;t_{ki}^{\lambda_i}\xy 0;/r.18pc/:
 (-12,0)*{\cbubg{\la k, s_i(\lambda) \ra -1 + \alpha}{k}};
 (-8,8)*{s_i(\lambda)};
 \endxy\\
\cal{T}'_i \left(\; \xy 0;/r.18pc/:
 (-12,0)*{\ccbubg{-\la k, \lambda \ra -1 + \alpha}{k}};
 (-8,8)*{\lambda};
 \endxy \;\right)
\; =\;&t_{ki}^{-\lambda_i}\xy 0;/r.18pc/:
 (-12,0)*{\ccbubg{-\la k, \lambda \ra -1 + \alpha}{k}};
 (-8,8)*{s_i(\lambda)};
 \endxy
\; =\;t_{ki}^{-\lambda_i}\xy 0;/r.18pc/:
 (-12,0)*{\ccbubg{-\la k, s_i(\lambda) \ra -1 + \alpha}{k}};
 (-8,8)*{s_i(\lambda)};
 \endxy\end{align*}

For $j$-labelled bubbles, we make use of the bubble sliding relations from Section~\ref{sec:bubble}.
(Note that, in the first equation, the number of dots on the black circles equals zero for both summands.)
\begin{align*}
\cal{T}'_i \left(\; \xy 0;/r.18pc/:
 (-12,0)*{\cbub{\la j, \lambda \ra -1 + \alpha}{j}};
 (-8,8)*{\lambda};
 \endxy \;\right)
=&
(-t_{ij})^{1+\lambda_i}c_{i,\lambda}^{-1}
\left(
\xy 0;/r.20pc/:(0,0)*{\xybox{
    (-3,0)*{}="t1"; (3,0)*{}="t2";
    "t1";"t2" **[blue]\crv{(-3,-4) & (3,-4)} ?(1)*[blue]\dir{<}?(.5)*\dir{}+(0,0)*{\bullet}
    +(0,-2.5)*{\scriptscriptstyle \la j, s_i(\lambda)-\alpha_i \ra -1 + \alpha-\lambda_i-1}   ;
    "t1";"t2" **[blue]\crv{(-3,4) & (3,4)}  ?(0)*[blue]\dir{>};
    (-20,0)*{}="t1"; (20,0)*{}="t2";
    "t1";"t2" **[black]\crv{(-20,-20) & (20,-20)};?(0)*[black]\dir{<}?(.5)*\dir{}+(0,0)*[black]
    {\bullet}+(0,-2.5)*{\scs \la i, s_i(\lambda) \ra -1 +\lambda_i+1};
    "t1";"t2" **[black]\crv{(-20,20) & (20,20)}?(1)*[black]\dir{>};
    (3,4)*{\scs j};(20,8)*{\scs i};}};
     (21,-10)*{s_i(\lambda)}; (-14,0)*{};
     \endxy
  - \quad
     \xy 0;/r.20pc/:(0,0)*{\xybox{
    (-3,0)*{}="t1"; (3,0)*{}="t2";
    "t1";"t2" **[black]\crv{(-3,-4) & (3,-4)} ?(1)*[black]\dir{<}?(.5)*\dir{}+(0,0)*[black]{\bullet}+(0,-2.5)*
    {\scriptscriptstyle \la i, s_i(\lambda)-\alpha_j \ra -1 +\lambda_i};
    "t1";"t2" **[black]\crv{(-3,4) & (3,4)}  ?(0)*[black]\dir{>}?(.5)*\dir{};
    (-16,0)*{}="t1"; (16,0)*{}="t2";
    "t1";"t2" **[blue]\crv{(-16,-16) & (16,-16)};?(0)*[blue]\dir{<}?(.5)*\dir{}+(0,0)*{\bullet}
    +(0,-2.5)*{\scs \la j, s_i(\lambda)\ra-1+ \alpha-\lambda_i};
    "t1";"t2" **[blue]\crv{(-16,16) & (16,16)}?(1)*[blue]\dir{>};
    (3,4)*{\scs i};(17,5)*{\scs j};
    (18,-10)*{s_i(\lambda)};}    };
     \endxy
\right) \\
=&
%
(-t_{ij})^{1+\lambda_i}c_{i,\lambda}^{-1}\left(
\sum_{\substack{h= \\ \text{max}(0,\lambda_i+1)}}^{\alpha}
t_{ji}^{-1} (-v_{ji})^{h-\lambda_i-1}\;
     \xy 0;/r.18pc/:
 (-6,0)*{\cbub{\spadesuit + \alpha-h}{j}};
 (6,0)*{\cbubr{\spadesuit + h}{i}};
 (19,4)*{s_i(\lambda)};
 \endxy
\right.\\
&\qquad \qquad \qquad \qquad \qquad \qquad
\left. -
\sum_{h=0}^{\text{min}(\lambda_i, \alpha)} t_{ij}^{-1}(-v_{ij})^{\lambda_i-h}\;
     \xy 0;/r.18pc/:
 (-6,0)*{\cbub{\spadesuit + \alpha-h}{j}};
 (6,0)*{\cbubr{\spadesuit + h}{i}}; (-15,0)*{};
 (19,4)*{s_i(\lambda)};
 \endxy
\right)
\\
=&
t_{ji}^{\lambda_i}c_{i,\lambda}^{-1}
\sum_{h=0}^{\alpha} (-v_{ij})^{-h}\;
     \xy 0;/r.18pc/:
 (-6,0)*{\cbub{\spadesuit + \alpha-h}{j}};
 (6,0)*{\cbubr{\spadesuit + h}{i}};
 (19,4)*{s_i(\lambda)};
 \endxy\end{align*}
Similarly, the image of the counter-clockwise bubble is given by:
\begin{align*}
\cal{T}'_i \left(\; \xy 0;/r.18pc/:
 (-12,0)*{\ccbub{-\la j, \lambda \ra -1 + \alpha}{j}};
 (-8,8)*{\lambda};
 \endxy \;\right)
=&
(-t_{ij})^{-\lambda_i}c_{i,\lambda}t_{ij}\left(
\sum_{h=0}^{\text{min}(-\lambda_i, \alpha)}
t_{ij}^{-1}(-v_{ij})^{-\lambda_i-h}\;
     \xy 0;/r.18pc/:
 (-6,0)*{\ccbub{\spadesuit + \alpha-h}{j}};
 (6,0)*{\ccbubr{\spadesuit + h}{i}}; (-15,0)*{};
 (19,2)*{s_i(\lambda)};
 \endxy\right.\\
& \qquad \qquad \qquad \qquad \qquad \qquad
\left. -
\sum_{\substack{h= \\ \text{max}(0,-\lambda_i)}}^{\alpha} t_{ji}^{-1}(-v_{ji})^{\lambda_i-1+h}\;
     \xy 0;/r.18pc/:
 (-6,0)*{\ccbub{\spadesuit + \alpha-h}{j}};
 (6,0)*{\ccbubr{\spadesuit + h}{i}};
 (19,2)*{s_i(\lambda)};
 \endxy\right)\\
=&
t_{ji}^{-\lambda_i}c_{i,\lambda}
\sum_{h=0}^{\alpha} (-v_{ij})^{-h}\;
     \xy 0;/r.18pc/:
 (-6,0)*{\ccbub{\spadesuit + \alpha-h}{j}};
 (6,0)*{\ccbubr{\spadesuit + h}{i}};
 (19,2)*{s_i(\lambda)};
 \endxy\end{align*}
(In both cases, recall our convention that any sums with non-increasing index are by definition zero.)

These computations for the images of bubbles under $\cal{T}'_{i,1}$ are only valid when the number of dots is positive; however, our next result shows that they also hold for bubbles with
a negative number of dots (\ie for fake bubbles, see Definition~\ref{defU_cat-cyc} \eqref{def:UQ-bub}).

\begin{lemma}
 $\cal{T}'_{i,1}$ preserves the infinite Grassmannian relation,
 \ie
\[
\cal{T}'_i\left(\left(
\xy  0;/r.15pc/:(0,0)*{\ccbubr{-\la i,\lambda\ra -1}{\ell}}; (6,5)*{\lambda};\endxy +
\cdots +
\xy  0;/r.15pc/:(0,0)*{\ccbubr{-\la i,\lambda\ra -1+\alpha}{\ell}}; (6,5)*{\lambda};\endxy t^\alpha + \cdots\right)
\left(\xy  0;/r.15pc/:(0,0)*{\cbubr{\la i,\lambda\ra -1}{\ell}}; (6,5)*{\lambda};\endxy +
\cdots +
\xy  0;/r.15pc/:(0,0)*{\cbubr{\la i,\lambda\ra -1+\alpha}{\ell}}; (6,5)*{\lambda};\endxy t^\alpha + \cdots\right)\right) = \Id_{\onell{s_i(\l)}}
\]
\end{lemma}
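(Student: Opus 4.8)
The strategy is to verify the identity separately for each of the three cases $\ell = i$, $\ell = k$ (with $i \cdot k = 0$), and $\ell = j$ (with $i \cdot j = -1$), using the explicit formulas for $\cal{T}'_{i,1}$ on clockwise and counterclockwise bubbles computed above in Section~\ref{sec:Ti-bub}. In every case the image of a bubble in weight $\lambda$ is (a scalar multiple of) a bubble or a sum of products of bubbles in weight $s_i(\lambda)$, so after substituting these formulas the claim will reduce to the infinite Grassmannian relation already holding in $\Ucat_Q$, which we may invoke freely. The key point is that the generating-function identity $\left( \sum_\alpha \mathsf{b}^-_\alpha t^\alpha \right)\left( \sum_\alpha \mathsf{b}^+_\alpha t^\alpha \right) = \Id$ is preserved under any ring homomorphism applied coefficientwise to the $t$-power series, so it suffices to show $\cal{T}'_{i,1}$ acts compatibly on the two generating functions.

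First I would handle $\ell = i$. Here $\cal{T}'_i$ sends $\xy 0;/r.14pc/:(-8,0)*{\cbubr{\la i, \lambda \ra -1 + \alpha}{i}}; (-3,5)*{\lambda};\endxy$ to $c_{i,\lambda}^2 \xy 0;/r.14pc/:(-8,0)*{\ccbubr{-\la i, s_i(\lambda) \ra -1 + \alpha}{i}}; (0,5)*{s_i(\lambda)};\endxy$ and the counterclockwise bubble to $c_{i,\lambda}^{-2}$ times a clockwise bubble in weight $s_i(\lambda)$ — that is, it interchanges the roles of the two generating functions (absorbing the factor $c_{i,\lambda}^{\pm 2}$, which cancels in the product). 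Since the infinite Grassmannian relation in $\Ucat_Q$ is symmetric in the clockwise/counterclockwise series (their product in either order equals $\Id$), applying $\cal{T}'_i$ to the product gives exactly the corresponding relation at $s_i(\lambda)$. The case $\ell = k$ is even more transparent: $\cal{T}'_i$ multiplies the clockwise series by $t_{ki}^{\lambda_i}$ and the counterclockwise one by $t_{ki}^{-\lambda_i}$, and simply relabels $\lambda \rightsquigarrow s_i(\lambda)$ in the degree of each bubble (using $\la k, s_i(\lambda) \ra = \la k, \lambda\ra$ since $i \cdot k = 0$), so the scalars cancel and the relation in $\Ucat_Q$ at weight $s_i(\lambda)$ does the job.

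The case $\ell = j$ with $i \cdot j = -1$ is the main obstacle. Here $\cal{T}'_i$ sends the clockwise $j$-bubble of degree $\la j,\lambda\ra - 1 + \alpha$ to $t_{ji}^{\lambda_i} c_{i,\lambda}^{-1} \sum_{h=0}^{\alpha} (-v_{ij})^{-h}\, \xy 0;/r.14pc/:(-6,0)*{\cbub{\spadesuit + \alpha-h}{j}}; (6,0)*{\cbubr{\spadesuit + h}{i}}; (17,3)*{s_i(\lambda)};\endxy$ and similarly for the counterclockwise bubble. Forming the product of the two generating functions and collecting the coefficient of $t^m$ produces a triple sum over $j$- and $i$-bubbles; I would show this equals $\Id$ by organizing the sum so that the $i$-bubbles and $j$-bubbles each assemble into their own generating functions. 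Concretely, the coefficient of $t^m$ in $\left(\sum_\alpha \cal{T}'_i(\mathsf{b}^-_{j,\alpha})\, t^\alpha\right)\left(\sum_\beta \cal{T}'_i(\mathsf{b}^+_{j,\beta})\, t^\beta\right)$ becomes, after reindexing the nested $h$-sums, a convolution of the $j$-infinite-Grassmannian series with the $i$-infinite-Grassmannian series (the powers of $-v_{ij}$ and the scalars $t_{ji}^{\pm\lambda_i} c_{i,\lambda}^{\mp1}$ cancelling against each other and against the shift in $\spadesuit$-degree along the $\mathfrak{sl}_2$-string). Each of those series evaluates to $\Id_{\onell{s_i(\lambda)}}$ by the infinite Grassmannian relation in $\Ucat_Q$, so their convolution contributes $\Id$ in degree $0$ and $0$ in positive degrees. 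The bookkeeping of the double-counting in the $h$-indices — and the verification that the products-of-bubbles in the triple sum telescope correctly against the two separate Grassmannian relations — is the delicate part; I expect it to require the same care with "increasing sums are zero" conventions and with the $\spadesuit$-notation for fake bubbles used throughout Section~\ref{sec:bubble}. Once this convolution identity is in hand, the lemma follows immediately, and as a byproduct the boxed formulas for $\cal{T}'_i$ on bubbles extend to fake bubbles, since both sides satisfy the same defining recursion coming from the low-degree part of the infinite Grassmannian relation.
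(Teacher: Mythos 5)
Your proposal is correct and follows essentially the same route as the paper: the only nontrivial case is $\ell=j$, where the paper likewise expands the degree-$\alpha$ coefficient of the product into a quadruple sum of $i$- and $j$-bubbles, factors out the $j$-bubble pairs (killed by the $j$-Grassmannian relation except in degree zero), and is left with $(-v_{ij})^{-\alpha}$ times the degree-$\alpha$ coefficient of the $i$-Grassmannian relation, exactly the ``convolution'' you describe, with the scalars $t_{ji}^{\pm\lambda_i}c_{i,\lambda}^{\mp1}$ cancelling as you note.
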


\begin{proof}
The only non-trivial case is when the bubbles are $j$-labeled (for $i\cdot j=-1$),
and here we compute the relation in degree $\alpha$ as follows.
\begin{align*}
\cal{T}'_i\left(
\sum_{g+h = \alpha}
\;\; \vcenter{\xy 0;/r.18pc/:
    (2,0)*{\cbub{\l_i-1+g}{j}};
    (24,0)*{\ccbub{-\l_i-1+h}{j}};
  (12,8)*{\lambda};
 \endxy}
\right)
&= \displaystyle\sum_{r+s+t+u=\alpha}
(-v_{ij})^{-s-u}\;
\xy  0;/r.15pc/: (-36,0)*{\cbub{\spadesuit+r}{j}}; (-18,0)*{\cbubr{\spadesuit+s}{i}}; (0,0)*{\ccbub{\spadesuit+t}{j}}; (18,0)*{\ccbubr{\spadesuit+u}{i}}; (30,4)*{s_i(\lambda)};\endxy \\
&= \displaystyle\sum_{k+s+u=\alpha }(-v_{ij})^{-s-u}\;
\xy  0;/r.15pc/: (0,0)*{\cbubr{\spadesuit+s}{i}}; (18,0)*{\ccbubr{\spadesuit+u}{i}}; (30,4)*{s_i(\lambda)};\endxy
\left(\sum_{r+t=k}\;
\xy  0;/r.15pc/: (0,0)*{\cbub{\spadesuit+r}{j}};(18,0)*{\ccbub{\spadesuit+t}{j}}; (30,4)*{s_i(\lambda)}; \endxy\right) \\
&= \displaystyle\sum_{k+s+u=\alpha } \delta_{0,k} (-v_{ij})^{-s-u}\;
\xy  0;/r.15pc/: (0,0)*{\cbubr{\spadesuit+s}{i}}; (18,0)*{\ccbubr{\spadesuit+u}{i}}; (30,4)*{s_i(\lambda)};\endxy \\
&= \displaystyle\sum_{s+u=\alpha } (-v_{ij})^{-s-u}\;
\xy  0;/r.15pc/: (0,0)*{\cbubr{\spadesuit+s}{i}}; (18,0)*{\ccbubr{\spadesuit+u}{i}}; (30,4)*{s_i(\lambda)};\endxy
= (-v_{ij})^{-\alpha} \delta_{0,\alpha} \Id_{\onell{s_i(\l)}}
\end{align*}

\end{proof}


\bibliographystyle{plain}
\bibliography{bib-braid}

%

%
\end{document}